\title{A theory of traces and the divergence theorem}
\author{Moritz Schönherr, Friedemann Schuricht\\
  \small TU Dresden - Fakultät Mathematik}  
\date{}
\begin{document}

\maketitle

\vspace{-5mm}

\begin{center}
  Dedicated in memoriam and gratitude to \\ 
  {\it Eberhard Zeidler} 
\end{center}

\bigskip

\begin{abstract}   
We introduce a general approach to traces that we consider 
as linear continuous functionals on some function space $X$
where we focus on the special choices 
$\cL^\infty(\edom)$ and $\cW^{1,\infty}(\edom)$. This leads to an integral
calculus for the computation of the precise representative of an integrable
function and of the trace of a Sobolev or BV function. For integrable vector
fields with distributional divergence being a measure, we also obtain 
Gauss-Green formulas on arbitrary Borel sets. It turns out that a second
boundary integral is needed in general. The advantage of the integral calculus
is that neither a normal field nor a trace function on the boundary is needed.
The Gauss-Green formulas are also available for Sobolev and BV functions.
Finally, for any open set the existence of a weak
solution of a boundary value problem containing the $p$-Laplace operator is
shown as application of the trace theory.
\end{abstract}

\bigskip

\renewcommand{\contentsname}{{\large Contents}}

{\small

\tableofcontents

}

\newpage

\section{Introduction}
\label{int}

The divergence theorem belongs to the most important tools 
in mathematical analysis and continuum physics and 
goes back to Gauss (1813), Ostrogradskii (1826), and Green (1828)
(cf. Stolze \cite{stolze} for a brief history).
It connects a volume integral with an integral over the
bounding surface. For a regular bounded open set
$\dom\subset\R^n$ and a regular vector field 
$F:\dom\to\R^n$ it says that
\begin{equation} \label{int-e1}
  \I{\dom}{\divv F}{\lem} = \I{\bd{\dom}}{F\cdot\nu^\dom}{\ham^{n-1}} \,
\end{equation}
where $\lem$ is the Lebesgue measure, $\ham^{n-1}$ the $(n-1)$-dimensional
Hausdorff measure, and $\nu^\dom$ the outer unit normal to $\dom$. 
In physics it combines volume and surface phenomena and is indispensable
for fundamental balance and conservation laws. 
If we apply \reff{int-e1} to a product 
$fF$ with some regular $f:\dom\to\R$, then we obtain the
Gauss-Green formula, also called integration by parts formula,
\begin{equation} \label{int-e2}
  \I{\dom}{\big(f\divv F+F\cdot Df\big)}{\lem} = 
  \I{\bd{\dom}}{f F\cdot\nu^\dom}{\ham^{n-1}} \,.
\end{equation}
This formula is 
the basis for the definition of weak derivatives and, 
according to Eberhard~Zeidler, it is
``{\it the key to the modern theory of partial
differential equations and to modern calculus of variations}''
(cf. \cite[p.~119]{zeidler_108}). It is of course desirable to have 
such an essential formula available for a very large class of sets $\dom$ and
functions $f$ and~$F$.  
But it turns out that the surface integral leads to crucial limitations. 
On the one hand we need sets $\dom$ having some normal field on its boundary
and on the other hand we need functions $F$, $f$ on $\dom$ that possess a
pointwise trace function on $\bd\dom$. In practice we need a balance in
regularity for these ingredients. This leads to the typical cases where, 
roughly speaking, either $F$, $f$ have to be smooth if $\dom$ has merely finite
perimeter (that are the most general sets with some kind of normal field 
on the boundary)
or we can allow that $fF$ has merely weak derivatives while
$\dom$ has to have Lipschitz boundary. In continuum mechanics this situation
prevents that we can use \reff{int-e1} on sets $\dom$ where concentrations
occur on $\bd\dom$ or on sets $\dom$ not having a normal field on
$\bd\dom$. More precisely, we are unable to compute the flux through the
boundary $\bd\dom$ for such sets. 
However, nature ``knows'' what happens or, with the words of Albert
Einstein (cf. \cite{infeld}): {\it ``God does not care about our mathematical
difficulties; He integrates empirically.''}
Therefore the derivation of more general versions of \reff{int-e2}
is an important task. But, without intending completeness, let us first 
sketch some developments going beyond smoothness. 

We start with the simple observation that the regularity of the product
$fF$ is essential for the availability of \reff{int-e2}. This means that 
a weak regularity of one factor requires a stronger regularity 
of the other factor. But also a bad boundary
$\bd\dom$ requires a more regular product $fF$ than a good one and it turns out
that a Lipschitz boundary is apparently always a good boundary. Moreover,
in cases where $\dom$ is not open, one needs some ambient open set $\edom$
containing $\dom$ such that $f$ and $F$ with their derivatives can be
reasonably defined on $\edom$.   

In the theory of partial differential equations the Gauss-Green formula is 
closely related to the treatment of boundary value problems where, during the
last century,  Sobolev functions $f\in\cW^{k,p}(\dom)$ became more and more
important. But these functions are merely defined  
$\lem$-a.e. on $\dom$ and their restriction to $\bd\dom$ doesn't make sense.
Thus the derivation of a trace operator 
$T:\cW^{k,p}(\dom)\to\cL^1(\bd\dom,\hm)$, that assigns reasonable values on the
boundary, is an essential requirement. 
Since Sobolev functions are not just integrable but also have an integrable
weak derivative, the limit of averages near the boundary exists $\hm$-a.e. on
$\bd\dom$ and provides a trace operator 
if e.g. $\dom$ is a bounded open set with Lipschitz boundary. 
It turns out that this is also
true for BV functions $f\in\cB\cV(\dom)$, 
that played an increasing role during the last decades. 
This way one obtains \reff{int-e2} for Sobolev and BV functions $f$, for 
Lipschitz continuous vector fields $F\in\op{Lip}(\ol\dom)$, and for open 
bounded $\dom$ with Lipschitz boundary where the second term on
the left hand side becomes $\I{\dom}{F}{Df}$ for BV functions
(cf. \cite[p.~168]{pfeffer}).

The balance in regularity for the product $fF$ is nicely worked out by 
Ancelotti~\cite{anzelotti} for bounded open $\dom$ with Lipschitz boundary. 
He treats the pairing of certain BV functions $f$
with bounded vector fields $F\in\cL^\infty(\dom)$
where the distributional divergence $\divv F$ either belongs to
$\cL^p(\dom)$ or it is just a Radon measure.
In this generality the product $F\cdot Df$ in \reff{int-e2} 
has to be interpreted as a measure.
Moreover one cannot assign a pointwise trace function to $F$ on $\bd\dom$, but
merely the normal component $F\cdot\nu^\dom$ has a so-called normal trace 
$\hm$-a.e. on $\bd\dom$ (cf. also Kawohl-Schuricht 
\cite[p.~540]{kawohl-schuricht}
for a variant that is relevant for the treatment of the 1-Laplace operator). 

The development of geometric measure theory lead to a different
substantial improvement,
since it allowed the extension of \reff{int-e2} to the large class of $\dom$
having merely finite perimeter.
Such sets have a measure theoretic outer unit normal $\nu^\dom$ 
for $\hm$-a.e. point on the measure theoretic boundary $\mbd\dom$ 
(that is contained in the topological boundary $\bd\dom$) and \reff{int-e2}
becomes 
\begin{equation}  \label{int-e3}
  \I{\dom}{\big(f\divv F+F\cdot Df\big)}{\lem} = 
  \I{\mbd{\dom}}{f F\cdot\nu^\dom}{\ham^{n-1}} \,.
\end{equation}
As price for the weak regularity of $\dom$ one had to require initially 
$fF\in\op{Lip}_c(\R^n,\R^n)$, i.e. Lipschitz continuous with compact support 
(cf. De Giorgi \cite{de-giorgi}, Federer \cite{federer-45}, \cite{federer-58}).
These results have then been extended to bounded vector fields 
of bounded variation, 
i.e. belonging to $\cB\cV(\dom,\R^n)$ 
(cf. Vol'pert \cite{volpert-67}, \cite{volpert-85}).
Pairings of bounded $F$ where $\divv F$ is a Radon measure with bounded 
BV functions~$f$ on sets of finite perimeter are treated  
by Crasta-De Cicco \cite{crasta-cicco-anz},
\cite{crasta-cicco-ext} and Crasta-De Cicco-Malusa \cite{crasta-cicco-malusa}.

Later on it became more and more common to consider the left hand
side in \reff{int-e2} as linear functional in $f$ or in $F$. This is 
an obvious idea if one has in mind the capability
of linear functionals in modern analysis (duality, weak derivatives etc.).
Let us now focus on the case where $F$ is a general vector field 
with weak regularity as needed in applications and let $f$ play the role of a
more regular test function, a situation that is mostly met in the literature
(it is a simple task to transfer the results to the opposite case).
It turned out that, with $\divv F$ taken in the
distributional sense and with $1\le p\le\infty$, 
\begin{equation*}
  \cD\cM^p(\edom)=\{F\in\cL^p(\edom,\R^n)\mid \divv F 
  \tx{ is a Radon measure} \} 
\end{equation*}
is a reasonable space for the selection of the vector field $F$.
Thus we consider the left hand side in
\reff{int-e2}, adapted to such general $F$ and for some $\dom\subset\edom$, 
as linear functional 
\begin{equation} \label{int-e4}
  T_F(f) = \I{\dom}{f}{\divv F} + \I{\dom}{F\cdot Df}{\lem}
  \qmq{for} f\in X\,
\end{equation}
where we basically find the following choices for $X$ in the literature 
\begin{equation*}
  X_1=\op{Lip}(\edom)\,, \;
  X_2=\op{Lip}_{\rm loc}(\edom)\,, \;
  X_3=C^1(\edom)\,, \;
  X_4=\{ f\in C(\edom)\mid Df\in\cL^{p'}(\edom)\} \,.
\end{equation*}
All choices have in common that, for
suitable $\dom\subset\edom$, we somehow have $X\subset C(\ol\dom)$. Moreover it
is an important observation that $T_F(f)$ in fact merely depends on the
values of $f$ on $\bd\dom$
(cf. \v Silhav\'y \cite[p.~22]{silhavy_2005}, 
\cite[p.~448]{silhavy_divergence_2009}, Chen-Torres-Ziemer
\cite[p.~254]{chen_gauss-green_2009}). Therefore most investigations are
devoted to the natural question for which $F$ the functional $T_F$ is related
to a Radon measure on $\bd\dom$ or on $\mbd\dom$.  

The strongest results are obtained for 
$F\in\cD\cM^\infty(\edom)$ and for sets of finite perimeter
$\dom\csubset\edom$. Here we also use the measure theoretic 
interior $\mint\dom$ (that differs from $\dom$ merely by an
$\lem$-null set). 
For such $F$ one generally has   
that $\divv F$ is absolutely continuous with respect to $\hm$
(cf. \v Silhav\'y \cite[p.~21]{silhavy_2005}, Chen-Torres 
\cite[p.~250]{chen_divergence-measure_2005}).
This implies that $T_F$ is always a Radon measure on $\mbd\dom$. 
If $(\divv F)(\mbd\dom)=0$ and if $F$ is a precise representative such that
$\hm$-a.e. $x\in\mbd\dom$ is a 
Lebesgue point, then one has the slightly modified version of \reff{int-e3}
that
\begin{equation}  \label{int-e3a}
  \I{\mint\dom}{f}{\divv F} + \I{\dom}{F\cdot Df}{\lem} = 
  \I{\mbd{\dom}}{f F\cdot\nu^\dom}{\ham^{n-1}} \,.
\end{equation}
(cf. Degiovanni \cite[p.~212]{degiovanni_cauchy_1999}). One also has 
\reff{int-e3a} for continuous $F$ 
(cf. \v Silhav\'y \cite[p.~85]{silhavy_2008},
Chen-Torres-Ziemer \cite[p.~284]{chen_gauss-green_2009},  
Comi-Payne \cite[p.~198]{comi-payne}). 
Otherwise one has to use some approximation to get a normal trace function 
$t^{\rm int}\in\cL^\infty(\mbd\dom,\hm)$ such that
\begin{equation} \label{int-e5}
  \I{\mint\dom}{f}{\divv F} + \I{\dom}{F\cdot Df}{\lem} = 
  \I{\mbd{\dom}}{f t^{\rm int}}{\ham^{n-1}} 
\end{equation}
(cf. \v Shilha\'y \cite[p.~25]{silhavy_2005} where 
\reff{int-e3} is applied to smooth approximations of $F$ and 
Chen-Torres \cite[p.~252]{chen_divergence-measure_2005} where
approximations of $\dom$ are used to obtain $t^{\rm int}$ as weak$^*$ 
limit of Radon measures; 
cf. also Comi-Payne \cite[p.~194, 200]{comi-payne}, 
Comi-Torres \cite{comi-torres}, \v Silhav\'y \cite[p.~6]{silhavy_2019}). 
Since $F$ can have jumps across the boundary
such that $(\divv F)(\mbd\dom)\ne 0$, it is reasonable to apply the previous 
result to the measure theoretic exterior $\mext\dom$ of $\dom$. This readily 
gives some $t^{\rm ext}\in\cL^\infty(\mbd\dom,\hm)$, 
that differs from $t^{\rm int}$ in general, such that 
\begin{equation*}
   \I{\mint\dom\cup\mbd\dom}{f}{\divv F} + \I{\dom}{F\cdot Df}{\lem} = 
  \I{\mbd{\dom}}{f t^{\rm ext}}{\ham^{n-1}} 
\end{equation*}
(cf. Chen-Torres-Ziemer \cite[p.~275, 281]{chen_gauss-green_2009}, 
Comi-Payne \cite[p.~194, 200]{comi-payne}). The extension $\hat F$ of
$F\in\cD\cM^\infty(\edom)$ with zero belongs to $\cD\cM^\infty(\R^n)$ if
$\edom$ is open and bounded and satisfies 
$\hm(\bd\dom\setminus\mext\dom)=0$
(cf. Chen-Li-Torres \cite[p.~242]{chen_2020}; cf. also Chen-Torres
\cite[p.~258]{chen_divergence-measure_2005}, Chen-Torres-Ziemer
\cite[p.~288]{chen_gauss-green_2009}, Comi-Payne \cite[p.~209]{comi-payne}). 
In this case one can readily get the previous results for $\dom$ with finite
perimeter that do not need to be compactly contained in $\edom$ by applying the
previous results to $\hat F$ on some larger $\edom$
(cf. Chen-Li-Torres \cite[p.~248]{chen_2020}). In applications to shock waves
or cracks it is also desirable to have a Gauss-Green formula with boundary
integral on the complete topological boundary, i.e. where the inner part of the
boundary is also taken into account. This can easily be derived from 
\reff{int-e5} by moving the part of the first integral on the left hand
side over $\bd\dom\cap\mint\dom$ to the right hand side. 
If $\dom$ is open, then solely the integral over
$\dom$ remains on the left hand side 
(cf. Chen-Li-Torres \cite[p.~248]{chen_2020}
and Remark~\ref{nm-s10c} below).
Let us also refer to Leonardi-Saracco \cite{leonardi-saracco}
for some special result in $\R^2$.

For general vector fields $F\in\cD\cM^1(\edom)$
and sets $\dom\csubset\edom$ with finite perimeter 
we have \reff{int-e3a} if $(\divv F)(\mbd\dom)=0$ and if 
$F$ is a precise representative such that both $\hm$-a.e. $x\in\mbd\dom$ is a
Lebesgue point and $F$ is $\hm$-integrable on $\mbd\dom$
(cf. Degiovanni \cite[p.~212]{degiovanni_cauchy_1999}). 
We also have \reff{int-e3a}
if the vector field $F$ is continuous 
(cf. \v Silhav\'y \cite[p.~85]{silhavy_2008},
Chen-Torres-Ziemer \cite[p.~284]{chen_gauss-green_2009},  
Comi-Payne \cite[p.~198]{comi-payne}; cf. also 
Chen-Comi-Torres \cite[p.~131]{chen-comi}). 
For general open and closed sets $\dom$ there are merely some
approximation results  
as stated in Proposition~\ref{dt-s1a} below
(cf. Schuricht \cite[p.~534]{schuricht_new_2007}, 
Schuricht \cite[p.~189]{schuricht_quaderni}, 
\v Silhav\'y \cite[p.~449]{silhavy_divergence_2009}, 
Chen-Comi-Torres \cite[p.~117-123]{chen-comi}). It turns out, however,
that even for $\dom$ with finite perimeter, 
$T_F$ needs not to be a Radon measure 
on the boundary of $\dom$ 
(cf. \v Silhav\'y \cite[p.~449]{silhavy_divergence_2009} for an example).   
In the general case we find several sufficient conditions for $T_F$ to be a
Radon measure on the boundary 
(cf. \v Silhav\'y \cite[p.~449]{silhavy_divergence_2009},
\v Silhav\'y \cite[p.~84]{silhavy_2008}, Chen-Comi-Torres 
\cite[p.~127-129]{chen-comi}). Conditions for the existence of an integrable
density are given in \v Silhav\'y \cite[p.~26]{silhavy_2005}.
Let us still mention that many results of \v Silhav\'y are worked out for the 
more general case where $F\in\cD\cM(\edom)$ is a vector valued
Radon measure such that the distributional divergence is also a Radon measure.  
We do not treat this generality in the present paper. But, due to the product
rule that $\tf F$ also belongs to $\cD\cM(\edom)$ for any 
$\tf\in\cW^{1,\infty}(\edom)$, it might be possibly to extend essential results
to that generality (cf. \v Silhav\'y 
\cite[p.~448]{silhavy_divergence_2009}). 

Let us come back to the linear functional $T_F$ and the underlying space $X$. 
As already mentioned, in previous investigations $X$ was somehow
always contained in $C(\ol\dom)$. Thus it was naturally
to ask how far $T_F$ is continuous on $C(\ol\dom)$ 
and, if this is the case, it is representable 
by a Radon measure on $\bd\dom$ as an element of the dual space. 
Since $T_F(f)$ merely depends on $f_{|\bd\dom}$, this seems to be an obvious 
strategy. The drawback is that not all $T_F$ can be represented by a
Radon measure on $\bd\dom$ and that there are rarely results beyond 
$\dom$ with finite perimeter.

Therefore let us discuss a different choice for $X$. If one looks at the
right hand side in \reff{int-e4}, the optimal pairing of
$F\in\cD\cM^1(\edom)$ seems to be with $f\in\woinf{\edom}$. In this case one
trivially gets that $T_F$ is a continuous linear functional on
$X=\woinf{\edom}$. Therefore a representation of $T_F$ as
dual element of $\woinf{\edom}$ is possible for all $F\in\cD\cM^1(\edom)$ and
all Borel sets $\dom\subset\edom$. But the question is how 
far this is reasonable. The dual of $\cW^{1,\infty}(\edom)$ 
can be identified with a product of dual spaces $\cL^\infty(\edom)^*$ and
it is known that  
$\cL^\infty(\edom)^*$ consists of $\cL^1(\edom)$ supplemented by certain finitely
additive measures. Since the integration theory for finitely additive
measures is commonly considered as 
not very powerful, 
one could hope that the functionals $T_F$ are in fact related to measures with
an $\cL^1$-integrable density. Notice that, in the simple case of a smooth $F$,
we can trivially identify $T_F$ with $(\divv F,F)\in\cL^1(\edom)^{n+1}$ 
as an element of the dual of $\woinf{\edom}$ (cf. \reff{int-e4}). 
But this is not what we want to get and it
gives no improvement. We are rather looking for some representation on or at
least near the boundary of $\dom$. 
Therefore let us choose $\delta>0$, let $\chi_\delta\in C^1_c(\edom)$ be
supported on the $\delta$-neighborhood $(\bd\dom)_\delta$ of $\bd\dom$ and
let $\chi_\delta=1$ on $(\bd\dom)_{\frac{\delta}{2}}$.
Then it is a standard result that
\begin{equation*}
  T_F(f) = T_F(\chi_\delta f) = T_{\chi_\delta F}(f)  \qmz{for all $f$\,.}
\end{equation*}
This motivates to replace $T_F$ with the functional 
$T_F^\delta=T_{\chi_\delta F}$ that merely
considers values of $f$ near $\bd\dom$. 
This way we get that $T_F$ can be localized near $\bd\dom$ and, similar as
above, we can identify $T_F^\delta$ with 
$(\divv (\chi_\delta F),\chi_\delta F)\in\cL^1(\edom)^{n+1}$ as dual element 
of $\cW^{1,\infty}(\edom)$. But such a representation always
depends on $\delta>0$
and it turns out that we cannot remove that dependence in general 
with dual elements belonging to $\cL^1(\edom)$. 

Hence let us briefly overcome
our preconception about finitely additive measures and let us take a brief
look at it. In continuum mechanics it is a simple observation that
contact interactions are naturally related to finite additivity
(cf. Schuricht \cite[p.~512]{schuricht_new_2007}). 
Even more, it seems that finite additivity is characterizing for short-range
phenomena (cf. Schuricht \cite{schuricht_quaderni}). Let us illuminate this by
a simple example. The density of a set $A\subset\R^n$ at point $x\in\R^n$
is an important standard tool in geometric measure theory
and it is given by  
\begin{equation*}
  \dens_xA = \lim_{\delta\downarrow 0}
  \frac{\lem(A\cap B_\delta(x))}{\lem(B_\delta(x))} \,
\end{equation*}
whenever the limit exists and where $B_\delta(x)$ is the open ball with 
radius $\delta>0$ centered at $x$.
Let us fix $x$ and consider $A\to\dens_xA$ as set function.
We readily see that it is additive for disjoint sets. 
Now we fix some $y\ne x$ and a sequence $B_k=B_{\delta_k}(y)$ of open balls 
with $\delta_k\uparrow |y-x|$. Then 
\begin{equation*}
  \dens_xB_k = 0 \qmq{for all $k$,} \text{but} \quad
  \dens_x\Big(\bigcup_{k\in\N} B_k\Big) = \frac{1}{2}\,,
\end{equation*}
which obviously prevents $\sigma$-additivity. We readily observe that we
merely have to know the intersection of $A$ with an arbitrarily small
neighborhood of $x$ for the determination of $\dens_xA$. But notice that
$\dens_x\{x\}=0$. Hence, roughly speaking, $\dens_x$ lives near $x$ but it is
not supported at the point $x$. By a simple Hahn-Banach argument $\dens_x$ can
be extended, though not uniquely, to a finitely additive measure on all Borel
sets. It turns out that this example is typical for a certain class of finitely 
additive measures $\me$. They can be characterized by the fact that there is  
a decreasing sequence of sets $A_k$ such that
\begin{equation*}
  \lem(A_k)\to 0 \qmq{and} |\me|(A_k^c)=0 \zmz{for all} k\in\N
\end{equation*}
where $|\me|$ is the total variation of $\me$ and $A_k^c$ denotes the
complement of $A_k$. 
But how far can that be useful for traces of functions. 
First we readily observe some similarity of traces to the measure $\dens_x$ so
far that the computation of the trace $\tilde f(x)$ of e.g. a Sobolev function
$f$ requires the values of $f$ merely in an arbitrarily small neighborhood of
$x$. For a more precise look we observe 
that one can define an integral for the finitely
additive measure $\dens_x$ similar to the usual integral for $\sigma$-additive
measures by, roughly speaking, just replacing ``convergence a.e.'' with 
``convergence in measure'' in the definition. 
Then, for any $f\in L^1(\dom)$ with $\dom$ open
and for any Lebesgue point $x\in\dom$, we obtain that $f$ is
$\dens_x$-integrable with 
\begin{equation*}
  \I{\dom}{f}{\dens_x} = f(x) \,.
\end{equation*}
Moreover, let $f\in\cW^{1,1}(\dom)$ be a Sobolev function on some open $\dom$
with Lipschitz boundary and let $\dens_x^\dom$ be a measure similar to
$\dens_x$ but, for $x\in\bd\dom$, giving the density within $\dom$
(cf. \reff{pm-e11} below). 
Then, for a trace function $\tilde f\in L^1(\bd\dom,\hm)$ of $f$, we have 
that
\begin{equation*}
  \I{\dom}{f}{\dens_x} = \tilde f(x) \qmq{for $\hm$-a.e. $x\in\bd\dom$\,}
\end{equation*}
(cf. Proposition~\ref{pm-prop7} below). Hence we can compute the
trace $\tilde f(x)$ by an integral over $\dom$ instead of the usual limit of
mean values. But this means that integrals for finitely additive measures
not only provide an integral calculus for traces but also 
for the precise representative of an $\cL^1$-function. These observations
suggest that finitely additive measures are a
very convenient and natural tool for the treatment of traces.

In the present paper we thus develop a general theory of traces that relies 
on the dual of $\cL^\infty(\edom)$. More precisely, 
we understand traces as linear continuous functionals and we focus on the
special case of functionals on $\cL^\infty(\edom)$ or $\cW^{1,\infty}(\edom)$. 
This way we derive Gauss-Green formulas for any $F\in\cD\cM^1(\edom)$ on any
Borel set $\dom\subset\edom$. In the most general case we get for each
$\delta>0$ the existence of a scalar measure $\lF\in\cL^\infty(\edom)^*$ and a
vector measure $\mF\in\cL^\infty(\edom,\R^n)^*$ such that
\begin{equation*}
  \df{T_F}{\tf} = \I{\dom}{\tf}{\divv F} + \I{\dom}{F\cdot D\tf}{\lem} = 
  \I{(\bd\dom)_\delta}{\tf}{\lF} + \I{(\bd\dom)_\delta}{D\tf}{\mF} 
\end{equation*}
for all $\tf\in\cW^{1,\infty}(\edom)$ (cf. Proposition~\ref{dt-s1}). 
If the functional $T_F$ is finite in
some sense (cf. \reff{dm-monster}), then we can remove the dependence 
on $\delta$ and we write
\begin{equation} \label{int-e11}
  \df{T_F}{\tf} = \I{\dom}{\tf}{\divv F} + \I{\dom}{F\cdot D\tf}{\lem} = 
  \sI{\bd\dom}{\tf}{\lF} + \sI{\bd\dom}{D\tf}{\mF} 
\end{equation}  
where $\sIsymb_{\bd\dom}$ means that we have to integrate over any small
neighborhood of $\bd\dom$. In this case we typically have that 
the measures $\lF$, $\mF$ are merely finitely additive. 
This Gauss-Green formula precisely accounts for boundary points belonging to
$\dom$, which is important in the case of concentrations of $\divv F$ on
$\bd\dom$. This way we can exactly treat any Borel set $\dom$ with 
$\Int\dom\subset\dom\subset\ol\dom$. We also show that the
second term on the right hand side cannot be neglected in general
(e.g. if $T_F$ restricted to $\tf\in C(\ol\dom)$ doesn't correspond to a 
Radon measure). If $T_F$ can be
extended to a linear continuous functional on $\cL^\infty(\edom)$ (i.e. if 
$T_F$ is continuous with respect to the $\cL^\infty$-norm), then we can choose
$\mF=0$ (cf. Proposition~\ref{dt-s2}). However the choice of $\lF$, $\mF$ is
not unique in general. We provide examples where $\mF=0$ in \reff{int-e11}
is possible but, alternatively, also $\lF=0$ with some nontrivial $\mF$
can be chosen. If $\mF=0$ and $\dom\csubset\edom$, then all 
$\tf\in\cW^{1,\infty}(\edom)$ belong to $C(\ol\dom)$ and we can identify 
$\lF$ with a Radon measure supported on $\bd\dom$. This way we somehow recover
previous results. But notice that we typically have to integrate over $\dom$
near $\bd\dom$ and not over $\bd\dom$. This raises the question how far a 
Radon measure or a trace function on $\bd\dom$ is really needed. 

Though the measures $\lF$ and $\mF$ are not unique in general, they have to be 
linear in $F$ as a whole and, of course, some explicite dependence of the
``boundary integrals'' on $F$ would be desirable. 
This turns out to be possible for $\dom$ with finite perimeter if we can 
choose $\mF=0$ and if $F$ is appropriate (e.g. essentially bounded).
However we do not intend to get
a Radon measure on the measure theoretic boundary in this case. Therefore we
replace the usually used pointwise normal field $\nu^\dom$ by some 
finitely additive normal measure $\nu^\dom$ that might be constructed e.g. 
by means of the signed distance function of $\bd\dom$. 
We provide several choices for $\nu^\dom$, e.g. related to an open $\dom$ or
to a closed $\dom$. If $F$ is $\nu^\dom$-integrable and satisfies some
compatibility condition (cf. \reff{nm-s5-3}), then we have that
\begin{equation} \label{int-e12}
  \df{T_F}{\tf} = \I{\dom}{\tf}{\divv F} + \I{\dom}{F\cdot D\tf}{\lem} = 
  \sI{\bd\dom}{\tf F}{\nu^\dom} \,.
\end{equation}
If $\dom$ is open, then $\nu^\dom$ only uses values of $\tf F$ 
near $\bd\dom$ inside $\dom$ and if it is closed, then only  
values outside of $\dom$ are used. In \reff{int-e12} one can even 
include a certain weight function $\chi$ for $\divv F$ 
on $\bd\dom$ (cf. \reff{nm-s7-1}). 
Since $\nu^\dom$ belongs to $\cL^\infty(\edom,\R^n)^*$, all essentially
bounded $F\in\cD\cM^\infty(\edom)$ are trivially $\nu^\dom$-integrable and they
also satisfy the additional compatibility condition. In addition, 
\reff{int-e12} is also applicable to certain unbounded vector fields $F$.
For normal measures based on the signed distance function of $\bd\dom$ we get
even more structural information for the boundary term. 
If e.g. $\dom$ is open and satisfies some mild perimeter bound 
(cf. \reff{nm-int-1}), we can use a suitable normal measure to derive the 
more explicite form
\begin{equation*}
  \df{T_F}{\tf} = \I{\dom}{\tf}{\divv F} + \I{\dom}{F\cdot D\tf}{\lem} = 
  \sI{\bd\dom}{\tf F\cdot\nu^\dom}{\dens_{\bd\dom}^{\rm int}} 
\end{equation*}
where $\nu^\dom$ is the normal field on $\dom$ given by the gradient of the
signed distance function for $\bd\dom$ and $\dens_{\bd\dom}^{\rm int}$ is, 
roughly speaking, a
finitely additive extension of the Radon measure $\reme{\hm}{\mbd\dom}$.
This version of the Gauss-Green formula is quite close to the usual form as
integral on $\mbd\dom$. The advantage here is that we do not need an
explicit trace function of $F$ on $\bd\dom$, since the knowledge of $F$ 
for $\lem$-a.e. point on a
neighborhood of $\bd\dom$ is sufficient to compute the integral.  
Moreover, in contrast to the measure theoretic boundary, the formula uses the
topological boundary $\bd\dom$ that takes into account also  
inner parts of the boundary. This is desirable for the treatment of cracks
and shocks. Notice that in the case where $\dom=\edom$ is open, the functions 
in $\cW^{1,\infty}(\edom)$ belong to $C(\dom)$ but they do not need to be in
$C(\ol\dom)$. Therefore it is possible to 
change $\tf$ independently on both sides of some inner crack or shock which 
allows a precise description of the situation on each side separately. 

The results can be easily transferred to Gauss-Green formulas for
Sobolev and BV functions $f$ with test functions
$\tf\in\cW^{1,\infty}(\edom,\R^n)$. For open $\dom$ with Lipschitz boundary  
we supplement the classical Gauss-Green formula \reff{int-e2} with 
\begin{equation} \label{int-e14}
  \I{\dom}{\big(f\divv\tf+\tf\cdot Df\big)}{\lem} = 
  \sI{\bd\dom}{f\tf}{\nu^{\rm int}} 
\:=\: 
  \sI{\bd\dom}{f\tf\cdot\nu^\dom}{\dens^{\rm int}_{\bd\dom}}
\end{equation}
where $\nu^{\rm int}$ is some normal
measure and $\dens_{\bd\dom}^{\rm int}$ is a measure as above. 
Notice that, similar as for vector fields, we do not need an explicit
trace function for $f$ on the boundary. One could say that its computation 
is somehow incorporated into the integral. 
As application of the trace theory we finally show
for some boundary value problem containing the $p$-Laplace operator
that there exists a weak solution for any bounded open set $\dom$.

Let us now briefly sketch how the paper is organized. 
In Section~\ref{pam} we start with some rough introduction to the 
integration theory for finitely additive measures, since it is 
not so well known. For that we summarize material that is widely spread
around in the book of Bhaskara Rao \& Bhaskara Rao \cite{rao}
and that cannot be found somewhere else in that compact form.
But notice that this survey is far away from providing all results of the
integration theory that are needed for the subsequent investigation.
This material is supplemented by some typical examples and by some new
results that are used later. Let us still add some warning: 
{\it We use the notion measure for any finitely additive measure while
$\sigma$-additivity is indicated by the notion $\sigma$-measure. 
Moreover, we orient our terminology to that typically used in measure theory
and, that way, we substantially deviate from the terminology used in
the underlying book \cite{rao}.} 

Section~\ref{tt} presents a general approach to traces on arbitrary sets. 
In Section~\ref{tt-gt} we first define traces as certain linear continuous 
functionals. Then we 
provide a simple but important class of trace functionals 
over $\cL^\infty(\edom)$. They are needed for later use, but initially 
they also serve for illustration. In Section~\ref{tt-tvf} we show that
$T_F$ from \reff{int-e4} is a trace functional on $\bd\dom$ over
$\cW^{1,\infty}(\edom)$ for any $F\in\cD\cM^1(\edom)$ and any Borel set 
$\dom\subset\edom$. We also get an analogous 
result for Sobolev functions in $\cW^{1,1}(\edom)$ and for BV functions in
$\cB\cV(\edom)$. Section~\ref{tt-r} is devoted to the representation of such
traces by means of measures that are ``living'' near $\bd\dom$. Here we
distinguish three variants of generality called (G), (L), and (C). 
Theorem~\ref{prop:dual} and several necessary and sufficient
conditions for certain special cases are the basis for the subsequent
Gauss-Green formulas. Some examples illuminate the spirit behind the
three variants. 

Section~\ref{dt} presents several divergence theorems or, more generally,
several Gauss-Green formulas. In Section~\ref{sec:ub} we start with
Theorem~\ref{dt-s1} that provides the Gauss-Green formula for any   
$F\in\cD\cM^1(\edom)$ and any Borel set $\dom\subset\edom$ and also covers
some special cases. Later other special cases are considered. 
Typical examples show the variability and applicability of the results.
The special case of normal measures is considered in Section~\ref{nm}.
The definition and construction of normal measures is followed
by some general integrability condition. This leads to Gauss-Green formulas
where $F$ and partially also the normal field $\nu^\dom$ are explicitly
contained in the boundary term and where, in addition, some weight on
$\bd\dom$ can be included. Several examples illustrate the variety of
applications. The relation of the new results to previous results from the
literature and some comprehensive discussion conclude that section. 
In Section~\ref{sf} we briefly transfer the former results  
for vector fields to Sobolev and BV functions for completeness, but also for
the convenience of the reader, since it might be not completely
straightforward to do that. 
In addition we study a Sobolev function on a set $\dom$ of finite perimeter
where the trace functional is not related to a Radon measure on $\bd\dom$.
Here it turns out that both boundary integrals are needed for a general 
Gauss-Green formula and we explicitly compute the related measures. 
We also show \reff{int-e14} for a bounded open $\dom$ with Lipschitz boundary.
Finally, for any bounded open set $\dom$ the existence of a weak solution 
for a general boundary value problem is shown.

Summarizing we can say that finitely additive measures appear to be a natural
tool for short range phenomena like traces. Though the underlying integration
theory was already known for many decades, the bridge to a relevant 
application seemed to be hidden.  
Therefore we hope that the new results can somewhat contribute to wake up this
``sleeping beauty''.   
The key observation for our investigation
was to consider the density $\dens_xA$ as a set function and to realize that 
the related integral gives the precise representative $\lem$-a.e.
The presented results can certainly be extended to localized spaces 
$\cD\cM_{\rm loc}(\edom)$, $\cB\cV_{\rm loc}(\edom)$ etc. by using compactly
supported test functions $\tf$. But we refrain from formulating the results 
in that generality in order to avoid unnecessary technicalities 
for this new approach. Notice that some of the results can already be found in 
\cite{schonherr_diss}, \cite{schonherr_gauss_2017}, 
and \cite{schonherr_pure_2017}.
Finally we wish to express our deep gratitude to Eberhard Zeidler and Stuart
Antman for their inestimable support and 
the profound scientific stimulation to 
the second author.

\medskip

\textit{Notation}.
For real numbers we use $\ol\R=[-\infty,+\infty]$ and
$\R_{\ge0}=[0,\infty)$. For $a\in\R^n$ the $p$-norm is $|a|_p$ and 
$|a|=|a|_2$. 
We denote by
$\al$ an algebra of subsets of a set $\edom$, by $\sal$ a $\sigma$-algebra on 
$\edom$, by $\bor{\edom}$ the Borel subsets of $\edom$, 
and by $\cP(\edom)$ the power set of $\edom$. We write $\als^c$ 
for the complement of a set $\als$ and $\chi_\als$ for its characteristic
function. For $\dom\subset\R^n$ we use $\op{int}\dom$, $\op{ext}\dom$,
$\bd\dom$, and $\cl\Omega$ for its topological interior, exterior, boundary,
and closure, respectively. The corresponding measure theoretic quantities are
denoted by $\mint\dom$, $\mext\dom$, $\mbd\dom$ while $\rbd\dom$ stands for
the reduced boundary. The signed distance function $\distf\dom\!:\R^n\to\R$
for $\dom$ ($\ne\emptyset,\R^n$) is given by
$\dist{\dom}{x}=\pm\inf_{y\in\bd\dom}|x-y|$ if
$x\,\substack{\not\in\\\in}\,\dom$. Then we define the 
open $\delta$-neighborhood of $\dom\subset\R^n$ for all $\delta\in\R$
by $\dom_\delta=\{x\in\R^n\mid \dist{\dom}{x}<\delta\}$ and 
$\dom\csubset U$ indicates that $\dom$ is compactly contained in $U$. 
The open ball of radius $r$ centered at $x$ is $B_r(x)$ and we set
$B_r^A(x):= B_r(x)\cap A$. By $\normal{\dom}$ we denote either the usual
outward unit normal of $\dom$ on the boundary or the normal field given
by the gradient $D\op{dist}_\dom$. The Lebesgue measure on $\R^n$ is 
$\lem$ and the $k$-dimensional Hausdorff measure is $\ham^k$.
We write $\op{Per}(\dom)$ for the perimeter of $\dom$.
We typically use $\me$, $\lme$ for a (finitely additive) measure
and $\sigma$ for a $\sigma$-additive measure. 
$\me^\pm$ stands for the positive or negative part, 
$|\me|=\me^++\me^-$ for the total variation, $|\me|^*$ for the associated 
outer measure, $\me_p$ for the pure part, $\me_c$ for the 
$\sigma$-additive part, and $\reme{\me}{\als}$ for the restriction to 
the set $\als$. By $\me\wac\lme$ ($\ac$, $\sing$, $\ssing$)
we indicate that $\me$ is weakly
absolutely continuous with respect to $\lme$ (absolutely continuous,
singular, strongly singular). $\baa$, $\caa$, and $\paa$ stands for 
the space of bounded measures on the algebra $\al$ of subsets of $\dom$
that are additive, countably additive, or pure.
$\baaw{\lme}\subset\baa$ is the subset of measures $\me$ with 
$\me\wac\lme$. The notion Radon measure is merely used for $\sigma$-measures 
in the usual sense and $\cM(\dom)$ stands for the set of Radon measures. 
For the support of a $\sigma$-measure we write $\supp\sigma$.
By $\baaw{\lme}^m$ and $\cM(\dom)^m$ we mean vector-valued measures. 
Let $L^p(\dom,\al,\mu)$ denote the space of $p$-integrable
functions on $\dom$ with respect to $\me$ and let $\lpam{p}$ be the
corresponding set of equivalence classes. We write 
$\Lpbm{p}$ for $L^p(\dom,\bor{\dom},\me)$ and $L^p(\dom)$ for 
$L^p(\dom,\bor{\dom},\lem)$ and we use $p'$ for the Hölder conjugate of $p$.  
Let $C(\dom)$, $C(\ol\dom)$, and $C_c(\dom)$ 
denote the spaces of continuous functions on $\dom$, with continuous extension
on $\ol\dom$, and with compact support. With 
$C^k(\dom)$, $C^k(\ol\dom)$, and $C^k_c(\dom)$ we denote the corresponding
spaces with continuous derivatives up to order $k$. In particular 
$C_c^\infty(\dom)$ is the usual space of test functions. 
$\op{Lip}(\dom)$ denotes the Lipschitz continuous functions on $\dom$.
$W^{k,p}(\dom)$ stands for the Sobolev space of $p$-integrable functions with
$p$-integrable weak derivatives up to order $k$ and 
$BV(\dom)$ stands for the set of functions of bounded variation.
$\cW^{k,p}(\dom)$ and $\cB\cV(\dom)$ are the corresponding spaces of
equivalence classes. $\cW^{k,p}_0(\dom)$ is the 
completion of $C_c^\infty(\dom)$ within $\cW^{k,p}(\dom)$.
By $C(\dom,\R^m)$, $L^p(\dom,\R^m)$ etc. we mean functions mapping into $\R^m$.
If $f\in X$ according to the context, then $\|f\|$ means
$\|f\|_X$ (several important norms used can be found at the beginning of
Section~\ref{tt-tvf}). 
For the precise representative of an $\cL^p$-function $f$ we write $\pr{f}$.
Let $(f)_{x,r}=\mI{B_r(x)}{f}{\lem}$ where $\mIsymb$ is the mean value
integral (formally $\mI{M}{f}{\lem}=0$ if $\lem(M)=0$). 
We take $f_{|\als}$ for the restriction of $f$ to $\als$ and $\supp f$ for
the support of $f$. By $\eta_\eps$ we mean the symmetric standard mollifier
supported on $B_\eps(0)$.

\section{Preliminaries about measures}
\label{pam}

\subsection{Measures and integration}
\label{pm}

Let us first provide some material about (finitely
additive) measures, as needed for our analysis, that is mostly taken
from Bhaskara Rao \& Bhaskara Rao \cite{rao},
Schönherr \cite{schonherr_diss}, and 
Schönherr-Schuricht \cite{schonherr_gauss_2017}, 
\cite{schonherr_pure_2017} 
(notice that some terminology in \cite{rao} differs).

Let $\dom$ be a set and let $\al$ be an \textit{algebra} on $\dom$, i.e.
a collection of subsets of $\dom$ containing 
$\emptyset$, $\Omega$ and being closed under complements and finite
unions and intersections.
At variance with common usage we call a
set function $\me:\al\to\ol\R$ a \textit{measure} on $(\dom,\al)$
if it is finitely additive, i.e. 
\begin{equation*}
  \me\bigg(\bigcup_{k=1}^m \als_k\bigg) = \sum \limits_{k=1}^m \me(\als_k) 
\end{equation*}
for all pairwise disjoint $\als_k\in\al$.
We call $\me$ a
\textit{$\sigma$-measure} on $(\dom,\al)$ if it is $\sigma$-additive, i.e.
\begin{equation*}
  \me\bigg(\bigcup_{k=1}^\infty \als_k\bigg) = \sum_{k=1}^\infty \me(\als_k) 
\end{equation*}
for any sequence $\{\als_k\}$ of pairwise disjoint $\als_k\subset\al$ 
with $\bigcup_{k=1}^\infty \als_k \in \al$
(usually we denote general measures by $\me$, $\lme$ and $\sigma$-measures
by $\sigma$). Notice that a measure cannot attain both values $\pm\infty$
and that always $\me(\emptyset)=0$ if $\me$ is finite somewhere.
We say that $\me$ is \textit{positive} if $\me(\als)\ge 0$ for all
$\als\in\al$. The \textit{positive} and \textit{negative part}
$\me^\pm:\al\to\ol\R_{\ge0}$ of measure $\me$ given by 
\begin{equation*}
  \me^\pm(\als) := \sup \big\{ \pm\me(\bals) \mid
                   \bals\subset\als,\; \bals\in\al \big\}
\end{equation*}
and the \textit{total variation} $|\me|:\al\to\ol\R_{\ge0}$ of $\me$  
\begin{equation*}
  |\me|:=\me^++\me^- \,
\end{equation*}
are positive measures (cf. \cite[p.~53, 85]{rao}).
A measure $\me$ is \textit{pure} if one has
for any $\sigma$-measure $\sme:\al\to\R$ that
\begin{equation*}
  0 \leq \sme \leq |\me| \qmq{implies} \sme=0 
\end{equation*}
(cf. \cite[p.~240]{rao}).
Thus a (nontrivial) pure measure cannot be extended to a
\mbox{$\sigma$-measure} on $\al$, but notice that a non $\sigma$-additive
measure 
need not be pure. With $\me$ also $\me^\pm$ and $|\me|$ are pure.
The \textit{outer measure} $\me^*:\cP(\dom)\to[0,\infty]$ of a
positive measure $\me$ given by
\begin{equation*}
  \me^*(\als) := \inf_{\substack{\als\subset\bals\\\bals\in\al}}\:\me(\bals) \, 
\end{equation*}
is finitely subadditive but not necessarily $\sigma$-subadditive
(cf. \cite[p.~86]{rao}).  
$\als\subset\dom$ is a \textit{null set} if $|\me|^*(\als)=0$.
A measure~$\me$ is \textit{bounded} if 
\begin{equation*}
  \sup_{\als \in \al} |\me(\als)| < \infty  
\end{equation*}
and it is \textit{bounded above} (\textit{below}) if $\me^+$ ($\me^-$) is
bounded. 
We have $\me=\me^+-\me^-$ if $\me^+$ or $\me^-$ is bounded. Let us set
\begin{eqnarray*}
  \baa &:= &
  \{\me:\al\to\R \mid \text{$\me$ is a bounded measure}\}\,, \\ 
  \paa &:= &
  \{\me\in\baa \mid \me \tx{ is pure}\}\,,  \\
  \caa &:= &
  \{\sme\in\baa \mid \sme \tx{ is $\sigma$-additive}\}\,
\end{eqnarray*}
where $\baa$ is a Banach space with $\|\me\|:=|\me|(\dom)$
(cf. \cite[p.~44]{rao}). 
We call $\me\in\baa^m$ also \textit{vector measure} if $m>1$. 
As {\it total variation} of $\me\in\baa^m$ for $\als\in\al$ we define
\begin{equation}\label{pm-e3}
  |\me|(\als) := \sup 
  \Big\{\sum_{j=1}^k |\me(\als_j)|\:\Big|\: \als_j\in\al
  \text{ pairwise disjoint,}\: \bigcup_{j=1}^k \als_j=\als\Big\} \,.
\end{equation}
Then we have $|\me|\in\baa$ (argue as in the proof of Theorem 1.6 in
\cite{ambrosio}) 
and for $m=1$ this coincides with the previous definition (cf. 
\cite[p.~46]{rao}). Obviously $|\me|(\dom)$ is a norm on  
$\me\in\baa^m$ that we use as standard norm.

Measures $\me$,~$\lme$ on $(\dom,\al)$ are called \textit{singular}
($\me\sing\lme$) if
for all $\eps>0$ there is some $\als\in\al$ with
\begin{equation*}
  |\me|(\als)<\eps \qmq{and} |\lme|(\als^c)<\eps \,
\end{equation*}
and they are called \textit{strongly singular} ($\me\ssing\lme$)
if there is some $\als\in\al$ with 
\begin{equation*}
  |\me|(\als)=0=|\lme|(\als^c)\,.
\end{equation*}
While strong singularity implies singularity, equivalence is met for
$\sigma$-measures on a $\sigma$-algebra (cf. \cite[p.~165]{rao}).
Singularity also means orthogonality, i.e.
\mbox{$|\me|\wedge|\lme|=0$}, on the lattice of 
measures (cf. \cite[p.~52, 166]{rao}).
For general measures $\me$ we have
\begin{equation*}
  \me^+\sing\me^- \qmz{if $\me^+$ or $\me^-$ is bounded} 
\end{equation*}
(cf. \cite[p.~53]{rao}). Moreover,
\begin{equation*}
  \me_c\sing\me_p \qmq{if} \me_c\in\caa,\z \me_p\in\paa 
\end{equation*}
(cf. \cite[p.~240]{rao}).
As a consequence of Riesz's decomposition theorem for lattices
we have an important singular
decomposition of bounded measures (cf. \cite[p.~241]{rao}). 
\begin{proposition} \label{pm-s1}
  Let $\al$ be an algebra on $\dom$ and let $\me \in \baa$.
  Then there is a unique decomposition 
\begin{equation*}
  \me = \me_c + \me_p  \qmq{with} \me_c\in\caa\,, \; \me_p \in \paa
\end{equation*}
where we call $\me_c$ {\em $\sigma$-additive part} and $\me_p$ {\em pure part}
of $\me$.
\end{proposition}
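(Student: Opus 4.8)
The plan is to obtain the decomposition from an order-theoretic decomposition result (Riesz decomposition) applied to the lattice of bounded measures $\baa$, exactly as the text announces. Recall that $\baa$ with the total-variation norm is a Banach lattice: for $\me,\lme\in\baa$ the lattice operations $\me\vee\lme$ and $\me\wedge\lme$ are defined pointwise via $\me^\pm$, and $\caa$ (the $\sigma$-additive members) and $\paa$ (the pure members) are both sublattices. The key structural fact is that $\caa$ is a \emph{band} (an order-closed ideal) in $\baa$: if $0\le\sme_k\uparrow\sme$ with $\sme_k\in\caa$ and $\sme\in\baa$, then $\sme\in\caa$; and if $0\le\me'\le\sme$ with $\sme\in\caa$ then $\me'\in\caa$. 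Granting this, the classical band decomposition theorem for Banach lattices (equivalently, Riesz's decomposition as cited to \cite[p.~241]{rao}) gives that every $\me\in\baa$ splits uniquely as $\me=\me_c+\me_p$ with $\me_c$ in the band $\caa$ and $\me_p$ in the disjoint complement band $\caa^\perp$, and one then identifies $\caa^\perp$ with $\paa$.

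First I would reduce to the positive case: write $\me=\me^+-\me^-$ (legitimate since $\me\in\baa$ is bounded, so both parts are bounded positive measures), decompose each of $\me^+$ and $\me^-$, and recombine; uniqueness for $\me$ follows from uniqueness for the positive pieces. So assume $\me\ge0$. Next I would define $\me_c$ directly as a supremum: set
\begin{equation*}
  \me_c(\als) := \sup\{\sme(\als) \mid \sme\in\caa,\ 0\le\sme\le\me\}
\end{equation*}
for $\als\in\al$, i.e. $\me_c$ is the order-projection of $\me$ onto the band $\caa$. One must check: (i) $\me_c$ is additive on $\al$ — this uses that the family $\{\sme\in\caa : 0\le\sme\le\me\}$ is upward directed (given two such $\sme_1,\sme_2$, their lattice sup $\sme_1\vee\sme_2$ is again $\sigma$-additive and $\le\me$), so the supremum behaves additively over disjoint sets; (ii) $\me_c$ is itself $\sigma$-additive — this is the band-closedness of $\caa$ under the relevant monotone limits, and is where the genuine measure-theoretic content sits; (iii) $\me_c\le\me$, so $\me_p:=\me-\me_c\ge0$ is a bounded measure; (iv) $\me_p$ is pure, i.e. $0\le\sme\le\me_p$ with $\sme\in\caa$ forces $\sme=0$ — because then $\me_c+\sme\in\caa$, $\le\me$, and $\ge\me_c$, contradicting maximality of $\me_c$ unless $\sme=0$.

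For uniqueness, suppose $\me=\me_c'+\me_p'$ is another such splitting. Then $\me_c'\le\me$ and $\me_c'\in\caa$, so $\me_c'\le\me_c$ by definition of $\me_c$; hence $\me_p=\me-\me_c\le\me-\me_c'=\me_p'$. But $\me_c-\me_c'=\me_p'-\me_p$ is simultaneously $\sigma$-additive and dominated by the pure measure $\me_p'$, so by purity it is $0$; thus $\me_c=\me_c'$ and $\me_p=\me_p'$. I expect the main obstacle to be step (ii): verifying that the supremum $\me_c$ is actually $\sigma$-additive and not merely finitely additive — one has to show that for a disjoint sequence $\als_j$ with $\bigcup_j\als_j=:\als\in\al$ one can, given $\eps>0$, choose a single $\sme\in\caa$ with $0\le\sme\le\me$ that is $\eps$-optimal \emph{simultaneously} for $\als$ and for all the finite unions $\bigcup_{j\le N}\als_j$, which again leans on upward-directedness of the competitor family together with the $\sigma$-additivity of each competitor. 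Since the excerpt explicitly attributes this to Riesz's decomposition theorem for lattices \cite[p.~241]{rao}, the cleanest writeup simply invokes that theorem for the band $\caa\subseteq\baa$ and records the identification of the complementary band with $\paa$ via the purity characterization in the lines preceding the proposition; I would present the direct supremum construction above as the self-contained alternative in case the reader wants it spelled out.
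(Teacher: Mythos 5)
Your proposal is correct and follows essentially the same route as the paper, which gives no written proof but simply invokes Riesz's decomposition theorem for lattices (the band decomposition of $\baa$ with $\caa$ a band and $\paa$ its disjoint complement, cf. \cite[p.~241]{rao}); your supplementary supremum construction of $\me_c$ is the standard explicit version of that projection. One small caveat: your hands-on uniqueness argument tacitly assumes the competing parts $\me_c'$, $\me_p'$ are positive (you use $\me_c'\le\me$), whereas the clean general argument is that $\caa$ and $\paa$ are both linear subspaces with $\caa\cap\paa=\{0\}$ (a measure that is simultaneously $\sigma$-additive and pure vanishes), so any two decompositions coincide.
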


Many examples of pure measures found in the literature are 
just measures on $\dom=\N$. 
Moreover they are either defined on a very small algebra
or they rely on some construction not allowing an explicit computation
of the measure even on simple sets
(cf. \cite[p.~246]{rao}, 
\cite[p.~57 ff.]{yosida_finitely_1951}).
Let us provide some simple but typical examples of pure measures.
\begin{example}\label{ex:puremn}   
For $\dom=\N$ and the algebra
$\al=\{\als\subset\dom\mid \als \tx{ or } \als^c \tx{ finite}\}$
we readily get a measure by  
\begin{equation*}
  \me(\als):= \bigg\{
  \begin{array}{ll} 0 & \tx{if $\als$ is finite,} \\
                    1 & \tx{if $\als^c$ is finite.} \end{array}
\end{equation*}
(cf.  \cite[Remark 4.1.5]{rao}).
For $\sigma\in\caa$ with $0\le\sigma\le\me$ we have
\begin{eqnarray*}
  0\le \sigma(\N)=\sum_{k\in\N}\sigma(\{k\})\le
  \sum_{k\in\N}\me(\{k\})=0
\end{eqnarray*}
and, hence, $\me$ is pure (cf. also \cite[Example 10.4.1]{rao}).
By $\me(\N)=1$ we also see that $\me$ is not $\sigma$-subadditive.
\end{example}

\begin{example}\label{ex:puremr}   
Let $\dom=(0,1)$ and let $\al$ be the algebra generated by all intervals
of the form $(a,b]\subset\dom$. Then we get a pure measure by
\begin{equation*}
  \me(\als):= \bigg\{
  \begin{array}{ll} 1 & \tx{if } B_\delta(0)\cap\als\ne\emptyset
                        \tx{ for all } \delta>0\,, \\
                    0 & \tx{otherwise} \end{array}
\end{equation*}
(cf. \cite[Example 10.4.4]{rao}). 
\end{example}

Notice that contact interactions in continuum mechanics 
naturally lead to pure measures having some similarity with that in the last 
example (cf. Schuricht \cite{schuricht_new_2007}). 
In geometric measure theory the density $\dens_x\als$ of set $\als$ at
point $x$ is a well-known important quantity. But it seems that it hasn't been
considered yet as a set function (for fixed $x$). It turns out to be a typical
example of pure measures.
\begin{example}\label{ex:dzero}   
Let $\dom := \Rn$, let $\al=\bor{\dom}$, and fix some $x\in\dom$.
The density of $\als\in\bor{\dom}$ at $x$ is given by
\begin{equation*}
  \dens_x\als := \lim_{\delta \downarrow 0}
  \frac{\lem(\als \cap \ball{x}{\delta})}{\lem(\ball{x}{\delta})}
\end{equation*}
if the limit exists and, as set function, it is disjointly additive on
these sets. Though not uniquely, one can extend $\dens_x$ to a positive
measure on $\bor{\dom}$ 
such that for all $\als\in\bor{\dom}$
\begin{equation} \label{ex:dzero-1}
  \liminf_{\delta \downarrow 0}
  \frac{\lem(\als \cap \ball{x}{\delta})}{\lem(\ball{x}{\delta})} \le
  \dens_x\als \le
  \limsup_{\delta \downarrow 0}
  \frac{\lem(\als \cap \ball{x}{\delta})}{\lem(\ball{x}{\delta})}
\end{equation}
(cf. Proposition~\ref{tt-s3} below with $\edom=\alse=\dom$, $\K=\{x\}$, 
$\gamma(\delta)=\lem(\ball{x}{\delta})$, $f\equiv 1$,
and $\phi=\chi_\als$). We call $\dens_x$ {\it density measure} at $x$.
For $\sigma\in\caa$ with $0\le\sigma\le\dens_x$ we get
\begin{equation*}
  0\le \sigma\big(\R^n\setminus\{x\}\big) =
  \lim_{\delta \downarrow 0}\,\sigma\big(\ball{x}{\delta}^c\big) \le 
  \lim_{\delta \downarrow 0}\,\dens_x \big(\ball{x}{\delta}^c\big) = 0 
\end{equation*}
and $0\le\sigma(\{x\})\le\dens_x(\{x\})=0$. Hence $\sigma=0$ and $\dens_x$ is
pure. Moreover $\dens_x\sing\lem$ but not $\dens_x\ssing\lem$.
\end{example}

For measures $\lme$, $\me$ on algebra $\al$, 
we call $\me$ \textit{\tac} 
with respect to $\lme$ if for every $\eps > 0$ there is some
$\delta > 0$ such that
\begin{equation*}
  |\me(\als)| < \eps \qmq{if} \als\in\al \zmz{with}
  |\lme|(\als) < \delta \z    
\end{equation*}
and we write $\me\ac\lme$. 
We call $\me$ \textit{\twac}
with respect to $\lme$ in the case where
\begin{equation*}
  \me(\als) = 0 \qmq{if} \als\in\al \zmz{with} |\lme|(\als) = 0
\end{equation*}
and we write $\me\wac\lme$. Clearly $\me\wac\lme$ if $\me\ac\lme$ 
and all notions coincide for 
positive bounded $\sigma$-measures on a $\sigma$-algebra
(cf. \cite[p.~159 ff.]{rao}). 
For any measure $\lme$ on $(\dom,\al)$ ($\lme$ not necessarily bounded) we
introduce the set 
\begin{equation*}
  \baaw{\lme} := \big\{\me\in\baa \mid \me\wac\lme \big\} \,
\end{equation*}
(which plays an important role in our analysis in the special case 
$\bawl{\dom}$). It turns out that 
the decomposition of $\me\in\baa$ stated in Proposition \ref{pm-s1} 
doesn't leave the set $\baaw{\lme}$
(cf. \cite[Theorem~3.16]{schonherr_pure_2017} for the case 
$\lme\in\baa$).
\begin{proposition} \label{pm-s2}
  Let $\al$ be an algebra on $\dom$, let $\lme$ be a positive measure on
  $(\dom,\al)$, and let
  $\me \in \baaw{\lme}$ with (unique) decomposition $\me=\me_c+\me_p$ 
into $\sigma$-additive and pure part according to 
Proposition~{\rm \ref{pm-s1}}. Then 
\begin{equation*}
  \me_c,\me_p \in \baaw{\lme} \,.
\end{equation*}
\end{proposition}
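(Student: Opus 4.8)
The plan is to reduce the claim to the uniqueness part of Proposition~\ref{pm-s1}, applied not to $\me$ itself but to a suitable restriction. Fix $\als\in\al$ with $|\lme|(\als)=0$. Since $\me_c\in\caa$ and $\me_p\in\paa$ already lie in $\baa$ by Proposition~\ref{pm-s1}, it suffices to show $\me_c(\als)=0$ and $\me_p(\als)=0$; as $\als$ is an arbitrary $|\lme|$-null set, this gives $\me_c,\me_p\in\baaw{\lme}$.

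First I would observe that the restriction $\reme{\me}{\als}$ is the zero measure. Indeed, for every $\bals\in\al$ we have $\als\cap\bals\subset\als$, so monotonicity of the positive measure $|\lme|$ gives $|\lme|(\als\cap\bals)\le|\lme|(\als)=0$, and then $\me\wac\lme$ forces $(\reme{\me}{\als})(\bals)=\me(\als\cap\bals)=0$.

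Next I would check that restriction to a set of $\al$ respects the singular decomposition. Restricting a bounded $\sigma$-additive measure again yields a bounded $\sigma$-additive measure, so $\reme{\me_c}{\als}\in\caa$. For the pure part, note that $|\reme{\me_p}{\als}|(\bals)=|\me_p|(\als\cap\bals)\le|\me_p|(\bals)$ for all $\bals\in\al$; hence any $\sigma$-measure $\nu$ on $\al$ with $0\le\nu\le|\reme{\me_p}{\als}|$ also satisfies $0\le\nu\le|\me_p|$ and therefore vanishes by purity of $\me_p$, so $\reme{\me_p}{\als}\in\paa$. Now $\reme{\me}{\als}=\reme{\me_c}{\als}+\reme{\me_p}{\als}$ is a decomposition of the zero measure into a $\sigma$-additive and a pure part, and since $0=0+0$ is another such decomposition, the uniqueness statement of Proposition~\ref{pm-s1} gives $\reme{\me_c}{\als}=0$ and $\reme{\me_p}{\als}=0$. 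Evaluating both at $\als$ yields $\me_c(\als)=\me_p(\als)=0$, as desired.

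I do not expect a serious obstacle: the only nonroutine point is the stability of purity under restriction, which is handled by the domination $|\reme{\me_p}{\als}|\le|\me_p|$ used above. (As an alternative route one could first reduce to the case $\me\ge0$, using that $\me\wac\lme$ implies $\me^\pm\wac\lme$ by monotonicity of $|\lme|$ together with the definition of $\me^\pm$, and that the decomposition in Proposition~\ref{pm-s1} is additive in $\me$; then for positive $\me$ one has $\me_c,\me_p\ge0$ with $\me_c+\me_p=\me$, so $|\lme|(\als)=0$ forces $0=\me(\als)=\me_c(\als)+\me_p(\als)$ with both summands nonnegative. The restriction argument above, however, is self-contained and avoids invoking positivity of the parts.)
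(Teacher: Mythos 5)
Your argument is correct. The paper itself gives no proof of Proposition~\ref{pm-s2}; it only defers to \cite[Theorem~3.16]{schonherr_pure_2017}, so there is nothing internal to compare against, and your restriction-plus-uniqueness argument stands on its own. The three points that need checking all go through: (i) $\reme{\me}{\als}=0$ follows from monotonicity of the positive measure $\lme$ on subsets of $\als$ together with $\me\wac\lme$; (ii) the identity $|\reme{\me_p}{\als}|(\bals)=|\me_p|(\als\cap\bals)\le|\me_p|(\bals)$ is valid (the sup defining $(\reme{\me_p}{\als})^\pm(\bals)$ ranges exactly over subsets of $\als\cap\bals$), so purity is inherited by the restriction, and $\sigma$-additivity and boundedness of $\reme{\me_c}{\als}$ are immediate; (iii) applying the uniqueness in Proposition~\ref{pm-s1} to the two decompositions $0=\reme{\me_c}{\als}+\reme{\me_p}{\als}=0+0$ of the zero measure forces both restrictions to vanish, and evaluating at $\als$ gives $\me_c(\als)=\me_p(\als)=0$. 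Your parenthetical alternative via positivity of the parts would require knowing that the band decomposition of a positive measure has positive components, which is true but is exactly the kind of lattice fact your main argument cleanly avoids; the restriction route is the more economical one.
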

\noindent
The statement remains true for any (non-positive) $\lme\in\baa$.
The next characterization of pure measures in $\basws$ with $\sigma$ being a
$\sigma$-measure adumbrates their relation to traces
(cf. \cite[p.~244]{rao} and \cite[p.~56]{yosida_finitely_1951}).

\begin{proposition} \label{pm-s3}
  Let $\sal$ be a $\sigma$-algebra on $\dom$ and let $\sme\in\cas$ be
  positive. Then $\me \in \basws$ is pure if and only if there is a decreasing
  sequence $\{A_k\}\subset \sal$ such that 
\begin{equation*}
  \sme(\als_k) \to 0 \qmq{and}
  |\me|(\als_k^c) = 0 \zmz{for all}  k\in\N\,.
\end{equation*}
\end{proposition}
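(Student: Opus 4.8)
The plan is to prove both directions, with the ``if'' direction being the easy one and the ``only if'' direction requiring the construction of the exhausting sequence $\{A_k\}$.

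For the ``if'' direction, suppose such a decreasing sequence $\{A_k\}\subset\sal$ exists with $\sme(A_k)\to0$ and $|\me|(A_k^c)=0$ for all $k$. Let $\tau\in\cas$ be a $\sigma$-measure with $0\le\tau\le|\me|$; I must show $\tau=0$. Since $|\me|(A_k^c)=0$ and $\tau\le|\me|$, we get $\tau(A_k^c)=0$, hence $\tau(\dom)=\tau(A_k)\le|\me|(A_k)$ for every $k$. Now here is the one point that needs a small argument: we want $|\me|(A_k)\to0$. This uses $\me\wac\sme$: since $|\me|$ is a bounded measure which is weakly absolutely continuous with respect to the $\sigma$-measure $\sme$ (by Proposition~\ref{pm-s2}, $\me\wac\sme$ passes to $|\me|$, or one argues directly that $|\me|\wac\sme$ whenever $\me\wac\sme$), and $\sme$ is a positive $\sigma$-measure, one has that $\me\wac\sme$ together with boundedness actually upgrades — on a $\sigma$-algebra, for bounded $\sigma$-additive $\me$ one has $\me\wac\sme\iff\me\ac\sme$; but $|\me|$ need not be $\sigma$-additive. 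Instead I would argue: the set function $B\mapsto|\me|(A_k\cap B)$ decreases pointwise, and since $\bigcap_k A_k$ is $\sme$-null (as $\sme(A_k)\to0$), weak absolute continuity gives $|\me|(\bigcap_k A_k)=0$; monotonicity and finite additivity do not immediately yield $|\me|(A_k)\to0$ because $|\me|$ is only finitely additive, so I would instead pass through the $\sigma$-additive part: by Proposition~\ref{pm-s2}, write $|\me|=\nu_c+\nu_p$, note $\tau\le|\me|$ forces $\tau\le\nu_c$ since $\tau$ is a $\sigma$-measure and $\nu_p$ is pure (here $\nu_c=0$ would finish it immediately, but in general one shows $\nu_c(A_k)\to0$ by $\sigma$-additivity of $\nu_c$ applied to the decreasing sequence with null intersection, using $\nu_c\wac\sme$). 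This gives $\tau(\dom)\le\nu_c(A_k)\to0$, so $\tau=0$ and $\me$ is pure.

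For the ``only if'' direction, suppose $\me\in\basws$ is pure; I may assume $\me$ is positive (replace $\me$ by $|\me|$, which is also pure and still in $\basws$, and note $|\me|(A_k^c)=0\iff\me(A_k^c)=0$ componentwise in the vector case — but as stated we are in the scalar case). The strategy is to extract the sequence from the contrapositive: for each $k\in\N$, consider
\[
  c_k := \sup\{\me(\dom)-\sme\text{-small witnesses}\}
\]
— more precisely, I would set, for $\varepsilon>0$,
\[
  m(\varepsilon) := \sup\{\me(B)\mid B\in\sal,\ \sme(B)\le\varepsilon\}.
\]
The key claim is that $m(\varepsilon)\to\me(\dom)$ as $\varepsilon\downarrow0$; equivalently, $\inf_{B:\sme(B)\le\varepsilon}\me(B^c)\to0$. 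Suppose not: then there is $\eta>0$ with $\me(B^c)\ge\eta$ for every $B$ with $\sme(B)\le\varepsilon$, for all small $\varepsilon$. Using this I would build, via an exhaustion/greedy argument, a $\sigma$-measure $0\le\sigma\le\me$ with $\sigma\ne0$, contradicting purity: pick $B_1$ with $\sme(B_1)\le1/2$ and $\me(B_1)$ within $1/2$ of $m(1/2)$, then iterate inside $B_j^c$-complements appropriately to produce disjoint pieces on which $\me$ is ``$\sigma$-additively large,'' and define $\sigma$ by restricting and summing. The cleanest route is actually to invoke the standard fact (Bhaskara Rao \& Bhaskara Rao, \cite[\S10.4]{rao}, or Yosida--Hewitt \cite{yosida_finitely_1951}) that a bounded positive measure $\me$ on a $\sigma$-algebra is pure iff for every $\varepsilon>0$ and every $\sigma$-measure $\sme\ge0$ there is $B\in\sal$ with $\sme(B)<\varepsilon$ and $\me(B^c)=0$ — this is essentially the Yosida--Hewitt characterization that the pure part carries no nontrivial $\sigma$-additive minorant, localized against $\sme$. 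Granting $m(\varepsilon)\to\me(\dom)$: choose $B_k\in\sal$ with $\sme(B_k)\le 2^{-k}$ and $\me(B_k^c)\le 2^{-k}$ — wait, one needs $\me(B_k^c)=0$, so instead, since $\me$ is pure and $\sme$ is a fixed positive $\sigma$-measure, for each $k$ directly produce $B_k$ with $\sme(B_k)<1/k$ and $\me(B_k^c)=0$; then set $A_k:=\bigcap_{j=1}^{k}B_j$. This $\{A_k\}$ is decreasing, $\sme(A_k)\le\sme(B_k)<1/k\to0$, and $\me(A_k^c)=\me\big(\bigcup_{j=1}^k B_j^c\big)\le\sum_{j=1}^k\me(B_j^c)=0$, hence $|\me|(A_k^c)=0$. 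This completes the construction.

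The main obstacle I anticipate is the core assertion underlying the ``only if'' direction — that purity of $\me$ in $\basws$ forces, for each $k$, the existence of $B_k\in\sal$ with $\sme(B_k)$ arbitrarily small and $\me(B_k^c)=0$. This is precisely the content that makes a pure measure ``concentrated near nothing'' relative to $\sme$, and it is the place where one must genuinely use that $\me$ admits no nontrivial $\sigma$-additive minorant: the natural proof is by contradiction via a transfinite or greedy exhaustion that manufactures such a minorant from any failure of the small-support property, and one must be careful that the pieces assembled are genuinely $\sigma$-additive on the $\sigma$-algebra (this is where $\sme$ being a true $\sigma$-measure, not merely finitely additive, is essential, and why the hypothesis $\sme\in\cas$ cannot be weakened). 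Since the cited references \cite{rao} and \cite{yosida_finitely_1951} already contain this characterization essentially verbatim, I would present the short derivation of the stated sequential form from it rather than redo the exhaustion argument in full.
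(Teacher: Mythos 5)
The paper gives no proof of this proposition; it is quoted directly from Bhaskara Rao--Bhaskara Rao and Yosida--Hewitt, and your hard (``only if'') direction ultimately rests on exactly the result those references supply (for pure $\me$ and a positive $\sigma$-measure $\sme$ one can find $B$ with $\sme(B)<\eps$ and $|\me|(B^c)=0$, after which $A_k=\bigcap_{j\le k}B_j$ works), so your treatment coincides with the paper's. Your ``if'' direction is correct but can be shortened: for a $\sigma$-measure $0\le\tau\le|\me|$ one has $\tau(A_k^c)=0$, hence $\tau(\dom)=\tau(A_k)\to\tau\big(\bigcap_k A_k\big)$ by continuity from above, and $\tau\big(\bigcap_k A_k\big)\le|\me|\big(\bigcap_k A_k\big)=0$ because $\sme\big(\bigcap_k A_k\big)=0$ and $|\me|\wac\sme$; the detour through the decomposition $|\me|=\nu_c+\nu_p$ and the lattice fact $\tau\le\nu_c$ is sound but unnecessary.
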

\noindent
Roughly speaking one "feels" a pure measure $\me\in\basws$ in
any small vicinity of a $\sme$-null set $\bigcap_{k\in\N} \als_k$.
We call $\als\in\al$ an \textit{aura} of measure $\me\in\baa$ if  
\begin{equation*}
  |\me|(\als^c) = 0\,.
\end{equation*}
For a pure measure $\me\in\basws$ a
decreasing sequence $\{\als_k\}$ of auras with $\sme(\als_k)\to 0$
is said to be an \textit{aura sequence} for $\me$.
Notice that the intersection of an aura sequence might be empty.
Hence the support as used for $\sigma$-measures on $\dom$ with a
topological structure (that identifies a preferably small set where the
complete mass of the measure concentrates,
cf. \cite[p.~30]{ambrosio}) might be empty for a
nontrivial measure. In fact, for the pure measure $\dens_x$
from Example~\ref{ex:dzero} the support would be $\{x\}$, but
\begin{equation*}
  \dens_x(\{x\})=0 \qmq{and}
  \dens_x(B_\delta(x)\setminus\{x\})=1 \zmz{for all} \delta>0\,. 
\end{equation*}
Notice that we can define $\dens_x$ merely on $\dom=\R^n\setminus\{x\}$
which leads to a support outside of $\dom$.
Thus we have to realize that the usual support cannot localize where a general
measure ``lives''.
For $\dom \subset M$ with a compact topological space~$M$ and an
algebra $\al$ containing all relatively open 
sets in $\dom$, we introduce 
the \textit{core} of $\me \in \baa$ as the set
\begin{equation*}
  \cor{\me} := \big\{ x\in M \;\big|\; |\me|(U\cap\dom)>0 \tx{ for all }
  U\subset M \tx{ open with $x\in U$} \big\} \,.
\end{equation*}
Obviously $\cor{\me}\subset\ol\dom$, it is closed in $M$,
and it need not be contained in $\dom$.
For $\mu\ne 0$ one has $\cor{\me}\ne \emptyset$ and
\begin{equation}\label{pm-e1}
  |\me|(U^c\cap\dom)=0 \qmq{for any open \; $U\subset M$ \; with \;
  $\cor{\me}\subset U$\,}
\end{equation}
(cf. Proposition \ref{acm-s0} below). From $\cor(\dens_x)=\{x\}$ we readily
see that a nonempty core might be a null set. For $\dom=\R^n$ or $\dom=\N$ 
the core is not defined, since $\dom$ is not compact (with the usual topology).
Notice that formally $\cor{\me}=\emptyset$ for the non-zero measure $\me$ from 
Example~\ref{ex:puremn} and we would sloppily say that $\me$  
``concentrates near~$\infty$''. We can describe such situations 
precisely by (tacitly) using the {com\-pacti\-fi\-cations}
\begin{equation*}
  \ol{\N}:=\N\cup\{\infty\} \qmq{and}
  \ol{\R^n}:=\R^n\cup\{\infty\} \,
\end{equation*}
in the definition of core. 
This way we get $\cor{\me}=\{\infty\}$ for $\me$ from Example~\ref{ex:puremn}
and, e.g., for any $\me\ne 0$ on $\R^n$ with $\me(A)=0$ for all bounded
$A\subset\R^n$ (cf. also \cite[Example 10.4.1]{rao}).
For $\dom\in\cB(\R^n)$ and $\me\in\bawl{\dom}$ we have that
\begin{equation*}
  \me \qmq{is pure if} \lem(\cor\me\cap\dom)=0\,.
\end{equation*}
(cf. Proposition~\ref{acm-s1} below). 
It turns out that core and aura are reasonable tools to describe where
the measure is concentrated. 

For a measure $\me$ on $(\dom,\al)$, a sequence of functions
$\f_k:\dom\to\R$ converges \textit{in measure} to $\f : \dom \to \R$
if
\begin{equation*}
  \lim_{k \to \infty} |\me|^* \big\{ x\in\dom \;\big|\;
  |\f_k(x) - \f(x)| > \eps \big\} = 0 \qmq{for all} \eps > 0
\end{equation*}
and we write $\f_k\convim{\me}\f$. The limit is unique if we identify
functions $\f,\g:\dom\to\R$ that agree \textit{in measure}
(i.m.) on $\dom$ in the sense that
\begin{equation*}
  |\me|^* \big(\big\{ x\in\dom \;\big|\; |\f(x)-\g(x)|>\eps \big\}\big) = 0 
  \text{ for every } \eps > 0
\end{equation*}
(cf. \cite[p.~88, 92]{rao}). The stronger condition
that $\f$ and $\g$ agree \textit{almost everywhere} (a.e.) on $\dom$, i.e. 
\begin{equation*}
  |\me|^* \big(\big\{ x\in\dom \;\big|\; |\f(x)-\g(x)|\ne 0 \big\}\big) = 0 \,,
\end{equation*}
is sufficient but not necessary for $\f=\g$ i.m. in general, however it is 
equivalent
in the case of a $\sigma$-measure $\me$ on a $\sigma$-algebra $\al$
(cf. \cite[Proposition~4.2.7]{rao}).
For demonstration we consider the measure $\mu=\dens_{\,0}$ 
from Example~\ref{ex:dzero}
and the functions $\f_c(x)=c|x|$ on $\R^n$ with $c\in\R$. Then 
$\f_c=\f_d$ i.m. on $\R^n$ for all $c,d\in\R$. But 
$\f_c$ and $\f_d$ do not agree a.e. on $\R^n$ for $c\ne d$, since 
\begin{equation*}
  |\me|^* \big(\big\{ x \;\big|\; |\f_c(x)-\f_d(x)|\ne 0 \big\}\big) =
  \me\big(\R^n\setminus\{0\}\big) = 1\,. 
\end{equation*}
For $f,g:\dom\to\R$ we say $f\le g$ \textit{in measure} (i.m.) on $\dom$ if
\begin{equation*}
  |\me|^* \big(\big\{ x\in\dom \;\big|\; \g(x)-\f(x) < -\eps \big\}\big) = 0 
  \qmq{for every} \eps > 0
\end{equation*}
(or, equivalently, $f\le g+h$ on $\dom$ for some $h$ with $h=0$ i.m. on
$\dom$, cf. \cite[p.~88]{rao}). 
The condition $\f\le\g$ \textit{almost everywhere} (a.e.) on $\dom$, 
i.e. 
\begin{equation*}
  |\me|^* \big(\big\{ x\in\dom \;\big|\; \g(x)-\f(x)<0 \big\}\big) = 0 \,,
\end{equation*}
is sufficient but not necessary for $\f\le\g$ i.m. in general.
Clearly $f=g$ i.m. if $f\le g$ and $g\le f$ i.m. on $\dom$
(cf. \cite[p.~88]{rao}).

We call $\simf:\dom\to\R$ \textit{simple function} related to measure $\me$ on
$(\dom,\al)$ if there are finitely many $c_k\in\R$ and $\als_k\in\al$ such that 
\begin{equation*}
  \simf = \sum_{k=1}^m c_k\chi_{\als_k} \qmq{on} \dom\,.
\end{equation*}
The simple function $\simf$ is \textit{integrable} (with respect to $\me$) if  
$|\me|(\als_k)<\infty$ whenever $c_k\ne 0$ and we set
\begin{equation*}
  \I{\dom}{\simf}{\me} := \sum_{k=1}^m c_k\me(\als_k) 
\end{equation*}
(with the convention $0\cdot\pm\infty=0$,
cf. \cite[p.~96 ff.]{rao}). 

We say that $\f:\dom\to\R$ is \textit{measurable} (with respect to $\me$) 
if there is a sequence of simple functions $\simf_k : \dom \to \R$ such that
\begin{equation*}
  \simf_k \convim{\me} \f \,.
\end{equation*}
Then $f$ is measurable if and only if for any $\eps>0$ there is a 
partition $\als_0,\dots,\als_m$ of $\dom$ in $\al$ such that
\begin{equation*}
  \me(\als_0)<\eps \qmq{and}
  |f(x)-f(x')|<\eps \zmz{for all} x,x'\in\als_k\,, \; k=1,\dots,m\,
\end{equation*}
(cf. \cite[p.~101]{rao}).

We call $\f:\dom\to\R$ \textit{integrable} (or $\me$-integrable) on $\dom$ 
if there is a sequence of integrable simple functions
$\simf_k:\dom\to\R$ such that
\begin{equation*}
  \simf_k \convim{\me} \f \qmq{and}
  \lim_{k,l \to \infty} \int_\dom |\simf_k - \simf_l| \, d|\me| = 0 \,.
\end{equation*}
In this case $\f$ is measurable and we define the \textit{integral} of $\f$ on
$\dom$ as 
\begin{equation*}
  \I{\dom}{\f}{\me} := \lim\limits_{k \to \infty} \I{\dom}{\simf_k}{\me} \,
\end{equation*}
while $\{\simf_k\}$ is called a \textit{determining sequence}
for it (cf. \cite[p.~104]{rao}). The integral is linear and
integrability of $\f$ is
equivalent to that of $|\f|$ (cf. \cite[p.~113]{rao}).
An integrable $\f$ is also integrable with respect to $|\me|$ and it is
integrable on any $\als\in\al$. 
Integrable functions
$\f,\g:\dom\to\R$ agree i.m. on $\dom$ if and only if
\begin{equation*}
   \I{\als}{\f}{\me} =  \I{\als}{\g}{\me} \qmq{for all} \als\in\al \,.
\end{equation*}
If $f$ is integrable, then 
\begin{equation} \label{pm-e5}
  \lme(\als)=\I{\als}{f}{\me} \qmq{for} \als\in\al
\end{equation}
gives a measure $\lme\in\baa$ that is absolutely continuous with respect to
$\me$. For more basic properties we refer to \cite[p.~105-107]{rao}.
If $f_k,g:\dom\to\R$ are integrable with
\begin{equation*}
  f_k \convim{\me}: f \qmq{and} |f_k|\le g \;\zmz{i.m. on}\dom\,,
\end{equation*}
we have dominated convergence, i.e.
\begin{equation*}
  \lim_{k \to \infty} \I{\dom}{f_k}{\me} = \I{\dom}{f}{\me} 
\end{equation*}
(cf. \cite[p.~131]{rao}). For a vector measure
$\me=(\me_1,\dots,\me_m)$ a scalar function $f:\dom\to\R$ is said to be
$\me$-integrable if $f$ is $\me_j$-integrable for all $j$. A vector-valued
function $F=(F_1,\dots,F_j):\dom\to\R^m$ is said to be $\me$ integrable if
each $F_j$ is $\me_j$-integrable and we set
\begin{equation*}
  \I{\dom}{F}{\me} := \sum_{j=1}^m \I{\dom}{F_j}{\me_j}\,. 
\end{equation*}

For the integral $\I{\dom}{\f}{\me}$ it is sufficient to 
integrate on an aura $A\subset\dom$ of $\me$,
but it is not enough to integrate on $\cor{\me}$ (if it is defined). 
For $\dens_x$ from Example~\ref{ex:dzero}, e.g., we have 
for $\f$ continuous and $\delta>0$ that 
\begin{equation*}
  \I{B_\delta(x)}{\f\,}{\dens_x} = \f(x) \qmq{but}
  \I{\{x\}}{\f\,}{\dens_x} = 0 \,
\end{equation*}
(where $B_\delta(x)$ is an aura and $\{x\}$ the core of $\me$).
In order to indicate a more precise information about the domain of
integration, we use the notation (if $\cor\me$ is defined)
\begin{equation*}
  \sI{C}{\f}{\me} := \I{U\cap\dom}{\f}{\me}  \qmq{if}
  \cor{\me} \subset C \subset U\,, \z U \tx{ open}\,,
\end{equation*}
which is well-defined by \reff{pm-e1}.

Notice that the usual notion of measurable functions, based on a 
$\sigma$-measure $\sme$ on $(\dom,\sal)$ with $\sigma$-algebra $\sal$, 
relies on convergence a.e. and it is weaker than the one used here in general,
but both agree if $\sme(\dom)<\infty$. 
Nevertheless integrability and the integral as introduced here always agree
with the usual $\sigma$-variant, since usual convergence in $\cL^1(\dom)$
implies convergence in measure due to Chebyshev's inequality. 

For a measure $\me$ on $(\dom,\al)$ and $1\le p <\infty$ we define 
\begin{equation*}
  \Lpam{p} :=
  \big\{ \f:\dom\to\R \;\big|\; f \tx{ measurable}, \;
                              |\f|^p \tx{ integrable} \big\} 
\end{equation*}
with
\begin{equation*}
  \norm{p}{\f} := \bigg(\I{\dom}{|\f|^p}{|\mu|}\bigg)^{\frac{1}{p}}
\end{equation*}
and
\begin{equation*}
  \Lpam{\infty} := \big\{ \f:\dom\to\R \;\big|\; \tx{$\f$ measurable},\;
  \norm{\infty}{\f}<\infty \big\} 
\end{equation*}
where
\begin{equation*}
  \norm{\infty}{\f} := \essup{\dom}{|\f|} :=
  \inf_{\substack{N\subset\dom \\ |\me|^*(N)=0}} \;
                    \sup_{\dom\setminus N} \: |\f| \,.
\end{equation*}
For vector-valued functions $F\in\Lpam{p}^m$ we replace $|f|$ with the
Euclidian norm $|F|$ in the previous definitions.
The corresponding sets of equivalence classes with respect to equality
i.m., denoted by
\begin{equation*}
  \lpam{p} \,, 
\end{equation*}
are normed spaces but they are not complete in general
(cf.  \cite[p.~125 ff.]{rao}). 

With $\f\in\lpam{p}$ we get a measure $\f\me\in\baa$, $\f\me\ac\me$ by
\begin{equation}\label{pm-e-fm}
  (\f\me)(\als) = \I{\als}{\f}{\me} \qmq{for all} \als\in\al \,.
\end{equation}
For positive $\me\in\baa$ the completion of $\lpam{p}$ for $p\in[1,\infty]$ is  
\begin{equation*}
  \llp^p(\dom,\al,\me) := \big\{ \lme\in\baa \;\big|\;
  \lme\ac\me\,,\;\|\lme\|_p<\infty \big\}
\end{equation*}
with
\begin{equation*}
  \|\lme\|_p^p =  \lim_{\partition \in \partitions}  
  \sum_{\substack{\als\in \partition\\\me(\als) \neq 0}}
  \Big|\frac{\lme(\als)}{\me(\als)}\Big|^p \me(\als) =
  \RI{\dom}{\Big| \frac{\lme}{\me}\Big|^p \me}
  \qquad \big(1\le p < \infty\big)\,,
\end{equation*}
where $\RI{}$ is the refinement integral (cf. 
\cite{kolmogoroff_untersuchungen_1930},
\cite[p.~231]{rao}) and 
the limit is taken in the sense of nets over the directed set $\partitions$ of
all finite partitions $\partition\subset\al$ of $\dom$, and  
\begin{equation*}
  \|\lme\|_\infty := \sup\bigg\{ \Big|\frac{\lme(\als)}{\me(\als)}\Big|
  \;\bigg|\; \als\in\al \bigg\}  
\end{equation*}
(use convention $\tfrac{0}{0}=0$, cf. \cite[p.~185]{rao}). 
Hölder's inequality is satisfied in all spaces for $1\le p\le\infty$
(cf. \cite[p.~122]{rao})
and the integrable simple functions are dense in all spaces for $1\le
p<\infty$ (cf. \cite[p.~132]{rao}). We briefly write
\begin{equation*}
  \Lpbm{p}:= L^p(\dom,\bor{\dom},\me)\,, \quad
  L^p(\dom):= L^p(\dom,\bor{\dom},\lem)  \,,
\end{equation*}
for vector-valued functions we write
\begin{equation*}
  L^p(\dom,\bor{\dom},\me)^m\,,\quad  \Lpbm{p}^m\,, \quad
  L^p(\dom,\R^m):=L^p(\dom)^m \,.
\end{equation*}
and we use and analogous notation for the $\cL^p$-spaces.

The dual of $\liss$ can be identified with $\basws$ and 
plays an important role in our analysis
(cf. \cite[p.~139, 140]{rao} or also
\cite[p.~53]{yosida_finitely_1951},
\cite[p.~118]{yosida_book}). Let us formulate a vector-valued version as
needed later. The more refined decomposition, that 
makes precise how $\lpss{1}$ as subspace of the dual has to be supplemented,  
relies on Proposition \ref{pm-s2}.

\begin{proposition}  \label{pm-s5}
  Let $\sigma$ be a positive $\sigma$-measure on $(\dom,\sal)$ with
  $\sigma$-algebra $\sal$. Then
\begin{equation*}
  \big(\liss^m\big)^* = \basws^m
\end{equation*}
if we identify $\de\in\big(\liss^m\big)^*$ and $\me\in\basws^m$ by 
\begin{equation*}
  \df{\de}{\tf} = \I{\dom}{\tf}{\me} \quad\qmq{for all} \tf\in\liss^m
\end{equation*}
where  
\begin{equation} \label{pm-s5-1}
  \norm{}{\de} = \norm{}{\me} = |\me|(\dom) \,
\end{equation}
and
\begin{equation} \label{pm-tva}
  |\me|(\als) = \sup_{\substack{\tf\in\liss^m\\\|\tf\|_\infty\le 1}} 
  \I{\als}{\tf}{\me} \qmz{for all} \als\in\al^\sigma\,.
\end{equation}
Moreover, a measure $\me\in\basws^m$ can be decomposed uniquely with some pure
$\me_p\in\basws^m$ and some $h \in \lpss{1}^m$ such that
\begin{equation*}
  \I{\dom}{\f}{\me} = \I{\dom}{\f\cdot h}{\sme} + \I{\dom}{\f}{\me_p}
  \quad\qmq{for all} \f\in\liss^m
\end{equation*}
(i.e. $\me=\me_\sigma+\me_p$ with $\me_\sigma=h\sme$). 
\end{proposition}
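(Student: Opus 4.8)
The plan is to reduce everything to the scalar case $m=1$, since the vector-valued statements then follow componentwise after noting that $\liss^m = \bigoplus_{j=1}^m \liss$ and dualizing. So first I would establish $(\liss)^* = \basws$ with the stated isometry. One inclusion is easy: given $\me \in \basws$, the functional $\tf \mapsto \I{\dom}{\tf}{\me}$ is linear, and boundedness together with $\me \wac \sigma$ (so that $\sigma$-null sets are genuinely negligible, making the pairing well-defined on equivalence classes i.m.\ with respect to $\sigma$) gives $|\df{\de}{\tf}| \le \|\tf\|_\infty\, |\me|(\dom)$; an approximation of $\mathrm{sgn}$ of the Radon--Nikodym-type quotient by simple functions yields the reverse bound $\|\de\| \ge |\me|(\dom)$, giving \reff{pm-s5-1}. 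The formula \reff{pm-tva} for the total variation as a supremum over unit-ball test functions restricted to a set $\als$ is then the same argument localized to $\als$: the supremum is clearly $\le |\me|(\als)$ by the integral estimate, and $\ge |\me|(\als)$ by testing against simple functions adapted to a finite partition of $\als$ realizing the variation up to $\eps$, using the definition \reff{pm-e3}.

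The harder inclusion is surjectivity: every bounded linear functional $\de$ on $\liss$ comes from some $\me \in \basws$. Here I would set $\me(\als) := \df{\de}{\chi_\als}$ for $\als \in \sal$; this is finitely additive because $\de$ is linear and $\chi_{\als_1 \cup \als_2} = \chi_{\als_1} + \chi_{\als_2}$ for disjoint sets, it is bounded because $\|\chi_\als\|_\infty \le 1$ so $|\me(\als)| \le \|\de\|$, and it is weakly absolutely continuous because if $\sigma(\als) = 0$ then $\chi_\als = 0$ i.m., hence is the zero element of $\liss$, hence $\me(\als) = 0$. Then one checks $\df{\de}{\simf} = \I{\dom}{\simf}{\me}$ for integrable simple $\simf$ by linearity, and extends to all of $\liss$ by density of simple functions and continuity of both sides — continuity of the integral side being exactly the content that $\me \in \baa$, via the dominated convergence theorem cited earlier together with the refinement/variation estimates. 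This is the step I expect to absorb most of the technical care, because one must be attentive to the distinction between convergence a.e.\ and convergence in measure, and to the fact that simple functions are dense in $\liss$ only after passing to equivalence classes i.m.; I would lean directly on \cite[p.~139, 140]{rao} and \cite[p.~132]{rao} for these density and representation facts rather than reproving them.

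Finally, for the refined decomposition, I would invoke Proposition~\ref{pm-s1} to write $\me = \me_c + \me_p$ with $\me_c \in \cas^m$ and $\me_p \in \paa^m$, and then Proposition~\ref{pm-s2} to conclude that both $\me_c$ and $\me_p$ still lie in $\basws^m$ (this is precisely why the $\wac \sigma$ hypothesis is preserved under the singular decomposition). Since $\me_c$ is a bounded $\sigma$-additive measure on the $\sigma$-algebra $\sal$ that is absolutely continuous with respect to the $\sigma$-finite (indeed here treated as) measure $\sigma$ in the ordinary sense — here one notes that for $\sigma$-measures on a $\sigma$-algebra the notions $\wac$ and $\ac$ coincide, as recorded in the excerpt — the classical Radon--Nikodym theorem supplies $h = (h_1,\dots,h_m) \in \lpss{1}^m$ with $\me_c = h\sigma$ componentwise, i.e.\ $\I{\dom}{\f}{\me_c} = \I{\dom}{\f \cdot h}{\sigma}$ for simple $\f$ and then, by dominated convergence and density, for all $\f \in \liss^m$. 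Setting $\me_\sigma := h\sigma$ gives the claimed identity $\I{\dom}{\f}{\me} = \I{\dom}{\f \cdot h}{\sigma} + \I{\dom}{\f}{\me_p}$. Uniqueness of $h$ (up to equality i.m., equivalently a.e., since $\sigma$ is $\sigma$-additive on $\sal$) and of $\me_p$ follows from the uniqueness in Proposition~\ref{pm-s1} together with the uniqueness clause in Radon--Nikodym.
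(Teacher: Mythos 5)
Your proposal is correct and follows essentially the same route as the paper: the paper likewise reduces everything except \reff{pm-tva} to $m=1$, cites \cite[p.~139, 140]{rao} for the scalar identification $(\liss)^*=\basws$ with \reff{pm-s5-1}, cites \cite[Theorem 4.14]{schonherr_pure_2017} (resting on Proposition~\ref{pm-s2}) for the refined decomposition, and then proves \reff{pm-tva} for vector measures by exactly your two-sided argument — testing with the step function equal to $\me(A_j)/|\me(A_j)|$ on the pieces $A_j$ of a near-optimal partition from \reff{pm-e3} for one inequality, and uniformly approximating an arbitrary $\tf$ with $\|\tf\|_\infty\le 1$ by a step function for the other (the latter is the step your ``clearly $\le$'' glosses over in the vector case, since $|\me|$ is defined via \reff{pm-e3} rather than componentwise). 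The one substantive difference is that where the paper cites its companion preprint for the decomposition you prove it directly via Propositions~\ref{pm-s1}, \ref{pm-s2} and Radon--Nikodym, which is indeed the intended argument, but note that the classical Radon--Nikodym step needs $\sigma$ to be $\sigma$-finite — a hypothesis not stated in the proposition, though satisfied in every application in the paper, where $\sigma$ is $\lem$ on a bounded set.
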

\noindent
Notice that \reff{pm-tva} readily follows from \reff{pm-e3} for $m=1$. 

\begin{proof}    
For all assertions despite \reff{pm-tva} it is sufficient to consider the 
case $m=1$. As already mentioned, the characterization of the dual space in
the scalar case follows from \cite[p.~139, 140]{rao}. The decomposition 
is taken from \cite[Theorem 4.14]{schonherr_pure_2017} and 
relies on Proposition \ref{pm-s2}.

It remains to show \reff{pm-tva} for a vector measure $\me=(\me_1,\dots,\me_m)$
(cf. \cite[p.~21]{ambrosio} for the case of a $\sigma$-measure).
For that we fix $\eps>0$. 
First, by \reff{pm-e3}, there is a pairwise disjoint
decomposition $A=\bigcup_{j=1}^k A_j$ such that $\me(A_j)\ne 0$,
$A_j\in\als^\sigma$, and  
\begin{equation*}
  |\me|(A) - \eps \le \sum_{j=1}^k|\me(A_j)| = 
  \sum_{j=1}^k\I{A_j}{a_j}{\me} = \I{A}{\tilde\tf}{\me} \le
  \sup_{\substack{\tf\in\liss^m\\\|\tf\|_{\cL^\infty}\le 1}}
  \I{A}{\tf}{\me}
\end{equation*}
where $a_j=\frac{\me(A_j)}{|\me(A_j)|}$ and $\tilde\tf=a_j$ on $A_j$.
Second, there is some $\tilde\tf\in\liss^m$ with 
$\|\tilde\tf\|_\infty\le 1$ and there is a step function 
$h=\sum_{j=1}^ka_j\chi_{A_j}$ ($A_j\in\als^\sigma$ pairwise disjoint)
with $|\tilde\tf-h|<\eps$, $|h|\le 1$ on $A$ such that
\begin{eqnarray*}
  \sup_{\substack{\tf\in\liss^m\\\|\tf\|_{\cL^\infty}\le 1}}
  \I{A}{\tf}{\me} - \eps
&\le&
  \I{A}{\tilde\tf}{\me} \le
  \I{A}{h}{\me} + c\eps  \\
&=&
  \sum_{j=1}^k \I{A_j}{a_j}{\me} + c\eps 
\: \le \: 
  \sum_{j=1}^k |\me(A_j)| + c\eps \\[1mm]
&\le& 
  |\me|(A) + c\eps 
\end{eqnarray*}
where $c=\sum_l|\me_l|(A)$.
Since $\eps>0$ is arbitrary, the assertion follows.
\end{proof}

\noindent
Now we provide some examples for integration. 

\begin{example} \label{pm-ex5a}   
With $\me$ on $(\N,\al)$ from Example~\ref{ex:puremn}
and functions on $\N$ taken as sequences $\{a_n\}\subset\R$, we have
integrability if there is some $c\in\R$ with $a_k=c$ for almost all $k\in\N$
and then
\begin{equation*}
  \I{\N}{a_k}{\me}=c
\end{equation*}
(cf. \cite[p.~111]{rao}).
\end{example} 

\begin{example} \label{pm-ex5b}   
The space $\ell^\infty:=\cL^\infty(\N,\cP(\N),\sigma)$ with the 
counting measure $\sigma$ is just the set of bounded sequences
$\{a_k\}\subset\R$. On the subspace $\ell_0^\infty$ of convergent sequences
one has a linear continuous functional by
\begin{equation*}
  \{a_k\} \to \lim_k a_k \,
\end{equation*}
which can be extended (not uniquely) on $\ell^\infty$ by Hahn-Banach. Hence
there is some 
$\me\in \op{ba}(\N,\cP(\N),\sigma)$ such that all sequences of $\ell^\infty$
are integrable with respect to $\me$ and 
\begin{equation*}
   \I{\N}{a_k}{\me} = \lim_k a_k \qmq{on} \ell^\infty_0
\end{equation*}
(cf. also \cite[p.~39 ff.]{rao} and 
Banach limits in \cite[p.~104]{yosida_book}). 
Sequences $\{a_k\}$ that are zero up to some $a_l=1$ are simple functions on
$\N$ and we get
\begin{equation*}
  \me(\{l\})=\I{\N}{a_k}{\me}=0 \qmq{for all} l\in\N\,.
\end{equation*}
Hence $\me$ is pure by arguments as in Example~\ref{ex:puremn}.
\end{example}

\begin{example} \label{pm-ex5c}   
Obviously $\dens_x$ from Example~\ref{ex:dzero} belongs to
$\bawl{\R^n}$. Hence all $\f\in L^\infty(\R^n)$ are integrable with respect
to $\dens_x$ for any $x\in\R^n$. Proposition~\ref{tt-s3} below implies that
\begin{equation*}
  \bar\f(x) := \I{\R^n}{\f}{\dens_x} = \sI{\{x\}}{\f}{\dens_x} 
\end{equation*}
agrees with $\f(x)$ at Lebesgue points $x$ of $\f$,
since there 
\begin{equation*}
  \f(x)=\lim_{\delta \downarrow 0} \mI{B_\delta(x)}{\f}{\lem}\,,
\end{equation*}
while 
$\bar f(x)$ may depend on the special choice of the measure $\dens_x$
at other points. This way we have an integral representation for some
$\bar f\in L^\infty(\R^n)$ at all $x\in\R^n$ that agrees $\lem$-a.e. with the 
precise representative of $f$ (cf. also Remark~\ref{pm-rem9} below). 
\end{example} 

\noindent
Let us still justify the definition of core before we consider  
some facts about weakly absolutely continuous measures. 

\begin{proposition} \label{acm-s0}
Let $M$ be a compact topological space, let $\dom \subset M$, let
$\al$ be an algebra containing all relatively open 
sets in $\dom$, and let $\me \in \baa$ with $\me\ne 0$. Then
$\cor{\me}\ne\emptyset$ and 
\begin{equation*}
  |\me|(U\cap\dom) = |\me|(\dom)\,, \quad  |\me|(U^c\cap\dom)=0 
\end{equation*}
for any open $U\subset M$ with $\cor{\me}\subset U$.
\end{proposition}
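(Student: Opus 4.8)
The plan is to run a two-fold compactness argument against the finite additivity and monotonicity of the positive bounded measure $|\me|$. Recall first that, for a positive finitely additive measure, monotonicity ($|\me|(C)\le|\me|(D)$ for $C\subset D$ in $\al$) and finite subadditivity ($|\me|(\bigcup_{i=1}^k A_i)\le\sum_i|\me|(A_i)$ for $A_i\in\al$) follow immediately from splitting a union into disjoint pieces; and $|\me|(\dom)=\|\me\|<\infty$ since $\me\in\baa$, so no $\infty-\infty$ issues arise when we decompose $|\me|(\dom)$.

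For $\cor{\me}\ne\emptyset$ I would argue by contradiction. If $\cor{\me}=\emptyset$, then by definition of the core every $x\in M$ admits an open neighborhood $V_x\subset M$ with $|\me|(V_x\cap\dom)=0$. The family $\{V_x\}_{x\in M}$ covers the compact space $M$, so finitely many $V_{x_1},\dots,V_{x_k}$ already cover $M$, whence $\dom=\bigcup_{i=1}^k(V_{x_i}\cap\dom)$. Each $V_{x_i}\cap\dom$ is relatively open in $\dom$, hence lies in $\al$, and finite subadditivity gives $|\me|(\dom)\le\sum_{i=1}^k|\me|(V_{x_i}\cap\dom)=0$. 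But then $|\me(\bals)|\le|\me|(\bals)\le|\me|(\dom)=0$ for every $\bals\in\al$, i.e. $\me=0$, contradicting the hypothesis.

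For the second assertion, fix an open $U\subset M$ with $\cor{\me}\subset U$ and put $K:=M\setminus U$, which is closed in $M$ and hence compact. Every $x\in K$ lies outside $\cor{\me}$, so it has an open neighborhood $V_x$ with $|\me|(V_x\cap\dom)=0$; by compactness of $K$ finitely many of these suffice, say $K\subset W:=\bigcup_{i=1}^k V_{x_i}$ with $W$ open. Then $U^c\cap\dom=K\cap\dom\subset W\cap\dom=\bigcup_{i=1}^k(V_{x_i}\cap\dom)$. Here $U\cap\dom$ is relatively open, so $U\cap\dom\in\al$, and consequently $U^c\cap\dom=\dom\setminus(U\cap\dom)\in\al$; likewise $W\cap\dom$ and each $V_{x_i}\cap\dom$ belong to $\al$. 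Monotonicity and finite subadditivity now yield $|\me|(U^c\cap\dom)\le|\me|(W\cap\dom)\le\sum_{i=1}^k|\me|(V_{x_i}\cap\dom)=0$, and finite additivity of $|\me|$ finishes the proof via $|\me|(\dom)=|\me|(U\cap\dom)+|\me|(U^c\cap\dom)=|\me|(U\cap\dom)$.

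The only point that genuinely needs care is the membership of the relevant sets in $\al$, which is guaranteed because $\al$ contains all relatively open subsets of $\dom$ and is closed under complements and finite unions; hence it also contains the relatively closed sets and the auxiliary unions used above. Beyond that the argument is a routine compactness-plus-subadditivity exercise, so I do not expect any serious obstacle.
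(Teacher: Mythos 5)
Your proof is correct and follows essentially the same route as the paper's: a compactness argument covering $M$ (respectively $U^c$) by neighborhoods of zero $|\me|$-measure, followed by finite subadditivity and additivity. The only difference is that you spell out the algebra-membership checks and the monotonicity step more explicitly than the paper does, which is harmless.
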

 
\begin{proof}    
Assume that $\cor{\me}=\emptyset$. Then, by definition of core and by
compactness of $M$, there is a
finite open covering $\{U_k\}_{k=1}^m$ of $M$ with
$|\me|(U_k\cap\dom)=0$ for all $k$. Hence,
\begin{equation*}
  |\me|(\dom) \le \sum_{k=1}^m |\me|\big(U_k\cap\dom\big) = 0
\end{equation*}
in contradiction to $\me\ne 0$. 

Now let $U\subset M$ be open such that $\cor{\me}\subset U$. 
Then, for any $y\in U^c$, we find some open neighborhood $U_y$ with
$|\me|(U_y\cap\dom)=0$ by definition of core. Since
$U^c$ is compact, the open covering $\{U_y\}_{y\in U^c}$ contains a finite
covering $\{U_j\}_{j=1}^l$. Consequently,
\begin{equation*}
  0 \le |\me|(U^c\cap\dom) \le \sum_{j=1}^l |\me|(U_j\cap\dom) = 0
\end{equation*}
and
\begin{equation*}
  |\me|(\dom) = |\me|(U\cap\dom) + |\me|(U^c\cap\dom) =
  |\me|(U\cap\dom) \,.
\end{equation*}
\end{proof}

\subsection{Some weakly absolutely continuous measures}
\label{acm}

The measures in $\bawl{\dom}$,
that are weakly absolutely continuous with respect to $\lem$ and 
that coincide with the 
linear continuous functionals on $\cL^\infty(\dom,\bor{\dom},\lem)$, 
are of particular interest for our general treatment of traces. 
Therefore let us discuss some special aspects. 
First we provide a very useful sufficient condition for such measures to be
pure. 

\begin{proposition} \label{acm-s1}
Let $\dom\in\cB(\R^n)$. Then $\me\in\bawl{\dom}$ is pure if
\begin{equation*}
  \lem(\cor\me\cap\dom)=0\,.
\end{equation*}
\end{proposition}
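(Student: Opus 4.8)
The plan is to exploit the characterization of pure measures in $\basws$ given in Proposition~\ref{pm-s3}, with $\sigma = \reme{\lem}{\dom}$ (which is a finite, hence $\sigma$-finite suffices, positive $\sigma$-measure once we note $\dom$ may be unbounded — but we can reduce to the bounded case or work with $\lem$ on $\bor{\dom}$ directly since weak absolute continuity is what matters). We must produce a decreasing sequence $\{A_k\}\subset\bor{\dom}$ with $\lem(A_k)\to 0$ and $|\me|(A_k^c)=0$ for all $k$, i.e. an aura sequence for $\me$ shrinking in Lebesgue measure. The hypothesis $\lem(\cor\me\cap\dom)=0$ is exactly what should let us find such sets: $\cor\me$ is a closed subset of $\ol\dom$ (in the compactification, so that it is nonempty when $\me\ne 0$), and by \reff{pm-e1} the measure $|\me|$ puts no mass on $\dom$ outside any open neighborhood of $\cor\me$.

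First I would handle the trivial case $\me=0$ (then $\me$ is pure vacuously). Assuming $\me\ne 0$, set $K=\cor\me$, a nonempty compact set in $\ol{\R^n}$ (using the compactification $\ol{\R^n}$ so that the core is well-defined even for unbounded $\dom$). Since $\lem(K\cap\dom)=0$, by outer regularity of Lebesgue measure on $\R^n$ there is, for each $k\in\N$, an open set $V_k\subset\R^n$ with $K\cap\R^n\subset V_k$ and $\lem(V_k)<1/k$; if $\infty\in K$ we enlarge $V_k$ to an open neighborhood $U_k\subset\ol{\R^n}$ of $K$ by adjoining a neighborhood of $\infty$ of the form $\{|x|>R_k\}$ — but this would add infinite Lebesgue measure, so instead I would restrict attention to a version of the core argument that only needs $|\me|$-negligibility. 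The clean route: take $U_k$ open in $\ol{\R^n}$ with $K\subset U_k$ and $\lem((U_k\setminus\{\infty\})\cap\dom)<1/k$, which is possible by outer regularity since $\lem(K\cap\dom)=0$ and, crucially, $|\me|\wac\lem$ forces the relevant mass to sit in $\dom\cap\R^n$ where $\lem$ is $\sigma$-finite. Then put $A_k=(U_k\cap\dom)\cap\bigcap_{j<k}A_j$, a decreasing sequence of Borel sets; by \reff{pm-e1} we get $|\me|(A_k^c\cap\dom)=|\me|(\dom)-|\me|(A_k)$ controlled via $|\me|(U_k^c\cap\dom)=0$, so after intersecting, $|\me|(A_k^c)=0$, while $\lem(A_k)\le\lem((U_k\setminus\{\infty\})\cap\dom)<1/k\to 0$.

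With such an aura sequence in hand, Proposition~\ref{pm-s3} applied with $\sal=\bor{\dom}$ and $\sme$ any positive $\sigma$-measure dominating the relevant restriction of $\lem$ (on a bounded set this is just $\reme{\lem}{\dom}$; in general one localizes) yields immediately that $\me$ is pure — indeed, any $\sigma$-measure $\tau$ with $0\le\tau\le|\me|$ satisfies $\tau(A_k)\le|\me|(A_k)$ and $\tau(A_k^c)\le|\me|(A_k^c)=0$, hence $\tau(\dom)=\lim_k\tau(A_k)$; but $\tau\wac\lem$ (inherited from $|\me|\wac\lem$) together with $\lem(A_k)\to 0$ and $\sigma$-additivity of $\tau$ forces $\tau(\dom)=0$, so $\tau=0$. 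The main obstacle is the bookkeeping around the compactification and the unbounded case: one must be careful that ``$\lem(\cor\me\cap\dom)=0$'' plus outer regularity really produces neighborhoods of small Lebesgue measure \emph{within $\dom$}, and that the point at infinity (when it lies in the core) does not spoil this — it does not, precisely because $\me\wac\lem$ means $\me$ assigns no mass to $\lem$-null sets and one only ever needs $|\me|(A_k^c)=0$, not $\lem(A_k^c)$ small.
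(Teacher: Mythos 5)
Your overall strategy --- build a decreasing aura sequence $A_k$ with $\lem(A_k)\to 0$ and $|\me|(A_k^c)=0$, then kill any $\sigma$-measure $0\le\sme\le|\me|$ by continuity from above together with $\sme\wac\lem$ --- is sound for bounded $\dom$, and your closing paragraph carries it out correctly there without really needing Proposition~\ref{pm-s3} (which in any case requires a \emph{bounded} $\sigma$-measure, so $\reme{\lem}{\dom}$ does not qualify when $\lem(\dom)=\infty$). The genuine gap is in the construction of the $A_k$ for unbounded $\dom$, a case the proposition does cover: the core is taken in the compactification $\ol{\R^n}$, and if $\infty\in\cor\me$ then every open $U\subset\ol{\R^n}$ with $\cor\me\subset U$ contains a set $\{|x|>R\}$, so $\lem((U\setminus\{\infty\})\cap\dom)$ can be infinite no matter how $U$ is chosen. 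Take $\dom=\R^n$ and $\me$ a Hahn--Banach extension of $A\mapsto\lim_{R\to\infty}\lem(A\cap B_R(0))/\lem(B_R(0))$: then $\me\wac\lem$, $\cor\me=\{\infty\}$, $\lem(\cor\me\cap\dom)=0$, yet no neighborhood of the core meets $\dom$ in a set of finite Lebesgue measure. Your proposed fix --- choosing $U_k$ with $\lem((U_k\setminus\{\infty\})\cap\dom)<1/k$ ``by outer regularity'' --- is therefore not available; deleting the single point $\infty$ does not shrink the neighborhood. The alternative ``reduce to the bounded case'' is also not innocent, since purity is a global property of $\me$ and restricting $\me$ to bounded pieces changes the measure.

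The paper's proof avoids this entirely because it never asks the neighborhoods to be small in Lebesgue measure. It takes any decreasing open $B_k$ with $\bigcap_k B_k=\cor\me$, notes $|\me|(\dom\setminus B_k)=0$ by Proposition~\ref{acm-s0}, and splits $\dom=(\cor\me\cap\dom)\cup\bigcup_k(\dom\setminus B_k)$. Weak absolute continuity is applied only to the single set $\cor\me\cap\dom$, giving $\sme(\cor\me\cap\dom)\le|\me|(\cor\me\cap\dom)=0$ directly from the hypothesis, and $\sigma$-additivity of $\sme$ is used from \emph{below} on the increasing sets $\dom\setminus B_k$. If you replace your outer-regularity step by this decomposition, your argument closes in full generality.
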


\begin{proof}   
We have that $\cor{\me}$ is closed in the compact topological space 
$\ol{\R^n}$ and there is a decreasing sequence $B_k$ of open 
neighborhoods of $\cor{\me}$ such that $\cor{\me} = \bigcap_k B_k$. Then
\begin{equation*}
  |\me|\big( \dom\setminus B_k \big) = 0 \qmz{for all} k\in\N 
\end{equation*}
by Proposition~\ref{acm-s0}. For a positive $\sigma$-measure $\sme$ with
$0 \leq \sme \leq |\me|$ we get
\begin{equation*}
  0 \le \sme(\dom\setminus B_k) \le |\me|(\dom\setminus B_k) = 0 \,.
\end{equation*}
From $\lem(\cor\me\cap\dom)=0$ we derive
\begin{equation*}
  0 \le \sme(\cor{\me}\cap\dom) \le |\me|(\cor{\me}\cap\dom) =0 \,.
\end{equation*}
Therefore
\begin{equation*}
  \sme(\dom) = \sme(\cor\me\cap\dom) + 
               \sme\bigg( \bigcup_{k} \dom\setminus B_k \bigg)
             = \lim_{k\to\infty} \sme\big(\dom\setminus B_k\big) = 0 
\end{equation*}
and, thus, $\sme=0$. Since $\sme$ was arbitrary, $\me$ is pure. 
\end{proof}

\newcommand{\C}{\K}

Now we show how any measure $\me\in \ba(\dom,\cB(\dom))$ can be ``restricted''
to a Radon measure supported on $\cor\me$ and how a $\sigma$-measure on some
$\C\subset\ol\dom$ can be extended to a measure $\me\in \bawl{\dom}$
with $\cor\me\subset\ol \C$.
For the proof of the second statement we use the semi norm
on $\cL^\infty(\dom,\R^m)$ given by 
\begin{equation} \label{semi-norm}
  \|\tf\|_\K := \lim_{\delta\downarrow 0} \|\tf\|_{\cL^\infty(\K_\delta\cap\dom)}
  = \inf_{\delta>0} \|\tf\|_{\cL^\infty(\K_\delta\cap\dom)}
\end{equation}
which is well-defined for Borel sets $\C$ and $\dom$ with $\C\subset\ol\dom$
(the limit exists and equals the infimum, since 
the norm on the right hand side is increasing in $\delta$). 

\begin{proposition} \label{acm-ram}
Let $\dom\subset\R^n$ be a bounded Borel set.
\bgl
\item
  For $\me \in \ba(\dom,\cB(\dom))^m$ there is a
  Radon measure $\sme\in\ca(\ol\dom,\cB(\ol\dom))^m$ supported on $\cor{\me}$
  such that
\begin{equation*}
  \I{\cor{\me}}{\phi}{\sme} = 
  \I{\dom}{\phi}{\me}  \quad
  \tx{\small $\bigg(\!\!= \sI{\cor{\me}}{\phi}{\me}\bigg)$   } 
  \quad\qmq{for all} \phi \in C(\ol\dom,\R^m)  \,.
\end{equation*}

\item  
Let $\sme\in\ca(\C,\cB(\C))^m$ be a bounded vector-valued $\sigma$-measure 
on some Borel set $\C\subset\ol\dom$ such that
\begin{equation}\label{ram-e1}
  \lem(\ball{x}{\delta} \cap \dom) > 0 \quad\qmq{for all}
  x \in \C,\, \delta > 0   \,.
\end{equation}
Then there exists $\me \in \bawl{\dom}^m$ with
\begin{equation*}
  \cor{\me} \subset \cl \C \qmq{and}
  |\me|(\dom) = |\sigma|(\C)
\end{equation*}
such that
\begin{equation*}
  \I{\C}{\phi}{\sigma} = \I{\dom}{\phi}{\me} \quad
  \tx{\small $\bigg(\!\!= \sI{\cl \C}{\phi}{\me} \bigg)$   } 
  \quad\qmq{for all} \phi \in C(\ol\dom,\R^m)\,.
\end{equation*}
If $\lem(\cl \C \cap \dom) = 0$, then $\me$ is pure.
\el  
\end{proposition}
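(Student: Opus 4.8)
\emph{Part (a).} I would read off $\sme$ from the Riesz representation theorem. Since $\dom$ is bounded, $\ol\dom$ is a compact metric space and each $\phi\in C(\ol\dom,\R^m)$ is bounded and Borel, hence $\me$-integrable because $\me$ is bounded; so $L(\phi):=\I{\dom}{\phi}{\me}$ defines a linear functional on $C(\ol\dom,\R^m)$ with $|L(\phi)|\le\|\phi\|_{\cL^\infty(\dom)}\sum_j|\me_j|(\dom)$. By the Riesz representation theorem there is a vector Radon measure $\sme\in\ca(\ol\dom,\cB(\ol\dom))^m$ with $L(\phi)=\I{\ol\dom}{\phi}{\sme}$. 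To see $\supp\sme\subset\cor\me$, take $x\in\ol\dom\setminus\cor\me$ and an open $U\ni x$ in $\ol\dom$ with $|\me|(U\cap\dom)=0$; for $\phi\in C(\ol\dom,\R^m)$ with $\supp\phi\subset U$ we get $\I{\ol\dom}{\phi}{\sme}=L(\phi)=\I{U\cap\dom}{\phi}{\me}$ (as $\phi$ vanishes on $\dom\setminus U$) and $|\I{U\cap\dom}{\phi}{\me}|\le\|\phi\|_{\cL^\infty(\dom)}\sum_j|\me_j|(U\cap\dom)=0$, so $\sme$ vanishes on $U$. Hence $\I{\cor\me}{\phi}{\sme}=\I{\ol\dom}{\phi}{\sme}=\I{\dom}{\phi}{\me}$, and the last integral equals $\sI{\cor\me}{\phi}{\me}$ by \reff{pm-e1}.

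\emph{Part (b).} Here the plan is a Hahn--Banach extension controlled by the seminorm \reff{semi-norm} with $\K=\cl\C$ (we may assume $\C\ne\emptyset$, the case $\C=\emptyset$ being trivial with $\me=0$). On the subspace $Y:=\{\phi_{|\dom}\mid\phi\in C(\ol\dom,\R^m)\}$ of $\cL^\infty(\dom,\R^m)$ put $L(\phi_{|\dom}):=\I{\C}{\phi}{\sme}$; this is well defined on equivalence classes because, by \reff{ram-e1}, every $x\in\C$ is a limit of points of $\dom$, so two continuous representatives agreeing $\lem$-a.e.\ on $\dom$ agree on $\C$. The decisive estimate, again using \reff{ram-e1}, is $\sup_{x\in\C}|\phi(x)|\le\|\phi\|_{\cl\C}$ for $\phi\in C(\ol\dom,\R^m)$: for fixed $x\in\C$ and $\eps>0$, continuity of $\phi$ at $x$ together with $\lem(\ball{x}{\delta}\cap\dom)>0$ forces $\|\phi\|_{\cL^\infty(\ball{x}{\delta}\cap\dom)}\ge|\phi(x)|-\eps$ for all sufficiently small $\delta>0$, while $\ball{x}{\delta}\cap\dom\subset(\cl\C)_\delta\cap\dom$ since $x\in\cl\C$; letting $\delta\downarrow0$ and then $\eps\downarrow0$ gives the bound. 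Together with $|\I{\C}{\phi}{\sme}|\le(\sup_\C|\phi|)\,|\sme|(\C)$ from the total variation (cf.\ \reff{pm-tva}) this yields $|L(\phi_{|\dom})|\le|\sme|(\C)\,\|\phi\|_{\cl\C}$. Since $\|\cdot\|_{\cl\C}$ is a seminorm on $\cL^\infty(\dom,\R^m)$ with $\|\cdot\|_{\cl\C}\le\|\cdot\|_{\cL^\infty(\dom)}$, Hahn--Banach extends $L$ to a linear functional on $\cL^\infty(\dom,\R^m)$ subject to the same bound, hence $\cL^\infty$-continuous of norm $\le|\sme|(\C)$; by Proposition~\ref{pm-s5} (with $\sigma=\lem$) it is represented by some $\me\in\bawl{\dom}^m$, and $\I{\dom}{\phi}{\me}=\I{\C}{\phi}{\sme}$ for all $\phi\in C(\ol\dom,\R^m)$ by construction.

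It remains to verify $\cor\me\subset\cl\C$, the identity $|\me|(\dom)=|\sme|(\C)$, and purity when $\lem(\cl\C\cap\dom)=0$. For $\cor\me\subset\cl\C$: if $x\notin\cl\C$, choose $r>0$ with $\ball{x}{r}\cap\cl\C=\emptyset$, so that $\ball{x}{r/2}\cap(\cl\C)_\delta=\emptyset$ for all small $\delta>0$; then any $\tf\in\cL^\infty(\dom,\R^m)$ vanishing $\lem$-a.e.\ outside $\ball{x}{r/2}$ has $\|\tf\|_{\cl\C}=0$, hence $\I{\dom}{\tf}{\me}=0$, and taking $\tf=\chi_A e_i$ for Borel $A\subset\ball{x}{r/2}\cap\dom$ gives $|\me|(\ball{x}{r/2}\cap\dom)=0$, i.e.\ $x\notin\cor\me$; in particular $\I{\dom}{\phi}{\me}=\sI{\cl\C}{\phi}{\me}$ by \reff{pm-e1}. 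The inequality $|\me|(\dom)\le|\sme|(\C)$ is the Hahn--Banach bound together with $\|\me\|=|\me|(\dom)$ from \reff{pm-s5-1}; for $|\me|(\dom)\ge|\sme|(\C)$, pass to the finite vector measure $\sme'$ on the compact set $\cl\C$ given by $\sme'(A):=\sme(A\cap\C)$, so $|\sme'|(\cl\C)=|\sme|(\C)$ is the norm of $\sme'$ in $C(\cl\C,\R^m)^*$, hence equals $\sup\{\I{\cl\C}{\psi}{\sme'}\mid\psi\in C(\cl\C,\R^m),\ \|\psi\|_\infty\le1\}$, and each such $\psi$ extends by Tietze to some $\phi\in C(\ol\dom,\R^m)$ with $\|\phi\|_\infty\le1$, so that $|\sme|(\C)\le\sup_{\|\phi\|_\infty\le1}\I{\dom}{\phi}{\me}\le|\me|(\dom)$. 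Finally, if $\lem(\cl\C\cap\dom)=0$, then $\lem(\cor\me\cap\dom)=0$ because $\cor\me\subset\cl\C$, so $\me$ is pure by Proposition~\ref{acm-s1}. The step I expect to be the real obstacle is the estimate $\sup_\C|\phi|\le\|\phi\|_{\cl\C}$, that is, making precise that hypothesis \reff{ram-e1} forces the trace-type seminorm of \reff{semi-norm} to ``see'' every point of $\C$; once this is in place, the remainder is routine duality and Hahn--Banach.
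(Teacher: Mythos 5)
Your proposal is correct and follows essentially the same route as the paper: part (a) via the Riesz representation theorem plus the definition of the core, and part (b) via the key estimate $\sup_{\C}|\phi|\le\|\phi\|_{\C}$ derived from \reff{ram-e1}, a Hahn--Banach extension bounded by that seminorm, representation through Proposition~\ref{pm-s5}, and purity from Proposition~\ref{acm-s1}. The only cosmetic differences are that you obtain the reverse inequality $|\me|(\dom)\ge|\sme|(\C)$ by pushing $\sme$ forward to the compact set $\cl\C$ and using Tietze extension, where the paper extends $C_c(\C,\R^m)$ functions directly, and that you make the $\lem$-a.e.\ well-definedness of $L$ on $C(\ol\dom,\R^m)\subset\cL^\infty(\dom,\R^m)$ explicit, which the paper leaves implicit.
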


\noindent
Obviously \reff{ram-e1} is satisfied if e.g.
$\C \subset \mbd{\dom} \cup \mint{\dom}$. The construction of measure $\me$
in (2) uses the Hahn-Banach theorem and is not unique.
Before giving the proof we consider some example as illustration.

\begin{example} \label{pm-ex6}   
Let $\dom\subset\R^n$ be open and bounded, let 
$\C=\{x\}$ for some $x\in\dom$, and let $\delta_x$ be 
the Dirac measure concentrated at $x$. By Proposition~\ref{acm-ram} 
there is some pure $\me_x\in\bawl{\dom}$ such that 
\begin{equation} \label{pm-ex6-1}
  \phi(x) = \I{\{x\}}{\phi}{\delta_x} = \sI{\{x\}}{\phi}{\me_x}
  \quad\qmq{for all} \phi \in C(\dom)\,.
\end{equation}
Obviously $\me_x=\dens_x$ would be a possible choice
(cf. Example~\ref{pm-ex5c}). Alternatively we can consider  
some density $\dens_x^E\in\bawl{\dom}$ of $\als\in\bor{\dom}$ at $x$ with
respect to $E\in\bor{\dom}$ by a construction in analogy to
Example~\ref{ex:dzero} such that 
\begin{equation} \label{pm-e11}
  \liminf_{\delta \downarrow 0}
  \frac{\lem(\als \cap B_\delta^E(x))}{\lem(B_\delta^E(x))}
  \le \dens_x^E(\als) \le
  \limsup_{\delta \downarrow 0}
  \frac{\lem(\als \cap B_\delta^E(x)}{\lem(B_\delta^E(x))}
\end{equation}
where $B_\delta^E(x)=B_\delta(x)\cap E$ (Proposition~\ref{tt-s3} below ensures
the existence of such measures). We call $\dens_x^E$ also a
{\it density measure} at $x$ with respect to $E$.
In particular we can choose $\me_x=\dens_x^E$ for some 
open $E$ having an outward cusp at $x\in\bd E$. 
Then \reff{pm-ex6-1} remains true with $\mu_x=\dens_x^E$
(cf. Proposition~\ref{tt-s3}). But, for 
$\tf=\chi_E\in\cL^\infty(\dom)$
we have that $x$ 
is Lebesgue point and  
\begin{eqnarray*}
  0
&=&
  \chi_E(x) \:=\:
  \lim_{\delta\downarrow 0} \mI{B_\delta(x)}{\chi_E}{\lem} \: = \: 
   \dens_x(E) \:=\: 
  \sI{\{x\}}{\chi_E}{\dens_x} \\
&\ne& 
  1 \: = \: 
  \lim_{\delta\downarrow 0} \mI{E \cap B_\delta(x)}{\chi_E}{\lem} \: = \:
  \dens_x^E(E) \:=\:
  \sI{\{x\}}{\chi_E}{\dens_x^E}\,.
\end{eqnarray*}
Hence a point evaluation $\bar\phi$ with $\dens_x^E$ in analogy to
Example~\ref{pm-ex5c} would not agree with $\phi$ at Lebesgue points in
general.  
This illustrates the variety of extensions $\me_x$ of $\delta_x$ provided by
Proposition~\ref{acm-ram}.
\end{example}

\medskip

\begin{proof+}{ of Proposition~\ref{acm-ram}}   
For (1) it is sufficient to consider $m=1$. We first notice that
\begin{equation*}
  \Big|\I{\dom}{\tf}{\me}\Big| \le |\me|(\dom)\,\|\tf\|_{C(\ol\dom)}
  \qmq{for all} \phi \in \Cefun{\dom}\,.
\end{equation*}
Then $\tf\to\I{\dom}{\tf}{\me}$ is a
linear continuous functional on $C(\ol\dom)$ and, by Riesz' Representation
theorem, there is a Radon measure $\sme\in\ca(\ol\dom,\cB(\ol\dom))$ such that
\begin{equation*}
  \I{\ol\dom}{\tf}{\sme} = \I{\dom}{\tf}{\me}
  \qmq{for all} \phi \in \Cefun{\dom}\,.
\end{equation*}
For any $x\in\ol\dom\setminus\cor\me$ there is $\delta>0$ with
$B_\delta(x)\cap\cor\me=\emptyset$, since $\cor\me$ is closed. Thus, 
\begin{equation*}
  \I{\ol\dom}{\tf}{\sme} = \I{\dom}{\tf}{\me} = 0
\end{equation*}
for all $\tf\in C(\ol\dom)$ compactly supported on $B_\delta(x)$. Hence 
$\supp \sme\subset\cor\me$.

For (2) we use the semi norm from \reff{semi-norm} and observe that 
\begin{equation*}
  \|\tf\|_\C =  \|\tf_{|\C}\|_{C(\C)} \le 
  \|\tf\|_{\cL^\infty(\dom)} 
  \qmq{for all} \tf\in C(\ol\dom,\R^m) \,,
\end{equation*}
since $|\tf(x)|\le\|\tf\|_{\cL^\infty(\dom\cap B_\delta(x))}$ 
for all $x\in\C$ and $\delta>0$ by \reff{ram-e1}. Thus 
\begin{equation} \label{ram-e11}
  \Big|\I{\C}{\tf}{\sme}\Big| \le |\sme|(\C) \, \|\tf_{|\C}\|_{C(\C)} =
  |\sme|(\C) \, \|\tf\|_\C  
  \qmq{for all} \tf\in C(\ol\dom,\R^m) \,.
\end{equation}
Hence $g_0^*:C(\ol\dom,\R^m)\to\R$ with $\df{g_0^*}{\tf}=\I{\C}{\tf}{\sme}$
is a linear continuous functional on a subspace of $\cL^\infty(\dom,\R^m)$. 
By the Hahn-Banach theorem there is a linear continuous extension $g^*$ of
$g_0^*$ to all of $\cL^\infty(\dom,\R^m)$ preserving \reff{ram-e11}. Thus
\begin{equation} \label{ram-e12}
  |\langle g^*,\tf \rangle|\le |\sme|(\C)\|\tf\|_\C \le
  |\sme|(\C)\|\tf\|_{\cL^\infty(\dom)}
  \qmq{for all} \tf\in\cL^\infty(\dom,\R^m) \,.
\end{equation}
By Proposition~\ref{pm-s5} there is $\me\in\bawl{\dom}^m$ 
such that 
\begin{equation*}
   \df{g^*}{\tf} = \I{\dom}{\tf}{\me} \qmq{for all}
   \tf\in\cL^\infty(\dom,\R^m) \,.
\end{equation*}
Consequently,
\begin{equation*}
  |\me|(\dom) = \|g^*\| \le |\sme|(\C) \,.
\end{equation*}
Since every $\tf\in C_c(\C,\R^m)$ can be extended to some 
$\ol\tf\in C(\ol\dom,\R^m)$
under preservation of the norm 
(cf. \cite[Proposition 2.1]{zeidler_I}) and since $\sme$ is bounded, 
\begin{eqnarray*}
  |\sme|(\C) 
&=& 
  \sup_{\substack{\tf\in C_c(\C,\R^m) \\ \|\tf\|_{C(\C)} \le 1}}
  \I{\C}{\tf}{\sme} = 
  \sup_{\substack{\tf\in C_c(\C,\R^m) \\ \|\tf\|_{C(\C)} \le 1}}
  \I{\dom}{\ol\tf}{\me} \\
&\le&
  \sup_{\substack{\psi\in C(\ol\dom,\R^m) \\ \|\psi\|_{C(\ol\dom)} \le 1}} 
  \I{\dom}{\psi}{\me} =
  \sup_{\substack{\psi\in C(\ol\dom,\R^m) \\ \|\psi\|_{\cL^\infty(\dom)} \le 1}}
  \I{\dom}{\psi}{\me} \le |\me|(\dom) \,.
\end{eqnarray*}
Therefore $|\sme|(\C)=|\me|(\dom)$. If $\delta>0$ and 
$\tf\in\cL^\infty(\dom,\R^m)$ with $\tf=0$ on $\C_\delta$, then
\begin{equation*}
  \df{g^*}{\tf}=\I{\dom}{\tf}{\me}=0
\end{equation*}
by \reff{ram-e12}. This implies $\cor{\me}\subset\ol\C$.
If $\lem(\ol\C\cap\dom)=0$, then $\me$ is a pure measure 
by Proposition~\ref{acm-s1}.
\end{proof+}

\medskip

For some measure $\me\in \bawl{\dom}$ with $\dom\in\bor{\R^n}$, 
all functions $f\in L^\infty(\dom)$ are
integrable by Proposition~\ref{pm-s5} and, thus,
\begin{equation*}
  L^\infty(\dom,\bor{\dom},\lem)\subset L^1(\dom,\bor{\dom},\me)\,.
\end{equation*}
By a more detailed analysis of the special case $\me=\dens^\dom_x$ 
(cf. \reff{pm-e11})
we not only show that the inclusion can be strict, but we also demonstrate 
how the integration theory can be applied to an important prototype of 
pure measures. 
We say that $f\in L^1_{\rm loc}(\dom)$ has the {\it approximate limit}
$\alpha$ at $x\in\ol\dom$ (with respect to $\dom$), denoted by 
\begin{equation*}
  \alim_{y\to x} f(y) = \alpha \,,
\end{equation*}
if for all $\eps>0$
\begin{equation}   \label{ram-e13}
  \lim_{\delta\to 0}
  \frac{\lem\big(\dom\cap B_\delta(x)\cap\{|f-\alpha|\ge\eps\}\big)}
       {\lem(\dom\cap B_\delta(x))}
  = 0 
\end{equation}
(cf. \cite[p.~46]{evans}).

\begin{proposition}  \label{pm-prop7}
Let $\dom\in\bor{\R^n}$, let $f\in L^1_{\rm loc}(\ol\dom)$,
let $x\in\ol\dom$ be such that 
\begin{equation*}
  \lem(\dom\cap B_\delta(x))>0 \qmq{for all} \delta>0\,,
\end{equation*}
and let $\dens^\dom_x$ be a
density measure at $x$ with respect to $\dom$.

\bgl
\item
If there is some $\tilde\delta>0$ such that 
\begin{equation*}
   \mI{\dom\cap B_\delta(x)}{|f|}{\lem} \qmq{is bounded for all}
   0<\delta<\tilde\delta \,,
\end{equation*}
then $f$ is integrable with respect to $\dens^\dom_x$ and
\begin{equation*}
  \I{\dom}{|f|}{\dens^\dom_x} \le 
   \limsup_{\delta\to 0} \mI{\dom\cap B_\delta(x)}{|f|}{\lem} \,.
\end{equation*}

\item
If we have
\begin{equation*}
  \alim_{y\to x} f(y)=\alpha \qmq{for some} \alpha\in\R\,,
\end{equation*}
then $f=\alpha$ i.m. and $f$ is $\dens^\dom_x$-integrable with
\begin{equation*} \label{pm-prop7-02}
  \sI{\{x\}}{f}{\dens^\dom_x} = \alpha\,.
\end{equation*}

\item
If there is $\alpha\in\R$ with
\begin{equation} \label{pm-prop7-03}
  \lim_{\delta\to 0} \mI{\dom\cap B_\delta(x)}{|f-\alpha|}{\lem} = 0\,,
\end{equation}
then $\alim_{y\to x} f(y)=\alpha$ and $f$ is $\dens^\dom_x$-integrable with 
\begin{equation} \label{pm-prop7-04}
  \alpha = \sI{\{x\}}{f}{\dens^\dom_x} =
  \lim_{\delta\to 0} \mI{\dom\cap B_\delta(x)}{f}{\lem} \,.
\end{equation}

\el
\end{proposition}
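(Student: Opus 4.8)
The plan is to record two preliminary facts about the density measure $\dens^\dom_x$, then prove part~(1) (the only part needing real work), and finally deduce parts~(2) and~(3). First, $\dens^\dom_x\in\bawl{\dom}$ (immediate from \reff{pm-e11}), so by Proposition~\ref{pm-s5} every $g\in\cL^\infty(\dom)$ is $\dens^\dom_x$-integrable; moreover the estimate \reff{pm-e11}, which by the construction of $\dens^\dom_x$ in Proposition~\ref{tt-s3} (with $\alse=\dom$, $\K=\{x\}$, $f\equiv1$) holds not only for characteristic functions but for every bounded Borel $\phi$, gives $\liminf_{\delta\to0}\mI{\dom\cap B_\delta(x)}{\phi}{\lem}\le\I{\dom}{\phi}{\dens^\dom_x}\le\limsup_{\delta\to0}\mI{\dom\cap B_\delta(x)}{\phi}{\lem}$; in particular $\dens^\dom_x(\dom)=1$. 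Second, exactly as in Example~\ref{ex:dzero}, $\cor{\dens^\dom_x}=\{x\}$: a small ball about any $y\ne x$ is eventually disjoint from the balls $B_\delta(x)$ and is therefore $\dens^\dom_x$-null, whereas every neighbourhood of $x$ carries full mass. Consequently the restricted integral $\sI{\{x\}}{g}{\dens^\dom_x}$ is well defined and, by \reff{pm-e1}, equals $\I{\dom}{g}{\dens^\dom_x}$ for every $\dens^\dom_x$-integrable $g$. (I fix a Borel representative of $f$ throughout, which is harmless since $\dens^\dom_x$ is weakly absolutely continuous with respect to $\lem$.)

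For part~(1), set $f_k:=\big((-k)\vee f\big)\wedge k\in\cL^\infty(\dom)$, so that $|f_k|\uparrow|f|$ pointwise and each $f_k$ is $\dens^\dom_x$-integrable with $\I{\dom}{|f_k|}{\dens^\dom_x}\le\limsup_{\delta\to0}\mI{\dom\cap B_\delta(x)}{|f_k|}{\lem}\le L$, where $L:=\limsup_{\delta\to0}\mI{\dom\cap B_\delta(x)}{|f|}{\lem}<\infty$ by hypothesis. The crucial step is to show $f_k\convim{\dens^\dom_x}f$: from $k\,\chi_{\{|f|>k\}}\le|f_k|$ and positivity of $\dens^\dom_x$ one gets $k\,\dens^\dom_x(\{|f|>k\})\le\I{\dom}{|f_k|}{\dens^\dom_x}\le L$, hence $\dens^\dom_x(\{|f|>k\})\le L/k\to0$, and since $\{|f_k-f|>\eps\}=\{|f|>k+\eps\}$ this is convergence in measure. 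A routine density-and-Cauchy argument then builds a determining sequence for $|f|$ from simple functions approximating the $|f_k|$ in $L^1(\dom,\bor{\dom},\dens^\dom_x)$, the Cauchy property being $\I{\dom}{(|f_l|-|f_k|)}{\dens^\dom_x}\to0$ for $k\le l$ (increments of a bounded increasing sequence); thus $|f|$, and hence $f$, is $\dens^\dom_x$-integrable and $\I{\dom}{|f|}{\dens^\dom_x}=\lim_k\I{\dom}{|f_k|}{\dens^\dom_x}\le L$, which is the assertion. (Equivalently, once $|f|$ is known integrable, dominated convergence with dominating function $|f|$ gives the same.)

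For part~(2), plugging the Borel set $\{|f-\alpha|>\eps\}\subset\{|f-\alpha|\ge\eps\}$ into \reff{pm-e11} and using \reff{ram-e13} gives $\dens^\dom_x(\{|f-\alpha|>\eps\})=0$ for every $\eps>0$, i.e. $f=\alpha$ in measure on $\dom$. Since integrability and the value of the integral depend only on the equivalence class in measure, and the constant $\alpha$ is an integrable simple function with $\I{\dom}{\alpha}{\dens^\dom_x}=\alpha\,\dens^\dom_x(\dom)=\alpha$, we obtain $\sI{\{x\}}{f}{\dens^\dom_x}=\I{\dom}{f}{\dens^\dom_x}=\alpha$, which is \reff{pm-prop7-02}. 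For part~(3), Chebyshev's inequality turns \reff{pm-prop7-03} into the defining condition \reff{ram-e13} for the approximate limit, so $\alim_{y\to x}f(y)=\alpha$ and part~(2) applies; the remaining equality in \reff{pm-prop7-04} follows from $\big|\mI{\dom\cap B_\delta(x)}{f}{\lem}-\alpha\big|\le\mI{\dom\cap B_\delta(x)}{|f-\alpha|}{\lem}\to0$.

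The main obstacle is part~(1): one must lift the set-estimate \reff{pm-e11} to bounded functions and, more seriously, justify the limit $k\to\infty$ inside a merely finitely additive integral, where no Egorov theorem and no a priori monotone convergence theorem are available. The Chebyshev bound $\dens^\dom_x(\{|f|>k\})\le L/k$ is exactly what forces the truncations to converge in measure and what makes the approximating simple functions a determining sequence; with this in hand, parts~(2) and~(3) are immediate.
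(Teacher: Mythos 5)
Your proof is correct and follows the same overall strategy as the paper's: establish the mean-value sandwich \reff{tt-s3-1} for bounded functions, approximate $|f|$ from below by bounded functions, show these converge to $|f|$ in measure by proving $\dens^\dom_x(\{|f|>k\})\to 0$, and pass to a determining sequence of simple functions. The differences are in execution. For the key smallness estimate the paper argues by contradiction directly from the hypothesis: assuming $\dens^\dom_x(\dom_k^0)\ge 2/\sqrt{k}$ it picks radii $\delta_k$ nearly realizing the limsup in \reff{pm-e11} and derives $\mI{B^\dom_{\delta_k}(x)}{|f|}{\lem}\ge\sqrt{k}$, contradicting boundedness; you instead get $\dens^\dom_x(\{|f|>k\})\le L/k$ by a Chebyshev inequality against the already-integrable truncation $f_k$, which is shorter and gives a better rate. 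The paper also works with explicit dyadic simple functions $h_k$ (which are genuinely simple since they vanish where $|f|\ge k$), whereas you use truncations and then invoke density of integrable simple functions in $L^1(\dom,\bor{\dom},\dens^\dom_x)$; both are legitimate. One point where your write-up is actually more careful than the paper's: the sandwich inequality for non-characteristic functions does not follow from the set-level estimate \reff{pm-e11} alone (limsups of sums need not split), so grounding it in the Hahn--Banach construction of Proposition~\ref{tt-s3}, as you do, is the right justification. Parts (2) and (3) coincide with the paper's argument.
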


\begin{remark} \label{pm-rem9}   
(1) Let us discuss the results for $\dom$ open and $x\in\dom$.
Notice first that, in this case, the results are also true for any density
measure $\dens_x$ according to Example~\ref{ex:dzero}, since it agrees with
some $\dens^\dom_x$ on $\dom$. We also observe that \reff{pm-prop7-03} is
valid with $\alpha=f(x)$ at any Lebesgue point $x\in\dom$ of $f$ by definition. 
This way we see that any $f\in L^1_{\rm loc}(\dom)$ is
$\dens^\dom_x$-integrable at 
least for $\lem$-a.e. $x\in\dom$ (cf. \cite[p.~44]{evans}). 
Consequently, for fixed $x\in\dom$ there is a large class of 
$\dens^\dom_x$-integrable functions beyond $L^\infty(\dom)$.
Notice that, conversely, $\dens^\dom_x$-integrability of $f$ does not imply that
$x$ is a Lebesgue point of $f$ or that $f$ has an approximate limit $x$
(take e.g. $f=\chi_A$ for some
$A\subset\bor{\dom}$ such that liminf and limsup in \reff{pm-e11} do not
coincide). 

(2) For $\dom$ open the results also provide an integral representation for a
slightly modified {\it precise representative} of $f$ by 
\begin{equation*}
  \pr{f}(x) =
  \bigg\{ \mbox{\small $ 
  \begin{array}{ll} \sI{\{x\}}{f}{\dens^\dom_x} & 
                    \text{if the integral exists}\,, \\[3pt]
                    0 & \text{otherwise} \,.
  \end{array}$ } 
\end{equation*}
The usually used precise representative, that equals 
\begin{equation*}
  \lim_{\delta\to0}\:\mI{B_\delta(x)}{f}{\lem}
\end{equation*}
if the limit exists,
obviously agrees with $\pr{f}(x)$ at all $x\in\dom$ where \reff{pm-prop7-03}
is satisfied. Thus it differs from $\pr{f}(x)$ 
at most on an $\lem$-zero set (cf. \cite[p.~46]{evans}).
More precisely, $f\in L^1_{\rm loc}(\dom)$ might not be $\dens^\dom_x$-integrable
if merely $\lim_{\delta\to0}\mI{B_\delta(x)}{f}{\lem}$ exists, since
roughly speaking $\sI{\{x\}}{f^\pm}{\dens^\dom_x}$ can be infinite for the
positive and negative part of~$f$. If $f$ is 
$\dens^\dom_x$-integrable, the limit 
$\lim_{\delta\to0}\mI{B_\delta(x)}{f}{\lem}$ can exist but differ from 
$\pr{f}(x)$ (cf. Example~\ref{pm-ex9} below).
Moreover, $f$ can be $\dens^\dom_x$-integrable if the limit
$\lim_{\delta\to0}\mI{B_\delta(x)}{f}{\lem}$ 
does not exist, since , e.g., the inequalities in \reff{tt-s3-1}
can be strict. In this last case the integral will depend on the special
choice of $\dens^\dom_x$. 

For $f$ in $W^{1,1}(\dom)$ or $BV(\dom)$ we readily obtain that it is 
integrable with respect to $\dens^\dom_x$ for $\cH^{n-1}$-a.e.
$x\in\dom$ and that $\pr{f}$ agrees with the usual precise representative
$\cH^{n-1}$-a.e. on $\dom$ (cf. \cite[p.~160, 213]{evans}). 

(3) If $\dom$ is open with Lipschitz boundary and if 
$f$ is in $W^{1,1}(\dom)$ or $BV(\dom)$, then the integral
$\sI{\{x\}}{f}{\dens^\dom_x}$ exists and agrees with the usual trace of $f$ for 
$\cH^{n-1}$-a.e. $x\in\bd\dom$ (cf. \cite[p.~133, 181]{evans}).  
Therefore, the precise representative $\pr{f}$ also provides an integral
representation for the pointwise trace of $f$ in this case. 

(4) Let $\dom\subset\R^n$ be open and let $f\in\cW^{1,1}(\dom,\R^m)$. Then $f$
is differentiable $\lem$-a.e. and its derivative $Df$ equals its
weak derivative $\lem$-a.e. (cf. \cite[p.~235]{evans}). 
For $x\in\dom$ where $f$ is differentiable we have
\begin{equation*}
  \alim_{y\to x}\frac{\big| f(y)-f(x) - Df(x)(y-x) \big|}{|y-x|} = 0
\end{equation*}
and, thus,
\begin{equation*}
  \sI{\{x\}}{\frac{\big| f(y)-f(x) - Df(x)(y-x) \big|}{|y-x|}}{\dens_x^\dom} 
  = 0 \,.
\end{equation*}
Moreover, if $h\in\R^n$ with $h\ne 0$, then
\begin{equation*}
  \lim_{t\to 0} \frac{f(x+th)-f(x)}{t} = Df(x)\,h =
  \alim_{t\to 0} \frac{f(x+th)-f(x)}{t} \,.
\end{equation*}
With a density measure $\dens_0^\R\in\op{ba}(\R,\bor{\R},\cL^1)$ at $t=0$ in
$\R$ we obtain 
\begin{equation*}
  Df(x)\,h = \sI{\{0\}}{\frac{f(x+th)-f(x)}{t}}{\dens_0^\R}
  \qmz{for all} h\in\R^n\,,\; h\ne 0\,,
\end{equation*}
which is an integral representation of the directional derivatives
for $\lem$-a.e. $x\in\dom$. 
\end{remark}

For $f\in BV(\dom)$ we have an even stronger result saying 
that, on a jump set of $f$, the precise representative $\pr{f}$ gives 
the mean value of the approximate limits from both sides for 
$\cH^{n-1}$-a.e. $x$ (cf. also \cite[p.~213]{evans}, 
\cite[p.~175]{ambrosio}).

\begin{corollary}  \label{pm-cor8}   
Let $\dom\subset\R^n$ be open, let $f\in BV(\dom)$, and let $\dens^\dom_x$ be
any density measure at $x\in\dom$ with respect to $\dom$. Then
\begin{equation*}
  \pr{f}(x) = \sI{\{x\}}{f}{\dens^\dom_x} =
  \frac{f^i(x)+f^s(x)}{2} \quad\qmq{$\cH^{n-1}$-a.e. on $\dom$}
\end{equation*}
where 
\begin{equation*}
  f^i(x) = \op{ap} \liminf_{y\to x} f(y) =
  \sup \bigg\{ \alpha \:\Big|\: \lim_{\delta\to 0} 
  \text{\small $\frac{\lem\big(B_\delta(x)\cap\{f<\alpha\}\big)}
       {\lem( B_\delta(x))} = 0 $ } \bigg\}\,,
\end{equation*}
\begin{equation*}
  f^s(x) = \op{ap} \limsup_{y\to x} f(y) =
  \inf \bigg\{ \alpha \:\Big|\: \lim_{\delta\to 0} 
  \text{\small $\frac{\lem\big( B_\delta(x)\cap\{f>\alpha\}\big)}
       {\lem( B_\delta(x))} = 0 $ } \bigg\}\,
\end{equation*}
are the lower and the upper approximate limit of $f$ at $x$, respectively.
\end{corollary}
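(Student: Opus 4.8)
## Proof proposal for Corollary~\ref{pm-cor8}

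The plan is to reduce the statement entirely to Proposition~\ref{pm-prop7}, using the standard structure theory of BV functions to verify its hypotheses at $\cH^{n-1}$-a.e.\ point. First I would recall the classical fact that for $f\in BV(\dom)$ with $\dom$ open, the set $S_f$ where the approximate limit fails to exist is $\cH^{n-1}$-rectifiable and $\cH^{n-1}$-countably covered; away from $S_f$ (i.e.\ for $\cH^{n-1}$-a.e.\ $x\in\dom$) the approximate limit $\alim_{y\to x}f(y)$ exists and is finite, while for $\cH^{n-1}$-a.e.\ $x\in S_f$ the function has a genuine jump structure: there are two values $f^i(x)\le f^s(x)$ and a direction $\nu_f(x)$ such that $f$ has one-sided approximate limits $f^s(x)$ and $f^i(x)$ on the two half-balls determined by $\nu_f(x)$ (cf.\ \cite[p.~213]{evans}, \cite[p.~175]{ambrosio}). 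Outside $S_f$ one also has $f^i(x)=f^s(x)=\alim_{y\to x}f(y)$, so the asserted formula $\pr{f}(x)=\tfrac{f^i(x)+f^s(x)}{2}$ is simply the approximate limit there.

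Next I would split into the two cases. For $\cH^{n-1}$-a.e.\ $x\in\dom\setminus S_f$ the approximate limit $\alpha:=\alim_{y\to x}f(y)$ exists, so hypothesis (2) of Proposition~\ref{pm-prop7} applies directly (with $\dom$ open, $\lem(\dom\cap B_\delta(x))>0$ automatically) and yields $\sI{\{x\}}{f}{\dens^\dom_x}=\alpha=\tfrac{f^i(x)+f^s(x)}{2}$; since $\alpha$ also equals $\lim_{\delta\to0}(f)_{x,\delta}$ by definition of Lebesgue/approximate-limit points for $BV$ functions, this is $\pr{f}(x)$ in the sense of Remark~\ref{pm-rem9}(2). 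For $\cH^{n-1}$-a.e.\ $x\in S_f$ I would set $\alpha:=\tfrac{f^i(x)+f^s(x)}{2}$ and verify \reff{pm-prop7-03}, i.e.\ $\lim_{\delta\to0}\mI{\dom\cap B_\delta(x)}{|f-\alpha|}{\lem}=0$. This is where the one-sided structure is used: by the blow-up characterization of the jump set, $f$ converges in $L^1_{\rm loc}$ on the rescaled balls to the two-valued function taking $f^s(x)$ on the half-space $\{y\cdot\nu_f(x)>0\}$ and $f^i(x)$ on $\{y\cdot\nu_f(x)<0\}$; the two half-balls have equal Lebesgue measure, so the average of $|f-\alpha|$ over $B_\delta(x)$ tends to $\tfrac12|f^s-\alpha|+\tfrac12|f^i-\alpha|=\tfrac12\cdot\tfrac{|f^s-f^i|}{2}\cdot 2 \to 0$ — wait, more carefully: $|f^s(x)-\alpha|=|f^i(x)-\alpha|=\tfrac{f^s(x)-f^i(x)}{2}$, which need not be zero, so \reff{pm-prop7-03} does \emph{not} hold. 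Hence hypothesis (3) is unavailable on $S_f$, and I must instead work with the approximate \emph{one-sided} limits and apply part (2) after a restriction argument, or argue directly that the $\dens^\dom_x$-integral of $f$ equals the average of the two values.

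The honest route on $S_f$: since $\alim_{y\to x}f(y)$ does not exist there, I would instead observe that $f$ does have an approximate limit \emph{in measure} with respect to $\dens^\dom_x$, namely that $f=\alpha$ i.m.\ on $\dom$ for $\alpha=\tfrac{f^i(x)+f^s(x)}{2}$, because the density measure sees the two half-balls symmetrically. Concretely, for the density measure $\dens^\dom_x$ with $\dom$ open the half-ball $E^+=\{y:\,(y-x)\cdot\nu_f(x)>0\}$ satisfies $\dens^\dom_x(E^+)=\dens^\dom_x(E^-)=\tfrac12$ (up to the choice freedom in \reff{pm-e11}, which on these symmetric cones gives exactly $\tfrac12$), and on $E^\pm$ the function $f$ agrees i.m.\ with the constants $f^s(x)$, $f^i(x)$ respectively by the one-sided approximate-limit property; then $\sI{\{x\}}{f}{\dens^\dom_x}=f^s(x)\dens^\dom_x(E^+)+f^i(x)\dens^\dom_x(E^-)=\tfrac{f^i(x)+f^s(x)}{2}$, using linearity of the integral and dominated convergence (the relevant functions are bounded near $x$ for $\cH^{n-1}$-a.e.\ $x$ after discarding a negligible set). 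The main obstacle is precisely this last point: pinning down that $\dens^\dom_x(E^\pm)=\tfrac12$ for the symmetric half-spaces despite the non-uniqueness of $\dens^\dom_x$ — this needs \reff{pm-e11} together with the fact that $\liminf$ and $\limsup$ of $\tfrac{\lem(E^\pm\cap B_\delta(x))}{\lem(B_\delta(x))}$ both equal $\tfrac12$, so the sandwich forces the value — and then combining it with the i.m.\ identification of $f$ on each cone, which in turn rests on the $BV$ blow-up theorem. Once those pieces are in place, the equality with $\pr{f}(x)$ follows from Remark~\ref{pm-rem9}(2) since \reff{pm-prop7-03} fails exactly on the $\cH^{n-1}$-negligible-for-the-a.e.-statement... rather, since the classically-defined precise representative already equals $\tfrac{f^i+f^s}{2}$ $\cH^{n-1}$-a.e.\ and we have shown the $\dens^\dom_x$-integral computes the same value, the two agree $\cH^{n-1}$-a.e.\ on $\dom$, which is the claim.
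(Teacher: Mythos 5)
Your proposal is correct and follows essentially the same route as the paper: split $\cH^{n-1}$-a.e.\ points into those where the approximate limit exists (apply Proposition~\ref{pm-prop7}(2) directly) and jump points, where you restrict the density measure to the two half-spaces from the BV blow-up structure, use the sandwich in \reff{pm-e11} to see that the doubled restrictions are density measures with respect to the half-spaces (equivalently $\dens^\dom_x(E^\pm)=\tfrac12$), evaluate each one-sided integral as the corresponding approximate limit, and average. Your self-correction on the failure of \reff{pm-prop7-03} at jump points is exactly right, and the final argument you settle on is the one the paper gives.
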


\medskip

For $\dom\in\bor{\R^n}$ and $x\in\ol\dom$ such that 
$\lem(\dom\cap B_\delta(x))>0$ for all $\delta>0$, 
we readily get from \reff{pm-e11} 
for all $A\in\bor{\dom}$ 
\begin{equation*}
  \I{\dom}{\chi_A}{\dens_x} = \dens_x(A) \le
  \limsup_{\delta\to 0}\frac{\lem\big(A\cap B^\dom_\delta(x)\big)}
          {\lem(B^\dom_\delta(x))} =
  \limsup_{\delta\to 0} \mI{B^\dom_\delta(x)}{\chi_A}{\lem}
\end{equation*}
and an analogous relation with $\liminf$. This implies for 
simple functions $h$ that
\begin{equation} \label{pm-e2}
  \liminf_{\delta\to 0} \mI{B^\dom_\delta(x)}{h}{\lem} \le
  \I{\dom}{h}{\dens_x} \le 
  \limsup_{\delta\to 0} \mI{B^\dom_\delta(x)}{h}{\lem} \,.
\end{equation}
Now we can ask how far this remains true for $\dens^\dom_x$-integrable
$f\in L^1_{\rm loc}(\dom)$. For any $f\in L^\infty(\dom)$
this follows from \reff{tt-s3-1} below (take $\K=\{x\}$ and $\alse=\dom$).
If $f\in L^1_{\rm loc}(\dom)$ satisfies \reff{pm-prop7-03}, then \reff{pm-e2} 
follows with equality. Let us provide an example that this might fail if 
merely $\alim_{y\to x} f(y)=\alpha$. 

\begin{example} \label{pm-ex9}   
Let $\dom=B_1(0)\subset\R^2$. With polar coordinates  
$(r,\beta)$ we set (radius $r>0$, angle $\beta\in[0,2\pi)$) 
\begin{equation*}
  \dom'= \big\{(r,\beta)\,\big|\, 0<r<1,\: 0<\beta<r^2 \big\} \subset\dom\,.
\end{equation*}
Now we consider $f\in  L^1(\dom)$ given by
\begin{equation*}
  f(x) = \bigg\{ 
  \mbox{\small $ 
  \begin{array}{cl} \frac{1}{|x|} & \text{on } \dom'\,, \\[3pt]
                    0 & \text{otherwise\,.}
  \end{array}$ } 
\end{equation*}
Then, for $0<\delta<1$,
\begin{equation*}
  \mI{B_\delta(0)}{f}{\cL^2} = 
  \frac{1}{\pi\delta^2} \int_0^\delta\int_0^{r^2}\frac{1}{r}\,d\beta\, dr =
  \frac{1}{\pi\delta^2} \int_0^\delta r\,dr = \frac{1}{2\pi}
\end{equation*}
From \reff{pm-e11} we obtain for $x=0$
\begin{equation*}
  0 \le \dens_0(\dom') \le \limsup_{\delta \downarrow 0}
  \frac{\lem(\dom' \cap B_\delta(0))}{\lem(B_\delta(0))} = 0 \,.
\end{equation*}
Since $\dom'\cap B_{\frac{1}{\eps}}(0)=\{|f-0|>\eps\}$ for $\eps>0$,
\begin{equation} \label{pm-ex9-3}
  \alim_{y\to 0} f(y)=0 \,.
\end{equation}
Consequently, by Proposition~\ref{pm-prop7}, $f=0$ i.m. $\dens^\dom_0$
and
\begin{equation} \label{pm-ex9-4}
  0 = \sI{\{0\}}{f}{\dens^\dom_0} < 
  \liminf_{\delta\to 0} \mI{B_\delta(0)}{f}{\lem}  \,.
\end{equation}
For $-f$ we get the opposite inequality with $\limsup$. Thus \reff{pm-e2} is
not valid with $\pm f$ instead of $h$. Since 
$\lim_{\delta\to 0}\mI{B_\delta(0)}{f}{\lem}>0$ exists, $\pr{f}(x)=0$
differs from the usual precise representative. If we define $f$ with 
$\frac{1}{|x|^2}$ instead of $\frac{1}{|x|}$ on $\dom'$, then 
we still have \reff{pm-ex9-3} but
\begin{equation*}
  \mI{B_\delta(0)}{f}{\lem}=
  \frac{1}{2\pi\delta}\overset{\delta\to 0}{\longrightarrow}\infty \,,
\end{equation*}
i.e. the equality in \reff{pm-ex9-4} remains the same while the 
$\liminf$ becomes even infinite. The example also shows that the boundedness
of $\mI{\dom\cap B_\delta(0)}{|f|}{\lem}$ for small $\delta>0$ is not
sufficient for the second equality in \reff{pm-prop7-04}. 
\end{example}

\medskip

\begin{proof+}{ of Proposition~\ref{pm-prop7}}   
We have that $f\in L^1(\dom\cap B_{\bar\delta}(x))$
for some $\bar\delta>0$. 

For (1) there is some $c>0$ such that 
$\mI{B^\dom_\delta(x)}{|f|}{\lem}<c$ for all $\delta\in(0,\tilde\delta)$.
We define for each $k\in\N$
\begin{equation*}
  \dom_k := \big\{y\in\dom \:\big|\: |f(y)|<k \big\}\,, \quad 
  \dom_k^0 := \dom\setminus\dom_k\,,
\end{equation*}
\begin{equation*}
  h_k(x) := \bigg\{ \mbox{\small $ 
  \begin{array}{ll} \tfrac{l}{2^k} & \text{on } 
    |f|^{-1}\big(\big[\tfrac{l}{2^k},\tfrac{l+1}{2^k}\big)\big) 
    \cap \dom_k \text{ for all } l\in\N\,,\\[4pt]
    0 & \text{on } \dom_k^0 \,.
  \end{array}$ } 
\end{equation*}
Obviously all $h_k$ are Borel measurable and, thus, they are 
simple functions related to $\dens^\dom_x$. 
Clearly, $h_k\le|f|$ on $\dom$
for all $k\in\N$. By $|\,h_k-|f|\,|<\tfrac{1}{2^k}$ on $\dom_k$ we get
\begin{equation*}
  \big\{ y\in\dom\:\big|\: |\,h_k-|f|\,|>\eps \big\} \subset \dom_k^0
  \qmq{if} \tfrac{1}{2^k}<\eps \,.
\end{equation*}
Therefore
\begin{equation} \label{pm-prop7-1}
  \dens_x^\dom\big\{ |\,h_k-|f|\,|>\eps \big\} \le \dens_x^\dom(\dom_k^0) \,.
\end{equation}
Let us first assume that $\dens^\dom_x(\dom_k^0)\ge\tfrac{2}{\sqrt{k}}$. 
Then we choose $\delta_k\in(0,\tilde\delta)$ such that
\begin{equation*}
  \limsup_{\delta\to 0}
  \frac{\lem\big(\dom_k^0\cap B^\dom_\delta(x)\big)}{\lem(B^\dom_\delta(x))} \le 
  \frac{\lem\big(\dom_k^0\cap B^\dom_{\delta_k}(x)\big)}
       {\lem(B^\dom_{\delta_k}(x))} +
  \frac{1}{\sqrt{k}} \,.
\end{equation*}
Consequently,
\begin{eqnarray*}
  \mI{B^\dom_{\delta_k}(x)}{|f|}{\lem} 
&=&
  \frac{1}{\lem(B^\dom_{\delta_k}(x))} \bigg(\,
  \I{B^\dom_{\delta_k}(x)\cap\dom_k}{|f|}{\lem} +
  \I{B^\dom_{\delta_k}(x)\cap\dom_k^0}{|f|}{\lem} \bigg) \\
&\ge&
  k \, \frac{\lem\big(\dom_k^0\cap B^\dom_{\delta_k}(x)\big)}
            {\lem(B^\dom_{\delta_k}(x))} 
\; \ge \;
  k\,\bigg(\limsup_{\delta\to 0} 
  \frac{\lem\big(\dom_k^0\cap B^\dom_\delta(x)\big)}{\lem(B^\dom_\delta(x))} -
  \frac{1}{\sqrt{k}} \bigg) \\
&\overset{\reff{pm-e11}}{\ge}&
  k\, \big( \dens^\dom_x(\dom_k^0) - \tfrac{1}{\sqrt{k}}\, \big)
\; \ge \;
  \sqrt{k} \,.
\end{eqnarray*}
But this is impossible for $\sqrt{k}>c$ by the boundedness of the left hand
side and, therefore, $\dens^\dom_x(\dom_k^0)<\tfrac{2}{\sqrt{k}}$ for such $k$. 
Using \reff{pm-prop7-1} we get 
\begin{equation*}
  h_k \convim{\dens^\dom_x} |f| \,.
\end{equation*}
With \reff{pm-e11} we obtain for all $A\in\bor{\dom}$
\begin{equation*}
  \I{\dom}{\chi_A}{\dens_x} = \dens_x(A) \le
  \limsup_{\delta\to 0}\frac{\lem\big(A\cap B^\dom_\delta(x)\big)}
          {\lem(B^\dom_\delta(x))} =
  \limsup_{\delta\to 0} \mI{B^\dom_\delta(x)}{\chi_A}{\lem}
\end{equation*}
and an analogous relation with $\liminf$. This implies for 
the simple functions $h_k$ that
\begin{equation*} \label{pm-prop7-2}
  \liminf_{\delta\to 0} \mI{B^\dom_\delta(x)}{h_k}{\lem} \le
  \I{\dom}{h_k}{\dens_x} \le 
  \limsup_{\delta\to 0} \mI{B^\dom_\delta(x)}{h_k}{\lem} \,.
\end{equation*}
Since $0\le h_k\le|f|$ on $\dom$, we obtain that 
\begin{equation} \label{pm-prop7-4}
  0 \le \I{\dom}{h_k}{\dens_x} \le 
   \limsup_{\delta\to 0} \mI{B^\dom_\delta(x)}{|f|}{\lem} < c \,.
\end{equation}
By construction, the sequence $\{h_k\}$ of simple functions is
increasing. Hence 
\begin{equation*}
  \I{\dom}{|h_k-h_l|}{\dens_x} \to 0 \qmq{as} k,l\to\infty\,. 
\end{equation*}
Consequently, $|f|$ is $\dens^\dom_x$-integrable with determining sequence
$\{h_k\}$ and, hence, also $f$ is $\dens^\dom_x$-integrable. 
Taking the limit $k\to\infty$ in \reff{pm-prop7-4} we get the remaining 
estimate.

For (2) we fix $\eps>0$ and set
$\dom^\eps := \big\{ y\in\dom \:\big|\: |f-\alpha|\ge\eps\big\}$. 
Then, by \reff{pm-e11},
\begin{equation*}
  0 \le \dens^\dom_x (\dom^\eps) \le 
  \limsup_{\delta\to 0} 
  \frac{\lem\big(\dom^\eps\cap B^\dom_\delta(x)\big)}{\lem(B^\dom_\delta(x))} 
  = 0
\end{equation*}
where the last equality follows from 
$\alim_{y\to x} f(y)=\alpha$.
Thus, $\dens_x^\dom(\dom^\eps)=0$ for all $\eps>0$,
which implies $f=\alpha$ i.m. on $\dom$. Hence the constant sequence 
$\{\alpha\}_k$ is a determining
sequence for $f$. Therefore $f$ is $\dens^\dom_x$-integrable with
\begin{equation*}
  \sI{\{x\}}{f}{\dens^\dom_x} = 
  \I{\dom}{\alpha}{\dens^\dom_x} = \alpha \dens^\dom_x(\dom) =
  \alpha \,.  
\end{equation*}

For (3) we use $\dom^\eps$ as defined in the proof of (2) to get
\begin{eqnarray*}
  0 
& \le & 
  \limsup_{\delta\to 0} 
  \frac{\lem\big(\dom^\eps\cap B^\dom_\delta(x)\big)}{\lem(B^\dom_\delta(x))}  \\
&\le&
   \limsup_{\delta\to 0} 
  \frac{1}{\eps\lem(B^\dom_\delta(x))} \:
  \I{\dom^\eps\cap B^\dom_\delta(x)}{|f-\alpha|}{\lem}             \\
&\le&
  \limsup_{\delta\to 0} 
  \frac{1}{\eps} \:\mI{B^\dom_\delta(x)}{|f-\alpha|}{\lem} 
\; = \; 0 \,.
\end{eqnarray*}
Hence $\alim_{y\to x} f(y)=\alpha$ by the definition of $\dom^\eps$. Thus (2)
implies that $f$ is $\dens^\dom_x$-integrable with
$\sI{\{x\}}{f}{\dens^\dom_x}=\alpha$.  
By \reff{pm-prop7-03}, 
\begin{equation*}
  \Big|\: \mI{\dom\cap B_\delta(x)}{\alpha-f\,}{\lem}\,\Big| \le
  \mI{\dom\cap B_\delta(x)}{|\alpha-f|}{\lem} 
  \,\overset{\delta\to 0}{\longrightarrow}\, 0 \,.
\end{equation*}
Hence
\begin{equation*}
  \alpha = \lim_{\delta\to 0} \mI{\dom\cap B_\delta(x)}{\alpha}{\lem} =
  \lim_{\delta\to 0} \mI{\dom\cap B_\delta(x)}{f}{\lem}  \,
\end{equation*}
which verifies the assertion.
\end{proof+}

\medskip

\begin{proof+}{ of Corollary~\ref{pm-cor8}}   
We have
\begin{equation} \label{pm-cor8-1}
  -\infty < f^i(x) \le f^s(x) < \infty \qmq{for $\cH^{n-1}$-a.e. $x\in\dom$\,}
\end{equation}
(cf. \cite[p.~211]{evans}). 
We use $\tilde f(x)=\tfrac{1}{2}(f^i(x)+f^s(x))$ and 
show the statement for $x\in\dom$
where \reff{pm-cor8-1} is satisfied. If the approximate limit of $f$ at $x$
exists, then 
\begin{equation*}
  f^i(x)=f^s(x)=\alim_{y\to x}f(y)
\end{equation*}
and $\pr{f}(x)=\tilde f(x)$ by Proposition~\ref{pm-prop7}. 
Otherwise there are disjoint open half
spaces $H_\pm\subset\R^n$ such that $x\in\bd H_\pm$ and
\begin{equation*}
  f^i(x) = \alim_{\substack{y\to x\\y\in H_-}} f(y) \,, \quad
  f^s(x) = \alim_{\substack{y\to x\\y\in H_+}} f(y) \,
\end{equation*}
(cf. \cite[p.~213]{evans} and notice that merely 
half balls $B_\delta^{H_\pm}(x)$ enter the computation of $\alim$). 
By $\lem(H^+\cap H^-)=0$, the measures
\begin{equation*}
   \dens_x^{\dom\cap H^\pm} := 2\,\reme{\dens_x^\dom}{H^\pm}
\end{equation*}
are density measures at $x$ with respect to $H^\pm$. 
From Proposition~\ref{pm-prop7} (2) with $\dom\cap H^\pm$ instead of $\dom$
we get
\begin{equation*}
  f^i(x)=\sI{\{x\}}{f}{\dens^{\dom\cap H^-}_x}\,, \quad
  f^s(x)=\sI{\{x\}}{f}{\dens^{\dom\cap H^+}_x} \,. 
\end{equation*}
With 
\begin{equation*}
  \dens^\dom_x = \tfrac{1}{2} 
  \big( \dens^{\dom\cap H^-}_x + \dens^{\dom\cap H^-}_x \big) 
\end{equation*}
we get $\pr{f}(x)=\tilde f(x)$ also in this case. 
\end{proof+}

\section{Theory of traces}
\label{tt}

For the treatment of partial differential equations, Sobolev and BV functions
play an essential role. Since they cannot be evaluated directly on the
boundary, it is common to consider a trace operator for sufficiently
regular $\dom$.  As typical example available today we can take
$\dom\subset\R^n$ open and bounded with Lipschitz boundary to have a
linear continuous operator
\begin{equation*}
  T:\cW^{1,1}(\dom)\to\cL^1(\bd\dom,\cH^{n-1})
\end{equation*}
such that for all $f\in\cW^{1,1}(\dom)$ and $\phi\in C^1(\ol\dom,\R^n)$\\
\begin{equation} \label{tt-e0}
  \I{\dom}{f\divv\phi}{\lem} + \I{\dom}{\phi\cdot Df}{\lem} =
  \I{\bd\dom}{\phi\cdot\normal{\dom}\, Tf }{\cH^{n-1}}
\end{equation}
(cf. \cite[p.~133]{evans}, \cite[p.~168]{pfeffer}). Here the surface integral
on the right hand side is related to the vector-valued Radon measure 
$\normal{\dom}\, Tf \cH^{n-1}\lfloor\bd\dom$. This basically restricts
\reff{tt-e0} to sets $\dom$ of finite perimeter, since these are the sets 
having a suitable normal field on their
boundary. We will overcome that limitation by a much more general approach.

\subsection{General traces}
\label{tt-gt}

Notice that the left hand side in \reff{tt-e0} can be considered as linear
continuous functional $f^*\in C^1(\ol\dom,\R^n)^*$ such that 
\begin{equation} \label{tt-e1} 
  \df{f^*}{\tf}=0 \qmq{if}
  \tf_{|\bd\dom}=0 \,. 
\end{equation}
In this light we introduce a more general notion of trace. 
Let $\edom\subset \Rn$ be a Borel set, let $\K\subset\cl\edom$, and let $X$
be a normed space of functions $\tf:U\to\R^m$. A \textit{trace} or
\textit{trace functional} on $\K$ over $X$ is some $f^*\in X^*$ such that
for all $\tf\in X$ 
\begin{equation} \label{tt-e2} 
  \df{f^*}{\tf}=0 \quad\qmq{if}\quad \tf_{|\K_\delta\cap\edom}=0 
  \zmz{for some} \delta>0\,.
\end{equation}
Clearly $f^*$ in \reff{tt-e1} is a trace on $\bd\dom$ related to~$f$.
Since $\K_\delta=(\ol \K)_\delta$ for all $\delta>0$, it is
sufficient to consider traces on closed $\K$.

Let us motivate our approach by some traces over
$\cL^\infty(\edom)$ that in particular show that  
\reff{tt-e1} would be too restrictive for general traces we intend to study.

\begin{proposition}  \label{tt-s3}
Let $\edom \subset \Rn$ be a Borel set, let $\alse\in\bor{\edom}$, 
let $\K\subset\ol\edom$ be closed, 
and let $\gamma:(0,\infty)\to(0,\infty)$ be continuous such that
\begin{equation} \label{tt-s3-0}
  c := \limsup_{\delta \downarrow 0} \frac{1}{\gamma(\delta)}
  \I{\K_\delta\cap\alse}{}{\lem}  \quad \text{is finite.}
\end{equation}
Then there exists a measure    
\begin{equation*}
  \me_\K\in \bawl{\edom} \qmq{with} \cor\me_\K\subset \K\,, \quad
  |\me_\K|(\edom)\le c
\end{equation*}
such that $\f_\K^*\in\cL^\infty(\edom)^*$ related to 
$\f\me_\K$ is a trace on $\K$ for all $\f\in \cL^\infty(\edom)$
and 
\begin{eqnarray} 
  \liminf_{\delta \downarrow 0} \frac{1}{\gamma(\delta)}\,
  \I{\K_\delta\cap\alse}{\tf\f}{\lem} 
&\le& 
  \df{f^*_\K}{\tf} = \sI{\K}{\tf\f}{\me_\K} \nonumber \\
&\le&  \label{tt-s3-1}
  \limsup_{\delta \downarrow 0} \frac{1}{\gamma(\delta)}\,
  \I{\K_\delta\cap\alse}{\tf\f}{\lem} \,
\end{eqnarray}
for all $\tf \in \cL^\infty(\edom)$. If the limsup in \reff{tt-s3-0} is a
limit, then $|\me_\K|(\edom)=c$. The mapping
\begin{equation*} 
  T:\cL^\infty(\edom)\to\cL^\infty(\edom)^* \qmq{with} Tf=f_\K^*
\end{equation*}
is linear and continuous. For fixed $f,\tf\in\cL^\infty(\edom)$ there is
a sequence $\delta_j\downarrow 0$ with
\begin{equation}  \label{tt-s3-2}
  \sI{\K}{\tf\f}{\me_\K} = 
  \lim_{j\to\infty} \frac{1}{\gamma(\delta_j)}\,
  \I{\K_{\delta_j}\cap\alse}{\tf\f}{\lem} \,.
\end{equation}
\end{proposition}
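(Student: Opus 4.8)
The plan is to build the whole family of trace functionals from a single weak-$*$ cluster point of the averaging functionals attached to the shrinking neighbourhoods $\K_\delta$. Since $c<\infty$ in \reff{tt-s3-0}, there is $\delta_0>0$ with $\lem(\K_\delta\cap\alse)<\infty$ for $0<\delta<\delta_0$; for such $\delta$ I would set
\[
  \df{L_\delta}{g} := \frac{1}{\gamma(\delta)}\,\I{\K_\delta\cap\alse}{g}{\lem}
  \qquad\big(g\in\cL^\infty(\edom)\big)\,,
\]
so that $L_\delta\in\cL^\infty(\edom)^*$ with $\|L_\delta\|\le\gamma(\delta)^{-1}\lem(\K_\delta\cap\alse)$, hence $\|L_\delta\|\le c+1$ once $\delta$ is small enough. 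Because $\cL^\infty(\edom)$ is a Banach space, the ball $\{\lme\in\cL^\infty(\edom)^*\mid\|\lme\|\le c+1\}$ is weak-$*$ compact (Banach--Alaoglu), so the net $(L_\delta)_{\delta\downarrow0}$, which eventually lies in it, has a weak-$*$ cluster point $\Lambda$. I would then record the two standard facts about cluster points of nets that do all the work: for every $g\in\cL^\infty(\edom)$, $\df{\Lambda}{g}$ is a cluster value of the real net $\big(\df{L_\delta}{g}\big)_{\delta\downarrow0}$, so that $\liminf_{\delta\downarrow0}\df{L_\delta}{g}\le\df{\Lambda}{g}\le\limsup_{\delta\downarrow0}\df{L_\delta}{g}$, and there is a sequence $\delta_j\downarrow0$ (depending on $g$) with $\df{L_{\delta_j}}{g}\to\df{\Lambda}{g}$. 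In particular $|\df{\Lambda}{g}|\le\limsup_{\delta\downarrow0}|\df{L_\delta}{g}|\le c\,\|g\|_\infty$, so $\Lambda\in\cL^\infty(\edom)^*$ with $\|\Lambda\|\le c$.

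Next, by Proposition~\ref{pm-s5} (applied with the $\sigma$-algebra $\bor\edom$ and $\sigma=\lem$) there is $\me_\K\in\bawl{\edom}$ with $\df{\Lambda}{\tf}=\I{\edom}{\tf}{\me_\K}$ for all $\tf\in\cL^\infty(\edom)$ and $|\me_\K|(\edom)=\|\Lambda\|\le c$. For $\cor\me_\K\subset\K$ I would fix a point $x$ of the compactification $\ol{\R^n}$ lying outside the closed set $\K$, choose a relatively compact open neighbourhood $U$ of $x$ with $\overline U\cap\K=\emptyset$, and observe $U\cap\K_\delta=\emptyset$ for all small $\delta$; then $\df{L_\delta}{\tf}=0$ for such $\delta$ and every $\tf\in\cL^\infty(\edom)$ vanishing off $U\cap\edom$, whence $\I{\edom}{\tf}{\me_\K}=\df{\Lambda}{\tf}=0$ for all such $\tf$, i.e. $\me_\K(A)=0$ for every Borel $A\subset U\cap\edom$, so $|\me_\K|(U\cap\edom)=0$ and $x\notin\cor\me_\K$. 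If moreover the limsup in \reff{tt-s3-0} is a limit, then $g=\chi_\alse$ gives $\df{L_\delta}{\chi_\alse}=\gamma(\delta)^{-1}\lem(\K_\delta\cap\alse)\to c$, so $\me_\K(\alse)=\df{\Lambda}{\chi_\alse}=c$ and $|\me_\K|(\edom)\ge|\me_\K(\alse)|=c$, which together with $|\me_\K|(\edom)\le c$ yields equality.

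It remains to read off the statements about $f^*_\K$. For fixed $f\in\cL^\infty(\edom)$ let $f^*_\K$ be the functional associated with the measure $f\me_\K$ defined as in \reff{pm-e-fm}; by the transformation property $\I{\edom}{\tf}{f\me_\K}=\I{\edom}{\tf f}{\me_\K}$ (immediate for simple $\tf$, then via a determining sequence) one has $\df{f^*_\K}{\tf}=\df{\Lambda}{\tf f}$, and since $\cor\me_\K\subset\K$ this integral equals $\sI{\K}{\tf f}{\me_\K}$ by \reff{pm-e1}. Inserting $g=\tf f$ into the two cluster-point facts above gives precisely the sandwich \reff{tt-s3-1} and, with the sequence there, \reff{tt-s3-2}. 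If $\tf_{|\K_\delta\cap\edom}=0$ for some $\delta>0$, then $\tf f$ vanishes on $\K_{\delta'}\cap\alse\subset\K_\delta\cap\edom$ for all $\delta'\le\delta$, so $\df{L_{\delta'}}{\tf f}=0$ for $\delta'\le\delta$ and hence $\df{\Lambda}{\tf f}=0$; thus $f^*_\K$ is a trace on $\K$. Finally $f\mapsto f\me_\K$ is linear, so $T\colon f\mapsto f^*_\K$ is linear, and $\|Tf\|=\sup_{\|\tf\|_\infty\le1}|\df{\Lambda}{\tf f}|\le c\,\|f\|_\infty$ gives continuity.

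The one genuinely delicate point — and the reason for choosing a weak-$*$ cluster point rather than, say, a Hahn--Banach extension of $g\mapsto\limsup_{\delta\downarrow0}\df{L_\delta}{g}$ or a Banach-limit-type functional — is that the latter constructions would still yield \reff{tt-s3-1} but not the pointwise realisation \reff{tt-s3-2}, since a generalized limit of a net need not be one of its cluster values. Passing to a cluster point of $(L_\delta)$ repairs this while keeping the liminf/limsup bounds intact for every fixed $f,\tf$. Minor points requiring some care are the compactification in which $\cor$ is computed when $\edom$ is unbounded (one works in $\ol{\R^n}$, as in Section~\ref{pm}) and the $\me_\K$-integrability of $\chi_\alse$, $\tf$ and $\tf f$, which is automatic since $\me_\K$ is a bounded element of $\bawl{\edom}$ by Proposition~\ref{pm-s5}.
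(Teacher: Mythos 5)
Your proof is correct, but it takes a genuinely different route from the paper's. The paper restricts the averaging functionals to the subspace $X_0$ of those $\tf$ for which $\lim_{\delta\downarrow 0}\frac{1}{\gamma(\delta)}\I{\K_\delta\cap\alse}{\tf}{\lem}$ exists, and extends by Hahn--Banach with the positively homogeneous, subadditive majorant $\tilde g(\tf)=\limsup_{\delta\downarrow 0}\frac{1}{\gamma(\delta)}\I{\K_\delta\cap\alse}{\tf}{\lem}$; this yields the sandwich \reff{tt-s3-1} directly. You instead take a weak-$*$ cluster point of the net $(L_\delta)$ via Banach--Alaoglu, and the sandwich plus the realisation \reff{tt-s3-2} both drop out of the elementary fact that $\df{\Lambda}{g}$ is then a cluster value of the scalar net $(\df{L_\delta}{g})$ for every fixed $g$. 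The two constructions in fact agree on $X_0$, and the remaining steps (Proposition~\ref{pm-s5}, the core argument, the norm identities, linearity of $T$) are essentially identical. One remark of yours is slightly off, though: you claim the Hahn--Banach route cannot deliver \reff{tt-s3-2} because a value in $[\liminf,\limsup]$ need not be a cluster value. The paper closes exactly this gap by observing that $\delta\mapsto I(\delta)=\frac{1}{\gamma(\delta)}\I{\K_\delta\cap\alse}{\tf f}{\lem}$ is continuous in $\delta$ (this is where the continuity of $\gamma$, together with $\lem(\{\op{dist}_\K=\delta\})=0$, is actually used), so by the intermediate value theorem every value between the liminf and the limsup is attained on each interval $(0,\tilde\delta)$. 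Your cluster-point argument buys you \reff{tt-s3-2} without invoking this continuity at all, which is a small gain in economy; the paper's approach buys an explicit description of the extension as dominated by a concrete sublinear functional. Both are valid.
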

\noi
For a nontrivial measure $\mu_\K$ one obviously needs
\begin{equation*}
  \I{\K_\delta\cap\alse}{}{\lem} = \lem(\K_\delta\cap\alse)
  >0 \qmz{for all} \delta>0\,.
\end{equation*}
In applications we consider the special choices $\gamma(\delta)=\delta$ and
\begin{equation}  \label{tt-s3-3}
  \gamma(\delta)=\lem(\K_\delta\cap\alse) \quad 
  \mbox{\Big( {\small  thus \;\;
     $\displaystyle
      \frac{1}{\gamma(\delta)}\, \I{\K_\delta\cap\alse}{\tf\f}{\lem} =
      \mI{\K_\delta\cap\alse}{\tf\f}{\lem}$  }\Big).}
\end{equation}
Notice that $|\me_\K|(\edom)=1$ in case \reff{tt-s3-3}. 
In some examples the following approximation result turns out to be helpful.

\begin{corollary}  \label{tt-s4}
Let $\edom \subset \R^n$, $\alse\in\bor{\edom}$, $\K\subset\ol\edom$, 
$\gamma\in C(\R_{>0})$, and $\me_\K$ be as in Proposition~{\rm \ref{tt-s3}}.
Moreover let $\tilde\delta>0$ and $\tf_k,\tf\in\cL^\infty(\edom)$ be such
that 
\begin{equation} \label{tt-s4-1}
  \frac{1}{\gamma(\delta)} \I{\K_\delta\cap\alse}{\tf}{\lem} =
  \lim_{k\to\infty} \frac{1}{\gamma(\delta)} \I{\K_\delta\cap\alse}{\tf_k}{\lem} 
  \qmz{uniformly for} \delta\in(0,\tilde\delta)\,,
\end{equation}
\begin{equation}  \label{tt-s4-2}
  \sI{\K}{\tf_k}{\me_\K} =
  \lim_{\delta \downarrow 0} \frac{1}{\gamma(\delta)}
  \I{\K_\delta\cap\alse}{\tf_k}{\lem} 
  \qmz{for all} k\in\N\,.
\end{equation}
Then 
\begin{equation*}
   \sI{\K}{\tf}{\me_\K} = \lim_{k\to\infty}\sI{\K}{\tf_k}{\me_\K}\,.
\end{equation*}
\end{corollary}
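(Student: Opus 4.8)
The plan is to bound the difference $\sI{\K}{\tf}{\me_\K}-\sI{\K}{\tf_k}{\me_\K}$ directly and show that it vanishes as $k\to\infty$. Since $\me_\K\in\bawl{\edom}$, every element of $\cL^\infty(\edom)$ is $\me_\K$-integrable (Proposition~\ref{pm-s5}) and the integral is linear, so $\sI{\K}{\tf}{\me_\K}-\sI{\K}{\tf_k}{\me_\K}=\sI{\K}{(\tf-\tf_k)}{\me_\K}$, and it suffices to estimate the right-hand side. The idea is to feed the perturbation $\tf-\tf_k\in\cL^\infty(\edom)$ into the sandwich estimate \reff{tt-s3-1} of Proposition~\ref{tt-s3} (taken with $\f\equiv1$, so that $\tf\f=\tf$ and $\sI{\K}{\cdot}{\me_\K}$ is sandwiched between the lower and upper limit of the normalized integrals) and then invoke the uniform convergence hypothesis \reff{tt-s4-1}.

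Concretely, I would first apply \reff{tt-s3-1} with $\tf-\tf_k$ in place of $\tf$ to get
\[
  \liminf_{\delta\downarrow0}\frac{1}{\gamma(\delta)}\I{\K_\delta\cap\alse}{(\tf-\tf_k)}{\lem}
  \;\le\;\sI{\K}{(\tf-\tf_k)}{\me_\K}\;\le\;
  \limsup_{\delta\downarrow0}\frac{1}{\gamma(\delta)}\I{\K_\delta\cap\alse}{(\tf-\tf_k)}{\lem}.
\]
Putting $s_k:=\sup_{0<\delta<\tilde\delta}\frac{1}{\gamma(\delta)}\bigl|\I{\K_\delta\cap\alse}{(\tf-\tf_k)}{\lem}\bigr|$, both the $\liminf$ and the $\limsup$ above lie in $[-s_k,s_k]$, whence $\bigl|\sI{\K}{(\tf-\tf_k)}{\me_\K}\bigr|\le s_k$. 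By linearity of the Lebesgue integral the quotient under the supremum equals $g(\delta)-g_k(\delta)$ with $g(\delta)=\frac{1}{\gamma(\delta)}\I{\K_\delta\cap\alse}{\tf}{\lem}$ and $g_k(\delta)=\frac{1}{\gamma(\delta)}\I{\K_\delta\cap\alse}{\tf_k}{\lem}$, so \reff{tt-s4-1} says precisely that $s_k\to0$. Hence $\sI{\K}{\tf_k}{\me_\K}\to\sI{\K}{\tf}{\me_\K}$, which is the assertion.

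I expect no genuine obstacle here; the statement is essentially a repackaging of uniform convergence together with linearity. The one point requiring a little care is that \reff{tt-s3-1} is applied to the \emph{difference} $\tf-\tf_k$ rather than to $\tf$ or $\tf_k$ separately — which is legitimate because \reff{tt-s3-1} holds for every function in $\cL^\infty(\edom)$ — and that $s_k$ is finite for all large $k$, which is automatic once $\sup_{0<\delta<\tilde\delta}|g-g_k|<\infty$, and this is guaranteed by the uniform convergence in \reff{tt-s4-1}. Incidentally, the argument above uses only \reff{tt-s3-1} and \reff{tt-s4-1}; if one prefers a route through \reff{tt-s4-2}, one can instead argue via the Moore--Osgood interchange-of-limits lemma: \reff{tt-s4-1} gives $g_k\to g$ uniformly on $(0,\tilde\delta)$, \reff{tt-s4-2} gives that $\lim_{\delta\downarrow0}g_k(\delta)=\sI{\K}{\tf_k}{\me_\K}$ exists, hence $\lim_{\delta\downarrow0}g(\delta)$ exists and equals $\lim_k\sI{\K}{\tf_k}{\me_\K}$, and finally \reff{tt-s3-1} forces this common value to be $\sI{\K}{\tf}{\me_\K}$, since its outer $\liminf$ and $\limsup$ both coincide with $\lim_{\delta\downarrow0}g(\delta)$.
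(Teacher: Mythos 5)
Your proof is correct, but it follows a genuinely different route from the paper's. The paper fixes $\eps>0$, uses \reff{tt-s4-1} to get, for $k$ large and all $\delta\in(0,\tilde\delta)$, the two-sided bound $\frac{1}{\gamma(\delta)}\I{\K_\delta\cap\alse}{\tf}{\lem}-\eps \le \frac{1}{\gamma(\delta)}\I{\K_\delta\cap\alse}{\tf_k}{\lem} \le \frac{1}{\gamma(\delta)}\I{\K_\delta\cap\alse}{\tf}{\lem}+\eps$, and then sends $\delta\downarrow 0$ along the particular sequence $\delta_j$ furnished by \reff{tt-s3-2} for $\tf$ (with $f\equiv 1$), invoking \reff{tt-s4-2} to identify the limits of the middle terms; the arbitrariness of $\eps$ then gives the claim. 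You instead apply the sandwich \reff{tt-s3-1} to the single function $\tf-\tf_k$ and exploit linearity of the integral, obtaining $\bigl|\,\sI{\K}{\tf}{\me_\K}-\sI{\K}{\tf_k}{\me_\K}\bigr|\le s_k$ with $s_k$ the uniform error from \reff{tt-s4-1}; this is legitimate since \reff{tt-s3-1} holds for every function in $\cL^\infty(\edom)$ and the $\me_\K$-integral is linear. Both arguments are sound; your version is shorter and, as you observe, does not use \reff{tt-s4-2} at all, so it establishes the conclusion under a strictly weaker hypothesis (\reff{tt-s4-2} only serves to identify the common value as a genuine limit of the normalized integrals, or, in the paper's route, to pass to the limit termwise). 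The Moore--Osgood variant you sketch is the one closest to the paper's argument, the only difference being that the paper works along the subsequence $\delta_j$ from \reff{tt-s3-2} rather than first proving that the full limit $\lim_{\delta\downarrow 0}$ exists for $\tf$.
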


Let us discuss the results before we provide the proofs.
For $\alse=\edom$, $\K=\{x\}$ and $\gamma$ as in \reff{tt-s3-3},
the proposition provides a measure $\me_x$, that we call $\dens_x$
in accordance with \reff{ex:dzero-1}, and a trace $f^*_x$
on $\{x\}$ over $\cL^\infty(\edom)$.
We readily notice that $f_x^*$ cannot satisfy \reff{tt-e1} with $\{x\}$
instead of $\bd\dom$, since $\tf_{|\{x\}}$ cannot be defined in a reasonable
way. However, the trace $f_x^*$ provides an
evaluation of $f$ at $x$ due to
\begin{equation*}
  \bar f(x) := \sI{\{x\}}{f}{\dens_x} = \df{f_x^*}{1} 
  \qmq{for all} f\in\cL^\infty(\dom) \,.
\end{equation*}
By \reff{tt-s3-1} this agrees with $f(x)$ if $x$ is Lebesgue point of 
$f\in L^\infty(\edom)$
(let us mention that this need not be the case for any extension $\me_x$ of
$\delta_x$ according to Proposition~\ref{acm-ram} as, e.g., in
Example~\ref{pm-ex6}). The mapping $f\to f_x^*$ can be considered as a
trace operator on $\cL^\infty(\dom)$ at $x$. If we fix $f$ and vary $x$, then
we have a pointwise integral representation of $f$ that agrees a.e. with its 
precise representative $\pr{f}$ (cf. Remark~\ref{pm-rem9}). 

Let us discuss Proposition~\ref{tt-s3} with 
\begin{equation*}
  \edom=\alse=(0,1)^2\subset\R^2\,, \quad
  \K=\bd\edom \,, \quad \gamma(\delta)=\delta\,.
\end{equation*}
Then, for fixed $f\in\cL^\infty(\edom)$, the measure $f_{\bd\edom}^*$ 
is a trace on $\bd\edom$. 
For $g,\tf\in\cL^\infty(\edom)$ we obviously have
\begin{equation*}
  \Big|\, \sI{\bd\edom}{\tf(f-g)}{\me_\K} \Big|\le 
  \|\tf\|_{\cL^\infty(\edom)} \, \|f-g\|_{\cL^\infty((\bd\edom)_\delta)}
  \qmq{for all} \delta>0\,.
\end{equation*}
Therefore the trace $g_{\bd\edom}^*=g\me_{\bd\edom}$ agrees with 
$f_{\bd\edom}^*$ for all
functions $g$ in the affine linear subspace
\begin{equation*}
  X_f = \big\{g\in\cL^\infty(\edom)\:\big|\: \|f-g\|_{\bd\edom}=0\big\} \,
\end{equation*}
(cf. \reff{semi-norm}). This somehow means that $g\in X_f$ behaves as 
$f$ arbitrarily close to $\bd\edom$ and $f_{\bd\edom}^*$ appears to be an
appropriate tool to describe that behavior. If we restrict our attention to 
$\tf\in C(\ol\edom)$, then we can identify $f_{\bd\edom}^*$ with a
$\sigma$-measure $f_{\bd\edom}^\sigma$ supported on $\bd\edom$. 
By the application of \reff{tt-s3-2} to smoothened versions of $\tf=\chi_R$ 
with rectangles $R$ intersecting $\bd\edom$, we obtain that $f_{\bd\edom}^\sigma$
is (weakly) absolutely continuous with respect to $\reme{\cH^1}{\bd\edom}$. 
Hence there is a density function $f^\sigma$ on $\bd\edom$ such that
$f_{\bd\edom}^\sigma=f^\sigma\reme{\cH^1}{\bd\edom}$. However, in the general
case with $\tf\in\cL^\infty(\edom)$ we cannot find a function on $\bd\edom$
representing the measure $f_{\bd\edom}^*$.

We still consider
\begin{equation*}
  \dom=\dom_0\cup\dom_1\subset\R^2 \qmq{with}
  \dom_j = (j,j+1)\times(0,1) 
\end{equation*}
and $f\in\cL^\infty(\dom)$ given by
\begin{equation*}
  f=0 \zmz{on} \dom_0\,, \quad f=1 \zmz{on} \dom_1
\end{equation*}
(which is merely a representative of an equivalence class).
Obviously $f\in \cB\cV(\dom)$. 
For $\K=\bd\dom$ and $\gamma(\delta)=\delta$
the proposition provides a measure $\me_\K$ and a trace 
$f_\K^*$ on $\K$ such that
\begin{equation*}
  \df{f^*_\K}{\tf} = \sI{\K}{\tf\f}{\me_\K} \qmq{for all} 
  \tf\in\cL^\infty(\dom)\,.
\end{equation*}
In the light of usual traces we can try to assign a function $f_\K$ on $\K$ to
the trace $f_\K^*$ by the requirement
\begin{equation}\label{tt-e4}
  \sI{\K}{\tf\f}{\me_\K} = \I{\K}{\tf\f_\K}{\cH^1} 
\end{equation} 
for suitable $\tf$ that, however, have to be extendable up to $\K$. 
For $\tf\in C(\ol\dom)$ we get from \reff{tt-s3-1} 
\begin{equation*}
  f_\K = \Bigg\{ 
  \mbox{\small $ 
  \begin{array}{ll} 0 & \text{on $\bd\dom_0\!\setminus\bd\dom_1$}\,, \\
                    1 & \text{on $\bd\dom_1\!\setminus\bd\dom_0$} \,, \\
          \frac{1}{2} & \text{on $\bd\dom_0\!\cap\bd\dom_1$} \,.   
  \end{array}$ } 
\end{equation*}
But notice that $f_\K$ cannot provide the precise behavior of $f$ near 
$\bd\dom_0\!\cap\bd\dom_1$ while $f_\K^*$ can give the full information
by using $\tf\in C(\dom)$. In addition, $f_\K^*$ is not restricted to $\tf$
that are extendable up to $\K$. This property of general (finitely additive)
measures 
allows the construction of much more general Gauss-Green formulas than before.

Finally let us roughly sketch how we extend the Gauss-Green formula
\reff{tt-e0} to general Borel sets $\dom$ contained in some open set
$\edom\subset\R^n$. For $f\in \cW^{1,1}(\edom)$ the left hand side can
obviously be considered as functional $f^*\in\cW^{1,\infty}(\edom,\R^n)^*$. 
Based on Proposition~\ref{pm-s5} it can be shown that $f^*$ is
related to measures 
\begin{equation*}
   \lf \in \bawl{\edom}^n \qmq{and} \mf \in \bawl{\edom}^{n\times n}
\end{equation*}
such that
\begin{equation*}
  \I{\dom}{f\divv\phi}{\lem} + \I{\dom}{\phi\cdot Df}{\lem} =
  \df{\lf}{\tf} + \df{\mf}{D\tf} \,
\end{equation*}
for all $\tf\in\cW^{1,\infty}(\edom,\R^n)$ where, in full generality, 
the core of $\lf$ and $\mf$ belongs to a small neighborhood of $\bd\dom$.
In 'better' cases their core belongs to $\bd\dom$
and we get
\begin{equation*}
  \I{\dom}{f\divv\phi}{\lem} + \I{\dom}{\phi\cdot Df}{\lem} =
  \sI{\bd\dom}{\tf}{\lf} + \sI{\bd\dom}{D\tf}{\mf}\,.
\end{equation*}
In 'even better' cases $\lf$ can be considered as Radon measure on $\bd\dom$ and
$\mf$ might disappear. If $\dom$ has some inner boundary, the measures 'know'
the function $f$ on both sides of it and $\lf$ cannot be a $\sigma$-measure
as in the previous example surrounding
\reff{tt-e4}. In some cases, the measure $\mf$ disappears 
and we get more structure for the other boundary term where, in particular, 
$f$ enters explicitly. More precisely, in these cases
there is a so-called normal measure $\nu\in\bawl{\edom}^n$ with
$\cor{\nu}\subset\bd\dom$, an extension of 
the pointwise outer normal function, such that
\begin{equation*}
  \I{\dom}{f\divv\phi}{\lem} + \I{\dom}{\phi\cdot Df}{\lem} =
  \sI{\bd\dom}{f\tf}{\nu}  \qmz{for all} 
  \tf\in\cW^{1,\infty}(\dom,\R^n)\,.
\end{equation*}
For some normal measures $\nu$ we even get that
\begin{equation*}
  \nu = \nu^\dom\dens_{\bd\dom}
\end{equation*}
for the normal field $\nu^\dom=D \big(\distf{\dom}-\distf{\dom^c}\big)$
and some density measure $\dens_{\bd\dom}$ as in Proposition~\ref{tt-s3}.

Summarizing it turns out that the results derived below would not be
possible in that generality with a notion of trace relying merely on pointwise
trace functions on the boundary $\bd\dom$.  
We will develop our theory first for vector fields having
divergence measure. Then the results for Sobolev
functions and BV functions are at least partially direct consequences.

\medskip

\begin{proof+}{ of Proposition \ref{tt-s3}} 
Let $X:=\cL^\infty(\edom)$. Then  
\begin{equation*}
  X_0 := \Big\{\tf \in X \;\Big|\; 
  \lim_{\delta\downarrow 0} \frac{1}{\gamma(\delta)}
  \I{\K_\delta \cap \alse}{\tf}{\lem} 
  \text{ exists} \Big\} \,
\end{equation*}
is a linear subspace. $g_0^*:X_0\to\R$ with
\begin{equation*}
  g_0^*(\tf) := \lim_{\delta \downarrow 0} \frac{1}{\gamma(\delta)}
  \I{\K_\delta \cap \alse}{\tf}{\lem}
\end{equation*}
is a continuous linear functional on $X_0$ majorized by the  
positively homogeneous and subadditive functional
$\tilde g:\cL^\infty(\edom) \to \R$ given by
\begin{equation*}
  \tilde g(\tf) := \limsup_{\delta\downarrow 0} \frac{1}{\gamma(\delta)}
  \I{\K_\delta\cap\alse}{\tf}{\lem} 
  \le c\|\tf\|_{\cL^\infty}\,.
\end{equation*}
The Hahn-Banach theorem provides an extension 
$g^*\in\cL^\infty(\edom)^*$ of $g_0^*$ that is also majorized by $\tilde g$ on
$X$.   
Hence $\|g^*\|\le c$ and
\begin{eqnarray}
  \liminf_{\delta \downarrow 0} \frac{1}{\gamma(\delta)}
  \I{\K_\delta \cap \alse}{\tf}{\lem}
&=&
  -\limsup_{\delta \downarrow 0} \frac{1}{\gamma(\delta)}
  \I{\K_\delta \cap \alse}{- \tf}{\lem} =
  - \tilde g(-\tf)  \nonumber \\
&\le& -\df{g^*}{-\tf} = \df{g^*}{\tf} \nonumber \\
&\le&
  \tilde g(\tf) = \limsup_{\delta\downarrow 0} \frac{1}{\gamma(\delta)}
  \I{\K_\delta\cap\alse}{\tf}{\lem}  \label{tt-s3-5}
\end{eqnarray}
for all $\tf\in\cL^\infty(\edom)$. In the case where the limsup in
\reff{tt-s3-0} is a limit, we have for $\tf\equiv 1$ that 
$\|\tf\|_{\cL^\infty}=1$
and $\df{g^*}{\tf}=c$ by \reff{tt-s3-5} and, thus, $\|g^*\|=c$. 
If $\tf_{|\K_{\delta'}}=0$ 
for some $\delta' >0$, then obviously
\begin{equation} \label{tt-s3-6}
  0 = \lim \limits_{\delta\downarrow 0} \frac{1}{\gamma(\delta)} 
  \I{\K_\delta\cap\alse}{\tf}{\lem} = \df{g^*}{\tf}  
\end{equation}
and, hence, $g^*$ is a trace on $\K$. 

Let $\me_\K\in \bawl{\edom}$ be related to $g^*\in\cL^\infty(\edom)^*$ as in
Proposition~\ref{pm-s5}. Then
\begin{equation*}
  \df{g^*}{\tf} = \sI{\K}{\phi}{\me_\K} \qmq{for all}
  \phi\in\cL^\infty(\edom)\,,
\end{equation*}
$\cor{\me_\K} \subset \K$ by \reff{tt-s3-6}, and $|\me_\K|(\edom)=\|g^*\|$.

For $f\in\cL^\infty(\edom)$ we now consider 
$\f_\K^*\in\cL^\infty(\edom)^*$ related to $\f\me_\K$ and we have
\begin{equation*}
  \df{f_\K^*}{\tf} = \df{g^*}{\tf f} = \sI{\K}{\phi f}{\me_\K}
  \qmq{for all} \phi\in\cL^\infty(\edom)
\end{equation*} 
(cf. \reff{pm-e5}).
Obviously $f_\K^*$ is also a trace on $\K$.
From \reff{tt-s3-5} we obtain \reff{tt-s3-1}. 
Clearly, the mapping $T$ is linear and, by
$\|Tf\|\le\|f\|_{\cL^\infty}|\me_\K|(\edom)$,
also continuous.

For the last statement we fix $f,\tf\in\cL^\infty(\edom)$ and set
\begin{equation*}
  \beta^i:=\liminf_{\delta \downarrow 0} \frac{1}{\gamma(\delta)} 
  \I{\K_\delta\cap\alse}{\tf\f}{\lem}\,,
  \quad
  \beta^s:=\limsup_{\delta \downarrow 0} \frac{1}{\gamma(\delta)}
  \I{\K_\delta\cap\alse}{\tf\f}{\lem}\,. 
\end{equation*}
By \reff{tt-s3-1} we have
\begin{equation*}
  \beta^i \le \beta:=\sI{\K}{\phi f}{\me_\K} \le \beta^s\,.
\end{equation*}
If $\beta=\beta^i$ or $\beta=\beta^s$ we use the definition of $\liminf$
or $\limsup$, respectively, to get the assertion. For 
$\beta\in(\beta^i,\beta^s)$ we first observe that 
\begin{equation*}
  \delta\to I(\delta) := \frac{1}{\gamma(\delta)}
  \I{\K_\delta\cap\alse}{\tf\f}{\lem}
\end{equation*}
is continuous for $\delta>0$. Hence the mapping $I$ attains the value 
$\gamma$ on each interval $(0,\tilde\delta)$ with $\tilde\delta>0$. But this
implies the statement.  
\end{proof+}

\medskip

\begin{proof+}{ of Corollary~\ref{tt-s4}} 
Let us fix $\eps>0$. By \reff{tt-s4-1} there is some $k_0$
such that for all $k>k_0$ and all $\delta\in(0,\tilde\delta)$
\begin{equation*}
  \frac{1}{\gamma(\delta)} \I{\K_\delta\cap\alse}{\tf}{\lem} - \eps \le
  \frac{1}{\gamma(\delta)} \I{\K_\delta\cap\alse}{\tf_k}{\lem} \le
  \frac{1}{\gamma(\delta)} \I{\K_\delta\cap\alse}{\tf}{\lem} +\eps \,.
\end{equation*}
Using the limit from \reff{tt-s3-2} with $f\equiv 1$ and using 
\reff{tt-s4-2} we obtain
\begin{equation*}
  \sI{\K}{\tf}{\me_\K}- \eps \le
  \sI{\K}{\tf_k}{\me_\K} \le
  \sI{\K}{\tf}{\me_\K} + \eps \qmz{for all} k>k_0\,.
\end{equation*}
Now the arbitrariness of $\eps>0$ implies the assertion.
\end{proof+}

\subsection{Traces of vector fields with divergence measure}
\label{tt-tvf}

In our further treatment we are interested in traces that describe the
behavior near the boundary $\bd\dom$ for vector fields 
where the distributional divergence is a Radon measure.
As special cases we consider Sobolev functions and BV functions.
In our subsequent treatment we always assume that 
$\edom\subset\R^n$ is an open set.

Let us first recall some notation. For vector fields 
$F=(F_1,\dots,F_m)$ we use 
\begin{equation*}
  \cL^p(\edom,\R^m):=\cL^p(\edom)^m\,, \quad 
  \cW^{1,p}(\edom,\R^m):=\cW^{1,p}(\edom)^m 
\end{equation*}
with the norms
\begin{eqnarray}
  \|F\|_p = \|F\|_{\cL^p}
&:=&
  \Big( \I{\edom}{|F|^p}{\lem}\,\Big)^\frac{1}{p}
  \qmq{for} 1\le p<\infty\,, \nonumber \\
  \|F\|_{\cW^{1,p}}
&:=&
  \big(\|F\|_p^p + \|DF\|_p^p\big)^{\frac{1}{p}}
  \qmz{for} 1\le p<\infty\,, \nonumber \\[2mm]
  \|F\|_\infty = \|F\|_{\cL^\infty} 
&:=& \label{norm-li}
  \essup{\edom}{|F|} \,, \\
  \|F\|_{\cW^{1,\infty}} 
&:=&
  \max \big\{\|F\|_\infty,\|DF\|_\infty\big\}   \nonumber
\end{eqnarray}
(where $|\cdot|$ is the Euclidean norm and $DF$ is interpreted as $mn$-vector).
Moreover
\begin{equation*}
  \|f\|_{\cB\cV} := \|f\|_1 + |Df|(\edom) \qmq{for}
  f\in\cB\cV(\edom) \,,
\end{equation*}
\begin{equation*} 
  \|\me\| := |\me|(\edom) 
  \qmq{for} \me\in\bawl{\edom}^m \,.
\end{equation*}
Proposition~\ref{pm-s5} tells us that $\bawl{\edom}^m$ is
the dual of $\cL^\infty(\edom,\R^m)$.

We say that $F\in\cL^1_{\rm loc}(U,\R^n)$
has {\it divergence measure} if there is a signed Radon measure on $\edom$
denoted by $\divv F$ such that
\begin{equation}  \label{tt-e3}
  \I{\edom}{F\cdot D\tf}{\lem} = -\I{\edom}{\tf}{\divv F}
  \qmq{for all} \tf\in C^\infty_c(\edom)\,
\end{equation}
(i.e. the distributional divergence of $F$ is a signed Radon measure).
By approximation, \reff{tt-e3} is even valid for all
$\tf\in\cW^{1,\infty}(\edom)$ having compact support in $\edom$
(take mollifications $\tf_n\in C^\infty_c(\edom)$ of $\tf$
such that $D\tf_n\to D\tf$ a.e. on $\edom$ and use dominated convergence on
the left hand side). The space of vector fields in $\cL^p$ having
divergence measure is denoted by 
\begin{equation*}
  \cD\cM^p(U):=\big\{ F\in\cL^p(U,\R^n) \:\big|\: 
  \left|\divv F\right|(\edom)<\infty \big\}\,, \quad
  1\le p\le\infty\,, 
\end{equation*}
where the total variation $\left|\divv\F\right|(\edom)$ equals
\begin{equation*}
  \left|\divv F\right|(\edom) = 
  \sup\Big\{ \I{\edom}{F\cdot D\tf}{\lem} \:\Big|\:
  \tf\in C^1_c(\edom),\;|\tf|\le 1 \Big\}\,.
\end{equation*}
We have that $\cD\cM^p(U)$ is a Banach space with the norm
\begin{equation*}
  \|F\|_{\cD\cM^p} := \|F\|_{\cL^p} + \left|\divv F\right|(U) \,. 
\end{equation*}

For a Borel set $\K\subset\ol\edom$ we use the semi norm on 
$\cL^\infty(\edom,\R^m)$ given by
\begin{equation*}
  \|\tf\|_\K := 
  \lim_{\delta\downarrow 0} \|\tf\|_{\cL^\infty(\K_\delta\cap\edom,\R^m)}
\end{equation*}
(cf. \reff{semi-norm}). With the subspace 
\begin{equation} \label{tt-e5}
  Z := \{ \tf\in\cL^\infty(\edom,\R^m) \mid \|\tf\|_\K=0 \}
\end{equation}
we define the factor space
\begin{equation} \label{tt-e6}
  \cL^\infty_\K(\edom,\R^m) := \cL^\infty(\edom,\R^m)/Z \,.
\end{equation} 
The equivalence class containing $\tf\in\cL^\infty(\edom,\R^m)$ is denoted by 
\begin{equation*} \label{eq-class}
  \tf_{\wr\K}:=\tf+Z \,.
\end{equation*}
This way we can describe $\tf$ infinitesimally close to~$\K$, 
but not at $\K$. If $\tf$ is continuous 
on a neighborhood of $\K$ with a continuous extension up to $\K$, we can
identify $\tf_{\wr\K}$ with the restriction $\tf_{|\K}$.

Now we are able to provide a large class of traces that will be the basis 
for upcoming general Gauss-Green formulas.

\begin{theorem} \label{ex:dmo_woi}
Let $\edom \subset \Rn$ be open and bounded and let $\dom \in \bor{\edom}$. 
Then there is a
linear continuous operator $T: \dmo{\edom}\to\woinf{\edom}^*$ such that
\begin{equation} \label{ex:dmo_woi-1}
  \df{TF}{\tf} =  \divv(\tf F)(\dom) =
  \I{\dom}{\tf}{\divv{\funv}} + \I{\dom}{\Deriv{\tf}\cdot \funv}{\lem} 
\end{equation}
for all $\tf\in\woinf{\edom}$
and $TF$ is a trace on $\bd{\dom}$ over $\woinf{\edom}$ for all
$F\in\dmo{\edom}$. Moreover
\begin{equation*}
  \df{TF}{\tf}=0 \qmq{if} \tf_{|(\bd\dom)_\delta\cap\dom}=0\,
\end{equation*}
for some $\delta>0$.
\end{theorem}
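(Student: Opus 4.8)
The plan is to verify directly that the formula in \reff{ex:dmo_woi-1} defines a bounded linear functional on $\woinf{\edom}$ for each fixed $F\in\dmo{\edom}$, that the assignment $F\mapsto TF$ is itself linear and bounded into the dual, and finally that $TF$ annihilates functions vanishing near $\bd\dom$. The second equality in \reff{ex:dmo_woi-1} is just the Leibniz rule for divergence measures, so the real content is the bound.

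First I would fix $F\in\dmo{\edom}$ and estimate, for $\tf\in\woinf{\edom}$,
\begin{equation*}
  |\df{TF}{\tf}| \le \|\tf\|_{\cL^\infty(\dom)}\,|\divv F|(\dom)
  + \|D\tf\|_{\cL^\infty(\dom)}\,\|F\|_{\cL^1(\dom)}
  \le \big(|\divv F|(\edom)+\|F\|_{\cL^1(\edom)}\big)\,\|\tf\|_{\cW^{1,\infty}}\,,
\end{equation*}
using $\dom\subset\edom$ and the definition of the $\cW^{1,\infty}$-norm in \reff{norm-li}. This shows $TF\in\woinf{\edom}^*$ with $\|TF\|\le\|F\|_{\dmo{\edom}}$. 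Linearity of $\tf\mapsto\df{TF}{\tf}$ is immediate from linearity of the integrals, and linearity of $F\mapsto TF$ together with the same estimate gives that $T:\dmo{\edom}\to\woinf{\edom}^*$ is linear and continuous. One should note that the right-hand side of \reff{ex:dmo_woi-1} makes sense because $F\in\cL^1(\edom,\R^n)$ is Lebesgue-integrable on the Borel set $\dom$ and $\divv F$ is a finite Radon measure, so both integrals are well defined and the bound is legitimate; here the hypothesis that $\edom$ is bounded is convenient (it guarantees $\cL^\infty\subset\cL^1$ and keeps all quantities finite) though the $\cL^1$-norm bound already suffices.

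Next I would check the Leibniz identity $\divv(\tf F)(\dom)=\I{\dom}{\tf}{\divv F}+\I{\dom}{D\tf\cdot F}{\lem}$, which holds as measures on $\edom$: for $\tf\in\woinf{\edom}$ and $F\in\dmo{\edom}$ one has $\tf F\in\dmo{\edom}$ with $\divv(\tf F)=\tf\,\divv F+F\cdot D\tf\,\lem$ (verified by testing against $C^\infty_c(\edom)$, approximating $\tf$ by mollifications so that $D\tf_n\to D\tf$ a.e. and using dominated convergence, exactly as indicated after \reff{tt-e3}), and then evaluating this measure on $\dom$. This gives the first equality in \reff{ex:dmo_woi-1}.

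Finally, for the trace property: if $\tf_{|(\bd\dom)_\delta\cap\edom}=0$ for some $\delta>0$, then in particular $\tf=0$ $\lem$-a.e. on $(\bd\dom)_\delta\cap\dom$, hence $D\tf=0$ $\lem$-a.e. there as well. Writing $\dom=\big(\dom\cap(\bd\dom)_\delta\big)\cup\big(\dom\setminus(\bd\dom)_\delta\big)$ and noting $\dom\setminus(\bd\dom)_\delta\subset\Int\dom\cup\op{ext}\dom$, one sees $\I{\dom}{D\tf\cdot F}{\lem}=\I{\dom\setminus(\bd\dom)_\delta}{D\tf\cdot F}{\lem}$, and for the divergence term: on $\Int\dom$ we may apply \reff{tt-e3} with a cutoff, since $\tf$ is supported away from $\bd\dom$; more cleanly, $\divv(\tf F)$ is supported in $\ol{\{\tf\neq 0\}}\subset(\bd\dom)^c_{\delta/2}$ up to the obvious caveat, so $\divv(\tf F)(\dom)=\divv(\tf F)(\Int\dom)=0$ because $\tf F$ vanishes near $\partial(\Int\dom)$ and $\divv$ of a compactly-supported-in-an-open-set field integrates to zero there. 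The cleanest route is to observe $\df{TF}{\tf}=\df{TF}{\chi\tf}$ for any Lipschitz cutoff $\chi$ equal to $1$ near $\bd\dom$ and supported in $(\bd\dom)_\delta$, so $\chi\tf\equiv 0$ and $\df{TF}{\tf}=0$. I expect this last localization step—making the vanishing of $\divv(\tf F)(\dom)$ rigorous when $\dom$ is merely Borel (so that $\partial\dom$, $\Int\dom$ behave well enough)—to be the only delicate point, and it is handled by the cutoff trick together with the fact, recorded after \reff{tt-e3}, that \reff{tt-e3} extends to $\cW^{1,\infty}$ functions with compact support in $\edom$.
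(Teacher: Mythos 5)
The construction of $T$, the norm estimate $\|TF\|\le\|F\|_{\cD\cM^1}$, and the Leibniz identity $\divv(\tf F)=\tf\,\divv F+F\cdot D\tf\,\lem$ are handled exactly as in the paper, and that part of your argument is fine. The gap is in the vanishing property, on two counts.

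First, your preferred ``cleanest route'' is circular. The identity $\df{TF}{\tf}=\df{TF}{\chi\tf}$ for a cutoff $\chi$ equal to $1$ near $\bd\dom$ and supported in $(\bd\dom)_\delta$ is equivalent to $\df{TF}{(1-\chi)\tf}=0$, and $(1-\chi)\tf$ is precisely a $\cW^{1,\infty}$ function vanishing on a neighborhood of $\bd\dom$ --- i.e.\ the identity \emph{is} the statement you are trying to prove. It cannot be ``observed''; it must be established by the support argument you sketch earlier, so that earlier sketch carries all the weight and needs to be made precise (in particular the clause ``up to the obvious caveat'' hides the step $\divv(\tf F)(\dom\cap\bd\dom)=0$, which for a general Borel $\dom$ requires noting that $\divv(\tf F)$ gives zero mass to any subset of $\{\tf=0\}$, since $\tf=0$ pointwise and $D\tf=0$ $\lem$-a.e.\ there).

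Second, and more substantively, you only treat the hypothesis $\tf_{|(\bd\dom)_\delta\cap\edom}=0$, whereas the ``Moreover'' clause asserts $\df{TF}{\tf}=0$ under the strictly weaker assumption $\tf_{|(\bd\dom)_\delta\cap\dom}=0$, i.e.\ $\tf$ may be nonzero on $(\bd\dom)_\delta\setminus\dom$, arbitrarily close to $\bd\dom$ from outside. Under that hypothesis your key localization ``$\divv(\tf F)$ is supported in $\ol{\{\tf\neq 0\}}\subset(\bd\dom)^c_{\delta/2}$'' fails, because $\ol{\{\tf\ne0\}}$ can meet $\bd\dom$. The paper's proof is built to cover exactly this case: it takes a Lipschitz cutoff $\chi$ equal to $1$ on $\ol{\dom_{-2\delta/3}}$ and supported in $\ol{\dom_{-\delta/3}}\subset\op{int}\dom$ (so compactly in $\edom$, using boundedness of $\edom$). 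Then $(1-\chi)\tf=0$ and $(1-\chi)D\tf=0$ $\lem$-a.e.\ on $\dom$, and $\tf D\chi=0$ on $\edom$, all of which use only the vanishing of $\tf$ \emph{inside} $\dom$ near $\bd\dom$; this converts $\divv(\tf F)(\dom)$ into $\I{\edom}{\chi\tf}{\divv F}+\I{\edom}{F\cdot D(\chi\tf)}{\lem}$, which vanishes because $\chi\tf$ is a compactly supported $\cW^{1,\infty}$ function. Your argument, as written, does not reach this stronger conclusion, and the trace property in the theorem is stated as a consequence of it.
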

\noi
We call $T$ trace operator. 
In Theorem \ref{dt-s1} below 
we will exploit the structure of $\woinf{\edom}^*$ to get a general
representation for these traces.

\begin{remark}\label{ex:dmo_div} 
The functional $T^*\in \dmo{\edom}^*$ given by 
\begin{equation*}
  \df{T^*}{F} = \divv{F}(\dom) \,
\end{equation*}
is a trace on $\bd\dom$ over $\dmo{\edom}$ as one can see similar to the proof
of the theorem. Thus we could take $\df{T^*}{\tf F}$ instead of
$\df{TF}{\tf}$ in \reff{ex:dmo_woi-1}. The advantage would be 
to have merely one functional $T^*$ for all $F$. However, the lack of knowledge
about the structure of $\dmo{\edom}^*$ prevents a direct
representation of traces that way in general. 
\end{remark}

\begin{corollary} \label{ex:dmo_woi_c}
Let $\edom\subset\Rn$ be open and bounded, $\dom \in \bor{\edom}$,
$F\in\dmo{\edom}$, and let $T$ be as in 
\reff{ex:dmo_woi-1}. If $\tf\in\woinf{\edom}$ with
\begin{equation*}
  \tf_{\wr\bd\dom}=0 \quad \qmq{(i.e. $\|\tf\|_{\bd\dom}=0$)}
\end{equation*}
and 
\begin{equation} \label{dmo_woi_c-1}
  (D\tf)_{\wr\bd\dom}=0  \quad\qmq{or}\quad  \lem(\dom\setminus\op{int}\dom)=0\,,
\end{equation}
then $\df{TF}{\tf}=0$.
\end{corollary}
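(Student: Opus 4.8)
The plan is to deduce the statement from the trace property of $TF$ (Theorem~\ref{ex:dmo_woi}) by replacing $\tf$ with a truncation that is \emph{uniformly} small near $\bd\dom$; in fact the argument will only use $\tf_{\wr\bd\dom}=0$, so it covers both alternatives in \reff{dmo_woi_c-1} at once. Fix $\eps>0$, let $T_\eps(s):=\max(-\eps,\min(\eps,s))$ be the $1$-Lipschitz truncation to $[-\eps,\eps]$, and set $\tf^\eps:=T_\eps\circ\tf\in\woinf{\edom}$. Since $\|\tf\|_{\bd\dom}=0$, there is $\delta>0$ with $|\tf|\le\eps$ $\lem$-a.e.\ on $(\bd\dom)_\delta\cap\edom$; there $\tf^\eps=\tf$, hence $(\tf-\tf^\eps)_{|(\bd\dom)_\delta\cap\dom}=0$, and Theorem~\ref{ex:dmo_woi} gives $\df{TF}{\tf-\tf^\eps}=0$. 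By linearity of $TF$ and \reff{ex:dmo_woi-1} this means
\begin{equation*}
  \df{TF}{\tf}=\df{TF}{\tf^\eps}=\I{\dom}{\tf^\eps}{\divv F}+\I{\dom}{D\tf^\eps\cdot F}{\lem}\qquad\text{for every }\eps>0\,.
\end{equation*}

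Next I would estimate the two terms on the right. The first is harmless because $\|\tf^\eps\|_{\cL^\infty(\edom)}\le\eps$, so $\bigl|\I{\dom}{\tf^\eps}{\divv F}\bigr|\le\eps\,|\divv F|(\edom)$. For the second, the chain rule for Sobolev functions gives $D\tf^\eps=\chi_{\{|\tf| < \eps\}}D\tf$ $\lem$-a.e.\ (using $D\tf=0$ $\lem$-a.e.\ on $\{|\tf|=\eps\}$), hence
\begin{equation*}
  \Bigl|\I{\dom}{D\tf^\eps\cdot F}{\lem}\Bigr|\le\I{\dom\cap\{|\tf| < \eps\}}{|D\tf|\,|F|}{\lem}\,.
\end{equation*}
Letting $\eps\downarrow0$: the first estimate yields $\I{\dom}{\tf^\eps}{\divv F}\to0$ since $|\divv F|(\edom)<\infty$; the sets $\dom\cap\{|\tf| < \eps\}$ decrease to $\dom\cap\{\tf=0\}$, and $|D\tf|\,|F|\le\|D\tf\|_{\cL^\infty(\edom)}\,|F|\in\cL^1(\edom)$ (here boundedness of $\edom$ is used), so dominated convergence gives $\I{\dom\cap\{|\tf| < \eps\}}{|D\tf|\,|F|}{\lem}\to\I{\dom\cap\{\tf=0\}}{|D\tf|\,|F|}{\lem}$. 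Since $D\tf=0$ $\lem$-a.e.\ on $\{\tf=0\}$ (a standard property of Sobolev functions), this last integral is $0$. Passing to the limit in the displayed identity gives $\df{TF}{\tf}=0$.

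The step needing care is the divergence-measure term $\I{\dom}{\tf}{\divv F}$: because $\divv F$ need not be absolutely continuous with respect to $\lem$, the mere fact that $\tf$ is $\lem$-a.e.\ small near $\bd\dom$ does not control it, and the role of the truncation $T_\eps$ is exactly to pass from an a.e.\ estimate to the pointwise bound $\|\tf^\eps\|_{\cL^\infty}\le\eps$ that survives the pairing with a measure. The hypothesis $\lem(\dom\setminus\op{int}\dom)=0$ is thus not strictly needed; alternatively one may exploit it by splitting $\dom$ into $\op{int}\dom$ and the $\lem$-null set $\dom\setminus\op{int}\dom$, dropping the latter in the Lebesgue integral and noting that the locally Lipschitz representative of $\tf$ vanishes on $\edom\cap\bd\dom\supseteq\dom\setminus\op{int}\dom$ (since $\tf\to0$ uniformly towards $\bd\dom$ inside $\edom$), so that this part of $\divv F$ is annihilated as well — but the truncation argument above is shorter and makes the role of each hypothesis transparent.
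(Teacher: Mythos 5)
Your proof is correct, and it takes a genuinely different route from the paper's. The paper multiplies $\tf$ by a spatial cutoff $\chi_\delta$ built from the signed distance function, decomposes $\df{TF}{\tf}$ into three terms, derives the quantitative bound $|\tf|\le\delta\|D\tf\|_\infty$ on $\dom\setminus\dom_{-\delta}$ from $\|\tf\|_{\bd\dom}=0$, and passes to the limit $\delta\to 0$ by dominated convergence; the alternative \reff{dmo_woi_c-1} is invoked precisely to kill the term $\I{\dom}{(1-\chi_\delta)F\cdot D\tf}{\lem}$, since $\chi_\delta$ remains $0$ on $\dom\setminus\op{int}\dom$. You instead truncate in the range, which reduces everything to the trace property already established in Theorem~\ref{ex:dmo_woi} together with the standard fact that $D\tf=0$ $\lem$-a.e.\ on level sets of $\tf$. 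This buys two things: the argument is shorter and essentially geometry-free, and it shows that the second hypothesis \reff{dmo_woi_c-1} is in fact redundant, i.e.\ $\tf_{\wr\bd\dom}=0$ alone suffices. (This is consistent with the paper's own method: the continuous representative of $\tf$ vanishes pointwise on $\bd\dom\cap\edom\supseteq\dom\setminus\op{int}\dom$, hence $D\tf=0$ $\lem$-a.e.\ there, so the problematic term tends to zero in all cases anyway.) One small point you should make explicit: the hypothesis of the ``Moreover'' clause of Theorem~\ref{ex:dmo_woi} has to be verified for the continuous representative, because $\divv F$ may charge $\lem$-null subsets of $(\bd\dom)_\delta\cap\dom$; this is immediate here, since $(\bd\dom)_\delta\cap\edom$ is open and $\tf$ is continuous on $\edom$, so the bound $|\tf|\le\eps$ holds pointwise rather than merely $\lem$-a.e., and $\tf-\tf^\eps$ vanishes identically on $(\bd\dom)_\delta\cap\dom$.
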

\noindent
Notice that \reff{dmo_woi_c-1} is satisfied if $\dom$ is open or if
$\lem(\bd\dom)=0$. For $\edom=\dom$ open and bounded 
the previous result is similar to Theorem 2.3 in \v Silhav\'y
\cite{silhavy_divergence_2009} where the right hand side in
\reff{ex:dmo_woi-1} is considered as functional over bounded 
$\tf\in \op{Lip}(\R^n)$ and it is shown that this functional agrees with a
linear continuous functional on $\op{Lip}(\bd\dom)$.

\begin{remark}  \label{tt-s15}
Corollary~\ref{ex:dmo_woi_c} readily implies that the trace $TF$ from
Theorem~\ref{ex:dmo_woi} is uniquely determined if it is known for all
$\tf\in\cW^{1,\infty}_c((\bd\dom)_\delta)$ having compact support in
$(\bd\dom)_\delta$ for some $\delta>0$.  
\end{remark}

As simple consequence of Theorem~\ref{ex:dmo_woi} we get some 
analogous statement for Sobolev functions and BV functions.
  
\begin{proposition} \label{tt-s5}
Let $\edom \subset \Rn$ be open and bounded and let $\dom \in \bor{\edom}$. 
Then there is a
linear continuous operator $T: \cB\cV(\edom)\to\woinf{\edom,\R^n}^*$ 
such that
\begin{equation} \label{ex:woo_woi-1}
  \df{Tf}{\tf} = \divv(f\tf)(\dom) =
  \I{\dom}{f\divv\tf}{\lem} + \I{\dom}{\tf}{Df} 
\end{equation}
for all $\tf\in\woinf{\edom,\R^n}$
and $Tf$ is a trace on $\bd{\dom}$ over $\woinf{\edom,\R^n}$
for all functions $f\in\cB\cV(\edom)$. If
\begin{equation*}
  \tf_{|(\bd\dom)_\delta\cap\dom}=0 
  \zmz{for some} \delta>0\,
\end{equation*}
or if
\begin{equation*}
   \tf_{\wr\bd\dom}=0 \qmq{and \reff{dmo_woi_c-1} is satisfied,}
\end{equation*}
then we have $\df{Tf}{\tf} = 0$.
\end{proposition}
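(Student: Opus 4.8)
The plan is to deduce this from Theorem~\ref{ex:dmo_woi} by the standard trick of pairing a scalar function with test vector fields. First I would observe that for $f\in\cB\cV(\edom)$ the vector field $F:=f\,e_i$ (the $i$-th coordinate vector times $f$) need not lie in $\cD\cM^1(\edom,\R^n)$ in the componentwise sense, so instead I would work directly with the bilinear pairing: for $\tf=(\tf_1,\dots,\tf_n)\in\cW^{1,\infty}(\edom,\R^n)$ one has, componentwise,
\begin{equation*}
  \divv(f\tf)(\dom) = \sum_{i=1}^n \divv\big((f\,\tf_i)\,e_i\big)(\dom)
  \;\text{?}
\end{equation*}
which is not quite the right decomposition either. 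The cleaner route is: since $f\in\cB\cV(\edom)$, the distributional gradient $Df$ is a finite $\R^n$-valued Radon measure, so for every scalar $\psi\in C^\infty_c(\edom)$ we have $\I{\edom}{f\,\partial_i\psi}{\lem}=-\I{\edom}{\psi}{D_if}$; thus each vector field of the form $f\,e_i$ \emph{does} have divergence measure equal to $D_if$, i.e. $f\,e_i\in\cD\cM^1(\edom)$ with $\divv(f e_i)=D_i f$. Given $\tf\in\cW^{1,\infty}(\edom,\R^n)$ I would then apply Theorem~\ref{ex:dmo_woi} to each $F_i:=f\,e_i\in\cD\cM^1(\edom)$ with the scalar test function $\tf_i\in\cW^{1,\infty}(\edom)$, obtaining
\begin{equation*}
  \df{TF_i}{\tf_i} = \I{\dom}{\tf_i}{D_i f} + \I{\dom}{D\tf_i\cdot (f e_i)}{\lem}
  = \I{\dom}{\tf_i}{D_i f} + \I{\dom}{f\,\partial_i\tf_i}{\lem}.
\end{equation*}
Summing over $i=1,\dots,n$ gives $\sum_i \df{TF_i}{\tf_i} = \I{\dom}{\tf}{Df} + \I{\dom}{f\,\divv\tf}{\lem}$, which is exactly the right-hand side of \reff{ex:woo_woi-1}, and this also equals $\divv(f\tf)(\dom)$ by the same computation with $\tf\in C^\infty_c$ first and then approximation. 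So I would \emph{define} $\df{Tf}{\tf}:=\sum_{i=1}^n\df{TF_i}{\tf_i}$.

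Next I would check linearity and continuity of $T:\cB\cV(\edom)\to\cW^{1,\infty}(\edom,\R^n)^*$. Linearity in $f$ is immediate since $f\mapsto f e_i$ is linear and $T$ from Theorem~\ref{ex:dmo_woi} is linear. For continuity I would estimate $|\df{Tf}{\tf}|\le \sum_i |\df{TF_i}{\tf_i}|$ and use the operator-norm bound from Theorem~\ref{ex:dmo_woi}, namely $\|TF_i\|\le C\|F_i\|_{\cD\cM^1}=C(\|f\|_1+|D_i f|(\edom))\le C\|f\|_{\cB\cV}$, together with $\|\tf_i\|_{\cW^{1,\infty}}\le\|\tf\|_{\cW^{1,\infty}}$; this yields $\|Tf\|\le nC\|f\|_{\cB\cV}$. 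Then I would verify the trace property: if $\tf_{|(\bd\dom)_\delta\cap\dom}=0$ for some $\delta>0$, then each $\tf_i$ vanishes on $(\bd\dom)_\delta\cap\dom$, so $\df{TF_i}{\tf_i}=0$ by the last displayed assertion in Theorem~\ref{ex:dmo_woi}, hence $\df{Tf}{\tf}=0$; in particular $Tf$ is a trace on $\bd\dom$ over $\cW^{1,\infty}(\edom,\R^n)$ in the sense of \reff{tt-e2}. Finally, for the second vanishing condition I would apply Corollary~\ref{ex:dmo_woi_c} componentwise: $\tf_{\wr\bd\dom}=0$ means $\|\tf_i\|_{\bd\dom}=0$ for each $i$, and the dichotomy \reff{dmo_woi_c-1} — either $(D\tf)_{\wr\bd\dom}=0$, whence $(D\tf_i)_{\wr\bd\dom}=0$, or $\lem(\dom\setminus\Int\dom)=0$ — is inherited by each component, so Corollary~\ref{ex:dmo_woi_c} gives $\df{TF_i}{\tf_i}=0$ and therefore $\df{Tf}{\tf}=0$.

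The only genuinely non-routine point is the very first claim, that $f\,e_i\in\cD\cM^1(\edom)$ with $\divv(f e_i)=D_i f$; but this is essentially the definition of the distributional partial derivative of a $\cB\cV$ function, and the identification $\divv(f\tf)(\dom)=\I{\dom}{f\divv\tf}{\lem}+\I{\dom}{\tf}{Df}$ follows by testing with $\tf\in C^\infty_c(\edom,\R^n)$ and then extending to $\cW^{1,\infty}$ by mollification and dominated convergence, exactly as done for \reff{tt-e3} in the vector-field case. So I expect no real obstacle here; the proof is a short reduction. One should only be slightly careful that $f\tf$ on the left of \reff{ex:woo_woi-1} is interpreted componentwise as $(f\tf_1,\dots,f\tf_n)$ and its divergence is $\sum_i \partial_i(f\tf_i)$, matching the sum of the $n$ scalar applications of Theorem~\ref{ex:dmo_woi}.
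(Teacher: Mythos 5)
Your proposal is correct and follows essentially the same route as the paper: decompose into the coordinate vector fields $F_k=f\,e_k\in\cD\cM^1(\edom)$ with $\divv F_k=D_{x_k}f$, apply Theorem~\ref{ex:dmo_woi} to each pair $(F_k,\tf^k)$, sum over $k$, bound $\|F_k\|_{\cD\cM^1}\le\|f\|_{\cB\cV}$ for continuity, and obtain the vanishing assertions componentwise from Theorem~\ref{ex:dmo_woi} and Corollary~\ref{ex:dmo_woi_c}. No gaps.
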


\begin{remark} \label{tt-s6}
(1) For $f\in\cB\cV(\edom)$ the distributional partial derivatives $D_{x_k}f$ 
are signed Radon measures and the distributional gradient $Df$ is the
vector-valued Radon measure
\begin{equation} \label{tt-s6-1}
  Df=(D_{x_1}f,\dots,D_{x_n}f)
\end{equation}
(cf. \cite[p.~117]{ambrosio}). Thus the most right integral in 
\reff{ex:woo_woi-1} has to be taken as
\begin{equation*}
  \I{\dom}{\tf}{Df} = \sum_{k=1}^n \I{\dom}{\tf^k}{D_{x_k}f}
  \qmq{where} \tf=(\tf^1,\dots,\tf^n)\,.
\end{equation*}

(2) Proposition~\ref{tt-s5} is obviously valid for all Sobolev functions  
$f\in\cW^{1,1}(\edom)$, since they belong to $\cB\cV(\edom)$
(cf. \cite[p.~170]{evans}). For such $f$ the measure $Df$ equals
$Df(\cdot)\lem$ with the weak gradient $Df(\cdot)$ as density.
Therefore
\begin{equation*}
  |Df|(\dom) = \I{\dom}{|Df|}{\lem}\,, \quad 
  \|f\|_{\cB\cV} = \|f\|_{\cW^{1,1}} \,,
\end{equation*}
and in \reff{ex:woo_woi-1} we can replace
\begin{equation*}
  \I{\dom}{\tf}{Df} = \I{\dom}{\tf\cdot Df}{\lem}\,.
\end{equation*}

(3) Let $\edom\subset\R^n$ be open, bounded and let $\dom\in\bor{\edom}$.
We consider the space
\begin{equation*}
  X:=\{f\in\cW^{1,1}(\edom)\mid Df\in\cD\cM^1(\edom)\}\,, \quad
  \|f\|_X:=\|f\|_{\cL^1} + \|Df\|_{\cD\cM^1} \,. 
\end{equation*}
This means that $\Delta f$ in the sense of distributions is a Radon
measure. Now we define $T:X\to\woinf{\edom,\R^n}^*$ by
\begin{equation*}
  \df{Tf}{\tf} = \I{\dom}{\tf}{\Delta f} + \I{\dom}{Df\cdot D\tf}{\lem}\,.
\end{equation*}
Theorem~\ref{ex:dmo_woi} implies that $Tf$ is a trace on $\bd\dom$ and we
readily conclude that $T$ is continuous. For $\dom$ open with Lipschitz
boundary and $f$ smooth we obviously have
\begin{equation*}
  \df{Tf}{\tf} = \I{\bd\dom}{\tf Df\cdot\nu^\dom}{\hm} \,.
\end{equation*}
\end{remark}

\medskip

\begin{proof+}{ of Proposition \ref{tt-s5}}  
For $f\in\cB\cV(\edom)$ we consider the vector fields
\begin{equation*}
  F_k=(F_k^1,\dots,F_k^n) \qmq{with}
  F_k^k=f,\z F_k^j=0 \zmz{for} k\ne j 
\end{equation*}
where $k=1,\dots,n$.
Obviously $F_k\in\dmo{\edom}$ for all $k$ and, with the notation from
\reff{tt-s6-1}, 
\begin{eqnarray*}
  \|F_k\|_{\cD\cM^1} 
&=&
 \|F_k\|_{\cL^1}+|\divv F_k|(\edom) =
  \|f\|_{\cL^1}+|D_{x_k}f|(\edom) \\
&\le&
  \|f\|_{\cL^1}+|Df|(\edom) = \|f\|_{\cB\cV} \,.
\end{eqnarray*}
Hence, by Theorem~\ref{ex:dmo_woi},
there are linear and continuous mappings  
\begin{equation*}
  T_k:\cB\cV(\edom)\to\woinf{\edom,\R^n}^* \quad (k=1,\dots,n)
\end{equation*}
such that each $T_kf$ is a trace on $\bd\dom$ and such that
\begin{equation*}
   \df{T_kf}{\tf} = \divv(\tf^kF_k)(\dom) = \I{\dom}{\tf^k}{D_{x_k}f} + 
  \I{\dom}{f\tf^k_{x_k}}{\lem} \,
\end{equation*}
for all $\tf=(\tf^1,\dots,\tf^n)\in\woinf{\edom,\R^n}$.
Summing over $k$ we get the first statement of the proposition for
$T=\sum_kT_k$. The assertions related to $\df{Tf}{\tf} = 0$ follow directly
from Theorem~\ref{ex:dmo_woi} and Corollary~\ref{ex:dmo_woi_c}.
\end{proof+}

\bigskip

\begin{proof+}{ of Theorem \ref{ex:dmo_woi}}    
First we note that
\begin{equation} \label{ex:dmo_woi-4}
    \tf F \in \dmo{\edom} \qmq{for all} 
    F\in\dmo{\edom},\; \tf \in \cW^{1,\infty}(\edom)
\end{equation}
and that, as measures,
\begin{equation} \label{ex:dmo_woi-5}
  \divv(\tf F) = \tf\divv F + F\cdot D\tf \,\lem
\end{equation}
(cf. Proposition 2.2 and the subsequent comment in 
\cite{silhavy_divergence_2009}). 
For $F\in\dmo{\edom}$ we set
\begin{equation*}
    \df{TF}{\tf} := \divv (\tf F)(\dom) 
  \qmq{for all} \tf\in\cW^{1,\infty}(\edom) \,.
\end{equation*}
From \reff{ex:dmo_woi-5} we get \reff{ex:dmo_woi-1} and
\begin{equation*}
    |\df{TF}{\tf}| \le 
    \|\tf\|_\infty\left|\divv F\right|(U) + \|D\tf\|_\infty\|F\|_1 \le
    \|\tf\|_{\cW^{1,\infty}}\|\,\|F\|_{\cD\cM^1}\,. 
\end{equation*}
Hence $TF\in\cW^{1,\infty}(\edom)^*$ with $\|TF\|\le\|F\|_{\cD\cM^1}$.
Therefore $T$ is a linear continuous operator as stated.

We now consider $\tf\in\cW^{1,\infty}(\edom)$ with 
\begin{equation}\label{ex:dmo_woi-6}
  \tf_{|(\bd\dom)_\delta\cap\dom} = 0 \qmq{for some} \delta > 0 \,.
\end{equation}
Then
\begin{equation} \label{ex:dmo_woi-8}
  D\tf=0 \qmq{$\cL^n$-a.e. on} (\bd\dom)_\delta\cap\dom 
\end{equation}
(cf. \cite[p.~130]{evans}). 
For $\delta'=\frac{\delta}{3}$ we define   
$\chi\in\cW^{1,\infty}(\R^n)$ by
\begin{equation}\label{ex:dmo_woi-7}
  \chi(x):= \Bigg\{ 
  \mbox{\small $ 
  \begin{array}{ll} 1 & \text{for }x\in\dom_{-2\delta'}\,, \\
                    0 & \text{for }x\not\in\dom_{-\delta'}\,,  \\
      1- \tfrac{1}{\delta'} \dist{(\dom_{-2\delta'})}{x} & 
     \text{otherwise}\,.
  \end{array}$ } 
\end{equation}
Clearly,
\begin{equation*}
  0\le\chi\le 1\,, \quad \chi=1 \zmz{on} \ol{\dom_{-\delta}} \,, \quad
  \supp\chi\subset\ol{\dom_{-\delta'}} 
           \subset\ol{\edom_{-\delta'}} \,,
\end{equation*}
\begin{equation*}
  (1-\chi)\tf=0 \zmz{on}\dom\,, \z \tf D\chi=0 \zmz{on} \edom\,, 
\end{equation*}
\begin{equation*}
  (1-\chi)D\tf=0 \z\zmz{$\cL^n$-a.e. on} \dom \,.
\end{equation*}
Using \reff{ex:dmo_woi-1} we obtain
\begin{eqnarray*}
  \df{TF}{\tf} 
& = &
 \I{\dom}{(1-\chi)\tf}{\divv F} + \I{\dom}{(1-\chi) D\tf\cdot F}{\lem} 
  \nonumber\\
&&
 + \I{\dom}{\chi\tf}{\divv F} + 
   \I{\dom}{\chi D\tf \cdot F}{\lem}   \nonumber\\ 
&=&
  \I{\edom}{\chi\tf}{\divv F} + 
   \I{\edom}{\big(\chi D\tf + \tf D\chi\big) \cdot F}{\lem}  
   \nonumber\\ 
& = & 
  \I{\edom}{\chi\tf}{\divv F} + 
  \I{\edom}{F\cdot D(\chi\tf)}{\lem} \,. 
\end{eqnarray*}
Obviously $\chi\tf\in\cW^{1,\infty}(\edom)$
with compact support in $\edom$. The definition of divergence measure in 
\reff{tt-e3} and the subsequent comment imply
\begin{equation*}
  \I{\edom}{\chi\tf}{\divv F} + 
  \I{\edom}{F\cdot D(\chi\tf)}{\lem} = 0 \,.
\end{equation*}
Consequently $\df{TF}{\tf}=0$ for all $\tf\in\cW^{1,\infty}(\edom)$
satisfying \reff{ex:dmo_woi-6}. This shows the last statement in the theorem
and readily implies that $TF$ is a trace on $\bd{\dom}$ over
$\cW^{1,\infty}(\edom)$. 
\end{proof+}

\begin{proof+}{ of Corollary \ref{ex:dmo_woi_c}}    
For $\delta>0$ we use $\chi=\chi_\delta$ as in \reff{ex:dmo_woi-7} and we have 
\begin{equation*}
  0\le\chi_\delta\le 1\,, \quad \chi_\delta=1 \zmz{on} \ol{\dom_{-\delta}} \,, 
  \quad  \supp\chi_\delta\subset\ol{\dom_{-\frac{\delta}{3}}} 
           \subset\ol{\edom_{-\frac{\delta}{3}}} \,.
\end{equation*}
If $\tf\in\cW^{1,\infty}(\edom)$, then 
$\chi_\delta\tf\in\cW^{1,\infty}(\edom)$, it has compact support on 
$\edom$, and it vanishes outside $\dom$. 
Hence $D\chi_\delta=0$ $\cL^n$-a.e. on 
$\edom\setminus(\dom\setminus\dom_{-\delta})$
(cf. \cite[p.~130]{evans}). Therefore the definition of divergence
measure in \reff{tt-e3} and the subsequent comment give for all $\delta>0$
\begin{eqnarray*}
  0
&=&
  \I{\edom}{\chi_\delta\tf}{\divv F} + 
  \I{\edom}{F\cdot D(\chi_\delta\tf)}{\lem} \\
&=&
  \I{\dom}{\chi_\delta\tf}{\divv F} + 
  \I{\dom}{\chi_\delta F\cdot D\tf}{\lem} +
  \I{\dom}{\tf F\cdot D\chi_\delta}{\lem} \,.
\end{eqnarray*}
Consequently
\begin{eqnarray}
  \df{TF}{\tf} 
& = &
 \I{\dom}{(1-\chi_\delta)\tf}{\divv F} + 
 \I{\dom}{(1-\chi_\delta) D\tf\cdot F}{\lem}   \nonumber\\
&&
 + \I{\dom}{\chi_\delta\tf}{\divv F} + 
   \I{\dom}{\chi_\delta D\tf \cdot F}{\lem}   \nonumber\\ 
& = & 
  \I{\dom}{(1-\chi_\delta)\tf}{\divv F} + 
  \I{\dom}{(1-\chi_\delta) D\tf\cdot F}{\lem} \nonumber\\
&& \label{ex:dmo_woi_c-5}
  - \I{\dom}{\tf F\cdot D\chi_\delta}{\lem} \,.
\end{eqnarray}

Let now $\tf_{\wr\bd\dom}=0$. 
Then $\tf(x)=0$ for $x\in\bd\dom\cap\dom$ by continuity of $\tf$ on $\edom$.
For $x\in\op{int}\dom\setminus\dom_{-\delta}$ there is $x'\in\bd\dom$ such that
\begin{equation*}
  |x-x'|=\dist{\bd\dom}{x} \le\delta 
\end{equation*}
and, consequently,
\begin{equation*}
  B_{|x-x'|}(x)\subset\dom \,.
\end{equation*}
For any $\delta'>0$ we find $x''\in(x',x)$ (open segment connecting $x$, $x'$)
with $|\tf(x'')|<\delta'$ by $\|\tf\|_{\bd\dom}=0$. Therefore
\begin{equation*}
  |\tf(x)| \le |\tf(x)-\tf(x'')| + |\tf(x'')| \le
  \delta\|D\tf\|_\infty + \delta'\,.
\end{equation*}
The arbitrariness of $\delta'$ and $x$ implies
\begin{equation*}
  |\tf(x)|\le \delta\|D\tf\|_\infty  
  \qmq{on} \dom\setminus\dom_{-\delta}\,.
\end{equation*}
Using $|D\chi_\delta|\le\frac{3}{\delta}$ $\cL^n$-a.e. on $\edom$ and 
$D\chi_\delta=0$ $\cL^n$-a.e. on 
$\edom\setminus\big(\dom\setminus\dom_{-\delta} \big)$ we get
\begin{equation*}
  |\tf D\chi_\delta|\le 3\|D\tf\|_\infty 
  \qmq{$\cL^n$-a.e. on} \edom \,.
\end{equation*}
Since $D\chi_\delta(x)\to 0$ for all $x\in\edom$, dominated convergence gives 
\begin{equation*}
  \I{\edom}{\tf F\cdot D\chi_\delta}{\lem} \to 0 \qmq{as} \delta\to 0\,.
\end{equation*}
From $\tf_{\wr\bd\dom}=0$ we also obtain that
\begin{equation*}
  \big|(1-\chi_\delta(x))\tf(x)\big| \le 
  \|\tf_{|\dom\setminus\dom_{-\delta}}\|_\infty
  \overset{\delta\to 0}{\longrightarrow} 0 \qmq{for all} x\in\dom
\end{equation*}
and, therefore,
\begin{equation*}
  \I{\dom}{(1-\chi_\delta)\tf}{\divv F} \to 0 \qmq{as} \delta\to 0\,.
\end{equation*}
Analogously, $(D\tf)_{\wr\bd\dom}=0$ implies
$\big|(1-\chi_\delta(x))D\tf(x)\big|\overset{\delta\to 0}{\longrightarrow} 0$
for all $x\in\dom$. Thus
\begin{equation*}
  \I{\dom}{(1-\chi_\delta)F\cdot D\tf}{\lem} \to 0 \qmq{as} \delta\to 0\,.
\end{equation*}
The last convergence is also obtained in the case where
$\lem(\dom\setminus\op{int}\dom)=0$, since
then $\chi_\delta\to 1$ $\lem$-a.e. on $\dom$.

Now we can take the limit $\delta\to 0$
in \reff{ex:dmo_woi_c-5} to get $\df{TF}{\tf}=0$.
\end{proof+}

\subsection{Representation of traces}
\label{tt-r}

For a powerful theory we now need suitable representations for the traces 
introduced in the previous section. In general we are interested 
in traces over $\woinf{\edom,\R^m}$ which requires
representations of functionals in $\woinf{\edom,\R^m}^*$.
That we do not interrupt the presentation of the subsequent essential results, 
we collect all proofs in the next subsection.

Let us start with some preliminary considerations. 
First we consider the semi norm $\|\tf\|_\K$ and the related
factor space $ \cL^\infty_\K(\edom,\R^m)$ introduced in \reff{tt-e6}.
\begin{lemma}\label{lem-fs}
Let $\edom\subset\R^n$ be open and let $\K\subset\ol\edom$ be a Borel set.
Then the subspace $Z=\{\tf\in\cL^\infty(\edom,\R^m)\mid\|\tf\|_\C=0\}$ 
(cf. \reff{tt-e5})
is closed and, for all $\tf\in\cL^\infty(\edom,\R^m)$, 
\begin{equation}\label{lem-fs-1}
  \|\tf\|_\K = \op{dist}_Z \tf := 
  \inf_{\psi\in Z} \|\tf-\psi\|_{\cL^\infty(\edom)} \,,
\end{equation}
\begin{equation}\label{lem-fs-2}
  \|\tf\|_\K=\|\tf+\psi\|_\K \qmq{if} \psi\in Z \,.
\end{equation}
Moreover 
\begin{equation*}
  \cL^\infty_\K(\edom,\R^m) = \big\{ \tf_{\wr\K} \:\big|\:
  \tf_{\wr\K} = \tf +Z,\: \tf\in\cL^\infty(\edom,\R^m) \big\}  
\end{equation*}
is a Banach space with the norm $\|\tf_{\wr\K}\|:=\|\tf\|_\K$ and 
\begin{equation*}
  \cL^\infty_\K(\edom,\R^m)^* \cong 
  \{f^*\in \cL^\infty(\edom,\R^m)^*\mid \df{f^*}{\tf}=0
  \text{ for all } \tf\in Z \}
\end{equation*}
as isometric isomorphism. For each $f_\K^*\in \cL^\infty_\K(\edom,\R^m)^*$ 
there is some vector-valued measure $\me\in\bawl{\edom}^m$ such that
\begin{equation*}
  \df{f_\K^*}{\tf_{\wr\K}} = \I{\edom}{\tf}{\me} \qmz{for all}
  \tf_{\wr\K}\in\cL^\infty_\K(\edom,\R^m)
\end{equation*}
and 
\begin{equation*}
  \I{\edom}{\tf}{\me}=0 \qmz{for all} \tf\in Z 
  \zmz{(i.e. $\|\tf\|_\K=0$)\,.}
\end{equation*}
If $\edom$ is bounded, then $\cor{\me}\subset\ol \K$.
\end{lemma}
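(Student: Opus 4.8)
The plan is to settle the functional-analytic statements about the seminorm and the quotient first, then to read off the representing measure from Proposition~\ref{pm-s5}, and finally to locate the core. I would begin by recording that $\delta\mapsto\|\tf\|_{\cL^\infty(\K_\delta\cap\edom,\R^m)}$ is non-decreasing, so the limit defining $\|\tf\|_\K$ exists and equals $\inf_{\delta>0}\|\tf\|_{\cL^\infty(\K_\delta\cap\edom,\R^m)}$; passing to the limit $\delta\downarrow0$ in the corresponding identities for $\|\cdot\|_{\cL^\infty(\K_\delta\cap\edom,\R^m)}$ shows that $\|\cdot\|_\K$ is positively homogeneous and subadditive and satisfies $\|\cdot\|_\K\le\|\cdot\|_{\cL^\infty(\edom)}$. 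For \reff{lem-fs-2}, if $\psi\in Z$ the reverse triangle inequality gives $\bigl|\,\|\tf\|_{\cL^\infty(\K_\delta\cap\edom)}-\|\tf+\psi\|_{\cL^\infty(\K_\delta\cap\edom)}\,\bigr|\le\|\psi\|_{\cL^\infty(\K_\delta\cap\edom)}$, and the right-hand side tends to $0$, whence $\|\tf\|_\K=\|\tf+\psi\|_\K$. Closedness of $Z$ then follows at once: if $\tf_k\in Z$ and $\tf_k\to\tf$ in $\cL^\infty(\edom,\R^m)$, then $\|\tf\|_\K\le\|\tf-\tf_k\|_\K+\|\tf_k\|_\K\le\|\tf-\tf_k\|_{\cL^\infty(\edom)}\to0$.

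For \reff{lem-fs-1}, the inequality $\|\tf\|_\K\le\op{dist}_Z\tf$ is immediate from \reff{lem-fs-2} together with $\|\cdot\|_\K\le\|\cdot\|_{\cL^\infty(\edom)}$. For the converse I would use an explicit cut-off: given $\eps>0$ choose $\delta_0>0$ with $\|\tf\|_{\cL^\infty(\K_{\delta_0}\cap\edom)}\le\|\tf\|_\K+\eps$ and set $\psi:=\tf\,\chi_{\edom\setminus\K_{\delta_0}}$; since $\K_\delta\subset\K_{\delta_0}$ for $\delta<\delta_0$, this $\psi$ vanishes on $\K_\delta\cap\edom$, hence $\psi\in Z$, while $\tf-\psi=\tf\,\chi_{\K_{\delta_0}\cap\edom}$, so that $\op{dist}_Z\tf\le\|\tf-\psi\|_{\cL^\infty(\edom)}=\|\tf\|_{\cL^\infty(\K_{\delta_0}\cap\edom)}\le\|\tf\|_\K+\eps$, and $\eps\downarrow0$ gives \reff{lem-fs-1}. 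Consequently $\|\tf_{\wr\K}\|=\|\tf\|_\K$ is exactly the quotient norm of $\cL^\infty(\edom,\R^m)/Z$, so $\cL^\infty_\K(\edom,\R^m)$ is a Banach space ($\cL^\infty(\edom,\R^m)$ being complete and $Z$ closed), and the standard isometric identification of the dual of a quotient by a closed subspace with the annihilator of that subspace yields $\cL^\infty_\K(\edom,\R^m)^*\cong Z^\perp:=\{f^*\in\cL^\infty(\edom,\R^m)^*\mid\df{f^*}{\tf}=0\ \text{for all}\ \tf\in Z\}$, the isomorphism being $g^*\mapsto g^*\circ\pi$ with $\pi$ the canonical projection.

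Given $f_\K^*\in\cL^\infty_\K(\edom,\R^m)^*$, the functional $f^*:=f_\K^*\circ\pi$ lies in $Z^\perp$, and Proposition~\ref{pm-s5}, applied with $\sigma=\lem$ on $(\edom,\bor{\edom})$, produces $\me\in\bawl{\edom}^m$ with $\df{f^*}{\tf}=\I{\edom}{\tf}{\me}$ for all $\tf\in\cL^\infty(\edom,\R^m)$; hence $\df{f_\K^*}{\tf_{\wr\K}}=\I{\edom}{\tf}{\me}$ for every representative $\tf$ of the class (the value being independent of the representative, since $\I{\edom}{\tf}{\me}=0$ whenever $\tf\in Z$). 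For the final assertion I would assume $\edom$ bounded and take $M:=\ol\edom$ as the compact ambient space in the definition of $\cor{\me}$. Let $x\in\ol\edom\setminus\ol\K$ and put $\rho:=\tfrac12\inf_{z\in\ol\K}|x-z|>0$. For $y\in B_\rho(x)$ one has $\inf_{z\in\ol\K}|y-z|\ge 2\rho-|x-y|>\rho$, so $y\notin\ol\K$ and $\dist{\K}{y}=\inf_{z\in\bd\K}|y-z|\ge\inf_{z\in\ol\K}|y-z|>\rho$, i.e. $B_\rho(x)\cap\K_\rho=\emptyset$; therefore $\chi_A\,e_i\in Z$ for every Borel $A\subset B_\rho(x)\cap\edom$ and every coordinate vector $e_i$, and $\I{\edom}{\chi_A e_i}{\me}=\me_i(A)=0$ forces $\me(A)=0$. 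Running over finite Borel partitions of $B_\rho(x)\cap\edom$ and using \reff{pm-e3} gives $|\me|\bigl(B_\rho(x)\cap\edom\bigr)=0$, so $x\notin\cor{\me}$, whence $\cor{\me}\subset\ol\edom\cap\ol\K\subset\ol\K$.

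The only genuinely non-automatic steps are the cut-off producing \reff{lem-fs-1} and this last localization of the core; the latter is where one has to be careful that the neighborhoods $\K_\delta$ built from the signed distance function really do contract to $\ol\K$ as $\delta\downarrow0$, so that points off $\ol\K$ get separated from $\K_\delta\cap\edom$ for all small $\delta$. Everything else reduces to the standard duality for quotients by closed subspaces together with a direct appeal to Proposition~\ref{pm-s5}.
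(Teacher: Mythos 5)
Your proposal is correct and follows essentially the same route as the paper's proof: the same truncation $\tf\chi_{\edom\setminus\K_{\delta_0}}\in Z$ to identify $\|\tf\|_\K$ with the quotient norm, the standard duality of a quotient by a closed subspace, the identification $\cL^\infty(\edom,\R^m)^*\cong\bawl{\edom}^m$ from Proposition~\ref{pm-s5}, and the localization of $\cor{\me}$ by testing against functions supported away from $\K_\delta$. The only (cosmetic) differences are that you prove $\|\tf\|_\K\le\op{dist}_Z\tf$ directly from \reff{lem-fs-2} where the paper argues by contradiction, and you make the core estimate explicit with characteristic functions of small balls instead of a generic $\psi$ supported in an open set $V\subset\R^n\setminus\K_{2\delta}$.
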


For the characterization of $\woinf{\edom,\R^m}^*$ we identify 
$\woinf{\edom,\R^m}$ with a subspace of a product space of the form
\begin{equation*}
  \big\{ (\tf,D\tf)\in X_0\times\cL^\infty(\edom,\R^{mn}) \:\big| \: 
  \tf\in \woinf{\edom,\R^m} \big\}
\end{equation*}
where $X_0$ is a suitable Banach space (e.g. $\cL^\infty(\edom,\R^m)$ 
in the general case or $C(\K,\R^m)$ if all $\tf$ are continuously extendable
up to $\K\subset\ol\edom$).

\begin{lemma} \label{lem-ps2}
Let $X_0$ be a Banach space with $\|.\|_{X_0}$ and let 
$\edom\subset\R^n$ be open. Then 
\begin{equation*}
  X=X_0\times\cL^\infty(\edom,\R^{mn}) \qmq{with}
  \|(\tf,\tF)\|_X= \max\big\{ \|\tf\|_{X_0},\|\tF\|_{\cL^\infty} \big\}
\end{equation*}
is a Banach space and $X^*$ is isometrically isomorphic to 
\begin{equation*}
  X_0^*\times \bawl{\edom}^{mn} \qmq{with}
  \|(f^*,\me)\| = \|f^*\| + \|\me\| 
\end{equation*}
such that
\begin{equation} \label{lem-ps2-1}
  \big\langle (f^*,\me),(\tf,\tF) \big\rangle = 
  \df{f^*}{\tf} + \I{\edom}{\tF}{\me} 
\end{equation}
for all $(\tf,\tF)\in X$, $(f^*,\me)\in X^*$ 
and $\|\me\|=|\me|(\edom)$. 
\end{lemma}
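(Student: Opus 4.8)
The plan is to reduce the statement to two ingredients: the abstract duality between a finite product of Banach spaces carrying the maximum norm and the product of their duals carrying the sum norm, and the identification of $\cL^\infty(\edom,\R^{mn})^*$ with $\bawl{\edom}^{mn}$. The latter is exactly Proposition~\ref{pm-s5} applied with $\dom=\edom$, $\sal=\bor{\edom}$, $\sigma=\lem$, and with $m$ replaced by $mn$; in particular it furnishes the pairing $\I{\edom}{\tF}{\me}$ for $\tF\in\cL^\infty(\edom,\R^{mn})$ and $\me\in\bawl{\edom}^{mn}$, the integrability of every such $\tF$ against $\me$ (since $\me\wac\lem$), the norm formula $\|\me\|=|\me|(\edom)$ of \reff{pm-s5-1}, and the total-variation identity \reff{pm-tva}. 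One should note that $\lem$ restricted to an unbounded $\edom$ is only $\sigma$-finite, but Proposition~\ref{pm-s5} merely assumes a positive $\sigma$-measure, so this is immaterial. That $X$ is a Banach space is routine: it is a finite product of Banach spaces carried in the product topology, so a sequence is Cauchy in $X$ iff it is Cauchy in each factor, and limits of the factor sequences assemble to a limit in $X$.

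I would then define $\Phi:X_0^*\times\bawl{\edom}^{mn}\to X^*$ by $\langle\Phi(f^*,\me),(\tf,\tF)\rangle:=\df{f^*}{\tf}+\I{\edom}{\tF}{\me}$, which is plainly linear. Boundedness together with the estimate $\|\Phi(f^*,\me)\|\le\|f^*\|+\|\me\|$ follows from
\begin{equation*}
  \big|\df{f^*}{\tf}+\I{\edom}{\tF}{\me}\big| \le \|f^*\|\,\|\tf\|_{X_0}+|\me|(\edom)\,\|\tF\|_{\cL^\infty} \le \big(\|f^*\|+\|\me\|\big)\,\|(\tf,\tF)\|_X\,,
\end{equation*}
where $|\I{\edom}{\tF}{\me}|\le|\me|(\edom)\,\|\tF\|_{\cL^\infty}$ comes from \reff{pm-tva} with $\als=\edom$. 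For surjectivity, given $L\in X^*$ I set $f^*:=L(\cdot,0)\in X_0^*$, and observe that $\tF\mapsto L(0,\tF)$ is a bounded linear functional on $\cL^\infty(\edom,\R^{mn})$, hence equals $\I{\edom}{\tF}{\me}$ for a (unique) $\me\in\bawl{\edom}^{mn}$ by Proposition~\ref{pm-s5}; splitting $L(\tf,\tF)=L(\tf,0)+L(0,\tF)$ then gives $L=\Phi(f^*,\me)$. Injectivity is immediate: $\Phi(f^*,\me)=0$ forces $\df{f^*}{\tf}=0$ for all $\tf$ (take $\tF=0$) and $\I{\edom}{\tF}{\me}=0$ for all $\tF$ (take $\tf=0$), whence $f^*=0$ and $\me=0$. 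The representation \reff{lem-ps2-1} is built into the definition of $\Phi$.

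The one step that needs care is the reverse norm inequality $\|\Phi(f^*,\me)\|\ge\|f^*\|+\|\me\|$, which is where the maximum/sum duality actually bites. Given $\eps>0$, I would choose $\tf\in X_0$ with $\|\tf\|_{X_0}\le1$ and $\df{f^*}{\tf}\ge\|f^*\|-\eps$, and, using $\|\me\|=|\me|(\edom)$ together with \reff{pm-tva} for $\als=\edom$, choose $\tF\in\cL^\infty(\edom,\R^{mn})$ with $\|\tF\|_{\cL^\infty}\le1$ and $\I{\edom}{\tF}{\me}\ge\|\me\|-\eps$. Then $\|(\tf,\tF)\|_X\le1$ and $\langle\Phi(f^*,\me),(\tf,\tF)\rangle\ge\|f^*\|+\|\me\|-2\eps$; letting $\eps\downarrow0$ concludes, so that $\Phi$ is an isometric isomorphism. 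The main obstacle is really only bookkeeping: all the analytic substance — the structure of $\cL^\infty(\edom,\R^{mn})^*$ and the total-variation formula — is already delivered by Proposition~\ref{pm-s5}, so the task reduces to matching the two norm inequalities exactly and keeping track of the two slots of the product.
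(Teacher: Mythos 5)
Your proposal is correct and follows essentially the same route as the paper: the Banach-space structure of $X$ is routine, the identification of $\cL^\infty(\edom,\R^{mn})^*$ with $\bawl{\edom}^{mn}$ is delegated to Proposition~\ref{pm-s5}, and the norm identity is obtained by the easy triangle-type estimate in one direction and by combining $\eps$-optimal test elements $(\tf^\eps,\tF^\eps)$ with $\|(\tf^\eps,\tF^\eps)\|_X\le 1$ in the other. You merely spell out the surjectivity/injectivity bookkeeping that the paper compresses into ``by standard arguments.''
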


That we can use the lemma for the description of $\woinf{\edom,\R^m}^*$,
we need an injection from $\woinf{\edom,\R^m}$ onto a
subspace of $X$. 

\begin{proposition} \label{prop:dual1}
Let $X_0$ be a Banach space, let $\edom\subset\R^n$ be open, and assume that
there is a linear mapping 
$\iota_0:\cW^{1,\infty}(\edom,\R^m)\to X_0$ such that 
\begin{equation*}
  \iota:\cW^{1,\infty}(\edom,\R^m)\to X_0\times\cL^\infty(\edom,\R^{mn})
  \qmq{with} \iota(\tf) = (\iota_0(\tf),D\tf)
\end{equation*}
has a continuous inverse 
$\iota^{-1}$ on its image $\iota\big(\cW^{1,\infty}(\edom,\R^m)\big)$
equipped with 
\begin{equation*}
  \|\iota(\tf)\|=\max \big\{ \|\iota_0(\tf)\|_{X_0}, \|D\tf\|_\infty \big\} \,.
\end{equation*}
Then for each $f^*\in\cW^{1,\infty}(\edom,\R^m)^*$ there are 
$f_0^*\in X_0^*$ and $\me\in\bawl{\edom}^{mn}$ such that
\begin{equation*}
  \df{f^*}{\tf} = \df{f_0^*}{\iota_0(\tf)} + \I{\edom}{D\tf}{\me} 
\end{equation*}
for all $\tf\in\cW^{1,\infty}(\edom,\R^m)$. Moreover
\begin{eqnarray}
  \|(f_0^*,\me)\| 
&=&
  \|f_0^*\| + \|\me\| \: = \:
  \|f^*\circ\iota^{-1}\| \nonumber \\
&\le&  \label{prop:dual1-3}
  \|\iota^{-1}\| \|f^*\| \: = \:
  \|\iota^{-1}\|
  \sup_{\substack{\tf\in\cW^{1,\infty}(\edom,\R^m)\\
        \|\tf\|_{\cW^{1,\infty}}\le 1} }
    \df{f^*}{\tf} \,.  
\end{eqnarray}
\end{proposition}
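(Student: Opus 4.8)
The plan is to reduce the statement to the Hahn--Banach theorem combined with the identification of $X^*$ provided by Lemma~\ref{lem-ps2}. Put $X=X_0\times\cL^\infty(\edom,\R^{mn})$ with the max-norm as in Lemma~\ref{lem-ps2}, and let
\[
  V := \iota\big(\cW^{1,\infty}(\edom,\R^m)\big) \subset X
\]
be the image of $\iota$, a (not necessarily closed) linear subspace of $X$. On $V$ the norm induced by $X$ is exactly $\|\iota(\tf)\|=\max\{\|\iota_0(\tf)\|_{X_0},\|D\tf\|_\infty\}$, which is the norm appearing in the hypothesis, so Lemma~\ref{lem-ps2} applies on the nose. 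Since $\iota$ is assumed to have a continuous inverse on $V$, it is in particular injective, hence $f^*\circ\iota^{-1}:V\to\R$ is a well-defined linear functional; for $v=\iota(\tf)\in V$,
\[
  |\langle f^*\circ\iota^{-1},v\rangle| = |\df{f^*}{\tf}| \le \|f^*\|\,\|\tf\|_{\cW^{1,\infty}} = \|f^*\|\,\|\iota^{-1}v\|_{\cW^{1,\infty}} \le \|f^*\|\,\|\iota^{-1}\|\,\|v\|_X ,
\]
so $f^*\circ\iota^{-1}$ is continuous on $V$ with $\|f^*\circ\iota^{-1}\|_{V^*}\le\|\iota^{-1}\|\,\|f^*\|$.

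Next I would extend $f^*\circ\iota^{-1}$ by Hahn--Banach to a functional $g^*\in X^*$ with $\|g^*\|=\|f^*\circ\iota^{-1}\|_{V^*}$. By Lemma~\ref{lem-ps2} there is a (unique) pair $(f_0^*,\me)\in X_0^*\times\bawl{\edom}^{mn}$ with
\[
  \df{g^*}{(\tf_0,\tF)} = \df{f_0^*}{\tf_0} + \I{\edom}{\tF}{\me} \qquad\text{for all }(\tf_0,\tF)\in X ,
\]
the identification being isometric, i.e. $\|g^*\|=\|f_0^*\|+\|\me\|$ and $\|\me\|=|\me|(\edom)$. Evaluating on $\iota(\tf)=(\iota_0(\tf),D\tf)\in V$ and using that $g^*$ extends $f^*\circ\iota^{-1}$ yields, for every $\tf\in\cW^{1,\infty}(\edom,\R^m)$,
\[
  \df{f^*}{\tf} = \df{g^*}{\iota(\tf)} = \df{f_0^*}{\iota_0(\tf)} + \I{\edom}{D\tf}{\me} ,
\]
which is the asserted representation.

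For the norm statement \reff{prop:dual1-3}: the first equality $\|(f_0^*,\me)\|=\|f_0^*\|+\|\me\|$ is the definition of the norm on $X_0^*\times\bawl{\edom}^{mn}$, the second equality $\|f_0^*\|+\|\me\|=\|g^*\|=\|f^*\circ\iota^{-1}\|$ combines the isometry of Lemma~\ref{lem-ps2} with the norm-preserving Hahn--Banach extension, the inequality $\|f^*\circ\iota^{-1}\|\le\|\iota^{-1}\|\,\|f^*\|$ was shown in the first paragraph, and finally $\|f^*\|=\sup\{\df{f^*}{\tf}\mid\|\tf\|_{\cW^{1,\infty}}\le 1\}$ because $f^*$ is real-valued and the unit ball is symmetric. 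There is no genuine obstacle in this argument; the only points requiring a moment's care are verifying that the norm with which $V$ is equipped in the hypothesis coincides with the one induced from $X$ (needed so that Hahn--Banach and Lemma~\ref{lem-ps2} combine cleanly) and noting that the assumed existence of a continuous inverse on $V$ forces $\iota$ to be injective, so that $f^*\circ\iota^{-1}$ makes sense.
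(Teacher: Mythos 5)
Your proof is correct and follows essentially the same route as the paper: extend $f^*\circ\iota^{-1}$ from the image $\iota\big(\cW^{1,\infty}(\edom,\R^m)\big)$ to all of $X_0\times\cL^\infty(\edom,\R^{mn})$ by a norm-preserving Hahn--Banach extension and then apply Lemma~\ref{lem-ps2} to identify the extension with a pair $(f_0^*,\me)$. The additional details you supply (injectivity of $\iota$, coincidence of the hypothesized norm with the induced one, and the explicit continuity estimate for $f^*\circ\iota^{-1}$) are exactly the points the paper leaves implicit.
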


\begin{remark} \label{rem:dual1}
The assumption for $\iota$ is satisfied if 
$\iota\big(\cW^{1,\infty}(\edom,\R^m)\big)$ is closed and $\iota$ 
is continuous and injective, since then $\iota^{-1}$ is continuous by the open
mapping theorem. Alternatively an estimate 
\begin{equation*}
  \|\tf\|_{\cW^{1,\infty}}\le \tilde c\, \|\iota(\tf)\| 
  \qmz{for all} \tf\in \cW^{1,\infty}(\edom,\R^m) 
\end{equation*}
with some $\tilde c\ge 0$ would be sufficient for existence and continuity of
$\iota^{-1}$. 
\end{remark}

For $X_0=\cL^\infty(\edom,\R^m)$ and $\iota_0(\tf)=\tf$, the assumptions of
Proposition~\ref{prop:dual1} are satisfied and we obtain a
representation of $\woinf{\edom,\R^m}^*$ related to measures on $\edom$
without further specification of their core. 
At this point we have to realize that the representation of 
$f^*\in\woinf{\edom,\R^m}^*$ by means of measures is not unique in general
and that even traces on $\K\subset\ol\edom$ over
$\woinf{\edom,\R^m}$ can be represented with measures that are supported on
all of $\edom$. Indeed, if we take the trace $f^*=TF$ from
\reff{ex:dmo_woi-1}, then the right hand side of \reff{ex:dmo_woi-1} itself
gives a representation of it related to the measures
\begin{equation*}
  \divv F\in\cM(\edom) \qmq{and} F\cL^n\in\bawl{\edom}^{n}\,.
\end{equation*}
These measures might be provided by Proposition~\ref{prop:dual1} for $f^*$
if we consider $\divv F$ as extension to some element of $\bawl{\edom}$ 
according to Proposition~\ref{acm-ram}. However, having in mind
usual Gauss-Green formulas, we are rather interested in representations of
traces on $\K$ with measures having core on or at least near $\K$. 

For such a localization we use the 
\textit{\tent\ function} $\port{\K}_\delta:\R^n\to\R$ 
of $\K$ and $\delta>0$ given by
\begin{equation}\label{eq:port_fun}
    \port{\K}_\delta := \ind{\K_{\frac{\delta}{2}}} +
    \ind{\K_\delta\setminus \K_{\frac{\delta}{2}}}
    \big( 2 - \tfrac{2}{\delta}\distf{\K}\big) 
\end{equation}
which is 1 on $\K_{\frac{\delta}{2}}$ and 0 outside $\K_\delta$ (cf. 
Figure \ref{fig:portbd}). Note that $\port{\K}_\delta \in \woinf{\edom}$, 
since it is Lipschitz continuous on $\Rn$.
Let us mention that other choices of $\port{\K}_\delta$ provide the same
results as long as $\port{\K}_\delta$ is Lipschitz continuous with support
on $\ol{\K_\delta}$ and with $\port{\K}_\delta =1$
on some neighborhood of~$\K$.

\begin{figure}[H] 
  \centering
  \begin{tikzpicture}
    \draw (-2.5,0) -- (2.5,0);
    \draw (-1,1.5) -- (1,1.5);
    \draw (1,1.5) -- (2,0);
    \draw (0,0) -- (0,1.5);
    \draw (1,1.5) -- (1,0);
    \draw (-1,1.5) -- (-1,0);
    \draw (-2,0) -- (-1,1.5);
    \node at (0,-0.5) {$\K$};
    \node at (-1,-0.5) {$\frac{\delta}{2}$};
    \node at (1,-0.5) {$\frac{\delta}{2}$};
    \node at (2,-0.5) {$\delta$};
    \node at (-2,-0.5) {$\delta$};
  \end{tikzpicture}
  \vspace{-2.5ex}
  \caption{\Tent\ function $\port{\K}_\delta$ of $\K$ and $\delta$.}
  \label{fig:portbd}
\end{figure}

\begin{proposition}\label{prop:dual0}
Let $\edom\subset\R^n$ be open, let $\K\subset\cl\edom$ be closed, let
$\delta>0$, and let $f^*\in\cW^{1,\infty}(\edom,\R^m)^*$
be a trace on $\K$.
Then there is some 
\begin{equation}\label{prop:dual0-1}
  f_\delta^*\in\cW^{1,\infty}(\K_\delta\cap\edom,\R^m)^* \qmq{with}
  \df{\de}{\tf} = \df{f^*}{\port{\K}_\delta \tf} =
  \df{f_\delta^*}{\tf_{|\K_\delta\cap\edom}}
\end{equation}
for all $\tf\in\cW^{1,\infty}(\edom,\R^m)$ and 
\begin{equation*}
  \|f_\delta^*\|\le 2\|\port{\K}_\delta\|_{\cW^{1,\infty}(\edom,\R^m)}\|f^*\| 
  \,.
\end{equation*}
\end{proposition}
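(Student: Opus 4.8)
The plan is to localize $f^*$ near $\K$ by multiplying with the tent function $\port{\K}_\delta$ from \reff{eq:port_fun}. First I would set
\begin{equation*}
  \df{\de}{\tf} := \df{f^*}{\port{\K}_\delta\tf}
  \qmq{for} \tf\in\cW^{1,\infty}(\edom,\R^m)\,.
\end{equation*}
Since $\port{\K}_\delta$ is Lipschitz on $\R^n$ with $0\le\port{\K}_\delta\le 1$ and Lipschitz constant at most $\tfrac{2}{\delta}$, it lies in $\cW^{1,\infty}(\edom)$, and the product rule gives $\port{\K}_\delta\tf\in\cW^{1,\infty}(\edom,\R^m)$ together with $\|\port{\K}_\delta\tf\|_{\cW^{1,\infty}(\edom)}\le 2\|\port{\K}_\delta\|_{\cW^{1,\infty}(\edom,\R^m)}\|\tf\|_{\cW^{1,\infty}(\edom)}$; hence $\de\in\cW^{1,\infty}(\edom,\R^m)^*$. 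I would also record that, $f^*$ being a trace on $\K$ and $(1-\port{\K}_\delta)\tf$ vanishing on $\K_{\delta/2}\cap\edom$ (where $\port{\K}_\delta\equiv 1$), one has $\df{f^*}{(1-\port{\K}_\delta)\tf}=0$, so in fact $\df{\de}{\tf}=\df{f^*}{\tf}$; thus $\de$ is merely $f^*$ rewritten in localized form.

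Next I would observe that $\port{\K}_\delta\tf$ depends only on the restriction $\tf_{|\K_\delta\cap\edom}$: if $\tf_1,\tf_2\in\cW^{1,\infty}(\edom,\R^m)$ agree on $\K_\delta\cap\edom$, then $\port{\K}_\delta\tf_1=\port{\K}_\delta\tf_2$ on all of $\edom$, since $\port{\K}_\delta=0$ on $\edom\setminus\K_\delta$. Consequently $\df{f_\delta^*}{\tf_{|\K_\delta\cap\edom}}:=\df{f^*}{\port{\K}_\delta\tf}$ unambiguously defines a linear functional $f_\delta^*$ on the subspace $\{\tf_{|\K_\delta\cap\edom}\mid\tf\in\cW^{1,\infty}(\edom,\R^m)\}$ of $\cW^{1,\infty}(\K_\delta\cap\edom,\R^m)$. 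Since the restriction map need not be onto $\cW^{1,\infty}(\K_\delta\cap\edom,\R^m)$, I would first define $f_\delta^*$ on this image subspace and then extend it to the whole space by the Hahn--Banach theorem; the identity $\df{f_\delta^*}{\tf_{|\K_\delta\cap\edom}}=\df{f^*}{\port{\K}_\delta\tf}=\df{\de}{\tf}$ holds for all $\tf$ by construction.

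The point needing care — and the main, if only mildly technical, obstacle — is the norm estimate, which has to be against $\|\tf_{|\K_\delta\cap\edom}\|_{\cW^{1,\infty}(\K_\delta\cap\edom)}$ rather than the full norm $\|\tf\|_{\cW^{1,\infty}(\edom)}$. Here I would use that $\port{\K}_\delta\tf$ vanishes on $\edom\setminus\K_\delta$, so its weak gradient vanishes there $\lem$-a.e. as well (cf. \cite[p.~130]{evans}), whence $\|\port{\K}_\delta\tf\|_{\cW^{1,\infty}(\edom)}=\|(\port{\K}_\delta\tf)_{|\K_\delta\cap\edom}\|_{\cW^{1,\infty}(\K_\delta\cap\edom)}$; applying the product rule on $\K_\delta\cap\edom$ bounds this by $2\|\port{\K}_\delta\|_{\cW^{1,\infty}(\edom,\R^m)}\|\tf_{|\K_\delta\cap\edom}\|_{\cW^{1,\infty}(\K_\delta\cap\edom)}$. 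Combined with $|\df{f^*}{\port{\K}_\delta\tf}|\le\|f^*\|\,\|\port{\K}_\delta\tf\|_{\cW^{1,\infty}(\edom)}$, this shows that $f_\delta^*$ is bounded on the image subspace with norm at most $2\|\port{\K}_\delta\|_{\cW^{1,\infty}(\edom,\R^m)}\|f^*\|$, a bound the Hahn--Banach extension preserves; this yields the asserted $f_\delta^*\in\cW^{1,\infty}(\K_\delta\cap\edom,\R^m)^*$. One could alternatively avoid Hahn--Banach by extending $\port{\K}_\delta g$ by zero for an arbitrary $g\in\cW^{1,\infty}(\K_\delta\cap\edom,\R^m)$ and checking directly that this extension belongs to $\cW^{1,\infty}(\edom,\R^m)$ — routine, but a little delicate near the level set $\{\distf{\K}=\delta\}$ — which is why I favour the route above.
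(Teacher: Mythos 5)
Your argument is correct and follows essentially the same route as the paper: localize via the tent function and the trace property, observe that $\port{\K}_\delta\tf$ depends only on $\tf_{|\K_\delta\cap\edom}$ so that $f_\delta^*$ is well defined on the (possibly proper) restriction subspace, obtain the norm bound $\|\port{\K}_\delta\tf\|_{\cW^{1,\infty}(\edom)}=\|\port{\K}_\delta\tf\|_{\cW^{1,\infty}(\K_\delta\cap\edom)}\le 2\|\port{\K}_\delta\|_{\cW^{1,\infty}}\|\tf_{|\K_\delta\cap\edom}\|_{\cW^{1,\infty}}$ from the product rule, and extend by Hahn--Banach. No gaps.
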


Now we are able to represent any trace $f^*$ on $\K$ over  
$\cW^{1,\infty}(\edom,\R^m)$ by a functional   
$f^*_\delta\in\cW^{1,\infty}(\K_\delta\cap\edom,\R^m)^*$. This certainly 
leads to sharper results for the core of the related measures. 
But notice that $f^*_\delta$ really can depend on $\delta>0$ in general 
(cf. Example~\ref{dt-ex2} below). However if the  
$f_\delta^*$ are bounded in some sense, then $f^*$ can be represented 
by measures with core in $\K$ (cf. Proposition~\ref{prop:special-a} below).

For a more precise analysis we apply Proposition \ref{prop:dual1} 
to $f_\delta^*$ of Proposition~\ref{prop:dual0}. Here we use  
three choices of $X_0$ with corresponding 
$\iota_0:\cW^{1,\infty}(\edom,\R^m)\to X_0$
that turn out to be of particular relevance.
We say that we have case (G), (L), or (C) for $\K\subset\cl\edom$ and
$\delta>0$ if the assumption in 
Proposition~\ref{prop:dual1} is satisfied with $\K_\delta\cap\edom$
instead of $\edom$ and with

\smallskip

\bgl
\item[(G)] 
$X_0=\cL^\infty(\K_\delta\cap\edom,\R^m)$ and $\iota_0(\tf)=\tf$,

\item[(L)]
$X_0=\cL^\infty_\K(\K_\delta\cap\edom,\R^m)$ and $\iota_0(\tf)=\tf_{\wr\K}$, 

\item[(C)] 
$X_0=C(\K,\R^m)$ and $\iota_0(\tf)=\tf_{|\K}$ 
where all $\tf\in\cW^{1,\infty}(\K_\delta\cap\edom,\R^m)$ 
are assumed to be continuously extendable up to $\K$.
\el

\smallskip

Obviously we always have the {\it general case} (G), since
here $\iota$ is an isometry for all $\delta>0$. 
The other cases, that we call {\it Lebesgue case} (L) and 
{\it continuity case} (C), turn out to be a condition for the
geometry of $\K\subset\ol\edom$ related to $\delta$.
With Lemma~\ref{lem-fs} we readily get for bounded $\edom$
that $f_0^*\in X_0^*$ from  
Proposition~\ref{prop:dual1} corresponds to a measure $\ome$ where
\begin{equation*}
  \text{(G): } \cor{\ome}\subset\ol{\K_\delta\cap\edom}\,, \quad
  \text{(L),\;(C): } \cor{\ome}\subset\cl\K 
\end{equation*}
and where $\ome$ is even a $\sigma$-measure on $\K$ in the strongest case (C). 
This shows us the relevance of the different cases. 
Using these cases we now provide general representations of traces on $\K$ over
$\cW^{1,\infty}(\edom,\R^m)$. In Section~\ref{sec:ub} we combine these
results with Theorem~\ref{ex:dmo_woi} to derive Gauss-Green formulas
on arbitrary Borel sets $\dom\subset\edom$
for vector fields in $\cD\cM^1(\edom)$.

\begin{theorem}\label{prop:dual}
Let $\edom\subset\R^n$ be open and bounded, let $\K\subset\cl\edom$ be closed,
and let $f^*\in\cW^{1,\infty}(\edom,\R^m)^*$ be a trace on $\K$. 
Then for each $\delta>0$ there are measures
\begin{equation}\label{prop:dual-1}
  \ome \in \bawl{\edom}^m \qmq{and} \me \in \bawl{\edom}^{mn}
\end{equation}
with
\begin{equation*}\label{prop:dual-2}
  \cor{\ome},\: \cor{\me} \subset \ol{\K_\delta\cap\edom}
\end{equation*}
such that
\begin{equation}\label{prop:dual-3}
  \df{f^*}{\tf}= \I{\K_\delta\cap\edom}{\tf}{\ome} + 
                \I{\K_\delta\cap\edom}{D\tf}{\me} \,
\end{equation}
for all $\tf\in\woinf{\edom,\R^m}$ and
\begin{eqnarray}
  \|(\ome,\me)\| 
&=&
  \|\ome\| + \|\me\| \: = \: 
  |\ome|(\K_\delta\cap\edom) + |\me|(\K_\delta\cap\edom)
  \nonumber \\
&\le& \label{prop:dual-4}
  c \;
  \sup_{\substack{\tf\in\cW^{1,\infty}(\edom,\R^m)\\
                  \|\tf_{|\K_\delta\cap\edom}\|_{\cW^{1,\infty}}\le 1} }
    \df{f^*}{\port{\K}_\delta\tf}  \,
\end{eqnarray}
for a constant $c\ge 1$. 
In the particular cases we have in addition
\bgl
\item[{\rm (G):}]
equality in norm estimate \reff{prop:dual-4} with $c=1$,

\item[{\rm (L):\,}]
$\cor{\ome}\subset\K$ such that \reff{prop:dual-3} can be written as
\begin{equation*}
   \df{\de}{\tf}= \sI{\K}{\tf}{\ome} + 
                \I{\K_\delta\cap\edom}{D\tf}{\me} \,,
\end{equation*}
\item[{\rm (C):}]
$\ome$ corresponds to a Radon measure $\sme$ with $\supp{\sme}\subset\K$ 
such that   
\begin{equation*}
  \df{\de}{\tf}= \I{\K}{\tf}{\sme} + 
                \I{\K_\delta\cap\edom}{D\tf}{\me}  \,.
\end{equation*}
\el
\end{theorem}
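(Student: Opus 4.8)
The plan is to reduce Theorem~\ref{prop:dual} to the already-established machinery: Proposition~\ref{prop:dual0} localizes the trace, Proposition~\ref{prop:dual1} splits a functional on $\cW^{1,\infty}$ into a $X_0^*$-part and a measure on the gradient, and Lemma~\ref{lem-fs}/Proposition~\ref{acm-ram} identify the $X_0^*$-part with a (finitely additive) measure of the required core. First I would fix $\delta>0$ and apply Proposition~\ref{prop:dual0} to obtain $f_\delta^*\in\cW^{1,\infty}(\K_\delta\cap\edom,\R^m)^*$ with $\df{f^*}{\tf}=\df{f_\delta^*}{\tf_{|\K_\delta\cap\edom}}$ and $\df{f_\delta^*}{\psi}=\df{f^*}{\port{\K}_\delta\psi}$ for all $\psi\in\cW^{1,\infty}(\K_\delta\cap\edom,\R^m)$ — here one uses that $\port{\K}_\delta\psi$ extends by zero to an element of $\cW^{1,\infty}(\edom,\R^m)$ and that $f^*$ is a trace on $\K$, so the value only depends on $\psi$ near $\K$. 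Then I would apply Proposition~\ref{prop:dual1} to $f_\delta^*$ on the open set $\K_\delta\cap\edom$ in the general case (G), i.e. with $X_0=\cL^\infty(\K_\delta\cap\edom,\R^m)$, $\iota_0=\mathrm{id}$: this is always admissible since $\iota$ is an isometry, and it yields $f_0^*\in\cL^\infty(\K_\delta\cap\edom,\R^m)^*$ and $\me\in\bawl{\K_\delta\cap\edom}^{mn}$ with $\df{f_\delta^*}{\psi}=\df{f_0^*}{\psi}+\I{\K_\delta\cap\edom}{D\psi}{\me}$ and $\|f_0^*\|+\|\me\|=\|f_\delta^*\circ\iota^{-1}\|$.

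Next I would convert $f_0^*\in\cL^\infty(\K_\delta\cap\edom,\R^m)^*$ into a measure $\ome\in\bawl{\edom}^m$ via Proposition~\ref{pm-s5} (extending by zero off $\K_\delta\cap\edom$, which is harmless since $\edom\setminus(\K_\delta\cap\edom)$ carries no mass); likewise extend $\me$ by zero to $\bawl{\edom}^{mn}$. Both extended measures then have core contained in $\ol{\K_\delta\cap\edom}$ — for $\ome$ this is immediate because $|\ome|$ vanishes outside $\K_\delta\cap\edom$, and for $\me$ one argues the same way. Substituting back through the chain $\df{f^*}{\tf}=\df{f_\delta^*}{\tf_{|\K_\delta\cap\edom}}=\df{f_0^*}{\tf_{|\K_\delta\cap\edom}}+\I{\K_\delta\cap\edom}{D\tf}{\me}$ gives \reff{prop:dual-3}. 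The norm estimate \reff{prop:dual-4} follows by composing the estimates: $\|\ome\|+\|\me\|=\|f_0^*\|+\|\me\|\le\|f_\delta^*\|$ (with $c=1$ in case (G) because $\iota$ is isometric there, hence $\|\iota^{-1}\|=1$), and $\|f_\delta^*\|=\sup\{\df{f^*}{\port{\K}_\delta\tf}\mid \|\tf_{|\K_\delta\cap\edom}\|_{\cW^{1,\infty}}\le 1\}$ by Proposition~\ref{prop:dual0}; in the other two cases the extra constant $\|\iota^{-1}\|$ gets absorbed into $c$.

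For the refinements (L) and (C), I would rerun the same argument but apply Proposition~\ref{prop:dual1} with the respective $X_0$ and $\iota_0$ (as spelled out in the definition of cases (G)/(L)/(C)), which is legitimate precisely because being in case (L) or (C) means the hypothesis of Proposition~\ref{prop:dual1} holds for that choice. In case (L), $f_0^*\in\cL^\infty_\K(\K_\delta\cap\edom,\R^m)^*$, and Lemma~\ref{lem-fs} furnishes a measure $\ome$ annihilating $Z$, hence with $\cor{\ome}\subset\ol\K=\K$ ($\K$ closed); the first integral in \reff{prop:dual-3} then localizes to $\sIsymb_\K$ by \reff{pm-e1}. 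In case (C), $f_0^*\in C(\K,\R^m)^*$, and the Riesz representation theorem gives a Radon measure $\sme$ with $\supp\sme\subset\K$ representing $\iota_0(\tf)=\tf_{|\K}$, i.e. the first integral becomes an honest integral over $\K$ against $\sme$; one extends $\ome$ from $\sme$ as in Proposition~\ref{acm-ram}(2). I expect the only delicate point to be bookkeeping the norms and the core-localizations through the three nested identifications (trace $\to$ localized functional $\to$ product-space splitting $\to$ measures), in particular verifying that extending $\ome$ and $\me$ by zero does not enlarge the core beyond $\ol{\K_\delta\cap\edom}$ and that the supremum defining $\|f_\delta^*\|$ matches the one in \reff{prop:dual-4}; these are routine but must be stated carefully.
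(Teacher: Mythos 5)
Your proposal is correct and follows essentially the same route as the paper's proof: localize via Proposition~\ref{prop:dual0}, split via Proposition~\ref{prop:dual1} with the choice of $X_0$, $\iota_0$ dictated by the case (G)/(L)/(C), identify the $X_0^*$-component with a measure through Proposition~\ref{pm-s5}, Lemma~\ref{lem-fs}, or Riesz respectively, extend by zero, and track the norms with $c=1$ in case (G) from the isometry of $\iota$ and $c=\|\iota^{-1}\|$ otherwise. The only cosmetic difference is that you define $f_\delta^*$ on all of $\cW^{1,\infty}(\K_\delta\cap\edom,\R^m)$ directly by $\psi\mapsto\df{f^*}{\port{\K}_\delta\psi}$ (extending $\port{\K}_\delta\psi$ by zero), whereas the paper defines it on the subspace of restrictions and extends by Hahn--Banach; both yield the same norm identity needed for \reff{prop:dual-4}.
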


\noindent
In cases (L) and (C) we get from the proof that
\begin{equation}\label{prop-dual-a}
   c = \|\iota^{-1}\|
\end{equation}
in \reff{prop:dual-4} for the related $\iota$ 
according to Proposition~\ref{prop:dual1}.   
Before we discuss cases (L) and (C) in some more detail we consider 
important special cases of Theorem~\ref{prop:dual}.

\newpage

In general the measures $(\ome,\me)=(\ome_\delta,\me_\delta)$ 
on $\K_\delta\cap\edom$ related to $f^*$ depend on $\delta$. 
But, if they are somehow bounded with respect to $\delta$,
they have a weak$^*$ cluster point giving a representation of $f^*$ 
independent of $\delta$.

\begin{proposition}\label{prop:special-a}
Let $\edom \subset \Rn$ be open and bounded, let $\K \subset \cl{\edom}$ be 
closed, and let $\de\in \woinf{\edom,\R^m}^*$ be a trace on $\K$.
Then we can choose $\ome$, $\me$ in \reff{prop:dual-3} independent of $\delta$
such that 
\begin{equation*}
    \df{\de}{\tf} = \sI{\K}{\tf}{\ome} + \sI{\K}{D\tf}{\me} 
  \qmq{and} \cor{\ome},\cor{\me}\subset \K
\end{equation*}
if and only if 
\begin{equation}\label{eq:monster}
  \liminf_{\delta \downarrow 0} 
  \sup_{\substack{\tf \in \woinf{\edom,\R^m}\\
        \|\tf_{|\K_\delta\cap\edom}\|_{\cW^{1,\infty}} \leq 1}} 
  \df{\de}{\port{\K}_\delta \tf} < \infty \,.
\end{equation}
If we have in addition case {\rm (C)} for some $\delta>0$, then
$\ome$ corresponds to a Radon measure supported on $\K$.
\end{proposition}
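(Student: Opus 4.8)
The plan is to prove the two implications separately: the forward one is routine, the backward one rests on a weak$^*$-compactness argument together with a localization of cores, and the case-(C) addendum needs an extra refinement, which I expect to be the genuinely delicate point.

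\emph{Necessity.} Suppose $\df{\de}{\tf}=\sI{\K}{\tf}{\ome}+\sI{\K}{D\tf}{\me}$ holds for all $\tf\in\woinf{\edom,\R^m}$ with fixed measures such that $\cor{\ome},\cor{\me}\subset\K$. Fixing $\delta>0$ and testing with $\port{\K}_\delta\tf$, I would use that $\port{\K}_\delta\equiv1$ and $D\port{\K}_\delta\equiv0$ on a neighbourhood of $\K$, together with the core condition, to get $\df{\de}{\port{\K}_\delta\tf}=\sI{\K}{\tf}{\ome}+\sI{\K}{D\tf}{\me}$. For $\tf$ with $\|\tf_{|\K_\delta\cap\edom}\|_{\cW^{1,\infty}}\le1$ one has $\|\tf\|_\K\le1$ and $\|D\tf\|_{\cL^\infty(\K_\delta\cap\edom)}\le1$ (cf. \reff{semi-norm}), hence $\big|\df{\de}{\port{\K}_\delta\tf}\big|\le|\ome|(\edom)+|\me|(\edom)$, a bound independent of $\delta$, which yields \reff{eq:monster}.

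\emph{Sufficiency.} Assuming \reff{eq:monster}, I would apply Theorem~\ref{prop:dual} in the always-available case (G): for each $\delta>0$ it provides $\ome_\delta\in\bawl{\edom}^m$, $\me_\delta\in\bawl{\edom}^{mn}$ with $\cor{\ome_\delta},\cor{\me_\delta}\subset\cl{\K_\delta\cap\edom}$ representing $\de$ as in \reff{prop:dual-3}, with $\|\ome_\delta\|+\|\me_\delta\|$ equal to the supremum in \reff{eq:monster}; after restricting them to $\K_\delta$ we may treat them as elements of $\bawl{\edom}$ and write $\df{\de}{\tf}=\I{\edom}{\tf}{\ome_\delta}+\I{\edom}{D\tf}{\me_\delta}$. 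By \reff{eq:monster} pick $\delta_j\downarrow0$ along which $\|\ome_{\delta_j}\|+\|\me_{\delta_j}\|\le M<\infty$. Since $\bawl{\edom}^m\times\bawl{\edom}^{mn}$ is the dual of $\cL^\infty(\edom,\R^m)\times\cL^\infty(\edom,\R^{mn})$ (Lemma~\ref{lem-ps2}), Banach--Alaoglu furnishes a weak$^*$ cluster point $(\ome,\me)$ of $\{(\ome_{\delta_j},\me_{\delta_j})\}$. Testing at a fixed $\tf$, the constant value $\df{\de}{\tf}$ is a cluster value of $\I{\edom}{\tf}{\ome_{\delta_j}}+\I{\edom}{D\tf}{\me_{\delta_j}}$, hence $\df{\de}{\tf}=\I{\edom}{\tf}{\ome}+\I{\edom}{D\tf}{\me}$. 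For the cores, fix $x\in\cl{\edom}\setminus\K$ and $\rho>0$ with $\ball{x}{\rho}\cap\K=\emptyset$; then $\ball{x}{\rho/2}\cap\cl{\K_\delta}=\emptyset$ for $\delta<\rho/2$, so Proposition~\ref{acm-s0} gives $|\ome_{\delta_j}|(\ball{x}{\rho/4}\cap\edom)=0$ for large $j$; therefore $\I{\edom}{\phi}{\ome_{\delta_j}}=0$ for every $\phi\in\cL^\infty(\edom,\R^m)$ vanishing $\lem$-a.e. outside $\ball{x}{\rho/4}\cap\edom$, and the same holds for $\ome$, which by \reff{pm-tva} gives $|\ome|(\ball{x}{\rho/4}\cap\edom)=0$, i.e. $x\notin\cor{\ome}$. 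Thus $\cor{\ome}\subset\K$, likewise $\cor{\me}\subset\K$, turning the integrals over $\edom$ into $\sI{\K}{\tf}{\ome}$ and $\sI{\K}{D\tf}{\me}$. The only real care here is that $\cL^\infty(\edom)$ is nonseparable, so the cluster point cannot be taken as a subsequential limit and one must argue in the net sense of Banach--Alaoglu.

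\emph{The case-(C) addendum.} Assume case (C) holds at some $\delta_0$. Theorem~\ref{prop:dual}(C) writes $\de=\I{\K}{\,\cdot\,}{\sme}+h^*$ with $\sme$ a Radon measure supported on $\K$ and $h^*(\tf)=\I{\K_{\delta_0}\cap\edom}{D\tf}{\me_{\delta_0}}$ for some $\me_{\delta_0}\in\bawl{\edom}^{mn}$. I would check that $h^*$ is again a trace on $\K$ (it vanishes on $\tf$ with $\tf_{|\K_\delta\cap\edom}=0$, because $\de(\tf)=0$ and $\tf_{|\K}=0$, so $\I{\K}{\tf}{\sme}=0$) and still satisfies \reff{eq:monster} (from $|\I{\K}{\tf}{\sme}|\le|\sme|(\K)\,\|\tf\|_\K$), then run the sufficiency step on $h^*$, take $\me$ to be the second measure it produces and $\ome$ to be the sum of $\sme$ and the first. \textbf{The main obstacle lies precisely here}: applied to a general trace the sufficiency step may deliver a purely finitely additive first component, whereas one needs $\ome$ to be Radon. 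I would remove this by exploiting that $h^*$ \emph{factors through} $\tf\mapsto D\tf$: then no Hahn--Banach freedom remains in case (G) for $h^*$, its first component at level $\delta$ is explicitly the measure obtained from $D\port{\K}_\delta$ multiplied into $\me_{\delta_0}$, whose core sits in the annulus $\K_\delta\setminus\K_{\delta/2}$; as this annulus recedes from $\K$ while $\delta\downarrow0$, a bound on $|\me_{\delta_0}|$ there --- using the core information on $\me_{\delta_0}$ inherited from case (C) at $\delta_0$ --- should force that first component to $0$ weak$^*$, leaving $\ome=\sme$, a Radon measure supported on $\K$.
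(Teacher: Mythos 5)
Your necessity and sufficiency arguments coincide with the paper's proof: the reverse direction by testing with $\port{\K}_\delta\tf$ and using that the cores lie in $\K$, the forward direction by taking the case-(G) representations along a sequence $\delta_j\downarrow 0$ realizing the liminf, extracting a weak$^*$ cluster point via Banach--Alaoglu, and localizing the cores to $\K$ (your localization via Proposition~\ref{acm-s0} and balls away from $\K$ is equivalent to the paper's testing with functions vanishing on $\K_\delta$). These two parts are fine.

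The case-(C) addendum is where you go wrong, in two respects. First, the obstacle you identify is not actually there: the statement only asserts that $\ome$ \emph{corresponds to} a Radon measure supported on $\K$, i.e. that the functional $\tf\mapsto\sI{\K}{\tf}{\ome}$ agrees with integration against a Radon measure on the test functions actually occurring. Since the sufficiency step already delivers $\cor{\ome}\subset\K$, and in case (C) every $\tf\in\woinf{\edom,\R^m}$ is continuous up to $\K$ near $\K$, this follows directly from Proposition~\ref{acm-ram}\,(1) applied to $\ome$ — which is exactly what the paper does. There is no need to arrange that $\ome$ itself be countably additive. Second, the workaround you build instead has a genuine gap at the step you flag yourself: you need the first-component contribution coming from $\tf\,D\port{\K}_\delta$ paired against $\me_{\delta_0}$ to vanish weak$^*$ as $\delta\downarrow 0$. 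But case (C) gives no core information on $\me_{\delta_0}$ beyond $\cor{\me_{\delta_0}}\subset\ol{\K_{\delta_0}\cap\edom}$, so $|\me_{\delta_0}|(\K_\delta\setminus\K_{\delta/2})$ need not tend to zero — and even if it did, it would have to be $o(\delta)$ to beat the factor $|D\port{\K}_\delta|\sim 2/\delta$. In addition, the claim that "no Hahn--Banach freedom remains" for $h^*$ is unjustified: the extensions in Propositions~\ref{prop:dual0} and \ref{prop:dual1} are applied to $h^*_\delta$ as a functional on a subspace and reintroduce non-uniqueness regardless of whether $h^*$ factors through $D\tf$. Replace the entire addendum by the one-line appeal to Proposition~\ref{acm-ram}\,(1).
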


\noi
We call the trace $f^*$ {\it finite} if \reff{eq:monster} is satisfied. 
Notice that this condition gives some uniform bound for 
$(\ome,\me)$ with respect to $\delta$ according to \reff{prop:dual-4}. 
In the light of usual Gauss-Green formulas as in
\reff{tt-e0} it is also desirable to characterize the case where the measure 
$\me$ can disappear in \reff{prop:dual-3}.

\begin{proposition}\label{prop:special-b}
Let $\edom \subset \Rn$ be open and bounded, let $\K \subset \cl{\edom}$ be 
closed, let $\de\in \woinf{\edom,\R^m}^*$ be a trace on $\K$.
\bgl
\item
For $\delta>0$ we can choose $\me=0$ in \reff{prop:dual-3} such that 
\begin{equation*}
  \df{\de}{\tf} = \I{\K_\delta\cap\edom}{\tf}{\ome} \qmq{and}
  \cor{\ome}\subset\ol{\K_\delta\cap\edom}
\end{equation*}
if and only if
\begin{equation} \label{eq:special}
  \sup_{\substack{\tf\in\woinf{\edom,\R^m}\\
        \|\tf_{|\K_\delta\cap\edom}\|_{\cL^\infty}\leq 1}} 
  \df{f^*}{\tf} < \infty \,.
\end{equation}
If we have in addition case {\rm (L)} or {\rm (C)} for $\delta$, then
$\cor{\ome}\subset\K$.
\item
We can choose $\me=0$ in \reff{prop:dual-3} such that 
\begin{equation*}
  \df{\de}{\tf} = \sI{\K}{\tf}{\ome}
  \qmq{and} \cor{\ome}\subset \K
\end{equation*}
if and only if 
\begin{equation} \label{eq:special-1}
  \liminf_{\delta\downarrow 0}\sup_{\substack{\tf\in\woinf{\edom,\R^m}\\
        \|\tf_{|\K_\delta\cap\edom}\|_{\cL^\infty}\leq 1}} 
  \df{f^*}{\tf} < \infty \,.
\end{equation}
\el
If we have in addition case {\rm (C)} for some $\delta>0$, then 
measure $\ome$ corresponds to a Radon measure supported on $\K$ in both cases.
\end{proposition}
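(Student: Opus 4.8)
The plan is to treat the two parts in parallel, since part~(2) is essentially the ``$\liminf_{\delta\downarrow0}$'' version of part~(1). For part~(1), fix $\delta>0$. The key observation is that the condition \reff{eq:special} says exactly that the linear functional $\tf\mapsto\df{f^*}{\tf}$, restricted through the map $\tf\mapsto\tf_{|\K_\delta\cap\edom}$, is bounded with respect to the $\cL^\infty$-norm on $\K_\delta\cap\edom$ rather than the $\cW^{1,\infty}$-norm. First I would verify the easy direction: if $\df{f^*}{\tf}=\I{\K_\delta\cap\edom}{\tf}{\ome}$ for some $\ome\in\bawl{\edom}^m$ with $\cor\ome\subset\ol{\K_\delta\cap\edom}$, then $|\df{f^*}{\tf}|\le|\ome|(\K_\delta\cap\edom)\,\|\tf_{|\K_\delta\cap\edom}\|_{\cL^\infty}$ by Proposition~\ref{pm-s5}, so the supremum in \reff{eq:special} is finite. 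For the converse, since $f^*$ is a trace on $\K$, Proposition~\ref{prop:dual0} gives $f_\delta^*\in\cW^{1,\infty}(\K_\delta\cap\edom,\R^m)^*$ with $\df{f^*}{\port{\K}_\delta\tf}=\df{f_\delta^*}{\tf_{|\K_\delta\cap\edom}}$; then \reff{eq:special} (applied to $\port{\K}_\delta\tf$, noting $\port{\K}_\delta$ has values in $[0,1]$ so $\|(\port{\K}_\delta\tf)_{|\K_\delta\cap\edom}\|_{\cL^\infty}\le\|\tf_{|\K_\delta\cap\edom}\|_{\cL^\infty}$, and that every $\psi\in\cL^\infty(\K_\delta\cap\edom,\R^m)$ is the restriction of some $\tf$, e.g.\ a truncated mollification of an extension) shows that $f_\delta^*$ extends to a bounded functional on $\cL^\infty(\K_\delta\cap\edom,\R^m)$. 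By Proposition~\ref{pm-s5} this corresponds to a measure $\tilde\me\in\bawl{\K_\delta\cap\edom}^m$, which I would then push forward to $\ome\in\bawl{\edom}^m$ (extend by zero outside $\K_\delta\cap\edom$; note $\tf_{\wr\edom\setminus\ol{\K_\delta\cap\edom}}$ plays no role since $\port{\K}_\delta$ vanishes there) obtaining $\df{f^*}{\tf}=\I{\K_\delta\cap\edom}{\tf}{\ome}$ with $\cor\ome\subset\ol{\K_\delta\cap\edom}$. This is precisely choosing $\me=0$ in \reff{prop:dual-3}.

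For the refinement under case~(L) or~(C): if we are in case~(L) for this $\delta$, the functional $\tf\mapsto\df{f_\delta^*}{\tf_{|\K_\delta\cap\edom}}$ already factors through $\iota_0(\tf)=\tf_{\wr\K}$, hence through the semi-norm $\|\cdot\|_\K$; combining this with the $\cL^\infty$-bound just obtained, it factors through a bounded functional on $\cL^\infty_\K(\K_\delta\cap\edom,\R^m)$, so by Lemma~\ref{lem-fs} the corresponding measure $\ome$ has $\cor\ome\subset\cl\K=\K$. In case~(C) the functional is moreover bounded on $C(\K,\R^m)$, so Riesz representation (as in Proposition~\ref{acm-ram}(1) or Lemma~\ref{lem-fs}) gives a Radon measure $\sme$ with $\supp\sme\subset\K$, and $\ome$ corresponds to $\sme$.

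For part~(2), the easy direction is again immediate from Proposition~\ref{pm-s5}. For the converse, \reff{eq:special-1} says that along some sequence $\delta_j\downarrow0$ the quantities $S_j:=\sup\{\df{f^*}{\tf}\mid\|\tf_{|\K_{\delta_j}\cap\edom}\|_{\cL^\infty}\le1\}$ stay bounded by some $M<\infty$. Applying part~(1) at each $\delta_j$ yields measures $\ome_j\in\bawl{\edom}^m$ with $\df{f^*}{\tf}=\I{\K_{\delta_j}\cap\edom}{\tf}{\ome_j}$ and $|\ome_j|(\edom)\le M$ (the norm bound following from the supremum estimate exactly as in \reff{prop:dual-4} with $c=1$ in case~(G)). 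Since $\bawl{\edom}^m$ is the dual of the separable-enough space $\cL^\infty(\edom,\R^m)$... — here I'd be careful: $\cL^\infty$ is \emph{not} separable, so weak$^*$ sequential compactness of the unit ball is not automatic. I would instead pass to a weak$^*$ \emph{net} cluster point $\ome$ of $\{\ome_j\}$ in the ball of radius $M$ (Banach--Alaoglu), and check that $\df{f^*}{\tf}=\I{\edom}{\tf}{\ome}$ for every fixed $\tf\in\cW^{1,\infty}(\edom,\R^m)$: indeed $\df{f^*}{\tf}=\I{\edom}{\tf}{\ome_j}$ for all $j$, and along the cluster-point net $\I{\edom}{\tf}{\ome_j}\to\I{\edom}{\tf}{\ome}$. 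For the core: given any neighborhood $U$ of $\K$ in $\ol{\R^n}$, for $j$ large $\ol{\K_{\delta_j}\cap\edom}\subset U$, so $|\ome_j|(\edom\setminus U)=0$; testing with $\tf$ supported in $\edom\setminus\K$ and passing to the limit gives $|\ome|(\edom\setminus U)=0$ for every such $U$, hence $\cor\ome\subset\K$, which legitimizes writing $\sI{\K}{\tf}{\ome}$. Finally, if case~(C) holds for some $\delta$, one can run the argument with the $\ome_j$ already of the form given by the case~(C) conclusion of part~(1), or simply note that a posteriori $\ome$ is a trace-type measure with core in $\K$ on which $f^*$ is $\cL^\infty$-bounded, so the case~(C) factorization through $C(\K,\R^m)$ again produces a Radon measure on $\K$. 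The main obstacle is precisely this non-separability point: making the weak$^*$ compactness and core-localization arguments rigorous with nets rather than sequences, and ensuring the limiting identity holds simultaneously for all test functions in $\cW^{1,\infty}(\edom,\R^m)$, which it does because the identity is linear and exact (not merely approximate) for each $\ome_j$.
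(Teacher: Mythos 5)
Your proof follows the paper's argument essentially step for step: the easy directions via the norm estimate from Proposition~\ref{pm-s5}, the converse of (1) via the localized functional $f^*_\delta$ of Proposition~\ref{prop:dual0}, a Hahn--Banach extension to $\cL^\infty(\K_\delta\cap\edom,\R^m)^*$ and identification with a measure extended by zero, and the converse of (2) via a uniformly bounded family $\{\ome_j\}$ (with $c=1$ from case (G) in \reff{prop:dual-4}) and a weak$^*$ cluster point. Your caution about nets versus sequences for the non-separable predual is exactly how the paper handles it in Proposition~\ref{prop:special-a}, and you correctly build the cluster point from the $\ome_j$ produced by part (1) rather than trying to combine part (1) with Proposition~\ref{prop:special-a} directly --- a trap the paper explicitly warns against because of non-uniqueness.

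The one step whose justification does not hold as written is the case (L) refinement in part (1). Case (L) only says that $\iota(\tf)=(\tf_{\wr\K},D\tf)$ is injective with continuous inverse on its image; it does \emph{not} say that $f^*_\delta$ ``already factors through $\iota_0(\tf)=\tf_{\wr\K}$'' --- a priori the value may depend on $D\tf$ as well, and even granting well-definedness on equivalence classes, boundedness in the $\cL^\infty$-norm does not by itself give boundedness in the quotient semi-norm $\|\cdot\|_\K$, which is what is needed to invoke Lemma~\ref{lem-fs}. The repair uses the trace property once more rather than case (L): for any $\delta'\in(0,\delta)$ one has $\df{f^*}{\tf}=\df{f^*}{\port{\K}_{\delta'}\tf}$, and since $0\le\port{\K}_{\delta'}\le1$ with support in $\ol{\K_{\delta'}}$, condition \reff{eq:special} gives $|\df{f^*}{\tf}|\le C\,\|\tf\|_{\cL^\infty(\K_{\delta'}\cap\edom)}$ with $C$ independent of $\delta'$; letting $\delta'\downarrow0$ yields $|\df{f^*}{\tf}|\le C\|\tf\|_\K$, after which Hahn--Banach on $\cL^\infty_\K(\K_\delta\cap\edom,\R^m)$ and Lemma~\ref{lem-fs} give $\cor{\ome}\subset\K$, and the Riesz step in case (C) is then as you describe. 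The paper is equally terse at this point (it simply refers back to Theorem~\ref{prop:dual}), but the reason you state is the one piece of your argument that would not survive scrutiny.
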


\noindent
Condition \reff{eq:special} somehow says that the trace $f^*$ can be
considered as 
a linear continuous functional on $\cL^\infty(\K_\delta\cap\edom,\R^m)$. 
But notice that the 
possible choice $\me=0$ does not exclude other representations of $f^*$ 
with $\me\ne 0$ (cf. Example~\ref{dt-ex5} below).
The proof of (2) shows that \reff{eq:special-1} implies
\reff{eq:monster}.

Let us come back to cases (L) and (C). Since they play an important role,
we will provide some conditions that help to identify them. We say that 
$\K_\delta\cap\edom$ is {\it bounded path connected with} $\K$ if
there is a maximal length $\ell>0$ such that for any $x\in\K_\delta\cap\edom$  
and any $\delta'>0$ there are a point $y\in\K_{\delta'}\cap\edom$ and a curve
connecting $x$, $y$ inside $\K_\delta\cap\edom$ with length 
less than $\ell$.

\begin{proposition}\label{prop:cases}
Let $\edom\subset\R^n$ be open, let $\K\subset\cl\edom$ be closed, and let
$\delta>0$.  
\bgl
\item
If we have case {\rm (L)} or {\rm (C)} for $\delta$ 
and if $V$ is a component of 
$\K_\delta\cap\edom$, then 
\begin{equation*}
  \K_{\delta'}\cap V\ne\emptyset \qmq{for all} \delta'>0 \,.
\end{equation*}

\item
If $\K_\delta\cap\edom$ is bounded path connected with $\K$, 
then we have case {\rm (L)} for $\delta$ with $c=1+\ell$ in \reff{prop:dual-4}.

\item
If any $\tf\in\woinf{\edom,\R^m}$ is continuously extendable up to $\K$
and if $\K_\delta\cap\edom$ is
bounded path connected with $\K$, then we have case {\rm (C)} for $\delta$
with $c=1+\ell$ in \reff{prop:dual-4}. 
\el
\end{proposition}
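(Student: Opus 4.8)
The plan is to obtain (1) by a short contradiction and to reduce (2) and (3) to the sufficient criterion for the existence and continuity of $\iota^{-1}$ recorded in Remark~\ref{rem:dual1}. For (1), assume case (L) or (C) holds for $\delta$ and suppose some component $V$ of the open set $\K_\delta\cap\edom$ has $\K_{\delta'}\cap V=\emptyset$ for some $\delta'>0$. Since $\R^n$ is locally connected, $V$ is open, and as a component it is relatively closed in $\K_\delta\cap\edom$; hence $\chi_V$ is locally constant there, so $\chi_V\,e\in\cW^{1,\infty}(\K_\delta\cap\edom,\R^m)$ for any unit vector $e\in\R^m$, with $D(\chi_V e)=0$ and $\|\chi_V e\|_\infty=1$. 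As $V$ lies at distance $\ge\delta'$ from $\K$, the function $\chi_V e$ vanishes on $\K_{\delta'/2}\cap\K_\delta\cap\edom$, so $\|\chi_V e\|_\K=0$ in case (L), resp.\ the continuous extension of $\chi_V e$ to $\K$ is $0$ in case (C); in either case $\iota_0(\chi_V e)=0$, hence $\iota(\chi_V e)=(\iota_0(\chi_V e),D(\chi_V e))=0$. But $\iota$ has a continuous inverse, hence is injective, so $\iota(\chi_V e)=\iota(0)$ forces $\chi_V e=0$ --- impossible since $V\ne\emptyset$. Thus no such component exists.

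For (2) and (3) I would verify, in the respective setting of Proposition~\ref{prop:dual1} applied with $\K_\delta\cap\edom$ in place of $\edom$, the bound
\begin{equation*}
  \|\tf\|_{\cW^{1,\infty}(\K_\delta\cap\edom,\R^m)}\le(1+\ell)\,\|\iota(\tf)\|
  \qmq{for all} \tf\in\cW^{1,\infty}(\K_\delta\cap\edom,\R^m)\,,
\end{equation*}
which by Remark~\ref{rem:dual1} gives $\|\iota^{-1}\|\le 1+\ell$ and hence, via \reff{prop-dual-a}, case (L), resp.\ (C), for $\delta$ with $c=1+\ell$ in \reff{prop:dual-4}. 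Since $\|D\tf\|_\infty\le\|\iota(\tf)\|$ trivially, only $\|\tf\|_\infty$ must be bounded. I would pass to the locally Lipschitz continuous representative of $\tf$ --- for which $\|\tf\|_\infty=\sup_{\K_\delta\cap\edom}|\tf|$ and $|\tf(\gamma(b))-\tf(\gamma(a))|\le\|D\tf\|_\infty\,\ell(\gamma)$ for every rectifiable curve $\gamma$ inside the open set $\K_\delta\cap\edom$ (standard, by mollification on a tube around the compact $\gamma$) --- then fix $x\in\K_\delta\cap\edom$ and $\delta'\in(0,\delta)$ and use bounded path connectedness to get $y_{\delta'}\in\K_{\delta'}\cap\edom$ and a curve in $\K_\delta\cap\edom$ from $x$ to $y_{\delta'}$ of length $<\ell$, so that $|\tf(x)|\le|\tf(y_{\delta'})|+\ell\,\|D\tf\|_\infty$. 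In case (L), $|\tf(y_{\delta'})|\le\|\tf\|_{\cL^\infty(\K_{\delta'}\cap\edom)}\to\|\tf\|_\K=\|\iota_0(\tf)\|_{X_0}$ as $\delta'\downarrow0$ by Lemma~\ref{lem-fs}; in case (C), along a subsequence $\delta'_k\downarrow0$ with $y_{\delta'_k}$ converging to a point of $\K$ (possible since $\K$ is compact, $\edom$ being bounded), continuity of the extension yields $\limsup_k|\tf(y_{\delta'_k})|\le\|\tf_{|\K}\|_{C(\K,\R^m)}=\|\iota_0(\tf)\|_{X_0}$. In either case $|\tf(x)|\le\|\iota_0(\tf)\|_{X_0}+\ell\,\|D\tf\|_\infty\le(1+\ell)\|\iota(\tf)\|$, and taking the supremum over $x$ gives the displayed bound.

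The one step I expect to require care is matching the limiting near-boundary values of $\tf$ with $\|\iota_0(\tf)\|_{X_0}$: in case (L) this is immediate from the definition of the seminorm $\|\cdot\|_\K$, but in case (C) it genuinely uses compactness of $\K$ (hence boundedness of $\edom$) to upgrade pointwise continuity of the extension to uniform control along the sequence $y_{\delta'_k}$. The path-integral estimate along $\gamma$ is classical and I would only sketch it.
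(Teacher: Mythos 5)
Your proposal is correct and follows essentially the same route as the paper's proof: part (1) via the non-injectivity of $\iota$ on the characteristic function of the component, and parts (2)--(3) via the local Lipschitz estimate along a connecting curve of length less than $\ell$, reduced to continuity of $\iota^{-1}$ through Remark~\ref{rem:dual1} and \reff{prop-dual-a}. The only (harmless) variations are that you pass to the limit $\delta'\downarrow 0$ where the paper runs an $\eps$-argument with a fixed $\delta'$, and in case (C) you extract a convergent subsequence of near-boundary points where the paper picks $\delta'$ by continuity of the extension; your explicit remark that this step uses compactness of $\K$ is a fair observation that the paper leaves implicit.
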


\noi
These general assertions imply some important special cases. 

\begin{corollary} \label{cor:cases}
Let $\edom\subset\R^n$ be open and bounded and let $\K\subset\cl\edom$ be
closed. 
\bgl
\item
If $\K=\bd\edom$, then we have case {\rm (L)} for any
$\delta>0$ and $c\le 1+\delta$ in \reff{prop:dual-4}. 
\item
If $\K=\bd\edom$ and $\edom$ has Lipschitz boundary,
then we have case {\rm (C)} for all $\delta>0$
and $c\le 1+\delta$ in \reff{prop:dual-4}.
\item 
If $\K_\delta\csubset\edom$ for $\delta>0$, 
then 
we have case {\rm (C)} for $\delta$   
and $c\le 1+\delta$ in \reff{prop:dual-4}.
\el
\end{corollary}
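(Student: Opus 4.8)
The plan is to deduce all three parts from Proposition~\ref{prop:cases}, so that the only work is to verify its geometric hypotheses — bounded path connectedness of $\K_\delta\cap\edom$ with $\K$, and, where case (C) is involved, continuous extendability up to $\K$. The recurring tool I would set up first is an elementary estimate: for a closed set $\K\subset\R^n$ and $\delta>0$ one has $x\in\K_\delta$ iff $\op{dist}(x,\K)<\delta$; if $\K$ is compact, $d:=\op{dist}(x,\K)$ is attained at some $z\in\K$, and along $x_t:=(1-t)x+tz$ one gets $\op{dist}(x_t,\K)\le|x_t-z|=(1-t)d\le d$ for all $t\in[0,1]$. Hence, whenever $d<\delta$, the whole segment from $x$ to $z$ lies in $\{y:\op{dist}(y,\K)\le d\}\subset\K_\delta$, it meets $\K_{\delta'}$ for every $\delta'>0$ (choose $t$ with $(1-t)d<\delta'$), and the connecting sub-segment from $x$ has length $td<d<\delta$. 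This already supplies the short curves required; for part~(1) the extra point is that these segments stay inside $\edom$.

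For part~(1), take $\K=\bd\edom$, which is compact since $\edom$ is bounded. Given $x\in(\bd\edom)_\delta\cap\edom$ with nearest point $z\in\bd\edom$ and $d=\op{dist}(x,\bd\edom)<\delta$, I would note that the open ball $B_d(x)$ contains no point of $\bd\edom$ and, being connected and containing $x\in\edom$, lies entirely in $\edom$; since $x_t\in B_d(x)$ for $t\in[0,1)$, the half-open segment $[x,z)$ lies in $\edom$, and by the estimate above also in $(\bd\edom)_\delta$, reaching $(\bd\edom)_{\delta'}\cap\edom$ for every $\delta'>0$ along a curve of length $<\delta$. Thus $(\bd\edom)_\delta\cap\edom$ is bounded path connected with $\bd\edom$ with maximal length $\ell=\delta$, and Proposition~\ref{prop:cases}(2) yields case~(L) for $\delta$ with $c=1+\ell=1+\delta$. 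For part~(3), $\K_\delta\csubset\edom$ forces $\K_\delta\cap\edom=\K_\delta$ and makes $\K$ compact, so the same segment argument — now needing no constraint from $\edom$, the segments already lying in $\K_\delta\subset\edom$ — shows bounded path connectedness with $\ell=\delta$; moreover $\K\subset\K_{\delta/2}\subset\K_\delta$ with $\K_{\delta/2}$ open, so every $\tf\in\cW^{1,\infty}(\K_\delta,\R^m)$ is already continuous on a neighbourhood of $\K$ and is trivially extendable up to $\K$. Proposition~\ref{prop:cases}(3) then gives case~(C) for $\delta$ with $c=1+\delta$.

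For part~(2), the bounded path connectedness of $(\bd\edom)_\delta\cap\edom$ with $\bd\edom$ is exactly the one proved in part~(1), so by Proposition~\ref{prop:cases}(3) it remains to show that every $\tf\in\cW^{1,\infty}((\bd\edom)_\delta\cap\edom,\R^m)$ extends continuously up to $\bd\edom$. I would argue locally: using the Lipschitz boundary, for each $p\in\bd\edom$ pick $r_p\in(0,\delta)$ with $B_{r_p}(p)\subset(\bd\edom)_\delta$ such that $B_{r_p}(p)\cap\edom$ is a bounded Lipschitz domain; on such a domain $\cW^{1,\infty}$ coincides with $C^{0,1}$ of its closure, so the restriction of $\tf$ extends Lipschitz-continuously to $\cl{B_{r_p}(p)\cap\edom}$, in particular to $B_{r_p}(p)\cap\bd\edom\ni p$. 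Any two such local extensions agree on their overlap, each being the unique continuous extension of $\tf$ approached from inside $\edom$, so patching gives a continuous extension of $\tf$ up to all of $\bd\edom$; Proposition~\ref{prop:cases}(3) then delivers case~(C) for $\delta$ with $c=1+\delta$.

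The main obstacle will be this extension step in part~(2): since $(\bd\edom)_\delta\cap\edom$ itself need not be a Lipschitz domain — its ``inner'' boundary is the level set $\{\op{dist}(\cdot,\bd\edom)=\delta\}$ of the distance function — one cannot apply a single global identification $\cW^{1,\infty}=C^{0,1}(\cl{\,\cdot\,})$ on it, and must instead localize near $\bd\edom$ through the Lipschitz charts and glue; all the remaining verifications are the routine segment estimate from the first paragraph.
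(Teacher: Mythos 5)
Your proposal is correct and follows essentially the same route as the paper: both reduce each part to Proposition~\ref{prop:cases} by joining a point $x$ to its nearest point of $\K$ by a straight segment of length less than $\delta$ that stays in $\K_\delta\cap\edom$, giving bounded path connectedness with $\ell=\delta$ and hence $c\le 1+\delta$, and both handle (3) by noting that $\tf$ is already continuous on the open set $\K_\delta\supset\K$. You in fact supply more detail than the paper at two points — the $B_d(x)\subset\edom$ argument ensuring the segment stays in $\edom$, and the local Lipschitz-chart extension in (2), which the paper merely asserts; only a cosmetic caution is warranted there, namely that one should extend on the cylinders $C_j\cap\edom$ from the definition of Lipschitz boundary rather than on $B_{r_p}(p)\cap\edom$, since a ball intersected with a Lipschitz domain need not itself be Lipschitz.
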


\noindent
Let us illuminate the cases (G), (L), (C) by
applying Proposition~\ref{prop:cases} and Corollary~\ref{cor:cases}
to some typical examples in $\R^2$.

\begin{example}\label{tt-r-ex1}   
We consider $\edom,\K\subset\R^2$ with 
\begin{equation*}
  \edom:=\big((0,1)\cup(1,2)\big)\times(0,1)\,,
  \quad \K:=\{1\}\times(0,1)\,.
\end{equation*}
Since $\K_\delta\cap\edom$ has two components, some
$\tf\in\woinf{\K_\delta\cap\edom,\R^m}$ that equals different constants on each
component cannot be extended continuously up to~$\K$. Therefore we do not 
have (C) for any $\delta>0$. But we readily verify the
assumption of Proposition \ref{prop:cases} (2) and, thus, we have 
(L) for all $\delta>0$. 
\end{example}

\begin{example}\label{tt-r-ex2}   
In $\R^2$ we take
\begin{equation*}
    \edom := \bigcup_{k=1}^\infty R_k \qmq{with}
    R_k := \big(\tfrac{1}{2k+1},\tfrac{1}{2k}\big) \times (0,1)\,, \quad
    \K:=\{0\}\times(0,1)\,.
\end{equation*}
Obviously we cannot continuously extend all 
$\tf\in\woinf{\K_\delta\cap\edom,\R^m}$
up to $\K$ and, thus, we do not have (C) for any $\delta>0$. 

For fixed $\delta>0$ we now choose some 
$R_{k'}\subset\K_\delta$. This is obviously a component of $\K_\delta\cap\edom$ 
and clearly $\K_{\delta'}\cap R_{k'}=\emptyset$ for all sufficiently small
$\delta'>0$. Hence we do not have (L) for any $\delta>0$ by 
Proposition~\ref{prop:cases}~(1).
Therefore, for the treatment of a trace on $\K$ we can 
merely use the general case (G) in Theorem~\ref{prop:dual}.

While $\edom$ has infinite perimeter, we get the same results for some 
$\edom$ with finite perimeter if we replace $R_k$ and $\K$ with
\begin{equation*}
    \tilde R_k := \big(\tfrac{1}{2^{2k+1}},\tfrac{1}{2^{2k}}\big) \times 
           \big( 0,\tfrac{1}{2^k} \big)
    \qmq{and} \tilde \K:=\{(0,0)\}\,.
\end{equation*}
\end{example}

\begin{example}\label{tt-r-ex2a}   
With $R_k$ as in the previous example we now choose in $\R^2$
\begin{equation*}
  \edom=B_2(0)\,, \quad \dom=\bigcup_{k=1}^\infty R_k\,, \quad 
  \K\subset\bd\dom \zmz{closed\,.}
\end{equation*}
In contrast to $\K\subset\bd\edom$ in Example~\ref{tt-r-ex2}, we now 
have $\K\csubset\edom$. This changes the situation essentially 
and, by Corollary~\ref{cor:cases} (3), we have case (C) for all small 
$\delta>0$.
\end{example}

\begin{example}\label{tt-r-ex3}   
In $\R^2$ we consider $\K:=\{0\}\times (0,1)$ and
\begin{equation*}
  \edom:=\big((0,1)\times(0,1)\big)\setminus
  \Big( \bigcup_{l=1}^\infty 
  \big\{\tfrac{1}{2l}\big\}\times\big(0,\tfrac{3}{4}\big] \cup
  \big\{\tfrac{1}{2l+1}\big\}\times\big[\tfrac{1}{4},1\big) \Big)
\end{equation*}
(cf. Figure~\ref{fig:u0}). 

\begin{figure}[h!]          
\centering
  \includegraphics[width=3cm]{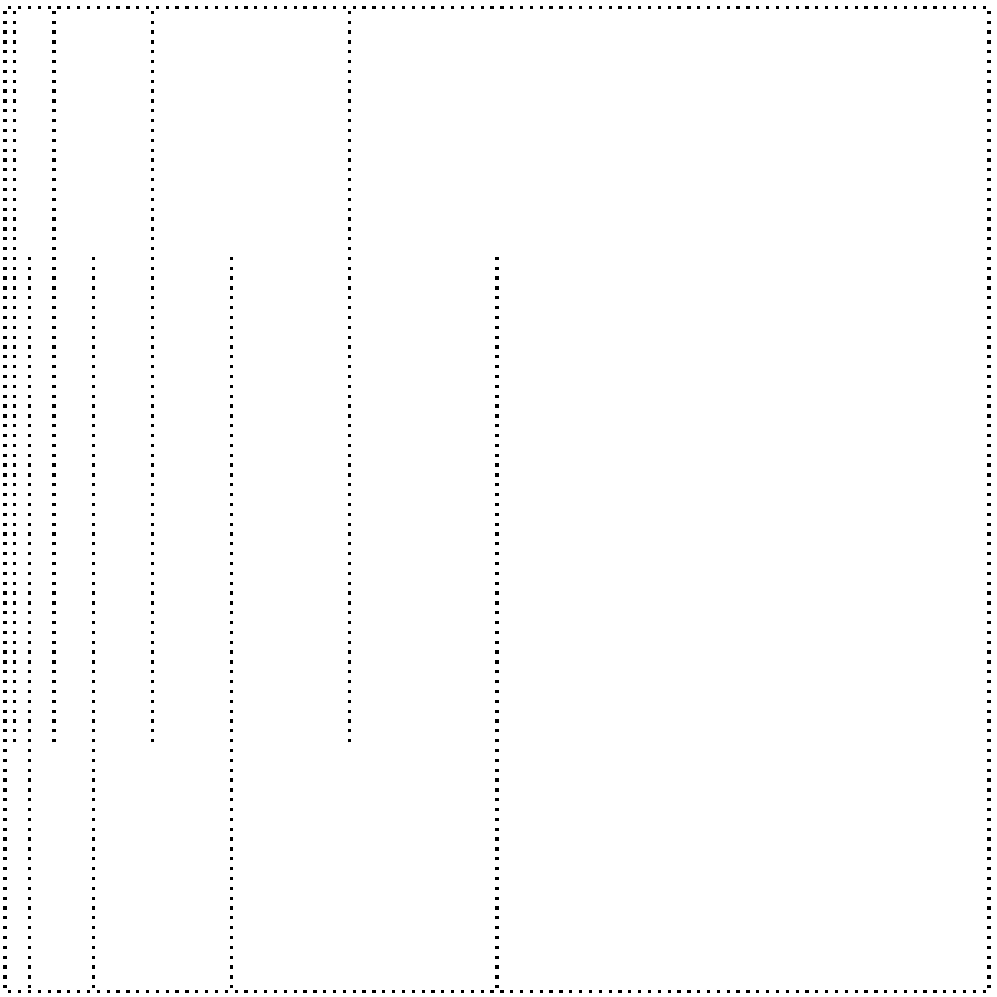} 
  \caption{The open set $U$.} 
  \label{fig:u0}
\end{figure}

\noi
Obviously we cannot extend all $\tf\in\woinf{\K_\delta\cap\edom,\R^m}$
up to $\K$, since $\tf$ can oscillate between the inner boundaries. 
Therefore we do not have (C) for any $\delta>0$. 
For treating (L) we first observe that $\K_\delta\cap\edom$ is not bounded
path connected with $\K$ for any $\delta>0$. Thus we cannot use
Proposition~\ref{prop:cases} and we have to check the assumption for $\iota$ in
Proposition~\ref{prop:dual1} directly. For that we fix $\delta>0$. Then for
each $k\in\N$ there is some $\tf_k\in\woinf{\K_\delta\cap\edom}$ with
\begin{equation*}
  \|\tf_k\|_\infty=k\,, \quad \|D\tf_k\|_\infty=1\,, \quad
  \tf_{k\wr\K}=0 \,
\end{equation*}
(roughly speaking, choose $\tf_k=k$ near $\{\delta\}\times(0,1)$, 
decrease $\tf_k$ to zero towards $\K$ by respecting $\|D\tf_k\|_\infty=1$,
and set $\tf_k=0$ in a small remaining neighborhood of~$\K$). Then
\begin{equation*}
  \|\tf_k\|_{\cW^{1,\infty}} = k+1\,,  \quad 
  \|\iota(\tf_k)\| = 
  \max \big\{ \|\tf_{k\wr\K}\|,\, \|D\tf\|_\infty \big\} = 1 \,.
\end{equation*}
But this prevents continuity of $\iota^{-1}$ and, hence, we do not have 
(L) for any $\delta>0$. 

If we take $\K':=\{1\}\times (0,1)$ instead of $\K$, then 
$\K'_\delta\cap\edom$ is bounded path connected with $\K'$ for
$\delta<1$ while it is not for $\delta\ge 1$. Thus we get (L) for $\delta<1$
from Proposition~\ref{prop:cases} while we do not have (L) for $\delta\ge 1$
by arguments as above. For $\K''=\bd\edom$ we have (L) for all $\delta>0$ by
Corollary~\ref{cor:cases}. 
\end{example}

\subsection{Proofs}

Now, the proofs of the previous results will be given. 

\medskip

\begin{proof+}{ of Lemma \ref{lem-fs}}    
Since $\|\cdot\|_\K$ is a semi norm, we readily get for 
$\tf,\psi\in\cL^\infty(\edom,\R^m)$
\begin{equation}\label{lem-fs-5}
  \big| \|\tf\|_\K - \|\psi\|_\K \big| \le \|\tf-\psi\|_\K \le 
  \|\tf-\psi\|_{\cL^\infty}\,.
\end{equation}
Hence $Z$ is a closed subspace. If $\psi\in Z$, then
\begin{equation*}
  \|\tf\|_\K \osm{\reff{lem-fs-5}}{\le} \|\tf+\psi\|_\K \le
  \|\tf\|_\K + \|\psi\|_\K = \|\tf\|_\K \,,
\end{equation*}
which implies \reff{lem-fs-2}.
For $\tf\in\cL^\infty(\edom,\R^m)$ we define 
\begin{equation*}
  \tf_\delta := \bigg\{
  \begin{array}{ll} \tf & \tx{on } \edom\setminus \K_\delta\,, \\   
                    0   & \tx{on } \edom\cap \K_\delta
  \end{array}
  \quad \zmz{for all} \delta >0\,.
\end{equation*}
Obviously $\tf_\delta\in Z$ for all $\delta$ and
\begin{equation*}
  \op{dist}_Z \tf \le \inf_{\delta>0} \|\tf-\tf_\delta\|_{\cL^\infty(\edom)} =
  \lim_{\delta\downarrow 0} \|\tf\|_{\cL^\infty(\K_\delta\cap\edom)} = 
  \|\tf\|_\K \,.
\end{equation*}
Assume that $\|\tf\|_\K > \op{dist}_Z \tf$, then there is $\psi\in Z$ with
\begin{equation*}
  \|\tf-\psi\|_{\cL^\infty} < \|\tf\|_\K \osm{\reff{lem-fs-2}}{=} 
  \|\tf-\psi\|_\K \le
  \|\tf-\psi\|_{\cL^\infty}\,, 
\end{equation*}
which is a contradiction. Hence $\|\tf\|_\K = \op{dist}_Z \tf$.
Therefore, by standard results, 
$\cL^\infty_\K(\edom,\R^m)$ is a Banach space with
$\|\tf_{\wr\K}\|=\|\tf\|_\K$ and its dual space is isometrically isomorphic to
the stated set  
(cf. \cite[p.~34, 99]{werner_00}, \cite[p.~185]{zeidler_109}).

Since $\cL^\infty(\edom,\R^m)^*$ can be identified with $\bawl{\edom}^m$, for 
$f_\K^*\in\cL^\infty_\K(\edom,\R^m)^*$ there is $\me\in \bawl{\edom}^m$ such that
\begin{equation*}
  \df{f_\K^*}{\tf_{\wr\K}} = \I{\edom}{\tf}{\me} \qmq{for all} 
  \tf_{\wr\K}\in\cL^\infty_\K(\edom) 
\end{equation*}
while
\begin{equation}\label{lem-fs-10}
  \I{\edom}{\tf}{\me}=0 \qmz{for all} \tf\in Z\,.
\end{equation}

Let now $\edom$ be bounded and assume that $x\in\cor{\me_k}\setminus\ol \K$
for some component $\me_k$ of $\me$.   
Then there is some $\delta>0$ and some open 
$V\subset\R^n\setminus \K_{2\delta}$
containing $x$, such that $|\me_k|(V\cap\edom)>0$. Hence we can  
find some $\psi\in\cL^\infty(\edom,\R^m)$ with 
$\psi_{|\K_\delta}=0$, with $\psi_j\equiv 0$ for $j\ne k$, and 
with $\I{\edom}{\psi}{\me}>0$.
But this contradicts \reff{lem-fs-10}, since $\psi\in Z$. Therefore
$\cor{\me}\subset\ol \K$. 
\end{proof+}

\begin{proof+}{ of Lemma \ref{lem-ps2}}  
Obviously $X$ is a Banach space. Moreover $\bawl{\edom}^{mn}$ 
with $\|\me\|=|\me|(\edom)$ is the dual of
$\cL^\infty(\edom,\R^{mn})$ 
by Proposition~\ref{pm-s5}.  
Then $X^*$ is the dual of $X$ with \reff{lem-ps2-1}
by standard arguments. For the norm in $X^*$ we fix $\eps>0$. Then
there is $(\tf^\eps,\tF^\eps)\in X$ with
\begin{equation*}
  \|(\tf^\eps,\tF^\eps)\|\le 1\,, \quad
  \|f^*\|\le \df{f^*}{\tf^\eps}+\eps\,, \quad
  \|\me\|\le \df{\me}{\tF^\eps}+\eps \,.
\end{equation*}
Hence, by \reff{lem-ps2-1},
\begin{equation*}
  \|(f^*,\me)\| \le \|f^*\| + \|\me\| \le 
  \df{(f^*,\me)}{(\tf^\eps,\tF^\eps)} +\eps 
  \le \|(f^*,\me)\| + \eps\,.
\end{equation*}
The arbitrariness of $\eps>0$ implies equality and completes the proof.
\end{proof+}

\begin{proof+}{ of Proposition \ref{prop:dual1}}   
We fix $f^*\in\cW^{1,\infty}(\edom,\R^m)^*$. Since    
$\tilde X:=\iota\big(\cW^{1,\infty}(\edom,\R^m)\big)$
is a linear subspace of $X:=X_0\times\cL^\infty(\edom,\R^{mn})$, we can use  
the Hahn-Banach theorem to extend $f^*\circ\iota^{-1}\in\tilde X^*$ 
to some $g^*\in X^*$ under preservation of norm. 
Then, by Lemma~\ref{lem-ps2}, there are $f_0^*\in X_0^*$ 
and $\me\in\bawl{\edom}^{mn}$ such that 
\begin{equation*}
  \df{f^*}{\tf} = \df{f^*\circ\iota^{-1}}{\iota(\tf)} =
  \df{g^*}{\iota(\tf)} =
  \df{f_0^*}{\iota_0(\tf)} + \I{\edom}{D\tf}{\me} 
\end{equation*} 
for all $\tf\in\cW^{1,\infty}(\edom,\R^m)$. Moreover 
\begin{eqnarray*}
\|f_0^*\| + \|\me\|  
&=&
\|(f_0^*,\me)\| = \|g^*\| = \|f^*\circ\iota^{-1}\|   \\
&\le&
  \|f^*\|\,\|\iota^{-1}\| = \|\iota^{-1}\|
  \sup_{\substack{\tf\in\cW^{1,\infty}(\edom,\R^m)\\
        \|\tf\|_{\cW^{1,\infty}}\le 1}} \df{f^*}{\tf}    
\end{eqnarray*}
which verifies the final estimate. 
\end{proof+}

\medskip

\begin{proof+}{ of Proposition \ref{prop:dual0}}   
Let $\delta>0$ be fixed. For $\tf\in\woi{m}$ we have 
\begin{equation*}
    \port{\K}_\delta \tf \in \woi{m} \qmq{and}
    \refun{(1-\port{\K}_\delta)\tf}{\dnhd{\K}{\frac{\delta}{2}}} = 0 \,.
\end{equation*}
Since $f^*\in\cW^{1,\infty}(\edom,\R^m)^*$ is a trace on $\K$,
\begin{equation}\label{prop:dual0-5}
    \df{\de}{\tf} = \df{\de}{\port{\K}_\delta \tf} + 
                    \df{\de}{(1-\port{\K}_\delta)\tf}
    = \df{\de}{\port{\K}_\delta \tf} \,. 
\end{equation}
With 
\begin{equation*}
  \const_\delta := \|\port{\K}_\delta\|_{\cW^{1,\infty}(\edom,\R^m)} =
  \|\port{\K}_\delta\|_{\cW^{1,\infty}(\K_\delta\cap\edom,\R^m)} 
  \ge 1
\end{equation*}
and with the product rule for $D(\port{\K}_\delta \tf)$ we get
\begin{eqnarray*}
  \norm{\woi{m}}{\port{\K}_\delta \tf} 
&=&
  \|\port{\K}_\delta \tf\|_{\cW^{1,\infty}(\K_\delta\cap\edom,\R^m)} \\
&\le&
  \max \big\{ \|\port{\K}_\delta
      \tf\|_{\cL^\infty(\K_\delta\cap\edom,\R^m)},\, \\
&& \hspace{11mm}
  \|\port{\K}_\delta D\tf\|_{\cL^\infty(\K_\delta\cap\edom,\R^m)} +
  \|\tf D\port{\K}_\delta\|_{\cL\infty(\K_\delta\cap\edom,\R^m)}
   \big\}   \\
&\leq&
  \max\big\{\|\tf\|_{\cL^\infty(\K_\delta\cap\edom,\R^m)},  \\
&& \hspace{11mm}
  \|D\tf\|_{\cL^\infty(\K_\delta\cap\edom,\R^m)} +
  c_\delta\|\tf\|_{\cL\infty(\K_\delta\cap\edom,\R^m)}\big\}   \\
&\le&
  2c_\delta\max \big\{ \|\tf\|_{\cL^\infty(\K_\delta\cap\edom,\R^m)},\,
  \|D\tf\|_{\cL^\infty(\K_\delta\cap\edom,\R^m)} \big\}    \\
&=&
  2\const_\delta \|\tf\|_{\cW^{1,\infty}(\K_\delta\cap\edom,\R^m)}
\le 2\const_\delta \norm{\woi{m}}{\tf} 
\end{eqnarray*}
for all $\tf\in\woi{m}$.

We now consider the subspace (that might be strict)
\begin{equation*}
  X_\delta := \big\{ \psi\in\cW^{1,\infty}(\K_\delta\cap\edom,\R^m) 
  \;\big|\; \psi = \tf_{|\K_\delta\cap\edom} \zmz{for some} 
  \tf\in\cW^{1,\infty}(\edom,\R^m) \big\}
\end{equation*}
and define a linear functional $f^*_\delta$ on $X_\delta$ by
\begin{equation*}
  \df{f_\delta^*}{\tf_{|\K_\delta\cap\edom}} =
  \df{f^*}{\port{\K}_\delta \tf} \qmq{for} \tf\in\cW^{1,\infty}(\edom,\R^m) \,
\end{equation*}
(notice that $f_\delta^*$ is well-defined this way). Since
\begin{equation*}
  \big|\df{f_\delta^*}{\tf_{|\K_\delta\cap\edom}}\big| = 
  \big|\df{f^*}{\port{\K}_\delta \tf}\big|
  \le 2 c_\delta \|f^*\| \|\tf\|_{\cW^{1,\infty}(\K_\delta\cap\edom,\R^m)} \,,
\end{equation*}
we have that $f^*_\delta\in X_\delta^*$ and 
$\|f^*_\delta\|\le 2c_\delta\|f^*\|$.
By a norm preserving extension with the Hahn-Banach theorem,
we can identify $f_\delta^*$ with some 
$f_\delta^*\in\cW^{1,\infty}(\K_\delta\cap\edom,\R^m)^*$.
Using \reff{prop:dual0-5} we obtain
\begin{equation*}
  \df{\de}{\tf} = \df{f_\delta^*}{\tf_{|\K_\delta\cap\edom}}
  \qmq{for} \tf\in\cW^{1,\infty}(\edom,\R^m) \,
\end{equation*}
which verifies the assertion.
\end{proof+}

\medskip

\begin{proof+}{ of Theorem \ref{prop:dual}}   
For $f^*\in \cW^{1,\infty}(\edom,\R^m)^*$ and $\delta>0$ we fix  
\begin{equation*}
  f_\delta^*\in\cW^{1,\infty}(\K_\delta\cap\edom,\R^m)^*
\end{equation*}
according to Proposition \ref{prop:dual0}. Then we apply 
Proposition~\ref{prop:dual1} with $\K_\delta\cap\edom$
instead of $\edom$, with a suitable choice of $X_0$, $\iota_0$, and with
$\|(\tf,\tF)\|_{X_0\times\cL^\infty} = \max\{\|\tf\|_{X_0},\|\tF\|_\infty\}$. 

Let us first consider the general case (G) with 
\begin{equation*}
  X_0=\cL^\infty(\K_\delta\cap\edom,\R^m) \qmq{and} \iota_0(\tf)=\tf.
\end{equation*}
Then  
\begin{equation*}
  \iota:\cW^{1,\infty}(\K_\delta\cap\edom,\R^m) \to
  \cL^\infty(\K_\delta\cap\edom,\R^m) \times
  \cL^\infty(\K_\delta\cap\edom,\R^{mn})
\end{equation*}
with $\iota(\tf)=(\tf,D\tf)$ is a linear and isometric mapping. Moreover, it is
bijective onto $Y:=\iota\big(\cW^{1,\infty}(\K_\delta\cap\edom,\R^m)\big)$.
Hence there is a continuous inverse $\iota^{-1}$ on $Y$. With 
\begin{equation*}
  X_0^*=\bawl{\K_\delta\cap\edom}^m
\end{equation*}
we obtain the existence of
\begin{equation}\label{prop:dual-5}
  \ome \in \bawl{\K_\delta\cap\edom}^m\,, \quad 
  \me \in \bawl{\K_\delta\cap\edom}^{mn}
\end{equation}
such that the representation of $\df{f^*}{\tf}$ as in \reff{prop:dual-3} is
satisfied. The measures $\ome$, $\me$ can be extended on $\edom$ by zero to
get \reff{prop:dual-1} and, clearly, 
\begin{equation*}
  \cor{\ome},\, \cor{\me} \subset \ol{\K_\delta\cap\edom}\,.
\end{equation*}
Using \reff{prop:dual1-3}, \reff{prop:dual0-1},
the isometry of $\iota$, and that
$f_\delta^*$ is a norm preserving extension from the subspace $X_\delta$ to 
$X_0$ (cf. the proof of Proposition~\ref{prop:dual0}), 
we finally have
\begin{eqnarray*}
  \|\ome\| + \|\me\| 
&=&
  |\ome|(\edom) + |\me|(\edom) 
\: = \:
  |\ome|(\K_\delta\cap\edom) + |\me|(\K_\delta\cap\edom)
  \nonumber \\
&=&
  \|(\ome,\me)\| = \|f^*_\delta\circ\iota^{-1}\|  =
  \sup_{\substack{\psi\in\cW^{1,\infty}(\K_\delta\cap\edom,\R^m)
                  \\\|\iota(\psi)\|\le 1} }
  \df{f_\delta^*\circ\iota^{-1}}{\iota(\psi)}   \\
&=& 
  \sup_{\substack{\psi\in\cW^{1,\infty}(\K_\delta\cap\edom,\R^m)
                  \\\|\psi\|\le 1} }
  \df{f_\delta^*}{\psi}  
= \sup_{\substack{\psi\in X_\delta \\\|\psi\|\le 1} }
  \df{f_\delta^*}{\psi}   \\
&=&
   \sup_{\substack{\tf\in\cW^{1,\infty}(\edom,\R^m)\\\|\tf_{|\K_\delta\cap\edom}\|\le 1} }
    \df{f_\delta^*}{\tf_{|\K_\delta\cap\edom}} =  
  \sup_{\substack{\tf\in\cW^{1,\infty}(\edom,\R^m)\\\|\tf_{|\K_\delta\cap\edom}\|\le 1} }
    \df{f^*}{\port{\K}_\delta\tf}  \,.
\end{eqnarray*}
This verifies the first assertion for case (G)
(cf. also Adams \cite[Theorem 3.8]{adams:78} for the duals of Sobolev spaces).

For case (L) we choose $X_0=\cL^\infty_\K(\K_\delta\cap\edom,\R^m)$ and
$\iota_0(\tf)=\tf_{\wr\K}$. Then we combine Proposition~\ref{prop:dual1} with
Lemma~\ref{lem-fs} to get the existence 
of measures $\ome$, $\me$ as in \reff{prop:dual-5} with
$\cor{\ome}\subset \K$, $\cor{\me}\subset\ol{\K_\delta\cap\edom}$,
and such that the representation of $\df{f^*}{\tf}$ as in the assertion is
true. For \reff{prop:dual-1} we extend the measures by zero. 
With $c=\|\iota^{-1}\|\ge 1$ we get similar to the general case that
\begin{eqnarray*}
  \|\ome\| + \|\me\| 
&=&
  \|(\ome,\me)\| = \|f^*_\delta\circ\iota^{-1}\|  \\
&\le& 
  \|\iota^{-1}\|\, \|f^*_\delta\| =
  c  \sup_{\substack{\psi\in\cW^{1,\infty}(\K_\delta\cap\edom,\R^m)
                  \\\|\psi\|\le 1} }
  \df{f_\delta^*}{\psi}  \\
&=&
  c
  \sup_{\substack{\tf\in\cW^{1,\infty}(\edom,\R^m)\\\|\tf_{|\K_\delta\cap\edom}\|\le 1} }
    \df{f^*}{\port{\K}_\delta\tf}  \,.
\end{eqnarray*}
For case (C) we use $X_0=C(\K,\R^m)$ and $\iota_0(\tf)=\tf_{|\K}$ and 
argue as in case~(L).
\end{proof+}

\medskip

\begin{proof+}{ of Proposition \ref{prop:special-a}}   
We first assume that $f^*$ is finite, i.e. \reff{eq:monster} is
satisfied. Then there are $\delta_k>0$ with $\delta_k\to 0$ and
\begin{equation}\label{eq:mon_aux}
  \sup \limits_{k \in \N} \; 
  \sup_{\substack{\tf \in \woinf{\edom,\R^m}\\\|\tf_{|\K_{\delta_k}\cap\edom}\| \leq 1}} 
  \df{\de}{\port{\K}_{\delta_k} \tf} < \infty \,. 
\end{equation}
Theorem \ref{prop:dual} provides measures
\begin{equation*}
  \ome_k\in\bawl{\edom}^m\,, \quad \me_k\in\bawl{\edom}^{mn}
\end{equation*}
related to $\delta_k$
such that $\cor{\ome_k}, \cor{\me_k}\subset\ol{\K_{\delta_k}\cap\edom}$ and,
for all $\tf\in\woi{m}$,
\begin{equation*}
  \df{f^*}{\tf} = \I{\K_{\delta_k}\cap\edom}{\tf}{\ome_k} + 
                \I{\K_{\delta_k}\cap\edom}{D\tf}{\me_k} \,.
\end{equation*}
By \reff{prop:dual-4} for case (G), where $c$ is independent of $\delta$,
and by \reff{eq:mon_aux} there is some $\tilde c>0$ with
\begin{equation*}
  \|(\ome_k,\me_k)\| = \|\ome_k\| + \|\me_k\| 
  \le \tilde c 
  \qmz{for all} k\,.
\end{equation*}
Therefore $\{(\ome_k,\me_k)\}$ is a bounded sequence in 
$\big(\cL^\infty(\edom,\R^m) \times \cL^\infty(\edom,\R^{mn})\big)^*$ and, by
the Banach-Alaoglu theorem, there is a 
weak* cluster point $(\ome,\me)$ with
\begin{equation*}
  \df{(\ome,\me)}{(\tf,\tF)} = \I{\edom}{\tf}{\ome} + \I{\edom}{\tF}{\me} 
\end{equation*}
for all $(\tf,\tF)\in \cL^\infty(\edom,\R^m) \times \cL^\infty(\edom,\R^{mn})$.
Hence there is a subnet of 
$\{(\ome_k,\me_k)\}$ converging to $(\ome,\me)$. Thus, 
for any $(\tf,\tF)\in \cL^\infty(\edom,\R^m) \times \cL^\infty(\edom,\R^{mn})$
with 
\begin{equation*}
  \tf_{|\K_\delta}=0\,, \z \tF_{|\K_\delta}=0  \qmq{for some} \delta>0\,
\end{equation*}
there is a subsequence $\big\{(\ome_{k'},\me_{k'})\big\}$ such that 
\begin{equation*}
  \big\langle (\ome,\me),(\tf,\tF) \big\rangle = 
  \lim_{k'\to\infty} \big\langle (\ome_{k'},\me_{k'}),(\tf,\tF)\big\rangle =
  \lim_{k'\to\infty}\I{\edom}{\tf}{\ome_{k'}} + \I{\edom}{\tF}{\me_{k'}} = 0 \,.
\end{equation*}
(recall that $\cor{\ome_k},\cor{\me_k}\subset\K_{\frac{\delta}{2}}$ for $k$
large). Consequently $\cor{\ome}, \cor{\me} \subset \K$ and, clearly,
$\df{\de}{\tf} = \sI{\K}{\tf}{\ome} + \sI{\K}{D\tf}{\me}$.

For the reverse statement we consider $\ome$, $\me$ as in \reff{prop:dual-1}
with $\cor{\ome}, \cor{\me} \subset \K$ such that for all
$\tf\in\cW^{1,\infty}(\edom,\R^m)$
\begin{equation*}
    \df{\de}{\tf} = \sI{\K}{\tf}{\ome} + \sI{\K}{D\tf}{\me}   \,. 
\end{equation*}
Obviously
\begin{equation*}
  \port{\K}_\delta \tf = \tf\,, \z D(\port{\K}_\delta \tf) = D\tf
  \qmz{on} \K_{\frac{\delta}{2}}\cap\edom  \,.
\end{equation*}
Thus, for any $\delta>0$ 
and any $\tf\in\cW^{1,\infty}(\edom,\R^m)$ with 
$\|\tf_{|\K_\delta\cap\edom}\|_{\cW^{1,\infty}}\le 1$ we use that 
$\ome$, $\me$ have core in $\K$ to get  
\begin{eqnarray*}
  \df{\de}{\port{\K}_\delta \tf}
&=& 
  \sI{\K}{\port{\K}_\delta \tf}{\ome} + \sI{\K}{D(\port{\K}_\delta \tf)}{\me} \\
&=&
  \I{\K_{\delta/2} \cap\edom}{\tf}{\ome} + 
  \I{\K_{\delta/2} \cap\edom}{D\tf}{\me}  \\
&\le&
  \|\tf_{|\K_\delta\cap\edom}\|_\infty \|\ome\| + 
  \|(D\tf)_{|\K_\delta\cap\edom}\|_\infty \|\me\| \\
&\le& 
   \|\ome\| + \|\me\| \,.
\end{eqnarray*}
Since the right hand side does not depend on $\delta$, we obtain
\reff{eq:monster}.

If we have case (C) for $\delta>0$, then all 
$\tf\in\cW^{1,\infty}(\edom,\R^m)$ can be considered as continuous up to $\K$ 
and $\ome$ can be replaced by a Radon measure $\sme$ with the stated properties
according to Proposition~\ref{acm-ram}.
\end{proof+}

\medskip

\begin{proof+}{ of Proposition \ref{prop:special-b}}   
For (1) we fix $\delta>0$. First we assume that $\me=0$ in
\reff{prop:dual-3}. Then  
\begin{equation*}
  |\df{f^*}{\tf}| = \Big|\I{\K_\delta\cap\edom}{\tf}{\ome} \,\Big|
  \le \|\tf_{|\K_\delta\cap\edom}\|_\infty \|\ome\|
\end{equation*}
for all $\tf\in\woinf{\edom,\R^m}$, which verifies the statement. 
For the other direction we assume \reff{eq:special}. 
With $f_\delta^*$ from Proposition \ref{prop:dual0} we have  
\begin{equation*}
  |\df{f^*}{\tf}| = \big|\df{f^*_\delta}{\tf_{|\K_\delta\cap\edom}}\big| \le
  \tilde c \, \|\tf_{|\K_\delta\cap\edom}\|_\infty\,.
\end{equation*}
for all $\tf\in\woinf{\edom,\R^m}$ and some constant $\tilde c>0$. 
Hence $f^*_\delta$ can be extended to some 
$g^*_\delta\in\cL^\infty(\K_\delta\cap\edom)^*$ by the Hahn-Banach theorem. 
Consequently there is some measure
$\ome\in \bawl{\edom}^m$ with $\cor{\ome}\in\cl{\K_\delta\cap\edom}$ such that
\begin{equation*}
  \df{f^*}{\tf} = \I{\K_\delta\cap\edom}{\tf}{\ome}
  \qmq{for all}  \tf\in\woinf{\edom,\R^m}\,,
\end{equation*}
which gives the opposite statement.
The remaining assertion follows directly from Theorem~\ref{prop:dual}.

For (2) we first assume that there is a measure $\ome$ with
$\cor{\ome}\subset\K$ and
\begin{equation*}
  \df{\de}{\tf} = \sI{\K}{\tf}{\ome}
  \qmq{for all} \tf \in \woinf{\edom,\R^m} \,.
\end{equation*}
Then, using \reff{prop:dual0-1},  
we have for any $\delta>0$ and all $\tf$ that
\begin{equation*}
  \df{\de}{\tf} = \df{\de}{\port{\K}_{\delta} \tf} 
  \le \|\tf_{|\K_\delta\cap\edom}\|_\infty \|\ome\|
  \le \|\tf_{|\K_\delta\cap\edom}\|_{\cW^{1,\infty}} \|\ome\| \,.
\end{equation*}
This readily implies \reff{eq:special-1}.
For the reverse statement we choose $\delta_k\downarrow 0$ 
such that the liminf in \reff{eq:special-1} is realized. 
By the first assertion there are $\ome_k$ with 
$\cor{\ome_k}\subset\ol{\K_{\delta_k}\cap\edom}$ and
\begin{equation*}
  \df{\de}{\tf} = \I{\K_{\delta_k}\cap\edom}{\tf}{\ome_k} 
  \qmq{for all} \tf \in \woinf{\edom,\R^m} \,.
\end{equation*}
Using \reff{prop:dual0-1} and the assumption we get for some $\tilde c>0$
\begin{equation*}
  \sup_{\substack{\tf \in \woinf{\edom,\R^m}\\
        \|\tf_{|\K_{\delta_k}\cap\edom}\|_{\cW^{1,\infty}} \leq 1}} 
  \df{\de}{\port{\K}_{\delta_k} \tf} \le
  \sup_{\substack{\tf\in\woinf{\edom,\R^m}\\
        \|\tf_{|\K_{\delta_k}\cap\edom}\|_{\cL^\infty}\leq 1}} 
  \df{f^*}{\tf} < \tilde c
\end{equation*}
for all $k$. Hence, by \reff{prop:dual-4} with $c=1$ 
for case (G), we get $\|\ome_k\|\le\tilde c$. 
Now we can argue as in the proof of Proposition~\ref{prop:special-a}
to get a weak$^*$ cluster point $\ome$ of 
$\{\ome_k\}$ with $\cor{\ome}\subset\K$ and
\begin{equation*}
  \df{\de}{\tf} = \sI{\K}{\tf}{\ome}
  \qmq{for all} \tf \in \woinf{\edom,\R^m} \,.
\end{equation*}
Notice that we cannot just apply Propositions~\ref{prop:special-a} and
assertion (1) simultaneously, since $\ome$ in (1)
might differ from that in the previous proposition 
due to non-uniqueness.

For case (C) we argue as in the proof of Proposition~\ref{prop:special-a}.
\end{proof+}

\medskip

\begin{proof+}{ of Proposition \ref{prop:cases}}   
For (1) we assume that there is a component $V$ of 
$\K_\delta\cap\edom$ and some
$\delta'>0$ such that $\K_{\delta'}\cap V=\emptyset$. We consider 
$\tf_0,\tf_1\in \cW^{1,\infty}(\K_\delta\cap\edom)$ with
\begin{equation*}
 \tf_0=0 \zmz{on} \K_\delta\cap\edom\,, \quad
 \tf_1= \Big\{ 
  \mbox{\small $ 
  \begin{array}{ll} 1 & \text{on $V$}\,, \\
                    0 & \text{otherwise} \,. 
  \end{array}$ } 
\end{equation*}
Obviously $\tf_0\ne \tf_1$. But, for cases (L) and (C) with $\delta$, we have
$\iota_0(\tf_0)=\iota_0(\tf_1)=0$ and $D\tf_0=D\tf_1=0$. 
Hence $\iota(\tf_0)=\iota(\tf_1)$ . 
Therefore $\iota$ is not injective. Thus both (L) and (C) are not
met, which verifies the assertion. 

For (2) we notice that $\tf\in\cW^{1,\infty}(\K_\delta\cap\edom,\R^m)$
is locally Lipschitz continuous with Lipschitz constant $\|D\tf\|_\infty$.
For $\eps>0$ there are $x\in\K_\delta\cap\edom$ and $\delta'\in(0,\delta)$ 
such that
\begin{equation*}
  \|\tf\|_{\cL^\infty(\K_\delta\cap\edom)}-\eps\le |\tf(x)|\,, \quad 
  \|\tf\|_{\cL^\infty(\K_{\delta'}\cap\edom)} \le \|\tf\|_\K + \eps \,.
\end{equation*}
Now we find $y\in\K_{\delta'}\cap\edom$ that can be connected with $x$ within
$\K_\delta\cap\edom$ by a curve of length less than $\ell$. Hence, by local
Lipschitz continuity,  
\begin{equation*}
  |\tf(x)| \le |\tf(y)| + \ell\|D\tf\|_\infty \,.
\end{equation*}
Consequently,
\begin{equation*}
  \|\tf\|_{\cL^\infty(\K_\delta\cap\edom)} \le 
  \|\tf\|_\K + \ell\|D\tf\|_\infty + 2\eps \,.
\end{equation*}
Since $\eps>0$ is arbitrary and 
$a\cdot b\le|a|_1|b|_\infty$ for $a,b\in\R^2$, we get
\begin{equation*}
  \|\tf\|_\infty \le \|\tf\|_\K + \ell\|D\tf\|_\infty \le
  (1+\ell) \max \big\{ \|\tf\|_\K,\|D\tf\|_\infty\big\} \,.
\end{equation*}
Using that $\iota(\tf)=(\tf_{\wr\K},D\tf)$ for case (L) and that 
the right hand side is larger than $\|D\tf\|_\infty$,
we obtain
\begin{equation*}
  \|\tf\|_{\cW^{1,\infty}} = \max\big\{\|\tf\|_\infty,\, \|D\tf\|_\infty \big\}
  \le (1+\ell)\,\|\iota(\tf)\| \,.  
\end{equation*}
Therefore $\iota$ is injective and $\iota^{-1}$ is continuous on its image
with $\|\iota^{-1}\|\le 1+\ell$. Observing \reff{prop-dual-a} 
we get the assertion (cf. also Remark~\ref{rem:dual1}).

For (3) we argue basically as for (2). However we use $\|\tf\|_{C(\K)}$ 
instead of $\|\tf\|_\K$, $\iota(\tf)=(\tf_{|\K},D\tf)$, and  
we choose $\delta'>0$ by continuity of $\tf$ such that 
\begin{equation*}
  \|\tf\|_{C( \K \,\cup\,(\K_{\delta'}\cap\edom) )} 
  \le \|\tf\|_{C(\K)}+\eps\,.
\end{equation*}
Then we can proceed as above.
\end{proof+}

\medskip

\begin{proof+}{ of Corollary \ref{cor:cases}}  
For (1) we fix $\delta>0$, $x\in(\bd\edom)_\delta\cap\edom$, and $\delta'>0$. 
Then there is $x'\in\bd\edom$ with  
\begin{equation*}
  |x-x'|=\op{dist}_{\bd\edom}x < \delta \,.
\end{equation*}
Clearly, the open segment $(x',x)$ belongs to $(\bd\edom)_\delta\cap\edom$
and there is $y\in(x',x)$ with $|y-x'|<\min\{\delta,\delta'\}$. 
Then the closed segment $[y,x]$ connects $x$, $y$ inside
$(\bd\edom)_\delta\cap\edom$ and has length less than $\delta$. 
Hence $(\bd\edom)_\delta\cap\edom$ is bounded path connected with $\bd\edom$
and maximal length $\ell=\delta$. Thus we have case (L) for $\delta$ by
Proposition~\ref{prop:cases} (2) and $c\le 1+\delta$ in \reff{prop:dual-4}.

For (2) we argue as in (1) and use that any
$\tf\in\cW^{1,\infty}\big((\bd\edom)_\delta\cap\edom,\R^m\big)$
can be extended continuously up to $\bd\edom$. 

For (3) we fix $\delta>0$ and observe that all
$\tf\in\cW^{1,\infty}(\K_\delta\cap\edom,\R^m)$ 
are continuous on $\K_\delta\subset\edom$. 
For any $x\in\K_\delta$ and any $\delta'>0$ we clearly find $y\in\K_{\delta'}$
such that the line segment $[y,x]$ has length less than $\delta$ 
and belongs to $\K_\delta$. Hence $\K_\delta\cap\edom$ is bounded path
connected with $\K$ and we have case (C) for $\delta$. For the estimate 
$c\le 1+\delta$ we can argue as in the proof of assertion (1). 
\end{proof+}

\section{ Divergence theorems}
\label{dt}

We derive general divergence theorems for vector fields in $\cD\cM^1(\edom)$
by representing corresponding traces with the results of
the previous section. As long as nothing else is mentioned the cases (L) and
(C) are taken for $\K=\bd\dom$.

\subsection{Divergence measure fields}\label{sec:ub}

For $F\in\cD\cM^1(\edom)$ and $\dom \in \bor{\edom}$ we have that
\begin{equation*}
  \df{TF}{\tf} = \divv(\tf\F)(\dom) = 
  \I{\dom}{\tf}{\divv\F} + \I{\dom}{\F D\tf}{\lem}\,
\end{equation*}
is a trace on $\bd\dom$ over $\woinf{\edom}$ according to
Theorem~\ref{ex:dmo_woi}. Then, with Theorem~\ref{prop:dual},
we obtain a general Gauss-Green formula for $\cD\cM^1$-vector fields. 
Notice that we have to choose $m=1$ for the particular cases (L) and (C) 
in this section. 

\begin{theorem} \label{dt-s1}
Let $\edom \subset \Rn$ be open and bounded, let $\dom \in \bor{\edom}$, let
$\delta>0$, and assume that $\F\in \dmo{\edom}$. 
Then there exist measures 
\begin{equation*}
  \lF \in \bawl{\edom} \qmq{and} \mF \in \bawl{\edom}^n
\end{equation*}
with $\cor{\lF},\: \cor{\mF} \subset \ol{(\bd\dom)_\delta\cap\edom}$
such that
\begin{equation}\label{dt-s1-1}
  \df{TF}{\tf} =
  \divv(\tf\F)(\dom) = \I{(\bd\dom)_\delta\cap\edom}{\tf}{\lF} + 
            \I{(\bd\dom)_\delta\cap\edom}{D\tf}{\mF}   
\end{equation}
for all $\tf\in\woinf{\edom}$ with $T:\dmo{\edom}\to\woinf{\edom}^*$ from
Theorem~\ref{ex:dmo_woi}.
In the particular cases with $\K=\bd\dom$ we have in addition

\medskip

{\rm (L):} $\cor{\lF}\subset\bd\dom$ and \reff{dt-s1-1} becomes
\begin{equation*}
   \divv(\tf\F)(\dom) = \sI{\bd\dom}{\tf}{\lF} + 
            \I{(\bd\dom)_\delta\cap\edom}{D\tf}{\mF} \,.   
\end{equation*}

{\rm (C):} $\lF$ corresponds to a Radon measure $\sigma_F$ with 
$\supp{\sigma_F}\subset\bd\dom$ such that 
\begin{equation*}
   \divv(\tf\F)(\dom) = \I{\bd\dom}{\tf}{\siF} + 
            \I{(\bd\dom)_\delta\cap\edom}{D\tf}{\mF} \,.   
\end{equation*}
\end{theorem}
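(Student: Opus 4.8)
The plan is to obtain \reff{dt-s1-1} together with its two refinements directly by feeding the trace supplied by Theorem~\ref{ex:dmo_woi} into Theorem~\ref{prop:dual} with $\K=\bd\dom$; almost all of the work has already been done in those two results.

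First I would verify that $\bd\dom$ is an admissible choice of $\K$. It is closed in $\R^n$, and from $\dom\subset\edom$ we get $\cl\dom\subset\cl\edom$, hence $\bd\dom\subset\cl\dom\subset\cl\edom$; so $\K:=\bd\dom$ is a closed subset of $\cl\edom$, as Theorem~\ref{prop:dual} requires. By Theorem~\ref{ex:dmo_woi} the functional $f^*:=TF$ given by $\df{f^*}{\tf}=\divv(\tf F)(\dom)=\I{\dom}{\tf}{\divv F}+\I{\dom}{F\cdot D\tf}{\lem}$ lies in $\woinf{\edom}^*$ and is a trace on $\bd\dom$ over $\woinf{\edom}$; this is $\cW^{1,\infty}(\edom,\R^m)$ with $m=1$, which is exactly why the particular cases (L) and (C) are taken only in this dimension. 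Linearity and continuity of $T:\dmo{\edom}\to\woinf{\edom}^*$ are part of Theorem~\ref{ex:dmo_woi} and need no further comment.

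Next I would apply Theorem~\ref{prop:dual} to $f^*$, with $\K=\bd\dom$ and with the prescribed $\delta>0$. Since $m=1$, it yields measures $\lF:=\ome\in\bawl{\edom}$ and $\mF:=\me\in\bawl{\edom}^{n}$ with $\cor{\lF},\cor{\mF}\subset\ol{(\bd\dom)_\delta\cap\edom}$ and
\begin{equation*}
  \df{f^*}{\tf}=\I{(\bd\dom)_\delta\cap\edom}{\tf}{\lF}+\I{(\bd\dom)_\delta\cap\edom}{D\tf}{\mF}
\end{equation*}
for all $\tf\in\woinf{\edom}$; substituting $\df{f^*}{\tf}=\divv(\tf F)(\dom)$ gives \reff{dt-s1-1}. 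The refinements then transcribe the corresponding parts of Theorem~\ref{prop:dual} for $\K=\bd\dom$: in case (L) one has $\cor{\lF}\subset\bd\dom$, and since $\sI{\bd\dom}{\tf}{\lF}$ is by definition $\I{U\cap\edom}{\tf}{\lF}$ for any open $U\supset\cor{\lF}$ --- independent of $U$ by \reff{pm-e1} --- the first integral may be written $\sI{\bd\dom}{\tf}{\lF}$; in case (C), Theorem~\ref{prop:dual} gives that $\lF$ corresponds to a Radon measure $\siF$ with $\supp\siF\subset\bd\dom$, so the first integral becomes $\I{\bd\dom}{\tf}{\siF}$.

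Since the substantive content is already packaged --- that $TF$ annihilates every $\tf$ vanishing near $\bd\dom\cap\dom$ (Theorem~\ref{ex:dmo_woi}, via the cut-off of \reff{ex:dmo_woi-7} and the definition \reff{tt-e3} of divergence measure) and the Hahn--Banach/product-space representation of $\woinf{\edom}^*$ (Propositions~\ref{prop:dual0} and~\ref{prop:dual1}) --- the argument here is essentially bookkeeping. The only points that need care are the admissibility of $\bd\dom$ as $\K$, the specialization $m=1$, and matching the $\sI{}{}{}$-notation to the core localization; I do not expect any real obstacle.
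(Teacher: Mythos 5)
Your proposal is correct and follows exactly the paper's own (very short) proof: identify $TF$ as a trace on $\bd\dom$ over $\woinf{\edom}$ via Theorem~\ref{ex:dmo_woi} and then apply Theorem~\ref{prop:dual} with $\K=\bd\dom$ and $m=1$. The extra checks you supply (that $\bd\dom$ is a closed subset of $\cl\edom$ and the translation of the $\sIsymb$-notation in cases (L) and (C)) are sound and merely make explicit what the paper leaves implicit.
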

\noindent
We call $(\lF,\mF)$, that represents an element of $\woinf{\edom}^*$,
{\it normal trace} of $F$ on $\bd\dom$. 
In contrast to usual 
Gauss-Green formulas, \reff{dt-s1-1} contains a second boundary term depending 
on $D\tf$ and both boundary terms  depend on a whole neighborhood
of the boundary. It turns out that both extensions cannot be omitted in
general. Example~\ref{dt-ex1} shows the necessity of the additional boundary
term and from Example~\ref{dt-ex2} we see that the dependence on $\delta$ is
needed. 

\begin{proof}       
According to Theorem~\ref{ex:dmo_woi} we have that $TF$ with
\begin{equation*}
  \df{TF}{\tf} = \divv(\tf F)(\dom)
\end{equation*}
is a trace on $\bd{\dom}$ over $\woinf{\edom}$. Then, for each
$\delta>0$, there are measures $\lF$ and $\mF$ as in Theorem~\ref{prop:dual}
with $\K=\bd\dom$. 
\end{proof}

For any $\dom\subset\R^n$ 
we define the (outward) {\it unit normal field} $\nu^\dom$ of $\dom$ to be
the gradient of the signed distance function
\begin{equation} \label{nm-normal}
  \nu^\dom := D \big(\distf{\dom}-\distf{\dom^c}\big) 
  \qmz{$\lem$-a.e. on} \R^n
\end{equation}
(we avoid confusion with our previous notation $\nu^\dom$ by saying that
$\nu^\dom$ is the usual measure theoretic unit normal if we take it on some 
$\rbd\dom$ and otherwise it is the field given above). 
Notice that 
\begin{equation} \label{nm-normal1}
  |\nu^\dom|=1 \qmz{$\lem$-a.e. on} \Int\dom\cup\Ext\dom \qmq{and} 
\end{equation}
\begin{equation*}
  \nu^\dom=0 \qmz{$\lem$-a.e. on} \bd\dom\,
\end{equation*}
(use that $\distf{\bd\dom}$ is Lipschitz continuous with Lipschitz 
constant $1$, that it is differentiable $\lem$-a.e. on $\R^n$ with 
$|D\dist{\bd\dom}{x}|=1$ at points of differentiability outside
$\bd\dom$, since obviously the directional derivative 
$D\op{dist}_{\bd\dom}\big(x;\frac{y-x}{|y-x|}\big)=-1$ if $y$ is a projection
of $x$ onto $\bd\dom$, and recall \cite[p.~235, 130]{evans};
cf. also \cite[p.~114]{chen-comi}).
The coarea formula implies that
$\hm(\bd\dom_\delta)<\infty$ for $\cL^1$-a.e. $\dom_\delta$. Thus, by 
$\mbd\dom_\delta\subset\bd\dom_\delta$ (cf. \cite[p.~50]{pfeffer}),
such sets have finite perimeter and $\nu^\dom$ from \reff{nm-normal}
agrees with the measure theoretic outward unit normal $\hm$-a.e. on 
$\bd\dom_\delta$ 
(cf. \cite[p.~115]{chen-comi} for the last statement).
These properties certainly justify to speak about a normal field of $\dom$.

For $\dom\subset\edom$ open or closed we easily get some characterization 
of the trace $TF$ by several limits using the normal field $\nu^\dom$.

\begin{proposition} \label{dt-s1a}
Let $\edom \subset \Rn$ be open and bounded and 
assume that $\F\in \dmo{\edom}$. If $\dom\subset\edom$ is open, then
\begin{eqnarray} \label{dt-s1a-1a}
  \divv(\tf F)(\dom) 
&=&
  \lim_{\delta\downarrow 0}
  \frac{1}{\delta}\I{(\bd\dom)_\delta\cap\dom}{\tf F\cdot\nu^\dom}{\lem} 
  \nonumber\\
&=& \label{dt-s1a-1b}
  \lim_{\delta\downarrow 0} \frac{1}{\delta} \int_0^\delta
  \I{\bd\dom_{-\tau}}{\tf F\cdot\nu^\dom}{\lem}\, d\tau  \nonumber\\
&=& \label{dt-s1a-1c}
  \esslim_{\delta\downarrow 0} 
  \I{\rbd\dom_{-\delta}}{\tf F\cdot\nu^\dom}{\lem} \nonumber
\end{eqnarray}
and if $\dom\subset\edom$ is closed, then
\begin{eqnarray} \label{dt-s1a-2a}
  \divv(\tf F)(\dom) 
&=& \lim_{\delta\downarrow 0}
  \frac{1}{\delta}\I{(\bd\dom)_\delta\cap\dom^c}{\tf F\cdot\nu^\dom}{\lem}
  \nonumber\\
&=&  \label{dt-s1a-2b}
  \lim_{\delta\downarrow 0} \frac{1}{\delta}
  \int_0^\delta \I{\bd\dom_\tau}{\tf F\cdot\nu^\dom}{\lem}\, d\tau \nonumber\\
&=&  \label{dt-s1a-2c}
  \esslim_{\delta\downarrow 0} 
  \I{\rbd\dom_\delta}{\tf F\cdot\nu^\dom}{\lem} \nonumber
\end{eqnarray}
for all $\tf\in\woinf{\edom}$ where $\esslim$ denotes the limit up to an
$\cL^1$-negligible set.
\end{proposition}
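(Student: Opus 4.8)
The plan is to realise the trace $\divv(\tf F)(\dom)=\df{TF}{\tf}$ (Theorem~\ref{ex:dmo_woi}) as a limit of boundary‑layer integrals by inserting Lipschitz cut‑offs built from the distance function of $\bd\dom$, and then to deduce the coarea and essential‑limit forms from the neighbourhood form. Throughout I use the product rule \reff{ex:dmo_woi-5}, the definition \reff{tt-e3} of divergence measure (valid for all Lipschitz test functions of compact support), and the properties of $\nu^\dom$ recorded after \reff{nm-normal1}: $|\nu^\dom|=1$ $\lem$‑a.e.\ on $\Int\dom\cup\Ext\dom$, $\nu^\dom=0$ $\lem$‑a.e.\ on $\bd\dom$, and $D\distf{\bd\dom}=-\nu^\dom$ $\lem$‑a.e.\ on $\Int\dom$ (respectively $=+\nu^\dom$ a.e.\ on $\Ext\dom$), with $\distf{\bd\dom}$ being $1$‑Lipschitz and $|D\distf{\bd\dom}|=1$ $\lem$‑a.e.\ off $\bd\dom$; the coarea formula will be applied to $\distf{\bd\dom}$.

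\emph{Open $\dom$.} For $\delta>0$ set $\chi_\delta:=\min\{1,\tfrac1\delta\distf{\bd\dom}\}$ on $\dom$ and $\chi_\delta:=0$ on $\dom^c$. Then $\chi_\delta\in\cW^{1,\infty}(\R^n)$, $0\le\chi_\delta\le1$, $\chi_\delta=1$ on $\dom_{-\delta}$, $\chi_\delta(x)\uparrow1$ as $\delta\downarrow0$ for every $x\in\dom$ (a point of the open set $\dom$ has positive distance to $\bd\dom$), and by Rademacher's theorem together with the facts above $D\chi_\delta=-\tfrac1\delta\,\chi_{(\bd\dom)_\delta\cap\dom}\,\nu^\dom$ $\lem$‑a.e. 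Since $|\chi_\delta\tf|\le\tfrac{\delta'}{\delta}\|\tf\|_\infty$ on $(\bd\dom)_{\delta'}\cap\edom$ for $\delta'<\delta$ we have $(\chi_\delta\tf)_{\wr\bd\dom}=0$, and $\lem(\dom\setminus\op{int}\dom)=0$, so Corollary~\ref{ex:dmo_woi_c} gives $\divv(\chi_\delta\tf F)(\dom)=\df{TF}{\chi_\delta\tf}=0$. Expanding this identity by \reff{ex:dmo_woi-5} yields
\begin{equation*}
  0=\I{\dom}{\chi_\delta\tf}{\divv F}+\I{\dom}{\chi_\delta\,F\cdot D\tf}{\lem}
    -\tfrac1\delta\I{(\bd\dom)_\delta\cap\dom}{\tf\,F\cdot\nu^\dom}{\lem}\,.
\end{equation*}
By dominated convergence ($|\tf|\le\|\tf\|_\infty$, $|\divv F|(\edom)<\infty$; $|F\cdot D\tf|\le\|D\tf\|_\infty|F|\in\cL^1(\dom)$) the first two integrals tend, as $\delta\downarrow0$, to $\I{\dom}{\tf}{\divv F}+\I{\dom}{F\cdot D\tf}{\lem}=\divv(\tf F)(\dom)$, which gives the first equality. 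The second then follows from the coarea formula: $\I{(\bd\dom)_\delta\cap\dom}{\tf F\cdot\nu^\dom}{\lem}=\int_0^\delta\big(\I{\bd\dom_{-\tau}}{\tf F\cdot\nu^\dom}{\ham^{n-1}}\big)\,d\tau$, where the level set $\{\distf{\bd\dom}=\tau\}\cap\dom$ coincides with $\rbd\dom_{-\tau}$ up to an $\ham^{n-1}$‑null set for a.e.\ $\tau$, and the inner integral is in $L^1_{\mathrm{loc}}(0,\infty)$ in $\tau$.

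\emph{Essential‑limit form; closed $\dom$.} For the third equality I would show that for $\lem$‑a.e.\ $\delta>0$ the set $\dom_{-\delta}$ has finite perimeter with $\nu^\dom=\nu^{\dom_{-\delta}}$ $\ham^{n-1}$‑a.e.\ on $\rbd\dom_{-\delta}$ (coarea for $\distf{\bd\dom}$ gives $\ham^{n-1}(\bd\dom_{-\tau})<\infty$ for a.e.\ $\tau$, and $\nu^\dom$ agrees with the measure‑theoretic outward normal on such level sets, as noted after \reff{nm-normal1}), and that $\divv(\tf F)(\dom_{-\delta})=\I{\rbd\dom_{-\delta}}{\tf F\cdot\nu^\dom}{\ham^{n-1}}$ for $\lem$‑a.e.\ $\delta$: approximate $\chi_{\cl{\dom_{-\delta}}}$ in $\cW^{1,\infty}(\edom)$ by the cut‑offs $\min\{1,\max\{0,\tfrac1\eps(\distf{\bd\dom}-\delta+\eps)\}\}$ (which have compact support in $\edom$, since $\cl{\dom_{-\delta}}$ is a compact subset of the open set $\edom$), test the compactly supported identity \reff{tt-e3} with them, and let $\eps\downarrow0$ using the coarea formula, the Lebesgue‑point property of the $L^1_{\mathrm{loc}}$‑integrand $\tau\mapsto\I{\bd\dom_{-\tau}}{\tf F\cdot\nu^\dom}{\ham^{n-1}}$, and the fact that $|\divv(\tf F)|$ charges only countably many of the pairwise disjoint level sets $\bd\dom_{-\tau}$. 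Since $\dom_{-\delta}\uparrow\dom$, continuity of the finite measure $\divv(\tf F)$ gives $\divv(\tf F)(\dom_{-\delta})\to\divv(\tf F)(\dom)$, hence the $\esslim$ equality; the first two open‑case equalities also re‑emerge by averaging this in $\delta$. The closed case is symmetric: $\dom$ is then compact, so $\dom_{\delta_0}\csubset\edom$ for some $\delta_0>0$; one uses $\chi_\delta:=\max\{0,1-\tfrac1\delta\distf{\bd\dom}\}$ on $\dom^c$, $\chi_\delta:=1$ on $\dom$, for which $\chi_\delta\tf F$ is Lipschitz with compact support in $\edom$, whence $\divv(\chi_\delta\tf F)(\edom)=0$ by \reff{tt-e3}, while $\divv(\chi_\delta\tf F)$ and $\divv(\tf F)$ agree on Borel subsets of $\dom$ (because $\chi_\delta\equiv1$ and $D\chi_\delta=0$ $\lem$‑a.e.\ on $\dom$); combining these and expanding $\divv(\chi_\delta\tf F)(\edom\setminus\dom)$ by \reff{ex:dmo_woi-5}, then passing to the limit as above, yields all three closed‑case identities with $\dom^c$ and the outer neighbourhoods $\dom_\tau$ in place of $\dom$ and $\dom_{-\tau}$.

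\emph{Main obstacle.} The delicate step is the a.e.-$\delta$ divergence identity $\divv(\tf F)(\dom_{-\delta})=\I{\rbd\dom_{-\delta}}{\tf F\cdot\nu^\dom}{\ham^{n-1}}$: a general $F\in\cD\cM^1(\edom)$ possesses no normal trace on an arbitrary set of finite perimeter, so this can only be expected for $\lem$‑a.e.\ $\delta$, and the argument must exploit that the $\dom_{-\delta}$ are level sets of the $1$‑Lipschitz function $\distf{\bd\dom}$ — so that coarea spreads the gradients of the approximating cut‑offs across neighbouring level sets and Lebesgue differentiation delivers the exact value at a.e.\ $\delta$ — rather than any pointwise‑trace information about $F$ on $\bd\dom$ itself.
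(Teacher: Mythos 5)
Your proof is correct and, for the two limit identities, follows essentially the same route as the paper: insert a Lipschitz cut-off built from a distance function, observe via Corollary~\ref{ex:dmo_woi_c} (open case) respectively the compact-support identity \reff{tt-e3} (closed case) that the trace functional is unchanged, expand with the product rule \reff{ex:dmo_woi-5}, and pass to the limit by dominated convergence; the coarea formula then gives the averaged form. The differences are cosmetic or organisational. You use $\distf{\bd\dom}$ for the cut-off where the paper uses $\distf{\dom_{-\delta}}$ (your choice is arguably cleaner, since $D\distf{\bd\dom}=\mp\nu^\dom$ a.e.\ by definition of $\nu^\dom$, whereas the paper must implicitly identify $D\distf{\dom_{-\delta}}$ with $\nu^\dom$ on the collar). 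You handle the closed case by a direct cut-off on the outer collar combined with $\divv(\chi_\delta\tf F)(\edom)=0$, while the paper reduces it to the open case applied to $\dom^c$ for compactly supported $\tf$; these are equivalent. For the $\esslim$ identity the paper simply cites the a.e.-$\delta$ Gauss--Green formula on the level sets $\dom_{-\delta}$ (Degiovanni, Schuricht, Chen--Comi--Torres) together with $\mint(\dom_{-\delta})\uparrow\dom$ and continuity of the finite measure $\divv(\tf F)$, whereas you sketch a self-contained proof of that formula via coarea and Lebesgue differentiation of $\tau\mapsto\I{\bd\dom_{-\tau}}{\tf F\cdot\nu^\dom}{\ham^{n-1}}$. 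Your sketch is the standard argument and identifies the right mechanism, so nothing essential is missing.

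One formula in the $\esslim$ step needs repair: the cut-off $\min\{1,\max\{0,\tfrac1\eps(\distf{\bd\dom}-\delta+\eps)\}\}$ equals $1$ on the whole set $\{\distf{\bd\dom}\ge\delta\}$, which contains not only $\ol{\dom_{-\delta}}$ but also all exterior points at distance $\ge\delta$ from $\bd\dom$; as written it therefore neither has compact support in $\edom$ nor localises the gradient to the inner collar (and the exterior collar would enter with the opposite sign of $\nu^\dom$). You must multiply by $\chi_\dom$, i.e.\ define the cut-off by that formula on $\dom$ and by $0$ on $\dom^c$, exactly as you did for the first identity; then its support is $\ol{\dom_{-(\delta-\eps)}}\csubset\edom$ and the argument goes through unchanged.
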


The results are a simple evaluation of $\divv(\tf_\delta\F)(\dom)$ for
suitable $\tf_\delta$. The first two equations for closed $\dom$
can be found in Schuricht \cite[p.~534]{schuricht_new_2007} and 
\cite[p.~189]{schuricht_quaderni} (cf. also 
\v Silhav\'y \cite[p.~449]{silhavy_divergence_2009} for a more general
version). The corresponding equations for open $\dom$ can be shown 
exactly the same way. The third equations can be found in
Chen-Comi-Torres \cite[p.~117-123]{chen-comi}. Here the
approximating sets $\dom_{-\delta}$ or $\dom_\delta$ can be replaced 
by approximating sets from inside or outside  
with smooth boundary by using a standard smoothing argument
(cf. \cite[p.~129, 150]{maggi}). 
The examples below show that the limits on the right hand side need not to be
related to a measure on $\bd\dom$ in general. 
For $\dom$ where $\ham^{n-1}(\bd\dom_\delta)$ is uniformly bounded near 
$\bd\dom$ and for suitable $F$, Proposition~\ref{nm-s9} below 
provides a ``more classical'' version without limit on the right hand side and 
with some density measure near $\bd\dom$ of the type 
given in Proposition~\ref{tt-s3}. Let us provide a short proof for
the convenience of the reader.  

\begin{proof}    
Fix $\tf\in\woinf{\edom}$.
First let $\dom$ be open and consider $\chi_\delta\in\woinf{\edom}$ with
\begin{equation*}
  \chi_\delta := 
  \chi_{\dom^c} + 
  \tfrac{1}{\delta}\chi_{\dom\setminus\dom_{-\delta}}\distf{\dom_{-\delta}}
  \qmq{for} \delta>0 \,.
\end{equation*}
Obviously $\chi_\delta=1$ on $\dom^c$, $\chi_\delta=0$ on $\dom_{-\delta}$,
$ \|1-\chi_\delta\|_{\bd\dom}=0$, and 
$\lem(\dom\setminus\Int\dom)=0$.
Then, by Corollary~\ref{ex:dmo_woi_c}, by  
$D\chi_\delta=\frac{1}{\delta}\nu^\dom$ $\lem$-a.e. on $\supp{D\chi_\delta}$,
and by dominated convergence,
\begin{eqnarray*}
  \divv(\tf F)(\dom)
&=& 
  \divv(\chi_\delta\tf F)(\dom)\\
&=&
  \I{\dom}{\chi_\delta\tf}{\divv F} + 
  \I{\dom}{\chi_\delta\cdot D\tf}{\lem} +
  \I{\dom}{\tf F\cdot D\chi_\delta}{\lem}  \\
&=& 
  \lim_{\delta\downarrow 0} \frac{1}{\delta}
  \I{\dom_\delta}{\tf F\cdot \nu^\dom}{\lem}   \,.
\end{eqnarray*}
The second equation follows from the coarea formula. 
For the third equation we first observe that
$\dom_{-\delta}\subset \mint\,(\dom_{-\delta}) \subset \ol{\dom_{-\delta}}$
for $\delta>0$. Thus the sets $\mint\,(\dom_{-\delta})$ are increasing 
as $\delta\downarrow 0$
and $\bigcup_{\delta>0}\mint\,(\dom_{-\delta})=\dom$. 
Since $\divv(\tf F)$ is a Radon measure on~$\edom$,
\begin{equation*}
  \divv(\tf F)(\dom) = 
  \lim_{\delta\downarrow 0}\,\divv(\tf F)\big(\mint\,(\dom_{-\delta})\big) \,.
\end{equation*}
Moreover we have that 
\begin{equation*}
  \divv(\tf F)\big(\mint\,(\dom_{-\delta})\big) = 
  \I{\rbd\dom_{-\delta}}{\tf F\cdot\nu^\dom}{\lem}
  \qmq{for $\cL^1$-a.e. $\delta>0$} 
\end{equation*}
with the measure theoretic normal $\nu^\dom$ on $\rbd\dom_{-\tau}$
(cf. \cite[p.~212]{degiovanni_cauchy_1999}, \cite[p.~534]{schuricht_new_2007}).
But, for $\cL^1$-a.e. $\tau>0$, we can replace it with $\nu^\dom$ 
from \reff{nm-normal} by the arguments following \reff{nm-normal1}.
This readily gives the third equation. 

If $\dom$ is closed we have $\dom\csubset\edom$ and it is sufficient to show 
the assertion for $\tf_c$ having compact support in $\edom$. 
Since $\dom^c$ is open and since
\begin{equation*}
  0 = \divv(\tf_c F)(\edom) = \divv(\tf_c F)(\dom) + \divv(\tf_c F)(\dom^c) 
\end{equation*}
by Corollary~\ref{ex:dmo_woi_c}, we can apply the first assertion
to $\dom^c$ to get the results for $\dom$.
\end{proof}

We still provide some situation where the right hand side in
Proposition~\ref{dt-s1a} can be represented by a Radon measure supported 
on $\bd\dom$ and we give some relation to 
continuum mechanics. For that we first recall a result from 
Schuricht \cite[p.~537]{schuricht_new_2007}. 

\begin{proposition}  \label{dt-s1b}
Let $\edom \subset \Rn$ be open and bounded and let $\F\in \dmo{\edom}$.
Then there is some $h\in\cL^1_{\rm loc}(\edom)$ with
$|F|\le h$ $\lem$-a.e. on $\edom$ such that for any 
$\dom\csubset\edom$ with finite perimeter and
$\I{\mbd\dom}{h}{\hm}<\infty$ one has $\chi_\dom
F\in\cD\cM^1(\edom)$. Moreover there is some 
$g_\dom\in\cL^\infty(\edom,|\divv F|)$ with values in $[0,1]$
such that, for any $B\in\bor{\edom}$,
\begin{equation} \label{dt-s1b-1}
  \divv(\tf\chi_\dom F)(B) =
  \I{B}{g_\dom \tf}{\divv F} + \I{B}{g_\dom F\cdot D\tf}{\lem}
  - \, \I{\mbd\dom\cap B}{\tf F\cdot\nu^\dom}{\hm} 
\end{equation}
for all $\tf\in\woinf{\edom}$ and $g_\dom(x)=\dens^\edom_x(\dom)$ whenever
$\dens^\edom_x(\dom)$ exists (cf. \reff{pm-e11}).  
\end{proposition}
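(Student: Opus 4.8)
The plan is to reduce Proposition~\ref{dt-s1b} essentially to the theory of sets of finite perimeter combined with the Gauss-Green theorem for $\cD\cM^1$-fields on such sets, and then to identify the density $g_\dom$ via the density measure $\dens_x^\edom$. Since the statement is quoted here as a result from Schuricht \cite{schuricht_new_2007}, the ``proof'' should really be a proof sketch that recalls the structure of the argument there and fits it into the present framework; I will present it accordingly.

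First I would construct the dominating function $h$. The point is that $F\in\cD\cM^1(\edom)$ need not be locally bounded, but $\chi_\dom F$ will still have divergence measure provided the slicing of $F$ near $\bd\dom$ is controlled. The standard device is to choose $h\in\cL^1_{\rm loc}(\edom)$ with $|F|\le h$ $\lem$-a.e.\ (e.g.\ a suitable $\lem$-measurable representative majorant, or simply $h=|F|$ if one only wants local integrability), and then to use the coarea formula: for $\dom$ with finite perimeter the measure theoretic boundary $\mbd\dom$ carries a finite $\hm$-measure, and $\I{\mbd\dom}{h}{\hm}<\infty$ is exactly the hypothesis that makes the normal trace of $F$ on $\mbd\dom$ an $\hm$-integrable function. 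Under this hypothesis one shows $\chi_\dom F\in\cD\cM^1(\edom)$ by testing with $\tf\in C_c^\infty(\edom)$: one approximates $\chi_\dom$ by the Lipschitz cutoffs built from $\distf{\dom}$ (as in the proof of Theorem~\ref{ex:dmo_woi}), applies the product rule \reff{ex:dmo_woi-4}--\reff{ex:dmo_woi-5} for $\tf\,\chi_\dom^{(\delta)}F$, and passes to the limit $\delta\downarrow0$; the boundary contribution converges, by the coarea formula and the definition of the reduced boundary, to $-\I{\mbd\dom}{\tf F\cdot\nu^\dom}{\hm}$, which is finite precisely because $|F|\le h$ on $\mbd\dom$. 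This yields that $\divv(\chi_\dom F)$ is a (signed) Radon measure, i.e.\ $\chi_\dom F\in\cD\cM^1(\edom)$.

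Next I would establish the Gauss-Green formula \reff{dt-s1b-1}. By the product rule applied to $\chi_\dom F$ (now known to be in $\cD\cM^1$) one has $\divv(\tf\,\chi_\dom F)=\tf\,\divv(\chi_\dom F)+\chi_\dom F\cdot D\tf\,\lem$ as measures, so for $B\in\bor\edom$
\begin{equation*}
  \divv(\tf\,\chi_\dom F)(B)=\I{B}{\tf}{\divv(\chi_\dom F)}+\I{B\cap\dom}{F\cdot D\tf}{\lem}\,.
\end{equation*}
It remains to decompose the Radon measure $\divv(\chi_\dom F)$ into the part absolutely continuous with respect to $\divv F$ and the singular boundary part. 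From the construction in the previous step, $\divv(\chi_\dom F)=g_\dom\,\divv F-(F\cdot\nu^\dom)\,\reme{\hm}{\mbd\dom}$ for a Borel function $g_\dom$ with values in $[0,1]$: the bound $0\le g_\dom\le1$ comes from the fact that $g_\dom$ arises as a weak$^*$ limit of the cutoff functions $\chi_\dom^{(\delta)}$, which take values in $[0,1]$. Substituting this decomposition gives \reff{dt-s1b-1}. The identification $g_\dom(x)=\dens_x^\edom(\dom)$ at points where the density exists then follows by a differentiation argument: test \reff{dt-s1b-1} against mollifications of $\chi_{B_r(x)}$ and let $r\downarrow0$; by Proposition~\ref{pm-prop7} (and the fact that $\dens_x^\edom(\dom)=\lim_{r\downarrow0}\mI{B_r^\edom(x)}{\chi_\dom}{\lem}$ when this limit exists) one recovers $g_\dom(x)$ as that density $|\divv F|$-a.e., hence wherever it exists in the classical sense.

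The main obstacle is the first step: proving $\chi_\dom F\in\cD\cM^1(\edom)$ for possibly unbounded $F$, which requires the delicate control of the normal slices of $F$ near $\bd\dom$ and the use of the coarea formula together with $\I{\mbd\dom}{h}{\hm}<\infty$ — this is exactly where the dominating function $h$ is essential and where the argument in \cite{schuricht_new_2007} does its real work. Everything after that (the product rule, the decomposition of the divergence measure, and the identification of $g_\dom$ with the density measure via Proposition~\ref{pm-prop7}) is comparatively routine. I would therefore present the construction of $h$ and the finiteness of the boundary integral carefully, and merely indicate the passage-to-the-limit and density-identification steps, citing \cite[p.~537]{schuricht_new_2007} for the full details.
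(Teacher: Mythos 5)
The paper gives no proof of Proposition~\ref{dt-s1b} at all: it is explicitly ``recalled'' from Schuricht \cite[p.~537]{schuricht_new_2007}, so your decision to present a structural sketch and defer the hard analysis to that reference matches the paper's own treatment, and your overall architecture (construct $h$, show $\chi_\dom F\in\cD\cM^1(\edom)$ by cutoff approximation, apply the product rule, split $\divv(\chi_\dom F)$ into an absolutely continuous part $g_\dom\,\divv F$ and a surface part, identify $g_\dom$) is the right one.

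Two points in the sketch would not survive being written out, however. First, the parenthetical ``or simply $h=|F|$'' cannot stand: $|F|$ is only an $\lem$-equivalence class and $\mbd\dom$ is $\lem$-null, so the hypothesis $\I{\mbd\dom}{h}{\hm}<\infty$ is meaningless for an arbitrary representative. The nontrivial content of the proposition is precisely the existence of one specific, pointwise-defined Borel majorant $h$ for which the implication holds for \emph{every} admissible $\dom$ simultaneously; in \cite{schuricht_new_2007} this is built from the precise representative together with a coarea/Fubini argument showing that the family of sets with $\I{\mbd\dom}{h}{\hm}<\infty$ is rich (almost all sublevel sets of distance functions belong to it). You do flag this as the crux, but the suggestion that $h$ can be chosen naively undercuts the statement. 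Second, the identification $g_\dom(x)=\dens^\edom_x(\dom)$ does not follow from a ``differentiation argument'' applied to \reff{dt-s1b-1}: that would only recover $g_\dom$ as a Radon--Nikodym density with respect to $\divv F$, whereas $\dens^\edom_x(\dom)$ is a density with respect to $\lem$, and these measures are in general mutually singular. Moreover the distance-function cutoffs you invoke in the first step converge pointwise to $\chi_{\Int\dom}$, which differs from $\dens^\edom_x(\dom)$ on $\bd\dom$, exactly where the identification matters. The correct mechanism is to take symmetric mollifications $\chi_\dom*\eta_{\eps_k}$ as the approximating sequence: they take values in $[0,1]$ (whence the bound on $g_\dom$), converge weakly$^*$ in $\cL^\infty(\edom,|\divv F|)$ to $g_\dom$, and converge pointwise to $\dens^\edom_x(\dom)$ at every point where that density exists, so dominated convergence in the weak$^*$ pairing forces $g_\dom$ to agree with the density there.
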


\begin{remark} \label{dt-s1c}
(1) For $F$ and $\dom$ as in Proposition~\ref{dt-s1b}, 
Theorem~\ref{ex:dmo_woi} gives
\begin{equation*}
  \divv(\tf\chi_{\dom} F)(\edom)=0 \qmq{for all}
  \tf\in\woinf{\edom}\,,
\end{equation*}
since we can change $\tf$ outside $\dom$ to have compact support in $\edom$. 
Then, using the disjoint decomposition 
$\R^n=\mint\dom\cup\mbd\dom\cup\mext\dom$, we directly get from 
\reff{dt-s1b-1} that
\begin{equation*}
  \I{\mint\dom}{\tf}{\divv F} + 
  \I{\mint\dom}{F\cdot D\tf}{\lem} =
  \I{\mbd\dom}{\tf F\cdot\nu^\dom}{\hm} -
  \I{\mbd\dom}{g_\dom\tf}{\divv F} \,.
\end{equation*}
For $\dom=\mint\dom$ we thus have that $\divv(\tf F)(\dom)$ is related to a
Radon measure supported on $\mbd\dom$. 
Notice that the result covers cases where $\divv F$ doesn't vanish
on~$\bd\dom$. If $F\in\cD\cM^\infty(\edom)$
this is true 
with $g_\dom=\frac12$,
since $\divv F\wac\hm$ and $g_\dom=\frac12$ for $\hm$-a.e. point on 
$\mbd\dom$, 
and the related Radon measure has an $\hm$-integrable density on $\mbd\dom$.

(2) The Gauss-Green formula plays also an 
important role for contact interactions in continuum mechanics. 
Let $\edom$ be related to a continuous body, 
let $F$ and $\dom=\mint\dom$ be as in
Proposition~\ref{dt-s1b}, and let $B\subset(\mint\dom)^c$ be a Borel set.
Then we directly get
\begin{equation*}
  \divv(\chi_\dom F)(B) = \I{\mbd\dom}{g_\dom}{\divv F} -
  \I{\mbd\dom}{F\cdot\nu^\dom}{\hm}. 
\end{equation*}
This gives the action exerted from the subbody related to $\dom$ to the 
subbody related to~$B$. Analogously as above, this covers 
cases where $\divv F$ doesn't vanish on~$\bd\dom$ and we can specialize it for 
essentially bounded $F$
(cf. Schuricht \cite[p.~536]{schuricht_new_2007},  
Chen-Torres-Ziemer \cite[p.~298-291]{chen_gauss-green_2009},
Chen-Comi-Torres \cite[p.~157]{chen-comi}).
\end{remark}

Let us now characterize the special cases where $\mF=0$ is possible and 
where the measures $\lF$, $\mF$ can be chosen independent of $\delta$
in \reff{dt-s1-1}. 

\begin{proposition} \label{dt-s2}
Let $\edom \subset \Rn$ be open and bounded, let $\dom \in \bor{\edom}$, and 
assume that $\F\in \dmo{\edom}$. 
\bgl
\item
In Theorem~{\rm \ref{dt-s1}} we can choose $\lF$, $\mF$ with
$\cor{\lF}$, $\cor{\mF} \subset \bd\dom$, i.e. independent of $\delta$,  
if and only if 
\begin{equation} \label{dm-monster}
  \liminf\limits_{\delta\downarrow 0} 
  \sup_{\substack{\tf \in \woinf{\edom} \\ 
        \|\tf_{|(\bd\dom)_\delta\cap\edom}\|_{\cW^{1,\infty}}\le 1}} 
  \divv{(\port{\bd\dom}_\delta\tf\F)}(\dom) < \infty 
\end{equation}
with $\port{\bd\dom}_\delta$ as in \reff{eq:port_fun}. In this case 
\reff{dt-s1-1} becomes 
\begin{equation}\label{dt-s2-1}
   \divv(\tf\F)(\dom) = \sI{\bd\dom}{\tf}{\lF} + 
            \sI{\bd\dom}{D\tf}{\mF} \,.  
\end{equation}

\item
In Theorem~{\rm \ref{dt-s1}} we can choose $\mF=0$
for $\delta>0$ 
if and only if 
\begin{equation}\label{dm-special}
\sup_{\substack{\tf\in\woinf{\edom}\\
      \|\tf_{|(\bd\dom)_\delta\cap\edom}\|_{\cL^\infty}\le 1}} 
  \divv{(\tf\F)}(\dom) < \infty \,. 
\end{equation}

\item
In Theorem~{\rm \ref{dt-s1}} we can take $\mF=0$ and $\lF$ with
$\cor{\lF}\subset \bd\dom$ if and only if
\begin{equation}\label{dm-special1}
  \liminf_{\delta\downarrow 0}
  \sup_{\substack{\tf\in\woinf{\edom}\\
        \|\tf_{|(\bd\dom)_\delta\cap\edom}\|_{\cL^\infty}\le 1}} 
  \divv{(\tf\F)}(\dom) < \infty \,. 
\end{equation}
\el
\end{proposition}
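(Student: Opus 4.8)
The plan is to derive all three statements from the abstract representation results in Section~\ref{tt-r} by feeding them the concrete trace $TF$ from Theorem~\ref{ex:dmo_woi}. The key observation is that $TF\in\woinf{\edom}^*$ is a trace on $\K=\bd\dom$, and that $\df{TF}{\tf}=\divv(\tf F)(\dom)$, so that $\df{TF}{\port{\bd\dom}_\delta\tf}=\divv(\port{\bd\dom}_\delta\tf\,F)(\dom)$. Thus the abstract finiteness conditions \reff{eq:monster}, \reff{eq:special}, \reff{eq:special-1} for $f^*=TF$ on $\K=\bd\dom$ translate verbatim into \reff{dm-monster}, \reff{dm-special}, \reff{dm-special1}, respectively, with $m=1$. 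The proofs are therefore essentially a bookkeeping exercise of matching hypotheses, with the representations of $\lF$, $\mF$ obtained by specializing the corresponding measures $\ome$, $\me$.

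For (1): I would observe that by Theorem~\ref{ex:dmo_woi} the functional $TF$ is a trace on $\bd\dom$ over $\woinf{\edom}$, and that $\df{TF}{\port{\bd\dom}_\delta\tf}=\divv(\port{\bd\dom}_\delta\tf\,F)(\dom)$ for all $\tf$. Hence \reff{dm-monster} is exactly the condition \reff{eq:monster} of Proposition~\ref{prop:special-a} applied to $f^*=TF$ and $\K=\bd\dom$. Proposition~\ref{prop:special-a} then yields measures $\lF:=\ome$, $\mF:=\me$ with $\cor{\lF},\cor{\mF}\subset\bd\dom$, independent of $\delta$, and the representation $\df{TF}{\tf}=\sI{\bd\dom}{\tf}{\lF}+\sI{\bd\dom}{D\tf}{\mF}$, which is \reff{dt-s2-1}; the converse direction of Proposition~\ref{prop:special-a} gives the "only if" part. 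For (2): the condition \reff{dm-special} is exactly \reff{eq:special} of Proposition~\ref{prop:special-b}(1) for $f^*=TF$, $\K=\bd\dom$, $\delta>0$, using again $\df{TF}{\tf}=\divv(\tf F)(\dom)$; Proposition~\ref{prop:special-b}(1) then produces $\ome=:\lF$ with $\cor{\lF}\subset\ol{(\bd\dom)_\delta\cap\edom}$ and $\df{TF}{\tf}=\I{(\bd\dom)_\delta\cap\edom}{\tf}{\lF}$, i.e. $\mF=0$ in \reff{dt-s1-1}. For (3): \reff{dm-special1} is \reff{eq:special-1} of Proposition~\ref{prop:special-b}(2), which gives $\mF=0$ together with $\lF$ satisfying $\cor{\lF}\subset\bd\dom$ and $\df{TF}{\tf}=\sI{\bd\dom}{\tf}{\lF}$.

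The one subtlety worth spelling out, rather than just invoking the abstract propositions, is the appearance of $\port{\bd\dom}_\delta$ in \reff{dm-monster} versus plain $\tf$ in \reff{dm-special}, \reff{dm-special1}: in the $\cW^{1,\infty}$-bounded conditions one must truncate near the boundary (since $\tf$ bounded in $\cW^{1,\infty}$ over $(\bd\dom)_\delta\cap\edom$ need not extend boundedly to all of $\edom$), while in the $\cL^\infty$-bounded conditions the supremum is already over all $\tf\in\woinf{\edom}$ with $\|\tf_{|(\bd\dom)_\delta\cap\edom}\|_{\cL^\infty}\le 1$, matching \reff{eq:special}, \reff{eq:special-1} directly. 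Since $TF$ is a trace, $\df{TF}{\tf}=\df{TF}{\port{\bd\dom}_\delta\tf}$ by Proposition~\ref{prop:dual0} (cf. \reff{prop:dual0-5}), so there is no real discrepancy; I would just record this equality once at the start. The main obstacle, such as it is, is purely notational: one must be careful that Theorem~\ref{prop:dual} and its corollaries were stated for vector-valued $f^*\in\woinf{\edom,\R^m}^*$ with the particular cases (L), (C) requiring $m=1$, whereas here we only ever need $m=1$, so the specialization is legitimate and the scalar measures $\lF$, $\mF$ are exactly the objects delivered. I would close by noting that, as stated after \reff{eq:special-1}, condition \reff{dm-special1} implies \reff{dm-monster}, so (3) is a genuine strengthening of the conclusion of (1) under a stronger hypothesis, consistent with the logical layout.

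\begin{proof}
By Theorem~\ref{ex:dmo_woi}, $TF\in\woinf{\edom}^*$ is a trace on $\bd\dom$ over $\woinf{\edom}$ with $\df{TF}{\tf}=\divv(\tf F)(\dom)$ for all $\tf\in\woinf{\edom}$; in particular $\df{TF}{\port{\bd\dom}_\delta\tf}=\divv(\port{\bd\dom}_\delta\tf\,F)(\dom)$ for every $\delta>0$. We apply the representation results of Section~\ref{tt-r} with $f^*=TF$, $\K=\bd\dom$, and $m=1$.

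\emph{(1)} Condition \reff{dm-monster} is precisely condition \reff{eq:monster} of Proposition~\ref{prop:special-a} for $f^*=TF$. Hence, by Proposition~\ref{prop:special-a}, we may choose $\ome,\me$ in \reff{prop:dual-3} independent of $\delta$ with $\cor{\ome},\cor{\me}\subset\bd\dom$, and setting $\lF:=\ome$, $\mF:=\me$ gives \reff{dt-s2-1}. Conversely, if $\lF,\mF$ in Theorem~\ref{dt-s1} can be chosen with $\cor{\lF},\cor{\mF}\subset\bd\dom$, then by the reverse implication in Proposition~\ref{prop:special-a} the representing measures $\ome=\lF$, $\me=\mF$ satisfy \reff{eq:monster}, which is \reff{dm-monster}.

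\emph{(2)} Fix $\delta>0$. Condition \reff{dm-special} is exactly condition \reff{eq:special} of Proposition~\ref{prop:special-b}(1) for $f^*=TF$. Hence, by that proposition, we may choose $\me=0$ in \reff{prop:dual-3} with $\cor{\ome}\subset\ol{(\bd\dom)_\delta\cap\edom}$; setting $\lF:=\ome$, $\mF:=0$ yields \reff{dt-s1-1} with $\mF=0$. The converse follows likewise from Proposition~\ref{prop:special-b}(1).

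\emph{(3)} Condition \reff{dm-special1} is condition \reff{eq:special-1} of Proposition~\ref{prop:special-b}(2) for $f^*=TF$. Hence we may choose $\me=0$ and $\ome$ with $\cor{\ome}\subset\bd\dom$ such that $\df{TF}{\tf}=\sI{\bd\dom}{\tf}{\ome}$; setting $\lF:=\ome$, $\mF:=0$ gives the claim, and the converse follows as before.
\end{proof}
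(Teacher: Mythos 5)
Your proposal is correct and follows exactly the route the paper takes: the paper's own proof is the one-line observation that (1) is a direct consequence of Proposition~\ref{prop:special-a} and (2), (3) follow directly from Proposition~\ref{prop:special-b}, applied to the trace $f^*=TF$ on $\K=\bd\dom$. Your additional bookkeeping (the identity $\df{TF}{\port{\bd\dom}_\delta\tf}=\divv(\port{\bd\dom}_\delta\tf\,F)(\dom)$ and the specialization to $m=1$) is accurate and merely makes explicit what the paper leaves implicit.
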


\noindent
Notice that \reff{dm-monster} just means that the trace functional
$\tf\to\divv(\tf F)(\dom)$ is finite.

\begin{proof}   
Let $TF$ be as in Theorem~\ref{dt-s1}. 
Then (1) is a direct consequence of Proposition~\ref{prop:special-a} and
(2), (3) directly follow from Proposition~\ref{prop:special-b}.
\end{proof}

\noindent
We still provide some equivalent conditions.

\begin{lemma}\label{lem:monster2}
Let $\edom \subset \Rn$ be open and bounded, let $\dom\in\bor{\edom}$, and 
assume that $\F\in \dmo{\edom}$. Then \reff{dm-monster} is equivalent to
each of the following two conditions:
\bgl
\item
\begin{equation}\label{eq:monster_ub}
  \liminf \limits_{\delta\downarrow 0}  
  \sup_{\substack{\tf \in \woinf{\edom} \\ 
        \|\tf_{|(\bd\dom)_\delta\cap\edom}\|_{\cW^{1,\infty}}\le 1}}
  \I{(\bd{\dom})_\delta \cap \dom}{\tf\F D\port{\bd\dom}_\delta}{\lem} 
  < \infty \,,
\end{equation}
\item
\begin{equation*}
  \liminf\limits_{\delta\downarrow 0}  
  \sup_{\substack{\tf \in \woinf{\edom} \\ 
        \|\tf_{|(\bd\dom)_\delta\cap\edom}\|_{\cW^{1,\infty}}\le 1}}
     \; \frac{1}{\delta}\I{(\dnhd{(\bd{\dom})}{\delta}\setminus
        \dnhd{(\bd{\dom})}{\frac{\delta}{2}})\cap \dom}{\tf F
        \Deriv{\distf{\bd{\dom}}}}{\lem} < \infty \,. 
\end{equation*}
\el
Moreover, $\op{int}\dom=\emptyset$ implies \reff{dm-monster}.
\end{lemma}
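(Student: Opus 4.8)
The plan is to isolate, inside $\divv(\port{\bd\dom}_\delta\tf\F)(\dom)$, the single summand carrying the $\delta$-dependence — the one involving $D\port{\bd\dom}_\delta$ — by showing the remaining part is bounded uniformly in $\delta$ and $\tf$, and then to identify that summand with the integrals appearing in \reff{eq:monster_ub} and in part~(2).

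First I would apply \reff{ex:dmo_woi-1} of Theorem~\ref{ex:dmo_woi} to the test function $\port{\bd\dom}_\delta\tf\in\woinf{\edom}$ and use the product rule $D(\port{\bd\dom}_\delta\tf)=\port{\bd\dom}_\delta D\tf+\tf D\port{\bd\dom}_\delta$ together with $\supp\port{\bd\dom}_\delta\subset\ol{(\bd\dom)_\delta}$ to write
\[
\divv(\port{\bd\dom}_\delta\tf\F)(\dom)=\I{\dom}{\port{\bd\dom}_\delta\tf}{\divv\F}+\I{\dom}{\port{\bd\dom}_\delta D\tf\cdot\F}{\lem}+C_\delta(\tf),\qquad C_\delta(\tf):=\I{(\bd\dom)_\delta\cap\dom}{\tf\,\F\cdot D\port{\bd\dom}_\delta}{\lem}.
\]
Since $0\le\port{\bd\dom}_\delta\le1$ and $\port{\bd\dom}_\delta$ is supported on $\ol{(\bd\dom)_\delta}$, for every $\tf$ with $\|\tf_{|(\bd\dom)_\delta\cap\edom}\|_{\cW^{1,\infty}}\le1$ the first two summands are bounded in modulus by $C_0:=|\divv\F|(\edom)+\I{\edom}{|\F|}{\lem}<\infty$, uniformly in $\delta$ and $\tf$; note also that only the values of $\tf$ and $D\tf$ on $(\bd\dom)_\delta\cap\dom\subset(\bd\dom)_\delta\cap\edom$ enter. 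As the admissible set of $\tf$ is symmetric under $\tf\mapsto-\tf$, the suprema in \reff{dm-monster}, in \reff{eq:monster_ub} and in~(2) all equal the suprema of the corresponding moduli, and the displayed identity shows that the suprema over $\tf$ of $\divv(\port{\bd\dom}_\delta\tf\F)(\dom)$ and of $C_\delta(\tf)$ differ by at most $C_0$ for every $\delta>0$. Taking $\liminf_{\delta\downarrow0}$ yields \reff{dm-monster}$\Leftrightarrow$\reff{eq:monster_ub}.

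Next I would differentiate $\port{\bd\dom}_\delta$: by \reff{eq:port_fun} it is constant on $(\bd\dom)_{\delta/2}$ and off $(\bd\dom)_\delta$, so $D\port{\bd\dom}_\delta=0$ $\lem$-a.e. there (cf. \cite[p.~130]{evans}), while on $(\bd\dom)_\delta\setminus(\bd\dom)_{\delta/2}$ one has $\port{\bd\dom}_\delta=2-\tfrac{2}{\delta}\distf{\bd\dom}$, hence $D\port{\bd\dom}_\delta=-\tfrac{2}{\delta}D\distf{\bd\dom}$ $\lem$-a.e.; consequently
\[
C_\delta(\tf)=-\frac{2}{\delta}\I{((\bd\dom)_\delta\setminus(\bd\dom)_{\delta/2})\cap\dom}{\tf\,\F\cdot D\distf{\bd\dom}}{\lem}.
\]
Using the symmetry of the $\tf$-set once more to absorb the factor $-2$, the supremum over $\tf$ of $C_\delta(\tf)$ is finite precisely when $\tfrac{1}{\delta}\sup_\tf\I{((\bd\dom)_\delta\setminus(\bd\dom)_{\delta/2})\cap\dom}{\tf\,\F\cdot D\distf{\bd\dom}}{\lem}$ is, which gives \reff{eq:monster_ub}$\Leftrightarrow$(2).

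Finally, for the last assertion: if $\op{int}\dom=\emptyset$ then $\dom=\dom\setminus\op{int}\dom\subset\bd\dom\subset(\bd\dom)_{\delta/2}$ for every $\delta>0$, so $\big((\bd\dom)_\delta\setminus(\bd\dom)_{\delta/2}\big)\cap\dom=\emptyset$ and the integral in~(2) vanishes for all $\delta$ and $\tf$; hence the $\liminf$-$\sup$ in~(2) equals $0<\infty$, and by the equivalences just established \reff{dm-monster} holds. The only genuinely delicate point is the uniform-in-$(\delta,\tf)$ bound $C_0$ on the two non-leading terms, together with the observation that the constraint set on $\tf$ is symmetric so the suprema may be replaced by suprema of absolute values; everything else is direct substitution using \reff{eq:port_fun} and \reff{ex:dmo_woi-1}.
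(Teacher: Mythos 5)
Your proposal is correct and follows essentially the same route as the paper: decompose $\divv(\port{\bd\dom}_\delta\tf F)$ via the product rule into the two uniformly bounded terms plus the term $\I{(\bd\dom)_\delta\cap\dom}{\tf F\cdot D\port{\bd\dom}_\delta}{\lem}$, identify $D\port{\bd\dom}_\delta=-\tfrac{2}{\delta}\chi_{(\bd\dom)_\delta\setminus(\bd\dom)_{\delta/2}}D\distf{\bd\dom}$, and observe that $\op{int}\dom=\emptyset$ forces the relevant integral to vanish. Your explicit symmetry remark and the clean set-theoretic argument $\dom\subset\bd\dom\subset(\bd\dom)_{\delta/2}$ for the last assertion are minor elaborations of what the paper leaves implicit, not a different method.
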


\begin{proof}    
By \reff{ex:dmo_woi-4} and \reff{ex:dmo_woi-5} we have 
for $\tf\in\woinf{\edom}$ that
\begin{equation*}
  \divv(\port{\bd\dom}_\delta \tf F) = 
  \port{\bd\dom}_\delta\tf\divv F + 
  \port{\bd\dom}_\delta F D\tf \lem + 
  \tf F D\port{\bd\dom}_\delta \lem
\end{equation*}
as measures on $\edom$. Obviously
\begin{equation*}
  \big|\port{\bd\dom}_\delta\tf\divv F\big|(\dom) \qmq{and}
  \big|\port{\bd\dom}_\delta F D\tf \lem \big|(\dom) 
\end{equation*}
are uniformly bounded for $\delta>0$ and 
$\|\tf_{|(\bd\dom)_\delta\cap\edom}\|_{\cW^{1,\infty}}\le 1$. Since 
\begin{equation*}
  (\tf F D\port{\bd\dom}_\delta \lem)(\dom)= 
 \I{(\bd{\dom})_\delta \cap \dom}{\tf\F D\port{\bd\dom}_\delta}{\lem}\,,
\end{equation*}
\reff{dm-monster} is equivalent to \reff{eq:monster_ub}.
For the second condition we use that
\begin{equation*}
  D \port{\bd\dom}_\delta = 
  -\frac{2}{\delta} 
 \ind{\dnhd{(\bd{\dom})}{\delta}\setminus\dnhd{(\bd{\dom})}{\frac{\delta}{2}}}  
 D \distf{\bd{\dom}}\,.
\end{equation*}
If  $\op{int}\dom=\emptyset$, then $D \port{\bd\dom}_\delta = 0$ for all $x$
and this readily implies \reff{eq:monster_ub}.
\end{proof}

Let us still give some sufficient conditions that are useful for
applications. 

\begin{proposition}\label{dt-s4}
Let $\edom \subset \Rn$ be an open and bounded set, 
let $\dom\in\bor{\edom}$ with $\lem(\dom\setminus\Int\dom)=0$,
and assume that $\F\in \dmo{\edom}$. If
\begin{equation} \label{dt-s4-1}
  \liminf\limits_{\delta\downarrow 0} 
  \I{\dom}{\big|\F D\port{\bd\dom}_\delta\big|}{\lem} < \infty \,,
\end{equation}
then we can choose $\mF=0$ and $\lF$ with $\cor{\lF}\subset \bd\dom$
in Theorem~{\rm \ref{dt-s1}}. We have \reff{dt-s4-1} if
$F$ is bounded and if there is some $\tilde\delta>0$ such that
\begin{equation}\label{dt-s4-2}
  \sup_{\delta\in(0,\tilde\delta)}  
  \cH^{n-1}(\bd\dom_{-\delta})  < \infty \,.
\end{equation}
If $\dom$ has finite perimeter and if there are $c>0$,
$r>0$ such that
\begin{equation}\label{dt-s4-3}
  \frac{\lem(B_\delta(x)\cap(\ol\dom)^c)}{\lem(B_\delta(x))} \ge c \qmq{for all}
  x\in\bd\dom\,, \; \delta\in(0,r) \,,
\end{equation}
then \reff{dt-s4-2} is satisfied.
\end{proposition}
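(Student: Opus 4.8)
The plan is to verify the chain of implications
$\reff{dt-s4-3}\Rightarrow\reff{dt-s4-2}\Rightarrow\reff{dt-s4-1}\Rightarrow$
(conclusion), the last implication being essentially immediate from
Proposition~\ref{dt-s2}(3) via Lemma~\ref{lem:monster2}. Let me describe each link in turn.

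\textbf{Step 1: $\reff{dt-s4-1}$ gives the conclusion.}
Since $\lem(\dom\setminus\Int\dom)=0$, the product rule
\reff{ex:dmo_woi-5} shows that for any $\tf\in\woinf{\edom}$ with
$\|\tf_{|(\bd\dom)_\delta\cap\edom}\|_{\cW^{1,\infty}}\le 1$ we have
$\big|\divv(\port{\bd\dom}_\delta\tf F)(\dom)\big|$ bounded by
$|\divv F|(\dom\cap(\bd\dom)_\delta) + \|F\|_1$ plus the term
$\int_\dom|\tf F\cdot D\port{\bd\dom}_\delta|\,d\lem
\le \int_\dom|F\,D\port{\bd\dom}_\delta|\,d\lem$ (recall
$\|\tf\|_\infty\le1$ near $\bd\dom$ and $D\port{\bd\dom}_\delta$ is
supported there). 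Taking $\liminf_{\delta\downarrow0}$ and invoking
\reff{dt-s4-1}, condition \reff{dm-monster} holds; but in fact the same estimate
with $\divv F$ and $F D\tf$ controlled in $\cL^\infty$-dual norm shows
\reff{dm-special1}, so Proposition~\ref{dt-s2}(3) applies and yields $\mF=0$,
$\cor{\lF}\subset\bd\dom$. (Here one uses that $\port{\bd\dom}_\delta\tf\divv F$
and $\port{\bd\dom}_\delta FD\tf\,\lem$ contribute $\cL^\infty$-bounded
functionals of $\tf$ on $(\bd\dom)_\delta\cap\edom$, the bound being
$|\divv F|(\dom)+\|F\|_1$, while the third term is the one requiring
\reff{dt-s4-1}.)

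\textbf{Step 2: $\reff{dt-s4-2}\Rightarrow\reff{dt-s4-1}$ for bounded $F$.}
From \reff{eq:port_fun},
$D\port{\bd\dom}_\delta = -\tfrac2\delta\,
\ind{(\bd\dom)_\delta\setminus(\bd\dom)_{\delta/2}}\,D\distf{\bd\dom}$,
and $|D\distf{\bd\dom}|=1$ $\lem$-a.e. outside $\bd\dom$. Hence
$\int_\dom|FD\port{\bd\dom}_\delta|\,d\lem
\le \tfrac{2}{\delta}\|F\|_\infty\,
\lem\big(\dom\cap\big((\bd\dom)_\delta\setminus(\bd\dom)_{\delta/2}\big)\big)$.
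Now $\dom\cap(\bd\dom)_\delta$ differs from $\dom$ only near $\bd\dom$; using
$\lem(\dom\setminus\Int\dom)=0$ and the signed distance function one writes
$\dom\cap\big((\bd\dom)_\delta\setminus(\bd\dom)_{\delta/2}\big)$ as (up to a
null set) $\{x\in\dom:\ \delta/2<\dist{\bd\dom}{x}<\delta\}
=\dom_{-\delta/2}\setminus\dom_{-\delta}$ in the interior part, whose Lebesgue
measure is, by the coarea formula applied to $\distf{\bd\dom}$, equal to
$\int_{\delta/2}^{\delta}\cH^{n-1}(\bd\dom_{-\tau})\,d\tau
\le \tfrac{\delta}{2}\sup_{\tau\in(0,\tilde\delta)}\cH^{n-1}(\bd\dom_{-\tau})$
for $\delta<\tilde\delta$. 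Therefore the integral in \reff{dt-s4-1} is bounded
by $\|F\|_\infty\sup_{\tau\in(0,\tilde\delta)}\cH^{n-1}(\bd\dom_{-\tau})$,
uniformly in $\delta$, which gives \reff{dt-s4-1}. The main technical care here
is matching $(\bd\dom)_{-\tau}$-type sublevel sets of the signed distance
function with the sublevel sets of $\distf{\bd\dom}$ used in
$\port{\bd\dom}_\delta$; this is the step where one must be careful about which
"side" of $\bd\dom$ is being integrated over, but it is routine once one notes
$\Int\dom$ carries all the $\lem$-mass of $\dom$.

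\textbf{Step 3: $\reff{dt-s4-3}\Rightarrow\reff{dt-s4-2}$.}
This is the step I expect to be the real obstacle, and it is a density/relative
isoperimetric argument. The hypothesis \reff{dt-s4-3} is a uniform lower bound
on the exterior density of $\dom$ at every boundary point. The goal is a uniform
bound on $\cH^{n-1}(\bd\dom_{-\delta})$ for small $\delta$. One natural route:
the coarea formula gives
$\int_0^{\tilde\delta}\cH^{n-1}(\bd\dom_{-\tau})\,d\tau
= \lem(\dom\setminus\dom_{-\tilde\delta})$, so on average the perimeters are
bounded; to upgrade this to a pointwise-in-$\delta$ bound one combines the
finite-perimeter hypothesis with \reff{dt-s4-3}. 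Concretely: since $\dom$ has
finite perimeter, $\bd^*\dom=\mbd\dom$ carries $\hm$, and for $x\in\bd\dom$ the
density bound \reff{dt-s4-3} forces (via the relative isoperimetric inequality
on balls $B_\delta(x)$) a lower bound
$\op{Per}(\dom;B_\delta(x))\ge c'\delta^{n-1}$ with $c'=c'(c,n)>0$. A Vitali-type
covering of $\bd\dom$ by such balls, together with the finiteness of
$\op{Per}(\dom)$, then bounds $\cH^{n-1}(\bd\dom)$ and, more to the point,
bounds $\cH^{n-1}(\bd\dom_{-\delta})$ uniformly: one shows each $\bd\dom_{-\delta}$
is covered by controllably many balls on which $\dom_{-\delta}$ inherits a
comparable relative isoperimetric constant, using that $\dom_{-\delta}\subset\dom$
and that passing to $\dom_{-\delta}$ cannot increase exterior density at its own
boundary points below the threshold $c$ for $\delta$ small. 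Alternatively, and
perhaps more cleanly, one invokes the known fact (e.g.\ from the theory of sets
with uniform density estimates, cf.\ Maggi's book cited in the paper) that sets
satisfying a uniform exterior-density lower bound have the property that the
inner parallel sets $\dom_{-\delta}$ have perimeter uniformly bounded by a
constant times $\op{Per}(\dom)$ for $\delta$ in a fixed small range. I would cite
\cite{maggi} for the relative isoperimetric inequality and for the density-estimate
machinery, and fill in the covering argument. If a fully self-contained proof is
wanted, the Vitali covering plus relative isoperimetric inequality is the way to
go; the delicate point is ensuring the density threshold $c$ is not destroyed
when replacing $\dom$ by $\dom_{-\delta}$, which holds because for $x\in\bd\dom_{-\delta}$
one has $\dist{\bd\dom}{x}=\delta$ and hence $B_{\delta'}(x)\subset\dom^c$-neighbourhood
estimates degrade only by an amount $\to0$ as $\delta'/\delta\to0$, so for small
$\delta'$ comparable to $\delta$ a fixed fraction of \reff{dt-s4-3} survives.

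Assembling Steps 1--3 gives the chain
$\reff{dt-s4-3}\Rightarrow\reff{dt-s4-2}\Rightarrow\reff{dt-s4-1}\Rightarrow$
(the conclusion that $\mF=0$ and $\cor{\lF}\subset\bd\dom$ are possible in
Theorem~\ref{dt-s1}), which is exactly the assertion of the proposition.
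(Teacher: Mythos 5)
Your overall architecture (reduce to Proposition~\ref{dt-s2}\,(3), then check \reff{dt-s4-2}$\Rightarrow$\reff{dt-s4-1} by coarea, then \reff{dt-s4-3}$\Rightarrow$\reff{dt-s4-2}) is the paper's, and your Step~2 is essentially identical to the paper's argument and correct. But there are two genuine gaps.

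In Step~1, the assertion that ``the same estimate with $\divv F$ and $F\cdot D\tf$ controlled in $\cL^\infty$-dual norm shows \reff{dm-special1}'' does not hold as written. For a \emph{fixed} $\delta>0$, the term $\I{\dom}{\port{\bd\dom}_\delta F\cdot D\tf}{\lem}$ is not an $\cL^\infty$-bounded functional of $\tf_{|(\bd\dom)_\delta\cap\edom}$ with bound $\|F\|_1$: the supremum in \reff{dm-special1} only constrains $\|\tf\|_{\cL^\infty}$ near $\bd\dom$, so $D\tf$ may be made arbitrarily large on $(\bd\dom)_{\delta/2}\cap\dom$, where $\port{\bd\dom}_\delta\equiv 1$, and the term blows up unless $F$ vanishes there. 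The paper's fix is to keep $\tf$ fixed, use $\divv(\tf F)(\dom)=\divv(\port{\bd\dom}_{\delta_k}\tf F)(\dom)$ along a sequence $\delta_k\downarrow 0$ realizing the liminf in \reff{dt-s4-1}, and pass to the limit $k\to\infty$: since $\port{\bd\dom}_{\delta_k}\to\chi_{\bd\dom}$ pointwise and $\lem(\dom\setminus\Int\dom)=0$, dominated convergence sends $\I{\dom}{\port{\bd\dom}_{\delta_k}F\cdot D\tf}{\lem}$ to $0$, leaving a bound $\big(|\divv F|(\bd\dom)+c\big)\,\|\tf_{|(\bd\dom)_{\delta_k}\cap\edom}\|_{\cL^\infty}$. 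So the hypothesis $\lem(\dom\setminus\Int\dom)=0$ is exactly what kills the $D\tf$-dependence \emph{in the limit}; you invoke it at the start of Step~1 but never actually use it for this purpose.

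In Step~3, the implication \reff{dt-s4-3}$\Rightarrow$\reff{dt-s4-2} is not established. The relative isoperimetric inequality yields $\op{Per}(\dom;B_\delta(x))\gtrsim\delta^{n-1}$ only from a \emph{two-sided} density bound, whereas \reff{dt-s4-3} is purely exterior: $\dom$ may have outward cusps at which the interior density vanishes, and the inner parallel set $\dom_{-\delta}$ can likewise be arbitrarily thin near points of its boundary, so neither the isoperimetric lower bound nor the ``inherited threshold'' claim goes through. Moreover, even a bound on $\cH^{n-1}(\bd\dom)$ would not control $\cH^{n-1}(\bd\dom_{-\delta})$ uniformly in $\delta$; the paper explicitly notes that for sets of finite perimeter this quantity can scale like $\delta^{-s}$. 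The paper's proof of this step is a direct application of Theorems~3 and~4 of Kraft \cite{kraft_measure-theoretic_2016} to the open set $(\ol\dom)^c$, which by \reff{dt-s4-3} has $(r,c)$-uniform lower density on $\bd\dom$; this gives $\cH^{n-1}(\bd\dom_{-\delta})\le c_1\,\lem(\dom\setminus\dom_{-\delta})/\delta\le c_2\op{Per}(\dom)$, a mechanism genuinely different from (and stronger than) the Vitali-plus-isoperimetry sketch. Citing that result is the intended proof; your proposed self-contained fallback would not close the argument.
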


\noindent
Notice that $\bd\dom_{-\delta}$ has finite perimeter for all $\delta>0$
(cf. \cite[p.~2788]{kraft_measure-theoretic_2016}.
However, even if $\dom$ has finite perimeter, the perimeters
$\cH^{n-1}(\bd\dom_{-\delta})$ might not be bounded uniformly in $\delta$
(cf. \cite[p.~2781]{kraft_measure-theoretic_2016}).
The sufficient condition \reff{dt-s4-3} is a uniform lower
bound for the density of the exterior of $\dom$ at $x\in\bd\dom$.
It in particular excludes points $x\in\bd\dom$ where this density vanishes 
(as, e.g., for $x$ at an inward cusp of $\dom$ or for inner boundary points 
$x\in\op{int}\ol\dom$). But \reff{dt-s4-3} is obviously met if
$\dom$ has Lipschitz boundary. In this case we can also continuously extend 
all $\tf\in\woinf{\edom}$ up to $\bd\dom$ such that $\lF$ can be considered as
Radon measure supported on $\bd\dom$. Let us still refer to 
\cite[p.~449]{silhavy_divergence_2009} where it is shown that a 
condition similar to \reff{dt-s4-1} allows the representation of
$\divv(\tf F)(\dom)$ by a $\sigma$-measure on $\bd\dom$ for open and bounded
$\dom$ and $\tf$ that are Lipschitz continuous on $\R^n$.

\begin{proof}     
By \reff{dt-s4-1} there is some $c>0$ and a sequence $\delta_k\downarrow 0$
such that  
\begin{equation*}
  \I{\dom}{\big|\F D\port{\bd\dom}_{\delta_k}\big|}{\lem}\le c
  \qmq{for all} k\,.
\end{equation*}
For the trace $TF$ from Theorem~\ref{dt-s1}
and for $\tf\in\woinf{\edom}$ we can use dominated convergence to get
\begin{eqnarray*}
  \df{TF}{\tf}
&=&
  \divv{(\tf\F)}(\dom)
\; = \;
  \lim_{k\to\infty} \: \divv{(\port{\bd\dom}_{\delta_k}\tf\F)}(\dom)  \\
&=&
  \lim_{k\to\infty} \I{\dom}{\port{\bd\dom}_{\delta_k}\tf}{\divv\F} + 
  \I{\dom}{\port{\bd\dom}_{\delta_k}\F D\tf}{\lem} +
  \I{\dom}{\tf\F D\port{\bd\dom}_{\delta_k}}{\lem}   \\
&\le&
  \lim_{k\to\infty} \big(|\divv F|(\bd\dom)+c\big)\, 
  \|\tf_{|(\bd\dom)_{\delta_k}\cap\edom}\|_\infty \,.
\end{eqnarray*}
Hence the first statement follows from Proposition~\ref{dt-s2} (3). 
For bounded $F$ the coarea formula implies
\begin{equation*}
  \I{\dom}{\big|\F D\port{\bd\dom}_\delta\big|}{\lem} \le
  \|F\|_\infty \I{\dom}{\big| D\port{\bd\dom}_\delta\big|}{\lem} =
  \tfrac{2}{\delta}\|F\|_\infty
  \int_{-\delta}^{-\frac{\delta}{2}} \cH^{n-1}(\bd\dom_{\tau})\, d\tau
\end{equation*}
which gives the second assertion. For the last statement, \reff{dt-s4-3}
just means that the open set $(\ol\dom)^c$ 
has $(r,c)$-uniform lower density on $\bd\dom$ in the sense of Definition~4 in 
\cite{kraft_measure-theoretic_2016}. Hence, by Theorems~3 and 4 in
\cite{kraft_measure-theoretic_2016}, there are constants
$c_1,c_2>0$ such that for all $\delta\in(0,r)$ and with $\op{Per}$ denoting
the perimeter
\begin{equation} \label{dt-s4-4}
  \cH^{n-1}(\bd\dom_\delta) \le
  c_1 \frac{\lem(\dom\setminus\dom_\delta)}{\delta} 
 \le c_2 \op{Per}\big((\ol\dom)^c\big)\,.
\end{equation}
Since $\op{Per}\big((\ol\dom)^c\big)=\op{Per}(\dom)$, we readily get
\reff{dt-s4-2}.  
\end{proof}

\medskip

For Lipschitz continuous functions $\tf\in\op{Lip}(\Gamma)$ with
$\Gamma\subset\R^n$ we use the norm
\begin{equation*}
  \|\tf\|_{\op{Lip}(\Gamma)}= \|\tf\|_{C(\Gamma)} + \op{Lip}(\tf) 
\end{equation*}
where $\op{Lip}(\tf)$ is the Lipschitz constant of $\tf$ on $\Gamma$. 

\begin{proposition} \label{dt-s5}
Let $\edom \subset \Rn$ be open and bounded
and let $F\in\cD\cM^1(\edom)$. 
\bgl
\item
If $\dom \in \bor{\edom}$ is such that any $\tf\in\woinf{\edom}$ has a
continuous extension onto~$\cl\dom$, if $\lem(\dom\setminus\op{int}\dom)=0$,
and if there are $\const >0$ and $\tilde\delta>0$ such that 
\begin{equation} \label{dt-s5-0}
  \|\tf_{|\bd\dom}\|_{\op{Lip}(\bd\dom)} \le
  c \|\tf\|_{\woinf{(\bd\dom)_\delta\cap\edom}} \qmq{for all}
  \tf\in\woinf{\edom},\; \delta\in(0,\tilde\delta)\,,
\end{equation}
then \eqref{dm-monster} is satisfied.

\item If $\dom\subset\edom$ is open with Lipschitz boundary, then 
\eqref{dm-monster} is satisfied.
\el
\end{proposition}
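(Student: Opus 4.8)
The plan is to reduce both assertions of Proposition~\ref{dt-s5} to condition \reff{dm-monster} via the equivalent bound \reff{eq:monster_ub} from Lemma~\ref{lem:monster2}, using the continuity case (C) together with Proposition~\ref{prop:dual0}. First I would observe that for a trace functional $TF$ satisfying \reff{dt-s5-0}, the point is to control $\divv(\port{\bd\dom}_\delta\tf F)(\dom)$ uniformly over $\tf$ with $\|\tf_{|(\bd\dom)_\delta\cap\edom}\|_{\cW^{1,\infty}}\le 1$. By Proposition~\ref{prop:dual0}, $\divv(\port{\bd\dom}_\delta\tf F)(\dom) = \df{f_\delta^*}{\tf_{|(\bd\dom)_\delta\cap\edom}}$ for a functional $f_\delta^*$ on $\cW^{1,\infty}((\bd\dom)_\delta\cap\edom)$. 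Under the hypotheses of (1), all $\tf\in\cW^{1,\infty}(\edom)$ extend continuously up to $\bd\dom$; combined with $\lem(\dom\setminus\op{int}\dom)=0$ and Corollary~\ref{ex:dmo_woi_c}, the trace $TF$ on $\bd\dom$ depends on $\tf$ only through $\tf_{|\bd\dom}$, so $f_\delta^*$ factors through the restriction map $\tf\mapsto\tf_{|\bd\dom}$ into $\op{Lip}(\bd\dom)\subset C(\bd\dom)$.

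Next I would make this factorization quantitative. One has
\begin{equation*}
  |\divv(\port{\bd\dom}_\delta\tf F)(\dom)| = |\df{T^{\bd\dom}F}{\tf_{|\bd\dom}}|
\end{equation*}
where $T^{\bd\dom}F$ is the (fixed, $\delta$-independent) functional on $\op{Lip}(\bd\dom)$ that agrees with $\divv(\tf F)(\dom)$; this is exactly the statement — essentially due to \v Silhav\'y \cite[Theorem~2.3]{silhavy_divergence_2009} in the case $\edom=\dom$, and available here through Corollary~\ref{ex:dmo_woi_c} and Remark~\ref{tt-s15} — that $TF$ restricted to continuous test functions corresponds to a fixed continuous functional on $\op{Lip}(\bd\dom)$. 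Denoting its norm by $\const_F<\infty$, I would estimate
\begin{equation*}
  |\divv(\port{\bd\dom}_\delta\tf F)(\dom)| \le \const_F\,\|\tf_{|\bd\dom}\|_{\op{Lip}(\bd\dom)}
  \le \const_F\,c\,\|\tf\|_{\woinf{(\bd\dom)_\delta\cap\edom}}
\end{equation*}
using \reff{dt-s5-0}, for all $\delta\in(0,\tilde\delta)$. Taking the supremum over admissible $\tf$ and then $\liminf_{\delta\downarrow0}$ yields a bound by $\const_F c<\infty$, i.e. \reff{dm-monster}. The key technical step here is establishing that $TF$ does induce a \emph{single} bounded functional on $\op{Lip}(\bd\dom)$ independent of $\delta$: this requires knowing that the boundary term $\I{(\bd\dom)_\delta\cap\dom}{\tf F\cdot D\chi_\delta}{\lem}$ appearing in the proof of Corollary~\ref{ex:dmo_woi_c}-type identities stays controlled, which is where one genuinely uses the Lipschitz-continuity hypothesis on $\tf_{|\bd\dom}$ — the bound $|\tf(x)|\le|\tf(y)|+\op{Lip}(\tf)\,|x-y|$ along near-normal segments gives exactly the decay needed against $|D\chi_\delta|\le 3/\delta$.

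For part (2) I would simply verify that an open $\dom$ with Lipschitz boundary satisfies the hypotheses of (1). By Corollary~\ref{cor:cases}(2) (or directly: a Lipschitz boundary is bounded path connected with $\bd\dom$ in the sense of Proposition~\ref{prop:cases}), every $\tf\in\woinf{(\bd\dom)_\delta\cap\edom}$ extends continuously up to $\bd\dom$ and, moreover, one has the Lipschitz-norm control $\|\tf_{|\bd\dom}\|_{\op{Lip}(\bd\dom)}\le c\,\|\tf\|_{\woinf{(\bd\dom)_\delta\cap\edom}}$ with $c$ depending only on the Lipschitz constant of $\bd\dom$ (restrict the Lipschitz function $\tf$ defined on a neighborhood of the boundary to $\bd\dom$; the intrinsic Lipschitz constant on the boundary is comparable to $\|D\tf\|_\infty$ by the bi-Lipschitz equivalence of intrinsic and ambient distance on a Lipschitz hypersurface). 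Openness gives $\lem(\dom\setminus\op{int}\dom)=0$ trivially. Hence \reff{dt-s5-0} holds and part (1) applies. The main obstacle I anticipate is making the $\delta$-independence of the boundary functional fully rigorous without circularity — one must be careful that invoking \v Silhav\'y's result or re-deriving it via the distance-function cutoff argument does not secretly already assume a bound like \reff{dm-monster}; the safe route is to run the cutoff computation from the proof of Corollary~\ref{ex:dmo_woi_c} directly with the Lipschitz bound \reff{dt-s5-0} in hand, getting the uniform estimate in one stroke.
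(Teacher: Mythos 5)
Your proposal is correct and follows essentially the same route as the paper: reduce $\divv(\port{\bd\dom}_\delta\tf F)(\dom)=\divv(\tf F)(\dom)$ to a quantity depending only on $\tf_{|\bd\dom}$ via Corollary~\ref{ex:dmo_woi_c}, bound it by $\|\tf_{|\bd\dom}\|_{\op{Lip}(\bd\dom)}$, and invoke \reff{dt-s5-0}, with part (2) verified by the comparability of intrinsic and ambient distance on a Lipschitz boundary. The circularity you worry about dissolves exactly as the paper does it: extend $\tf_{|\bd\dom}$ to a Lipschitz function $\tilde\tf$ on $\R^n$ with $\|\tilde\tf\|_{\op{Lip}(\edom)}\le\|\tf_{|\bd\dom}\|_{\op{Lip}(\bd\dom)}$, use Corollary~\ref{ex:dmo_woi_c} to get $\divv(\tf F)(\dom)=\divv(\tilde\tf F)(\dom)$, and bound the latter directly by $\|F\|_{\cD\cM^1(\edom)}\|\tilde\tf\|_{W^{1,\infty}(\edom)}$, so your constant $\const_F$ is simply $\|F\|_{\cD\cM^1(\edom)}$ with no prior boundedness statement on $\op{Lip}(\bd\dom)$ needed.
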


\noi
Notice that \reff{dt-s5-0} is a condition for $\dom$ that does not depend 
on $F$. It somehow says that $\cW^{1,\infty}$-functions should be 
(globally) Lipschitz continuous near the boundary. 
We postpone the quite technical proof to the end of this section and now
provide several examples illuminating the previous results. 
First we show by an example on some open $\dom\subset\R^2$ with Lipschitz
boundary and some unbounded $F$ that $\me_F=0$ in \reff{dt-s1-1} is not
possible in general. This way we see that the new term is really needed
for a general Gauss-Green formula.  

\begin{example}\label{dt-ex1}   
Let $\edom=B_2(0)\subset\R^2$, let $\dom=(0,1)^2$, and let 
$F\in\cL^1(\edom,\R^2)$ be given by 
\begin{equation*}
    F(x,y) := \frac{1}{x^2+y^2}
    \begin{pmatrix}  -y \\ x   \end{pmatrix}\,.
\end{equation*}
(cf. also \cite[Example 2.5]{silhavy_divergence_2009}).
We clearly have $\divv{F} = 0$ on $\edom\setminus\{0\}$
in the classical sense and $F\cdot\nu^{B_\delta(0)} = 0$ on $\bd{B_\delta(0)}$ for
$\delta>0$. Then, for all $\tf\in C^1_c(\edom)$, 
\begin{eqnarray*}
  \I{\encl{\dom}}{F D\tf}{\cL^2} 
&=&
  \lim_{\delta \downarrow 0} \I{\edom\setminus B_\delta(0)}{F D\tf}{\cL^2} \\
&=&
  - \lim_{\delta \downarrow 0} \Big(
  \I{\edom\setminus B_\delta(0)}{\tf \divv{F}}{\cL^2} +
  \I{\bd{B_\delta(0)}}{\tf F \cdot \nu^{B_\delta(0)}}{\ham^1} \Big) \\
&=&
  0  \,.
\end{eqnarray*}
Therefore $\divv{F}$ is the zero measure on $\edom$
and, thus, $F\in\dmo{\edom}$. 
For small $\delta>0$ we have case (C) by Corollary~\ref{cor:cases} (3)
and \reff{dm-monster} is satisfied by Proposition~\ref{dt-s5}~(2).
Hence, by Theorem~\ref{dt-s1}, there are 
a Radon measure $\siF\in\cM(\edom)$ supported on $\bd\dom$ and a 
measure $\mF\in \op{ba}(\edom,\cB(\edom),\cL^2)^2$ with
core in $\bd\dom$ such that
\begin{equation}\label{dt-ex1-1}
  \divv(\tf F)(\dom) =  
  \I{\bd\dom}{\tf}{\siF} + \I{\bd\dom}{D\tf}{\mF}
\end{equation}
for all $\tf\in\woinf{\edom}$. For
\begin{equation*}
  \tf_k := \ind{\left (\frac{1}{k},\infty\right) \times \R} +
           \ind{\left (0,\frac{1}{k}\right)\times \R} k \distf{\{0\}\times\R} 
\end{equation*}
we have 
\begin{equation*}
  \tf_k\in \woinf{\edom}\,, \quad
  D\tf_k = \ind{\left (0,\frac{1}{k}\right) \times \R} \, k 
  {\scriptsize \begin{pmatrix}  1 \\ 0   \end{pmatrix} } \,,  \quad
  0\le\tf_k\le 1 
\end{equation*}
and, thus,
\begin{eqnarray*}
  \big| \divv(\tf_kF)(\dom) \big|
& = &  
  \Big| \I{\dom}{F D\tf_k}{\cL^2} + \I{\dom}{\tf_k}{\divv{F}} \Big| 
\; = \; 
  \Big| \I{\dom}{F D\tf_k}{\cL^2} \Big|  \\ 
& = &
  \mI{\left	(0,\frac{1}{k}\right)}{\I{(0,1)}{\frac{\eRR}{\eR^2+
	\eRR^2}}{\eRR}}{\eR} \\
& = &
   \mI{\left(0,\frac{1}{k}\right)}{\left [\frac{1}{2} \ln (\eR^2+
  \eRR^2)\right]_{y=0}^1}{\eR} \\
& = &
  \mI{\left(0,\frac{1}{k}\right)}{\frac{1}{2}\ln\left(\frac{1}{\eR^2}
  +1\right)}{\eR} \\
& \geq &
  \frac{1}{2} \ln (k^2 + 1) \xrightarrow{k \to \infty} \infty \,.
\end{eqnarray*}
Moreover
\begin{equation*}
  \Big|\I{\bd\dom}{\tf_k}{\siF}\Big| \le |\siF|(\edom)  
\end{equation*}
for all $k$. Hence $\mF=0$ is impossible in \reff{dt-ex1-1}.

In Example~\ref{ex:rotsing} below we consider the same vector field on a 
slightly modified set $\dom$. There we construct for \reff{dt-ex1-1}
possible Radon measures $\sigma_F$ and pure measures $\mF$ depending on some
scalar parameter. This way we provide  
an uncountable family of possibilities for $\sigma_F$ and $\mF$. 
\end{example}

Next we provide an example with a constant vector field $F$ and an
open $\dom\subset\R^2$ having infinite perimeter where
\reff{dm-monster} fails. This means by Proposition~\ref{dt-s2} 
that the dependence of the measures $\lF$, $\mF$ on $\delta>0$ cannot be
removed. 

\begin{example} \label{dt-ex2}  
Let $\edom:=B_2(0)\subset\R^2$, let 
\begin{equation*}
    \dom := \bigcup \limits_{k=1}^\infty R_k \qmq{with}
    R_k := \big(\tfrac{1}{2k+1},\tfrac{1}{2k}\big) \times (0,1)\,,
\end{equation*}
and take the constant vector field 
\begin{equation*}
  F\in\cL^1(\edom) \qmq{with} 
  F=\big(\begin{smallmatrix} 1 \\ 0 \end{smallmatrix}\big) \,.
\end{equation*}
(cf. also Example~\ref{tt-r-ex2}).
Obviously $\divv{\funv} = 0$ on $\edom$ and thus $F\in\dmo{\edom}$. 

Let us show that \reff{eq:monster_ub} doesn't hold. 
For $\delta>0$ we choose $k_\delta\in\N$ to be the largest number
such that
\begin{equation*}
  \delta < \tfrac{1}{4}\big( \tfrac{1}{2k}-\tfrac{1}{2k+1}\big)
  \qmq{for all} k\le k_\delta+1 \,.
\end{equation*}
Then $k_\delta\to\infty$ for $\delta\to 0$. Moreover, for $\delta>0$ fixed,
we set 
\begin{equation*}
  R_k^-:=\big(\tfrac{1}{2k+1},\tfrac{1}{2k+1}+\delta\big) \times (0,1)\,, \quad
  R_k^+:=\big(\tfrac{1}{2k}-\delta,\tfrac{1}{2k}\big) \times (0,1)\,.
\end{equation*}
Then we obviously find some 
\begin{equation*}
  \tf^\delta\in\woinf{\edom} \qmq{with} 
  \|\tf^\delta_{\,|(\bd\dom)_\delta\cap\edom}\|_{\cW^{1,\infty}} \le 1
\end{equation*}
such that
\begin{equation*}
  \tf^\delta(x,y) = \bigg\{ 
  \begin{array}{lll}
    \pm \tfrac{1}{2}\big(\tfrac{1}{2} - |y-\tfrac{1}{2}| \big) &
    \zmz{on} R_k^\pm & \text{for }\; k\le k_\delta \,, \\
    0 & \zmz{on} R_k & \text{for }\; k>k_\delta\,.
  \end{array}
\end{equation*}
Notice that
\begin{equation*}
  \tf^\delta(x,0)=\tf^\delta(x,1)=0\,, 
  \z \tf^\delta(x,\tfrac{1}{2})=\pm\tfrac{1}{4}\,, \z 
  |D\tf^\delta(x,y)|=\tfrac{1}{2} \qmq{on $R_k^\pm$ for $k\le k_\delta$\,,}
\end{equation*}
\begin{equation*}
  F D\port{\bd\dom}_\delta=0 \qmz{on} 
  \big((\bd\dom)_\delta\cap\dom\big)\setminus\big(R_k^-\cup R_k^+)\big) \,.
\end{equation*}
Hence
\begin{eqnarray*}
  \I{(\bd\dom)_\delta\cap\dom}{\tf^\delta F D\port{\bd\dom}_\delta}{\cL^2}
&=&
  \sum_{k=1}^{k_\delta}
  \I{R_k^-\cup R_k^+}{\tf^\delta F D\port{\bd\dom}_\delta }{\cL^2}  \\
&\ge&
  \sum_{k=1}^{k_\delta} 2
  \int_\delta^{1-\delta}\tfrac{1}{2}\big(\tfrac{1}{2}-|y-\tfrac{1}{2}|\big)\,dy\\
&=&
  2k_\delta \int_\delta^{\frac{1}{2}} y\, dy 
\; = \;
  k_\delta \big( \tfrac{1}{4}-\delta^2\big) 
  \overset{\delta\to 0}{\longrightarrow} \infty \,.
\end{eqnarray*}
But this means that \reff{eq:monster_ub} is not satisfied. 

For $\delta\in\big(0,\tfrac{1}{2}\big)$ and $\K=\bd\dom$ 
we have case (C) by 
Corollary~\ref{cor:cases}~(3) and \reff{dt-s1-1} becomes 
\begin{equation*}
   \divv(\tf\F)(\dom) = \I{\bd\dom}{\tf}{\siF} + 
            \I{(\bd\dom)_\delta\cap\edom}{D\tf}{\mF} \,.   
\end{equation*}
(notice that this doesn't contradict Example~\ref{tt-r-ex2} where 
$\edom$ and $\K$ are different). 

Let us provide possible choices of $\siF$ and $\mF$.
For $\delta>0$ we first fix some $m_\delta\in\N$ such that
\begin{equation*}
  \frac{1}{2m_\delta} - \frac{1}{2m_\delta+1} < \delta \,.
\end{equation*}
Then we get for every $\tf\in\woinf{\edom}$
\begin{eqnarray*}
  \divv(\tf F)(\dom)
&=&
  \sum_{k=1}^\infty \divv(\tf F)(R_k)
\; = \;
   \sum_{k=1}^\infty \I{R_k}{\tf\divv F + FD\tf}{\cL^2}  \\
&=&
  \sum_{k=1}^{m_\delta} \I{\bd R_k}{\tf F\nu^{R_k}}{\cH^1} +
  \sum_{k=m_\delta+1}^\infty \I{R_k}{FD\tf}{\cL^2} \,
\end{eqnarray*}
where we have used the classical Gauss-Green formula for $k\le m_\delta$.
Hence we can choose 
\begin{equation*}
  \siF^\delta = \sum_{k=1}^{m_\delta} F\nu^{R_k}\ham^1 \lfloor \bd R_k  \,,
  \quad
  \mF^\delta = \sum \limits_{k=m_\delta+1}^\infty F\cL^2\lfloor R_k \,.
\end{equation*}
Notice that $R_k\subset (\bd\dom)_\delta\cap\edom$ for $k\ge m_\delta+1$
and that the measures $\mF^\delta$ are even $\sigma$-measures.
Let us also mention that we cannot sum over all $k\in\N$ for $\sigma_F$, since
this would not give a bounded measure. 
The dependence of the measures on $\delta$ comes through $m_\delta$.
But, for fixed $\delta>0$, we also have some freedom to choose $m_\delta\in\N$. 
Therefore the choice of $\sigma_F$ and $\me_F$ is not unique even for given
$\delta$. Since the measures $\siF^\delta$, $\mF^\delta$ given above are
restricted to $\dom$, the situation would not change if we take $\edom=\dom$
instead of $\edom=B_2(0)$. 
\end{example}

We now give an example where we can choose $\mF=0$ but $\lF$ cannot be 
taken as $\sigma$-measure. 

\begin{example} \label{dt-ex3}
We set
\begin{equation*}
  \dom_1:=(0,1)\times(0,1)\,, \quad \dom_2:=(1,2)\times(0,1)\,
\end{equation*}
and consider
\begin{equation*}
  \edom=\dom=\dom_1\cup\dom_2
\end{equation*}
with the discontinuous vector field
\begin{equation*}
  F= \big(\begin{smallmatrix} 1 \\ 0 \end{smallmatrix}\big) \zmz{on} \dom_1\,, 
  \quad
  F= \big(\begin{smallmatrix}-1 \\ 0 \end{smallmatrix}\big) \zmz{on} \dom_2\,.
\end{equation*}
We readily verify \reff{dt-s4-2} and, hence, we can choose $\mF=0$ and
$\lF$ with $\cor{\lF}\subset \bd\dom$ in Theorem~{\rm \ref{dt-s1}}.
Since $\tf\in\woinf{\edom}$ cannot be extended continuously up to $\bd\dom$ in
general, we do not have case (C) and we cannot expect that $\lF$ is in fact a
Radon measure on $\bd\dom$. But notice that we have case (L) by
Corollary~\ref{cor:cases}~(1). 

Though the classical Gauss-Green formula is not applicable on $\dom$, we can use
it on $\dom_j$, since $\tf_{|\dom_j}$ is continuously extendable to a
Lipschitz function $\ol\tf_j$ on $\ol\dom_j$ for all $\tf\in\woinf{\edom}$.
Therefore, with
$\sigma_F^j=F\nu^{\dom_j}\cH^1\lfloor\bd\dom_j$,
\begin{equation*}
  \divv(\tf F)(\dom_j) = \I{\bd\dom_j}{\ol\tf_j F\nu^{\dom_j}}{\cH^1}
  = \I{\bd\dom_j}{\ol\tf_j}{\sigma_F^j} \qmq{for} \tf\in\woinf{\edom} \,.
\end{equation*}
But, due to $\tf$
without continuous extension up to $\bd\dom$, we cannot just sum
up the $\sigma_F^j$ for $\dom$. However we can apply  
Proposition~\ref{acm-ram} (2) to $\dom_j$ and obtain pure measures 
\begin{equation*}
  \lF^j\in\op{ba}(\dom_j,\cB(\dom_j),\cL^2) \qmq{with} \cor{\lF^j}=\bd\dom_j\,
\end{equation*}
such that
\begin{equation*}
  \divv(\tf F)(\dom_j) = 
  \sI{\bd\dom_j}{\tf_{|\dom_j}}{\lF^j} \qmq{for} \tf\in\woinf{\edom} \,.
\end{equation*}
We can consider the $\lF^j$ as measures on $\dom$ by extending them
with zero. Then $\dom_j$ is an aura of $\lF^j$ and we readily obtain 
for $\lF=\lF^1+\lF^2$ that
\begin{equation*}
  \divv(\tf F)(\dom) = 
  \sI{\bd\dom}{\tf}{\lF} \qmq{for} \tf\in\woinf{\edom} \,.
\end{equation*}
We can interpret the situation along the common boundary of the $\dom_j$ so,
that one part of $\lF$ takes care for $\tf_{|\dom_1}$ and the other part takes
care for $\tf_{|\dom_2}$. This nicely shows the relevance of the aura. 
In the case of a crack along the inner boundary  we would be able 
to describe the situation on each side of the crack separately by choosing
suitable functions $\tf$.

Notice that the usual Gauss-Green formula using $\mint{\dom}$ and 
$\rbd\dom$ (cf. \cite[p.~209]{evans}) is substantially different,
since here the interior part of $\bd\dom$ belongs to $\mint{\dom}$ and 
$\tf$ has to be continuous on $\mint{\dom}$. 
\end{example}

In the next example we consider some vector field $F$ where $\divv F$ has some
point concentration on $\bd\dom$. We demonstrate the difference between an open 
$\dom$ and its closure and we discuss some lower dimensional $\dom$.

\begin{example}\label{dt-ex4}   
We consider the vector field, that is a classical example in the literature,
\begin{equation*}
  F(x):=\frac{x}{2\pi|x|^2} \qmq{on} \edom=B_2(0)\subset\R^2\,
\end{equation*}
and we discuss the Gauss-Green formula for several $\dom$.
Clearly $F\in\cL^1(\edom)$ and we easily compute that $\divv F=0$ on 
$\edom\setminus\{0\}$ in the classical sense. Moreover, with the usual
Gauss-Green formula we get for $\tf\in C^1_c(\edom)$
\begin{eqnarray*}
  \I{\encl{\dom}}{F D\tf}{\cL^2} 
&=&
  \lim_{\delta \downarrow 0} \I{\edom\setminus B_\delta(0)}{F D\tf}{\cL^2} \\
&=&
  - \lim_{\delta \downarrow 0} \Big(
  \I{\edom\setminus B_\delta(0)}{\tf \divv{F}}{\cL^2} +
  \I{\bd{B_\delta(0)}}{\tf F \cdot \nu^{B_\delta(0)}}{\ham^1} \Big) \\
&=&
  -\lim_{\delta \downarrow 0} \:
  \frac{1}{2\pi\delta}\:\I{\bd{B_\delta(0)}}{\tf}{\ham^1} \; = \; -\tf(0) \,.
\end{eqnarray*}
Hence, by \reff{tt-e3}, 
$\divv F=\delta_0$ as measure
(Dirac measure concentrated at the origin) and $F\in \dmo{\edom}$.

First we consider 
\begin{equation*}
  \dom := B_1(0)\cap C_\alpha\subset\R^2
\end{equation*}
where $C_\alpha$ is an open cone with vertex at the origin and 
opening angle $\alpha\in(0,2\pi)$. 
Let us check condition \reff{dt-s4-1}. There we have to integrate over the
support of $\port{\bd\dom}_\delta$. For the integral over the curved part near 
$\bd B_1(0)$ we use the coarea formula to get a bound for small $\delta>0$ by
\begin{equation*}
  \I{(\bd B_1(0))_\delta\cap B_1(0)}{\big|\F D\port{\bd\dom}_\delta\big|}{\cL^2} 
  = \frac{2}{2\pi\delta}\int_{1-\delta}^{1-\frac{\delta}{2}}\frac{2\pi r}{r}\,dr 
  = 1 \,.
\end{equation*}
For $\bd C_\alpha$ we assume that it contains  
the positive $x_1$-axis. Then, for small $\delta>0$, 
\begin{equation*}
  \dom\cap Q_\delta\ne\emptyset \qmq{for} 
  Q_\delta=(\tfrac{\delta}{2},1)\times(\tfrac{\delta}{2},\delta) \,.
\end{equation*}
Using the symmetry, we can estimate 
\begin{eqnarray*}
  \I{\dom}{\big|\F D\port{\bd\dom}_\delta\big|}{\cL^2}
&\le&
  1 + 2\I{Q_\delta}{\frac{(x_1,x_2)}{2\pi(x_1^2+x_2^2)}\cdot
                   \big(0,\tfrac{2}{\delta}\big)}{\cL^2} \\
&=&
  1+\frac{2}{\delta\pi}\int_{\tfrac{\delta}{2}}^\delta  
  \int_{\tfrac{\delta}{2}}^1 \frac{x_2}{x_1^2+x_2^2}\, dx_1dx_2 \\
&\le&
  1+\frac{2}{\delta\pi} \frac{\delta}{2}
  \int_{\tfrac{\delta}{2}}^1 \frac{\delta}{x_1^2+\tfrac{\delta^2}{4}}\,dx_1\\
&=&
  1 + \frac{2}{\pi}
  \Big[\arctan\frac{2x_1}{\delta}\Big]_{\delta/2}^1 \\
&=&
  1 + 
  \frac{2}{\pi} \Big(\arctan\frac{2}{\delta} - \arctan 1\Big) \,.
\end{eqnarray*}
Hence \reff{dt-s4-1} is satisfied. Therefore
we can choose $\mF=0$ and $\lF$ with $\cor{\lF}\subset \bd\dom$
in Theorem~{\rm \ref{dt-s1}}. Moreover we have case (C) by
Corollary~\ref{cor:cases}. Thus \reff{dt-s1-1} becomes
\begin{equation} \label{dt-ex4-1}
  \divv(\tf\F)(\dom) = \I{\bd\dom}{\tf}{\sigma_F}
\end{equation}
for a Radon measure $\sigma_F$ supported on $\bd\dom$. 

For the open set $\dom$ we now 
consider $\tf\in\woinf{\edom}$ supported outside
$B_r(0)$ for small $r>0$. Then the usual Gauss-Green formula gives
\begin{equation*}
  \divv(\tf\F)(\dom) = \I{\bd\dom}{\tf F\nu^\dom}{\cH^1} = 
  \I{\bd\dom}{\tf}{\sigma_F} \,.
\end{equation*}
Since $\woinf{\edom}$ is dense in $C(\edom)$, we obtain
\begin{equation*}
  \sigma_F=F\nu^\dom\cH^1\lfloor\bd\dom  \qmq{on} \bd\dom\setminus\{0\} \,.
\end{equation*}
From \reff{dt-ex4-1} we get with $\tf=1$ on $\edom$ 
\begin{equation*}
  0 = \divv F(\dom) = 
  \sigma_F(\{0\}) + \I{\bd B_1(0)\cap\bd\dom}{F\nu^\dom}{\cH^1} =
  \sigma_F(\{0\}) + \frac{\alpha}{2\pi} \,.
\end{equation*}
Hence 
\begin{equation*}
  \sigma_F(\{0\}) = - \frac{\alpha}{2\pi} 
\end{equation*}
and, therefore,
\begin{equation} \label{dt-ex4-2}
  \sigma_F=F\nu^\dom\cH^1\lfloor\bd\dom - \tfrac{\alpha}{2\pi}\delta_0 \,.
\end{equation}

For $\ol\dom$ instead of $\dom$ we can argue the same way. Then the
concentration at the origin is contained. This leads to 
\reff{dt-ex4-1} with $\ol\dom$ and some measure $\ol\sigma_F$ that is given
by
\begin{equation*}
  \ol\sigma_F = F\nu^\dom\cH^1\lfloor\bd\dom + 
  \tfrac{2\pi-\alpha}{2\pi}\delta_0 \,.
\end{equation*}
Notice that
\begin{equation*}
  \sigma_F(\{0\}) = \lim_{r\to 0}
  \I{\bd B_r(0)\cap\dom}{F\nu^{B_r(0)^c}}{\ham^1}
\end{equation*}
but
\begin{equation*}
  \ol\sigma_F(\{0\}) = \lim_{r\to 0}
  \I{\bd B_r(0)\cap\dom^c}{F\nu^{B_r(0)}}{\ham^1} \,.
\end{equation*}
This allows the interpretation that $\sigma_F$ is a trace from inside, i.e.
it can be computed from $F$ restricted to the interior of $\dom$ 
and this way disregards the concentration of $\divv F$ on the boundary, while
$\ol\sigma_F$ is a trace from outside, i.e. it can be computed from $F$
restricted to the exterior of $\dom$ and includes the concentration of 
$\divv F$ at the origin. However we have that $\sigma_F(\{0\})$ and 
$\ol\sigma_F(\{0\})$ depend on the opening angle $\alpha$ and, in 
\reff{dt-ex4-1}, they merely contribute 
a part of the concentration of $\divv F$.

We can also treat any Borel set $\tilde\dom$ with
\begin{equation*}
  \dom\subset\tilde\dom \subset\ol\dom \,.
\end{equation*}
Then we readily get for the associated measure $\tilde\sigma_F$ that
\begin{equation*}
  \tilde\sigma_F = \bigg\{
  \begin{array}{ll}
    \sigma_f & \zmz{if} 0\not\in\tilde\dom \,, \\
    \ol\sigma_F & \zmz{if} 0\in\tilde\dom \,.
  \end{array}
\end{equation*}

This exact treatment of concentrations on the boundary  
allows in applications e.g. a very precise description how a point load
at a body is balanced by its parts (cf.~\cite{podio_2004},
\cite[Example 2]{schuricht_new_2007}). 
Moreover it shows that a dependence 
of such a concentration on a normal at $x=0$ doesn't make sense. 
Notice that point concentrations can also occur at a cuspidal
corner and need not be proportional to the opening angle $\alpha$ in general
(cf. \cite{podio}).

As limit case $\alpha=0$ we still consider
\begin{equation*}
  \dom = (0,1)\times\{0\} \,.
\end{equation*}
Then we have for all $\tf\in\woinf{\edom}$
\begin{equation*}
  \divv(\tf\F)(\dom) = \I{\dom}{\tf}{\divv\F} = 0\,, \quad
  \divv(\tf\F)(\ol\dom) = \tf(0) \,.
\end{equation*}
Hence we readily get with the zero measure $\sigma_0$
\begin{equation*}
  \divv(\tf\F)(\dom) = \I{\bd\dom}{\tf}{\sigma_0} \qmq{and}
  \divv(\tf\F)(\ol\dom) = \I{\bd\dom}{\tf}{\delta_0} \,.
\end{equation*}
This agrees with $\sigma_F$ and $\ol\sigma_F$ from above if we take 
as outer ``unit'' normal of $\dom$ the sum of the upward and the downward
normal, i.e.
\begin{equation*}
  \nu^\dom = {\scriptsize \begin{pmatrix} 0 \\ 1 \end{pmatrix} } +
  {\scriptsize \begin{pmatrix} 0 \\ -1 \end{pmatrix} } = 
  {\scriptsize \begin{pmatrix} 0 \\ 0 \end{pmatrix} } \,.
\end{equation*}
Obviously we can also treat $\dom=\{0\}$ that way.

\end{example}

In the light of $\tilde\dom$ in the previous example
let us provide a further simple example demonstrating 
how the Gauss-Green formula exactly takes care
for the points belonging to $\dom$. 

\begin{example} \label{dt-ex4a}   
For $\edom=B_2(0)\subset\R^2$ we consider $F\in\cD\cM^1(\edom)$ given by
\begin{equation*}
  F = \bigg\{
  \begin{array}{ll}
    (2,0) & \zmz{on} (0,1)^2 \,, \\
    (1,0) & \zmz{otherwise\,.} 
  \end{array}
\end{equation*}
The first component of $F$ is a BV function and we readily get
\begin{equation*}
  \divv F = \reme{\cH^1}{\{0\}\times(0,1)} - \reme{\cH^1}{\{1\}\times(0,1)}
\end{equation*}
(cf. \cite[p.~169]{evans}). Let $\dom$ be a Borel set satisfying
\begin{equation*}
  (0,1)^2\subset\dom\subset[0,1]^2 \,.
\end{equation*}
By Corollary~\ref{cor:cases} and Proposition~\ref{dt-s4} 
we have case (C) for small $\delta>0$ and we can choose \reff{dt-s1-1} in
the form 
\begin{equation*} 
  \divv(\tf\F)(\dom) = \I{\bd\dom}{\tf}{\sigma_F}
  \qmq{for all} \tf\in\woinf{\edom}
\end{equation*}
with a Radon measure $\sigma_F$ supported on $\bd\dom$. For the determination 
of $\sigma_F$ we consider rectangles $R\subset\edom$
intersecting $\bd\dom$. Then we approximate $\chi_R$ by
\begin{equation*}
  \chi_R^\delta := 
  \chi_R + \chi_{R_\delta\setminus R}(1-\tfrac{1}{\delta}\distf{R}) 
  \in\woinf{\edom} \zmz{for small} \delta>0\,
\end{equation*}
and use $\divv(\chi_R^\delta)=\chi_R^\delta\divv F+F\cdot
D\chi_R^\delta$. This way we get with 
\begin{equation*}
  \K_\dom:=\bd\dom\cap\dom\,, \quad
  F_{\rm int}:=(2,0)\,, \quad F_{\rm ext}:=(1,0)\,,
\end{equation*}
that
\begin{equation*}
  \divv(\tf\F)(\dom) = 
  \I{\bd\dom\setminus\K_\dom}{\tf F_{\rm int}\cdot\nu^\dom}{\cH^1} +
  \I{\K_\dom}{\tf (F_{\rm int}-F_{\rm ext})\cdot\nu^\dom}{\cH^1}
\end{equation*}
for all $\tf\in\woinf{\edom}$.
\end{example}

\medskip

In Example~\ref{dt-ex2} we have already seen that the choice of $\lF$ and
$\mF$ in \reff{dt-s1-1} is not unique in general. 
For a simple case we now demonstrate that the usual boundary integral
$\I{\bd{\dom}}{\tf F\cdot\nu}{\ham^{n-1}}$
in the Gauss-Green formula can be completely replaced with
$\I{\bd\dom}{D\tf}{\me_F}$ for some suitable $\me_F$. 

\begin{example} \label{dt-ex5}   
For $\edom=\dom=(-1,1)^2\subset\R^2$ and the vector field 
\begin{equation*}
  F(x,y) = 
  \Big(\,\mbox{\small $\begin{matrix}  |x| \\ 0  \end{matrix}$}\,\Big)
  \qmq{on}\dom  
\end{equation*}
we have classically
\begin{equation} \label{dt-ex5-1}
  \I{\dom}{F D\tf}{\lem} + \I{\dom}{\tf\divv{F}}{\lem} =
  \I{\bd{\dom}}{\tf F\cdot\nu^\dom}{\ham^1} 
\end{equation}
for all $\tf\in\woinf{\dom}$. For a transformation of the right hand side 
we first consider 
$\tf\in C^1(\ol\dom)$ and use integration by parts piecewise on 
$\bd\dom$ to get
\begin{eqnarray}
  \I{\bd{\dom}}{\tf F\cdot\nu^\dom}{\ham^1}
&=& 
  \int_{-1}^1 \tf(1,y) - \tf(-1,y) \, dy \nonumber\\
&=&
  - \int_{-1}^1 y\tf_y(1,y) - y\tf_y(-1,y)\, dy +
  \big[y\tf(1,y)-y\tf(-1,y)\big]_{-1}^1  \nonumber\\
&=& 
  - \int_{-1}^1 y\tf_y(1,y) - y\tf_y(-1,y)\, dy  \nonumber\\
&&  
  +  \int_{-1}^1 \tf_x(x,1) + \tf_x(x,-1) \, dx \nonumber \\
&=& \label{dt-ex5-2} 
  \I{\bd{\dom}}{G D\tf}{\ham^1}
\end{eqnarray}
where 
\begin{equation*}
  G(x,y) = 
    \bigg\{ \mbox{\small $ 
    \begin{array}{cl} (0,-xy) & 
                    \text{for } |x|>|y|\,, \\[3pt]
                    (|y|,0) & \text{for } |x|<|y| \,.
    \end{array}$ } 
\end{equation*}
For some general $\tf\in\woinf{\dom}$ we choose smooth 
$\tf_k\in C^\infty(\ol\dom)$
approximating $\tf$ within $\cW^{1,1}(\dom)$ 
and $\me_{\bd\dom}$ be the measure from Proposition~\ref{tt-s3} related to 
$\alse=\dom$ and $\gamma(\delta)=\delta$. 
Piecewise continuity of the integrand up to the boundary, the coarea
formula, and \reff{tt-s3-2} imply
\begin{equation*}
   \I{\bd{\dom}}{G D\tf_k}{\ham^1} =
  \lim_{\delta\downarrow 0} \frac{1}{\delta}
  \I{(\bd\dom)_\delta\cap\dom}{G D\tf_k}{\cL^2}
  = \sI{\bd\dom}{GD\tf_k}{\me_{\bd\dom}} \qmz{for all $k$\,.}
\end{equation*}
One readily checks for small $\delta>0$ that \reff{dt-ex5-2} is also valid
with $\dom_{-\delta}$ instead of $\dom$. Moreover we extend $\nu^\dom$ onto
$\dom$ by setting $\nu^\dom=\nu^{\dom_{-\delta}}$ on $\bd\dom_{-\delta}$. 
Then the coarea formula yields
\begin{equation*} 
  \frac{1}{\delta} \I{(\bd\dom)_\delta\cap\dom}{\tf_k F\cdot\nu^\dom}{\cL^2} =
  \frac{1}{\delta} \I{(\bd\dom)_\delta\cap\dom}{GD\tf_k}{\cL^2} 
  \qmz{for all} k\,. 
\end{equation*}
By the uniform convergence $\tf_k\to\tf$, the limit $k\to\infty$ on the left
hand side is uniform in $\delta$. With $D\tf_k\to D\tf$ in $\cL^1(\dom)$ we
therefore get that the limit
\begin{equation*}
  \lim_{k\to\infty}\frac{1}{\delta} \I{(\bd\dom)_\delta\cap\dom}{GD\tf_k}{\cL^2} 
  = \frac{1}{\delta} \I{(\bd\dom)_\delta\cap\dom}{GD\tf}{\cL^2} 
\end{equation*}
is also uniform in $\delta$. Using Corollary~\ref{tt-s4} we get
\begin{eqnarray*}
  \I{\bd{\dom}}{\tf F\cdot\nu^\dom}{\ham^1} 
&=& 
  \lim_{k\to\infty}\I{\bd{\dom}}{\tf_k F\cdot\nu^\dom}{\ham^1} =
  \lim_{k\to\infty}\I{\bd{\dom}}{G D\tf_k}{\ham^1} \\
&=&
  \lim_{k\to\infty}\sI{\bd{\dom}}{G D\tf_k}{\me_{\bd\dom}} =
  \sI{\bd{\dom}}{G D\tf}{\me_{\bd\dom}}\,.
\end{eqnarray*}
Thus we end up with 
\begin{equation} \label{dt-ex5-4}
  \I{\dom}{F D\tf}{\lem} + \I{\dom}{\tf\divv{F}}{\lem} =
  \sI{\bd{\dom}}{G D\tf}{\me_{\bd\dom}}
\end{equation}
for all $\tf\in\woinf{\dom}$. From \reff{dt-ex5-2} we see that we can replace
$\me_{\bd\dom}$ with $\reme{\cH^1}{\bd\dom}$ for $\tf\in C^1(\ol\dom)$.
But notice that $\me_{\bd\dom}$ is pure by Proposition~\ref{acm-s1} and that
we cannot take a
$\sigma$-measure in \reff{dt-ex5-4} in general. 
Finally it is a simple observation by taking $\tf=1$ on $\dom$ that a
Gauss-Green formula like \reff{dt-ex5-4} is only possible if 
$(\divv F)(\dom)=0$ (the example shows that $\divv F=0$ is not
necessary).
\end{example}

Let us still give some more general example in $\R^3$ for a 
Gauss-Green formula of the form \reff{dt-ex5-4}.
We refrain from looking for the most general version, since we just want to
provide the essential idea for a class of vector fields satisfying
$\divv F=0$.

\begin{example} \label{dt-ex6}   
Let $\edom\subset\R^3$ be open and bounded, let $\dom\csubset\edom$ be open
and connected with smooth boundary, and let $F\in C^1(\edom,\R^3)$ be a
bounded vector field such that 
\begin{equation*}
  F=\curl G \qmq{where} G\in C^2(\edom,\R^3)\,.
\end{equation*}
Obviously $\divv F=0$ and, thus, $F\in\cD\cM^1(\edom)$. By the smooth boundary
we have \reff{dt-s4-2}. Hence, by Proposition~\ref{dt-s4},
we can choose $\mF=0$ and $\lF$ with
$\cor{\lF}\subset\bd\dom$ in Theorem~\ref{dt-s1}  
while the usual Gauss-Green formula gives
\begin{equation} \label{dt-ex6-1}
  \divv{(\tf F)}(\dom) = \I{\dom}{F\cdot D\tf}{\cL^3} =
 \I{\bd\dom}{\tf F\cdot\nu^\dom}{\cH^2}  
\end{equation}
for all $\tf\in C^1(\edom)$. Now we find two disjoint smooth manifolds
$\K_j\subset\bd\dom$ with smooth boundary such that
$\bd\dom=\ol\K_1\cup\ol\K_2$ (since $\bd\dom$ is locally the graph of a smooth
function, we can intersect $\bd\dom$ transversally with 
a small circular cylinder to get the desired decomposition with a 'big' and a
'small' manifold).
By the classical Stoke's theorem and a coherent orientation of the boundaries
$\bd\K_j$ by some tangent field $t_j$ we have
\begin{equation*}
  \I{\K_j}{\curl{(\tf G)}\cdot\nu^\dom}{\cH^2} = 
  \I{\bd\K_j}{\tf G\cdot t_j}{\cH^1}
\end{equation*}
for all $\tf\in C^1(\edom)$. From 
\begin{equation*}
  \curl (\tf G) = \tf\curl G + D\tf\times G 
\end{equation*}
we obtain some kind of integration by parts on the manifolds $\K_j$ 
\begin{equation*}
  \I{\K_j}{\big(\tf\curl G\big)\cdot\nu^\dom}{\cH^2} + 
  \I{\K_j}{\big(D\tf\times G\big)\cdot\nu^\dom}{\cH^2} =
  \I{\bd\K_j}{\tf G\cdot t_j}{\cH^1} \,.
\end{equation*}
If we sum up the integrals on $\K_1$ and $\K_2$, the boundary terms cancel out
by $t_1=-t_2$. Moreover, 
\begin{equation*}
  \big(D\tf\times G\big)\cdot\nu^\dom = \big(G\times\nu^\dom\big)\cdot D\tf \,.
\end{equation*}
Thus, \reff{dt-ex6-1} becomes
\begin{equation*}
  \divv{(\tf F)}(\dom) = \I{\dom}{F\cdot D\tf}{\lem} = 
  \I{\bd\dom}{\big(G\times\nu^\dom\big)\cdot D\tf}{\cH^2} 
\end{equation*}
for all $\tf\in C^1(\edom)$. For $\tf\in\woinf{\edom}$ 
we consider $\me_{\bd\dom}$ from Proposition~\ref{tt-s3} with $\alse=\dom$ 
and $\gamma(\delta)=\delta$. By arguments as in  
Example~\ref{dt-ex5} we can apply Corollary~\ref{tt-s4} to get
\begin{equation*}
  \divv{(\tf F)}(\dom) = \I{\dom}{F\cdot D\tf}{\lem} = 
  \sI{\bd\dom}{\big(G\times\nu^\dom\big)\cdot D\tf}{\mu_{\bd\dom}} 
\end{equation*}
for all $\tf\in\woinf{\edom}$.
\end{example}

In contrast to Example~\ref{dt-ex2} we now consider a constant vector field
$F$ and a similar open set 
$\dom\subset\R^2$ having infinite perimeter but such that 
\reff{dm-monster} is satisfied. This tells us that the choice of the measures 
$\lF$, $\mF$ independent of $\delta>0$ is not restricted to $\dom$ with
finite perimeter. 

\begin{example}  \label{dt-ex7}
In $\R^2$ we consider the open set
\begin{equation*}
  \edom=\dom:=\bigcup_{k=1}^\infty R_k \qmq{with}
  R_k:=\big(\tfrac1k,\tfrac1k+\tfrac{1}{k^2}\big) \times 
  \big(0,\tfrac{1}{k}\big) 
\end{equation*}
and the constant vector field
\begin{equation*}
  F\in\cL^1(\edom) \qmq{with} 
  F=\big(\begin{smallmatrix} 1 \\ 0 \end{smallmatrix}\big) \,.
\end{equation*}
Since $\frac1k+\frac{1}{k^2}<\frac{1}{k-1}$, the $R_k$ are pairwise disjoint.
Clearly $F\in\cD\cM^1(\edom)$ and $\dom$ has infinite perimeter. 
In order to verify \reff{eq:monster_ub} we first study the supremum in
\reff{eq:monster_ub} over
\begin{equation*}
  M_\delta := \big\{\tf\in\woinf{\edom} \:\big|\: 
  \|\tf_{|(\bd\dom)_\delta\cap\dom}\|_{\cW^{1,\infty}}\le 1 \}
\end{equation*}
merely on some fixed $R_k$ for some given $\delta>0$. The coarea
formula gives that
\begin{equation} \label{dt-ex7-1}
  \I{(\bd R_k)_\delta\cap R_k}{\tf\F D\port{\bd\dom}_\delta}{\cL^2} =
  \int_{\frac{\delta}{2}}^\delta 
  \I{\bd((R_k)_{-\tau})}{\tf\F D\port{\bd\dom}_\delta}{\cH^1}\, d\tau \,
\end{equation}
where the inner integral on the right hand side vanishes if $(R_k)_{-\tau}$ is
empty. For ``relevant'' $\tau$ that inner integral  
has to be taken on the boundary of a (translated) rectangle of the form
\begin{equation} \label{dt-ex7-2}
  R:= (-a,a)\times(0,b) \qmq{with} 0<a<\tfrac{1}{2k^2}\,,\;
  0<b<\tfrac1k\,.
\end{equation}
The integral vanishes on the short sides of $R$ by
$F D\port{\bd\dom}_\delta=0$ and one has
\begin{equation*}
  \F D\port{\bd\dom}_\delta=\pm\tfrac2\delta \qmq{on} 
  \K_\pm := \{\pm a\}\times(0,b) \,. 
\end{equation*}
For an estimate of the supremum in \reff{eq:monster_ub} we consider 
$\cW^{1,\infty}$-functions $\tf$ on a small neighborhood of $\bd R$
with $\|\tf\|_\infty\le 1$ and $\|D\tf\|_\infty\le 1$. 
Using $\tf_\pm(y):=\tf(\pm a,y)$ we want to maximize 
\begin{equation} \label{dt-ex7-3}
  \int_0^b\tf_+-\tf_-\,dy 
\end{equation}
for such $\tf$ (cf. Figure~\ref{fig:monster}).
\begin{figure}[h!]          
\centering
  \includegraphics[width=3.5cm]{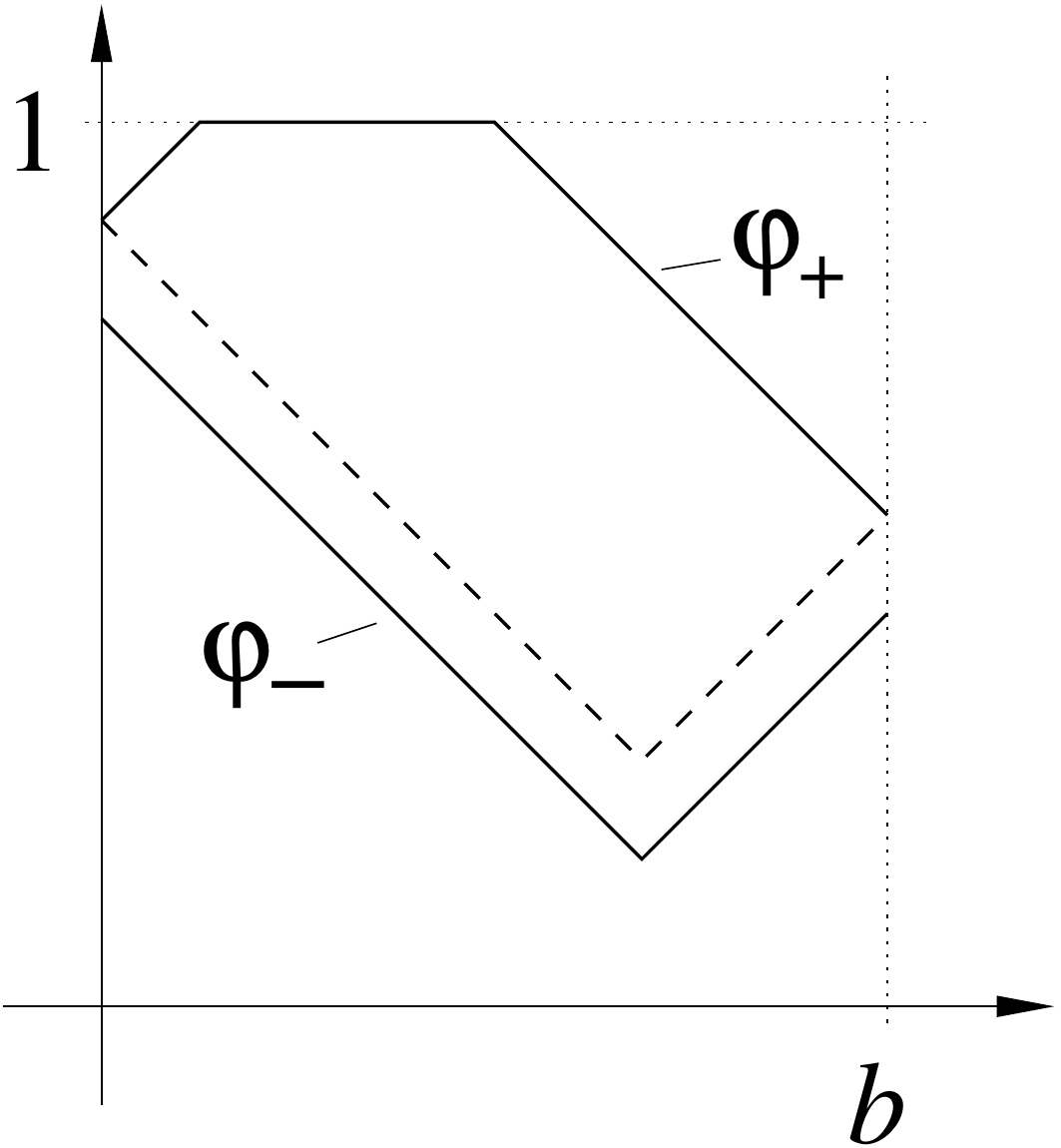} 
  \caption{The figure shows the graph of $\tf_+$ and $\tf_-$ where the dashed 
   graph is a translation of $\tf_-$.}  
  \label{fig:monster}
\end{figure}
Thus we have to look for $\tf$ with $\tf_+\ge\tf_-$ such that the 
area between the graphs of $\tf_\pm$ becomes maximal. 
Since an additive constant for $\tf$ doesn't change the integral, we can
assume that the maximum of $\tf_+$ on $[0,b]$ equals~$1$.  
Hence, by $\|D\tf\|_\infty\le 1$ and the size of $R$, 
we always have $\tf\ge-1$, i.e. we do not
have to take care explicitely for that constraint.
Let us now briefly assume that $\tf_+(0)$, $\tf_+(b)$ are fixed. 
For such $\tf_+$ we denote the smallest $y\in[0,b]$ with $\tf_+(y)=1$ 
by $y_1$ and the largest by $y_2$. In order to maximize the integral 
for such $\tf_+$, we use 
\begin{equation*}
  \tf_+(y)=\tf_+(0) + \int_0^y\tf_+'(\tau)\, d\tau
\end{equation*}
to see that $\tf_+'$ has to equal $1$ on $[0,y_1]$. Analogously  
$\tf_+'$ has to equal $-1$ on $[y_2,b]$ and, clearly, $\tf_+=1$ on
$[y_1,y_2]$. The same way we get that the optimal 
$\tf_-$ with fixed values on the boundary first has to decay with slope
$-1$ and then it grows with slope $1$, where we have used that 
always $\tf_-\ge-1$ (cf. Figure~\ref{fig:monster}). 
If $\tf_+$ equals $1$ on a nontrivial interval,
then we can enlarge the integral in \reff{dt-ex7-3}
by a translation of $\tf$ such 
that $\tf_+=1$ merely at a single point $\tilde y\in[0,b]$.
Moreover, for a maximal value in \reff{dt-ex7-3} 
the $\tf$ should have maximal slope on the short sides of $R$ such that 
\begin{equation*}
  \tf_-(0)=\tf_+(0)-2a\,, \quad \tf_-(b)=\tf_+(b)-2a\,.
\end{equation*}
Now it is sufficient to look for an optimal $\tf$ in this subclass of 
admissible $\tf$. Clearly the ``shape'' of the optimal $\tf_\pm$ is not
influenced if we add $2a$ to $\tf_-$ for a moment (cf. dashed graph in
Figure~\ref{fig:monster}) such that
the graphs of $\tf_\pm$ form a rectangle with sides having 
length $\sqrt{2}\tilde y$ and $\sqrt{2}(b-\tilde y)$.
Hence the sum is independent of $\tilde y$ and always equals $\sqrt{2}b$.
Therefore the area of the rectangle between the graphs becomes maximal if it
is a square. Consequently there is some $\tf$ that maximizes  
\reff{dt-ex7-3} under the considered constraint and it satisfies
(with the actual not shifted graph of $\tf_-$) 
\begin{equation*}
  \tf_+\big(\tfrac{b}{2}\big)=1\,, \z 
  \tf_+(0)=\tf_+(b)=1-\tfrac{b}{2}\,, \z
\end{equation*}
\begin{equation*}
    \tf_-(0)=\tf_-(b)=1-\tfrac{b}{2}-2a\,, \z
  \tf_-\big(\tfrac{b}{2}\big)=1-b-2a\,.
\end{equation*}
Therefore
\begin{equation*}
  \int_0^b\tf_+-\tf_-\,dy = \Big(\frac{\sqrt{2}b}{2}\Big)^2 + 2ab = 
  \frac{b^2}{2}+2ab\,
\end{equation*}
for the maximal $\tf$. Since $(R_k)_{-\tau}\subset R_k$
is a nonempty rectangle for $\tau<\frac{1}{2k^2}$, we can use the bounds from
\reff{dt-ex7-2} to get 
\begin{equation*}
  \sup_{\tf\in M_\delta} \:
  \I{\bd((R_k)_{-\tau})}{\tf\F D\port{\bd\dom}_\delta}{\cH^1} \le
  \sup_{\tf\in M_\delta} \I{\bd R_k}{\tf\F D\port{\bd\dom}_\delta}{\cH^1} =
  \frac{1}{\delta k^2} + \frac{2}{\delta k^3} \,
\end{equation*}
for $\tau\in\big[\frac\delta2,\delta\big]\cap\big(0,\frac{1}{2k^2}\big)$. 
Since the left hand side vanishes if $(R_k)_{-\tau}=\emptyset$, the estimate
is even true for a.e. $\tau\in\big[\frac\delta2,\delta\big]$. 
Though a 
``good'' $\tf\in M_\delta$ for the supremum in \reff{eq:monster_ub}
might not be optimal for each $\bd((R_k)_{-\tau})$, from \reff{dt-ex7-1}
we get at least the estimate  
\begin{equation*}
  \sup_{\tf\in M_\delta}
  \I{(\bd R_k)_\delta\cap R_k}{\tf\F D\port{\bd\dom}_\delta}{\lem} \le
  \frac{1}{2k^2} + \frac{1}{k^3} \qmq{for all} k\in\N\,,\: \delta>0\,.
\end{equation*}
Hence
\begin{equation*}
  \sup_{\tf\in M_\delta}
  \I{(\bd\dom)_\delta\cap\dom}{\tf\F D\port{\bd\dom}_\delta}{\lem} \le
  \sum_{k=1}^\infty\Big(\frac{1}{2k^2} + \frac{1}{k^3}\Big) < \infty
  \qmq{for all} \delta>0\,.
\end{equation*}
But this implies \reff{eq:monster_ub}. Thus,
according to Proposition~\ref{dt-s2} and Lemma~\ref{lem:monster2}, 
we can choose $\lF$, $\mF$ in \reff{dt-s1-1} independent of 
$\delta$ with core in $\bd\dom$.

Assume that $\mF=0$. Then we can consider functions $\tf\in\woinf{\dom}$ 
that vanish outside some fixed $R_k$. These $\tf$ are continuously extendable
onto $\ol{R_k}$. From the usual 
Gauss-Green formula on $R_k$ we obtain that the restriction of $\lF$ on $R_k$
is related to the Radon measure 
\begin{equation*}
  \sigma_k=\reme{F\nu^{R_k}}{\bd R_k} \qmq{where}
  |\lF|(R_k)=|\sigma_k|(\bd R_k)=\tfrac{2}{k} 
\end{equation*}
(cf. Proposition~\ref{acm-ram}). But this is a contradiction, since 
$|\lF|(\dom)$ wouldn't be finite. Therefore $\mF\ne 0$. Let us  
briefly sketch how the measures $\lF$ and $\mF$
can be chosen. We fix some $\tilde k\in\N$ and set
\begin{equation*}
  \dom_1:=\bigcup_{k=1}^{\tilde k} R_k\,, \quad
  \dom_2:=\bigcup_{k=\tilde k+1}^\infty R_k\,.
\end{equation*}
Obviously $\dom_1$ is an open set with Lipschitz boundary and we can choose 
$\lF$ corresponding to the Radon measure $\sigma_F$ on $\bd\dom_1$ 
from the boundary integral in the usual Gauss-Green formula on $\dom_1$
(cf. also Example~\ref{dt-ex2}).  
For $\mF$ we first argue similar to Example~\ref{dt-ex5} on some fixed $R_k$
to get for $\tf\in C^1(\ol{R_k})$
\begin{eqnarray*}
  \I{\bd R_k}{\tf F\cdot\nu^{R_k}}{\cH^1}
&=&
 \int_0^{\frac1k} \tf(\tfrac1k+\tfrac{1}{k^2},y)-\tf(\tfrac1k,y)\, dy \\
&=&
  \int_0^{\frac1k} 
  y\big(\tf_y(\tfrac1k,y)-\tf_y(\tfrac1k+\tfrac{1}{k^2},y)\big) \, dy \\
&&
  + \, \tfrac1k 
  \big(\tf(\tfrac1k+\tfrac{1}{k^2},\tfrac1k)-\tf(\tfrac1k,\tfrac1k)  \big) \\
&=&
  \int_0^{\frac1k} 
  y\big(\tf_y(\tfrac1k,y)-\tf_y(\tfrac1k+\tfrac{1}{k^2},y)\big) \, dy \\
&&
 + \, \tfrac1k \int_{\frac1k}^{\frac1k+\frac{1}{k^2}}
 \tf_x(x,\tfrac1k) \, dx \\
&=&
  \I{\bd R_k}{G D\tf}{\cH^1} 
\end{eqnarray*}
with a suitable vector field $G$ on $\ol{R_k}$. The measure
\begin{equation*}
  \reme{G\cH^1}{\bd R_k} \qmq{has total variation}
  \tfrac{1}{k^2}+\tfrac{1}{k^3}
\end{equation*}
and we can construct $\mF$ on $R_k$ similar to Example~\ref{dt-ex5}. Summing
up over $k>\tilde k$ we finally get $\mF$ on $\dom_2$.
\end{example}

Let us come back to Example~\ref{dt-ex1}. We construct an uncountable
family of measures $(\lF^\rho,\mF^\rho)$ satisfying
\reff{dt-s1-1} where even $\lF=0$ is possible.

\begin{example}\label{ex:rotsing}   
Let $\edom=B_2(0)$ as in Example~\ref{dt-ex1}, but 
for technical simplicity we now consider the cone
\begin{equation*}
  \dom := (0,\infty)^2 \cap B_1(0)\subset \R^2 \,.
\end{equation*}
For the vector field 
\begin{equation*}
  F(x,y) := \frac{1}{2\pi(x^2+y^2)}
    \begin{pmatrix} -y \\ x \end{pmatrix}\,
\end{equation*}
we already now that $F\in\dmo{\edom}$ with $\divv F=0$ and that $\mF\ne 0$ is
impossible in \reff{dt-s1-1}. Moreover we have \reff{dm-monster} and case (C)
for small $\delta>0$. For any $\rho\in(0,1]$ we now construct 
Radon measures $\sigma^\rho_F$ supported on $\bd\dom$ and $\mF^\rho$ 
in $\op{ba}(\edom,\cB(\edom),\cL^2)^2$ with
core in $\bd\dom$ such that
\begin{equation*}\label{ex:rotsing-1}
  \divv(\tf F)(\dom) = 
  \I{\bd\dom}{\tf}{\sigma_F^\rho} + \sI{\bd\dom}{D\tf}{\mF^\rho} 
  \qmq{for all} \tf\in\woinf{\edom}\,.
\end{equation*}
Notice that the measures $\mF^\rho$ are pure by Proposition~\ref{acm-s1}.

For fixed $\rho\in (0,1]$ we now set
\begin{equation*}
  \dom^\delta := \big(\dom\setminus B_\delta(0)\big)\cap B_\rho(0), \quad
  \tilde\dom := \dom\setminus \ol{B_\rho(0)} \qmq{for} \delta\in(0,\rho)\,.
\end{equation*}
Obviously $F$ is tangential to circles around the origin. Then, with the
classical Gauss-Green theorem, we get for all $\tf\in\woinf{\edom}$ 
\begin{eqnarray*}
  \divv(\tf F)(\dom^\delta\cup\tilde\dom) 
&=&
   \I{\bd\dom^\delta}{\tf F\nu^{\dom^\delta}}{\ham^1} +
  \I{\bd{\tilde\dom}}{\tf F\nu^{\tilde\dom}}{\ham^1} \\
&=&
  I_\delta + \I{\bd{\tilde\dom}}{\tf}{\tilde\sigma_F}
  \;\overset{\delta\to 0}{\longrightarrow}\; \divv(\tf F)(\dom)
\end{eqnarray*}
with the Radon measure
\begin{eqnarray}
  \tilde\sigma_F
&=&
  -\frac{1}{2\pi x}\cH^1\lfloor\big([\rho,1]\times\{0\}\big) 
  + \frac{1}{2\pi y}\cH^1\lfloor\big(\{0\}\times[\rho,1]\big) 
  \quad\qmq{and}  \nonumber\\
  I_\delta 
&=&\label{ex:rotsing-2}
  \frac{1}{2\pi} \bigg(
  \int_\delta^\rho \frac{\tf(0,0)-\tf(x,0)}{x} \: dx +
  \int_\delta^\rho \frac{\tf(0,y)-\tf(0,0)}{y} \: dy \bigg)  \,
\end{eqnarray}
where $\tf(0,0)$ is included additionally. We use the Lipschitz continuous
function 
\begin{equation*}
  \psi(x,y) := \tf(0,y)-\tf(x,y)  \qmq{satisfying}
  |\psi(x,y)|\le x\, \|D\tf\|_\infty  \,
\end{equation*}
to treat the first integral in $I_\delta$. Though the limit of the integral
for $\delta\to 0$ exists by dominated convergence, the limit of the measures
$\tfrac{1}{x}\cL^1\lfloor\,[\delta,\rho]$ for $\delta\to 0$ does not give a
finite measure. Hence the limit of $I_\delta$ cannot contribute to $\sigma_F$.
We therefore use integration by parts to get a 
contribution to $\mF$. The difficulty that $D\tf$ might not exist on
$\bd\dom^\delta$ is circumvented by fattening up the boundary.   
Clearly, $\psi(\cdot,y)$ is absolutely continuous for all $y$.
Since $D\tf$ exists $\cL^2$-a.e., the fundamental theorem of calculus 
implies 
\begin{equation}\label{ex:rotsing-5}
  \psi(x,y)\ln x\Big|_{x=\delta}^\rho =
  \int_\delta^\rho \Big( \psi_x(x,y)\ln x + \psi(x,y)\frac{1}{x} \Big) \,dx
  \quad\qmz{for a.e.} y
\end{equation}
(cf. \cite[p.~1019]{zeidler_IIB}, \cite[p.~164, 235]{evans}).
Notice that
\begin{equation*}
  |\psi(x,y)\ln x| \le x|\ln x|\,\|D\tf\|_\infty
  \overset{x\to 0}{\longrightarrow} 0\,, \quad
  \Big|\frac{\psi(x,y)}{x}\Big| \le \|D\tf\|_\infty\,.
\end{equation*}
Using dominated convergence and $\psi_x=-\tf_x$, 
we can take the limit $\delta\to 0$ in \reff{ex:rotsing-5}. Then 
we integrate over $[0,\tau]$ with respect to $y$ and divide everything 
by $\tau$ to get
\begin{equation}\label{ex:rotsing-6}
  \frac{1}{\tau}\int_0^\tau\psi(\rho,y)\ln\rho \, dy =
  \I{[0,\rho]\times[0,\tau]}{D\tf}{\me'_{F,\tau}} +
  \frac{1}{\tau}\int_0^\tau\int_0^\rho 
  \psi(x,y)\frac{1}{x} \,dx dy \,
\end{equation}
with the vector measures
\begin{equation*}
  \me_{F,\tau}' = - \tfrac{1}{\tau} 
  {\scriptsize \begin{pmatrix}\ln x \\ 0 \end{pmatrix} }\cL^2
  \lfloor \big([0,\rho]\times[0,\tau]\big) \in
  \op{ba}(\edom,\cB(\edom),\cL^2)^2\,.
\end{equation*}
 Obviously 
\begin{equation*}
  |\me'_{F,\tau}|(\edom) \le \int_0^1|\ln x| \: dx = 1  
  \qmz{for all} \tau>0\,, \quad
  \cor{\me'_{F,\tau}}\subset [0,\rho]\times[0,\tau]\,.
\end{equation*}
Hence, by the Alaoglu theorem, the measures
$\big\{\me'_{F,\frac{1}{k}}\big\}_k$ have a weak$^*$ cluster point 
\begin{equation*}
  \mF' \in \op{ba}(\edom,\cB(\edom),\cL^2) \qmq{with}
  \cor{\mF'}\subset\bd\dom\cap B_\rho(0) \,.
\end{equation*}
In \reff{ex:rotsing-6} the limit for $\tau\to 0$ exists for the most left and
the most right integral by continuity of $\psi$ (notice that 
$\frac{\psi(x,y)}{x}$ is bounded and continuous on 
$(0,\rho]\times[0,\tilde\tau]$ for some $\tilde\tau>0$ and that 
$\frac{1}{\tau}\int_0^\tau\frac{\psi(x,y)}{x}\,dy\to\frac{\psi(x,0)}{x}$ as
$\tau\to 0$ for all $x\in(0,\rho)$).
Hence we can take the limit $\tau\to 0$ in \reff{ex:rotsing-6}
and, with the definition of $\psi$, we obtain  
\begin{equation}\label{ex:rotsing-7}
  \big(\tf(0,0)-\tf(\rho,0)\big) \ln\rho \:=\:
   \sI{[0,\rho]\times\{0\}}{\!D\tf}{\mF'} +
   \int_0^\rho  \frac{\tf(0,0)-\tf(x,0)}{x} \,dx \,
\end{equation}
for all $\tf\in\woinf{\edom}$.
Analogous arguments for the second integral in \reff{ex:rotsing-2}
give a weak$^*$ cluster point $\mF''$ of the vector measures 
\begin{equation*}
  \me''_{F,\tau} = \tfrac{1}{\tau} 
  {\scriptsize \begin{pmatrix} 0 \\ \ln y \end{pmatrix} }\cL^2
  \lfloor \big([0,\tau]\times [0,\rho]\big) \in
  \op{ba}(\edom,\cB(\edom),\cL^2)^2\,
\end{equation*}
with $\cor{\mF''}\subset\bd\dom\cap B_\rho(0)$. Now we use
\reff{ex:rotsing-7} and the analogous equation with $\mF''$
to replace the two integrals in \reff{ex:rotsing-2}.
Then, with
\begin{equation*}
  \mF^\rho:=-\tfrac{1}{2\pi}(\mF'+\mF'')\,, \quad
  \sigma_F^\rho := \tilde\sigma_F + 
  \tfrac{\ln\rho}{2\pi}\big(\delta_{(0,\rho)} - \delta_{(\rho,0)}\big) \,,
\end{equation*}
we finally obtain
\begin{eqnarray}
  \divv(\tf F)(\dom) 
&=&
  \lim_{\delta\to 0}I_\delta + \I{\bd{\tilde\dom}}{\tf}{\tilde\sigma_F} 
  \nonumber\\
&=&  \label{ex:rotsing-8}
  \I{\bd\dom}{\tf}{\sigma_F^\rho} + 
  \sI{\bd\dom\cap B_\rho(0)}{D\tf}{\mF^\rho} \,
\end{eqnarray}
for all $\tf\in\woinf{\edom}$ and all $\rho\in(0,1]$ where
\begin{equation*}
  \I{\bd\dom}{\tf}{\sigma_F^\rho}
  = \I{\bd\dom\setminus B_\rho(0)}{\tf F\nu^\dom}{\cH^1}
  + \frac{\ln\rho}{2\pi}\big( \tf(0,\rho)-\tf(\rho,0) \big) \,.
\end{equation*}
Notice that this covers the special case $\sigma_F^1=0$ for $\rho=1$.
For $\tf\in C^1(\edom)$ we can argue by continuity to get 
\begin{equation*}
  \sI{\bd\dom\cap B_\rho(0)}{D\tf}{\mF^\rho} =
  \frac{1}{2\pi} 
  \Big( \int_0^\rho \tf_x(x,0)\ln x\, dx - 
        \int_0^\rho \tf_y(0,y)\ln y\, dy \Big) \,.
\end{equation*}
Moreover, from \reff{ex:rotsing-8} with $\tf=1$ on $\edom$, we get the
Gaussian formula 
\begin{eqnarray*}
  0 = \I{\dom}{\divv F}{\lem} = 
  \I{\bd\dom\setminus B_\rho(0)}{\tf F\nu^\dom}{\cH^1} 
\end{eqnarray*}
for any $\rho\in(0,1]$. For $\rho=1$ this includes the exotic special case
\begin{equation*}
  0 = \I{\dom}{\divv F}{\lem} = 
  \I{\emptyset}{\tf F\nu^\dom}{\cH^1} \,.
\end{equation*}
Let us finally provide an explicit computation for \reff{ex:rotsing-8} 
with the simple function $\tf(x,y)=x$ as demonstration. 
Using polar coordinates for the first volume integral we readily get
\begin{eqnarray*}
  -\frac{1}{2\pi} 
&=& \I{\dom}{FD\tf}{\cL^2} \: = \: \divv(\tf F)(\dom) \\
&=&
  - \int_\rho^1 \frac{\tf(x,0)}{2\pi x}\, dx - \frac{\ln\rho}{2\pi}\tf(\rho,0)
  + \int_0^\rho \frac{\tf_x(x,0)\ln x}{2\pi}\, dx  \\
&=&
  - \frac{1-\rho}{2\pi} - \frac{\rho\ln\rho}{2\pi} + 
    \frac{\rho\ln\rho-\rho}{2\pi}
\: = \:
  -\frac{1}{2\pi} \,
\end{eqnarray*}
for all $\rho\in(0,1)$. 
\end{example}

\medskip

\begin{proof+}{ of Proposition~\ref{dt-s5}}   
For (1) let $\tf\in\woinf{\edom}$ and $\delta\in(0,\tilde\delta)$
such that \reff{dt-s5-0} is satisfied. Hence $\tf_{|\bd\dom}$ is Lipschitz
continuous on $\bd\dom$ and there is a Lipschitz continuous extension 
 $\til{\phi}$ onto $\R^n$ with 
\begin{equation*}
  \|\tilde\tf\|_{\op{Lip}(\edom)} \le \|\tf_{|\bd\dom}\|_{\op{Lip}(\bd\dom)} 
\end{equation*}
(cf. \cite[p.~452]{silhavy_divergence_2009}).
By $\tilde\tf=\tf$ on $\bd\dom$, Corollary~\ref{ex:dmo_woi_c} implies
\begin{equation*}
  \divv (\phi F) (\dom) = \divv (\tilde\phi F) (\dom) \,.
\end{equation*}
Therefore, by \reff{ex:dmo_woi-1} and \reff{prop:dual0-1}
we get
\begin{eqnarray*}
  \divv{(\port{\bd\dom}_\delta \phi\funv)}(\dom) 
& = &
  \divv (\phi F) (\dom) = \divv (\tilde\phi F) (\dom)    \\
& \le &
  \|F\|_{\cD\cM^1(\edom)} \|\tilde\tf\|_{W^{1,\infty}(\edom)} \\
& \le &
  \|F\|_{\cD\cM^1(\edom)} \|\tilde\tf\|_{\op{Lip}(\edom)} \le 
  \|F\|_{\cD\cM^1(\edom)} \|\tf_{|\bd\dom}\|_{\op{Lip}(\bd\dom)}   \\
&\le&
  c \|F\|_{\cD\cM^1(\edom)} \|\tf\|_{\woinf{(\bd\dom)_\delta\cap\edom}} \,.
\end{eqnarray*}
But this readily implies \reff{dm-monster}.

For (2) we verify the assumptions of (1). Obviously 
$\lem(\dom\setminus\op{int}\dom)=0$ and all 
$\tf\in\woinf{\edom}$ have a continuous extension onto~$\cl\dom$. 
By definition of Lipschitz boundary we
can cover $\bd\dom$ by finitely many open cylinders $C_j$ with $j=1,\dots,m$
such that for Lipschitz continuous functions $\gamma_j:\R^{n-1}\to\R$
and suitable $r_j,h_j>0$ up to translation and rotation
\begin{equation*}
  C_j=\{(x',t)\mid |x'|<r_j,\: |t|<h_j \}\,, \quad
  |\gamma_j(x')|<\tfrac{h_j}{3}\,,
\end{equation*}
\begin{equation*}
  \dom\cap C_j = \{(x',t)\mid |x'|<r_j,\: \gamma_j(x')<t \} \quad
\end{equation*}
where $(x',t)\in\R^{n-1}\times\R$ 
(cf. \cite[p.~127, 177]{evans}). By compactness of $\bd\dom$ 
we can find some 
$\rho>0$ such that $\rho<\tfrac{h_j}{3}$ for all $j$ and such
that for all $x\in\bd\dom$ there is $j_x$ with   
\begin{equation*}
  B_\rho(x)\subset C_{j_x}\,, \quad j_x\in\{1,\dots,m\} \,.
\end{equation*}
Moreover we can assume that $|x-y|>\rho$ if $x,y$ belong to different
components of~$\ol\dom$. 
Since all $C_j$ are bounded, there is some $c_0>0$ such that
\begin{equation} \label{dt-s5-1}
  \op{diam}(C_j)< c_0\rho \qmz{for all} j
\end{equation}
where $\op{diam}(C_j)$ denotes the diameter. 

Let us now fix $\delta\in(0,\rho)$ and $\tf\in\woinf{\edom}$. Then 
\begin{equation*}
  L=\|\tf\|_{\woinf{(\bd\dom)_\delta\cap\edom}}
\end{equation*}
is a bound for $\tf$ on $(\bd\dom)_\delta\cap\edom$ and it is 
a Lipschitz constant of $\tf$ on segments 
$[x,y]\subset(\bd\dom)_\delta\cap\edom$.
First we assume that $x,y\in\bd\dom$ such that $y\in B_\rho(x)$. Then 
\begin{equation*}
  [x,y] \qmq{belongs to}  C_{j_x} \,.
\end{equation*}
We can move the points of the segment $[x,y]$ parallel to the axis
of the cylinder $C_{j_x}$ to get a polygonal curve 
$P\subset (\bd\dom)_\delta\cap\ol\dom$ connecting finitely many points 
\begin{equation*}
  x=x^0,x^1,\dots,x^{k}=y\in (\bd\dom)_\delta\cap\ol\dom 
\end{equation*}
(cf. Figure~\ref{fig:lipbd}).
\begin{figure}[h!]          
\centering
  \includegraphics[width=3.5cm]{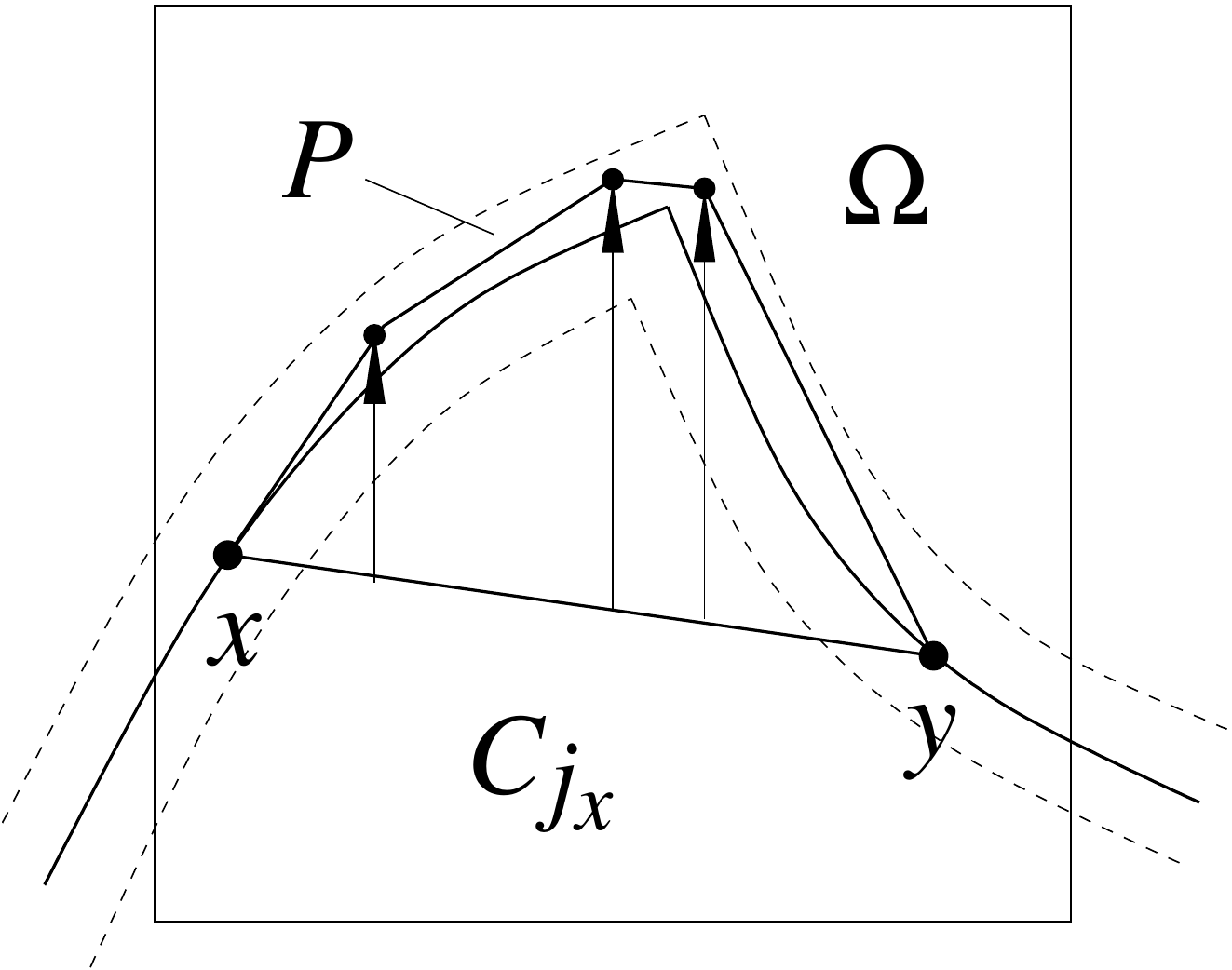} 
  \caption{The figure shows the boundary $\bd\dom$ (solid curve), its
    neighborhood $(\bd\dom)_\delta$ (dashed curves), the segment $[x,y]$ in 
    $C_{j_x}$, and the polygonal curve $P$ in $\ol\dom$.}  
  \label{fig:lipbd}
\end{figure}
There is a constant $c_1>0$ depending merely on the largest Lipschitz constant
of the $\gamma_j$ such that
\begin{equation*}
  \op{length}(P) \le c_1|x-y| \,.
\end{equation*}
Hence
\begin{equation*}
  |\tf(x)-\tf(y)| \le
  \sum_{j=1}^{k} |\tf(x^{j-1})-\tf(x^j)| \le 
  L \sum_{j=1}^{k} |x^{j-1}-x^j|  \le  c_1L|x-y| \,.
\end{equation*}
Let now $x,y\in\bd\dom$ be such that $y\not\in B_\rho(x)$ but that $x,y$ 
belong to the same component of $\ol\dom$. By the convexity of the
$C_j$ there are at most $m$ points 
$x_0=x,x_1,\dots,x_l=y\in\bd\dom$ such that the closed segments 
\begin{equation*}
   [x_{i-1},x_i] \qmq{belong to some}  C_j \,.
\end{equation*}
As above we can construct polygonal curves 
$P_i\subset (\bd\dom)_\delta\cap\ol\dom$ connecting $x_{i-1}$ and $x_i$ within 
$(\bd\dom)_\delta\cap\ol\dom$ to get
\begin{equation*}
  |\tf(x_{i-1})-\tf(x_i)| \le
  c_1L|x_{i-1}-x_i| \,.
\end{equation*}
Using that $y\not\in B_\rho(x)$ we get
\begin{eqnarray*}
  |\tf(x)-\tf(y)| 
& \le &
  \sum_{i=1}^l |\tf(x_{i-1})-\tf(x_i)| 
\; \overset{l\le m}{\le} \;
  mc_1L|x_{i-1}-x_i| \\
&\overset{\reff{dt-s5-1}}{\le}&
  mc_1Lc_0\rho \; \le \; c_0c_1mL|x-y| \,.
\end{eqnarray*}
Finally let $x,y\in\bd\dom$ belong to different components of $\ol\dom$. Then
$|x-y|>\rho$ and
\begin{equation*}
  |\tf(x)-\tf(y)| \le |\tf(x)| + |\tf(y)| \le 2L \le 
  \tfrac{2L}{\rho}|x-y| \,.
\end{equation*}
Summarizing all cases we use $c=1+\max\{1,\frac{2}{\rho},c_0c_1m\}$ to get
\begin{equation*}
  \|\tf_{|\bd\dom}\|_{\op{Lip}(\bd\dom)} =  
  \|\tf_{|\bd\dom}\|_{C(\bd\dom)} + \op{Lip}(\tf_{|\bd\dom})   \le cL =
  c \|\tf\|_{\woinf{(\bd\dom)_\delta\cap\edom}} \,.
\end{equation*}
Since $c$ is independent of $\delta\in(0,\rho)$ and $\tf\in\woinf{\edom}$,
the assumption of (1) is verified. 
\end{proof+}

\subsection{Normal measures}
\label{nm}

The linearity of the trace operator $T$ from Proposition
\ref{ex:dmo_woi} hints at a linear dependence of the measures $\lF$
and $\mF$ on $\F$. Moreover, the usual Gauss-Green formula
for smooth $F$ and regular $\dom$ given by
\begin{equation*}
  \divv(\tf F)(\dom) = \I{\bd\dom}{\tf F\cdot\nu^\dom}{\cH^{n-1}}
\end{equation*}
indicates some dependence of the boundary term on the outer unit normal field
$\nu^\dom$. Therefore we are interested in more structural information 
about $\lF$ and $\mF$. Here it turns out that the usage of a pointwise normal
field $\nu^\dom$ on the boundary $\bd\dom$ is too restrictive even if it
exists (in particular if $\divv F$ has concentrations there) and
that a pointwise trace function $F$ on the boundary might not exist. 
Therefore we are looking for
measures $\nu$ that extend the notion of pointwise normal fields. 
As a first idea we could consider some extension $\nu\in \bawl{\dom}^n$ of the
Radon measure $\nu^\dom\cH^{n-1}\lfloor\mbd{\dom}$ according to 
Proposition~\ref{acm-ram} for suitable sets of finite perimeter.
However, by the variety of extensions, 
we wouldn't get enough information about $\nu$ for a general Gauss-Green
formula. If $\divv F$ has concentrations on $\bd\dom$, we have e.g. to take
care for parts of the boundary belonging to $\dom$ by controlling
the aura of a corresponding $\nu$. Thus 
we need a more careful construction of such measures than
just any extension. Therefore we provide a general approach that even allows
some weight on 
$\bd\dom$. This way we finally obtain a more precise representation
of the boundary term in the general Gauss-Green formula \reff{dt-s1-1}
for a large class of cases.

For $\edom\subset\R^n$ open, bounded and $\dom\in\bor{\edom}$,
a measurable function $\chi: \edom \to [0,1]$ is said to be a 
\textit{good approximation} for $\chi_\dom$ if there are  
$\chi_k\in W^{1,\infty}(\edom)$ such that
\bgl
\item $\chi_k:\edom\to[0,1]$ is compactly supported on $\edom$ for all 
$k\in\N$,

\item \; $\lim\limits_{k \to \infty} \chi_k = \chi$ \z $\ham^{n-1}$-a.e. on
$\edom$, 

\item  $\chi_k = 1$ on $\dom_{-\frac{1}{k}}$, \z
$\chi_k=0$ on $\edom\setminus\dom_{\frac{1}{k}}$ for all $k$,

\item 
$\liminf\limits_{k \to \infty}  \|D\chi_k\|_{\cL^1(\edom)} < \infty$\,.
\el

\noi
We call $\{\chi_k\}$ an \textit{approximating sequence} for $\chi$.
Obviously 
\begin{equation*}
  \chi = 1 \zmz{on} \Int{\dom} \qmq{and}
  \chi = 0 \zmz{on} \Ext{\dom} \,.
\end{equation*}
Notice that the $\chi_k$ cannot be taken as
equivalence classes in $\woinf{\edom}$ and that 
$\dom$ need not to be compactly contained in $\edom$ for a good approximation. 
It might be an option for the treatment of bounded vector fields $F$ to 
consider also $\chi_k$ that merely belong to $\cB\cV(\edom)$
(recall that $\chi_k F\in\cD\cM^\infty(\edom)$ in this case;
cf. \cite[p.~97]{chen-comi}). However there was no need for that extension in
the present treatment. Now we see that,
despite degenerate cases, $\dom$ allowing a good approximation have to have
finite perimeter in~$\edom$. 

\begin{proposition} \label{nm-s2}
Let $\edom\subset\R^n$ be open and bounded and let $\dom\in\bor{\edom}$.
If there is a good approximation $\chi$ for $\chi_\dom$  with
$\|\chi - \chi_\dom\|_{\cL^1(\edom)}=0$, then $\dom$ has finite perimeter 
in~$\edom$. 
\end{proposition}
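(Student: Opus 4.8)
The plan is to show that $\chi$ itself is a BV function on $\edom$ and then to identify $\chi$ with $\chi_\dom$ in $\cL^1(\edom)$ so that $\op{Per}(\dom;\edom)=|D\chi_\dom|(\edom)=|D\chi|(\edom)<\infty$. First I would observe that the approximating sequence $\{\chi_k\}$ satisfies $0\le\chi_k\le1$, hence $\|\chi_k\|_{\cL^1(\edom)}\le\lem(\edom)<\infty$, and by property (iv) there is a subsequence (not relabelled) with $\sup_k\|D\chi_k\|_{\cL^1(\edom)}=:M<\infty$. So $\{\chi_k\}$ is bounded in $\cW^{1,1}(\edom)\subset\cB\cV(\edom)$, and in particular in $\cB\cV(\edom)$ with $\sup_k\|\chi_k\|_{\cB\cV(\edom)}\le\lem(\edom)+M$.

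Next I would invoke the standard compactness and lower semicontinuity properties of BV. By property (ii), $\chi_k\to\chi$ $\ham^{n-1}$-a.e.\ on $\edom$, hence also $\lem$-a.e.\ on $\edom$ (since $\ham^{n-1}$-null sets are $\lem$-null). Combined with the uniform bound $0\le\chi_k\le1$ and dominated convergence, $\chi_k\to\chi$ in $\cL^1(\edom)$. By the lower semicontinuity of the total variation with respect to $\cL^1$-convergence, $\chi$ has bounded variation in $\edom$ with
\begin{equation*}
  |D\chi|(\edom) \le \liminf_{k\to\infty} |D\chi_k|(\edom)
  = \liminf_{k\to\infty} \I{\edom}{|D\chi_k|}{\lem} \le M < \infty \,,
\end{equation*}
so $\chi\in\cB\cV(\edom)$.

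Finally I would use the hypothesis $\|\chi-\chi_\dom\|_{\cL^1(\edom)}=0$, which says $\chi=\chi_\dom$ $\lem$-a.e.\ on $\edom$, hence they define the same element of $\cL^1(\edom)$ and $|D\chi_\dom|(\edom)=|D\chi|(\edom)<\infty$. By the definition of perimeter in $\edom$ as the total variation of $D\chi_\dom$ on $\edom$, this gives $\op{Per}(\dom;\edom)<\infty$, i.e.\ $\dom$ has finite perimeter in $\edom$, as claimed. The only mild subtlety — and the one place where the hypotheses are genuinely used rather than routine — is the passage from "$\chi_k\to\chi$ $\ham^{n-1}$-a.e." plus the uniform bound to "$\chi_k\to\chi$ in $\cL^1(\edom)$", but since $0\le\chi_k\le1$ and $\lem(\edom)<\infty$ this is immediate from dominated convergence; no compactness theorem in BV is actually needed, only lower semicontinuity. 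I expect no real obstacle here; properties (i) and (iii) of a good approximation are not even required for this particular conclusion (they matter for the subsequent construction of normal measures).
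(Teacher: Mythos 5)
Your proof is correct and follows essentially the same route as the paper: $\cL^1$-convergence of $\chi_k$ to $\chi_\dom$ (via a.e.\ convergence, the uniform bound, and the hypothesis $\|\chi-\chi_\dom\|_{\cL^1}=0$) combined with lower semicontinuity of the total variation along a subsequence realizing the $\liminf$ in property (iv). The only cosmetic difference is that you first conclude $\chi\in\cB\cV(\edom)$ and then identify it with $\chi_\dom$, whereas the paper applies lower semicontinuity to $\chi_\dom$ directly; the arguments are identical in substance.
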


\noi
Note that $\lem(\bd{\dom\cap\edom})=0$ implies 
$\|\chi-\chi_\dom\|_{\cL^1(\edom)}=0$.

\begin{proof} 
Let $\chi$ be a good approximation for $\chi_\dom$ with corresponding 
approximating sequence $\{\chi_k\}\subset W^{1,\infty}(\edom)$. Since $\dom$ is
bounded and every $\ham^{n-1}$-null set is $\lem$-null set, 
\begin{equation*}
  \chi_k \xrightarrow{L^1} \chi_\dom \,.
\end{equation*}
By the lower semicontinuity of the total variation of BV functions,
\begin{equation*}
  |D\chi_\dom|(\edom) \le \liminf_{k\to\infty} \|D\chi_k\|_{\cL^1} < \infty \,.
\end{equation*}
Hence $\chi_\dom\in BV(\edom)$ and the assertion follows. 
\end{proof}

Let us now demonstrate that good approximations provide measures.

\begin{theorem}\label{bd-s5}
Let $\edom\subset\R^n$ be open and bounded, let $\dom\in\bor{\edom}$, and let 
$\chi$ be a good approximation for $\chi_\dom$ with approximating sequence
$\{\chi_k\}$. Then there is an associated measure  
$\nu\in\bawl{U}^n$  
such that 
\begin{equation*}
  \cor{\nu}\subset\bd\dom\,, \quad 
  |\nu|(\edom)=\|\nu\| \le \liminf_{k \to \infty} \|D\chi_k\|_{\cL^1(\edom)} \,,
\end{equation*}
\begin{equation} \label{bd-s5-0a}
  |\nu|(B) \le \limsup_{k\to\infty}\|D\chi_k\|_{\cL^1(B)}
  \qmq{for all} B\in\bor{\edom}\,,
\end{equation}
\begin{equation} \label{bd-s5-0}
  A = \bigcup_{k\ge k_0} \supp(D\chi_k) 
\end{equation}
is an aura of $\nu$ for each $k_0\in\N$, 
and for any $\tf\in\cL^\infty(\edom,\R^n)$ there is a subsequence 
$\{\chi_{k'}\}$ with 
\begin{equation}\label{bd-s5-1}
  \lim_{k'\to\infty}\I{U}{\tf\cdot D\chi_{k'}}{\lem} = 
  -\,\sI{\bd\dom}{\tf}{\nu} \,.
\end{equation}
Moreover, $\nu$ is a trace on $\bd\dom$ over $\cL^\infty(\edom,\R^n)$.
If $\|\chi - \chi_\dom\|_{\cL^1(\edom)}=0$, then
\begin{equation} \label{bd-s5-3}
  \sI{\bd\dom}{\tf}{\nu} =
  \I{\mbd\dom\cap\edom}{\tf\cdot\nu^\dom}{\ham^{n-1}}
  \qmq{for all} \tf\in C_c(\edom,\R^n) \,
\end{equation}
and 
\begin{equation} \label{bd-s5-2}
  |\nu|(B) \ge \big(\reme{\ham^{n-1}}{(\rbd{\dom}\cap\edom)}\big)(B) 
  \qmz{for all open} B\subset\edom \,.
\end{equation}
If $\edom=\dom$ with Lipschitz boundary, then 
\begin{equation} \label{bd-s5-3a}
  \sI{\bd\dom}{\tf}{\nu} =
  \I{\bd\dom}{\tf\cdot\nu^\dom}{\ham^{n-1}}
  \qmq{for all} \tf\in C(\ol\dom,\R^n) \,
\end{equation}
and we have \reff{bd-s5-2} with $\bd\dom$ instead of $\rbd\dom\cap\edom$ 
for all relatively open $B\subset\ol\dom$. 

\end{theorem}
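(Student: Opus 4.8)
The plan is to construct the measure $\nu$ as a weak$^*$ cluster point of the bounded sequence of vector measures $\nu_k := -\,D\chi_k\,\lem \in \bawl{\edom}^n$, and then to verify each listed property in turn. First I would observe that by property (iv) of a good approximation there is a subsequence, still denoted $\{\chi_k\}$, with $\sup_k \|D\chi_k\|_{\cL^1(\edom)} =: c < \infty$, so that $\|\nu_k\| = |D\chi_k|\lem(\edom) \le c$. Since $\bawl{\edom}^n = \cL^\infty(\edom,\R^n)^*$ by Proposition~\ref{pm-s5}, the Banach--Alaoglu theorem gives a weak$^*$ cluster point $\nu\in\bawl{\edom}^n$ with $\|\nu\| \le \liminf_k \|D\chi_k\|_{\cL^1(\edom)}$ (passing to the infimum over the original tail). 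For \reff{bd-s5-1}, given a fixed $\tf\in\cL^\infty(\edom,\R^n)$ one extracts a further subsequence $\{k'\}$ along which $\df{\nu_{k'}}{\tf} \to \df{\nu}{\tf}$, i.e. $\int_\edom \tf\cdot D\chi_{k'}\,d\lem \to -\,\df{\nu}{\tf}$; once we know $\cor\nu\subset\bd\dom$ this reads $-\,\sI{\bd\dom}{\tf}{\nu}$.

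The core statement and the aura statement are the geometric heart. For \reff{bd-s5-0}, fix $k_0$ and set $A = \bigcup_{k\ge k_0}\supp(D\chi_k)$. By property (iii), $D\chi_k = 0$ $\lem$-a.e. outside $\dom_{1/k}\setminus\dom_{-1/k}$, so $\supp(D\chi_k)\subset\overline{\dom_{1/k}\setminus\dom_{-1/k}}$; hence $A$ is contained in a neighborhood of $\bd\dom$ that shrinks as $k_0\to\infty$. Any $\tf\in\cL^\infty(\edom,\R^n)$ vanishing on $A$ satisfies $\int_\edom\tf\cdot D\chi_k\,d\lem = 0$ for all $k\ge k_0$, so $\df{\nu}{\tf}=0$ along the defining subnet, giving $|\nu|(A^c) = 0$ by \reff{pm-tva}; thus $A$ is an aura. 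Intersecting these auras over $k_0\in\N$ and using that $\bigcap_{k_0}\overline{\bigcup_{k\ge k_0}(\dom_{1/k}\setminus\dom_{-1/k})}\subset\bd\dom$ forces $\cor\nu\subset\bd\dom$ by Proposition~\ref{acm-s0} (or directly: any $x\notin\bd\dom$ has a ball disjoint from some such aura). That $\nu$ is a trace on $\bd\dom$ over $\cL^\infty(\edom,\R^n)$ is then immediate from $\cor\nu\subset\bd\dom$ and the definition \reff{tt-e2}. Estimate \reff{bd-s5-0a} follows the same way: for a Borel set $B$, restricting test functions to those supported near $B$ and using weak$^*$ lower semicontinuity of the total variation together with \reff{pm-tva} bounds $|\nu|(B)$ by $\limsup_k|D\chi_k|\lem(B)$; I expect some care is needed here to pass from open sets (where weak$^*$ convergence of $\nu_k$ controls $|\nu|$ from above on the closure) to general Borel $B$, and this bookkeeping is the main routine obstacle.

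For the identification statements under $\|\chi-\chi_\dom\|_{\cL^1(\edom)}=0$: by Proposition~\ref{nm-s2}, $\dom$ then has finite perimeter in $\edom$, so $D\chi_\dom = -\,\nu^\dom\,\reme{\ham^{n-1}}{(\rbd\dom\cap\edom)}$ as a vector Radon measure (Gauss--Green / De Giorgi structure theorem), and $\reme{\ham^{n-1}}{\rbd\dom} = \reme{\ham^{n-1}}{\mbd\dom}$ up to the usual $\ham^{n-1}$-null set. Since $\chi_k\to\chi_\dom$ in $\cL^1(\edom)$ with uniformly bounded variation, $D\chi_k \weakstarto D\chi_\dom$ as measures tested against $C_c(\edom,\R^n)$; hence for $\tf\in C_c(\edom,\R^n)$ every subsequential limit in \reff{bd-s5-1} equals $-\int_\edom \tf\,dD\chi_\dom = \int_{\mbd\dom\cap\edom}\tf\cdot\nu^\dom\,d\ham^{n-1}$, which is \reff{bd-s5-3} (and pins down $\df{\nu}{\tf}$ for all continuous compactly supported $\tf$, independent of the subnet). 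The lower bound \reff{bd-s5-2} for open $B\subset\edom$ then follows from \reff{bd-s5-3} by taking the supremum over $\tf\in C_c(B,\R^n)$ with $\|\tf\|_\infty\le 1$ and invoking \reff{pm-tva}: $|\nu|(B)\ge \sup_\tf \sI{\bd\dom}{\tf}{\nu} = \sup_\tf \int_{\mbd\dom\cap B}\tf\cdot\nu^\dom\,d\ham^{n-1} = (\reme{\ham^{n-1}}{(\rbd\dom\cap\edom)})(B)$. Finally, the Lipschitz case $\edom=\dom$: here $\bd\dom$ is $\ham^{n-1}$-rectifiable with $\ham^{n-1}(\bd\dom\setminus\rbd\dom)=0$, every $\tf\in\cL^\infty(\dom,\R^n)$ that is continuous up to $\ol\dom$ can be approximated and one upgrades \reff{bd-s5-3} to \reff{bd-s5-3a} by replacing $C_c(\edom,\R^n)$ with $C(\ol\dom,\R^n)$ and $\mbd\dom\cap\edom$ with $\bd\dom$, using that the $\chi_k$ are compactly supported in $\dom$ so no mass is lost at $\bd\dom$ in the limit; the analogue of \reff{bd-s5-2} with $\bd\dom$ in place of $\rbd\dom\cap\edom$ for relatively open $B\subset\ol\dom$ is then the same supremum argument over $\tf\in C(\ol\dom,\R^n)$ supported near $B$. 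The hard part throughout is keeping the subsequence extraction consistent — the cluster point $\nu$ is fixed once and for all, and for each individual $\tf$ one passes to a $\tf$-dependent subsequence — and ensuring the aura/core bookkeeping in \reff{bd-s5-0}--\reff{bd-s5-0a} is airtight; the measure-theoretic identifications are then standard consequences of lower semicontinuity of total variation and the structure theory of sets of finite perimeter.
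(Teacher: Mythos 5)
Your proposal is correct and follows essentially the same route as the paper: a weak$^*$ cluster point of $-D\chi_k\lem$ in $\cL^\infty(\edom,\R^n)^*\cong\bawl{\edom}^n$ chosen along a subsequence realizing the $\liminf$, per-$\tf$ subsequence extraction for \reff{bd-s5-1}, aura and core via test functions vanishing on $A^c$ resp.\ near $\bd\dom$, identification with $D\chi_\dom$ by integration by parts and the structure theorem when $\chi=\chi_\dom$ in $\cL^1$, and the lower bound \reff{bd-s5-2} by taking suprema over $C_c^1(B,\R^n)$ via \reff{pm-tva}. The one step you flag as a potential obstacle, \reff{bd-s5-0a} for general Borel $B$, is in fact immediate: since the duality is against $\cL^\infty$ rather than $C_0$, the function $\chi_B\tf$ is itself an admissible test function, so one estimates $|\nu|(B)-\eps\le\I{B}{\tf}{\nu}=\lim_{k'}\I{\edom}{\chi_B\tf\cdot D\chi_{k'}}{\lem}\le\limsup_k\|D\chi_k\|_{\cL^1(B)}$ directly, with no open-set approximation needed.
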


\noi
We call $\nu$ (outward) {\it normal measure} of $\dom$.
The proof shows that $\nu$ is a weak$^*$ cluster point of the measures 
$-D\chi_k\lem$ and, thus, it might be not unique in general. 
However it is uniquely determined by the subsequences entering \reff{bd-s5-1}. 
Below we provide examples showing that, for given $\dom$, there are different 
normal measures due to different auras. In particular we consider normal
measures with aura being completely inside or completely outside $\dom$ where,
in both cases, the vectors $\nu(B)$ are directed outward for small balls $B$
intersecting the boundary $\bd\dom$.
But notice that we always have the
same right hand side in \reff{bd-s5-3} for continuous $\tf$. If $\dom$ has
Lipschitz boundary, then $\mbd\dom=\bd\dom$ (cf. \cite[p.~50]{pfeffer}).
The inequality in \reff{bd-s5-2} can be strict as e.g. in the simple case
of an open ball $\dom=B\csubset\edom$ and $\nu=\nu_{\rm int}$ from 
Example~\ref{nm-int} below where 
$|\nu|(B)=\cH^{n-1}(\bd\dom)>(\reme{\cH^{n-1}}{\bd\dom})(B)=0$.

\begin{proof}   
Since $\chi$ is a good approximation, the measures $\nu_k:=-D\chi_k\lem$ 
with norm $\|\nu_k\|=\|D\chi_k\|_{\cL^1(\edom)}$
have a subsequence $\nu_{k'}$ that is bounded in $\cL^\infty(\edom,\R^n)^*$
and satisfies 
\begin{equation*}
  \lim_{k'\to\infty}\|D\chi_{k'}\|_{\cL^1(\edom)} = 
  \liminf_{k\to\infty}\|D\chi_k\|_{\cL^1(\edom)} \,.
\end{equation*}
This subsequence has a weak$^*$ cluster point in $\cL^\infty(\edom,\R^n)^*$
that we can identify with some 
\begin{equation*}
  \nu\in\bawl{U}^n \qmq{where} 
  |\nu|(\edom)=\|\nu\| \le \liminf_{k\to\infty} \|D\chi_k\|_{\cL^1(\edom)}\,.
\end{equation*}
Consequently we have \reff{bd-s5-1} where the 
subsequence $\{\chi_{k'}\}$ might depend on $\tf$. 
Hence $\I{\edom}{\tf}{\nu}=0$ for all $\tf$ vanishing on
$(\bd\dom)_\delta$ with some $\delta>0$. 
Thus $\cor\nu\subset\bd\dom$ and $\nu$ is a trace on
$\bd\dom$ over  $\cL^\infty(\edom,\R^n)$. Taking $\tf$ vanishing on
$\edom\setminus A$, we readily get from \reff{bd-s5-1} that $A$ is an aura of
$\nu$. For $B\in\bor{\edom}$ and $\eps>0$ there is some 
$\tf\in\cL^\infty(\edom,\R^n)$ with $\|\tf\|_\infty\le 1$ 
and a subsequence $\chi_{k'}$ such that
\begin{eqnarray*}
  |\nu|(B) - \eps 
&\le&
  \sI{\bd\dom}{\chi_B\tf}{\nu} 
= \lim_{k'\to\infty}\I{\edom}{\chi_B\tf\cdot D\chi_{k'}}{\lem} \\
&\le&
  \limsup_{k'\to\infty} \I{B}{|D\chi_{k'}|}{\lem} \le
  \limsup_{k\to\infty} \|D\chi_k\|_{\cL^1(B)}
\end{eqnarray*}
(cf. \reff{pm-tva}). 

Let now $\|\chi - \chi_\dom\|_{\cL^1(\edom)}=0$.
Then $\chi_k\to\chi_\dom$ in $\cL^1(\edom)$ and $\dom$ has finite perimeter in
$\edom$ by Proposition~\ref{nm-s2}. 
For $\tf\in C^\infty_c(\edom,\R^n)$ the definition of weak derivatives gives
\begin{equation} \label{bd-s5-5}
   \I{\edom}{\tf\cdot D\chi_k}{\lem} = - \I{\edom}{\chi_k\divv\tf}{\lem}\,.
\end{equation}
By dominated convergence and the divergence theorem
\begin{equation} \label{bd-s5-6}
  \lim_{k\to\infty} \I{\edom}{\chi_k\divv\tf}{\lem} =
  \I{\dom}{\divv\tf}{\lem} = 
  \I{\mbd\dom\cap\edom}{\tf\cdot\normal{\dom}}{\cH^{n-1}}\,
\end{equation}
(cf. \cite[p.141]{pfeffer} and notice that $\phi$ is Lipschitz continuous due
to its compact support).
Using \reff{bd-s5-1} for the left hand side in \reff{bd-s5-5}, we obtain
\begin{equation*}
   \sI{\bd\dom}{\tf}{\nu} =
  \I{\mbd\dom\cap\edom}{\tf\cdot\normal{\dom}}{\cH^{n-1}} 
\end{equation*}
for all $\tf\in C^\infty_c(\edom,\R^n)$. By uniform approximation we 
get this identity even for all $\tf\in C_c(\edom,\R^n)$.

Now let $B\subset\edom$ be open and recall that
\begin{equation*}
  D\chi_\dom=\reme{\nu^\dom\cH^{n-1}}{(\rbd\dom\cap\edom)}\,, \quad
  |D\chi_\dom|=\reme{\cH^{n-1}}{(\rbd\dom\cap\edom)}
\end{equation*}
(cf. \cite[p.~169, 205]{evans}).
Then, with \reff{pm-tva},
\begin{eqnarray*}
  |\nu|(B)
&=&
  \sup_{\substack{\tf\in \cL^\infty(\edom,\R^n)\\\|\tf\|_\infty\le 1}}
  \I{B}{\tf}{\nu} \\
&\ge&
  \sup_{\substack{\tf\in C^1_c(B,\R^n)\\\|\tf\|_\infty\le 1}}
  \I{B}{\tf}{\nu} 
\: = \:
  \sup_{\substack{\tf\in C^1_c(B,\R^n)\\\|\tf\|_\infty\le 1}}
  \I{\edom}{\tf}{\nu} \\
&\overset{\reff{bd-s5-3}}{=}&
  \sup_{\substack{\tf\in C^1_c(B,\R^n)\\\|\tf\|_\infty\le 1}}
  \I{\mbd\dom\cap\edom}{\tf \cdot \nu^\dom}{\ham^{n-1}} \\
&=&
  \sup_{\substack{\tf\in C^1_c(B,\R^n)\\\|\tf\|_\infty\le 1}}
  \I{\edom}{\tf}{D\chi_\dom} 
\: = \:
  \sup_{\substack{\tf\in C^1_c(B,\R^n)\\\|\tf\|_\infty\le 1}}
  \I{B}{\chi_\dom\divv\tf}{\lem}  \\
&=&
  |D\chi_\dom|(B) 
\: = \:
  \big(\reme{\ham^{n-1}}{(\rbd\dom\cap\edom)}\big)(B) \,. 
\end{eqnarray*}
(cf. also \cite[p.~169]{evans} for the second last line). 

If $\edom=\dom$ with Lipschitz boundary, then 
$\|\chi - \chi_\dom\|_{\cL^1(\edom)}=0$ and we argue similar as above.
For \reff{bd-s5-3a} we consider 
\begin{equation*}
  \tf\in C^\infty(\dom,\R^n)\cap C^1(\ol\dom,\R^n) \,.
\end{equation*}
Then we get \reff{bd-s5-5}, 
since $\chi_k$ has compact support in $\dom$, and in \reff{bd-s5-6}
we use the divergence theorem from \cite[p.~168]{pfeffer} to get
\begin{equation*}
  \I{\dom}{\divv\tf}{\lem} = 
  \I{\bd\dom}{\tf\cdot\normal{\dom}}{\cH^{n-1}}\,
\end{equation*}
(notice that $\tf$ is locally Lipschitz continuous on $\ol\dom$). 
For the adaption of \reff{bd-s5-2} we extend $\nu$
by zero on $\R^n$, we argue as above for arbitrary open 
$B\subset\R^n$ with some enlarged $\edom$, and we use that $\rbd\dom=\bd\dom$ 
(cf. \cite[p.~50]{pfeffer}).  
\end{proof}

\begin{proposition} \label{nm-s4}    
Let $\edom\subset\R^n$ be open and bounded, let $\dom\in\bor{\edom}$
be a set of finite perimeter, and let 
$\chi$ be a good approximation for $\chi_\dom$ with approximating sequence
$\chi_k$ and associated normal
measure $\nu\in \bawln{\edom}$. Then for every $B\in\bor{\edom}$
with finite perimeter 
there exists an $\cL^1$-null set $N \subset \R$ and some
$\tilde\delta>0$ such that for all $\delta\in(0,\tilde\delta)\setminus N$  

\begin{eqnarray} 
  \nu(B)
&=&
  - \lim_{k\to\infty}\I{B}{D\chi_k}{\lem}   \nonumber\\
&=&
  - \I{\edom\cap\ol\dom\cap\mbd (B\cap\dom^\delta)}
    {\chi\nu^{B\cap\dom^\delta}}{\ham^{n-1}} \quad\qmq{with}
  \dom^\delta=\dom_\delta\setminus\dom_{-\delta}  \nonumber\\
&=& \label{nm-s4-1}
  \I{\mbd B \cap \bd\dom\cap\edom}{-\chi \nu^B}{\ham^{n-1}} + 
  \lim_{\substack{\delta\downarrow 0\\\delta \notin N}}
  \I{(\mint B) \cap \mbd \dom_{-\delta}\cap\edom}
    {\nu^{\dom_{-\delta}}}{\ham^{n-1}} \,.
\end{eqnarray}
\end{proposition}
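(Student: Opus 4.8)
The plan is to establish the three expressions for $\nu(B)$ in sequence, using the coarea formula together with the already-proven properties of the normal measure from Theorem~\ref{bd-s5} and the structure of sets of finite perimeter. First I would record the basic setup: since $B$ has finite perimeter and $\dom$ has finite perimeter, the auxiliary sets $\dom^\delta = \dom_\delta \setminus \dom_{-\delta}$ and $B \cap \dom^\delta$ have finite perimeter for $\cL^1$-a.e.\ $\delta$ by the coarea formula applied to $\distf{\bd\dom}$ (this is exactly the argument surrounding \reff{nm-normal1}), and $\bd\dom_{-\delta}$, $\bd\dom_\delta$ have finite $\ham^{n-1}$-measure for $\cL^1$-a.e.\ $\delta$. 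I would fix $\tilde\delta>0$ small enough that $\dom_{\tilde\delta}\csubset\edom$ (or handle the part near $\bd\edom$ separately if $\dom$ is not compactly contained) and let $N\subset\R$ be the $\cL^1$-null set off which all these perimeter properties hold.

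\medskip

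The first equality $\nu(B) = -\lim_{k\to\infty}\I{B}{D\chi_k}{\lem}$ is essentially \reff{bd-s5-1} from Theorem~\ref{bd-s5} applied with $\tf = \chi_B e_j$ componentwise; one has to check that the limit genuinely exists (not just along a subsequence) for this particular integrand, which follows because outside $N$ the set $B$ is regular enough that the relevant boundary integrals stabilize, but in any case the value is $-\sI{\bd\dom}{\chi_B}{\nu} = \nu(B)$. For the second equality, the key observation is that $\chi_k = 1$ on $\dom_{-1/k}$ and $\chi_k = 0$ outside $\dom_{1/k}$, so $D\chi_k$ is supported in $\dom^{1/k}$, and for $k$ large $\dom^{1/k}\subset\dom^\delta$; hence $\I{B}{D\chi_k}{\lem} = \I{B\cap\dom^\delta}{D\chi_k}{\lem}$. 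Now I integrate by parts on the finite-perimeter set $B\cap\dom^\delta$: since $\chi_k\in W^{1,\infty}$, Gauss--Green gives $\I{B\cap\dom^\delta}{D\chi_k}{\lem} = \I{\mbd(B\cap\dom^\delta)\cap\edom}{\chi_k\,\nu^{B\cap\dom^\delta}}{\ham^{n-1}} - \I{B\cap\dom^\delta}{\chi_k\,\divv(\mathrm{id})}{\lem}$ — more precisely I apply it to each component, getting a pure boundary term because there is no volume contribution when the test "field" is constant in the $j$-th slot. Passing $k\to\infty$ and using $\chi_k\to\chi$ $\ham^{n-1}$-a.e.\ with dominated convergence on the boundary (bounded by $1$), together with the fact that on $\mbd(B\cap\dom^\delta)$ the points where $\chi$ is nonzero lie in $\ol\dom$, yields the stated formula $\nu(B) = -\I{\edom\cap\ol\dom\cap\mbd(B\cap\dom^\delta)}{\chi\,\nu^{B\cap\dom^\delta}}{\ham^{n-1}}$.

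\medskip

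For the third equality \reff{nm-s4-1} I would decompose the boundary $\mbd(B\cap\dom^\delta)$ into its natural pieces. Up to $\ham^{n-1}$-null sets, $\mbd(B\cap\dom^\delta)$ consists of: the part $\mbd B \cap (\dom^\delta)^{(1)}$ lying in the measure-theoretic interior of $\dom^\delta$, on which $\nu^{B\cap\dom^\delta} = \nu^B$; and the part $\mbd\dom^\delta \cap B^{(1)}$, on which $\nu^{B\cap\dom^\delta} = \nu^{\dom^\delta}$; plus a lower-dimensional intersection set that is $\ham^{n-1}$-negligible (Federer's theorem on the reduced boundary of an intersection). Since $\dom^\delta = \dom_\delta\setminus\dom_{-\delta}$, its reduced boundary splits as $\rbd\dom_\delta \cup \rbd\dom_{-\delta}$ with opposite normal orientations, and $\chi = 0$ on a neighborhood of $\rbd\dom_\delta$ for $\delta$ small (because $\chi$ vanishes on $\Ext\dom \supset \bd\dom_\delta$ when $\delta$ is small) — so only the inner sheet $\rbd\dom_{-\delta}$ survives, contributing $\I{(\mint B)\cap\mbd\dom_{-\delta}\cap\edom}{\nu^{\dom_{-\delta}}}{\ham^{n-1}}$ (the sign flips because the outward normal of $\dom^\delta$ on $\rbd\dom_{-\delta}$ points into $\dom$). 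On the $\mbd B$ sheet, $\chi\to -\chi$ enters with $\nu^B$, and since $B\cap\dom^\delta$ shrinks toward $\mbd B\cap\bd\dom$ as $\delta\downarrow 0$, dominated convergence collapses this term to $\I{\mbd B\cap\bd\dom\cap\edom}{-\chi\,\nu^B}{\ham^{n-1}}$. Taking the limit $\delta\downarrow 0$ along $\delta\notin N$ and using that $\nu(B)$ is independent of $\delta$ gives \reff{nm-s4-1}.

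\medskip

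The main obstacle I anticipate is the careful measure-theoretic bookkeeping of the boundary decomposition of $B\cap\dom^\delta$: one must invoke the correct facts about reduced boundaries of intersections and differences of finite-perimeter sets (that $\mbd(E\cap F) \subseteq (\mbd E \cap F^{(1)}) \cup (E^{(1)}\cap \mbd F) \cup (\mbd E \cap \mbd F)$ up to $\ham^{n-1}$-null sets, with the normal identified appropriately on each piece), and verify that the troublesome overlap $\mbd\dom_{-\delta}\cap\mbd B$ is $\ham^{n-1}$-negligible for $\delta\notin N$ — this is where the null set $N$ and the coarea formula really earn their keep, since for a.e.\ $\delta$ the level set $\bd\dom_{-\delta}$ meets $\mbd B$ transversally in a lower-dimensional set. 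The convergence $\chi_k\to\chi$ only $\ham^{n-1}$-a.e.\ (not uniformly) means I must be slightly careful that the exceptional $\ham^{n-1}$-null set does not concentrate on the relevant boundary sheets, but this is guaranteed by property (ii) of a good approximation together with the fact that $\mbd\dom_{-\delta}$ and $\mbd B$ are $(n-1)$-rectifiable.
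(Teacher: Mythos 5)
Your proposal is correct and follows essentially the same route as the paper: Gauss--Green on the finite-perimeter set $B\cap\dom^\delta$ with constant test fields (so the volume term vanishes), dominated convergence for $\chi_k\to\chi$ on the boundary, Federer's decomposition of $\mbd(B\cap\dom^\delta)$ with the normals identified on each sheet and the overlap $\mbd B\cap\mbd\dom^\delta$ killed by the coarea formula for a.e.\ $\delta$, and finally the limit $\delta\downarrow 0$ using $\bigcap_{\delta>0}\dom^\delta=\bd\dom$. The only presentational quirk is that your justification of the first equality (full limit rather than subsequential) is really supplied by the Gauss--Green computation you carry out for the second equality — which is exactly how the paper arranges it, so there is no gap.
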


\noi
The representation of $\nu(B)$ is illustrated in Figure~\ref{nm-fig1}
for two simple cases. 
For $B_1$ the first integral vanishes and, thus, $\nu(B)$ is directed as
$\nu^\dom$ on $\bd\dom\cap B_1$ with length $\ham^{n-1}(\bd\dom\cap B_1)$. 
For $B_2$ we distinguish two cases. If $\chi=0$ on $\bd\dom$, then
we have a similar situation as for $B_1$ and $-\nu^B$ in the figure does not
apply. If $\chi=1$ on $\bd\dom$, then $\nu(B)=0$, since the two contributions
in \reff{nm-s4-1} cancel out each other.
Below we discuss this situation in more detail for 
several examples. 

\begin{figure}[H]
\centering
\begin{tikzpicture}[scale=0.9]    
	\draw[gray] (-2,-2) rectangle (2,2.2);
	\draw (-0.5,0) node {$\dom$};
	\draw[fill=lightgray] (1.5,-1.5) rectangle (2,1.5);
	\draw[dashed] (0.0,-1.5) rectangle (2,1.5);
	\draw (0.8,-0.7) node {$B_2$};
	\draw (1.75,1.6) node[rotate=-90] {$\{$};
	\draw (1.75,1.9) node {$\delta$};
	\draw[->,thick] (1.5,-0.6) to (3.1,-0.6);
	\draw (2.6,-0.3) node {$\nu^{\dom_{-\delta}}$};
	\draw[->,thick] (2,0.1) to (0.4,0.1);
	\draw (1.0,0.4) node {$-\nu^B$};
	\draw[fill=lightgray] (-1.5,-1.5) rectangle (-2,1.5);
	\draw[dashed] (-0.9,-1.5) rectangle (-3.5,1.5);
	\draw (-2.5,0.5) node {$B_1$};
	\draw (-1.75,1.6) node[rotate=-90] {$\{$};
	\draw (-1.75,1.9) node {$\delta$};
	\draw[->,thick] (-1.5,-0.6) to (-3.1,-0.6);
	\draw (-2.5,-0.3) node {$\nu^{\dom_{-\delta}}$};
\end{tikzpicture}
\caption{Contributions to $\nu(B)$ for two versions of $B$}
         \label{nm-fig1} 
\end{figure}
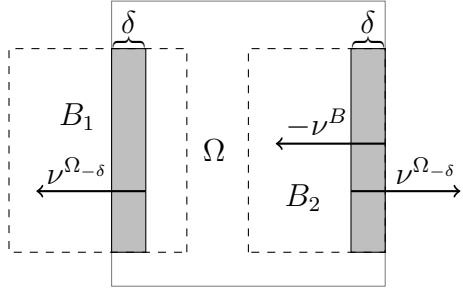

\begin{proof}       
Since $\nu\wac\lem$ 
and since $B$ differs from $\mint B$ only by an $\lem$-null set
(cf. \cite[p.~222]{evans}), we have 
$\nu(B)=\nu(\mint B)$. Thus we can essentially work with $\tilde B:=\mint B$,
but in integrals with $\lem$-measure we can replace it with the original~$B$.
We proceed analogously with $\tilde\dom^\delta:=\mint\dom^\delta$. Clearly
$\mbd B=\mbd{\tilde B}$. By 
$\mint\,(B\cap\dom^\delta)=\tilde B\cap\tilde\dom^\delta$ 
(cf. \cite[p. 50]{pfeffer}), we also have
$\mbd(B\cap\dom^\delta) = \mbd(\tilde B\cap\tilde\dom^\delta)$.
The coarea formula 
implies that $\dom^\delta$ and thus also $\tilde\dom^\delta$
has finite perimeter for a.e. $\delta>0$. 
Therefore $\tilde B\cap\tilde\dom^\delta$ has finite perimeter too
(cf. \cite[p. 130]{maggi}). Notice that $\tilde\dom^\delta$ might not be a
subset of $\edom$ and, though we have assumed $B\subset\edom$, also 
$\tilde B$ might not be a subset of $\edom$. But $\tilde
B\cap\tilde\dom^\delta$ has finite perimeter in $\edom$. 

We now consider the approximating sequence $\chi_k\in W^{1,\infty}(\edom)$ 
for $\chi$ and we choose some $\tf\in C^1(\edom,\R^n)$. Then $\chi_k\tf$ is 
Lipschitz continuous with compact support on $\edom$, since it is locally
Lipschitz continuous. Hence, using
\begin{equation*}
  \edom\cap\mbd(\tilde B\cap\tilde\dom^\delta\cap\edom) = 
  \edom\cap\mbd(\tilde B\cap\tilde\dom^\delta) \,
\end{equation*}
and a general version of the Gauss-Green formula, we obtain
\begin{eqnarray*}
  \I{B\cap\dom^\delta}{\tf\cdot D\chi_k}{\lem} 
&=&
  - \I{B\cap\dom^\delta}{\chi_k\divv\tf}{\lem}  \\
&&
  + \I{\edom\cap\mbd (\tilde B\cap\tilde\dom^\delta)}
            {\chi_k\tf\cdot\nu^{\tilde B\cap\tilde\dom^\delta}}{\ham^{n-1}}
\end{eqnarray*}
(cf. \cite[p.~51, 141]{pfeffer} and \cite[4.5.6]{federer69}).
Since $\chi_k\to\chi$ $\hm$-a.e. on $\edom$, by dominated convergence 
\begin{eqnarray*}
  \lim_{k\to\infty} \I{B\cap\dom^\delta}{\tf\cdot D\chi_k}{\lem} 
&=&
  - \I{B\cap\dom^\delta}{\chi\divv\tf}{\lem}  \\
&&
  + \I{\edom\cap\mbd (\tilde B\cap\tilde\dom^\delta)}
      {\chi\tf\cdot\nu^{\tilde B\cap\tilde\dom^\delta}}{\ham^{n-1}}
\end{eqnarray*}
for all $\tf\in C(\edom,\R^n)$. 
Let us choose $\tf$ to equal a constant vector $a\in\R^n$ on $\edom$. 
By $\cor{\nu}\subset\bd\dom$ and by \reff{bd-s5-1}, where we do not need
a subsequence due to the previous equation,
\begin{eqnarray*}
  \lim_{k\to\infty} \I{B\cap\dom^\delta}{a\cdot D\chi_k}{\lem} 
&=&
  \lim_{k\to\infty} \I{\edom}{\chi_{B\cap\dom^\delta}\, a\cdot D\chi_k}{\lem}
  \\
&=&
  -a \cdot\sI{\bd\dom}{\chi_{B\cap\dom^\delta}}{\nu} =
  -a\cdot\nu(B\cap\dom^\delta) \\
&=&
  -a\cdot\nu(B)  \qmq{for all} a\in\R^n\,.
\end{eqnarray*}
The arbitrariness of $a\in\R^n$ implies
\begin{equation} \label{nm-s4-4}
  \nu(B) =
  - \lim_{k\to\infty} \I{B\cap\dom^\delta}{D\chi_k}{\lem} =
  - \I{\edom\cap\mbd (\tilde B\cap\tilde\dom^\delta)}
    {\chi\nu^{\tilde B\cap\tilde\dom^\delta}}{\ham^{n-1}} 
\end{equation}
for a.e. $\delta>0$. By $\supp\chi_k\subset(\bd\dom)_{\frac{1}{k}}$ we can
omit $\dom^\delta$ in the first integral and by 
$\supp \chi\subset\ol\dom$ we can restrict the second 
integral to $\ol\dom$. Moreover the previous arguments show that 
\reff{nm-s4-4} is also valid with $B$, $\dom^\delta$ instead of 
$\tilde B$, $\tilde\dom^\delta$.

Since $\tilde B$ and $\tilde\dom^\delta$ agree with their measure theoretic
interior, 
\begin{eqnarray*}
  (\tilde B\cap\mbd{\tilde\dom^\delta})\cup(\mbd{\tilde B}\cap \tilde\dom^\delta) 
&\subset&  \mbd(\tilde B\cap \tilde\dom^\delta)  \\
&\subset&
  (\tilde B\cap\mbd{\tilde\dom^\delta})\cup(\mbd{\tilde B}
  \cap\tilde\dom^\delta)\cup (\mbd{\tilde B}\cap\mbd{\tilde\dom^\delta})
\end{eqnarray*}
(cf. \cite[p.~52]{pfeffer}). By $\lem(\mbd{\tilde  B})=0$, the coarea formula
implies
\begin{equation*}
  \ham^{n-1}( \mbd{\tilde  B}\cap\mbd{\tilde\dom^\delta}) = 0
  \qmz{for a.e.} \delta>0\,.
\end{equation*}
Therefore  $\mbd(\tilde B\cap\tilde\dom^\delta)$
differs from 
$(\tilde B\cap\mbd{\tilde\dom^\delta})\cup(\mbd{\tilde B}\cap\tilde\dom^\delta)$
only by an $\ham^{n-1}$-null set. Furthermore
\begin{equation*}
  \nu^{\tilde B\cap\tilde\dom^\delta}=\nu^{\tilde B} \qmq{on} 
  \rbd(\tilde B\cap \tilde\dom^\delta)\cap(\rbd{\tilde B}\cap\tilde\dom^\delta) 
  \qmz{for all} \delta>0\,,
\end{equation*}
since there (with $\dens_x$ from Example~\ref{ex:dzero})
\begin{equation*}
  \dens_x(\tilde B\cap\tilde\dom^\delta) = \dens_x\tilde B = \tfrac{1}{2}
\end{equation*}
and, by $\tilde B\cap\tilde\dom^\delta\subset \tilde B$, 
both sets have to generate the same half-space determining their normal
(cf. \cite[p.~157/158]{ambrosio}).
Analogously we get
\begin{equation*}
  \nu^{\tilde B\cap\tilde\dom^\delta}=\nu^{\tilde\dom^\delta} \qmq{on} 
  \rbd(\tilde B\cap \tilde\dom^\delta)\cap(\tilde B\cap\rbd{\tilde\dom^\delta}) 
  \qmz{for all} \delta>0\,.
\end{equation*}
Using that the reduced boundary agrees $\ham^{n-1}$-a.e. with the measure
theoretic one, that $\mbd B=\mbd{\tilde B}$, and that
$\mbd\dom^\delta=\mbd{\tilde\dom^\delta}$,  
we obtain from \reff{nm-s4-4} that
\begin{equation} \label{nm-s4-5}
  \nu(B)=-\I{\mbd B \cap \tilde\dom^\delta\cap\edom}{\chi \nu^B}{\ham^{n-1}}
  - \I{\tilde B\cap\mbd\dom^\delta\cap\edom}{\chi \nu^{\dom^\delta}}{\ham^{n-1}}
\end{equation}
for all $\delta>0$ despite an $\cL^1$-null set $N$. 
Since 
\begin{equation*}
  A\to\I{\mbd B \cap A\cap\edom}{\chi \nu^B}{\ham^{n-1}}
\end{equation*}
is a $\sigma$-measure
on $\mbd B\cap\edom$ and $\bigcap_{\delta>0}\tilde\dom^\delta=\bd\dom$,
\begin{equation*}
  \lim_{\substack{\delta\downarrow 0 \\ \delta\notin N}}
  \I{\mbd B \cap \tilde\dom^\delta\cap\edom}{\chi \nu^B}{\ham^{n-1}} =
  \I{\mbd B \cap \bd\dom\cap\edom}{\chi \nu^B}{\ham^{n-1}} \,. 
\end{equation*}
For the second integral in \reff{nm-s4-5} we use 
\begin{equation*}
  \supp\chi \cap \mbd\dom^\delta = \mbd\dom_{-\delta}\,, \quad
  \chi=1 \zmz{on} \op{int}\dom\,, \quad
  \nu^{\dom^\delta} = -\nu^{\dom_{-\delta}} \zmz{on} \mbd\dom_{-\delta}\,.
\end{equation*}
Finally we use that 
$\mbd\dom^\delta=\mbd(\dom_\delta\setminus\dom_{-\delta})$
to get the assertion.
\end{proof}

Let us now provide some important normal measures $\nu$ of $\dom$.
Some of the measures use the distance function $\distf{\bd\dom}$ for the
approximating sequence $\chi_k$ 
and require some boundedness
of $\ham^{n-1}(\bd\dom_\delta)$ for $\delta$ near zero. Notice that 
for any open bounded~$\dom$
\begin{equation*}
  \op{Per}(\dom_\delta)\le \ham^{n-1}(\bd\dom_\delta)<\infty
  \qmq{for all} \delta\ne 0\,
\end{equation*}
(cf. \cite[Theorem 3]{kraft_measure-theoretic_2016}), but
there are open bounded sets $\dom$ of finite perimeter where
$\ham^{n-1}(\bd\dom_\delta)$ scales as $\delta^{-s}$ for $s\in(0,1)$ 
(cf. \cite[Theorem 1]{kraft_measure-theoretic_2016}). The other examples 
are applicable to sets of finite perimeter $\dom\csubset\edom$ and 
use mollifications of $\chi_\dom$ for the approximating sequence $\chi_k$. 

As preparation let $\dom\csubset\edom$ have finite perimeter and we consider 
\begin{equation} \label{nm-mol}
  \psi_k := \chi_\dom*\eta_{\frac{1}{k}} 
\end{equation}
with the standard mollifier $\eta_\eps$ supported on $B_\eps(0)$.
Then, for large $k$,
\begin{equation*}
  \psi_k \in  C_c^\infty(\edom,[0,1])\,, \quad
  \psi_k = 1 \zmz{on} \dom_{-\frac{1}{k}}\,, \z
  \psi_k=0 \zmz{on} \edom\setminus\dom_{\frac{1}{k}}\,,
\end{equation*}
\begin{equation} \label{nm-mol1}
  \psi_k \to 
  \tfrac{1}{2}\chi_{\mbd\dom} + \chi_{\mint{\dom}}
  \qmz{$\ham^{n-1}$-a.e. on $\edom$} 
\end{equation}
(cf. \cite[p.~164, 173, 175]{ambrosio}). From
\cite[p.~41, 118]{ambrosio} we obtain for large $k$ 
\begin{equation*}
  D\psi_k(x) = \big(D\chi_\dom*\eta_{\frac{1}{k}}\big)(x) := 
  \I{\edom}{\eta_{\frac{1}{k}}(x-y)}{D\chi_\dom(y)} 
  \qmz{for} x\in\dom\,.
\end{equation*}
Then one has the weak$^*$ limits 
\begin{equation} \label{nm-mol2}
  D\psi_k\lem\overset{*}{\wto} D\chi_\dom \qmq{and}
  |D\psi_k|\lem\overset{*}{\wto} |D\chi_\dom| 
\end{equation}
in the sense of Radon measures. 
For any Borel set $B\subset\edom$ and $k$ large,
we can use $\supp{D\psi_k}\subset\dom_{\frac{1}{k}}\csubset\edom$ and 
$\mbd\dom\subset\bd\dom$ to get
\begin{equation} \label{nm-mol1a}
  \|D\psi_k\|_{\cL^1(B)} = \I{\dom_{1/k}\cap B}{|D\psi_k|}{\lem} \le
  |D\chi_\dom|(\dom_{\frac{2}{k}}\cap B_{\frac{1}{k}}) = 
  \ham^{n-1}(\mbd\dom\cap B_{\frac{1}{k}})  
\end{equation}
(cf. \cite[p.~42]{ambrosio}, \cite[p.~50, 138]{pfeffer} or also 
\cite[p.~49, 129]{maggi}). If $|D\chi_\dom|(\bd B)=0$, then
\begin{equation} \label{nm-mol2a}
  \lim_{k\to\infty}|D\psi_k|(B) = |D\chi_\dom|(B)
\end{equation}
(cf. \cite[p.~54]{evans}). Obviously 
$\psi_k\in W^{1,\infty}(\edom)$ and also the truncations
\begin{equation*}
  \tilde\chi_k^{\rm int}:=(2\psi_k-1)^+\,, \quad
  \tilde\chi_k^{\rm ext}:=-(2\psi_k-1)^-+1 \,.
\end{equation*}
belong to $W^{1,\infty}(\edom)$ with
\begin{equation*}
   D\tilde\chi_k^{\rm int} = \bigg\{ \mbox{\small $ 
  \begin{array}{cl} 2D\psi_k & 
      \text{$\lem$-a.e. on } \{\psi_k>\tfrac{1}{2}\} \,, \\[4pt]
    0 & \text{$\lem$-a.e. on } \{\psi_k\le \tfrac{1}{2}\}\,
  \end{array}$ } 
\end{equation*}
and $D\tilde\chi_k^{\rm ext}$ analogously (cf. \cite[p.~130]{evans}). 
Since $\dom$ and $\mint\dom$ differ merely by an $\lem$-negligible set
(cf. \cite[p.~222]{evans}),  
we can use \reff{tt-s3} with $\tf\in C^1_c(\edom)$ to get
\begin{eqnarray*}
  \lim_{k\to\infty}\I{\edom}{\tf D\tilde\chi_k^{\rm int}}{\lem} 
&=&
  - \lim_{k\to\infty} \I{\edom}{\tilde\chi_k^{\rm int}\divv\tf}{\lem} 
\: = \:
  - \I{\edom}{\chi_{\mint\dom}\divv\tf}{\lem} \\
&=&
  - \I{\edom}{\chi_\dom\divv\tf}{\lem} 
\: = \: 
  \I{\edom}{\tf}{D\chi_\dom} \,
\end{eqnarray*}
and analogously for $\chi_k^{\rm int}$. Since $C^1_c(\edom)$ is dense in
$C_c(\edom)$, we have  
\begin{equation*}
  D\tilde\chi_k^{\rm int}\lem\overset{*}{\wto} D\chi_\dom \qmq{and}
  D\tilde\chi_k^{\rm ext}\lem\overset{*}{\wto} D\chi_\dom \,.
\end{equation*}
From \reff{nm-mol1a} we get for any Borel set $B\subset\edom$
\begin{equation} \label{nm-mol3}
  \|D\tilde\chi_k^{\rm int}\|_{\cL^1(B)} = 
  \I{B\cap\{\psi_k>\frac{1}{2}\}}{2|D\psi_k|}{\lem} 
  \le  2\I{B}{|D\psi_k|}{\lem} 
\le 2\ham^{n-1}(\mbd\dom\cap B_{\frac{1}{k}})  
\end{equation}
and analogously 
\begin{equation} \label{nm-mol4}
  \|D\tilde\chi_k^{\rm ext}\|_{\cL^1(B)}\le 
  2\ham^{n-1}(\mbd\dom\cap B_{\frac{1}{k}}) \,.
\end{equation}

\medskip

Let us now introduce some special examples of normal measures. 
These can be basically applied to the same set $\dom$. However 
they differ in the point how
the boundary is taken into account or in the underlying construction.

\begin{example}(Interior Normal Measures) \label{nm-int}
Let $\edom \subset \Rn$ be open and bounded, and let
$\dom\in\bor{\edom}$. We call a normal measure  
{\it interior normal measure} of $\dom$ if the related good approximation
$\chi$ has the form
\begin{equation*}
  \chi=\chi_{\Int\dom} \qmq{or} \chi=\chi_{\mint\dom}\,.
\end{equation*}
For a first case we assume that $\dom\csubset\edom$, that there is a sequence 
$\delta_k\downarrow 0$ with 
\begin{equation}\label{nm-int-1}
    \lim_{k \to \infty}
    \mI{(0,\delta_k)}{\ham^{n-1}(\bd{\idnhd{\dom}{\delta}})}{\delta} <
    \infty \,,
\end{equation}
and we consider
\begin{equation} \label{nm-int-2}
  \chi_k^{\rm int} := \chi_{\ol{\dom_{-\delta_k}}} +
  \chi_{(\Int\dom\setminus\ol{\dom_{-\delta_k}})} 
  \frac{\distf{\bd\dom}}{\delta_k} \in W^{1,\infty}(\edom) \,.
\end{equation}
We can suppose that $\delta_k\le\frac{1}{k}$ and we obviously have that
\begin{equation*}
  \chi_k^{\rm int} \to \chi_{\Int{\dom}} 
  \qmz{pointwise on} \edom\,.
\end{equation*}
By the coarea formula (cf. \cite[p.117]{evans}),
\begin{eqnarray*}
  \|D\chi_k^{\rm int}\|_{\cL^1(\edom)}
&=&
  \I{(0,\delta_k)}{\I{\bd\dom_{-\delta}}{|D\chi_k^{\rm
    int}|}{\ham^{n-1}}}{\delta} \\ 
&=&
  \I{(0,\delta_k)}
    {\frac{\ham^{n-1}(\bd\dom_{-\delta})}{\delta_k}}{\delta} \\
&=&
  \mI{(0,\delta_k)}{\ham^{n-1}(\bd\dom_{-\delta})}{\delta} \,
\end{eqnarray*}
(notice that $|D\dist{\bd\dom}{x}|=1$ $\lem$-a.e. outside
$\bd\dom$, cf. \reff{nm-normal1})
Hence we obtain a good approximation for $\chi_\dom$ and a related interior
normal measure by
\begin{equation*}
  \chi^{\rm int}:=\chi_{\Int\dom} \qmq{and}  \nu^{\rm int}\in\bawln{\edom} \,.
\end{equation*}
Notice that any $A=(\bd\dom)_{\delta_k}\cap\Int\dom$ is an aura of 
$\nu^{\rm int}$ by \reff{bd-s5-0}. 

As alternative approximating sequence for $\chi_{\Int\dom}$ 
we set $\delta_k':=\frac{\delta_k}{2}$ and take
\begin{equation*} 
  \chi_k^{\rm intc} := \chi_{\ol{\dom_{-2\delta'_k}}} +
  \chi_{(\dom_{-\delta'_k}\setminus\ol{\dom_{-2\delta'_k}})}
  \frac{\distf{\bd(\dom_{-\delta'_k})}}{\delta'_k} \in W^{1,\infty}(\edom) \,
\end{equation*}
with $0<\delta_k\le\frac{1}{k}$ as far as 
\begin{equation*}
    \lim \limits_{k \to \infty}
    \mI{(\delta'_k,2\delta'_k)}{\ham^{n-1}(\bd{\idnhd{\dom}{\delta}})}{\delta} <
    \infty \,
\end{equation*}
(``intc'' indicates compact support of $D\chi^{\rm intc}_k$ on $\Int\dom$). 
In this case we can drop the requirement $\dom\csubset\edom$
and obtain a second interior normal measure related to 
the good approximation $\chi_{\Int\dom}$ of $\chi_\dom$ by
\begin{equation*}
  \chi^{\rm intc}:=\chi_{\Int\dom} \qmq{and}  \nu^{\rm intc}\in\bawln{\edom} \,.
\end{equation*}
Though the measure $\nu^{\rm intc}$
slightly differs from $\nu^{\rm int}$, this is not relevant for the values
of the corresponding integrals 
entering the considered Gauss-Green formulas. 

For a second case we assume that $\dom\csubset\edom$ has finite perimeter
and we consider
\begin{equation*}
  \tilde\chi_k^{\rm int} := (2\psi_k-1)^+
\end{equation*}
with the mollification $\psi_k$ from \reff{nm-mol}. Then, by \reff{nm-mol1},
\begin{equation*}
  \tilde\chi_k^{\rm int} \to \chi_{\mint\dom} \qmz{$\hm$-a.e. on $\edom$.}
\end{equation*}
Obviously $\tilde\chi_k^{\rm int}\in W^{1,\infty}(\edom)$ and, by
\reff{nm-mol3}, 
\begin{equation} \label{nm-int-3}
  \|D\tilde\chi_k^{\rm int}\|_{\cL^1(\edom)} = 
  \I{\{\psi_k>\frac{1}{2}\}}{2|D\psi_k|}{\lem} \le 2\ham^{n-1}(\mbd\dom) \,.  
\end{equation}
This way we get a good approximation for $\chi_\dom$ and a related 
interior normal measure by
\begin{equation*}
  \tilde\chi^{\rm int}:=\chi_{\mint\dom} \qmq{and}
  \tilde\nu^{\rm int}\in\bawln{\edom} \,.
\end{equation*}
\end{example}

\begin{example}(Exterior Normal Measures) \label{nm-ext}
Let $\edom \subset \Rn$ be open and bounded, and let
$\dom\in\bor{\edom}$. We call a normal measure  
{\it exterior normal measure} of $\dom$ if the related good approximation
$\chi$ has the form
\begin{equation*}
  \chi=\chi_{\ol\dom} \qmq{or} \chi=\chi_{\mbd\dom\cup\mint\dom} \,.
\end{equation*}
First we assume that $\dom\csubset\edom$, that there is a sequence 
$\delta_k\downarrow 0$ with 
\begin{equation} \label{nm-ext-1}
    \lim_{k \to \infty}
    \mI{(0,\delta_k)}{\ham^{n-1}(\bd\dom_\delta)}{\delta} <
    \infty \,,
\end{equation}
and we consider
\begin{equation*}
  \chi_k^{\rm ext} = \chi_{\ol\dom} +
  \chi_{(\dom_{\delta_k}\setminus\ol\dom)} 
  \frac{\distf{\bd(\dom_{\delta_k})}}{\delta_k} \in W^{1,\infty}(\edom) \,.
\end{equation*}
We again suppose that $\delta_k\le\frac{1}{k}$ and we have that
\begin{equation*}
  \chi_k^{\rm ext} \to \chi_{\ol\dom} 
  \qmz{pointwise on} \edom\,.
\end{equation*}
As above, 
\begin{equation*}
  \|D\chi_k^{\rm ext}\|_{\cL^1(\edom)} =
  \mI{(0,\delta_k)}{\ham^{n-1}(\bd\dom_{\delta})}{\delta} \,.
\end{equation*}
Therefore we obtain a good approximation for $\chi_\dom$ and a related 
exterior normal measure by
\begin{equation*}
  \chi^{\rm ext}:=\chi_{\ol\dom} \qmq{and} \nu^{\rm ext}\in\bawln{\edom} \,.
\end{equation*}
Now any $A=(\bd\dom)_{\delta_k}\cap\Ext\dom$
is an aura of $\nu^{\rm ext}$ by \reff{bd-s5-0}. 
Similar as in the previous example we can construct some alternative exterior
normal measure $\nu^{\rm extc}$ related to an approximating sequence 
$\{\chi_k^{\rm extc}\}$ for $\chi^{\rm ext}=\chi_{\ol\dom}$ where 
$\chi_k^{\rm extc}=1$ on $\dom_{\delta_k}$. 

For a second example we assume that $\dom\csubset\edom$ has finite perimeter
and we consider
\begin{equation*}
  \tilde\chi_k^{\rm ext} := -(2\psi_k-1)^-+1
\end{equation*}
with $\psi_k$ from \reff{nm-mol}. Then, by \reff{nm-mol1},
\begin{equation*}
  \tilde\chi_k^{\rm ext} \to \chi_{\mbd\dom\cup\mint\dom} 
  \qmz{$\hm$-a.e. on $\edom$.}
\end{equation*}
Clearly
$\tilde\chi_k^{\rm ext}\in W^{1,\infty}(\edom)$ and, by \reff{nm-mol4}, 
\begin{equation*}
  \|D\tilde\chi_k^{\rm ext}\|_{\cL^1(\edom)} \le 2\ham^{n-1}(\mbd\dom)\,.
\end{equation*}
Hence we get a good approximation for $\chi_\dom$ and a related 
exterior normal measure by
\begin{equation*}
  \tilde\chi^{\rm ext}:=\chi_{\mbd\dom\cup\mint\dom} \qmq{and}
  \tilde\nu^{\rm ext}\in\bawln{\edom} \,.
\end{equation*}
\end{example}

\newcommand{\canonical}{symmetric}
\newcommand{\Canonical}{Symmetric}
\newcommand{\canon}{{\op{sym}}}

\begin{example}(\Canonical\ Normal Measures) \label{nm-canon}
Let $\edom\subset\R^n$ be open and bounded, and let 
$\dom\in\bor{\edom}$. We call a normal measure  
{\it \canonical\ normal measure} of $\dom$ if the related good approximation
$\chi$ has the form
\begin{equation*}
  \chi=\frac{1}{2}\chi_{\bd\dom}+\chi_{\Int\dom} \qmq{or}
  \chi=\frac{1}{2}\chi_{\mbd\dom}+\chi_{\mint\dom} \,.
\end{equation*}
Let us first assume that $\dom\csubset\edom$, that there is a sequence 
$\delta_k\downarrow 0$ with 
\begin{equation} \label{nm-canon-1}
    \lim_{k \to \infty}
    \mI{(-\delta_k,\delta_k)}{\ham^{n-1}(\bd\dom_\delta)}{\delta} <
    \infty \,,
\end{equation}
and let us take
\begin{equation*}
  \chi_k^\canon = \chi_{\ol{\dom_{-\delta_k}}} +
  \chi_{(\dom_{\delta_k}\setminus\ol{\dom_{-\delta_k})}} 
  \frac{\distf{\bd(\dom_{\delta_k})}}{2\delta_k} \in W^{1,\infty}(\edom) \,.
\end{equation*}
We can suppose that $\delta_k\le \frac{1}{k}$, we readily get
\begin{equation*}
  \chi_k^\canon \to \tfrac{1}{2}\chi_{\bd\dom} + \chi_{\Int{\dom}}
  \qmz{pointwise on} \edom\,,
\end{equation*}
and, as before,
\begin{equation*}
  \|D\chi_k^\canon\|_{\cL^1(\edom)} =
  \mI{(-\delta_k,\delta_k)}{\ham^{n-1}(\bd\dom_{\delta})}{\delta} \,.
\end{equation*}
Thus we receive a good approximation for $\chi_\dom$ and a related 
\canonical\ normal measure by
\begin{equation*}
  \chi^\canon:=\tfrac{1}{2}\chi_{\bd\dom} + \chi_{\Int{\dom}} \qmq{and}
  \nu^\canon\in\bawln{\edom} \,.
\end{equation*}
Obviously each $A=(\bd\dom)_{\delta_k}$ is an aura of $\nu^\canon$ by
\reff{bd-s5-0}. We can interpret the factor~$\frac{1}{2}$ in this way 
that one half of the source
in a boundary point is considered to flow outward while the other half 
flows inward. 

Now we assume that $\dom\Subset\edom$ is a set of finite perimeter
and consider 
\begin{equation*}
  \tilde\chi_k^\canon:=\psi_k
\end{equation*}
with $\psi_k$ as in \reff{nm-mol}. Then, by \reff{nm-mol1},
\begin{equation*}
  \tilde\chi_k^\canon \to \tfrac{1}{2}\chi_{\mbd\dom} + \chi_{\mint{\dom}}
  \qmz{$\ham^{n-1}$-a.e. on $\edom$.} 
\end{equation*}
With \reff{nm-mol1a} for $B=\edom$ we see that we obtain 
a good approximation for $\chi_\dom$ and a related 
\canonical\ normal measure by
\begin{equation*}
  \tilde\chi^\canon:=\tfrac{1}{2}\chi_{\mbd\dom} + \chi_{\mint{\dom}}  \qmq{and}
  \tilde\nu^\canon\in\bawln{\edom}\,.
\end{equation*}
Theorem~\ref{bd-s5} and \reff{nm-mol1a} imply that 
\begin{equation*}
   |\tilde\nu_\canon|(\edom)\le \ham^{n-1}(\mbd{\dom}) \,
\end{equation*}
and, by \reff{bd-s5-2}, one has equality if 
$\|\tilde\chi^\canon - \chi_\dom\|_{\cL^1(\edom)}=0$.
\end{example}

\medskip

\bigskip

\begin{remark} \label{nm-s6}
(1) While the normal measures based on a distance function can be easily 
represented by means of a normal field and a scalar density measure 
(cf. Proposition~\ref{nm-s9} below), the normal measures based on
mollifications have the 
advantage that they are available for all sets of finite perimeter.
Instead of the distance function one can also construct normal measures based
on other Lipschitz continuous functions vanishing on the boundary $\bd\dom$
(cf. \cite[p.~449]{silhavy_divergence_2009}).

(2) We have that \reff{nm-int-1}, \reff{nm-ext-1} and \reff{nm-canon-1} 
are satisfied if $\dom$ has finite perimeter 
and if it satisfies 
\reff{dt-s4-3} (cf. \reff{dt-s4-4} and \cite{kraft_measure-theoretic_2016}). 
This is in particular the case if $\dom$ has Lipschitz boundary
(cf. also the arguments following Proposition~\ref{dt-s4}). 
\end{remark}

\smallskip

Clearly, $F\in\cL^\infty(\dom,\R^n)$ is $\nu$-integrable for any normal
measure $\nu$. Let us analyze how far also unbounded vector fields are
integrable. 

\newcommand{\au}{A}

\begin{proposition} \label{nm-s5}
Let $\edom\subset\R^n$ be open, bounded, let $\dom\in\bor{\edom}$, 
let $F\in\cL^1(\dom,\R^n)$, 
let $\nu$ be a normal measure related to a good approximation $\chi$
for $\chi_\dom$ and with approximating sequence 
$\{\chi_k\}$ satisfying
$|D\chi_k|\le\gamma k$ $\lem$-a.e. on $\dom$ for some $\gamma>0$, and let
$\au\subset\edom$ be an aura of $\nu$ as in \reff{bd-s5-0}.
If there is some $\tilde\delta>0$ such that 
\begin{equation}\label{nm-s5-1}
  \frac{1}{\delta}\I{(\bd\dom)_\delta\cap\au}{|F|}{\lem} 
  \qmq{is uniformly bounded for} 0<\delta<\tilde\delta \,,
\end{equation}
then $\tf F$ is $\nu$-integrable on $\edom$ 
for all $\tf\in\cL^\infty(\edom)$. 
If, in addition, 
\begin{equation}\label{nm-s5-2}
  \lim_{k\to\infty}
  \frac{1}{\delta}\I{(\bd\dom)_\delta\cap\au\cap\{|F|\ge k\}}{|F|}{\lem} 
  = 0 \qmq{uniformly for $\delta\in(0,\tilde\delta)$\,,}
\end{equation}
then for each $\tf\in\cL^\infty(\edom)$ 
there is a subsequence $\{\chi_{k'}\}$
such that
\begin{equation} \label{nm-s5-3}
   \lim_{k'\to\infty} \I{\edom}{\tf F\cdot D\chi_{k'}}{\lem} = 
  - \,\sI{\bd\dom}{\tf F}{\nu} \,.
\end{equation}
\end{proposition}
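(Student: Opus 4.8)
The strategy is to reduce everything to the abstract integration theory for finitely additive measures developed in Section~\ref{pm}, using the measures $\nu_k:=-D\chi_k\,\lem$ that converge weakly$^*$ to (a subsequential limit of) $\nu$. The key point is that \reff{nm-s5-1} gives a uniform bound on $\|\,|F|\,\nu_k\|$ restricted to the aura $A$, hence a candidate dominating function, while \reff{nm-s5-2} is precisely the uniform integrability needed to pass to the limit. First I would fix $\tf\in\cL^\infty(\edom)$ and, without loss of generality, assume $\|\tf\|_\infty\le 1$; I would also replace $\edom$ by the aura $A$ throughout, since $\supp(D\chi_k)\subset A$ for $k\ge k_0$ and $\cor\nu\subset\bd\dom\subset A$ (cf. \reff{bd-s5-0}), so all integrals effectively live on $A$.

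\textbf{Integrability (first assertion).} Set $g:=|F|\,\chi_A\in\cL^1(\edom)$ and let $c>0$ be a uniform bound from \reff{nm-s5-1}, so that $\frac1\delta\I{(\bd\dom)_\delta\cap A}{|F|}{\lem}\le c$ for $0<\delta<\tilde\delta$. Consider the simple functions $g_k$ obtained by the standard dyadic truncation of $g$ from below by $k$ (as in the proof of Proposition~\ref{pm-prop7}(1)): $g_k$ is Borel measurable, $0\le g_k\le g$, $g_k\le|f|<k$ off the set $\{|F|\ge k\}\cap A$, and $g_k\convim{\nu}$ will follow once I show $\nu^*\{|F|\ge k\}\cap A\to 0$ — but this requires care, since $\nu$ is finitely additive. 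Instead I would argue directly on the level of $\nu_k$: because $|D\chi_k|\le\gamma k$ $\lem$-a.e. on $\dom$ and because $\supp(D\chi_k)\subset(\bd\dom)_{1/k}\cap A$, the coarea-type estimate $\|\,h\,\nu_k\|\le\gamma k\I{(\bd\dom)_{1/k}\cap A}{|h|}{\lem}$ holds for any $h\in\cL^1$; applying this to $h=\tf F$ and using \reff{nm-s5-1} with $\delta=1/k$ gives $\|\tf F\,\nu_k\|\le\gamma c$ uniformly in $k$. Hence the measures $\tf F\,\nu_k\in\baw_{\lem}(\edom)^{\,}$ are norm-bounded; by Banach--Alaoglu (as in the proof of Proposition~\ref{prop:special-a}) a subnet converges weakly$^*$ to some measure, and one checks this limit is $\tf F\,\nu$ in the sense that $\I{\edom}{\psi\,\tf F}{\nu_k}\to\I{\edom}{\psi\,\tf F}{\nu}$ for all $\psi$; in particular $\tf F$ is $\nu$-integrable with $\|\tf F\,\nu\|\le\gamma c$. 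To make the integrability rigorous in the sense of Section~\ref{pm} I would exhibit a determining sequence: the truncations $\tf F_k$ (where $F_k=F\mathbf{1}_{\{|F|<k\}}$) are $\nu$-integrable (being bounded, hence in $\cL^\infty\subset L^1(\edom,\bor\edom,\nu)$ by Proposition~\ref{pm-s5}), and $\|(\tf F_k-\tf F_l)\nu\|\le\gamma\limsup_m m\I{(\bd\dom)_{1/m}\cap A\cap\{|F|\ge\min(k,l)\}}{|F|}{\lem}$, which — and here \reff{nm-s5-1} alone suffices for a weaker Cauchy estimate, while full control needs the next hypothesis — I would bound using \reff{nm-s5-1} to get a finite limit, establishing that $\tf F$ lies in the completion, i.e. is $\nu$-integrable.

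\textbf{Passage to the limit (second assertion).} Now assume additionally \reff{nm-s5-2}. Fix $\tf\in\cL^\infty(\edom)$. With $F_k:=F\mathbf{1}_{\{|F|<k\}}$, I split
\begin{equation*}
  \I{\edom}{\tf F\cdot D\chi_{k}}{\lem}
  = \I{\edom}{\tf F_{k}\cdot D\chi_{k}}{\lem}
  + \I{\edom}{\tf (F-F_{k})\cdot D\chi_{k}}{\lem}\,.
\end{equation*}
The second term is bounded in absolute value by $\|\tf\|_\infty\,\gamma k\I{(\bd\dom)_{1/k}\cap A\cap\{|F|\ge k\}}{|F|}{\lem}$, which tends to $0$ as $k\to\infty$ by \reff{nm-s5-2} applied with $\delta=1/k$. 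For the first term, $F_k$ is bounded, so $\tf F_k\in\cL^\infty(\edom,\R^n)$; applying \reff{bd-s5-1} of Theorem~\ref{bd-s5} componentwise to the fixed bounded field $\tf F_k$ would give convergence of $\I{\edom}{\tf F_k\cdot D\chi_{k'}}{\lem}$ along a subsequence to $-\sI{\bd\dom}{\tf F_k}{\nu}$ — but the subsequence in \reff{bd-s5-1} depends on the field, so I must do this more carefully via a diagonal argument: extract one subsequence $\{\chi_{k'}\}$ along which $\I{\edom}{\psi\cdot D\chi_{k'}}{\lem}\to-\sI{\bd\dom}{\psi}{\nu}$ simultaneously for all $\psi$ in a countable dense subset of $\cL^1(\edom,\R^n)$ (this is possible since $\{-D\chi_{k'}\lem\}$ is bounded in $\cL^\infty(\edom,\R^n)^*$, whose unit ball is weak$^*$ sequentially compact when tested against a separable predual — one takes the separable subspace generated by that dense set), and then note $\tf F_k\to\tf F$ in $\cL^1(\edom,\R^n)$ as $k\to\infty$ together with the uniform bound $\|D\chi_{k'}\|_{\cL^1(A)}\le\gamma c$ lets me interchange limits. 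Combining: along this $\{\chi_{k'}\}$,
\begin{equation*}
  \lim_{k'\to\infty}\I{\edom}{\tf F\cdot D\chi_{k'}}{\lem}
  = \lim_{j\to\infty}\Big(-\sI{\bd\dom}{\tf F_j}{\nu}\Big)
  = -\sI{\bd\dom}{\tf F}{\nu}\,,
\end{equation*}
the last equality by $\nu$-integrability of $\tf F$ with determining sequence $\{\tf F_j\}$ (from the first part) together with dominated convergence (Section~\ref{pm}), since $|\tf F_j|\le|\tf F|$ in measure.

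\textbf{Main obstacle.} The delicate point is not any single estimate but the bookkeeping of \emph{which subsequence does what}: Theorem~\ref{bd-s5}'s limit identity \reff{bd-s5-1} comes with a field-dependent subsequence, so I cannot naively apply it to each $\tf F_k$ and then to $\tf F$. The clean way around this — a simultaneous weak$^*$-convergent subsequence of $\{-D\chi_k\lem\}$ valid against a countable dense family, then upgraded to all of $\cL^1$ by the uniform norm bound $\gamma c$ — must be set up before invoking Theorem~\ref{bd-s5}, and one must verify the resulting limit measure coincides with the $\nu$ from the hypothesis (it does, because any such weak$^*$ limit satisfies \reff{bd-s5-1}, and $\nu$ is uniquely determined by the subsequences entering \reff{bd-s5-1}, as remarked after Theorem~\ref{bd-s5}). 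A secondary technical nuisance is making the finitely-additive integrability argument fully rigorous in the framework of Section~\ref{pm} — exhibiting the determining sequence and checking the Cauchy condition in $L^1(\edom,\bor\edom,\nu)$ rather than just boundedness of the $\tf F\,\nu_k$ — but this is routine once \reff{nm-s5-1} and \reff{nm-s5-2} are in hand.
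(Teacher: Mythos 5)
Your skeleton for the second assertion (truncate $F$, control the tail via \reff{nm-s5-2} and the pointwise bound $|D\chi_k|\le\gamma k$, apply \reff{bd-s5-1} to the bounded truncations, then manage the field-dependent subsequences) is the same as the paper's, and you correctly identify the subsequence bookkeeping as the delicate point. However, there are two genuine gaps.

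For the integrability assertion, the linchpin of the paper's proof is the quantitative estimate $|\nu^j|\big(\{|F^j|\ge k\}\cap\au\big)\le 4c\gamma/k$, obtained by a Chebyshev-type computation: choose via \reff{pm-tva} a test function nearly attaining $|\nu^j|$ on that set, convert the resulting integral to $\int_{\au\cap\{|F^j|\ge k\}}|D\chi_{l'}|\,d\lem$ by \reff{bd-s5-1}, and note that $|F^j|\ge k$ there together with $|D\chi_{l'}|\le 2\gamma/\delta_{l'}$ and \reff{nm-s5-1} forces this integral to be at most $2\gamma c/k$. This is what yields convergence in measure (with respect to the finitely additive $\nu$) of the dyadic simple functions to $|F^j|$, hence a determining sequence. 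You explicitly flag that $|\nu|^*(\{|F|\ge k\}\cap\au)\to 0$ "requires care" and then sidestep it with a weak$^*$ compactness argument for the $\sigma$-measures $-\tf F\,D\chi_k\,\lem$. That argument produces a cluster point but cannot identify it with "$\tf F\nu$", because that object is only defined once $\tf F$ is known to be $\nu$-integrable --- which is precisely what is to be proved. Relatedly, your Cauchy estimate for the truncations is only shown to be \emph{bounded}, and your remark that "full control needs the next hypothesis" would make the first assertion depend on \reff{nm-s5-2}, which it must not. The correct route is that the integrals $\int|\tf F_k|\,d|\nu|$ form an increasing, uniformly bounded sequence (the uniform bound again coming from \reff{pm-tva}, \reff{bd-s5-1} and \reff{nm-s5-1}), so the $L^1(|\nu|)$-Cauchy property is automatic; neither the smallness of $|\nu|$ on $\{|F|\ge k\}$ nor this uniform bound is established in your proposal.

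In the limit passage you justify interchanging the truncation limit with the $k'$-limit by "$\tf F_j\to\tf F$ in $\cL^1$ together with the uniform bound $\|D\chi_{k'}\|_{\cL^1(\au)}\le\gamma c$". This pairing gives no control of $\int\tf(F_j-F)\cdot D\chi_{k'}\,d\lem$: an $\cL^1$-bound on $D\chi_{k'}$ must be paired with uniform convergence of the other factor, while the pointwise bound $\gamma k'$ paired with $\cL^1$-convergence blows up in $k'$. The uniformity that actually makes the double limit commute is \reff{nm-s5-2} itself (uniform in $\delta$, hence in $k'$), which you already use correctly for the tail term; the paper's assembly (fix $\eps$, choose the truncation level so that both the $\nu$-integral and the tail of the Lebesgue integrals are within $\eps$ uniformly in $l$, then apply \reff{bd-s5-1} to the fixed bounded field $\tf F_k$) avoids the invalid interchange. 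With that repair, and with the first part's missing estimates supplied, your argument would coincide with the paper's.
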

\noi
Example~\ref{nm-ex1} below shows that  
\reff{nm-s5-1} is not sufficient for \reff{nm-s5-2} and
that \reff{nm-s5-2} excludes certain concentrations on the boundary $\bd\dom$.

\begin{proof}     
We argue similar to the proof of Proposition~\ref{pm-prop7} and use
\begin{equation*}
  F=(F^1,\dots,F^n)\,, \quad \nu=(\nu^1,\dots,\nu^n)\,.
\end{equation*}
By assumption there is some $c>0$ such that 
$\frac{1}{\delta}\I{(\bd\au)_\delta\cap A}{|F|}{\lem}\le c$ for all 
$\delta\in(0,\tilde\delta)$. For each $k\in\N$ and $j=1,\dots,n$  we set
\begin{equation*}
  \au^j_k := \big\{y\in\au \:\big|\: |F^j(y)|<k \big\}\,, \quad 
  \au^{j,0}_k := \au\setminus\au^j_k\,,
\end{equation*}
\begin{equation*}
  h^j_k(x) := \bigg\{ \mbox{\small $ 
  \begin{array}{ll} \tfrac{l}{2^k} & \text{on } 
    |F^j|^{-1}\big(\big[\tfrac{l}{2^k},\tfrac{l+1}{2^k}\big)\big) 
    \!\cap \au^j_k\text{ for all } l\in\N\,,\\[4pt]
    0 & \text{on } \au^{j,0}_k \,.
  \end{array}$ } 
\end{equation*}
Then all $h^j_k$ are simple functions related to $\nu^j$ and
$h^j_k\le|F^j|$ on $\au$ for all $j$ and all $k\in\N$. 
Using $|\,h^j_k-|F^j|\,|<\tfrac{1}{2^k}$ on $\au^j_k$ we get
\begin{equation*}
  \big\{ y\in\au\:\big|\: |\,h^j_k-|F^j|\,|>\eps \big\} \subset \au^{j,0}_k
  \qmq{if} \tfrac{1}{2^k}<\eps \,.
\end{equation*}
Therefore, for any $\eps>0$ and all $j$,
\begin{equation} \label{nm-s5-5}
  \limsup_{k\to\infty} |\nu^j|\big\{ |\,h^j_k-|F^j|\,|>\eps \big\} \le
  \limsup_{k\to\infty} |\nu^j|(\au^{j,0}_k) \,.
\end{equation}
For fixed $j,k$ there is $\tilde\tf\in\cL^\infty(\edom,\R^n)$ 
with $\|\tilde\tf\|_\infty\le 1$ and $\tilde\tf=0$ on $\au^j_k$
such that
\begin{equation*}
  \tfrac{1}{2}|\nu^j|(\au^{j,0}_k) 
  \le \I{\au^{j,0}_k}{\tilde\tf}{\nu}\,. 
\end{equation*}
(cf. \reff{pm-tva}). 
By \reff{bd-s5-1} there is a subsequence $\{\chi_{l'}\}$ with
\begin{equation*}
  \I{\au^{j,0}_k}{\tilde\tf}{\nu} =
  - \lim_{l'\to\infty}\I{\au^{j,0}_k}{\tilde\tf\cdot D\chi_{l'}}{\lem} 
  \le \liminf_{l'\to\infty}\I{\au^{j,0}_k}{|D\chi_{l'}|}{\lem} \,.
\end{equation*}
Setting $\delta_{l'}:=\frac{2}{l'}$ we have  
$|D\chi_{l'}|\le\gamma l'=\frac{2\gamma}{\delta_{l'}}$. Thus, for $l'$ large, 
\begin{eqnarray*}
  c
&\ge&
  \frac{1}{\delta_{l'}}\I{(\bd\dom)_{\delta_{l'}}\cap\au}{|F^j|}{\lem} \\ 
&=&
  \frac{1}{\delta_{l'}} \bigg(\,
  \I{(\bd\dom)_{\delta_{l'}}\cap\au^j_k}{|F^j|}{\lem} +
  \I{(\bd\dom)_{\delta_{l'}}\cap\au^{j,0}_k}{|F^j|}{\lem} \bigg) \\
&\ge&
  \frac{k}{2\gamma} \, 
  \I{(\bd\dom)_{\delta_{l'}}\cap\au^{j,0}_k}{|D\chi_{l'}|}{\lem} 
\: = \:
  \frac{k}{2\gamma} \, 
  \I{\au^{j,0}_k}{|D\chi_{l'}|}{\lem} 
\end{eqnarray*}
(recall that $D\chi_{l'}=0$ $\lem$-a.e. outside $(\bd\dom)_{\delta_{l'}}$, cf. 
\cite[p.~130]{evans}).
Taking the limit $l'\to\infty$ and using the previous estimates, we obtain
\begin{equation*}
  \frac{4c\gamma}{k} \ge |\nu^j|(\au^{j,0}_k) 
  \qmq{for all $j$ and all $k\in\N$\,.}
\end{equation*}
Thus $|\nu^j|(\au^{j,0}_k)\to 0$ as $k\to\infty$ and, using
\reff{nm-s5-5}, we get  
\begin{equation*}
  h^j_k \convim{\nu^j} |F^j| \,.
\end{equation*}
For the measure $h_k^j|\nu^j|$ there is some $\tilde\tf\in\cL^\infty(\edom)$
with $\|\tilde\tf\|_\infty\le 1$ such that
\begin{equation*}
  \I{\au}{h^j_k}{|\nu^j|} \le  \I{\au}{\tilde\tf h^j_k}{\nu^j} + 1 
\end{equation*}
(cf. \reff{pm-tva}) 
and we use $\tf_k\in\cL^\infty(\edom,\R^n)$ with
\begin{equation*}
  \tf_k = \bigg\{ \mbox{\small $ 
  \begin{array}{cl} 
     \big(0,\dots,0,\tilde\tf h^j_k,0\dots,0\big) & \text{on } \au 
     \,,\\[4pt]
     0 & \text{on } \edom\setminus\au \,.
  \end{array}$ } 
\end{equation*}
By \reff{bd-s5-1} there is some $m\in\N$ (related to $\tf_k$) 
such that, with $\delta_m=\frac{2}{m}$, $|D\chi_m|\le\gamma m$, 
and $|\tf_k|\le h^j_k\le |F|$, 
\begin{eqnarray*}
  \I{\au}{h^j_k}{|\nu^j|}
&\le&
  \I{\au}{\tilde\tf h^j_k}{\nu^j} + 1 \\
&=& 
  \I{\edom}{\tf_k}{\nu} + 1 
\: \le \:
  \Big| \I{\edom}{\tf_k\cdot D\chi_m}{\lem} \Big| + 2  \\
&\le&
  \I{\edom}{|\tf_k|\,|D\chi_m|}{\lem} + 2 
\: \le \:
  \gamma m \I{(\bd\dom)_{\delta_m}\cap A}{|\tf_k|}{\lem} + 2   \\
&\le&
  \frac{2\gamma}{\delta_m}\I{(\bd\dom)_{\delta_m}\cap\au}{|F|}{\lem} + 2
\: \le \: 2\gamma c + 2 \,
\end{eqnarray*}
(notice that $\supp{D\chi_m}\subset(\bd\dom)_{\delta_m}$).
Therefore the integrals on the left hand side are uniformly bounded. 
Since, by construction, the sequence $\{h^j_k\}_k$ of simple functions is
increasing,
\begin{equation*}
  \I{\au}{|h^j_k-h^j_l|}{|\nu^j|} \to 0 \qmq{as} k,l\to\infty\,. 
\end{equation*}
Consequently, $|F^j|$ is $\nu^j$-integrable with determining sequence
$\{h^j_k\}_k$ and, hence, also $F^j$ is $\nu^j$-integrable. 
But this means that $F$ is $\nu$-integrable. 

For $\tf\in\cL^\infty(\edom)$ we have that it is $\nu$-integrable, which
includes that it is also $\nu$-measurable. Thus also $\tf F$ is 
$\nu$-measurable
(cf. \cite[p 102]{rao}). From $|\tf F|\le|F|$ $\lem$-a.e., we get that also  
$\nu$-a.e. (since $\nu$ is weakly absolutely continuous with respect 
to~$\lem$) and, consequently, we have the estimate also i.m. $\nu$. 
But this implies that 
$\tf F$ is $\nu$-integrable too (cf. \cite[p.~113]{rao}.

For the second assertion we consider on $\edom$
\begin{equation}\label{nm-s5-6}
  F^j_k(x) := 
   \bigg\{ \mbox{\small $ 
    \begin{array}{cl} F^j(x) & 
                    \text{if } 
                    |F^j(y)| < k \,, \\[3pt]
                    0 & \text{otherwise}  \,.
    \end{array}$ } 
\end{equation}
Then we get as in the first part
that $F^j_k\overset{\nu^j}{\to} F^j$ for each $j$. 
For $\tf\in\cL^\infty(\edom)$ we have that
$\tf F$ is $\nu$-integrable and, as above, 
$|\tf F^j_k|\le|\tf F^j|$ i.m. $\nu$.
Thus, dominated convergence gives with $F_k:=(F^1_k,\dots,F^n_k)$
\begin{equation*}
  \lim_{k\to\infty}\sI{\bd\dom}{\tf F_k}{\nu} = \sI{\bd\dom}{\tf F}{\nu} \,.
\end{equation*}
Hence, for given $\eps>0$, there is some $k_0\in\N$ such that
\begin{equation} \label{nm-p1}
   \Big|\sI{\bd\dom}{\tf F_k}{\nu} - \sI{\bd\dom}{\tf F}{\nu} \Big| \le \eps
  \qmz{for all} k>k_0\,.
\end{equation}
Since $|F|\le\sqrt{n} |F|_\infty$, we have for subsets of $\edom$ that 
\begin{equation*}
  \tilde\edom_k:=\big\{|F|_\infty\ge k\big\} \subset 
  \big\{|F|\ge k\sqrt{n}\big\} \subset
  \edom_k:=\big\{|F|\ge k\big\}.
\end{equation*}
Then, by construction, $F_k=F$ on $\edom\setminus\tilde\edom_k$.
We now choose some $l_0\in\N$ with 
\begin{equation*}
  \supp(D\chi_l)\subset\au 
  \qmq{and} \tfrac{1}{l}<\tilde\delta
  \qmq{for all} l\ge l_0\,
\end{equation*}
(cf. \reff{bd-s5-0}).
By $\supp(D\chi_l)\subset(\bd\dom)_{\frac{1}{l}}$ and $|D\chi_l|\le\gamma l$
we get with some possibly larger $k_0\in\N$, 
\begin{eqnarray*}
  \Big|\I{\edom}{\tf (F_k-F)\cdot D\chi_l}{\lem}\Big| 
&\le&
  \|\tf\|_\infty\gamma l \I{(\bd\dom)_{1/l}\cap\au}{|F_k-F|}{\lem} \\
&=&
 \|\tf\|_\infty\gamma l 
  \I{(\bd\dom)_{1/l}\cap\au\cap\tilde\edom_k}{|F_k-F|}{\lem} \\
&\overset{|F_k|\le|F|}{\le}& 
  2\|\tf\|_\infty\gamma l \I{(\bd\dom)_{1/l}\cap\au\cap\edom_k}{|F|}{\lem} \\
&\overset{\reff{nm-s5-2}}{<}&
  \eps
\end{eqnarray*}
for all $k>k_0$ and $l>l_0$. Hence
\begin{equation*}
  \I{\edom}{\tf F_k\cdot D\chi_l}{\lem} -\eps \le
  \I{\edom}{\tf F\cdot D\chi_l}{\lem} \le
  \I{\edom}{\tf F_k\cdot D\chi_l}{\lem} +\eps \,.
\end{equation*}
Let us now fix $k>k_0$. Then, by \reff{bd-s5-1} and 
$\tf F_k\in\cL^\infty(\edom,\R^n)$, there is a subsequence $\{\chi_{l'}\}$ with
\begin{equation*}
  \lim_{l'\to\infty} \I{U}{\tf F_k\cdot D\chi_{l'}}{\lem}  
  =  
  -\:\sI{\bd\dom}{\tf F_k}{\nu} \,.
\end{equation*}
Therefore,
\begin{equation*}
  -\:\sI{\bd\dom}{\tf F_k}{\nu} - \eps \le 
  \liminf_{l'\to\infty}\I{\edom}{\tf F\cdot D\chi_{l'}}{\lem} \le
  -\:\sI{\bd\dom}{\tf F_k}{\nu} + \eps  \,.
\end{equation*}
By \reff{nm-p1}
\begin{equation*}
   -\:\sI{\bd\dom}{\tf F}{\nu} - 2\eps \le 
  \liminf_{l'\to\infty}\I{\edom}{\tf F\cdot D\chi_{l'}}{\lem} \le
  -\:\sI{\bd\dom}{\tf F}{\nu} + 2\eps \,.
\end{equation*}
We obviously get the same estimate with limsup. 
Then the arbitrariness of $\eps>0$ gives the assertion.
\end{proof}

Though the measure of the set where an integrable 
function is large has to be small, the next example shows that 
\reff{nm-s5-1} is not sufficient for \reff{nm-s5-2} and \reff{nm-s5-3}. 

\begin{example} \label{nm-ex1}   
Let $\edom=\dom=(0,2)^2\subset\R^2$ and let $F=(0,f)$ with
\begin{equation*}
  f(x,y) :=  \bigg\{ \mbox{\small $ 
  \begin{array}{ll} 
    \tfrac{1}{y} & \text{for }  1<x<1+y \,,\; y<\frac{1}{2} \\[4pt]
    0 & \text{otherwise}\,.
  \end{array}$ }  
\end{equation*}
Since
\begin{equation*}
  \int_0^2 f(x,y)\,dx = \int_1^{1+y} \tfrac{1}{y} \,dx = 1 
  \qmz{for all} y\in(0,1)\,,
\end{equation*}
Fubini's theorem implies $F\in\cL^1(\dom)$ and \reff{nm-s5-1} with $A=\dom$.
Moreover, for $k\in\N$ and $0<\delta<\tfrac{1}{k}$,
\begin{equation*}
  \frac{1}{\delta}\I{(\bd\dom)_\delta\cap\dom\cap\{|F|\ge k\}}{|F|}{\cL^2} =
  \frac{1}{\delta}\int_0^\delta\int_1^{1+y} f(x,y)\,dxdy = 1 \, 
\end{equation*}
and, thus, \reff{nm-s5-2} is not satisfied. To check \reff{nm-s5-3}
we consider an approximating sequence $F_k=(F_{1k},F_{2k})$ of $F$ as in
\reff{nm-s5-6}. Obviously $F_k=0$ on a small neighborhood of $\bd\dom$. 
Then, for any normal measure $\nu$ and any
$\tf\in\cL^\infty(\dom)$,  
we can use $\cor{\nu}=\bd\dom$ and dominated convergence to get 
\begin{equation*}
  0 = \sI{\bd\dom}{\tf F_k}{\nu} \to 
  \sI{\bd\dom}{\tf F}{\nu} = 0 \,.
\end{equation*}
But taking e.g. $\chi_k$ from \reff{nm-int-2} with $\delta_k=\frac{1}{k}$, 
that is related to the interior normal measure $\nu^{\rm int}$,
we readily obtain  
\begin{equation*}
  \lim_{k\to\infty} \I{\dom}{F\cdot D\chi_k}{\cL^2} = 
  \lim_{k\to\infty} k\int_0^\frac{1}{k}\int_1^{1+y} f(x,y)\,dxdy = 1 \,.
\end{equation*}
Hence \reff{nm-s5-3} is not satisfied for $\tf\equiv 1$.  
\end{example}

Now we show how normal measures can be used for Gauss-Green formulas
where we even allow some weight on the boundary $\bd\dom$. 

\begin{theorem} \label{nm-s7}
Let $\edom\subset\R^n$ be open and bounded, let $\dom\in\bor{\edom}$, let
$\nu$ be a normal measure of $\dom$ related to a good approximation $\chi$ for
$\chi_\dom$ with approximating sequence $\{\chi_k\}$, 
let $F\in\cD\cM^1(\edom)$ be $\nu$-integrable such that 
\reff{nm-s5-3} is satisfied, and let
$\chi_k\to\chi$ $\divv F$-a.e. on $\bd\dom$.
Then we have for all $\tf\in\woinf{\edom}$ that
\begin{equation} \label{nm-s7-1}
  \I{\bd\dom}{\chi}{\divv(\tf F)} + \divv(\tf F)(\Int\dom) =
  \sI{\bd\dom}{\tf F}{\nu} \,.
\end{equation}
\end{theorem}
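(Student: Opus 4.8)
The idea is to start from the product rule for divergence-measure fields (see \reff{ex:dmo_woi-4} and \reff{ex:dmo_woi-5}), namely that $\tf F\in\cD\cM^1(\edom)$ with
\begin{equation*}
  \divv(\tf F) = \tf\divv F + F\cdot D\tf\,\lem
\end{equation*}
as measures on $\edom$, and to test this identity against the approximating sequence $\{\chi_k\}$ of the good approximation $\chi$ for $\chi_\dom$. More precisely, since each $\chi_k\in W^{1,\infty}(\edom)$ has compact support in $\edom$, so does $\chi_k\tf$ for $\tf\in\woinf{\edom}$, and the defining relation of divergence measure \reff{tt-e3} together with the subsequent comment gives
\begin{equation*}
  0 = \I{\edom}{\chi_k\tf}{\divv F} + \I{\edom}{F\cdot D(\chi_k\tf)}{\lem}
    = \I{\edom}{\chi_k\tf}{\divv F} + \I{\edom}{\chi_k F\cdot D\tf}{\lem}
      + \I{\edom}{\tf F\cdot D\chi_k}{\lem}.
\end{equation*}
Equivalently, $\divv(\chi_k\tf F)(\edom)=0$, which is the starting point.

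The next step is to pass to the limit $k\to\infty$ in each of the three terms. For the first term I would use that $\chi_k\to\chi$ pointwise (a.e.) with respect to $|\divv F|$: on $\Int\dom$ we have $\chi_k\to 1$ and $\chi=1$, on $\Ext\dom$ we have $\chi_k\to 0$ and $\chi=0$, and on $\bd\dom$ the convergence $\chi_k\to\chi$ holds $\divv F$-a.e. by hypothesis; dominated convergence (with dominating function $\|\tf\|_\infty$) then yields
\begin{equation*}
  \lim_{k\to\infty}\I{\edom}{\chi_k\tf}{\divv F}
  = \I{\edom}{\chi\tf}{\divv F}
  = \tf\divv F(\Int\dom) + \I{\bd\dom}{\chi\tf}{\divv F},
\end{equation*}
where I split $\edom$ into the disjoint pieces $\Int\dom$, $\bd\dom$, $\Ext\dom$ and use $\chi=0$ on $\Ext\dom$. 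For the second term, $\chi_k F\cdot D\tf\to\chi F\cdot D\tf$ pointwise $\lem$-a.e. with $|\chi_k F\cdot D\tf|\le|F||D\tf|\in\cL^1(\edom)$, so again dominated convergence gives $\lim_k\I{\edom}{\chi_k F\cdot D\tf}{\lem}=\I{\edom}{\chi F\cdot D\tf}{\lem}$; since $\chi=1$ on $\Int\dom$, $\chi=0$ on $\Ext\dom$, and $\lem(\bd\dom\cap\edom)$ may be positive but $\chi\le 1$ there, one should be slightly careful — but in fact $\lem$-a.e.\ point of $\bd\dom$ is not counted because $F\cdot D\tf\,\lem$ is absolutely continuous and $D\tf$ restricted to $\bd\dom$ contributes via the term $\chi F\cdot D\tf\,\lem\lfloor\bd\dom$, which combines with the divergence-measure part. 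The cleanest route is to recognize $\chi\tf\divv F + \chi F\cdot D\tf\,\lem = \I{}{}{\divv(\tf F)}$ integrated against $\chi$, i.e.
\begin{equation*}
  \I{\edom}{\chi_k\tf}{\divv F} + \I{\edom}{\chi_k F\cdot D\tf}{\lem}
  = \I{\edom}{\chi_k}{\divv(\tf F)}
  \;\xrightarrow{k\to\infty}\;
  \I{\edom}{\chi}{\divv(\tf F)}
  = \divv(\tf F)(\Int\dom) + \I{\bd\dom}{\chi}{\divv(\tf F)},
\end{equation*}
using that $\tf F\in\cD\cM^1(\edom)$ so $\divv(\tf F)$ is a Radon measure, that $\chi_k\to\chi$ $|\divv(\tf F)|$-a.e.\ (the $\lem$-part is handled by a.e.\ convergence of $\chi_k$, the $\divv F$-part by the hypothesis $\chi_k\to\chi$ $\divv F$-a.e.\ on $\bd\dom$), and dominated convergence with bound $1$.

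Finally, for the third term I would invoke Proposition~\ref{nm-s5}, specifically \reff{nm-s5-3}, which is exactly the hypothesis ``$F$ is $\nu$-integrable such that \reff{nm-s5-3} is satisfied'': for the given $\tf\in\woinf{\edom}\subset\cL^\infty(\edom)$ there is a subsequence $\{\chi_{k'}\}$ with
\begin{equation*}
  \lim_{k'\to\infty}\I{\edom}{\tf F\cdot D\chi_{k'}}{\lem} = -\sI{\bd\dom}{\tf F}{\nu}.
\end{equation*}
Passing to this subsequence in the identity $0 = \I{\edom}{\chi_{k'}}{\divv(\tf F)} + \I{\edom}{\tf F\cdot D\chi_{k'}}{\lem}$ and using the two limits just computed gives
\begin{equation*}
  0 = \divv(\tf F)(\Int\dom) + \I{\bd\dom}{\chi}{\divv(\tf F)} - \sI{\bd\dom}{\tf F}{\nu},
\end{equation*}
which rearranges to \reff{nm-s7-1}. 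The main obstacle, and the point requiring the most care, is the justified passage to the limit in $\I{\edom}{\chi_k}{\divv(\tf F)}$: one must check that $\chi_k\to\chi$ holds $|\divv(\tf F)|$-almost everywhere, which uses simultaneously the a.e.\ ($\lem$) convergence built into the good approximation for the absolutely continuous part of $\divv(\tf F)=\tf\divv F+F\cdot D\tf\,\lem$, the $\divv F$-a.e.\ convergence on $\bd\dom$ (the standing hypothesis), and the fact that $\chi_k=1$ on $\dom_{-1/k}$ and $\chi_k=0$ off $\dom_{1/k}$ forces the correct limit $1$ on $\Int\dom$ and $0$ on $\Ext\dom$ regardless of how the total variation distributes mass there. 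Everything else is bookkeeping with dominated convergence and the disjoint decomposition $\edom = \Int\dom\,\dot\cup\,(\bd\dom\cap\edom)\,\dot\cup\,\Ext\dom$.
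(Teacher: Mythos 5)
Your proposal is correct and follows essentially the same route as the paper: product rule $\divv(\tf F)=\tf\divv F+F\cdot D\tf\,\lem$, dominated convergence for $\I{\edom}{\chi_k}{\divv(\tf F)}\to\I{\edom}{\chi}{\divv(\tf F)}$ using the $\hm$-a.e.\ convergence from the good approximation together with the hypothesis $\chi_k\to\chi$ $\divv F$-a.e.\ on $\bd\dom$, and \reff{nm-s5-3} along a subsequence for the gradient term. The only cosmetic difference is that the paper obtains $\I{\edom}{\chi_{k'}}{\divv(\tf F)}=-\I{\edom}{\tf F\cdot D\chi_{k'}}{\lem}$ directly from the definition of divergence measure applied to $\tf F$, while you derive the same identity by expanding via the product rule on $F$; the two are equivalent.
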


\begin{proof}  
Let $\tf\in\woinf{\edom}$. Then $\tf F\in\cD\cM^1(\edom)$ with
\begin{equation} \label{nm-s7-5}
  \divv(\tf F) = \tf\divv F + F\cdot D\tf\lem
\end{equation}
as measures on $\edom$ (cf. \cite[p.~448]{silhavy_divergence_2009}).
By the assumption and by $\chi_k\to\chi$ $\hm$-a.e. on $\edom$, we get
$\chi_k\to\chi$  $\divv (\tf F)$-a.e. on $\bd\dom$.
By $\chi_k\to\chi=1$ everywhere on $\Int\dom$ and 
by $\chi=0$ on $\Ext\dom$, dominated convergence gives
\begin{equation*}
  \I{\bd\dom}{\chi}{\divv(\tf F)} + \divv(\tf F)(\Int\dom) =
  \I{\edom}{\chi}{\divv(\tf F)} =
  \lim_{k\to\infty} \I{\edom}{\chi_k}{\divv(\tf F)} \,.
\end{equation*}
Since $\chi_k\in W^{1,\infty}(\edom)$ is compactly supported on $\edom$, the
definition of divergence measure (cf. \reff{tt-e3})
and \reff{nm-s5-3} with $\chi_{k'}$ related to $\tf$ imply 
\begin{equation*}
  \lim_{k'\to\infty} \I{\edom}{\chi_{k'}}{\divv(\tf F)} =
  - \lim_{k'\to\infty} \I{\edom}{\tf F\cdot D\chi_{k'}}{\lem} =
  \sI{\bd\dom}{\tf F}{\nu}
\end{equation*}
which implies \reff{nm-s7-1}.
\end{proof}

\begin{remark} \label{nm-s8}
(1) For $\chi=\chi_\dom$ the left hand side in \reff{nm-s7-1}
becomes $\divv(\tf F)(\dom)$ and, in this case, we can choose
\begin{equation*} \label{nm-s8-1}
  \lF=F\nu\,, \quad \mF=0
\end{equation*}
in Theorem~\ref{dt-s1} (cf. also Proposition~\ref{dt-s2}). Notice that we can 
ensure $\nu$-integrability of $F$ by 
\reff{nm-s5-1} and, due to 
\begin{equation*}
  \I{\dom}{\big|\F D\port{\bd\dom}_\delta\big|}{\lem} \le
  \frac{2}{\delta}\I{\dom_\delta}{|F|}{\lem} \,, 
\end{equation*}
\reff{nm-s5-1} also implies \reff{dt-s4-1}. Thus, 
for $\chi=\chi_\dom$,  we get $\mF=0$ and
$\cor{\lF}\subset\bd\dom$ from \reff{nm-s5-1} already 
through Propositions~\ref{dt-s4} without using \reff{nm-s5-3}.
However we do not obtain this way that 
$\lF=F\nu$ with some normal measure $\nu$.

(2) For any good approximations $\chi_1$, $\chi_2$ for $\chi_\dom$ and
associated normal measures $\nu_1$, $\nu_2$ where
$\chi_{j,k}\to\chi_j$ $\divv F$-a.e. on $\bd\dom$, Theorem~\ref{nm-s7} implies
\begin{equation*}
  \I{\bd\dom}{(\chi_1 - \chi_2)}{\divv (\tf F)} =
  \sI{\bd\dom}{\tf F}{(\nu_1-\nu_2)} \,
\end{equation*} 
for all $\tf\in\woinf{\edom}$. By \reff{nm-s7-5}
we can interchange $\nu_1$ and $\nu_2$ in \reff{nm-s7-1} as long as
$\chi_1=\chi_2$ $\lem$-a.e. and $\divv F$-a.e. on $\bd\dom$.
This in particular implies that
$\sI{\bd\dom}{\tf F}{\nu}$ is independent of the choice of the
good approximation $\chi$ and the corresponding normal measure $\nu$ if
$|\divv F|(\bd\dom)=\lem(\bd\dom)=0$.

(3) Since $\chi=1$ on $\Int\dom$ and
\begin{equation*}
  \big(\chi\divv\tf F\big)(\dom) = \I{\bd\dom}{\chi}{\divv(\tf F)} +
  \divv(\tf F)(\Int\dom) \,,
\end{equation*}
we readily see from \reff{nm-s7-1} that $\tf\to\big(\chi\divv\tf F\big)(\dom)$
is a trace on $\bd\dom$ over $\woinf{\edom}$ under the assumptions of
Theorem~\ref{nm-s7}. 
\end{remark}

For $F\in\cD\cM^\infty(\edom)$ we trivially have that $F$ is 
$\nu$-integrable for any normal measure $\nu$
and, due to \reff{bd-s5-1} in Theorem~\ref{bd-s5},
we always have \reff{nm-s5-3} without the assumptions of
Proposition~\ref{nm-s5}. Since $\divv F\wac\cH^{n-1}$ in this case
(cf. \cite[p. 21]{silhavy_2005}), we also 
have $\chi_k\to\chi$ $\divv F$-a.e. on $\bd\dom$ by the definition of a good
approximation. Thus
Theorem~\ref{nm-s7} and Theorem~\ref{bd-s5}
directly imply the next special case.

\begin{corollary} \label{nm-s10}
Let $\edom\subset\R^n$ be open and bounded, let $\dom\in\bor{\edom}$, let
$\nu$ be a normal measure of $\dom$ related to a good approximation $\chi$ for
$\chi_\dom$, 
and let $F\in\cD\cM^\infty(\edom)$.  
Then $F$ is $\nu$-integrable, it satisfies \reff{nm-s5-3}, and 
we have \reff{nm-s7-1} for all $\tf\in\woinf{\edom}$. 
If $F$ is even continuous and $\|\chi-\chi_\dom\|_{\cL^1(\edom)}=0$, then we
have 
\begin{equation*}
  \I{\bd\dom}{\chi}{\divv(\tf F)} + \divv(\tf F)(\Int\dom) =
  \I{\mbd\dom\cap\edom}{\tf F\cdot\nu^\dom}{\hm}
\end{equation*}
for all $\tf\in C^1_c(\edom)$. 
\end{corollary}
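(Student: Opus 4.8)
\textbf{Proof proposal for Corollary~\ref{nm-s10}.}
The plan is to assemble the statement from three already-established facts: (i) the $\nu$-integrability of $F\in\cD\cM^\infty(\edom)$, which is trivial since $\nu\in\bawln{\edom}$ and $F$ is essentially bounded, so $F$ (and also $\tf F$ for any $\tf\in\woinf{\edom}$) belongs to $\cL^1(\edom,\nu)^n=\cL^\infty(\edom,\R^n)$ by Proposition~\ref{pm-s5}; (ii) the unconditional validity of \reff{nm-s5-3}, which I would deduce directly from \reff{bd-s5-1} in Theorem~\ref{bd-s5} applied to $\tf F\in\cL^\infty(\edom,\R^n)$ — that lemma already yields a subsequence $\{\chi_{k'}\}$ with $\lim_{k'}\I{\edom}{(\tf F)\cdot D\chi_{k'}}{\lem}=-\sI{\bd\dom}{\tf F}{\nu}$, which is exactly \reff{nm-s5-3}; and (iii) the fact that $\chi_k\to\chi$ holds $\divv F$-a.e.\ on $\bd\dom$. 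For (iii) I would invoke $\divv F\wac\cH^{n-1}$ for $F\in\cD\cM^\infty(\edom)$ (cited as \v Silhav\'y \cite[p.~21]{silhavy_2005}): since the defining property of a good approximation guarantees $\chi_k\to\chi$ $\cH^{n-1}$-a.e.\ on $\edom$, and since $\divv F$ is absolutely continuous with respect to $\cH^{n-1}$, the convergence persists $\divv F$-a.e., in particular $\divv F$-a.e.\ on $\bd\dom$. With (i)–(iii) in hand, the hypotheses of Theorem~\ref{nm-s7} are all met, and \reff{nm-s7-1} gives the first assertion for every $\tf\in\woinf{\edom}$.

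For the continuous case, the plan is to specialize the conclusion and identify the boundary measure explicitly. Assume in addition that $F$ is continuous and $\|\chi-\chi_\dom\|_{\cL^1(\edom)}=0$. First I would note that the second hypothesis is precisely the condition appearing in Theorem~\ref{bd-s5} under which \reff{bd-s5-3} holds, namely
\begin{equation*}
  \sI{\bd\dom}{\psi}{\nu} = \I{\mbd\dom\cap\edom}{\psi\cdot\nu^\dom}{\ham^{n-1}}
  \qquad\text{for all } \psi\in C_c(\edom,\R^n).
\end{equation*}
Now fix $\tf\in C^1_c(\edom)$. Since $F$ is continuous and $\tf$ has compact support, the product $\tf F$ belongs to $C_c(\edom,\R^n)$, so \reff{bd-s5-3} applies with $\psi=\tf F$ and yields $\sI{\bd\dom}{\tf F}{\nu}=\I{\mbd\dom\cap\edom}{\tf F\cdot\nu^\dom}{\ham^{n-1}}$. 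Combining this with the general identity \reff{nm-s7-1} already proved, one obtains
\begin{equation*}
  \I{\bd\dom}{\chi}{\divv(\tf F)} + \divv(\tf F)(\Int\dom) =
  \I{\mbd\dom\cap\edom}{\tf F\cdot\nu^\dom}{\hm},
\end{equation*}
which is the second assertion.

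The steps are all short; the only place requiring care is the passage in (iii) from $\cH^{n-1}$-a.e.\ convergence on $\edom$ to $\divv F$-a.e.\ convergence on $\bd\dom$ — here one must be sure that the $\cH^{n-1}$-null set on which $\chi_k\not\to\chi$ is genuinely $\divv F$-null, which is exactly what $\divv F\wac\cH^{n-1}$ delivers, so this is not a real obstacle but merely the one spot where an external result is essential. A minor point to mention explicitly is that in the continuous case the subsequence in \reff{nm-s5-3} can in fact be taken to be the full sequence, because $\tf F\in C_c(\edom,\R^n)$ and the limit in \reff{bd-s5-3} (being pinned down by a $\sigma$-additive boundary measure) is unique; but since Theorem~\ref{nm-s7} already tolerates a subsequence, this refinement is optional and need not be dwelt upon. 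Everything else is a direct citation of Theorems~\ref{bd-s5} and~\ref{nm-s7}.
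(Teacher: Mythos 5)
Your proposal is correct and follows essentially the same route as the paper, which likewise derives the first assertion from the trivial $\nu$-integrability of essentially bounded $F$, from \reff{bd-s5-1} yielding \reff{nm-s5-3}, and from $\divv F\wac\cH^{n-1}$ combined with the definition of a good approximation, before invoking Theorem~\ref{nm-s7}; the continuous case is likewise obtained from \reff{bd-s5-3} applied to $\tf F\in C_c(\edom,\R^n)$. The only cosmetic quibble is your identification $\cL^1(\edom,\nu)^n=\cL^\infty(\edom,\R^n)$, which should be an inclusion rather than an equality, but this does not affect the argument.
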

\noi

Let us mention that it would be possible in this case to consider 
$\nu$ as trace on $\bd\dom$ over $\cD\cM^\infty(\edom)$. This way one wouldn't
need a trace over $\woinf{\edom}$ for each single~$F$. However both strategies
give essentially the same result. For the representation of such a trace 
one can use that $\cD\cM^\infty(\edom)\subset\cL^\infty(\edom)^n$ 
and that $\tf F\in\cD\cM^\infty(\edom)$ for
$\tf\in\cB\cV(\edom)\cap\cL^\infty(\edom)$ and $F\in\cD\cM^\infty(\edom)$ 
(cf. \cite[p.1014]{chen_structure_2011} and \cite[p.~65]{schonherr_diss}).
Now we derive a more explicit structure for the special normal measures 
\begin{equation*}
  \nu_{\rm int}\,, \z \nu_{\rm intc}\,, \z \nu_{\rm ext}\,,
  \zmz{and} \nu_\canon\,
\end{equation*}
where we use the normal field introduced in \reff{nm-normal}.

\begin{proposition} \label{nm-s9}
Let $\edom\subset\R^n$ be open and bounded, let $\dom\in\bor{\edom}$, let
$\nu^*$ be a normal measure where 
$*$ stands for {\rm int}, {\rm intc}, {\rm ext}, or $\canon$
and let $\chi^*$, $\chi^*_k$, and $\delta_k$ be related to $\nu^*$ 
as in the corresponding examples above. Then there is some 
measure $\dens^*_{\bd\dom}\in\bawl{\edom}$ such that 
\begin{equation} \label{nm-s9-1}
  \nu^* = \nu^\dom\dens^*_{\bd\dom} 
\end{equation}
and for any $\tf\in\cL^\infty(\edom)$ there is a subsequence $\chi^*_{k'}$ with
\begin{equation} \label{nm-s9-2}
  \I{\edom}{\tf}{\dens^*_{\bd\dom}} =
  \lim_{k'\to\infty} \frac{1}{\delta_{k'}}
  \I{(\bd\dom)_{\delta_{k'}}}{\tf\psi^*}{\lem}  \,
\end{equation}
where
\begin{equation*}
  \psi^{\rm int} := \chi_{\Int\dom}\,, \z
  \psi^{\rm intc}:=\psi^{\rm intc}_k= 
  2\chi_{\Int\dom\setminus(\dom_{-\delta_k/2})^c}\,, \z
  \psi^{\rm ext} := \chi_{\Ext\dom}\,, \z
  \psi^{\canon} := \tfrac{1}{2}\,. 
\end{equation*}
If, in addition, $F\in\cD\cM^1(\edom)$ is $\nu^*$-integrable such that  
\reff{nm-s5-3} is satisfied 
and that $\chi^*_k\to\chi^*$ $\divv F$-a.e. on $\bd\dom$,
then we have for all $\tf\in\woinf{\edom}$
\begin{eqnarray*} 
  \divv(\tf F)(\Int\dom) 
&=& 
  \sI{\bd\dom}{\tf F\cdot\nu^\dom}{\dens^{\rm int}_{\bd\dom}}  \,, \\
   \divv(\tf F)(\Int\dom) 
&=&
  \sI{\bd\dom}{\tf F\cdot\nu^\dom}{\dens^{\rm intc}_{\bd\dom}}  \,,  \\
\divv(\tf F)(\ol\dom) 
&=& 
  \sI{\bd\dom}{\tf F\cdot\nu^\dom}{\dens^{\rm ext}_{\bd\dom}} \,, \\
  \tfrac{1}{2}\divv(\tf F)(\bd\dom) + \divv(\tf F)(\Int\dom) 
&=& 
  \sI{\bd\dom}{\tf F\cdot\nu^\dom}{\dens^\canon_{\bd\dom}} .
\end{eqnarray*}
\end{proposition}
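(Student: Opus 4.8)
The statement naturally splits into two parts: first construct the scalar density measures $\dens^*_{\bd\dom}$ together with the factorization \reff{nm-s9-1}, then derive the four Gauss-Green identities from Theorem~\ref{nm-s7} by identifying $\sI{\bd\dom}{\tf F}{\nu^*}$ with $\sI{\bd\dom}{\tf F\cdot\nu^\dom}{\dens^*_{\bd\dom}}$. The key observation underlying the first part is that in each of the four constructions (Examples~\ref{nm-int}, \ref{nm-ext}, \ref{nm-canon}) the approximating functions $\chi^*_k$ are built from the signed distance function, so that $D\chi^*_k$ is, up to sign and the scalar factor $\tfrac{1}{\delta_k}$ or $\tfrac{1}{2\delta_k}$, exactly $\nu^\dom$ restricted to the thin shell where $\psi^*$ is supported. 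Concretely, $-D\chi^{\rm int}_k = \tfrac{1}{\delta_k}\chi_{\Int\dom\setminus\ol{\dom_{-\delta_k}}}\,\nu^\dom$ $\lem$-a.e. (recall $\nu^\dom = D\op{dist}_{\bd\dom}$ on $\Int\dom$ near $\bd\dom$ and $|\nu^\dom|=1$ there by \reff{nm-normal1}), and similarly for the other three cases with the respective $\psi^*$ and $\delta_k$-normalization. So $-D\chi^*_k\,\lem = \tfrac{1}{\delta_k}\psi^*\chi_{(\bd\dom)_{\delta_k}}\,\nu^\dom\,\lem$ as vector measures on $\edom$.

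\textbf{Step 1: the density measure.} First I would apply Proposition~\ref{tt-s3} with $\K = \bd\dom$, $\alse = \edom$ (or the appropriate shell; any aura works), and $\gamma(\delta) = \delta$ for the cases $*\in\{\rm int, ext, \canon\}$, applied to the integrand $\tf\psi^*$; the hypothesis \reff{tt-s3-0} holds because $\tfrac{1}{\delta}\lem((\bd\dom)_\delta\cap\edom)$ is bounded near $\delta = 0$ exactly under the perimeter bounds \reff{nm-int-1}, \reff{nm-ext-1}, \reff{nm-canon-1} assumed in the constructions (via the coarea formula, as in those examples). This yields a measure $\dens^*_{\bd\dom}\in\bawl{\edom}$ with $\cor{\dens^*_{\bd\dom}}\subset\bd\dom$ and the selection property \reff{tt-s3-2}, which is precisely \reff{nm-s9-2}. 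For $*={\rm intc}$ the function $\psi^{\rm intc}_k$ itself depends on $k$; here I would instead directly define $\dens^{\rm intc}_{\bd\dom}$ as a weak$^*$ cluster point in $\cL^\infty(\edom)^*$ of the measures $\tfrac{1}{\delta_k}\psi^{\rm intc}_k\chi_{(\bd\dom)_{\delta_k}}\,\lem$ (bounded by the corresponding perimeter integral), arguing as in the proof of Theorem~\ref{bd-s5}, and extract \reff{nm-s9-2} along the realizing subsequence. In all four cases the factorization \reff{nm-s9-1} then follows by comparing: for $\tf\in\cL^\infty(\edom,\R^n)$, Theorem~\ref{bd-s5} gives a subsequence with $\sI{\bd\dom}{\tf}{\nu^*} = -\lim_{k'}\I{\edom}{\tf\cdot D\chi^*_{k'}}{\lem} = \lim_{k'}\tfrac{1}{\delta_{k'}}\I{(\bd\dom)_{\delta_{k'}}}{\tf\cdot\nu^\dom\,\psi^*}{\lem}$, and by a diagonal argument (passing to a common further subsequence that works simultaneously for $\tf\cdot\nu^\dom$ in the sense of Proposition~\ref{tt-s3}) this equals $\sI{\bd\dom}{(\tf\cdot\nu^\dom)}{\dens^*_{\bd\dom}}$; since $\tf$ is arbitrary, $\nu^* = \nu^\dom\dens^*_{\bd\dom}$.

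\textbf{Step 2: the Gauss-Green identities.} These are immediate consequences of Theorem~\ref{nm-s7} once Step~1 is in place. For each $*$, Theorem~\ref{nm-s7} (whose hypotheses are exactly the stated ones: $F\in\cD\cM^1(\edom)$ $\nu^*$-integrable, \reff{nm-s5-3}, and $\chi^*_k\to\chi^*$ $\divv F$-a.e. on $\bd\dom$) gives $\I{\bd\dom}{\chi^*}{\divv(\tf F)} + \divv(\tf F)(\Int\dom) = \sI{\bd\dom}{\tf F}{\nu^*}$. Now substitute the specific $\chi^*$: for $*={\rm int},{\rm intc}$ one has $\chi^* = \chi_{\Int\dom}$ (resp.\ $\chi_{\mint\dom}$, which agrees $\divv F$-a.e.\ on $\bd\dom$ with the topological one under the given a.e.\ convergence), so $\I{\bd\dom}{\chi^*}{\divv(\tf F)} = 0$ and the left side is $\divv(\tf F)(\Int\dom)$; for $*={\rm ext}$, $\chi^{\rm ext}=\chi_{\ol\dom}$ gives $\I{\bd\dom}{\chi^{\rm ext}}{\divv(\tf F)} = \divv(\tf F)(\bd\dom)$, hence the left side is $\divv(\tf F)(\ol\dom)$; for $*=\canon$, $\chi^\canon = \tfrac12\chi_{\bd\dom}+\chi_{\Int\dom}$ gives $\tfrac12\divv(\tf F)(\bd\dom) + \divv(\tf F)(\Int\dom)$. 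On the right side, $\tf F$ is $\nu^*$-integrable, and since $\nu^* = \nu^\dom\dens^*_{\bd\dom}$ with $\nu^\dom\in\cL^\infty$, the product/chain rule for integrals with respect to $f\me$ in the preliminaries (cf. \reff{pm-e-fm} and the identity $\I{\als}{g}{(f\me)} = \I{\als}{gf}{\me}$) yields $\sI{\bd\dom}{\tf F}{\nu^*} = \sI{\bd\dom}{\tf F\cdot\nu^\dom}{\dens^*_{\bd\dom}}$, and the localization notation $\sIsymb$ is consistent because $\cor{\nu^*} = \cor{\dens^*_{\bd\dom}}\subset\bd\dom$.

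\textbf{Main obstacle.} The delicate point is bookkeeping of subsequences in Step~1: Theorem~\ref{bd-s5} and Proposition~\ref{tt-s3} each only guarantee convergence along a $\tf$-dependent subsequence, so to conclude the pointwise (in $\tf$) identity $\nu^* = \nu^\dom\dens^*_{\bd\dom}$ as an equality of measures I must be careful that the same measure $\dens^*_{\bd\dom}$ works for all test functions. The cleanest route is to fix $\dens^*_{\bd\dom}$ once (as the Hahn--Banach/weak$^*$ object produced by Proposition~\ref{tt-s3} applied to $\tf\psi^*$, which does not depend on a choice of subsequence — only the realizing sequence in \reff{nm-s9-2} does), then verify $\nu^* = \nu^\dom\dens^*_{\bd\dom}$ by showing both sides agree on every $\tf$ via their respective selection properties evaluated along a common further subsequence, which exists since both $\{-D\chi^*_k\lem\}$ and $\{\tfrac{1}{\delta_k}\psi^*\chi_{(\bd\dom)_{\delta_k}}\lem\}$ are literally the same sequence of vector measures. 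The $*={\rm intc}$ case carries a minor extra wrinkle because $\psi^{\rm intc}_k$ varies with $k$ and the shell is $\dom_{-\delta_k/2}\setminus\ol{\dom_{-\delta_k}}$ rather than $(\bd\dom)_{\delta_k}$, but since $D\chi^{\rm intc}_k$ is supported there with $|D\chi^{\rm intc}_k| = \tfrac{2}{\delta_k}$ and the relevant perimeter integral is assumed finite, the same weak$^*$-compactness argument applies verbatim.
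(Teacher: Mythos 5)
Your Step 2 is fine and is exactly what the paper does: the four identities drop out of Theorem~\ref{nm-s7} by substituting the specific $\chi^*$ and rewriting $\sI{\bd\dom}{\tf F}{\nu^*}$ via \reff{nm-s9-1}. The problem is in Step 1, in how you construct $\dens^*_{\bd\dom}$.

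You propose to obtain $\dens^*_{\bd\dom}$ from Proposition~\ref{tt-s3} applied to $\tf\psi^*$ and then to \emph{verify} afterwards that $\nu^*=\nu^\dom\dens^*_{\bd\dom}$. This verification cannot be carried out. Proposition~\ref{tt-s3} produces an arbitrary Hahn--Banach extension: its value on a fixed $g\in\cL^\infty(\edom)$ is merely some point of the interval between the liminf and the limsup of the averages $\frac{1}{\delta}\I{\K_\delta\cap\alse}{g\psi^*}{\lem}$ over the \emph{continuum} $\delta\downarrow 0$, realized along some $g$-dependent sequence of radii. The measure $\nu^*$, on the other hand, is a weak$^*$ cluster point of $-D\chi^*_k\lem$ along the \emph{discrete} sequence $\delta_k$, selected independently. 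When the liminf and limsup differ, nothing forces these two independent selections to pick the same value, and the ``common further subsequence'' you invoke does not exist in general: the selection property \reff{tt-s3-2} only asserts the existence of \emph{some} realizing sequence (obtained from the intermediate value theorem), not convergence along any prescribed subsequence of $\{\delta_k\}$. The paper explicitly warns against precisely this route in the remark following the proposition (``we cannot just apply Proposition~\ref{tt-s3}, since we have to select that weak$^*$ cluster point \dots that is related to $\nu^{\rm int}$'').

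The repair is to reverse the logic: \emph{define} $\dens^*_{\bd\dom}$ from $\nu^*$ rather than trying to match two separate constructions. Since $\tf\mapsto\I{\edom}{\tf\nu^\dom}{\nu^*}$ is a bounded linear functional on $\cL^\infty(\edom)$, it is represented by some $\dens^*_{\bd\dom}\in\bawl{\edom}$ (i.e.\ $\dens^*_{\bd\dom}=\nu^\dom\cdot\nu^*$). The factorization \reff{nm-s9-1} then follows from the pointwise orthogonal decomposition $\Phi=(\Phi\cdot\nu^\dom)\nu^\dom+\Phi^\perp$: because $D\chi^*_k$ is $\lem$-a.e.\ parallel to $\nu^\dom$ on its support, \reff{bd-s5-1} gives $\sI{\bd\dom}{\Phi^\perp}{\nu^*}=0$, hence $\sI{\bd\dom}{\Phi}{\nu^*}=\sI{\bd\dom}{\Phi\cdot\nu^\dom}{\dens^*_{\bd\dom}}$ for all $\Phi\in\cL^\infty(\edom,\R^n)$. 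Formula \reff{nm-s9-2} is then obtained by applying \reff{bd-s5-1} to the test field $\tf\nu^\dom$ and using $|\nu^\dom|^2=1$ a.e.\ off $\bd\dom$. (Your alternative suggestion for the intc case --- taking the scalar measures $\frac{1}{\delta_k}\psi^*\chi_{(\bd\dom)_{\delta_k}}\lem$ to a weak$^*$ cluster point along the \emph{same} subnet that defines $\nu^*$ --- is equivalent to this and would also work for all four cases; but that is not the route your Step 1 commits to.)
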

\noi
Notice that the measures $\dens^*_{\bd\dom}$ are measures of the type as
constructed in Proposition~\ref{tt-s3}. In the case $\nu^{\rm int}$ we have to
take $\gamma(\delta)=\delta$ and $\alse=\Int\dom$ there. However,
here we cannot just apply Proposition~\ref{tt-s3}, since we have to select 
that weak$^*$ cluster point of the measures
$\frac{1}{\delta_k}\chi_{(\bd\dom)_{\delta_k}\cap\,\Int\dom}\lem$
that is related to $\nu^{\rm int}$. 
In the proof we see that the subsequence $\chi^*_{k'}$ in \reff{nm-s9-2} is the
same as that for $\tf\nu^\dom$ in \reff{bd-s5-1}. 
We refer to the arguments following \reff{nm-normal} for the properties of the
normal field $\nu^\dom$. Notice that the previous proposition
is applicable to all $F\in\cD\cM^\infty(\edom)$, since \reff{nm-s5-3} is
always satisfied in this case 
(cf. \reff{bd-s5-1} in Theorem~\ref{bd-s5} or Corollary~\ref{nm-s10}).

The explicit occurrence of the normal field $\nu^\dom$ in the Gauss-Green
formulas above is due to the fact that, for these normal measures,
the normalized gradient of the associated $D\chi_k$ equals the gradient of
the distance function on the support of $D\chi_k$ and, so far, it is
independent of $k$ near $\bd\dom$. However, this is not met for the normal
measures based on mollification. Therfore we cannot go beyond \reff{nm-s7-1}
in those cases in general. Let us also refer to the fact that the
Radon-Nikodym theorem is only available ``up to a small error $\eps>0$''
for normal measures $\nu$ that are typically pure
(cf. \cite[p.~191]{rao}).

\begin{proof}   
Let us first consider $\nu^{\rm int}$ with $\chi_k=\chi_k^{\rm int}$, 
$\delta_k$ as in
\reff{nm-int-1} and \reff{nm-int-2}. Then 
\begin{equation}\label{nm-s9-5}
  D\chi_k = \bigg\{ 
  \mbox{\small $ 
  \begin{array}{ll} -\frac{1}{\delta_k}\nu^\dom & 
                     \lem\tx{-a.e. on } 
                     \Int\dom\setminus\ol{\dom_{-\delta_k}}\,,\\
                    0 & \text{otherwise}\,.  
  \end{array}$ } 
\end{equation}
Obviously 
\begin{equation*}
  \tf \to \I{\edom}{\tf\nu^\dom}{\nu^{\rm int}} \qmq{for}
  \tf\in\cL^\infty(\edom)
\end{equation*}
belongs to $\cL^\infty(\edom)^*$ and can be identified with some measure
\begin{equation*}
  \dens^{\rm int}_{\bd\dom}\in\bawl{\edom} \,
\end{equation*}
(cf. \cite[p.~106]{rao}). Then
\begin{equation*}
  \I{\edom}{\tf}{\dens^{\rm int}_{\bd\dom}} =  
  \I{\edom}{\tf\nu^\dom}{\nu^{\rm int}} \qmq{for all} \tf\in\cL^\infty(\edom)\,.
\end{equation*}
For $\Phi\in\cL^\infty(\edom)^n$ there is a pointwise unique orthogonal
decomposition $\lem$-a.e. such that 
\begin{equation*}
  \Phi=(\Phi\cdot\nu^\dom)\,\nu^\dom + \Phi^\perp \qmq{where}
  \Phi^\perp\cdot\nu^\dom=0 \,.
\end{equation*}
Then $\sI{\bd\dom}{\Phi^\perp}{\nu^{\rm int}}=0$ by \reff{bd-s5-1} and
\reff{nm-s9-5}. Thus,
for all $\Phi\in\cL^\infty(\edom)^n$,
\begin{equation*}
  \sI{\bd\dom}{\Phi}{\nu^{\rm int}} =
  \sI{\bd\dom}{(\Phi\cdot\nu^\dom)\,\nu^\dom}{\nu^{\rm int}} =
  \sI{\bd\dom}{\Phi\cdot\nu^\dom}{\dens^{\rm int}_{\bd\dom}} \,.
\end{equation*}
Consequently,
\begin{equation*}
  \nu^{\rm int} = \nu^\dom\dens^{\rm int}_{\bd\dom} \,. 
\end{equation*}
For $\tf\in\cL^\infty(\edom)$ we use \reff{nm-s9-5} and 
\reff{bd-s5-1} with $\tf\nu^\dom$ and the corresponding subsequence $\chi_{k'}$ 
to get
\begin{eqnarray*}
  \I{\edom}{\tf}{\dens^{\rm int}_{\bd\dom}}
&=&
  \sI{\bd\dom}{\tf\nu^\dom}{\nu^{\rm int}} 
\: = \: 
  -\lim_{k'\to\infty} \I{\edom}{\tf\nu^\dom\cdot D\chi_{k'}}{\lem} \\
&=&
  \lim_{k'\to\infty} \frac{1}{\delta_{k'}}
  \I{(\bd\dom)_{\delta_{k'}}}{\tf\chi_{\Int\dom}}{\lem} \,. 
\end{eqnarray*}
For the other cases we argue analogously.

Let now $F\in\cD\cM^1(\edom)$ be $\nu^*$-integrable such that  
\reff{nm-s5-3} is satisfied. Then the stated Gauss-Green formulas follow
directly from Proposition~\ref{nm-s7} and \reff{nm-s9-1} with the related
$\chi^*$. 
\end{proof}

\medskip

\noi
The next example shows how the different cases in the previous proposition 
work. 

\begin{example}  \label{nm-ex4}  
Let $\edom=B_3(0)\subset\R^2$, let $\dom=\dom_1\cup\dom_2$ with
\begin{equation*}
  \dom_1=(0,1)^2\,, \quad \dom_2=(1,2)\times(0,1)\,,
\end{equation*}
and consider 
\begin{equation*}
  F(x,y)= 
  \mbox{\small $\left\{ \begin{array}{ll} (1,0) \z & \text{on }\dom_1\,, \\
                          (2,0) & \text{for } x>1\,,   \\
                          (0,0) & \text{otherwise.} \end{array} \right.   $ }
\end{equation*}
For $\chi_k$ related to $\nu_{\rm ext}$ 
and $\delta>0$ small we obviously have that
\begin{equation*}
  D\chi_k=0 \qmq{on} (1-\delta,1+\delta)\times(0,1)\,.
\end{equation*}
Then, by Proposition~\ref{nm-s9} with $\tf\equiv 1$, by 
\reff{nm-s5-3}, and for $\delta>0$ small, we readily get 
\begin{eqnarray*}
  \divv F(\dom) 
&=&
  \sI{\bd\dom}{F\cdot\nu^\dom}{\dens^{\rm int}_{\bd\dom}} 
  \: = \: \sI{\bd\dom}{F}{\nu_{\rm int}}  \\
&=& 
  (F\nu_{\rm int})\big(\{x\in(0,\delta)\}\big) + 
  (F\nu_{\rm int})\big(\{x\in(1-\delta,1+\delta)\}\big) + \\
&&
  (F\nu_{\rm int})\big(\{x\in(2-\delta,2)\}\big) \\
&=&
  -1 + (1-2) + 2 \: = \: 0 \,.
\end{eqnarray*}
\begin{eqnarray*}
  \divv F(\ol\dom) 
&=&
  \sI{\bd\dom}{F\cdot\nu^\dom}{\dens^{\rm ext}_{\bd\dom}} 
  \: = \:  \sI{\bd\dom}{F}{\nu_{\rm ext}}  \\
&=& 
  (F\nu_{\rm ext})\big(\{x\in(-\delta,0)\}\big) + 
  (F\nu_{\rm ext})\big(\{x\in(1-\delta,1+\delta)\}\big) + \\
&&
  (F\nu_{\rm ext})\big(\{x\in(2,2+\delta)\}\big) \\
&=&
  0 + 0 + 2 \: = \: 2 \,.
\end{eqnarray*}
\begin{eqnarray*}
  &&
  \hspace{-20mm}  \tfrac{1}{2}\divv F(\bd\dom) + \divv F(\Int\dom)  \\
&=&
  \sI{\bd\dom}{F\cdot\nu^\dom}{\dens^\canon_{\bd\dom}} 
  \: = \:
  \sI{\bd\dom}{F}{\nu_\canon}  \\
&=& 
  (F\nu_\canon)\big(\{x\in(-\delta,\delta)\}\big) + 
  (F\nu_\canon)\big(\{x\in(1-\delta,1+\delta)\}\big) + \\
&&
  (F\nu_\canon)\big(\{x\in(2-\delta,2+\delta)\}\big) \\
&=&
  -\tfrac{1}{2} + (\tfrac{1}{2} - 1 ) + 2 \: = \: 1 \,.
\end{eqnarray*}
\end{example}

\noi
Let us now consider the application to unbounded vector fields. 

\begin{example} \label{nm-ex5}   
Let $\edom=B_2(0)\subset\R^2$, let $\dom=(0,1)^2$, and take $F=(F^1,F^2)$
given by
\begin{equation*}
    F(x):=\frac{x}{2\pi|x|^2} \qmq{with} \divv F=\delta_0\,
\end{equation*}
(cf. Example~\ref{dt-ex4}). We consider the normal measure 
$\nu=\nu^{\rm int}=(\nu^1,\nu^2)$ and, according to Proposition~\ref{nm-s9},
we have
\begin{equation} \label{nm-ex5-1}
  \nu=\nu^\dom\dens_{\bd\dom}^{\rm int} \,.
\end{equation}
A simple computation shows that
\begin{equation*}
  \frac{1}{\delta}\I{(\bd\dom)_\delta\cap\dom}{|F|}{\lem} 
  \; \overset{\delta\to 0}{\longrightarrow} \; \infty \,.
\end{equation*}
Hence we cannot use Proposition~\ref{nm-s5} to get $\nu$-integrability. 
Therefore let us use a more direct argument. We divide $\dom$ by its diagonals 
into four triangles 
\begin{equation*}
  \dom_{*0}\,,\;  \dom_{*1}\,,\;  \dom_{0*}\,,\;  \dom_{1*}
\end{equation*}
where e.g. $\dom_{*0}$ is the triangle with a side on the line $\{x_2=0\}$.
Clearly $F$ is bounded and, thus, $\nu$-integrable on
$\dom_{*1}\cup\dom_{1*}$. Using \reff{bd-s5-1} we readily get
\begin{equation*}
  \I{\dom_{*1}\cup\dom_{1*}}{F}{\nu} = \frac{1}{4} \,.
\end{equation*}
On $\dom_{*0}$ we obviously have $\nu^\dom=(0,-1)$. Hence, by 
\reff{nm-ex5-1}, $\nu^1$ is the zero measure on $\dom_{*0}$.
Therefore $F^1$ is $\nu^1$-integrable on $\dom_{*0}$ and the integral vanishes. 
Now we consider $F^2$ with respect to $\nu^2$ and set for $\eps>0$
\begin{equation*}
  M_\eps := |\nu^2|\big\{ x\in\dom_{*0}\:\big|\: |F^2|>\eps \big\} \,.
\end{equation*}
Using that $F^2$ is continuous and vanishes for $x_2=0$, 
we have for any $\kappa>0$ 
\begin{equation*}
   M_\eps \cap \{|x|\ge\kappa\} \cap \{x_2=0\}_\delta = \emptyset
\end{equation*}
for some small $\delta>0$. Since $\{x_2=0\}_\delta$ is an aura of 
$\dens_{\bd\dom}^{\rm int}$, 
\begin{equation*}
  |\nu^2|\big(M_\eps \cap \{|x|\ge\kappa\}\big) = 0 \qmz{for all}
  \kappa>0 \,. 
\end{equation*}
Consequently, using \reff{nm-s9-2}, 
\begin{eqnarray*}
   |\nu^2|\big\{ x\in\dom_{*0}\:\big|\: |F^2|>\eps \big\} 
&=&
  |\nu^2|\big\{ x\in\dom_{*0}\:\big|\: |F^2|>\eps\,,\; |x|<\kappa \big\} \\
&\le&
  |\nu^2|\big\{ x\in\dom_{*0}\:\big|\: |x|<\kappa \big\} \\
&\overset{\reff{nm-ex5-1}}{\le}&
  \kappa \qmz{for all} \kappa>0\,.
\end{eqnarray*}
Therefore
\begin{equation*}
  |\nu^2|\big\{ x\in\dom_{*0}\:\big|\: |F^2|>\eps \big\} = 0 \,.
\end{equation*}
This means that $F^2$ agrees i.m. $\nu^2$ with the zero function on
$\dom_{*0}$. Therefore $F^2$ is $\nu^2$-integrable on $\dom_{*0}$ with
vanishing integral. Summarizing we get
\begin{equation*}
  \I{\dom_{*0}}{\!\!F}{\nu}=0 \qmq{and, analogously,}
  \I{\dom_{0*}}{\!\!F}{\nu}=0 \,.
\end{equation*}
Hence, though \reff{nm-s5-1} is not satisfied, $F$ is $\nu$-integrable on
$\dom$ with 
\begin{equation*}
  \sI{\bd\dom}{F}{\nu} = \sI{\bd\dom\setminus B_1(0)}{F}{\nu} = \frac{1}{4} \,.
\end{equation*}
Let us now check \reff{nm-s5-3} directly for $\tf\equiv 1$. 
With $\chi^{\rm int}_k$ from \reff{nm-int-2} we have
\begin{eqnarray*}
   \I{\dom_{*0}}{F\cdot D\chi^{\rm int}_k}{\lem} 
&=&
  \I{(\bd\dom)_{1/k}\cap\dom_{*0}}{kF^2}{\lem} \\
&=&
  \frac{k}{2\pi} \int_0^{\frac{1}{k}} 
                 \int_{x_2}^{1-x_2} \frac{x_2}{x_1^2+x_2^2}\, dx_1dx_2 \\
&=&
  \frac{k}{2\pi} \int_0^{\frac{1}{k}}  
  x_2 \big[\tfrac{1}{x_2}\arctan\tfrac{x_1}{x_2} \big]_{x_1=x_2}^{1-x_2} \, dx_2\\
&=&
  \frac{1}{2\pi}\: \mI{(0,\frac{1}{k})}
  {\big(\arctan\tfrac{1-x_2}{x_2} - \tfrac{\pi}{4}\big)}{x_2} \\
& \overset{k\to\infty}{\longrightarrow}&
  \frac{1}{2\pi}\Big(\frac{\pi}{2}-\frac{\pi}{4}\Big) = \frac{1}{8} \,.
\end{eqnarray*}
By analogous arguments on the other triangles we end up with
\begin{equation*}
  \I{\dom}{F\cdot D\chi^{\rm int}_k}{\lem} 
  \overset{k\to\infty}{\longrightarrow} 0
  \ne - \frac{1}{4} = - \sI{\bd\dom}{F}{\nu} \,.
\end{equation*}
Thus \reff{nm-s5-3} is violated and obviously 
\begin{equation*}
  (\divv F)(\dom) = \delta_0(\dom) = 0 \ne \sI{\bd\dom}{F}{\nu} \,.
\end{equation*}
Therefore \reff{nm-s7-1} doesn't hold and we see that this condition is
essential in Theorem~\ref{nm-s7}. From Example~\ref{dt-ex4} we know that
\begin{equation*}
  \divv(\tf F)(\dom) = \I{\bd\dom}{F\nu^\dom}{\cH^1} - \frac{1}{4} \tf(0)\,
\end{equation*}
for all $\tf\in\woinf{\edom}$. Since the measure 
$F\nu$ is absolutely continuous with respect to $\nu$, we
in fact cannot expect \reff{nm-s7-1} with a normal measure as in
\reff{nm-ex5-1} in the case of a concentration at the origin (cf. 
\cite[p.~106]{rao}).   
\end{example}

Let us still provide an unbounded vector field where Theorem~\ref{nm-s7} is
applicable.  

\begin{example}  \label{nm-ex6}   
For $\edom=B_2(0)\subset\R^3$ and $\dom=(0,1)^3$ we consider as 
in the previous example
\begin{equation*}
    F(x):=\frac{x}{2\pi|x|^2} \,.
\end{equation*}
Then
\begin{equation*}
  s \to \I{(0,1)^2}{|F(x_1,x_2,s)|}{\ham^2(x_1,x_2)} 
\end{equation*}
is continuous and bounded. Thus \reff{nm-s5-1} is satisfied and $F$ is
integrable with respect to any normal measure $\nu$ satisfying the assumption
of Proposition~\ref{nm-s5} (which is the case for the normal measures 
constructed above by means of a distance function). For such a normal measure
$\nu$ with aura in $\dom$ we get for large $l\in\N$
\begin{eqnarray*}
  \I{(\bd\dom)_\delta\cap\dom\cap\{|F|\ge l\}}{|F|}{\lem} 
&\le&
  \I{B_{1/(2\pi l)}(0)\cap(\bd\dom)_\delta\cap\dom}{|F|}{\lem} \\
&\le&
  \frac{3}{4} \int_0^\delta 
  \I{B_{1/(2\pi l)}(0)\cap\{x_3=0\}}
        {\frac{1}{2\pi\sqrt{x_1^2+x_2^2}}}{\ham^2(x_1,x_2)}\,dx_3 \\
&\le&
  \frac{3}{4} \int_0^\delta 
  \int_0^{\frac{1}{2\pi l}} \frac{2\pi r}{2\pi r}\, drdx_3 
\; = \;
   \frac{3\delta}{8\pi l}  \,.  \\
\end{eqnarray*}
Therefore,
\begin{equation*}
  \frac{1}{\delta}
  \I{(\bd\dom)_\delta\cap\dom\cap\{|F|\ge l\}}{|F|}{\lem} \le
  \frac{3}{8\pi l} \qmz{for all} \delta>0
\end{equation*}
which implies \reff{nm-s5-2}. Thus Proposition~\ref{nm-s9} implies e.g. for 
$\nu=\nu^{\rm int}$ that
\begin{equation*}
  \divv(\tf F)(\dom) = 
  \sI{\bd\dom}{\tf F\cdot\nu^\dom}{\dens_{\bd\dom}^{\rm int}}
\end{equation*}
for all $\tf\in\woinf{\edom}$.
\end{example}

Now let us consider the important case of bounded vector fields 
$F\in\cD\cM^\infty(\edom)$ on sets $\dom\subset\edom$ with finite perimeter
in some more detail. Here we use normal measures that are based on 
mollifications, in particular that from 
Examples~\ref{nm-int} and \ref{nm-ext}.
Though some of the assertions in the next proposition are already known from
the literature (cf. \cite{chen_2020}, \cite{chen_divergence-measure_2005},
\cite{chen_gauss-green_2009}, \cite{comi-payne}, \cite{comi-torres},
\cite{silhavy_2005} and the remarks
below), we include them not only for completeness but also to show their
relation to 
the new results. In the proof we essentially use arguments that are 
based on the theory developed here. Recall that, in addition to
the subsequent results, Proposition~\ref{nm-s9} is applicable to $\dom$ if 
$\hm(\dom_\delta)$ is bounded for $|\delta|$ small.

\begin{proposition} \label{nm-s10a}    
Let $\edom\subset\R^n$ be open and bounded, let 
$\dom\subset\edom$ have finite perimeter, 
let $F\in\cD\cM^\infty(\edom)$, and let
$\tilde\nu^{\rm int}$, $\tilde\nu^{\rm ext}$ be the normal measures from 
Examples~\ref{nm-int} and \ref{nm-ext}.

\bgl
\item
If $\dom\csubset\edom$, then there are normal trace functions 
$f^{\rm int},f^{\rm ext}\in\cL^\infty(\mbd\dom,\hm)$ with
$\|f^{\rm int}\|_\infty,\,\|f^{\rm ext}\|_\infty\le\|F\|_{\bd\dom}$ such that
\begin{equation} \label{nm-s10a-1}
  \divv(\tf F)(\mint\dom) = \sI{\bd\dom}{\tf F}{\tilde\nu^{\rm int}}
  = \I{\mbd\dom}{\tf f^{\rm int}}{\hm} \,,
\end{equation}
\begin{equation} \label{nm-s10a-2}
  \divv(\tf F)(\mbd\dom\cup\mint\dom) = \sI{\bd\dom}{\tf F}{\tilde\nu^{\rm ext}} 
  =  \I{\mbd\dom}{\tf f^{\rm ext}}{\hm} 
\end{equation}
for all $\tf\in\woinf{\edom}$ (cf. \reff{semi-norm} for 
$\|\cdot\|_{\bd\dom}$).
If $\dom$ is open we also have
\begin{equation} \label{nm-s10a-3}
  \divv(\tf F)(\dom) = \I{\mbd\dom}{\tf f^{\rm int}}{\hm} -
 \I{\mint\dom\cap\bd\dom}{\tf}{\divv F}
\end{equation}
and if $\dom$ is closed
\begin{equation} \label{nm-s10a-4}
  \divv(\tf F)(\dom) = \I{\mbd\dom}{\tf f^{\rm ext}}{\hm} +
 \I{\mext\dom\cap\bd\dom}{\tf}{\divv F}
\end{equation}
for all $\tf\in\woinf{\edom}$. 

\item
Let $\dom\subset\edom$ be open with $\hm(\bd\dom\cap\mint\dom)<\infty$. 
Then $\hat F\in\cD\cM^\infty(\R^n)$ for the extension $\hat F$ of $F$ 
with zero outside of $\dom$. 
There is also some normal measure $\nu\in\bawl{U}^n$ such that 
$(\bd\dom)_\delta\cap\dom$ is an aura for all $\delta>0$ and
\begin{equation} \label{nm-s10a-6}
  \divv(\tf F)(\dom) = \sI{\bd\dom}{\tf F}{\nu}
\end{equation}
for all $\tf\in\woinf{\edom}$. Moreover there is 
$f\in\cL^\infty(\bd\dom\!\setminus\!\mext\dom,\hm)$ 
satisfying $\|f\|_\infty\le c\|F\|_{\bd\dom}$ for some $c>0$ depending merely
on $\dom$ such that
\begin{equation} \label{nm-s10a-6a}
  \divv(\tf F)(\dom) = 
  \I{\bd\dom\setminus\mext\dom}{\tf f}{\hm}
\end{equation}
for all $\tf\in C^1(\R^n)$ and
\begin{equation} \label{nm-s10a-6b}
  \reme{f\hm}{(\bd\dom\cap\mint\dom)} = 
  \reme{-\divv F}{(\bd\dom\cap\mint\dom)} \,.
\end{equation}
If $\hat F$ is continuous on a neighborhood of $\dom$, then 
$(\divv F)(\bd\dom\cap\mint\dom)=0$ and
\begin{equation} \label{nm-s10a-7}
  \divv(\tf F)(\mint\dom) =
  \divv(\tf F)(\dom) = \I{\mbd\dom}{\tf F\cdot \nu^\dom}{\hm}
\end{equation}
for all $\tf\in C^1(\R^n)$.
\el
\end{proposition}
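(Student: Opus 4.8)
The plan is to deduce everything from the general Gauss--Green formula with normal measures, Theorem~\ref{nm-s7} — or rather its specialization Corollary~\ref{nm-s10}, which applies at once since $F\in\cD\cM^\infty(\edom)$ — and then to convert the finitely additive boundary term $\sI{\bd\dom}{\tf F}{\nu}$ into a genuine surface integral against an $\hm$-density by invoking the absolute continuity $\divv F\wac\hm$ together with the quantitative structure of $\cD\cM^\infty$-fields.

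\textit{Part (1).} I would apply Corollary~\ref{nm-s10} to the normal measures $\tilde\nu^{\rm int}$ and $\tilde\nu^{\rm ext}$ of Examples~\ref{nm-int} and~\ref{nm-ext}, whose good approximations are $\tilde\chi^{\rm int}=\chi_{\mint\dom}$ and $\tilde\chi^{\rm ext}=\chi_{\mbd\dom\cup\mint\dom}$. Since $\Int\dom\subset\mint\dom$ and $\mint\dom\setminus\Int\dom\subset\bd\dom$, one has the disjoint decomposition $\mint\dom=\Int\dom\cup(\mint\dom\cap\bd\dom)$, so the left-hand side of \reff{nm-s7-1} collapses to $\divv(\tf F)(\mint\dom)$, and likewise to $\divv(\tf F)(\mbd\dom\cup\mint\dom)$ for $\tilde\chi^{\rm ext}$; this gives the first equality in \reff{nm-s10a-1} and \reff{nm-s10a-2}. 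For the second equality I would use the classical normal-trace identity for $\cD\cM^\infty$-fields (Remark~\ref{dt-s1c}~(1), i.e.\ Proposition~\ref{dt-s1b} with $g_\dom=\tfrac12$), which gives $\divv(\tf F)(\mint\dom)=\I{\mbd\dom}{\tf\,(F\cdot\nu^\dom)}{\hm}-\tfrac12\I{\mbd\dom}{\tf}{\divv F}$. Since $\reme{\hm}{\mbd\dom}$ is $\sigma$-finite (Federer: $\hm(\mbd\dom\setminus\rbd\dom)=0$, $\hm(\rbd\dom)=\op{Per}(\dom)<\infty$) and $\divv F\wac\hm$, the finite measure $\reme{\divv F}{\mbd\dom}$ has a density $h\in\cL^1(\mbd\dom,\hm)$; the local estimate $|\divv F|(B_r(x))\le C\|F\|_\infty r^{n-1}$ of \v Silhav\'y~\cite{silhavy_2005} upgrades this to $h\in\cL^\infty$ with $\|h\|_\infty\le C\|F\|_{\bd\dom}$, and bounds $F\cdot\nu^\dom$ on $\mbd\dom$ the same way. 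Setting $f^{\rm int}:=F\cdot\nu^\dom-\tfrac12 h$ and $f^{\rm ext}:=F\cdot\nu^\dom+\tfrac12 h$ (the sign recording whether the concentration of $\divv F$ on $\mbd\dom$ is attributed to the inside or the outside) yields \reff{nm-s10a-1}, \reff{nm-s10a-2} with the stated $\cL^\infty$-bounds. For \reff{nm-s10a-3}: when $\dom$ is open, $\mint\dom\setminus\dom=\mint\dom\cap\bd\dom$ and $\lem(\mint\dom\cap\bd\dom)=0$ by the Lebesgue density theorem, so $\divv(\tf F)(\dom)=\divv(\tf F)(\mint\dom)-\I{\mint\dom\cap\bd\dom}{\tf}{\divv F}$; similarly for closed $\dom$ one has $\dom\setminus(\mbd\dom\cup\mint\dom)=\bd\dom\cap\mext\dom$ and adds $\I{\mext\dom\cap\bd\dom}{\tf}{\divv F}$, which is \reff{nm-s10a-4}.

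\textit{Part (2).} That $\hat F\in\cD\cM^\infty(\R^n)$ under $\hm(\bd\dom\cap\mint\dom)<\infty$ is the extension result of Chen--Li--Torres \cite{chen_2020} (see also Comi--Payne \cite{comi-payne}); the point is that the only possible singular part of $\divv\hat F$ on $\bd\dom$ sits on $\bd\dom\setminus\mext\dom$ and is controlled there by $\hm$. Working in a large ball $\edom'$ with $\dom\csubset\edom'$, Theorem~\ref{ex:dmo_woi} gives $\divv(\tf\hat F)(\edom')=0$ for all $\tf\in\woinf{\edom'}$; splitting $\divv\hat F$ via $\reme{\divv\hat F}{\dom}=\reme{\divv F}{\dom}$ and $\reme{\divv\hat F}{\Ext\dom}=0$ then yields $\divv(\tf F)(\dom)=-\I{\bd\dom}{\tf}{\divv\hat F}$ for all $\tf\in\woinf{\edom}$, so $\divv(\tf F)(\dom)$ is represented by the Radon measure $-\reme{\divv\hat F}{\bd\dom}$. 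Restricting to $\bd\dom\setminus\mext\dom$ and using $\divv\hat F\wac\hm$ together with $\hm(\bd\dom\cap\mint\dom)<\infty$ and $\sigma$-finiteness of $\hm$ on the reduced-boundary part produces the density $f\in\cL^\infty(\bd\dom\setminus\mext\dom,\hm)$ with $\|f\|_\infty\le c\|F\|_{\bd\dom}$ of \reff{nm-s10a-6a}, again via the local estimate on $|\divv\hat F|$; \reff{nm-s10a-6b} is the locality of the divergence — at a density-one boundary point the mollifications of $\hat F$ and of $F$ agree to leading order, whence $\reme{\divv\hat F}{(\bd\dom\cap\mint\dom)}=\reme{\divv F}{(\bd\dom\cap\mint\dom)}$. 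The normal measure $\nu$ with aura $(\bd\dom)_\delta\cap\dom$ satisfying \reff{nm-s10a-6} is obtained by applying Theorem~\ref{nm-s7}/Corollary~\ref{nm-s10} to $\dom$ itself (open, so $\chi^{\rm int}=\chi_\dom$ and the left-hand side of \reff{nm-s7-1} is just $\divv(\tf F)(\dom)$) with a suitable interior good approximation of $\chi_\dom$, whose availability is ensured by $\hat F\in\cD\cM^\infty(\R^n)$ (alternatively by the distance-function construction $\chi^{\rm intc}$ of Example~\ref{nm-int} when the corresponding perimeter bound holds). Finally, if $\hat F$ is continuous near $\dom$ then $\divv\hat F$ charges no $(n-1)$-rectifiable set (\v Silhav\'y \cite{silhavy_2008}, Comi--Payne \cite{comi-payne}), in particular $(\divv F)(\bd\dom\cap\mint\dom)=0$; hence $\divv(\tf F)(\mint\dom)=\divv(\tf F)(\dom)$, the correction term in \reff{nm-s10a-3} drops, and $f^{\rm int}$ reduces to the pointwise trace $F\cdot\nu^\dom$ $\hm$-a.e.\ on $\mbd\dom$, giving \reff{nm-s10a-7}.

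\textit{Main obstacle.} The hard part will be the passage from the finitely additive object $\sI{\bd\dom}{\tf F}{\nu}$ produced by the abstract trace theory to a $\sigma$-additive surface integral $\I{\mbd\dom}{\tf f}{\hm}$ against an $\cL^\infty(\hm)$-density with the sharp bound $\|f\|_\infty\le\|F\|_{\bd\dom}$: this is exactly where one must import the quantitative structure theory of $\cD\cM^\infty$-fields (the density estimate $|\divv F|(B_r(x))\le C\|F\|_\infty r^{n-1}$ and the $\hm$-a.e.\ existence of the pointwise normal trace), since Theorem~\ref{nm-s7} by itself only yields a measure-valued boundary term. A secondary subtlety is to make $\hat F\in\cD\cM^\infty(\R^n)$ and the locality identity \reff{nm-s10a-6b} rigorous under the mere hypothesis $\hm(\bd\dom\cap\mint\dom)<\infty$, for which the cited extension theorems are needed.
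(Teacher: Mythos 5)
The overall architecture is the paper's: Corollary~\ref{nm-s10} identifies the left-hand sides, the set decompositions $\mint\dom=\Int\dom\cup(\mint\dom\cap\bd\dom)$ give \reff{nm-s10a-1}--\reff{nm-s10a-4}, and the Chen--Li--Torres extension handles $\hat F$. But there are two genuine gaps. First, your construction $f^{\rm int}=F\cdot\nu^\dom-\tfrac12 h$ does not deliver the stated bound $\|f^{\rm int}\|_\infty\le\|F\|_{\bd\dom}$: the triangle inequality only yields $\|F\|_{\bd\dom}+\tfrac12\|h\|_\infty$, and the density $h$ of $\reme{\divv F}{\mbd\dom}$ is controlled only by $C_n\|F\|_{\bd\dom}$ with a dimensional constant $C_n$ (ratio of the ball-boundary estimate to the lower $(n-1)$-density of $\hm\lfloor\mbd\dom$), so you land at $(1+\tfrac{C_n}{2})\|F\|_{\bd\dom}$. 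The sharp constant $1$ is precisely the delicate point: the paper estimates $|\tilde\sigma_F^{\rm int}|(B)$ directly by $2\|F\|_{\cL^\infty(B)}\lim_k\big(\I{\edom\cap B}{|D\psi_k|}{\lem}-\big|\I{\edom\cap B}{\chi_{\dom^c}D\psi_k}{\lem}\big|\big)$ and then uses the blow-up $|D\chi_\dom(B_r(x))|/|D\chi_\dom|(B_r(x))\to 1$ at points of $\rbd\dom$, so that the factor $2$ coming from the one-sided truncation $\tilde\chi_k^{\rm int}=(2\psi_k-1)^+$ is exactly cancelled by the factor $\tfrac12$ (see \reff{nm-s10a-21} and the Lebesgue differentiation step). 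Your decomposition obscures this cancellation; it also silently assumes that the precise representative $F\cdot\nu^\dom$ exists $\hm$-a.e.\ on $\mbd\dom$ for an arbitrary $F\in\cD\cM^\infty(\edom)$, which is exactly what Proposition~\ref{dt-s1b} must supply and should be made explicit.

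Second, in part (2) the existence of a normal measure $\nu$ whose aura is $(\bd\dom)_\delta\cap\dom$ is not ``ensured by $\hat F\in\cD\cM^\infty(\R^n)$'': a good approximation is a property of the set $\dom$ alone, and the distance-function construction $\chi^{\rm intc}$ is unavailable because no bound on $\hm(\bd\dom_{-\delta})$ is assumed. What is actually needed -- and what the paper uses -- is the interior approximation theorem of Chen--Li--Torres: open smooth sets $\dom_k\csubset\dom$ with $\bd\dom_k\subset(\bd\dom)_{\frac{1}{2k}}$ and $\sup_k\hm(\mbd\dom_k)<\infty$, whose mollified characteristic functions form the approximating sequence; this is where the hypothesis $\hm(\bd\dom\cap\mint\dom)<\infty$ does its work. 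Relatedly, your identity $\divv(\tf F)(\dom)=-\I{\bd\dom}{\tf}{\divv\hat F}$ needs, in addition, that $|\divv\hat F|(\bd\dom\cap\mext\dom)=0$ and that $\reme{\divv\hat F}{(\bd\dom\cap\mint\dom)}=\reme{\divv F}{(\bd\dom\cap\mint\dom)}$; both are plausible locality statements but are asserted rather than proved. The paper avoids them entirely by applying part (1) to $\hat F$ on a larger domain $\hat\edom\supset\!\supset\dom$ and then using the purely algebraic decomposition $\sigma_F=\sigma_F^{\rm int}-\reme{\divv F}{(\mint\dom\cap\bd\dom)}$ of Remark~\ref{nm-s10c}, from which \reff{nm-s10a-6b} is immediate because $\sigma_F^{\rm int}$ is supported on $\mbd\dom$, disjoint from $\mint\dom\cap\bd\dom$.
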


\begin{remark} \label{nm-s10b}     
(1) The measures $\reme{f^{\rm int}\hm}{\mbd\dom}$ and 
$\reme{f^{\rm ext}\hm}{\mbd\dom}$
are the Radon measures related to $F\tilde\nu^{\rm int}$ and 
$F\tilde\nu^{\rm ext}$, respectively, according to
Proposition~\ref{acm-ram}. Notice that we can only replace 
$\tilde\nu^{\rm int}$ and $\tilde\nu^{\rm ext}$ with a Radon measure in
\reff{nm-s10a-1} and \reff{nm-s10a-2}, respectively, 
if $F$ is continuous (cf. Proposition~\ref{acm-ram}).
With the approximating sequences
$\tilde\chi_k^{\rm int}$, $\tilde\chi_k^{\rm ext}$ corresponding to
$\tilde\nu^{\rm int}$, $\tilde\nu^{\rm ext}$, respectively, we have that
\begin{equation}  \label{nm-s10b-1}
  -F\cdot D\tilde\chi_k^{\rm int}\lem \overset{*}{\wto}
  \reme{f^{\rm int}\hm}{\mbd\dom}
  \qmq{and} 
  -F\cdot D\tilde\chi_k^{\rm ext}\lem \overset{*}{\wto}
  \reme{f^{\rm ext}\hm}{\mbd\dom}
\end{equation}
as weak$^*$ limits for Radon measures. This way we basically
recover several results from the literature about inner and outer traces
on sets of finite perimeter (cf. e.g. 
Chen-Torres-Ziemer \cite[p.~275, 281]{chen_gauss-green_2009},
Chen-Comi-Torres \cite[p.~106]{chen-comi}, 
Comi-Payne \cite[p.~194, 200]{comi-payne}).
Though the sequences in \reff{nm-s10b-1} slightly differ from 
the sequences
\begin{equation*}
  -2\chi_\dom F\cdot D\psi_k\lem  \qmq{and} 
  -2\chi_{\edom\setminus\dom} F\cdot D\psi_k\lem\,,
\end{equation*}
that are usually used in the literature (up to sign),
the weak$^*$ limit is always the same, since in each case
\reff{nm-s10a-1} and \reff{nm-s10a-2} have to be valid for a set of $\tf$ that 
is dense in $C_c(\edom)$. This in particular means that
\begin{equation} \label{nm-s10b-2}
  \I{\mbd\dom}{\tf f^{\rm int}}{\hm} = 
  -\lim_{k\to\infty} \I{\edom}{\tf F\cdot D\tilde\chi_k^{\rm int}}{\lem} =
  -\lim_{k\to\infty} \I{\edom}{2\tf\chi_\dom F\cdot D\psi_k}{\lem}
\end{equation}
for all $\tf\in C_c(\edom)$ and analogously for the other case. 

(2) The proof of Proposition~\ref{nm-s10a} readily shows that the first
assertion is also true for $\tilde\nu^\canon$ with 
\begin{equation*}
  \tfrac{1}{2}\divv(\tf F)(\mbd\dom) + \divv(\tf F)(\mint\dom)
  = \sI{\bd\dom}{\tf F}{\tilde\nu^\canon}
  = \I{\mbd\dom}{\tf f^\canon}{\hm}
\end{equation*}
for some $f^\canon\in\cL^\infty(\mbd\dom,\hm)$ with
$\|f^\canon\|_\infty\le\|F\|_{\bd\dom}$. Moreover, a check of the relevant proofs 
also shows that we can replace the requirement 
$\dom\csubset\edom$ in Proposition~\ref{nm-s10a} (1)
with the restriction to functions $\tf\in\cW^{1,\infty}_c(\edom)$ that have 
compact support in $\edom$. In this case a further restriction to
$\tf$ with
\begin{equation*}
   \tf=0 \qmq{on} \edom\setminus (\bd\dom)_\delta \qmz{for some} \delta >0 
\end{equation*}
is sufficient for the treatment of a Gauss-Green formula (cf.
Remark~\ref{tt-s15}). This way we recover the results from Comi-Payne
\cite[p.~203]{comi-payne}. 

(3) The proof shows that \reff{nm-s10a-6a} and \reff{nm-s10a-6b}
with some $f\in\cL^1(\bd\dom\!\setminus\!\mext\dom,\hm)$ 
are already true without the $\hm$-bound as long as 
the extension of $F$ with zero has divergence measure. 
\end{remark}

\begin{remark} \label{nm-s10c}   
Let $\edom\subset\R^n$ be an open bounded set, let $\dom\subset\edom$ be open 
with finite perimeter such that also $\mint\dom\subset\edom$,
let $F\in\cD\cM^1(\edom)$, and let $\sigma_F^{\rm int}$ be a Radon measure on 
$\mbd\dom$ with $\sigma_F^{\rm int}\wac\hm$ such that 
\begin{equation} \label{nm-s10c-1}
  \divv(\tf F)(\mint\dom) = \I{\mbd\dom}{\tf}{\sigma_F^{\rm int}}
\end{equation}
for all $\tf\in\woinf{\edom}\cap C(\ol\edom)$ (cf. \reff{nm-s10a-1}). 
With the disjoint decomposition
\begin{equation}  \label{nm-s10c-2}
  \mint\dom = \dom \cup (\mint\dom\cap\bd\dom)   \,  
\end{equation}
and since $\dom$ and $\mint\dom$ merely differ by an $\lem$-null set
(cf. \cite[p.~222]{evans}), we easily obtain that 
\begin{eqnarray*}
  \divv(\tf F)(\dom) 
&=&
  \I{\dom}{F\cdot D\tf}{\lem}
  +\I{\dom}{\tf}{\divv F} \\
&=&
  \I{\mint\dom}{F\cdot D\tf}{\lem}
  +\I{\mint\dom}{\tf}{\divv F} 
  -\I{\mint\dom\cap\bd\dom}{\tf}{\divv F}        \\
&=& 
  \I{\mbd\dom}{\tf}{\sigma_F^{\rm int}}
  -\I{\mint\dom\cap\bd\dom}{\tf}{\divv F} \,.
\end{eqnarray*}
Since $\mbd\dom\subset\bd\dom$, we use the Radon measure 
\begin{equation} \label{nm-s10c-3}
  \sigma_F := 
  \sigma_F^{\rm int} - \reme{\divv F}{(\mint\dom\cap\bd\dom)}
\end{equation}
to get
\begin{equation} \label{nm-s10c-4}
  \divv(\tf F)(\dom) = \I{\bd\dom\setminus\mext\dom}{\tf}{\sigma_F}
\end{equation}
for all $\tf\in\woinf{\edom}\cap C(\ol\edom)$. For $F\in\cD\cM^\infty(\dom)$
an $\hm$-integrable density $f$ of $\sigma_F$ is available by the
Radon-Nikodym theorem if $\hm$ is $\sigma$-finite on 
$\bd\dom\setminus\mext\dom$ (cf. \cite[p.~14]{ambrosio}).
This is obviously the case if $\hm(\bd\dom\setminus\mext\dom)<\infty$
and leads to \reff{nm-s10a-6a} with an essentially bounded density.

This shows that we can easily transform
\reff{nm-s10c-1} to a Gauss-Green formula where the full topological boundary
is incorporated into the boundary term, which is favorable in the case of
cracks along some inner boundary. Notice, however, that the form with a normal
measure as e.g. in \reff{nm-s10a-1} or in \reff{nm-s7-1} contains $F$
explicitly in the boundary term and doesn't require a 
normal trace function $f$ on $\bd\dom$. Moreover the larger class of functions
$\woinf{\edom}$ for $\tf$, that do not have to be continuous up
to $\bd\dom$, allows more flexibility for the investigation near $\bd\dom$
(cf. the discussion surrounding \reff{tt-e4}). In Example~\ref{dt-ex4a}
this allows e.g., in contrast to \reff{nm-s10c-4}, 
to describe the behavior on both sides of the inner boundary
$\bd\dom\cap\mint\dom$ by the normal components of $F$, which is
of course helpful for the description of cracks.

In the special situation where $\dom$ is open and bounded and 
where the vector field $F\in\cD\cM^\infty(\dom)$ is such that 
$\hat F\in\cD\cM(\R^n)$ 
for its extension $\hat F$ with zero, we can choose some open bounded
$\hat\edom\subset\R^n$ with $\dom\csubset\hat\edom$. Applying 
Proposition~\ref{nm-s10a}~(1) to $\hat F$ 
we obtain \reff{nm-s10a-1} and, by the previous
arguments, we get \reff{nm-s10c-4}. Using also the second assertion of 
Proposition~\ref{nm-s10a}, 
we basically recover the Gauss-Green formulas stated by  
Chen-Li-Torres \cite[p.~248]{chen_2020}. Let us still mention that 
a vector field $F\in\cD\cM^\infty(\dom)$ can be extended by zero 
as required above if $\dom$ is a bounded open set of finite perimeter
satisfying 
$\hm(\bd\dom\cap\mint\dom)<\infty$ (cf. \cite[p.~242]{chen_2020}).
\end{remark}

The previous remark and the definition of divergence measure in \reff{tt-e3}
lead to a simple extension criterion for open 
$\dom$ with finite perimeter and bounded vector fields $F$
(cf. also \cite[p.~104]{chen-comi},
\cite{comi-payne}, \cite{chen_2020}).

\begin{proposition} \label{nm-s10e}   
Let $\dom\subset\R^n$ be an open bounded set with finite perimeter and let
$F\in\cD\cM^\infty(\dom)$. Then the extension $\hat F$ of $F$ by zero 
belongs to $\cD\cM^\infty(\R^n)$ if and only if there is a Radon measure
$\sigma_F$ supported on $\bd\dom\setminus\mext\dom$ such that
\begin{equation} \label{nm-s10e-1}
  \I{\dom}{F\cdot D\tf}{\lem} + \I{\dom}{\tf}{\divv F} =
  \I{\bd\dom\setminus\mext\dom}{\tf}{\sigma_F}
\end{equation}
for all $\tf\in C^1(\R^n)$.
\end{proposition}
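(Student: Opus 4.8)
The statement is an iff, so I would prove the two directions separately, and the easy direction first.

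\textbf{Sufficiency.} Suppose there is a Radon measure $\sigma_F$ supported on $\bd\dom\setminus\mext\dom$ such that \reff{nm-s10e-1} holds for all $\tf\in C^1(\R^n)$. I would verify directly that $\hat F$ satisfies the definition of divergence measure on $\R^n$ (cf. \reff{tt-e3}), namely that $\tf\mapsto\I{\R^n}{\hat F\cdot D\tf}{\lem}$ extends to a bounded linear functional on $C^1_c(\R^n)$ represented by a signed Radon measure. Since $\hat F=0$ outside $\dom$ and $\hat F=F$ on $\dom$ ($F$ is defined $\lem$-a.e.), we have $\I{\R^n}{\hat F\cdot D\tf}{\lem}=\I{\dom}{F\cdot D\tf}{\lem}$ for every $\tf\in C^1_c(\R^n)$; by \reff{nm-s10e-1} this equals $\I{\bd\dom\setminus\mext\dom}{\tf}{\sigma_F}-\I{\dom}{\tf}{\divv F}$. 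Hence, extending $\divv F$ by zero to all of $\R^n$,
\begin{equation*}
  \I{\R^n}{\hat F\cdot D\tf}{\lem} = -\I{\R^n}{\tf}{\big(\reme{\divv F}{\dom}-\sigma_F\big)}
  \qmq{for all} \tf\in C^\infty_c(\R^n)\,,
\end{equation*}
and $\reme{\divv F}{\dom}-\sigma_F$ is a signed Radon measure on $\R^n$ with finite total variation bounded by $|\divv F|(\dom)+|\sigma_F|(\R^n)$. Thus $\divv\hat F=-(\reme{\divv F}{\dom}-\sigma_F)$ as measures and $\hat F\in\cL^\infty(\R^n,\R^n)$ since $\|\hat F\|_\infty=\|F\|_\infty$; therefore $\hat F\in\cD\cM^\infty(\R^n)$.

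\textbf{Necessity.} Suppose $\hat F\in\cD\cM^\infty(\R^n)$. Pick an open bounded set $\hat\edom$ with $\dom\csubset\hat\edom$. Then $\hat F\in\cD\cM^\infty(\hat\edom)$ and $\dom$ has finite perimeter with $\dom\csubset\hat\edom$, so Proposition~\ref{nm-s10a}~(1) applies to $\hat F$ and $\dom$, giving \reff{nm-s10a-1}, i.e. $\divv(\tf\hat F)(\mint\dom)=\I{\mbd\dom}{\tf f^{\rm int}}{\hm}$ for all $\tf\in\woinf{\hat\edom}$, with $f^{\rm int}\in\cL^\infty(\mbd\dom,\hm)$. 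For $\tf\in C^1(\R^n)$ we may use its restriction to $\hat\edom$; moreover $\hat F=F$ $\lem$-a.e. on $\dom$ and $\hat F=0$ elsewhere, so $\divv(\tf\hat F)(\mint\dom)=\I{\mint\dom}{F\cdot D\tf}{\lem}+\I{\mint\dom}{\tf}{\divv\hat F}$, where $\divv\hat F$ restricted to $\mint\dom$ coincides with $\divv F$ restricted to $\dom$ plus a part concentrated on $\mint\dom\cap\bd\dom$ (this last is where I must track things carefully). Following exactly the computation in Remark~\ref{nm-s10c} surrounding \reff{nm-s10c-1}--\reff{nm-s10c-4} — using the disjoint decomposition \reff{nm-s10c-2} and that $\dom$, $\mint\dom$ differ by an $\lem$-null set — I would convert $\divv(\tf F)(\dom)$ to $\I{\bd\dom\setminus\mext\dom}{\tf}{\sigma_F}$ with
\begin{equation*}
  \sigma_F := \reme{f^{\rm int}\hm}{\mbd\dom} - \reme{\divv\hat F}{(\mint\dom\cap\bd\dom)}\,,
\end{equation*}
a Radon measure supported on $\bd\dom\setminus\mext\dom$ (note $\mbd\dom\subset\bd\dom\setminus\mext\dom$ and $\mint\dom\cap\bd\dom\subset\bd\dom\setminus\mext\dom$). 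This yields \reff{nm-s10e-1} for all $\tf\in C^1(\R^n)$.

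\textbf{Main obstacle.} The delicate point is the necessity direction: one must identify $\reme{\divv\hat F}{\mint\dom}$ in terms of $\reme{\divv F}{\dom}$ and a boundary-concentrated piece, and check that the resulting $\sigma_F$ is genuinely a \emph{Radon} measure carried by $\bd\dom\setminus\mext\dom$ rather than leaking onto $\mext\dom$ or having infinite variation. This requires that $\hat F\in\cD\cM^\infty(\R^n)$ forces $|\divv\hat F|$ to charge no part of $\mext\dom$ near $\bd\dom$ (which follows since $\hat F\equiv 0$ on the open set $\mext\dom$, so $\divv\hat F$ vanishes there in the distributional sense) and that the $\hm$-boundedness built into $f^{\rm int}$ via $\|f^{\rm int}\|_\infty\le\|F\|_{\bd\dom}$ together with $\hm(\mbd\dom)<\infty$ keeps the total variation finite. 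All of this is essentially assembled from Proposition~\ref{nm-s10a}, Remark~\ref{nm-s10c}, and \cite[p.~222]{evans}; no genuinely new estimate is needed, only careful bookkeeping of which measure lives on which piece of the decomposition $\R^n=\mint\dom\cup\mbd\dom\cup\mext\dom$.
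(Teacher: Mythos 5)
Your proof is correct and follows essentially the same route as the paper: the sufficiency direction is the same direct verification via the measure $\reme{\divv F}{\dom}-\sigma_F$, and the necessity direction applies Proposition~\ref{nm-s10a}~(1) to $\hat F$ on an enlarged domain and then runs the bookkeeping of Remark~\ref{nm-s10c}. Only a trivial sign slip: comparing with the convention \reff{tt-e3}, one gets $\divv\hat F=\reme{\divv F}{\dom}-\sigma_F$ rather than its negative, which does not affect the conclusion.
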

\noi
This basically means that the extension of $F$ by zero has also divergence
measure if and only if there is a Gauss-Green formula where the boundary term
is related to a Radon measure.

\begin{proof}    
First, let $F$ has an extension $\hat F$ as assumed. 
Then we can argue as in Remark~\ref{nm-s10c}
to get \reff{nm-s10e-1}. If otherwise \reff{nm-s10e-1} is satisfied, then
we have for the extension $\hat F$ and for 
\begin{equation*}
  \hat\sigma := \reme{(\divv F)}{\dom} - \sigma_F
\end{equation*}
that
\begin{equation*}
  \I{\R^n}{\hat F\cdot D\tf}{\lem} = -\I{\R^n}{\tf}{\hat\sigma}
\end{equation*}
for all $C^1_c(\R^n)$. Thus $\hat F\in\cD\cM^\infty(\R^n)$. 
\end{proof}

\begin{remark} \label{nm-s10d}     
Let us discuss the general case where $\edom\subset\R^n$ is open and
bounded, $\dom\in\bor{\edom}$, and $F\in\cD\cM^\infty(\edom)$. Then we
always have
\begin{equation} \label{nm-s10d-0}
     \divv(\tf\F)(\dom) = \I{(\bd\dom)_\delta\cap\edom}{\tf}{\lF} + 
            \I{(\bd\dom)_\delta\cap\edom}{D\tf}{\mF}  
\end{equation}
for any $\delta>0$ and all $\tf\in\woinf{\edom}$ by \reff{dt-s1-1}. 
If $\dom=\edom$ then we have case (L) with 
$\cor{\lF}\subset\bd\dom$ and we can remove $\delta$ in the first integral and
if $\dom\csubset\edom$ then we have case (C) and can identify $\lF$ with a
$\sigma$-measure on $\bd\dom$ (cf. Corollary~\ref{cor:cases}). 
If merely $\dom\in\bor{\edom}$ with $\lem(\dom\setminus\Int\dom)=0$ but in
addition 
\begin{equation} \label{nm-s10d-1}
  \sup_{0<\delta<\tilde\delta}\hm(\bd\dom_{-\delta}) < \infty  
  \qmq{for some} \tilde\delta>0\,,
\end{equation}
then we have
\begin{equation} \label{nm-s10d-2}
  \divv(\tf\F)(\dom) = \sI{\bd\dom}{\tf}{\lF} 
\end{equation}
(cf. Proposition~\ref{dt-s4}). Notice that \reff{nm-s10d-1} is valid
for a 
large class of sets of finite perimeter and, if $\dom\csubset\edom$,
it allows the application of the normal measures 
$\nu^{\rm int}$, $\nu^{\rm ext}$, and $\nu^\canon$
that are constructed by means of a distance function.
This leads to Gauss-Green formulas containing $F$ and the normal field
$\nu^\dom$ explicitly in the boundary term (cf. Proposition~\ref{nm-s9}
and the subsequent discussion).
If \reff{nm-s10d-1} is not available, then we can apply the normal measures 
$\tilde\nu^{\rm int}$, $\tilde\nu^\canon$, and $\tilde\nu^{\rm ext}$, that are
based on mollified functions, to any set of finite perimeter
$\dom\csubset\edom$. Then the right hand side in \reff{nm-s10d-2} has the form
$\sI{\bd\dom}{\tf F}{\nu}$ with $\nu$ being one of those normal
measures. But here we cannot explicitly incorporate a normal field
(cf. Theorem~\ref{nm-s7} and Corollary~\ref{nm-s10}). 
We have to realize that \reff{nm-s10d-1},
though it excludes some ``exotic'' sets of finite perimeter, leads
to more structural information about $\lF$ in the boundary term. 

Notice that, even if merely $\dom\subset\edom$, it is possible 
in the general variants \reff{nm-s10d-0} and \reff{nm-s10d-2} to account
precisely for the boundary points belonging to $\dom$, which
is relevant if parts of the boundary belong to the support of $\divv F$. 
If we want to have more structural information in a Gauss-Green formula 
``up to the boundary'', i.e. if we e.g. have $\dom=\edom$, 
then merely the normal measure 
$\nu^{\rm intc}$ is available. It gives 
\begin{equation*}
  \divv(\tf F)(\Int\dom) = \sI{\bd\dom}{\tf F}{\nu^{\rm intc}} =
  \sI{\bd\dom}{\tf F\cdot\nu^\dom}{\dens^{\rm intc}_{\bd\dom}} 
\end{equation*}
for some density measure $\dens^{\rm intc}_{\bd\dom}$ of the type as in
Proposition~\ref{tt-s3}. The missing condition $\dom\csubset\edom$ is
compensated by a compact 
support of the $\chi_k^{\rm intc}$ of the approximating sequence. 
However, a uniform bound on $\hm(\bd\dom_{-\delta})$ for small $\delta>0$
similar to \reff{nm-s10d-1},
that excludes certain sets of finite perimeter, is needed. 
But notice that the bound implies that the extension $\hat F$ of 
any $F\in\cD\cM^\infty(\dom)$ by zero has divergence measure on $\R^n$
(cf. \cite[p.~7, 11]{chen_2020}). Consequently the problem is equivalent to 
that for $\hat F$ with $\dom\csubset\hat\edom$ for some open $\hat\edom$. 
\end{remark}

\begin{proof+}{ of Proposition~\ref{nm-s10a}}     
For (1) we start with the case of the interior normal measure 
$\tilde\nu^{\rm int}$ and use 
\begin{equation*}
  \chi:=\tilde\chi^{\rm int} = \chi_{\mint\dom} \qmq{and}
  \chi_k:=\tilde\chi_k^{\rm int}
\end{equation*}
(cf. Example~\ref{nm-int}). 
The first equation in \reff{nm-s10a-1} follows from 
Corollary~\ref{nm-s10} with
\begin{equation*}
  \I{\bd\dom}{\chi}{\divv(\tf F)} + \divv(\tf F)(\Int\dom) =
  \divv(\tf F)(\mint\dom) \,.
\end{equation*}
Obviously $F\tilde\nu^{\rm int}\in\bawl{U}^n$ and 
$\tf\in\woinf{\edom}$ is continuous on $\ol\dom$.
Then, by Proposition~\ref{acm-ram}, there is some related Radon measure
$\tilde\sigma^{\rm int}_F$. 
Since $\chi_k\in\woinf{\edom}$ has compact support, 
the definition of divergence measure for $\chi_k F$ gives
\begin{equation*}
  \I{\edom}{\tf F\cdot D\chi_k}{\lem} =
  - \I{\edom}{\chi_k\tf}{\divv F} -\I{\edom}{\chi_k F\cdot D\tf}{\lem} 
\end{equation*}
for all $\tf\in C^1(\edom)$.
By $\chi_k\to\chi_{\mint\dom}$ $\hm$-a.e. and $\divv F\wac\hm$
(cf. \cite[p.21]{silhavy_2005}), 
\begin{equation*}
   \lim_{k\to\infty}\I{\edom}{\tf F\cdot D\chi_k}{\lem} =
  - \I{\mint\dom}{\tf}{\divv F} -\I{\mint\dom}{F\cdot D\tf}{\lem} \,. 
\end{equation*}
Hence, we do not need a subsequence in \reff{bd-s5-1} and get 
\begin{equation*}
  \lim_{k\to\infty}\I{\edom}{\tf F\cdot D\chi_k}{\lem} =
  -\, \sI{\bd\dom}{\tf F}{\tilde\nu^{\rm int}} =
  - \I{\bd\dom}{\tf}{\tilde\sigma^{\rm int}_F} \,.
\end{equation*}
Since $C^1_c(\edom)$ is dense in $C_c(\edom)$ we have
$- F\cdot D\chi_k\lem \overset{*}{\wto} \tilde\sigma_F^{\rm int}$ 
as weak$^*$ limit of Radon measures. With the first equation in 
\reff{nm-s10a-1} and the results in \cite[p.~194, 200]{comi-payne}
we get the second equation in \reff{nm-s10b-2}. For $\psi_k$ from
\reff{nm-mol} we also have  
$2\chi_{\dom^c}D\psi_k\lem\overset{*}{\wto}D\chi_\dom$ by 
\cite[p.~189]{comi-payne} and
$|D\psi_k|\lem\overset{*}{\wto}|D\chi_\dom|$ by \reff{nm-mol2}.
Recall also \reff{nm-mol1}. 
Then, for given $\eps>0$ and any open $B\csubset\edom$ 
with $|D\chi_\dom|(\bd B)=0$, there is some
$\tf_\eps\in C_c(B)$ with $\|\tf_\eps\|_\infty\le 1$ such that,
with arguments similar as in Comi-Payne \cite[p.~196]{comi-payne}, 
\begin{eqnarray}
  |\tilde\sigma_F^{\rm int}|(B) -\eps
&\le&
  \I{\bd\dom\cap B}{\tf_\eps}{\tilde\sigma_F^{\rm int}}  
\: = \:
  -\lim_{k\to\infty} \I{\edom\cap B}{\tf_\eps F\cdot D\chi_k}{\lem}  
  \nonumber\\
&=&
  -\lim_{k\to\infty} 2\I{\edom\cap B\cap\dom}{\tf_\eps F\cdot D\psi_k}{\lem} 
  \nonumber\\
&\le&  \label{nm-s10a-20}
  2\|F\|_{\cL^\infty(B)}\lim_{k\to\infty} \I{\edom\cap B\cap\dom}{|D\psi_k|}{\lem} 
  \nonumber \\
&=& 
  2\|F\|_{\cL^\infty(B)}\lim_{k\to\infty} 
  \Big(\I{\edom\cap B}{|D\psi_k|}{\lem} - 
  \I{\edom\cap B\cap\dom^c}{|D\psi_k|}{\lem} \Big)  \nonumber \\
&\le&
  2\|F\|_{\cL^\infty(B)}\lim_{k\to\infty} 
  \Big(\I{\edom\cap B}{|D\psi_k|}{\lem} - 
  \Big| \I{\edom\cap B}{\chi_{\dom^c}D\psi_k}{\lem} \Big| \Big)  
  \nonumber\\
&=& 
  2\|F\|_{\cL^\infty(B)}  \big((|D\chi_\dom|(B) - 
  \tfrac{1}{2}|D\chi_\dom(B)|\big)    \nonumber\\
&=& \label{nm-s10a-21}
  2\|F\|_{\cL^\infty(B)} (\reme{\hm}{\mbd\dom})(B)
  \big( 1 - \tfrac{1}{2} \tfrac{|D\chi_\dom(B)|}{|D\chi_\dom|(B)}\big)  
\end{eqnarray}
(for $|D\chi_\dom|(B)\ne 0$ in the last line). 
By the arbitrariness of $\eps>0$ we can remove it. 
Then
\begin{equation*}
  |\tilde\sigma_F^{\rm int}|(B) \le 
  2\|F\|_{\cL^\infty(B)}(\reme{\hm}{\mbd\dom})(B) \,.
\end{equation*}
Since $\reme{\hm}{\mbd\dom}$ is a Radon measure, we conclude that
\begin{equation*}
  \supp{\tilde\sigma_F^{\rm int}} \subset \mbd\dom  \qmq{and} 
  \tilde\sigma_F^{\rm int} \wac \reme{\hm}{\mbd\dom}\,. 
\end{equation*}
Thus, the Radon-Nikodym theorem implies that $\tilde\sigma_F^{\rm int}$ has an 
integrable density $f^{\rm int}$. 
For fixed $x\in\rbd\dom$ and balls $B_r(x)$ we have that 
$|D\chi_\dom|(\bd B_r(x))=0$
for $\cL^1$-a.e. $r>0$ and, thus, \reff{nm-s10a-21} is valid for such 
$B=B_r(x)$. Then, by the definition of the reduced boundary,  
the fraction in \reff{nm-s10a-21} tends to 1 as 
$r\downarrow 0$ (cf. \cite[p.~154]{ambrosio}). Hence, 
by Lebesgue's differentiation theorem, 
\begin{equation} \label{nm-s10a-9}
  f^{\rm int}\in\cL^\infty(\mbd\dom,\reme{\hm}{\mbd\dom}) \qmq{with}
  \|f^{\rm int}\|_\infty\le \|F\|_{\bd\dom}
\end{equation}
and, consequently,
\begin{equation*}
  \I{\bd\dom}{\tf}{\tilde\sigma^{\rm int}_F} =
  \I{\mbd\dom}{\tf}{\tilde\sigma^{\rm int}_F} =
  \I{\mbd\dom}{\tf f^{\rm int}}{\hm} \,
\end{equation*}
for all $\tf\in\woinf{\edom}$. 
This verifies the first case.
We can argue analogously in the second case by using 
that $\chi=\chi_{\mbd\dom\cap\mint\dom}$ implies 
\begin{equation*}
  \I{\bd\dom}{\chi}{\divv(\tf F)} + \divv(\tf F)(\Int\dom) =
  \divv(\tf F)(\mbd\dom\cap\mint\dom) 
\end{equation*}
and that $\chi_k:=\tilde\chi_k^{\rm ext}\to\chi_{\mbd\dom\cap\mint\dom}$
$\hm$-a.e. on $\edom$. If $\dom$ is open we argue as in Remark~\ref{nm-s10c}
to get \reff{nm-s10a-3}. If $\dom$ is closed we start with \reff{nm-s10a-2},
we use
\begin{equation*}
  \dom = \mint\dom \cup \mbd\dom \cup (\mext\dom\cap\bd\dom) \,,
\end{equation*}
and we argue analogously to get \reff{nm-s10a-4}. 

For (2) we first observe that one always has the disjoint decomposition
\begin{equation*}
  \R^n = \mint\dom \cup \mbd\dom \cup \mext\dom \,.
\end{equation*}
With $\mbd\dom\subset\bd\dom$ we get 
\begin{equation} \label{nm-s10a-15}
  \mbd\dom\cup(\mint\dom\cap\bd\dom) = \bd\dom\!\setminus\!\mext\dom
\end{equation}
(cf. \cite[p.~49, 50]{pfeffer}). Since $\lem(\dom)>0$ and 
$\hm(\bd\dom\cap\mint\dom)<\infty$ we can apply 
Chen-Li-Torres \cite[Theorem~3.1]{chen_2020}
to get sets $\dom_k\csubset\dom$ with finite
perimeter such that 
\begin{equation*}
  \sup_{k\in\N}\hm(\mbd\dom_k)<\infty \qmq{and}
  \bd\dom_k\subset(\bd\dom)_{\frac{1}{2k}} \zmz{for all} k\in\N
\end{equation*}
where the set inclusion is not explicitly stated there but it follows  
from the proof 
(cf. also \cite[p.~208]{comi-payne}).  Then we define
\begin{equation*} 
  \chi_k := \chi_{\dom_k}*\eta_{\delta_k} 
  \qmq{with some} \delta_k\in \big(0,\tfrac{1}{2k}\big)
\end{equation*}
where $\eta_\eps$ is the standard mollifier supported on 
$B_\eps(0)$ (cf. also \reff{nm-mol}). Obviously we have 
$\supp{\chi_k}\subset(\bd\dom)_{\frac{1}{k}}$ and, 
by $\dom_k\csubset\dom$, we can choose $\delta_k>0$ so small
that even $\supp{\chi_k}\csubset\dom$. Moreover
$\chi_k\in W^{1,\infty}(\edom)$ with 
\begin{equation*}
  \chi_k\to 1 \zmz{on} \dom\,, \quad
  \chi_k\to 0 \zmz{otherwise.}
\end{equation*}
Since $|D\chi_{\dom_k}|(\bd\edom)=0$, we can apply \reff{nm-mol2a} to $\dom_k$.
Therefore we can assume $\delta_k>0$ to be so small that 
\begin{equation*}
  |D\chi_k|(\edom) = \I{\edom}{|D\chi_k|}{\lem} \le 
  |D\chi_{\dom_k}|(\edom) +1 = \hm(\mbd\dom_k) + 1 \,.
\end{equation*}
Consequently, we have that $\{\chi_k\}$ is an approximating sequence for the 
good approximation $\chi=\chi_{\Int\dom}$ of $\chi_\dom$ and each
$(\bd\dom)_{\frac{1}{k}}\cap\dom$ is an aura of the associated 
normal measure $\nu$ (cf. Theorem~\ref{bd-s5}).
Then \reff{nm-s10a-6} follows from Theorem~\ref{nm-s7} and
Corollary~\ref{nm-s10}. 

By $\hm(\bd\dom\cap\mint\dom)<\infty$ 
we have $\hat F\in\cD\cM^\infty(\R^n)$ for the extension $\hat F$ of
$F$ with zero (cf. \cite[p.~11]{chen_2020}). Let us choose some open bounded 
$\hat\edom\subset\R^n$ such that $\dom\csubset\hat\edom$. Then we can apply 
assertion (1) to get \reff{nm-s10a-1} for $\hat F$ and all $\tf\in C^1(\R^n)$.
According to Remark~\ref{nm-s10c}
this can be transformed to \reff{nm-s10c-4} and, by
$\divv F = \divv(\hat F)$ on $\dom$, we get 
\begin{equation*}
  \divv(\tf F)(\dom) = \I{\bd\dom\setminus\mext\dom}{\tf}{\sigma_F}
\end{equation*}
for some Radon measure $\sigma_F$ supported on $\bd\dom\setminus\mext\dom$.
The assumptions and \reff{nm-s10a-15} imply that
$\hm(\bd\dom\setminus\mext\dom)<\infty$. Therefore $\sigma_F$ has an 
$\hm$-integrable density $f$ by the Radon-Nikodym theorem and we obtain 
\reff{nm-s10a-6a} (cf. the arguments following \reff{nm-s10c-4}).
Then \reff{nm-s10a-6b} follows
from \reff{nm-s10c-3}. In order to show that $f$ is essentially bounded 
we argue as in Chen-Li-Torres \cite[p.~18]{chen_2020} but without 
smooth approximation of $F$. We fix some $B_r(x)$ with 
$x\in\bd\dom\setminus\mext\dom$ and $r>0$. Moreover we observe that the sets 
$\dom_k\csubset\dom$ from above can be chosen such that
they have smooth boundary, and can therefore assumed to be open, and that 
\begin{equation*}
  |D\chi_{\dom_k}|(B_r(x)) = (\reme{\hm}{\mbd\dom_k})(B_r(x)) \le 
  c \hm\big((\bd\dom\setminus\mext\dom)\cap B_r(x)\big)
\end{equation*}
for some $c>0$ depending merely on dimension $n$
(cf. \cite[p.~237, (5.8)]{chen_2020}) and also 
\cite[p.~208]{comi-payne}).  
Now, for some  
$\tf\in C^1(\R^n)$ with $\|\tf\|_\infty\le 1$ and supported on $B_r(x)$,
we use dominated convergence, $\dom_k=\mint\dom_k$, and 
assertion (1) for $\dom_k$ with density functions $f_k$ 
to get
\begin{eqnarray*}
  \I{\bd\dom\setminus\mext\dom}{\tf f}{\hm}
&=& 
  \divv(\tf F)(\dom) \\
&=&
  \I{\dom}{\tf}{\divv F} + \I{\dom}{F\cdot D\tf}{\lem}  \\
&=&
  \lim_{k\to\infty} \Big(
  \I{\dom_k}{\tf}{\divv F} + \I{\dom_k}{F\cdot D\tf}{\lem} \Big) \\
&=&
  \lim_{k\to\infty} \I{\mbd\dom_k}{\tf f_k}{\hm} \\
&\le&
  \lim_{k\to\infty} \|F\|_{\cL^\infty((\bd\dom)_{1/k})} 
  \I{\mbd\dom_k}{\tf}{\hm}  \\
&\le&
  \|F\|_{\bd\dom}\lim_{k\to\infty}
  (\reme{\hm}{\mbd\dom_k})(B_r(x))   \\
&\le& 
  c\|F\|_{\bd\dom} \hm\big((\bd\dom\setminus\mext\dom)\cap B_r(x)\big) \,.
\end{eqnarray*}
Hence, by Lebesgue's differentiation theorem, 
$\|f\|_{\bd\dom}\le c\|F\|_{\bd\dom}$.  

If $F$ is continuous, we use  
$\chi_{\Int\dom}=\chi_\dom$ to get \reff{bd-s5-3} with the normal measure
$\nu$ derived above (that we assume to be extended with zero on $\R^n$).  
Then \reff{nm-s10a-6} and \reff{bd-s5-3} with the vector function $\tf F$ 
give the second equality in \reff{nm-s10a-7}. 
Since we also have \reff{nm-s10a-6a}, the measure
$\reme{f\hm}{(\bd\dom\setminus\mext\dom)}$ 
has to vanish outside $\mbd\dom$. By \reff{nm-s10a-15} this means that
$(\divv F)(\bd\dom\cap\mint\dom)=0$ (cf. also \cite[p.~198]{comi-payne}).
Using \reff{nm-s10c-2} we get the first equality in \reff{nm-s10a-7}.
\end{proof+}

Next we consider an example with high oscillations that is occasionally
discussed in the literature (cf. 
Chen-Torres-Ziemer \cite[p.~258]{chen_gauss-green_2009},
Comi-Payne \cite[p.~216]{comi-payne}).

\begin{example} \label{nm-ex7}  
Let $\dom=\{y<x\}\cap\{|x|+|y|\le 1\}\subset\R^2$ 
and $F:\R^2\to\R^2$ given by 
\begin{equation*}
  F(x,y) = 
  \big( \sin\big(\tfrac{1}{x-y} \big),\sin\big(\tfrac{1}{x-y}\big)\,\big) 
  \qmz{for} x\ne y \,.
\end{equation*}
Obviously $\divv F=0$ on $\dom$ and, thus, $F\in\cD\cM^\infty(\dom)$.
From Proposition~\ref{nm-s9} we get 
\begin{equation} \label{nm-ex7-1}
  \divv(\tf F)(\dom) = 
  \sI{\bd\dom}{\tf F\cdot\nu^\dom}{\dens_{\bd\dom}^{\rm int}}
\end{equation}
for all $\tf\in\woinf{\dom}$. We can divide $\dom$ by its diagonals 
into two large triangles $\dom^l_j$ and two small triangles $\dom^s_j$
($j=1,2$). Then we have
\begin{equation*}
  F\cdot\nu^\dom=0 \zmz{on the} \dom^l_j \qmq{and}
  F\cdot\nu^\dom=\pm 2\sin\big(\tfrac{1}{x-y}\big) \zmz{on the} \dom^s_j .
\end{equation*}
Therefore we can disregard the large sides in \reff{nm-ex7-1} and the integral
vanishes for constant $\tf$. Moreover,
using \reff{nm-s9-2}, the $\sigma$-measure $\sigma_F$ associated to
$F\cdot\nu^\dom\dens_{\bd\dom}^{\rm int}$ according to
Proposition~\ref{acm-ram} is suported on $\bd\dom$ and we have
\begin{equation*}
  \sigma_F=0 \zmz{on} \bd\dom\cap\bd\dom^l_j \qmq{and}
  \sigma_F=\pm 2\sin\big(\tfrac{1}{x-y}\big)\cH^1 
  \zmz{on} \bd\dom\cap\bd\dom^s_j  \quad (j=1,2)\,.
\end{equation*}
Hence
\begin{equation} \label{nm-ex7-2}
  \divv(\tf F)(\dom) = 
  2\I{\bd\dom\cap\bd\dom^s_1}{\tf\sin\big(\tfrac{1}{x-y}\big)}{\cH^1} -
  2\I{\bd\dom\cap\bd\dom^s_2}{\tf\sin\big(\tfrac{1}{x-y}\big)}{\cH^1}
\end{equation}
for all $\tf\in C^1(\ol\dom)$.
Analogously we can argue for $\ol\dom$ with $\dens_{\bd\dom}^{\rm ext}$.
This in particular implies that $\reme{\divv F}{\{x=y\}}=0$.
Notice that the problem can be treated analogously for other $\dom$ touching
the set $\{x=y\}$.
\end{example}

\subsection{Sobolev functions and BV functions}
\label{sf}

In this section we show that the previous results are 
applicable to Sobolev functions and BV functions.
Recall that, for $\f\in\cB\cV(\edom)$ and $\dom\in\bor{\edom}$, 
\begin{equation*}
  \tf \to \divv(\tf\f)(\dom) = 
  \I{\dom}{\f\divv\tf}{\lem} + \I{\dom}{\tf}{D\f}\,
\end{equation*}
is a trace on $\bd\dom$ over $\woinf{\edom,\R^n}$ according to
Proposition~\ref{tt-s5}. As direct consequence of Theorem~\ref{dt-s1} 
we provide a general Gauss-Green formula for BV functions 
by arguing as in the proof of Proposition~\ref{tt-s5}.
Notice that we have to take $m=n$ for the particular cases (L) and (C) 
in this section. 

\begin{theorem} \label{sf-s1}
Let $\edom\subset\R^n$ be open and bounded, let $\dom\in\bor{\edom}$, let
$\delta>0$, and let $\f\in\cB\cV(\edom)$. Then there exist measures
\begin{equation*}
  \lf \in \bawl{\edom}^n \qmq{and} \mf \in \bawl{\edom}^{n\times n}
\end{equation*}
with $\cor{\lf},\: \cor{\mf} \subset \ol{(\bd\dom)_\delta\cap\edom}$
such that
\begin{equation}\label{sf-s1-1}
  \df{Tf}{\tf} =
  \divv{(\tf\f)}(\dom) = \I{(\bd\dom)_\delta\cap\edom}{\tf}{\lf} + 
  \I{(\bd{\dom})_\delta\cap\edom}{D\tf}{\mf} \,
\end{equation}
for all $\tf\in\woinf{\edom,\R^n}$ with
$T:\cB\cV(\edom)\to\woinf{\edom,\R^n}^*$ from
Proposition~\ref{tt-s5}.
In the particular cases with $\K=\bd\dom$ we have in addition

{\rm (L):} $\cor{\lf}\subset\bd\dom$ and \reff{sf-s1-1} becomes
\begin{equation*}
   \divv(\tf\f)(\dom) = \sI{\bd\dom}{\tf}{\lf} + 
            \I{(\bd\dom)_\delta\cap\edom}{D\tf}{\mf} \,.   
\end{equation*}

{\rm (C):} $\lf$ corresponds to a Radon measure $\sigma_f$ with 
$\supp{\sigma_f}\subset\bd\dom$ such that 
\begin{equation*}
   \divv(\tf\f)(\dom) = \I{\bd\dom}{\tf}{\sif} + 
            \I{(\bd\dom)_\delta\cap\edom}{D\tf}{\mf} \,.   
\end{equation*}
\end{theorem}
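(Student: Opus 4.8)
The plan is to deduce Theorem~\ref{sf-s1} almost immediately from Theorem~\ref{dt-s1} by reusing the reduction already performed in the proof of Proposition~\ref{tt-s5}. Given $f\in\cB\cV(\edom)$, the first step is to introduce the vector fields $F_k=(F_k^1,\dots,F_k^n)$ with $F_k^k=f$ and $F_k^j=0$ for $j\ne k$, exactly as in that proof. Each $F_k$ lies in $\cD\cM^1(\edom)$ because the distributional divergence of $F_k$ is the signed Radon measure $D_{x_k}f$, and one has the norm bound $\|F_k\|_{\cD\cM^1}=\|f\|_{\cL^1}+|D_{x_k}f|(\edom)\le\|f\|_{\cL^1}+|Df|(\edom)=\|f\|_{\cB\cV}$. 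Then $\divv(\tf^k F_k)(\dom)=\I{\dom}{\tf^k}{D_{x_k}f}+\I{\dom}{f\tf^k_{x_k}}{\lem}$ for every $\tf=(\tf^1,\dots,\tf^n)\in\woinf{\edom,\R^n}$, and summing over $k$ gives $\divv(\tf f)(\dom)=\divv(f\divv\tf)(\dom)+\sum_k\I{\dom}{\tf^k}{D_{x_k}f}$, which is precisely $\df{Tf}{\tf}$ from Proposition~\ref{tt-s5}.

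Next I would apply Theorem~\ref{dt-s1} to each $F_k$ with the same $\delta>0$ and the same Borel set $\dom$. This produces, for each $k$, measures $\lambda_{F_k}\in\bawl{\edom}$ and $\me_{F_k}\in\bawl{\edom}^n$ with cores contained in $\ol{(\bd\dom)_\delta\cap\edom}$ such that
\begin{equation*}
  \divv(\tf^k F_k)(\dom)=\I{(\bd\dom)_\delta\cap\edom}{\tf^k}{\lambda_{F_k}}
  +\I{(\bd\dom)_\delta\cap\edom}{D\tf^k}{\me_{F_k}}\,.
\end{equation*}
I would then assemble the vector-valued measure $\lf=(\lambda_{F_1},\dots,\lambda_{F_n})\in\bawl{\edom}^n$ and the matrix-valued measure $\mf$ whose $k$-th row is $\me_{F_k}$, so that $\mf\in\bawl{\edom}^{n\times n}$. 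Summing the displayed identities over $k$ and recalling that $D\tf$ is interpreted as the $n\times n$ matrix of the $D\tf^k$ yields \reff{sf-s1-1}, with the core bound $\cor{\lf},\cor{\mf}\subset\ol{(\bd\dom)_\delta\cap\edom}$ inherited componentwise. Continuity of $T$ is already part of Proposition~\ref{tt-s5}, or one re-derives it from the norm estimates above.

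For the refined conclusions under (L) and (C), I would invoke Theorem~\ref{prop:dual} applied to the trace $Tf\in\woinf{\edom,\R^n}^*$ with $\K=\bd\dom$ and $m=n$; this is exactly what Theorem~\ref{dt-s1} does for $TF$ with $m=1$, but the proof of Theorem~\ref{dt-s1} only cites Theorem~\ref{prop:dual} and Theorem~\ref{ex:dmo_woi}, and the analogue of the latter for BV functions is Proposition~\ref{tt-s5}. So in case (L) one gets $\cor{\lf}\subset\bd\dom$ and may write $\sI{\bd\dom}{\tf}{\lf}$ for the first boundary term, and in case (C) the measure $\lf$ corresponds to a Radon measure $\sigma_f$ supported on $\bd\dom$, giving $\I{\bd\dom}{\tf}{\sigma_f}$. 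The second boundary term is unchanged in both cases. The main thing to be careful about — though it is routine rather than deep — is the bookkeeping of the vector/matrix structure: one must check that applying Theorem~\ref{prop:dual} directly with $m=n$ gives the same $\lf$, $\mf$ as the componentwise assembly from the scalar case, and that the special cases (L), (C) are genuinely applied to $\bd\dom$ for the whole functional $Tf$ and not separately to each $F_k$; since $\iota_0$ and the factor space constructions in Section~\ref{tt-r} are stated for general $m$, this causes no difficulty. I expect no real obstacle here, as the theorem is by design a transcription of Theorem~\ref{dt-s1} to the BV setting via Proposition~\ref{tt-s5}.
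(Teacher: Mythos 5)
Your proposal is correct and follows essentially the same route as the paper: reduce to the vector fields $F_k$ with $F_k^k=f$ from the proof of Proposition~\ref{tt-s5}, apply Theorem~\ref{dt-s1} to each, assemble the resulting measures into $\lf\in\bawl{\edom}^n$ and $\mf\in\bawl{\edom}^{n\times n}$, and sum over $k$, with the cases (L) and (C) inherited from Theorem~\ref{dt-s1} (resp.\ Theorem~\ref{prop:dual} with $m=n$). The only blemish is the notational slip ``$\divv(f\divv\tf)(\dom)$'' where you mean $\I{\dom}{f\divv\tf}{\lem}$; the substance is fine.
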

\noi
We call $(\lf,\mf)$, representing an element of $\woinf{\edom,\R^n}^*$,
{\it normal trace} of $f$ on $\bd\dom$. 
Notice that $D\tf\,d\me_\f$ in \reff{sf-s1-1} has to be taken as scalar
product of matrices. Since
$\cW^{1,1}(\edom)\subset\cB\cV(\edom)$, the result covers these Sobolev
functions (cf. also Remark~\ref{tt-s6}). 

\begin{proof} 
For vector fields $F_k\in\dmo{\edom}$ related to $f\in\cB\cV(\edom)$ 
as in the proof of Proposition~\ref{tt-s5}, we can apply Theorem~\ref{dt-s1}
to get measures 
$\lambda_{F_k}\in\bawl{\edom}$ and $\me_{\F_k}\in\bawl{\edom}^n$ with 
$\cor{\lambda_{F_k}},\;\cor{\me_{\F_k}}\subset \ol{(\bd\dom)_\delta\cap\edom}$
such that for all functions $\tf=(\tf^1,\dots,\tf^n)\in\woinf{\edom,\R^n}$ 
\begin{equation*}
  \big(D_{x_k}(\tf^k\f)\big)(\dom) = 
  \I{(\bd\dom)_\delta\cap\edom}{\tf^k}{\lambda_{F_k}} + 
  \I{(\bd{\dom})_\delta\cap\edom}{D\tf^k}{\me_{\F_k}} \,
\end{equation*}
with the scalar measure
\begin{equation*}
  D_{x_k}(\tf^k\f) = fD_{x_k}\tf^k\lem + \tf^kD_{x_k}f
\end{equation*}
(cf. Remark~\ref{tt-s6} and \cite[p.118]{ambrosio}).
The sum over $k$ gives the first statement
and the particular cases follow directly from these in Theorem~\ref{dt-s1}.  
\end{proof}

As in Proposition~\ref{dt-s2} we can characterize the cases where 
$\mu_f=0$ is possible and where the measures $\lf$, $\mf$ can be chosen
independent of $\delta$.

\begin{proposition} \label{sf-s2}
Let $\edom\subset\R^n$ be open and bounded, let $\dom\in\bor{\edom}$, 
and assume that $\f\in\cB\cV(\edom)$.

\bgl
\item 
In Theorem~{\rm \ref{sf-s1}} we can choose $\lf$, $\mf$ with
$\cor{\lf}$, $\cor{\mf} \subset \bd\dom$, i.e. independent of $\delta$,  
if and only if 
\begin{equation} \label{bv-monster}
  \liminf\limits_{\delta\downarrow 0} 
  \sup_{\substack{\tf \in \woinf{\edom,\R^n}\\
       \|\tf_{|(\bd\dom)_\delta\cap\edom}\|_{\cW^{1,\infty}}\le 1}} 
  \divv{(\port{\bd\dom}_\delta\tf\f)}(\dom) < \infty 
\end{equation}
with $\port{\bd\dom}_\delta$ as in \reff{eq:port_fun}. In this case 
\reff{sf-s1-1} becomes 
\begin{equation}\label{sf-s2-1}
   \divv(\tf\f)(\dom) = \sI{\bd\dom}{\tf}{\lf} + 
            \sI{\bd\dom}{D\tf}{\mf} \,.  
\end{equation}

\item
In Theorem~{\rm \ref{sf-s1}} we can choose $\mf=0$
for $\delta>0$ if and only if 
\begin{equation}\label{bv-special}
\sup_{\substack{\tf\in\woinf{\edom,\R^n}\\
                \|\tf_{|(\bd\dom)_\delta\cap\edom}\|_{\cL^\infty}\le 1}} 
  \divv{(\tf\f)}(\dom) < \infty \,. 
\end{equation}

\item
In Theorem~{\rm \ref{sf-s1}} we can choose $\mf=0$ and $\lf$ with
$\cor{\lf}\subset \bd\dom$ if and only if
\begin{equation*}
  \liminf_{\delta\downarrow 0}
  \sup_{\substack{\tf\in\woinf{\edom,\R^n}\\
            \|\tf_{|(\bd\dom)_\delta\cap\edom}\|_{\cL^\infty}\le 1}} 
  \divv{(\tf\f)}(\dom) < \infty \,. 
\end{equation*}
\el
\end{proposition}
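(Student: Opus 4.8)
The statement is the $\cB\cV$-counterpart of Proposition~\ref{dt-s2}, and the plan is to obtain it as a direct corollary of the abstract representation results of Section~\ref{tt-r} applied to the trace functional of Proposition~\ref{tt-s5}. Concretely, I would set $f^*:=Tf\in\woinf{\edom,\R^n}^*$, where $T$ is the operator from Proposition~\ref{tt-s5}, so that $\df{f^*}{\tf}=\divv(\tf\f)(\dom)=\I{\dom}{\f\divv\tf}{\lem}+\I{\dom}{\tf}{D\f}$ for all $\tf\in\woinf{\edom,\R^n}$, and recall from that proposition that $f^*$ is a trace on $\bd\dom$ over $\woinf{\edom,\R^n}$. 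Taking $\K=\bd\dom$ and $m=n$ in Section~\ref{tt-r}, all hypotheses of Propositions~\ref{prop:special-a} and~\ref{prop:special-b} are met, since $\edom$ is open and bounded and $\bd\dom$ is closed.

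For part (1), I would apply Proposition~\ref{prop:special-a}. The only point to check is that the finiteness condition \reff{eq:monster} for $f^*$ coincides with \reff{bv-monster}: since $\port{\bd\dom}_\delta\tf\in\woinf{\edom,\R^n}$ and $\df{f^*}{\port{\bd\dom}_\delta\tf}=\divv(\port{\bd\dom}_\delta\tf\f)(\dom)$, the two suprema over $\tf$ with $\|\tf_{|(\bd\dom)_\delta\cap\edom}\|_{\cW^{1,\infty}}\le1$ are literally the same. Proposition~\ref{prop:special-a} then yields measures $\ome\in\bawl{\edom}^n$ and $\me\in\bawl{\edom}^{n\times n}$ (after the identification $\bawl{\edom}^{mn}\cong\bawl{\edom}^{n\times n}$), independent of $\delta$, with $\cor{\ome},\cor{\me}\subset\bd\dom$ and $\df{f^*}{\tf}=\sI{\bd\dom}{\tf}{\ome}+\sI{\bd\dom}{D\tf}{\me}$; setting $\lf:=\ome$ and $\mf:=\me$ gives exactly \reff{sf-s2-1}, and the reverse implication is the converse direction in Proposition~\ref{prop:special-a}. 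Parts (2) and (3) follow identically by applying Proposition~\ref{prop:special-b}(1) with a fixed $\delta>0$, respectively Proposition~\ref{prop:special-b}(2), to the same $f^*$, noting that \reff{eq:special} and \reff{eq:special-1} turn into the displayed $\cL^\infty$-supremum conditions \reff{bv-special} and the condition in (3) via the same substitution $\df{f^*}{\tf}=\divv(\tf\f)(\dom)$.

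There is essentially no obstacle: the content of the proposition is carried entirely by the general machinery of Section~\ref{tt-r}, and the proof is a transcription of the argument used for Proposition~\ref{dt-s2} (indeed one could also route it through Theorem~\ref{dt-s1} as in the proof of Proposition~\ref{tt-s5}, decomposing $\f$ into the vector fields $F_k$ and summing, but applying Proposition~\ref{prop:special-a}/\ref{prop:special-b} directly to $Tf$ is cleaner). The only routine points to verify are the bookkeeping of the vector/matrix dimensions ($m=n$, $mn=n^2$, and $D\tf\,d\me$ read as a scalar product of matrices) and the fact that the right-hand side $\sI{\bd\dom}{\tf}{\ome}+\sI{\bd\dom}{D\tf}{\me}$ produced by the abstract results is exactly the form claimed in \reff{sf-s2-1} once one recalls $\df{Tf}{\tf}=\divv(\tf\f)(\dom)$. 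If desired, the refinements that $\lf$ corresponds to a Radon measure supported on $\bd\dom$ in case (C) carry over verbatim from the corresponding clauses of Propositions~\ref{prop:special-a} and~\ref{prop:special-b}.
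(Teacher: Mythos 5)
Your proof is correct, but it takes a different route from the paper's. The paper proves Proposition~\ref{sf-s2} by reusing the component-wise decomposition from the proof of Proposition~\ref{tt-s5}: it writes $f$ as the vector fields $F_k\in\dmo{\edom}$ with $F_k^k=f$, $F_k^j=0$ for $j\ne k$, observes that $\port{\bd\dom}_\delta\tf^kF_k=\port{\bd\dom}_\delta\tf\f$ when only the $k$-th component of $\tf$ is nonzero, identifies \reff{bv-monster} with the conjunction of \reff{dm-monster} over all $F_k$, and then invokes Proposition~\ref{dt-s2} for each $F_k$ and sums the resulting measures. You instead apply Propositions~\ref{prop:special-a} and~\ref{prop:special-b} directly to the trace $Tf\in\woinf{\edom,\R^n}^*$ with $m=n$ and $\K=\bd\dom$ --- which is exactly how the paper proves Proposition~\ref{dt-s2} itself --- so you simply bypass the intermediate reduction. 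Both arguments are valid and rest on the same machinery, but yours is arguably tidier: the identification of \reff{eq:monster}, \reff{eq:special}, \reff{eq:special-1} with \reff{bv-monster}, \reff{bv-special} and the condition in (3) is literal once one substitutes $\df{Tf}{\tf}=\divv(\tf\f)(\dom)$, whereas the paper's route has to pass the finiteness conditions through the components (in particular, deducing \reff{bv-monster} from the $n$ separate conditions \reff{dm-monster} involves summing suprema and arranging a common sequence $\delta_j\downarrow 0$ for the liminf, a bookkeeping step your direct application avoids entirely). The dimension bookkeeping you flag ($m=n$, $\bawl{\edom}^{mn}\cong\bawl{\edom}^{n\times n}$, $D\tf\,d\mf$ as a matrix scalar product) is indeed all that needs checking.
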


\begin{proof} 
We use the notation of the proof of Theorem~\ref{sf-s1}. We have for all $k$ 
that
\begin{equation*}
  \port{\bd\dom}_\delta\tf^k F_k = \port{\bd\dom}_\delta\tf\f
  \qmq{if} \tf^j=0 \tx{ for all } j\ne k \,.
\end{equation*}
Hence \reff{bv-monster} is equivalent to \reff{dm-monster} for all $F_k$ and,
by Proposition~\ref{dt-s2}, this is equivalent to 
$\cor{\lambda_{F_k}},\;\cor{\me_{\F_k}}\subset\bd\dom$ for all $k$. But this
gives (1). Analogously we derive (2) and (3) from Proposition~\ref{dt-s2}. 
\end{proof}

Arguing as in the previous proof we can transfer Lemma~\ref{lem:monster2},
Proposition~\ref{dt-s4}, and Proposition~\ref{dt-s5} to BV functions. Let us
briefly rephrase these results for completeness and for the convenience of the
reader.  

\begin{lemma} \label{sf-s3}      
Condition \reff{bv-monster} in Proposition~{\rm \ref{sf-s2}} is equivalent to
each of the following two conditions
\begin{equation*}
  \liminf \limits_{\delta\downarrow 0}  
  \sup_{\substack{\tf \in\woinf{\edom,\R^n}\\
                  \|\tf_{|(\bd\dom)_\delta\cap\edom}\|_{\cW^{1,\infty}}\le 1}}
  \I{(\bd{\dom})_\delta \cap \dom}{\tf\f D\port{\bd\dom}_\delta}{\lem} 
  < \infty \,,
\end{equation*}
\begin{equation*}
  \liminf\limits_{\delta\downarrow 0}  
  \sup_{\substack{\tf\in\woinf{\edom,\R^n}\\
                  \|\tf_{|(\bd\dom)_\delta\cap\edom}\|_{\cW^{1,\infty}}\le 1}}
     \; \frac{1}{\delta}\I{(\dnhd{(\bd{\dom})}{\delta}\setminus
        \dnhd{(\bd{\dom})}{\frac{\delta}{2}})\cap \dom}
        {\tf\f D\distf{\bd{\dom}}}{\lem} < \infty \,. 
\end{equation*}
Moreover, $\Int{\dom}=\emptyset$ implies \reff{bv-monster}.
\end{lemma}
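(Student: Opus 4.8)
The plan is to mirror the proof of Lemma~\ref{lem:monster2}, replacing the product rule \reff{ex:dmo_woi-5} for $\cD\cM^1$‑vector fields by the corresponding one for the trace functional $\tf\mapsto\divv(\tf\f)(\dom)$ attached to a BV function. First I would fix $\f\in\cB\cV(\edom)$, a parameter $\delta>0$, and a test field $\tf\in\woinf{\edom,\R^n}$ with $\|\tf_{|(\bd\dom)_\delta\cap\edom}\|_{\cW^{1,\infty}}\le 1$. Since $\tf$ is bounded and $D\f$ is an $\R^n$‑valued Radon measure (Remark~\ref{tt-s6}), one has $\f\tf\in\cD\cM^1(\edom)$ with $\divv(\f\tf)=\f\divv\tf\,\lem+\tf\cdot D\f$, the measure appearing in Proposition~\ref{tt-s5}. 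Applying \reff{ex:dmo_woi-4}--\reff{ex:dmo_woi-5} to the Lipschitz factor $\port{\bd\dom}_\delta$ and the field $\f\tf$, and evaluating on $\dom$, gives the decomposition
\[\divv(\port{\bd\dom}_\delta\tf\f)(\dom)=\I{\dom}{\port{\bd\dom}_\delta\f\divv\tf}{\lem}+\I{\dom}{\port{\bd\dom}_\delta\tf}{D\f}+\I{(\bd\dom)_\delta\cap\dom}{\tf\f\cdot D\port{\bd\dom}_\delta}{\lem}.\]

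The key observation, exactly as in Lemma~\ref{lem:monster2}, is that the first two terms are uniformly bounded over all $\delta>0$ and all admissible $\tf$: on the support $(\bd\dom)_\delta$ of $\port{\bd\dom}_\delta$ one has $|\port{\bd\dom}_\delta|\le 1$, $|\tf|\le 1$ and $|D\tf|\le 1$, so (using $|D_{x_k}\f|\le|D\f|$) the first term is bounded by $\sqrt n\,\|\f\|_{\cL^1(\edom)}$ and the second by $n\,|D\f|(\edom)$. Hence
\[\Big|\divv(\port{\bd\dom}_\delta\tf\f)(\dom)-\I{(\bd\dom)_\delta\cap\dom}{\tf\f\cdot D\port{\bd\dom}_\delta}{\lem}\Big|\le \sqrt n\,\|\f\|_{\cL^1(\edom)}+n\,|D\f|(\edom)=:C.\]
Taking the supremum over admissible $\tf$ for each fixed $\delta$ and then the $\liminf$ as $\delta\downarrow 0$, this additive‑perturbation bound shows that \reff{bv-monster} holds if and only if the first displayed condition of the lemma holds. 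For the second condition I would substitute $D\port{\bd\dom}_\delta=-\frac{2}{\delta}\,\ind{(\bd\dom)_\delta\setminus(\bd\dom)_{\frac{\delta}{2}}}\,D\distf{\bd\dom}$ as in the proof of Lemma~\ref{lem:monster2}; since the supremum ranges over all $\tf$ and $-\tf$ is again admissible, the sign and the factor $2$ are irrelevant for finiteness, which yields the equivalence with the $\frac{1}{\delta}$‑weighted condition involving $D\distf{\bd\dom}$.

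Finally, for the last implication ($\Int\dom=\emptyset$ forces \reff{bv-monster}): every point of $\dom$ lies in $\Int\dom$, $\bd\dom$ or $\Ext\dom$, and the first and last are excluded, so $\dom\subset\bd\dom\subset(\bd\dom)_{\frac{\delta}{2}}$, on which $\port{\bd\dom}_\delta\equiv 1$. As $(\bd\dom)_{\frac{\delta}{2}}$ is open, $D\port{\bd\dom}_\delta=0$ $\lem$‑a.e.\ on $\dom$ by \cite[p.~130]{evans}, so the localized integral in the first condition vanishes for every $\tf$ and $\delta$, and \reff{bv-monster} holds trivially.

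The argument is essentially routine once the decomposition above is in place; the only point that needs a little care is the uniform boundedness of the first two terms with merely $\cL^1$‑control on $\f$ and BV‑control (the vector measure $D\f$) — in particular that $\int_\dom\port{\bd\dom}_\delta\tf\,dD\f$ is controlled by $|D\f|(\edom)$ independently of $\delta$. One could instead try to deduce the statement from Lemma~\ref{lem:monster2} applied to the $n$ coordinate vector fields $F_k$ attached to $\f$ as in the proof of Theorem~\ref{sf-s1}, but then one must be careful because $\liminf$ does not commute with the finite maximum over $k$; the direct computation avoids this obstacle.
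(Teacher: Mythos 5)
Your proof is correct, and it takes a slightly different route from the one the paper intends. The paper proves Lemma~\ref{sf-s3} by the reduction used in the proof of Proposition~\ref{sf-s2}: one passes to the coordinate vector fields $F_k\in\cD\cM^1(\edom)$ attached to $\f$ (as in the proof of Theorem~\ref{sf-s1}), observes $\port{\bd\dom}_\delta\tf^kF_k=\port{\bd\dom}_\delta\tf\f$ for $\tf$ supported in the $k$-th slot, and then invokes Lemma~\ref{lem:monster2} componentwise. You instead rerun the proof of Lemma~\ref{lem:monster2} directly for the BV pairing: the product-rule decomposition $\divv(\port{\bd\dom}_\delta\tf\f)=\port{\bd\dom}_\delta\f\divv\tf\,\lem+\port{\bd\dom}_\delta\tf\cdot D\f+\tf\f\cdot D\port{\bd\dom}_\delta\,\lem$ is the right analogue of \reff{ex:dmo_woi-5}, your uniform bound $\sqrt n\,\|\f\|_{\cL^1}+n\,|D\f|(\edom)$ on the first two terms is valid (the constraint $\|\tf_{|(\bd\dom)_\delta\cap\edom}\|_{\cW^{1,\infty}}\le 1$ suffices because $\port{\bd\dom}_\delta$ is supported in $(\bd\dom)_\delta$), and the substitution of $D\port{\bd\dom}_\delta$ and the $\Int\dom=\emptyset$ case go through exactly as in Lemma~\ref{lem:monster2}. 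Your closing remark identifies a real advantage of the direct computation: the paper's componentwise reduction equates a $\liminf_\delta$ of a sum of suprema with the finiteness of each summand's $\liminf_\delta$, and while one direction follows from superadditivity of $\liminf$ for nonnegative terms, the converse requires the finite $\liminf$'s to be realized along a common sequence $\delta_j\downarrow0$, which is not automatic. Your version sidesteps this entirely at the cost of a few extra lines, so it is arguably the more careful argument.
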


\begin{proposition}\label{sf-s4}      
Let $\edom \subset \Rn$ be an open and bounded set, let $\dom\in\bor{\edom}$
satisfy $\lem(\dom\setminus\Int\dom)=0$, and assume that
$\f\in\cB\cV(\edom)$. If 
\begin{equation} \label{sf-s4-1}
  \liminf\limits_{\delta\downarrow 0} 
  \I{\dom}{\big|\f D\port{\bd\dom}_\delta\big|}{\lem} < \infty \,,
\end{equation}
then we can take $\mf=0$ and $\lf$ with $\cor{\lf}\subset \bd\dom$
in Theorem~{\rm \ref{sf-s1}}. We have \reff{sf-s4-1} if
$f$ is bounded and if there is some $\tilde\delta>0$ such that
\begin{equation}\label{sf-s4-2}
  \sup_{\delta\in(0,\tilde\delta)}  
  \cH^{n-1}(\bd\dom_{-\delta})  < \infty \,.
\end{equation}
If $\dom$ has finite perimeter, then \reff{dt-s4-3} for some $c,r>0$
ensures \reff{sf-s4-2}.
\end{proposition}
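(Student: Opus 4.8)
The plan is to deduce Proposition~\ref{sf-s4} from Proposition~\ref{dt-s4} by the same device used in the proofs of Theorem~\ref{sf-s1} and Proposition~\ref{sf-s2}. Given $\f\in\cB\cV(\edom)$, I would introduce the vector fields $F_k=(F_k^1,\dots,F_k^n)$ with $F_k^k=\f$ and $F_k^j=0$ for $j\ne k$, $k=1,\dots,n$; as in the proof of Proposition~\ref{tt-s5} these belong to $\dmo{\edom}$ with $\divv F_k=D_{x_k}\f$ and $\|F_k\|_{\dmo{\edom}}\le\|\f\|_{\cB\cV}$. The crucial pointwise observation is that
\[
  |F_k \cdot D\port{\bd\dom}_\delta| = |\f\, D_{x_k}\port{\bd\dom}_\delta|
  \le |\f|\,|D\port{\bd\dom}_\delta| = |\f\, D\port{\bd\dom}_\delta|
  \qmq{$\lem$-a.e. on $\edom$,}
\]
so that hypothesis \reff{sf-s4-1} for $\f$ implies condition \reff{dt-s4-1} for each $F_k$. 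Since $\lem(\dom\setminus\Int\dom)=0$ is part of the hypotheses, Proposition~\ref{dt-s4} applies to every $F_k$ and lets us choose, in Theorem~\ref{dt-s1} applied to $F_k$, the measures $\me_{F_k}=0$ and $\lambda_{F_k}\in\bawl{\edom}$ with $\cor{\lambda_{F_k}}\subset\bd\dom$.

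Next I would assemble these into the measures of Theorem~\ref{sf-s1}, exactly as in its proof: for $\tf=(\tf^1,\dots,\tf^n)\in\woinf{\edom,\R^n}$ one has $\divv(\tf\f)(\dom)=\sum_{k=1}^n D_{x_k}(\tf^k\f)(\dom)$, and summing the representations of the scalar traces $\tf^k\mapsto D_{x_k}(\tf^k\f)(\dom)$ produces \reff{sf-s1-1} with $\lf$ the vector measure whose $k$-th component is $\lambda_{F_k}$ and $\mf$ the matrix measure whose $k$-th row is $\me_{F_k}$. Because every $\me_{F_k}=0$ we obtain $\mf=0$, and because $|\lf|\le\sum_k|\lambda_{F_k}|$ (so that a neighborhood $U$ of a point outside $\bigcup_k\cor{\lambda_{F_k}}$ can be intersected over $k$ to kill $|\lf|$ near the point) we get $\cor{\lf}\subset\bigcup_k\cor{\lambda_{F_k}}\subset\bd\dom$. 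This establishes the first assertion.

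For the second assertion I would simply invoke the corresponding statement of Proposition~\ref{dt-s4} for each $F_k$: when $\f$ is bounded we have $\|F_k\|_\infty=\|\f\|_\infty<\infty$, and condition \reff{sf-s4-2} is literally \reff{dt-s4-2}, so Proposition~\ref{dt-s4} yields \reff{dt-s4-1} for every $F_k$, whence the conclusion follows from the first part. (Equivalently, one checks directly via the coarea formula that $\I{\dom}{|\f\,D\port{\bd\dom}_\delta|}{\lem}\le\|\f\|_\infty\,\tfrac{2}{\delta}\int_{-\delta}^{-\delta/2}\cH^{n-1}(\bd\dom_\tau)\,d\tau$, which is bounded uniformly for small $\delta$ under \reff{sf-s4-2}.) Finally, the implication ``$\dom$ of finite perimeter and \reff{dt-s4-3} $\Rightarrow$ \reff{sf-s4-2}'' is exactly the last part of Proposition~\ref{dt-s4} (based on the density estimates of Kraft~\cite{kraft_measure-theoretic_2016}), so nothing new is needed there.

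Since each step is a direct transfer of an already-proved result, there is no serious obstacle. The only point deserving care is the bookkeeping in the assembly step: verifying that the vanishing of $\me_{F_k}$ for every $k$ genuinely forces the matrix-valued measure $\mf$ in Theorem~\ref{sf-s1} to be the zero measure, and that the cores of the components of $\lf$ control $\cor{\lf}$ as claimed. Both are routine, but should be spelled out so that the reduction is watertight.
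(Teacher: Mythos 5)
Your proposal is correct and is essentially the paper's own argument: the paper dispatches Proposition~\ref{sf-s4} with the remark that one transfers Proposition~\ref{dt-s4} to BV functions ``arguing as in the previous proof,'' i.e.\ via the componentwise vector fields $F_k$ and the pointwise bound $|F_k\cdot D\port{\bd\dom}_\delta|\le|\f\,D\port{\bd\dom}_\delta|$, exactly as you spell out. Your explicit treatment of the assembly step (vanishing of $\mf$ and the core of $\lf$) is a faithful elaboration of what the paper leaves implicit.
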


\begin{proposition}\label{sf-s5}    
Let $\edom\subset\R^n$ be open and bounded, and let $f\in\cB\cV(\edom)$.
\bgl
\item
If $\dom \in \bor{\edom}$ is such that any $\tf\in\woinf{\edom}$ has a
continuous extension onto~$\cl\dom$, if $\lem(\dom\setminus\op{int}\dom)=0$,
and if there are $\const >0$ and $\tilde\delta>0$ such that 
\begin{equation} \label{sf-s5-0}
  \|\tf_{|\bd\dom}\|_{\op{Lip}(\bd\dom)} \le
  c \|\tf\|_{\woinf{(\bd\dom)_\delta\cap\edom}} \qmq{for all}
  \tf\in\woinf{\edom},\; \delta\in(0,\tilde\delta)\,,
\end{equation}
then \eqref{bv-monster} is satisfied.

\item If $\dom$ is open with Lipschitz boundary, then 
\eqref{bv-monster} is satisfied.
\el
\end{proposition}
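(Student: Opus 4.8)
The plan is to deduce Proposition~\ref{sf-s5} from Proposition~\ref{dt-s5} by exactly the same device used throughout Section~\ref{sf}: represent the BV function $f$ by the $n$ auxiliary vector fields $F_k\in\cD\cM^1(\edom)$ with $F_k^k=f$, $F_k^j=0$ for $j\ne k$, as in the proof of Proposition~\ref{tt-s5}. For a test function $\tf=(\tf^1,\dots,\tf^n)\in\cW^{1,\infty}(\edom,\R^n)$ we have, componentwise, $\port{\bd\dom}_\delta\tf^k F_k=\port{\bd\dom}_\delta\tf\f$ whenever $\tf^j=0$ for $j\ne k$, and $\divv(\tf^k F_k)(\dom)=\divv_{x_k}(\tf^k f)(\dom)$. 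Summing over $k$, the finiteness condition \reff{bv-monster} for $f$ is equivalent to the simultaneous validity of the corresponding condition \reff{dm-monster} for each $F_k$ (this equivalence is already recorded inside the proof of Proposition~\ref{sf-s2}). Hence it suffices to check that the hypotheses of Proposition~\ref{dt-s5} are met by each $F_k$.

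First I would treat part (1). The geometric hypotheses on $\dom$ — that every $\tf\in\cW^{1,\infty}(\edom)$ has a continuous extension onto $\cl\dom$, that $\lem(\dom\setminus\op{int}\dom)=0$, and the Lipschitz-control estimate \reff{sf-s5-0} — are precisely the geometric hypotheses \reff{dt-s5-0} appearing in Proposition~\ref{dt-s5}(1); they do not involve the vector field at all. It remains only to observe that each $F_k$ lies in $\cD\cM^1(\edom)$, which is immediate since $f\in\cB\cV(\edom)\supset\cW^{1,1}$-type control gives $F_k\in\cL^1(\edom,\R^n)$ and $\divv F_k=D_{x_k}f$ is a signed Radon measure with $|\divv F_k|(\edom)=|D_{x_k}f|(\edom)\le|Df|(\edom)<\infty$. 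Applying Proposition~\ref{dt-s5}(1) to each $F_k$ yields \reff{dm-monster} for each $F_k$, hence \reff{bv-monster} for $f$. Part (2) follows at once from part (1) exactly as Proposition~\ref{dt-s5}(2) follows from Proposition~\ref{dt-s5}(1): an open $\dom$ with Lipschitz boundary satisfies all the geometric hypotheses of (1), this being the content of the (quite technical) argument given in the proof of Proposition~\ref{dt-s5}(2).

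There is essentially no obstacle here; the only point requiring a line of care is the bookkeeping that the norm bound on test functions is genuinely the same on both sides. In Proposition~\ref{dt-s5} the supremum in \reff{dm-monster} runs over $\tf\in\cW^{1,\infty}(\edom)$ (scalar), while in \reff{bv-monster} it runs over $\tf\in\cW^{1,\infty}(\edom,\R^n)$; but since $\divv(\port{\bd\dom}_\delta\tf\f)(\dom)=\sum_k\divv_{x_k}(\port{\bd\dom}_\delta\tf^k f)(\dom)$ and $\|\tf^k_{|(\bd\dom)_\delta\cap\edom}\|_{\cW^{1,\infty}}\le\|\tf_{|(\bd\dom)_\delta\cap\edom}\|_{\cW^{1,\infty}}$ for each component, a supremum over vector-valued $\tf$ with $\|\tf_{|(\bd\dom)_\delta\cap\edom}\|_{\cW^{1,\infty}}\le 1$ is controlled by $n$ times the supremum over scalar $\tf$ with the unit bound. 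This harmless factor $n$ does not affect the finiteness assertion. Accordingly, the proof is a short reduction: state the auxiliary fields $F_k$, verify $F_k\in\cD\cM^1(\edom)$ with the total-variation bound, invoke Proposition~\ref{dt-s5}(1) (resp.\ (2)) for each $F_k$, and conclude \reff{bv-monster} by the equivalence already used in Proposition~\ref{sf-s2}.
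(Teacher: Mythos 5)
Your proposal is correct and is exactly the route the paper takes: Proposition~\ref{sf-s5} is obtained by reducing to the component vector fields $F_k\in\cD\cM^1(\edom)$ from the proof of Proposition~\ref{tt-s5} and invoking Proposition~\ref{dt-s5}, using the equivalence of \reff{bv-monster} with \reff{dm-monster} for all $F_k$ already recorded in the proof of Proposition~\ref{sf-s2}. Your bookkeeping remark on scalar versus vector-valued test functions is the right point of care (it is why the paper notes afterwards that one need not consider $\tf\in\woinf{\edom,\R^n}$ in \reff{sf-s5-0}), and the summation over $k$ is unproblematic since Proposition~\ref{dt-s5} in fact yields a bound uniform in $\delta\in(0,\tilde\delta)$ rather than merely along a subsequence.
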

 
\noi
Notice that we do not have to consider $\tf\in\woinf{\edom,\R^n}$ in the
previous proposition.

Simple examples of Gauss-Green formulas for Sobolev
functions going beyond the classical ones can be obtained from 
Example~\ref{dt-ex2} or \ref{dt-ex3} if we take the first component of the
vector field $F$ as function $f\in\cW^{1,1}(\edom)$.
Let us now provide a Sobolev function on a set $\dom\subset\R^2$ 
of finite perimeter
where the precise representative is not $\cH^1$-integrable on
$\bd\dom$. This certainly prevents a usual Gauss-Green formula.
By the derivation of measures $\lambda_f$ and $\mu_f$ for \reff{sf-s1-1} we
demonstrate how more general Gauss-Green formulas can be obtained. 

\begin{example}\label{sf-ex2}  
Using the unit square
\begin{equation*}
  \tilde\dom := \{ (x,y)\in\R^2 \mid x,y\in(0,1)\}  \,,  
\end{equation*}
\begin{equation*}
  x_k:=\tfrac1k\,, \; y_k:=\big(\tfrac{1}{k}\big)^{\!\frac54}\,,
  \qmq{segments} \Gamma_k := \{x_k\}\times [0,y_k] \,,  
\end{equation*}
and the closed convex sets
\begin{equation*}
  \tilde\dom_k := \op{conv}\{\Gamma_{2k},\Gamma_{2k+1}\} \,,
\end{equation*}
we define the open sets
\begin{equation}\label{sf-ex2-1}
  \edom := \dom := \tilde\dom \setminus \bigcup_{k=1}^\infty \tilde\dom_k \,.
\end{equation}
Since $\sum_ky_k$ is finite, $\dom$ has finite perimeter. 
Let us consider $f\in\cW^{1,1}(\dom)$ with 
\begin{equation*}
  f(x,y):=\frac{1}{|(x,y)|_\infty^{\:\frac14}} 
\end{equation*}
where $|\cdot|_\infty$ is the $\infty$-norm 
(one even has $f\in\cW^{1,p}(\dom)$ for $1\le p<\frac{8}{5}$). Clearly
\begin{equation*}
  \I{\bd\dom}{|f|}{\ham^1} \ge 
  \I{\bigcup_k\Gamma_k}{|f|}{\ham^1} = 
  \sum_k \big(\tfrac{1}{k}\big)^{\!\frac54} k^{\frac14} =
  \sum_k \tfrac1k
\end{equation*}
and, thus, $f\not\in \cL^1(\bd\dom,\cH^1)$. 
Therefore we do not have a Gauss-Green
formula in the classical sense.

Nevertheless we can apply Theorem~\ref{sf-s1} and \reff{sf-s1-1} is valid. 
Let us first realize that for any two points in $\bar\dom$ there is a
connecting curve in $\dom$ with length less than three times its distance.
Therefore any $\tf\in\woinf{\dom,\R^2}$ is Lipschitz
continuous on $\dom$ and, thus, continuously extendable onto $\bar\dom$.
Moreover we readily verify \reff{dt-s5-0} for any $\tilde\delta>0$. 
This implies \reff{bv-monster} by Proposition~\ref{sf-s5}. Since 
$(\bd\dom)_\delta\cap\dom$ is bounded path connected with $\bd\dom$
for any $\delta>0$, we have case (C) by Proposition~\ref{prop:cases}. 
Consequently there is a vector-valued Radon measure $\sif$ supported on
$\bd\dom$ and a measure $\mf\in\op{ba}(\dom,\cB(\dom),\cL^2)^{2\times 2}$ with
$\cor{\mf}\subset\bd\dom$ such that
\begin{equation*}
  \divv(\tf f)(\dom) = 
  \I{\bd\dom}{\tf}{\sif} + \sI{\bd\dom}{D\tf}{\mf}
\end{equation*}
for all $\tf\in\woinf{\dom,\R^2}$ (identified with their extension onto
$\bar\dom$). 

Let us analyze how $\sif$ and $\mf$, that are not unique,
could look like. First we restrict our attention to smooth 
$\tf=(\tf_1,\tf_2)\in C^1(\R^2,\R^2)$ such that we can also consider
$\mf$ as Radon measures (cf. Proposition~\ref{acm-ram}). 
Notice that $\tilde\dom_k$ has Lipschitz boundary and we decompose
\begin{equation*}
  \bd{\tilde\dom_k} = 
  \Gamma_{2k}\cup\Gamma_{2k+1}\cup\Gamma_k^0\cup\Gamma_k^1
\end{equation*}
where $\Gamma_k^0$ is the part on the $x$-axis and $\Gamma_k^1$ is the
opposite part. Then, taking $\divv(\tf f)$ as a measure, we get 
\begin{equation} \label{sf-ex2-2}
  \divv{(\tf f)}(\dom) =
  \divv{(\tf f)}(\tilde\dom) -
  \sum_{k=1}^\infty \divv{(\tf f)}(\tilde\dom_k) 
\end{equation}
for all $\tf\in C^1(\R^2,\R^2)$. Obviously 
$f\in \cL^1(\bd{\tilde\dom},\cH^1)$ and, by the usual Gauss-Green formula,
\begin{equation}\label{sf-ex2-3}
  \divv{(\tf f)}(\tilde\dom) = 
  \I{\bd{\tilde\dom}}{\f\tf\cdot\nu^{\tilde\dom}}{\cH^1} \,.
\end{equation}
Moreover
\begin{eqnarray}
\hspace*{-5mm}  
\divv{(\tf f)}(\tilde\dom_k) 
&=& 
  \I{\bd{\tilde\dom_k}}{\f\tf\cdot\nu^{\tilde\dom_k}}{\cH^1} \nonumber \\
&=& \label{sf-ex2-4}
  \I{\Gamma_{2k+1}}{\f\tf_1}{\cH^1} -
  \I{\Gamma_{2k}}{\f\tf_1}{\cH^1} +
  \I{\Gamma_k^0\cup\Gamma_k^1}{\f\tf\cdot\nu^{\tilde\dom_k}}{\cH^1} \,.
\end{eqnarray}
If we plug \reff{sf-ex2-3} and \reff{sf-ex2-4} into \reff{sf-ex2-2},
the integrals on $\Gamma_k^0$ will be canceled out.
Thus the integral in \reff{sf-ex2-3} has to be evaluated merely on 
$\tilde\Gamma:=\bd{\tilde\dom}\setminus\big(\bigcup_{k}\Gamma_k^0\big)$.
Hence the Radon measure 
\begin{equation*}
  \sif^0:=\f\nu^{\tilde\dom}\reme{\cH^1}{\tilde\Gamma}
\end{equation*}
can be taken as part of $\sif$. The integral on $\Gamma_k^1$ in 
\reff{sf-ex2-4} is related to the Radon measure
\begin{equation*}
  \sif^{k1}:=\f\nu^{\tilde\dom_k}\reme{\cH^1}{\Gamma_k^1}\,.
\end{equation*}
Since the sum over $k\in\N$ is also a Radon measure, we can take
it with the opposite sign also as part of~$\sif$. 
It remains to consider the integrals on 
$\Gamma_k$ in \reff{sf-ex2-4}. Here we have to realize that
the sum of the Radon measures $\f\reme{\cH^1}{\Gamma_k}$
is not bounded (cf. above) and, thus, not a Radon measure.
Therefore we have to transform these integrals on
$\Gamma_k$ in a suitable way
(it would be sufficient to take finitely many of these measures with the
correct sign as part of $\sif$ and to transform merely the rest, which would 
lead to measures $\mf$ that differ from that derived below).
With
\begin{equation*}
  g(x,y):=x^{-\frac14}y\,, \quad 
  G^0(x,y) :=  \begin{pmatrix} 0 & -x^{-\frac14} y \\ 0 & 0
             \end{pmatrix}\,,
\end{equation*}
integration by parts gives
\begin{eqnarray}
  \I{\Gamma_{k}}{\f\tf_1}{\cH^1} 
&=& 
  \int_0^{y_k}g_y(x_k,y)\tf_1(x_k,y)\,dy  \nonumber \\
&=&
  - \int_0^{y_k}g(x_k,y)\tf_{1,y}(x_k,y)\,dy
  + \big[ g(x_k,y)\tf_1(x_k,y) \big]_0^{y_k}  \nonumber \\
&=&
  - x_k^{-\frac14} \int_0^{y_k} y \tf_{1,y}(x_k,y)\,dy
  + g(x_k,y_k) \tf_1(x_k,y_k)  \nonumber \\
&=&
  \int_0^{y_k} G^0(x_k,y): D\tf(x_k,y) \,dy 
  + \tfrac1k \tf_1(x_k,y_k)  \nonumber \\
&=& \label{sf-ex2-5}
   \I{\Gamma_k}{G^0:D\tf}{\cH^1} + \tfrac1k \tf_1(x_k,y_k) 
\end{eqnarray}
(here $:$ denotes the scalar product of matrices).
Now, taken with the correct sign, 
the measures $G^0\reme{\cH^1}{\Gamma_k}$ with total variation 
\begin{equation*}
  \big|G^0\reme{\cH^1}{\Gamma_k}\big|(\Gamma_k) =
  \int_0^{y_k}x_k^{-\frac{1}{4}}y\,dy = 
  \tfrac{1}{2}\big(\tfrac{1}{k}\big)^{\frac{9}{4}}
\end{equation*}
could contribute to $\mu_f$ 
and the Dirac measures $\frac1k\delta_{(x_k,y_k)}$ might contribute to
$\sif$. While the sum of the $G^0\reme{\cH^1}{\Gamma_k}$ gives a finite
measure, the sum of the Dirac measures is not finite and needs some further
transformation. We therefore consider (cf. \reff{sf-ex2-4})
\begin{eqnarray*}
&&
\hspace*{-20mm}
  \I{\Gamma_{2k}}{\f\tf_1}{\cH^1} - \I{\Gamma_{2k+1}}{\f\tf_1}{\cH^1}
  \\
&=&
  \I{\Gamma_{2k}}{G^0:D\tf}{\cH^1} - 
  \I{\Gamma_{2k+1}}{G^0:D\tf}{\cH^1}  \\[1mm]
&& 
 + \, \tfrac{1}{2k}\tf_1(x_{2k},y_{2k}) 
 - \tfrac{1}{2k+1}\tf_1(x_{2k+1},y_{2k+1})\,.
\end{eqnarray*}
Using $\frac1k=\frac{1}{k+1}+\frac{1}{k(k+1)}$ and
\begin{equation*}
  v_k:=\frac{(\delt{x}_k, \delt{y}_k)}{\big|(\delt{x}_k, \delt{y}_k)\big|}
  \qmq{with}
  (\delt{x}_k, \delt{y}_k) := 
  (x_{2k},y_{2k})-(x_{2k+1},y_{2k+1})
\end{equation*}
we get
\begin{eqnarray*}
&& \hspace*{-20mm}
  \tfrac{1}{2k}\tf_1(x_{2k},y_{2k}) - \tfrac{1}{2k+1}\tf_1(x_{2k+1},y_{2k+1}) \\
&=&
  \tfrac{1}{2k+1} \big(\tf_1(x_{2k},y_{2k}) - \tf_1(x_{2k+1},y_{2k+1})\big)
  + \tfrac{1}{2k(2k+1)} \tf_1(x_{2k},y_{2k})  \\
&=&
  \tfrac{1}{2k+1} \int_0^1
  D\tf_1(x_{2k+1}+t\!\delt{x}_k,y_{2k+1}+t\!\delt{y}_k) \cdot 
  (\delt{x}_k, \delt{y}_k) \, dt \\
&&
  + \, \tfrac{1}{2k(2k+1)} \tf_1(x_{2k},y_{2k})  \\
&=&
  \tfrac{1}{2k+1} \I{\Gamma_k^1}
    {v_k\cdot D\tf_1}{\cH^1} + \tfrac{1}{2k(2k+1)} \tf_1(x_{2k},y_{2k}) \,.
\end{eqnarray*}
Here the sum of the Dirac measures $\frac{1}{2k(2k+1)}\delta_{(x_{2k},y_{2k})}$
is finite and, thus, it can contribute to $\sif$. The measures 
$G_k^1\reme{\cH^1}{\Gamma_k^1}$ with
\begin{equation*}
  G_k^1(x,y) :=  \tfrac{1}{2k+1}
  \begin{pmatrix} v_{k,1} & v_{k,2} \\ 0 & 0
             \end{pmatrix}
\end{equation*}
have total variation
\begin{equation*}
  \tfrac{1}{2k+1}\cH^1(\Gamma_k^1) \le 
  \tfrac{1}{2k+1}\big(\tfrac{2}{2k}-\tfrac{2}{2k+1}\big) 
  = \tfrac{1}{k(2k+1)^2} \,.
\end{equation*}
Thus the sum of these measures is finite and can 
contribute to $\mf$.
Summarizing we obtain \reff{sf-s1-1} for $\tf\in C^1(\R^2,\R^2)$
with Radon measures 
\begin{equation*}
  \sif := \sif^0 -
  \sum_{k=1}^\infty  \big( \f\nu^{\tilde\dom_k}\reme{\cH^1}{\Gamma_k^1} + 
           \tfrac{1}{2k(2k+1)}\delta_{(x_{2k},y_{2k})} \big)
\end{equation*}
\begin{equation*}
   \mf := \sum_{k=1}^\infty G^0\reme{\cH^1}{\Gamma_{2k}}
             - G^0\reme {\cH^1}{\Gamma_{2k+1}}   
             + G_k^1\reme{\cH^1}{\Gamma_k^1} \,.
\end{equation*}

Let us now consider the extension of $\mu_f$ to 
$\tf\in\cW^{1,\infty}(\dom,\R^2)$. Since an extension by Hahn-Banach will not
be unique in general, we have to argue more carefully. 
Instead of an approximation by smooth functions as in Example~\ref{dt-ex5}
we want to present a more direct argument. First we consider 
$\Gamma_k$ with odd index $k$ and set 
\begin{equation*}
  \Gamma_{kt} := \big\{ (x_k+t,y) \;\big|\; y\in[0,y_k] \big\}\,, \quad 
  t\in\R\,.
\end{equation*}
Since $\f$, $\tf$ are continuous on a neighborhood of $\Gamma_k$ intersected
with $\dom$ and since 
$\tf(x_k+t,\cdot)$ is absolutely continuous on $\Gamma_{kt}$ 
for almost all small $t>0$, we argue similar as for \reff{sf-ex2-5} to get 
\begin{eqnarray*}
  \I{\Gamma_{k}}{\f\tf_1}{\cH^1} 
&=& 
  \lim_{t\downarrow 0}\frac1t \int_0^t\I{\Gamma_{k\tau}}{\f\tf_1}{\cH^1} \, d\tau \\
&=&
  \lim_{t\downarrow 0}\frac1t \int_0^t \Big(
  \I{\Gamma_{k\tau}}{G^0:D\tf}{\cH^1} + 
  \frac{y_k}{(x_k+\tau)^{\frac14}}\tf_1(x_k+\tau,y_k) \Big)\, d\tau \\
&=&
  \lim_{t\downarrow 0}\frac1t \int_0^t \I{\Gamma_{k\tau}}{G^0:D\tf}{\cH^1}\, d\tau
  + \tfrac1k\tf_1(x_k,y_k) \,.
\end{eqnarray*}
Obviously 
\begin{equation*}
  \Phi \to \lim_{t\downarrow 0}\frac1t 
  \int_0^t \I{\Gamma_{k\tau}}{\Phi:G^0}{\cH^1}\, d\tau
\end{equation*}
is a linear continuous functional on $\cL^\infty(\dom,\R^{2\times 2})$
depending on values of $\Phi$ near $\Gamma_k$.
Hence there are matrix-valued measures
$\mu_k^0\in\op{ba}(\dom,\cB(\dom),\cL^2)^{2\times 2}$   
with core $\Gamma_k$ such that
\begin{equation*}
  \sI{\Gamma_{k}}{\Phi}{\mu_k^0} =
  \lim_{t\downarrow 0}\frac1t \int_0^t \I{\Gamma_{k\tau}}{G^0:\Phi}{\cH^1}\, d\tau
\end{equation*}
for all $\Phi\in\cL^\infty(\dom,\R^{2\times 2})$. By the special form of $G^0$
the only non-vanishing component of the measure $\mu_k^0$ is $(\mu_k^0)_{12}$.
More precisely, with $\mu_{\Gamma_k}$ from Proposition~\ref{tt-s3} where
$\alse=\dom$ and 
$\gamma(\delta)=\delta$, we obtain from \reff{tt-s3-1} that
$(\mu_k^0)_{12}= \big(\!-x^{-\frac14} y\big)\mu_{\Gamma_k}$, i.e. a kind of
weighted density measure. Clearly,
\begin{equation*}
  \I{\Gamma_{k}}{\f\tf_1}{\cH^1} = 
  \sI{\Gamma_{k}}{D\tf_1}{\mu_k^0} + \tfrac1k\tf_1(x_k,y_k)
\end{equation*}
for all $\tf\in\cW^{1,\infty}(\dom,\R^2)$ and odd $k$. Analogously we get
measures $\mu_k^0$ for even $k$. 
Since the $\mu_k^0$ have the same total
variation as the $G^0\reme {\cH^1}{\Gamma_k}$, their sum 
is again a measure and we can replace the Radon measures 
$G^0\reme{\cH^1}{\Gamma_k}$ with the $\mu_k^0$ to handle  
$\tf\in\cW^{1,\infty}(\dom,\R^2)$. 
By analogous arguments we can construct measures 
$\mu_k^1\in\op{ba}(\dom,\cB(\dom),\cL^2)^{2\times 2}$ with core $\Gamma_k^1$
that can replace the Radon measures $G_k^1\reme {\cH^1}{\Gamma_k^1}$.
This way we can finally replace the former $\mu_f$ with
\begin{equation*}
   \mu_f := \sum_{k=1}^\infty \mu_{2k+1}^0 - \mu_{2k}^0 -\mu_k^1 
\end{equation*}
to get a Gauss-Green formula \reff{sf-s1-1} for all 
$\tf\in\cW^{1,\infty}(\dom,\R^2)$.

Notice that we didn't use for the derivation of the Gauss-Green formula
that \reff{bv-monster} is satisfied. Since $\sif$ and $\mf$ do not depend on
$\delta$, we could thus also apply
Proposition~\ref{sf-s2} after this derivation to get \reff{bv-monster}.
Recall that the choice of $\sif$ and $\mf$ is not unique, since an alternative
version with merely terms for $k>k_0$ in $\mf$ is possible for any $k_0\in\N$.
\end{example}

\medskip

Let us now discuss normal measures for Sobolev and BV functions. 
We start with a scalar variant of Proposition~\ref{nm-s5}.
\begin{proposition} \label{sf-s6}   
Let $\edom\subset\R^n$ be open, bounded, let $\dom\in\bor{\edom}$, 
let $f\in\cL^1(\dom)$, 
let $\nu$ be a normal measure related to a good approximation $\chi$
for $\chi_\dom$ and with approximating sequence 
$\{\chi_k\}$ satisfying
$|D\chi_k|\le\gamma k$ $\lem$-a.e. on $\dom$ for some $\gamma>0$, and let
$\au\subset\edom$ be an aura of $\nu$ as in \reff{bd-s5-0}.
If there is some $\tilde\delta>0$ such that 
\begin{equation}\label{sf-s6-1}
  \frac{1}{\delta}\I{(\bd\dom)_\delta\cap\au}{|f|}{\lem} 
  \qmq{is uniformly bounded for} 0<\delta<\tilde\delta \,,
\end{equation}
then $\tf f$ is $\nu$-integrable on $\edom$ for all 
$\tf\in\cL^\infty(\edom,\R^n)$. If, in addition, 
\begin{equation}\label{sf-s6-2}
  \lim_{k\to\infty}
  \frac{1}{\delta}\I{(\bd\dom)_\delta\cap\au\cap\{|f|\ge k\}}{|f|}{\lem} 
  = 0 \qmq{uniformly for $\delta\in(0,\tilde\delta)$\,,}
\end{equation}
then for each $\tf\in\cL^\infty(\edom,\R^n)$ 
there is a subsequence $\{\chi_{k'}\}$
such that
\begin{equation} \label{sf-s6-3}
   \lim_{k'\to\infty} \I{\edom}{f\tf\cdot D\chi_{k'}}{\lem} = 
  - \,\sI{\bd\dom}{f\tf}{\nu} \,.
\end{equation}
\end{proposition}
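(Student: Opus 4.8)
The plan is to reduce Proposition~\ref{sf-s6} to the vector-valued result of Proposition~\ref{nm-s5} in essentially the same way the Gauss-Green formula for BV functions (Theorem~\ref{sf-s1}) was reduced to the one for $\cD\cM^1$ vector fields. Given $f\in\cL^1(\dom)$ and $\tf=(\tf^1,\dots,\tf^n)\in\cL^\infty(\edom,\R^n)$, I would, for each $k\in\{1,\dots,n\}$, introduce the vector field $F_k=(F_k^1,\dots,F_k^n)$ with $F_k^k=f$ and $F_k^j=0$ for $j\ne k$, exactly as in the proof of Proposition~\ref{tt-s5}. Then $F_k\in\cL^1(\dom,\R^n)$ with $|F_k|=|f|$ $\lem$-a.e.\ on $\dom$, so hypothesis \reff{sf-s6-1} for $f$ is literally hypothesis \reff{nm-s5-1} for each $F_k$, and likewise \reff{sf-s6-2} for $f$ is \reff{nm-s5-2} for each $F_k$ (the set $\{|F_k|\ge k\}$ agrees with $\{|f|\ge k\}$). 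The normal measure $\nu$, the good approximation $\chi$, the approximating sequence $\{\chi_k\}$ with $|D\chi_k|\le\gamma k$, and the aura $\au$ are the same objects, so all the structural hypotheses of Proposition~\ref{nm-s5} are met for every $F_k$.

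Next I would apply Proposition~\ref{nm-s5} to each $F_k$. The first part gives that $\psi F_k$ is $\nu$-integrable on $\edom$ for every $\psi\in\cL^\infty(\edom)$; applied with $\psi=\tf^k$ this says precisely that $\tf^k F_k$ is $\nu$-integrable, hence (by the componentwise definition of vector integrability against a vector measure recalled in Section~\ref{pm}) that $f\tf^k=\nu^k$-component contributions are integrable. Summing over $k$, I would observe that $\sum_{k=1}^n \tf^k F_k$ has $k$-th component $f\tf^k$, i.e.\ it is the "diagonal" combination whose integral against $\nu=(\nu^1,\dots,\nu^n)$ equals $\sum_k \int_\edom f\tf^k\,d\nu^k=\int_\edom f\tf\cdot d\nu$; since each summand is $\nu$-integrable and the integral is linear, $\tf f$ is $\nu$-integrable on $\edom$ for all $\tf\in\cL^\infty(\edom,\R^n)$. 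A small point to make carefully here is that $\cL^\infty(\edom)$ is not an algebra in the naive sense for these factor spaces, but the measurability of $\tf^k f$ follows from measurability of the factors exactly as in the second paragraph of the proof of Proposition~\ref{nm-s5} (using $|\tf^k f|\le\|\tf^k\|_\infty|f|$ i.m.\ $\nu^k$ and \cite[p.~113]{rao}).

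For the limit assertion \reff{sf-s6-3}, the subtlety is that the subsequence produced by Proposition~\ref{nm-s5} for a given bounded function may depend on that function, so I cannot simply quote it $n$ times and add. Instead I would run the diagonal argument directly: fix $\tf\in\cL^\infty(\edom,\R^n)$ and apply the \emph{vector-valued} form \reff{nm-s5-3} of Proposition~\ref{nm-s5} once, to the single vector field $G:=\sum_{k=1}^n \tf^k F_k\in\cL^1(\dom,\R^n)$ (which satisfies $|G|\le\sqrt n\,|f|$ $\lem$-a.e., so \reff{nm-s5-1} and \reff{nm-s5-2} still hold) together with the constant test function $\equiv 1$; this yields a single subsequence $\{\chi_{k'}\}$ with $\lim_{k'}\int_\edom G\cdot D\chi_{k'}\,d\lem=-\sI{\bd\dom}{G}{\nu}$. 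Since $G\cdot D\chi_{k'}=\sum_k \tf^k F_k\cdot D\chi_{k'}=\sum_k \tf^k f\,(D\chi_{k'})_k=f\tf\cdot D\chi_{k'}$ and $\sI{\bd\dom}{G}{\nu}=\sum_k\int_\edom \tf^k f\,d\nu^k=\sI{\bd\dom}{f\tf}{\nu}$, this is exactly \reff{sf-s6-3}. The main obstacle is precisely this bookkeeping with the $\tf$-dependent subsequences; once one routes everything through a single application of the vector version of \reff{nm-s5-3} to $G$, the proof closes without further work. Alternatively, if one prefers to avoid repackaging $\tf$ into $G$, one can note that Proposition~\ref{nm-s5}'s proof of \reff{nm-s5-3} actually allows an $\eps$-argument with a common subsequence for finitely many bounded functions $\tf^1,\dots,\tf^n$ simultaneously, which gives the same conclusion.
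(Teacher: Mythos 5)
Your proposal is correct and follows essentially the same route as the paper, which proves Proposition~\ref{sf-s6} precisely by applying Proposition~\ref{nm-s5} to the component vector fields $F_k$ from the proof of Theorem~\ref{sf-s1}. Your extra care about the $\tf$-dependent subsequences --- routing the limit \reff{sf-s6-3} through a single application of \reff{nm-s5-3} to the combined field $G=f\tf$ with test function $\equiv 1$ --- is a legitimate and welcome way to make explicit a point the paper's one-line proof leaves implicit.
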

\noi
For the proof we apply Proposition~\ref{nm-s5} to vector fields $F_k$ as in
the proof of Theorem~\ref{sf-s1}. Analogously we obtain the subsequent results
from Theorem~\ref{nm-s7} and Proposition~\ref{nm-s9}.
\begin{theorem} \label{sf-s7}    
Let $\edom\subset\R^n$ be open and bounded, let $\dom\in\bor{\edom}$, let
$\nu$ be a normal measure of $\dom$ related to a good approximation $\chi$ for
$\chi_\dom$ with approximating sequence $\{\chi_k\}$, 
let $f\in\cB\cV(\edom)$ be $\nu$-integrable such that 
\reff{sf-s6-3} is satisfied, and let
$\chi_k\to\chi$ $Df$-a.e. on $\bd\dom$.
Then we have for all $\tf\in\woinf{\edom,\R^n}$ that
\begin{equation} \label{sf-s7-1}
  \I{\bd\dom}{\chi}{\divv(f\tf)} + \divv(f\tf)(\Int\dom) =
  \sI{\bd\dom}{f\tf}{\nu} \,.
\end{equation}
If $f\in\cB\cV(\edom)\cap\cL^\infty(\edom)$, then $f$ is $\nu$-integrable,
satisfies \reff{sf-s6-3}, and we have \reff{sf-s7-1} for all 
$\tf\in\woinf{\edom,\R^n}$.
\end{theorem}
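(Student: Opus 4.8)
The plan is to reduce Theorem~\ref{sf-s7} to the vector-field result Theorem~\ref{nm-s7} (and, in the bounded case, to Corollary~\ref{nm-s10}) by the same splitting device already used in the proofs of Proposition~\ref{tt-s5} and Theorem~\ref{sf-s1}. Given $f\in\cB\cV(\edom)$, I would introduce for $k=1,\dots,n$ the fields $F_k=(F_k^1,\dots,F_k^n)$ with $F_k^k=f$ and $F_k^j=0$ for $j\ne k$. As in the proof of Proposition~\ref{tt-s5} one has $F_k\in\cD\cM^1(\edom)$ with $\divv F_k=D_{x_k}f$, and by \reff{ex:dmo_woi-4} and \reff{ex:dmo_woi-5} one obtains, for $\tf=(\tf^1,\dots,\tf^n)\in\woinf{\edom,\R^n}$, that $\tf^kF_k\in\cD\cM^1(\edom)$ with
\begin{equation*}
  \divv(\tf^kF_k) = D_{x_k}(\tf^kf) = \tf^kD_{x_k}f + fD_{x_k}\tf^k\,\lem
\end{equation*}
(cf. also Remark~\ref{tt-s6} and \cite[p.~118]{ambrosio}).

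Next I would verify the hypotheses of Theorem~\ref{nm-s7} for each $F_k$. Since $f$ is $\nu$-integrable, i.e. $\nu^j$-integrable for every component $\nu^j$ of $\nu$, the field $F_k$ is $\nu$-integrable. Condition \reff{nm-s5-3} for $F_k$ with a scalar test function $\phi\in\cL^\infty(\edom)$ follows from \reff{sf-s6-3} applied to the vector function $\tf$ having $\phi$ in the $k$-th slot and zeros elsewhere: then $f\tf=\phi F_k$ as vector functions, so $f\tf\cdot D\chi_{k'}=\phi F_k\cdot D\chi_{k'}$ and $\sI{\bd\dom}{f\tf}{\nu}=\sI{\bd\dom}{\phi F_k}{\nu}$, which turns \reff{sf-s6-3} into exactly \reff{nm-s5-3} for $F_k$ and $\phi$. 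Finally $|D_{x_k}f|\le|Df|$ as measures (cf. \reff{pm-e3}), so $\chi_k\to\chi$ $Df$-a.e. on $\bd\dom$ implies $\chi_k\to\chi$ $\divv F_k$-a.e. on $\bd\dom$. Applying Theorem~\ref{nm-s7} to $F_k$ with test function $\tf^k$ gives, for each $k$,
\begin{equation*}
  \I{\bd\dom}{\chi}{D_{x_k}(\tf^kf)} + D_{x_k}(\tf^kf)(\Int\dom) = \sI{\bd\dom}{\tf^kf}{\nu^k}\,.
\end{equation*}
Summing over $k=1,\dots,n$, using $\sum_kD_{x_k}(\tf^kf)=\divv(f\tf)$ as measures, the linearity of the integral against this measure (and of its value on $\Int\dom$), and the definition of the vector integral $\sI{\bd\dom}{f\tf}{\nu}=\sum_k\sI{\bd\dom}{f\tf^k}{\nu^k}$, then yields \reff{sf-s7-1}.

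For the second statement, if in addition $f\in\cL^\infty(\edom)$ then each $F_k$ lies in $\cD\cM^\infty(\edom)$, so Corollary~\ref{nm-s10} applies directly to $F_k$ with no further assumptions: $F_k$ is $\nu$-integrable, satisfies \reff{nm-s5-3}, and \reff{nm-s7-1} holds for all $\tf^k\in\woinf{\edom}$; summing as above gives \reff{sf-s7-1} for every $\tf\in\woinf{\edom,\R^n}$. That $f$ itself is $\nu$-integrable is immediate from $\nu\in\bawl{\edom}^n$ and $f\in\cL^\infty(\edom)$ (Proposition~\ref{pm-s5}), and \reff{sf-s6-3} follows from \reff{bd-s5-1} in Theorem~\ref{bd-s5} applied to the vector functions $f\tf\in\cL^\infty(\edom,\R^n)$. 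The only point requiring care is this passage from the vector conditions \reff{sf-s6-3} (resp. \reff{bd-s5-1}) to condition \reff{nm-s5-3} for the individual $F_k$, together with the matching identification of the scalar boundary integrals $\sI{\bd\dom}{\tf^kf}{\nu^k}$ with the components of $\sI{\bd\dom}{f\tf}{\nu}$; everything else is the componentwise bookkeeping already carried out in the proof of Theorem~\ref{sf-s1}.
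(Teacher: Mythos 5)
Your proposal is correct and follows exactly the route the paper intends: it reduces the statement to Theorem~\ref{nm-s7} (and Corollary~\ref{nm-s10} in the bounded case) by applying them to the component fields $F_k$ with $f$ in the $k$-th slot, precisely as indicated in the paper's remark that the results for Sobolev/BV functions follow from the vector-field theorems via the decomposition used in the proofs of Proposition~\ref{tt-s5} and Theorem~\ref{sf-s1}. The passage from \reff{sf-s6-3} to \reff{nm-s5-3} by testing with $\tf=\phi e_k$, and the observation $|D_{x_k}f|\le|Df|$ for the $\divv F_k$-a.e.\ convergence, supply exactly the details the paper leaves implicit.
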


\begin{proposition} \label{sf-s9}   
Let $\edom\subset\R^n$ be open and bounded, let $\dom\in\bor{\edom}$, let
$\nu^*$ be a normal measure where $*$ stands for {\rm int}, 
{\rm intc}, {\rm ext}, or
$\canon$, let $\chi^*$, $\chi^*_k$, and $\delta_k$ be related to $\nu^*$ 
as in the corresponding examples above, let  
$\dens^*_{\bd\dom}\in\bawl{\edom}$ be the related measure as in 
Proposition~\ref{nm-s9}, and let $\nu^\dom$ be the normal field from
\reff{nm-normal}. 
If $f\in\cB\cV(\edom)$ is $\nu^*$-integrable such that  
\reff{sf-s6-3} is satisfied and that $\chi_k^*\to\chi^*$ $Df$-a.e. on
$\bd\dom$, then we have for all 
$\tf\in\woinf{\edom,\R^n}$
\begin{eqnarray*} 
  \divv(f\tf)(\Int\dom) 
&=& 
  \sI{\bd\dom}{f \tf \cdot\nu^\dom}{\dens^{\rm int}_{\bd\dom}}  \,, \\
\divv(f\tf)(\Int\dom) 
&=& 
  \sI{\bd\dom}{f \tf \cdot\nu^\dom}{\dens^{\rm intc}_{\bd\dom}}  \,, \\
\divv(f\tf)(\ol\dom) 
&=& 
  \sI{\bd\dom}{f\tf \cdot\nu^\dom}{\dens^{\rm ext}_{\bd\dom}} \,, \\
  \tfrac{1}{2}\divv(f\tf)(\bd\dom) + \divv(f\tf)(\Int\dom) 
&=& 
  \sI{\bd\dom}{f\tf\cdot\nu^\dom}{\dens^\canon_{\bd\dom}} .
\end{eqnarray*}
\end{proposition}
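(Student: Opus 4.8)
The idea is to deduce Proposition~\ref{sf-s9} from the corresponding statement for divergence-measure fields, Proposition~\ref{nm-s9}, in exactly the same way that Theorem~\ref{sf-s1} was deduced from Theorem~\ref{dt-s1} and Proposition~\ref{sf-s7} from Theorem~\ref{nm-s7}. First I would fix $f\in\cB\cV(\edom)$ and, for $k=1,\dots,n$, introduce the vector fields
\begin{equation*}
  F_k=(F_k^1,\dots,F_k^n) \qmq{with} F_k^k=f,\; F_k^j=0 \;\text{for}\; j\ne k\,,
\end{equation*}
which satisfy $F_k\in\cD\cM^1(\edom)$ with $\divv F_k=D_{x_k}f$ and $\|F_k\|_{\cD\cM^1}\le\|f\|_{\cB\cV}$, as already established in the proof of Proposition~\ref{tt-s5}. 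The normal measure $\nu^*$ together with its good approximation $\chi^*$, approximating sequence $\{\chi^*_k\}$, and the numbers $\delta_k$ is the same for all $F_k$, so the hypotheses transfer: if $f$ is $\nu^*$-integrable then each $F_k$ is $\nu^*$-integrable; the hypothesis \reff{sf-s6-3} for $f$ applied with test functions $\tf=(0,\dots,\tf^k,\dots,0)$ gives \reff{nm-s5-3} for each $F_k$; and the hypothesis $\chi^*_k\to\chi^*$ $Df$-a.e.\ on $\bd\dom$ implies $\chi^*_k\to\chi^*$ $D_{x_k}f$-a.e.\ on $\bd\dom$ for each $k$, since $|D_{x_k}f|\le|Df|$.

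\textbf{Key steps.} Second, I would apply Proposition~\ref{nm-s9} to each $F_k$. This yields the decomposition $\nu^*=\nu^\dom\dens^*_{\bd\dom}$ with the measure $\dens^*_{\bd\dom}\in\bawl{\edom}$ (which, crucially, does not depend on $k$, being built only from $\chi^*$, $\chi^*_k$, and $\delta_k$ via \reff{nm-s9-2}), and for each $k$ one of the four Gauss-Green formulas of Proposition~\ref{nm-s9}, e.g.\ in the interior case
\begin{equation*}
  \divv(\tf^k F_k)(\Int\dom) = \sI{\bd\dom}{\tf^k F_k\cdot\nu^\dom}{\dens^{\rm int}_{\bd\dom}}
\end{equation*}
for all $\tf^k\in\woinf{\edom}$. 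Third, I would expand both sides using the product rule \reff{ex:dmo_woi-5}: on the left,
\begin{equation*}
  \divv(\tf^k F_k) = fD_{x_k}\tf^k\,\lem + \tf^k D_{x_k}f = D_{x_k}(\tf^k f)
\end{equation*}
as measures (cf.\ Remark~\ref{tt-s6} and \cite[p.~118]{ambrosio}), while $F_k\cdot\nu^\dom=f\nu^\dom_k$ on the right, where $\nu^\dom_k$ is the $k$-th component of $\nu^\dom$. Summing over $k=1,\dots,n$ and using linearity of the refinement-style integral against $\dens^{\rm int}_{\bd\dom}$ gives
\begin{equation*}
  \divv(f\tf)(\Int\dom) = \sum_{k=1}^n D_{x_k}(\tf^k f)(\Int\dom)
  = \sum_{k=1}^n \sI{\bd\dom}{f\tf^k\nu^\dom_k}{\dens^{\rm int}_{\bd\dom}}
  = \sI{\bd\dom}{f\tf\cdot\nu^\dom}{\dens^{\rm int}_{\bd\dom}}
\end{equation*}
for all $\tf=(\tf^1,\dots,\tf^n)\in\woinf{\edom,\R^n}$, which is the first claimed identity. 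The three remaining identities (for {\rm intc}, {\rm ext}, and $\canon$) follow identically, only replacing $\Int\dom$ by the appropriate set ($\Int\dom$, $\ol\dom$, or the combination $\tfrac12\,\cdot\,(\bd\dom)+\cdot\,(\Int\dom)$) and $\dens^{\rm int}_{\bd\dom}$ by $\dens^{\rm intc}_{\bd\dom}$, $\dens^{\rm ext}_{\bd\dom}$, $\dens^\canon_{\bd\dom}$ as furnished by Proposition~\ref{nm-s9} for the vector fields $F_k$.

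\textbf{Expected obstacle.} The proof is essentially bookkeeping, so the only genuinely delicate point is verifying that the hypotheses \reff{sf-s6-3} and ``$\chi^*_k\to\chi^*$ $Df$-a.e.\ on $\bd\dom$'' for the scalar function $f$ really do reduce to the corresponding hypotheses for each $F_k$ uniformly in $k$, and that the single measure $\dens^*_{\bd\dom}$ produced by Proposition~\ref{nm-s9} is genuinely the same for all $n$ fields $F_k$ — this requires observing that \reff{nm-s9-2} characterizes $\dens^*_{\bd\dom}$ intrinsically in terms of $\chi^*$, $\psi^*$, and $\delta_k$, independently of which $F_k$ one feeds in, even though the subsequence $\{\chi^*_{k'}\}$ in \reff{nm-s9-2} may a priori depend on the chosen test function. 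One resolves this by noting that for the normal measures based on a distance function the convergence in \reff{bd-s5-1} holds without passing to a subsequence whenever the relevant limit exists, as is the case here (cf.\ the proof of Proposition~\ref{nm-s9}), so no incompatible subsequence choices across the $n$ components can occur. I would also note explicitly, as in Theorem~\ref{sf-s7}, that when $f\in\cB\cV(\edom)\cap\cL^\infty(\edom)$ all the integrability and convergence hypotheses are automatic.
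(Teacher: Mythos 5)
Your proposal is correct and follows essentially the same route as the paper: the paper's proof of Proposition~\ref{sf-s9} is precisely the componentwise reduction to Proposition~\ref{nm-s9} via the vector fields $F_k$ with $F_k^k=f$ and zero elsewhere, as introduced in the proof of Proposition~\ref{tt-s5} and reused for Theorem~\ref{sf-s1}. Your additional care about the single measure $\dens^*_{\bd\dom}$ being common to all $n$ components is a sensible elaboration of a point the paper leaves implicit.
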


\bigskip

For bounded functions on sets of finite perimeter we can transfer
Proposition~\ref{nm-s10a} (cf. also \cite[p.~25]{chen_2020}
and use \cite[p.~171, 177]{ambrosio} for \reff{sf-s14-7}).

\begin{proposition} \label{sf-s14}
Let $\edom\subset\R^n$ be open and bounded, let 
$\dom\subset\edom$ have finite perimeter, 
let $f\in\cB\cV(\edom)\cap\cL^\infty(\edom)$, and let
$\tilde\nu^{\rm int}$, $\tilde\nu^{\rm ext}$ be the normal measures from 
Examples~\ref{nm-int} and \ref{nm-ext}.

\bgl
\item
If $\dom\csubset\edom$, then there are vector-valued 
functions $F^{\rm int},F^{\rm ext}\in\cL^\infty(\mbd\dom,\hm)^n$ 
with $\|F^{\rm int}\|_\infty,\,\|F^{\rm ext}\|_\infty\le\|f\|_{\bd\dom}$ such that
\begin{equation} \label{sf-s14-1}
  \divv(\tf f)(\mint\dom) = \sI{\bd\dom}{f \tf}{\tilde\nu^{\rm int}}
  = \I{\mbd\dom}{\tf F^{\rm int}}{\hm}     \,,
\end{equation}
\begin{equation} \label{sf-s14-2}
  \divv(\tf f)(\mbd\dom\cup\mint\dom) =
   \sI{\bd\dom}{f\tf}{\tilde\nu^{\rm ext}}  =
  \I{\mbd\dom}{\tf F^{\rm ext}}{\hm}
\end{equation}
for all $\tf\in\woinf{\edom,\R^n}$
(cf. \reff{semi-norm} for $\|\cdot\|_{\bd\dom}$). If $\dom$ is open we also
have
\begin{equation} \label{sf-s14-3}
  \divv(\tf F)(\dom) = \I{\mbd\dom}{\tf F^{\rm int}}{\hm} -
 \I{\mint\dom\cap\bd\dom}{\tf}{Df}
\end{equation}
and if $\dom$ is closed
\begin{equation} \label{sf-s14-4}
  \divv(\tf F)(\dom) = \I{\mbd\dom}{\tf F^{\rm ext}}{\hm} +
 \I{\mext\dom\cap\bd\dom}{\tf}{Df}
\end{equation}
for all $\tf\in\woinf{\edom,\R^n}$. 

\item
Let $\dom\subset\edom$ be open with $\hm(\bd\dom\cap\mint\dom)<\infty$. Then 
$\hat f\in \cB\cV(\R^n)\cap\cL^\infty(\R^n)$ for the extension $\hat f$ of $f$
with zero outside $\dom$. 
There is also some normal measure $\nu\in\bawl{U}^n$ such that 
$(\bd\dom)_\delta\cap\dom$ is an aura for all $\delta>0$ and
\begin{equation} \label{sf-s14-5}
  \divv(\tf f)(\dom) = \I{\bd\dom}{f\tf}{\nu} 
\end{equation}
for all $\tf\in\woinf{\edom,\R^n}$. Moreover there is 
$F\in\cL^\infty\big(\bd\dom\setminus\mext\dom,\hm\big)^n$ 
with $\|F\|_\infty\le c\|f\|_{\bd\dom}$ for some $c>0$ depending merely on
$\dom$ such that
\begin{equation} \label{sf-s14-6}
  \divv(\tf f)(\dom) = \I{\bd\dom\setminus\mext\dom}{\tf F}{\hm}
\end{equation}
for all $\tf\in C^1(\R^n,\R^n)$ where
\begin{equation*}
  \reme{F\hm}{(\bd\dom\cap\mint\dom)} = 
  \reme{-Df}{(\bd\dom\cap\mint\dom)} \,.
\end{equation*}
\begin{equation*}
  \reme{F\hm}{(\mbd\dom)} = 
  \reme{f\nu^\dom\hm}{(\mbd\dom)}  \qmq{with}
\end{equation*}
\begin{equation} \label{sf-s14-7}
  f(x) = \lim_{r\downarrow 0} \mI{B_r(x)\cap\dom}{f}{\lem}
  \qmq{$\hm$-a.e. on $\mbd\dom$.}
\end{equation}
If $\hat f$ is continuous on a neighborhood of $\dom$, then 
$(Df)(\bd\dom\cap\mint\dom)=0$ and
\begin{equation} \label{nm-s14-8}
  \divv(\tf f)(\mint\dom) =
  \divv(\tf f)(\dom) = \I{\mbd\dom}{f\tf \cdot \nu^\dom}{\hm}
\end{equation}
for all $\tf\in C^1(\R^n,\R^n)$.
\el
\end{proposition}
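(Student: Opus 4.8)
The plan is to reduce to Proposition~\ref{nm-s10a} by the same device used in the proof of Theorem~\ref{sf-s1}: for $j=1,\dots,n$ set $G_j\in\cL^\infty(\edom,\R^n)$ to be the vector field whose $j$-th component equals $f$ and whose other components vanish. Since $f\in\cB\cV(\edom)\cap\cL^\infty(\edom)$, each $G_j$ lies in $\cD\cM^\infty(\edom)$ with $\divv G_j=D_{x_j}f$; moreover $\divv G_j$ vanishes on $\ham^{n-1}$-null sets (because $|Df|$ does, for any BV function), and $\|G_j\|_{\bd\dom}=\|f\|_{\bd\dom}$ since $|G_j|=|f|$ pointwise. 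The bookkeeping identity behind everything is that for $\tf=(\tf^1,\dots,\tf^n)\in\woinf{\edom,\R^n}$ one has $f\tf=\sum_{j}\tf^j G_j$ as vector fields, hence $\divv(f\tf)=\sum_j\divv(\tf^j G_j)$ as measures, and $\sI{\bd\dom}{f\tf}{\nu}=\sum_j\sI{\bd\dom}{\tf^j G_j}{\nu}$ for every normal measure~$\nu$.

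For part (1) I would apply Proposition~\ref{nm-s10a}(1) to each $G_j$. Because $\divv G_j$ is weakly absolutely continuous with respect to $\ham^{n-1}$, no subsequence is needed and the normal measures $\tilde\nu^{\rm int}$, $\tilde\nu^{\rm ext}$ (which depend only on $\dom$) can be taken the same for all $j$. This produces normal trace functions $g_j^{\rm int},g_j^{\rm ext}\in\cL^\infty(\mbd\dom,\ham^{n-1})$ together with the scalar identities \reff{nm-s10a-1}--\reff{nm-s10a-4} for each component $\tf^j$. Putting $F^{\rm int}:=(g_1^{\rm int},\dots,g_n^{\rm int})$, $F^{\rm ext}:=(g_1^{\rm ext},\dots,g_n^{\rm ext})$ and summing over $j$ gives \reff{sf-s14-1}--\reff{sf-s14-4}; for \reff{sf-s14-3}, \reff{sf-s14-4} one uses in addition that $\divv(\tf^j G_j)$ restricted to $\mint\dom\cap\bd\dom$ equals $\tf^j D_{x_j}f$ there (the $\lem$-part drops out since $\dom$ is open), and these sum to $\I{\mint\dom\cap\bd\dom}{\tf}{Df}$. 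For part (2) one first records that $\hat f=\chi_\dom f\in\cB\cV(\R^n)\cap\cL^\infty(\R^n)$ by the product rule for BV functions (using finite perimeter of $\dom$), equivalently $\widehat{G_j}\in\cD\cM^\infty(\R^n)$ for each $j$; then Proposition~\ref{nm-s10a}(2), applied to the $G_j$ with its common normal measure $\nu$ (built from sets $\dom_k\csubset\dom$ and having $(\bd\dom)_\delta\cap\dom$ as an aura), gives \reff{sf-s14-5} upon summation, and summing \reff{nm-s10a-6a}, \reff{nm-s10a-6b}, \reff{nm-s10a-7} yields \reff{sf-s14-6}, the jump identity $\reme{F\hm}{(\bd\dom\cap\mint\dom)}=\reme{-Df}{(\bd\dom\cap\mint\dom)}$, the bound $\|F\|_\infty\le c\|f\|_{\bd\dom}$ (absorbing the dimensional constant from the passage to the Euclidean norm into $c$), and the continuity case \reff{nm-s14-8}.

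The step I expect to be the main obstacle is the two sharp statements that do not simply follow component by component from Proposition~\ref{nm-s10a}: the bound $\|F^{\rm int}\|_\infty,\|F^{\rm ext}\|_\infty\le\|f\|_{\bd\dom}$ in part (1) (with no dimensional constant) and the trace identity \reff{sf-s14-7}, that is $\reme{F\hm}{\mbd\dom}=\reme{f\,\nu^\dom\hm}{\mbd\dom}$ with $f(x)=\lim_{r\downarrow 0}\mI{B_r(x)\cap\dom}{f}{\lem}$. Both reduce to showing that the vector density of the weak$^*$ limit $-f\,D\tilde\chi_k^{\rm int}\lem\overset{*}{\wto}\reme{F^{\rm int}\hm}{\mbd\dom}$ is, at $\ham^{n-1}$-a.e.\ point $x$ of $\rbd\dom$, parallel to $\nu^\dom(x)$ and of length equal to the interior trace of $f$ at $x$ (and analogously for $\tilde\nu^{\rm ext}$ with the exterior trace). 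This is the BV counterpart of the Lebesgue-differentiation argument in the proof of Proposition~\ref{nm-s10a}: near a reduced boundary point the gradient $D\psi_k$ of the mollification $\psi_k=\chi_\dom*\eta_{1/k}$ concentrates in the direction $-\nu^\dom$, so $-f\,D\tilde\chi_k^{\rm int}=-2f\,\chi_{\{\psi_k>\tfrac12\}}D\psi_k$ concentrates in the direction $+\nu^\dom$ with total mass per surface element tending to the interior trace of $f$. The input here is the fine structure of BV functions (one-sided approximate limits at reduced boundary points, $|Df|\ll\ham^{n-1}$; cf.\ \cite[p.~171, 177]{ambrosio}) together with the symmetric-mollifier representation of $\tilde\nu^{\rm int}$, $\tilde\nu^{\rm ext}$ from Examples~\ref{nm-int} and \ref{nm-ext}; everything else is routine assembly on top of Proposition~\ref{nm-s10a}.
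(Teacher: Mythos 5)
Your proposal is correct and takes essentially the same route the paper intends: the paper offers no separate proof of this proposition, stating only that one "transfers" Proposition~\ref{nm-s10a} componentwise via the vector fields with $f$ in a single slot (exactly the device from the proof of Theorem~\ref{sf-s1}), and citing \cite[p.~171, 177]{ambrosio} for precisely the BV fine-structure input you isolate for \reff{sf-s14-7} and the dimension-free bound $\|F^{\rm int}\|_\infty,\|F^{\rm ext}\|_\infty\le\|f\|_{\bd\dom}$. Your identification of those two points as the only ones not following from pure componentwise bookkeeping, and your sketch of why the normal-trace density is parallel to $\nu^\dom$ with length the one-sided trace of $f$, is the correct reading of that citation.
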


\bigskip

Let us now consider the case where $\edom=\dom\subset\R^n$ has Lipschitz
boundary and let $f\in\cB\cV(\dom)$. By Corollary~\ref{cor:cases} we have case
(C) for $\K=\bd\dom$ and all $\delta>0$ and by Proposition~\ref{sf-s5} we have
\reff{bv-monster}. Hence Theorem~\ref{sf-s1} implies 
\begin{equation}\label{sf-e1}
   \divv(\tf\f)(\dom) = \I{\bd\dom}{\tf}{\sif} + 
            \sI{\bd\dom}{D\tf}{\mf} \qmq{for all}
  \tf\in\woinf{\dom,\R^n}
\end{equation}
where $\sif$ is a Radon measure supported on $\bd\dom$ and 
$\cor{\mf}\subset\bd\dom$. With the precise representative $\pr{f}$ 
according to Remark~\ref{pm-rem9}, we get from the literature that
\begin{equation}\label{sf-e2}
   \divv(\tf\f)(\dom) = \I{\bd\dom}{\pr{f}\, \tf\cdot\nu}{\cH^{n-1}} 
  \qmq{for all} \tf\in \woinf{\dom,\R^n}  
\end{equation}
(cf. \cite[p.~168]{pfeffer}, \cite[p.~177]{evans}).
This implies \reff{bv-special} for all $\delta>0$ and we can choose $\mf=0$
in \reff{sf-e1} by Proposition~\ref{sf-s2}. In this case we have 
\begin{equation*}
  \sif = \reme{\pr{f}\nu\cH^{n-1}}{\bd\dom}
\end{equation*}
since all $\tf\in C(\ol\dom,\R^n)$ can be uniformly approximated by 
functions in $\woinf{\dom,\R^n}$. Notice that this version of the Gauss-Green
formula with a $\sigma$-measure $\sif$ supported on the boundary of $\dom$ 
requires a pointwise trace function $\pr{f}$ on $\bd\dom$.
Let us now provide an alternative version where only the values of $f$ on
$\dom$ are used. For that we verify \reff{sf-s7-1} with 
$\chi=\chi_{\Int\dom}$ and $\nu=\nu^{\rm int}$.

\begin{theorem}\label{sf-s8}
Let $\edom=\dom\subset\R^n$ be open and bounded with Lipschitz boundary, let
$f\in\cB\cV(\dom)$, let $\nu^{\rm int}$ be the interior normal measure 
from Example~\ref{nm-int}, let $\dens^{\rm int}_{\bd\dom}$ be the measure 
according to Proposition~\ref{nm-s9}, and let $\nu^\dom$ be the normal field as
in \reff{nm-normal}. Then $f$ is $\nu^{\rm int}$-integrable and 
\begin{eqnarray}
  \divv(\tf\f)(\dom) 
&=& 
  \I{\dom}{\f\divv\tf}{\lem} + \I{\dom}{\tf}{D\f} \nonumber\\
&=&  \label{sf-s8-1}
  \sI{\bd\dom}{f\tf}{\nu^{\rm int}} 
\:=\: 
  \sI{\bd\dom}{f\tf\cdot\nu^\dom}{\dens^{\rm int}_{\bd\dom}}
\end{eqnarray}
for all $\tf\in \woinf{\dom,\R^n}$.
\end{theorem}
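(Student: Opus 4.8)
The plan is to obtain \reff{sf-s8-1} by specializing the normal-measure apparatus of Section~\ref{nm} --- in its $\cB\cV$-form, Proposition~\ref{sf-s6}, Theorem~\ref{sf-s7} and Proposition~\ref{sf-s9} --- to the interior normal measure $\nu^{\rm int}$. First I would fix the good approximation $\chi^{\rm int}=\chi_{\Int\dom}$ of $\chi_\dom$ with the approximating sequence $\{\chi_k^{\rm int}\}$ of Example~\ref{nm-int}, taking $\delta_k=\tfrac1k$; this is admissible because on a domain with Lipschitz boundary $\hm(\bd\dom_{-\tau})$ is uniformly bounded for small $\tau$ (Remark~\ref{nm-s6}~(2), \reff{dt-s4-4}), so \reff{nm-int-1} holds for this choice (for $\edom=\dom$ one may equally use the compactly supported variant $\{\chi_k^{\rm intc}\}$, which gives the same integral values). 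Then $|D\chi_k^{\rm int}|=\delta_k^{-1}=k$ $\lem$-a.e.\ on $\dom$, so the derivative bound required in Proposition~\ref{sf-s6} holds with $\gamma=1$, and by \reff{bd-s5-0} the set $A=(\bd\dom)_{\delta_{k_0}}\cap\Int\dom$ is an aura of $\nu^{\rm int}$ for each $k_0$.

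The crucial step is to verify the integrability conditions \reff{sf-s6-1} and \reff{sf-s6-2} for this aura. By the coarea formula applied to the distance function of $\bd\dom$ (whose gradient has modulus $1$ $\lem$-a.e.\ off $\bd\dom$, cf.\ \reff{nm-normal1}),
\begin{equation*}
  \frac1\delta\I{(\bd\dom)_\delta\cap\dom}{|f|}{\lem}
  = \frac1\delta\int_0^\delta \I{\bd\dom_{-\tau}}{|f|}{\hm}\,d\tau \,,
\end{equation*}
so it suffices to bound $\tau\mapsto\I{\bd\dom_{-\tau}}{|f|}{\hm}$ near $\tau=0$ and to show that the family $\{f_{|\bd\dom_{-\tau}}\}_\tau$ is equi-integrable, i.e.\ $\sup_{0<\tau<\tilde\delta}\I{\bd\dom_{-\tau}\cap\{|f|\ge k\}}{|f|}{\hm}\to 0$ as $k\to\infty$. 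Both follow from the trace theory for $\cB\cV$ functions on Lipschitz domains: for small $\tau$ the sets $\dom_{-\tau}$ are Lipschitz with uniform constants, the restrictions $f_{|\bd\dom_{-\tau}}$ converge in $\cL^1$ to the trace of $f$ on $\bd\dom$ as $\tau\downarrow 0$ (cf.\ Remark~\ref{pm-rem9}~(3), \cite[p.~133, 181]{evans}), and an $\cL^1$-convergent family is equi-integrable. This is the step where the Lipschitz regularity of $\bd\dom$ is used in an essential way, and it is the main obstacle; the rest is bookkeeping. Granting it, Proposition~\ref{sf-s6} yields that $\tf f$ is $\nu^{\rm int}$-integrable for every $\tf\in\cL^\infty(\dom,\R^n)$ --- in particular $f$ itself is $\nu^{\rm int}$-integrable --- and that \reff{sf-s6-3} holds.

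With these facts in hand I would apply Theorem~\ref{sf-s7}: its remaining hypothesis, $\chi_k^{\rm int}\to\chi^{\rm int}$ $Df$-a.e.\ on $\bd\dom$, is automatic here since $\dom$ is open, so $\bd\dom\cap\dom=\emptyset$ and moreover $\chi_k^{\rm int}\equiv 0=\chi^{\rm int}$ on $\bd\dom$. Theorem~\ref{sf-s7} then gives
\begin{equation*}
  \I{\bd\dom}{\chi^{\rm int}}{\divv(\tf f)} + \divv(\tf f)(\Int\dom)
  = \sI{\bd\dom}{f\tf}{\nu^{\rm int}}
\end{equation*}
for all $\tf\in\woinf{\dom,\R^n}$; because $\bd\dom\cap\dom=\emptyset$ the first term vanishes, and since $\Int\dom=\dom$ the left side equals $\divv(\tf f)(\dom)=\I{\dom}{f\divv\tf}{\lem}+\I{\dom}{\tf}{Df}$ by Proposition~\ref{tt-s5}, which is the first equality in \reff{sf-s8-1}. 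For the second equality I would invoke Proposition~\ref{sf-s9} with $*={\rm int}$ --- its hypotheses are exactly the ones just checked --- obtaining $\nu^{\rm int}=\nu^\dom\dens^{\rm int}_{\bd\dom}$ and $\divv(\tf f)(\Int\dom)=\sI{\bd\dom}{f\tf\cdot\nu^\dom}{\dens^{\rm int}_{\bd\dom}}$, which completes the chain. As a consistency check, combining \reff{sf-s8-1} with the classical formula \reff{sf-e2} shows $\sI{\bd\dom}{f\tf}{\nu^{\rm int}}=\I{\bd\dom}{\pr f\,\tf\cdot\nu^\dom}{\hm}$, i.e.\ $\nu^{\rm int}$ encodes the $\cL^1$-trace of $f$ on $\bd\dom$ without it appearing explicitly in the formula.
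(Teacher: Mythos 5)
Your overall strategy --- verify the hypotheses of Proposition~\ref{sf-s6}, then invoke Theorem~\ref{sf-s7} and Proposition~\ref{sf-s9} --- is precisely the route the paper explicitly declines to take (``we do not directly apply Theorem~\ref{sf-s7}\dots we rather show the assertion directly by using an approximating sequence of $f$''). Instead the paper approximates $f$ strictly in $\cB\cV$ by smooth $f_k$, proves the formula for smooth integrands in Lemma~\ref{sf-s10}, and proves $\nu^{\rm int}$-integrability of $f$ together with $\sI{\bd\dom}{f_k\tf}{\nu^{\rm int}}\to\sI{\bd\dom}{f\tf}{\nu^{\rm int}}$ in Lemma~\ref{sf-s11}. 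Your reductions that are purely formal (vanishing of the $\bd\dom$-terms because $\dom$ is open, the identification $\Int\dom=\dom$, the final appeal to Proposition~\ref{sf-s9}) are all correct.

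The genuine gap is in the step you yourself identify as ``the main obstacle'': the verification of \reff{sf-s6-1} and \reff{sf-s6-2}. Your argument rests on the claim that ``for small $\tau$ the sets $\dom_{-\tau}$ are Lipschitz with uniform constants'' and that the restrictions $f_{|\bd\dom_{-\tau}}$ converge in $\cL^1$ to the trace. The first claim is false in general: inner parallel sets of a Lipschitz domain are level sets of the distance function and need not be Lipschitz (they can develop cusps where distance wavefronts collide); the paper only ever uses the weaker facts that $\hm(\bd\dom_{-\tau})$ is uniformly bounded and that $\bd\dom_{-\tau}$ has finite perimeter for a.e.\ $\tau$. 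Consequently the $\cL^1$-convergence of restrictions to these level sets, and the equi-integrability of a family living on varying measure spaces, are not citable facts and do not follow from the standard $\cB\cV$ trace theorem as you suggest. The statements you need are true, but proving them requires passing to the graph coordinates $(x',\gamma(x')+t)$ of the Lipschitz charts --- where the slices $g^t$ are controlled via one-dimensional $\cB\cV$ slicing, $|g^t-g^s|\le|Dg^x|((s,t])$, and the inclusion $\supp D\chi_m\cap C\subset C^{0,\tilde c\delta_m}$ --- which is exactly the content of parts (a)--(f) of the proof of Lemma~\ref{sf-s10} and of Lemma~\ref{sf-s11}. In other words, the ``bookkeeping'' you defer is where essentially all of the paper's proof lives; as written, the central analytic step of your argument is unsupported.
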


By $\cW^{1,1}(\dom)\subset\cB\cV(\dom)$ we have that \reff{sf-s8-1} is also 
valid for all Sobolev functions~$f$ (cf. Remark~\ref{tt-s6}).
For the proof 
we do not directly apply
Theorem~\ref{sf-s7} that is based on the technical condition
\reff{sf-s6-3}. We rather show the 
assertion directly by using an approximating sequence of $f$. 
Nevertheless, for the approximating sequence $\chi_k^{\rm int}$ of 
$\chi_{\Int\dom}$ related to $\nu^{\rm int}$ 
according to Example~\ref{nm-int}, we readily get
\reff{sf-s6-3} at the end of the proof. 
Notice that we can use $\chi_k^{\rm int}$ from \reff{nm-int-2}, since 
$\dom=\Int\dom$ and, thus, 
$\divv(\tf f)(\dom)=0$ if merely $\tf=0$ on $\bd\dom$ (cf.
Proposition~\ref{tt-s5}). Alternatively we can use
$\chi_k^{\rm intc}$ from
Example~\ref{nm-int} that might give a slightly different normal measure 
$\nu^{\rm intc}$, but the related integral in \reff{sf-s8-1} would give
the same values (roughly speaking, the integral performs a slightly different
averaging near $\bd\dom$ that doesn't change the result for functions entering
\reff{sf-s8-1}; cf. also Remark~\ref{nm-s8} (2)).  
In the proof we use arguments that are similar to those in the usual proof 
about traces (cf. \cite[p.~177-181]{evans}), however we have to work them out
in much more detail. But, before proving the theorem, let us 
still formulate a simple consequence.

\begin{corollary} \label{sf-s15}
 Let $\edom\subset\R^n$ be open and bounded, let $\dom\csubset\edom$ be open
 with Lipschitz boundary, let $f\in\cB\cV(\edom)$, 
let $\nu^{\rm ext}$ be the exterior normal measure from Example~\ref{nm-ext}, 
let $\dens^{\rm int}_{\bd\dom}$ be as in Proposition~\ref{nm-s9}, and let
the normal field $\nu^\dom$ be as in \reff{nm-normal}. Then $f$ is
$\nu^{\rm ext}$-integrable and  
\begin{eqnarray}
  \divv(\tf\f)(\ol\dom) 
&=& 
  \I{\ol\dom}{\f\divv\tf}{\lem} + \I{\ol\dom}{\tf}{D\f} \nonumber\\
&=&  \label{sf-s9-1}
  \sI{\bd\dom}{f\tf}{\nu^{\rm ext}} 
\:=\:
   \sI{\bd\dom}{f\tf \cdot\nu^\dom}{\dens^{\rm ext}_{\bd\dom}}
\end{eqnarray}
for all $\tf\in \woinf{\dom,\R^n}$. 
\end{corollary}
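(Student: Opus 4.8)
The plan is to derive \reff{sf-s9-1} by verifying, for the exterior normal measure $\nu^{\rm ext}$ of $\dom$ and its approximating sequence $\{\chi_k^{\rm ext}\}$ from Example~\ref{nm-ext}, the hypotheses of Theorem~\ref{sf-s7} and Proposition~\ref{sf-s9} with $\chi=\chi^{\rm ext}=\chi_{\ol\dom}$. Granting this, Theorem~\ref{sf-s7} yields $\I{\bd\dom}{\chi_{\ol\dom}}{\divv(f\tf)}+\divv(f\tf)(\Int\dom)=\sI{\bd\dom}{f\tf}{\nu^{\rm ext}}$; since $\dom$ is open, $\Int\dom=\dom$ and $\chi_{\ol\dom}\equiv 1$ on $\bd\dom$, so the left-hand side equals $\divv(f\tf)(\bd\dom)+\divv(f\tf)(\dom)=\divv(f\tf)(\ol\dom)$ by the disjoint decomposition $\ol\dom=\dom\cup\bd\dom$. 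Combined with \reff{ex:woo_woi-1} (applied to the Borel set $\ol\dom$ in place of $\dom$) this gives the first two equalities of \reff{sf-s9-1}, while the last one is the content of Proposition~\ref{sf-s9} (equivalently Proposition~\ref{nm-s9}, via $\nu^{\rm ext}=\nu^\dom\dens^{\rm ext}_{\bd\dom}$). Thus the whole proof is the verification of those hypotheses.

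First I would fix the approximating sequence. Since $\dom\csubset\edom$ has Lipschitz boundary, $\ham^{n-1}(\bd\dom_\delta)$ stays bounded for $|\delta|$ small (cf. Remark~\ref{nm-s6}(2) and the discussion after Proposition~\ref{dt-s4}), so condition \reff{nm-ext-1} in Example~\ref{nm-ext} holds with the explicit choice $\delta_k=\tfrac1k$. For this choice $\chi_k^{\rm ext}\in W^{1,\infty}(\edom)$ has $|D\chi_k^{\rm ext}|=k$ $\lem$-a.e. on $\supp(D\chi_k^{\rm ext})\subset\ol{\dom_{1/k}}\setminus\dom$ and $\chi_k^{\rm ext}\equiv 1$ on $\ol\dom$, and by \reff{bd-s5-0} the one-sided collar $A:=\ol{\dom_{1/k_0}}\setminus\dom$ is an aura of $\nu^{\rm ext}$ for every $k_0$. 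In particular the hypothesis ``$\chi_k\to\chi$ $Df$-a.e.\ on $\bd\dom$'' of Theorem~\ref{sf-s7} and Proposition~\ref{sf-s9} is immediate, because $\chi_k^{\rm ext}=1=\chi_{\ol\dom}$ everywhere on $\bd\dom$.

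Then I would establish $\nu^{\rm ext}$-integrability of $f$ and the limit relation \reff{sf-s6-3} via Proposition~\ref{sf-s6}, applied with $\gamma=1$ and the aura $A$ above; this requires only \reff{sf-s6-1} and \reff{sf-s6-2}. Using the coarea formula for the signed distance function $\distf{\dom}$ (which satisfies $|\nabla\distf{\dom}|=1$ $\lem$-a.e.\ off $\bd\dom$, cf.\ the discussion after \reff{nm-normal1}) and $\bd\dom_t=\{\distf{\dom}=t\}$, one gets for $0<\delta<\tfrac1{k_0}$ the identity $\tfrac1\delta\I{(\bd\dom)_\delta\cap A}{|f|}{\lem}=\mI{(0,\delta)}{\big(\I{\bd\dom_t}{|f|}{\hm}\big)}{t}$, and the same with $\{|f|\ge k\}$ inserted in the inner integral. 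Because $\dom\csubset\edom$, the surface $\bd\dom$ lies in the interior of the open set on which $f\in\cB\cV$, so the classical $BV$ trace theory for Lipschitz domains applies on the exterior side: $f$ admits a trace $f^{\rm ext}\in\cL^1(\bd\dom,\hm)$, and under the natural uniformly bi-Lipschitz identification of the level sets $\bd\dom_t$ with $\bd\dom$ for small $t>0$ the slices $f_{|\bd\dom_t}$ converge to $f^{\rm ext}$ in $\cL^1(\bd\dom,\hm)$ as $t\downarrow 0$. Hence $t\mapsto\I{\bd\dom_t}{|f|}{\hm}$ is bounded near $0$, giving \reff{sf-s6-1}, and the family $\{f_{|\bd\dom_t}\}_{t\in(0,\tilde\delta)}$ is uniformly integrable, giving \reff{sf-s6-2}. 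With all hypotheses in place, Theorem~\ref{sf-s7} and Proposition~\ref{sf-s9} finish the proof.

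The main obstacle is exactly this last step: turning the $BV$ trace estimates \reff{sf-s6-1}, \reff{sf-s6-2} into rigorous statements requires identifying the one-sided collar $\{0<\distf{\dom}<\delta\}$ with the coarea foliation of $\distf{\dom}$ and using that the level surfaces $\bd\dom_t$ are, for small $t$, Lipschitz with uniformly bounded constants and uniformly bi-Lipschitz to $\bd\dom$ — which is where the Lipschitz regularity of $\bd\dom$, and the fact that $\bd\dom$ sits in the interior of $\edom$ so that a two-sided $BV$ neighborhood and an exterior trace are available, enter essentially. An alternative, purely reductive route would deduce \reff{sf-s9-1} from Theorem~\ref{sf-s8} applied to a Lipschitz ``annular'' set $V\setminus\ol\dom$ with $\ol\dom\subset V\csubset\edom$, using $\divv(f\tf)(\ol\dom)=-\divv(f\tf)(V\setminus\ol\dom)$ for $\tf$ of compact support in $V$ (by Proposition~\ref{tt-s5} with the Borel set $V$) together with $\nu^{V\setminus\ol\dom}=-\nu^\dom$ on $\bd\dom$; but that route instead demands identifying the non-unique normal measures through the collar relation $\chi_k^{\rm ext,\dom}=1-\chi_k^{\rm int,V\setminus\ol\dom}$ near $\bd\dom$, and is no easier.
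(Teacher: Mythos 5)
Your overall architecture (verify the hypotheses of Theorem~\ref{sf-s7} and Proposition~\ref{sf-s9} for $\nu^{\rm ext}$, then use $\chi_{\ol\dom}=1$ on $\bd\dom$ to turn the left side into $\divv(\tf f)(\ol\dom)$) is coherent, but it is not the paper's route, and the route you dismiss at the end as ``no easier'' is in fact exactly what the paper does: after modifying $f$ to vanish near $\bd\edom$ (so that $\divv(\tf f)(\edom)=0$) and assuming $\edom$ Lipschitz, the authors apply Theorem~\ref{sf-s8} to the open Lipschitz set $\edom\setminus\ol\dom$, identify $\nu^{{\rm int}}_{\dom^c}=-\nu^{\rm ext}$ (the approximating sequences can be taken to satisfy $\chi_k^{{\rm int},\dom^c}=1-\chi_k^{\rm ext}$ near $\bd\dom$, so one family of cluster points is the negative of the other), and conclude from $\divv(\tf f)(\ol\dom)=\divv(\tf f)(\edom)-\divv(\tf f)(\edom\setminus\ol\dom)$; the last equality in \reff{sf-s9-1} then follows from $\dens^{\rm ext}_{\bd\dom}=\dens^{\rm int}_{\bd(\dom^c)}$ and $\nu^\dom=-\nu^{\dom^c}$. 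This reduction is much cheaper than your direct route because it reuses Theorem~\ref{sf-s8} wholesale and requires no new trace analysis on the exterior collar.

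The direct route you actually develop has a genuine gap at its central step. To obtain \reff{sf-s6-1} and \reff{sf-s6-2} you invoke a ``natural uniformly bi-Lipschitz identification of the level sets $\bd\dom_t$ with $\bd\dom$'' under which the slices $f_{|\bd\dom_t}$ converge to an exterior trace in $\cL^1(\bd\dom,\hm)$. For a merely Lipschitz domain no such identification exists in general: the level sets of the distance function need not be Lipschitz graphs over $\bd\dom$, the nearest-point projection is multivalued and not bi-Lipschitz near corners, and the only control the paper has on $\bd\dom_t$ is the bound on $\hm(\bd\dom_t)$ from Kraft's estimates, not its geometry. This is precisely the difficulty that the paper's proof of the interior case circumvents in Lemmas~\ref{sf-s10} and \ref{sf-s11}, which slice with vertical translates $g^t(x)=g(x',\gamma(x')+t)$ of the local Lipschitz graph rather than with distance-function level sets, and estimate $\I{\bd\dom\cap C}{|g^t-g^s|}{\hm}$ through the one-dimensional sections of $Dg$, absorbing the mismatch between the collar $\dom_{\delta_m}$ and the vertical slab into the constant $\tilde c$ in \reff{sf-s10-4}. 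Making your direct argument rigorous would require redoing that entire graph-based analysis on the exterior collar (possible, since the exterior of a Lipschitz graph is again locally a subgraph, but several pages of work rather than a citation); as written, the boundedness in \reff{sf-s6-1} and the uniform integrability in \reff{sf-s6-2} are asserted, not proved.
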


\noi
For the use of $\nu^{\rm ext}$ the function $f$ has to be given in a small
neighborhood of $\dom$. The results in \reff{sf-s8-1} and \reff{sf-s9-1} will
differ if $|Df|(\bd\dom)\ne 0$. But we have $|Df|(\bd\dom)=0$ 
for Sobolev functions and, thus, both formulas give the same in that case. 

\begin{proof}  
Let $\dom_0$ be open with Lipschitz boundary such that
$\dom\csubset\dom_0\csubset\edom$. We change $f$ to be zero on
$\edom\setminus\dom_0$ and still have $f\in\cB\cV(\edom)$
(cf. \cite[p.~183]{evans}). Moreover we can
assume that $\edom$ has Lipschitz boundary. From \reff{sf-s8-1} we get
\begin{equation*}
  \divv(\tf f)(\edom) = 0\,, \quad 
  \divv(\tf f)(\edom\setminus\ol\dom)=\sI{\bd\dom}{f\tf}{\nu^{\rm int}_{\dom^c}}
\end{equation*}
where $\nu^{\rm int}_{\dom^c}$ is the interior normal measure 
of $\dom^c$. By construction we readily see that 
$\nu^{\rm int}_{\dom^c}=-\nu^{\rm ext}$. Consequently
\begin{equation*}
  \divv(\tf f)(\ol\dom) = 
  \divv(\tf f)(\edom) - \divv(\tf f)(\edom\setminus\ol\dom) =
  \sI{\bd\dom}{f\tf}{\nu^{\rm ext}} 
\end{equation*}
which gives the first assertion. For the second one we use that
$\dens^{\rm ext}_{\bd\dom}=\dens^{\rm int}_{\bd(\dom^c)}$ by \reff{nm-s9-2} 
and that $\nu^\dom=-\nu^{\dom^c}$ by \reff{nm-normal}.
\end{proof}

\bigskip

\begin{proof+}{ of Theorem~\ref{sf-s8}}     
Let us fix some $\tf\in \woinf{\dom,\R^n}$ and recall Example~\ref{nm-int}.
Since $\dom$ has Lipschitz boundary, we have \reff{nm-int-1} for any sequence
$\delta_m\le\frac{1}{m}$. For the approximating sequence 
$\chi_m=\chi_m^{\rm int}$ of $\chi_{\op{int}\dom}$ according to
\reff{nm-int-2}, we set
$\psi_m:=1-\chi_m$. Notice that $\psi_m\tf\in\woinf{\dom,\R^n}$ for all $m$.
Since $\dom=\Int\dom$ and since $\psi_m$ equals $1$ on $\bd\dom$, we get from
Proposition~\ref{tt-s5} for all $g\in\cB\cV(\dom)$ and all $m\in\N$
\begin{eqnarray}
  \divv(\tf g)(\dom) 
&=&
  \divv(\psi_m\tf g)(\dom) \nonumber \\
&=& 
  \I{\dom}{g\divv(\psi_m\tf)}{\lem} + 
  \I{\dom}{\psi_m\tf}{Dg}  \nonumber \\
&=& \label{sf-s8-5}
  \I{\dom}{g\psi_m \divv\tf}{\lem} + 
  \I{\dom}{g\tf D\psi_m}{\lem} + 
  \I{\dom}{\psi_m\tf}{Dg} \,.
\end{eqnarray}
The first and the last integral in \reff{sf-s8-5} tend to zero for
$m\to\infty$ and, thus, 
\begin{equation} \label{sf-s8-5a}
  \lim_{m\to\infty}\I{\dom}{g\tf\cdot D\psi_m}{\lem} =
  \divv(\tf g)(\dom) \,.
\end{equation}
Choose now an approximating sequence $f_k\in\cB\cV(\dom)\cap C^\infty(\dom)$ 
for $f\in\cB\cV(\dom)$ with   
\begin{equation} \label{sf-s8-5b}
  f_k\to f \zmz{in} \cL^1(\dom)\,, \quad 
  |Df_k|(\dom)\to |Df|(\dom)\,, \quad
  Df_k\wto Df 
\end{equation}
where the last convergence denotes the weak$^*$ convergence in the sense of 
Radon measures with $(Df_k)(B)=\I{B}{Df_k}{\lem}$ for $B\in\bor{\dom}$
(cf. \cite[p.~54, 172, 175]{evans}). Then
\begin{eqnarray*}
  \lim_{k\to\infty} \divv(\tf f_k)(\dom) 
&=&
  \lim_{k\to\infty} \I{\dom}{f_k\divv\tf}{\lem} + \I{\dom}{\tf}{Df_k} \\
&=&
  \divv(\tf f)(\dom) \,.
\end{eqnarray*}
By Lemma~\ref{sf-s10} and Lemma~\ref{sf-s11} below we have that $f$ is
$\nu^{\rm int}$-integrable, that 
\begin{equation} \label{sf-s8-6}
  \lim_{k\to\infty}\sI{\bd\dom}{f_k\tf}{\nu^{\rm int}} =
  \sI{\bd\dom}{f\tf}{\nu^{\rm int}}\,, 
\end{equation}
and that 
\begin{equation} \label{sf-s8-7}
  \lim_{m\to\infty}\I{\dom}{f_k\tf\cdot D\psi_m}{\lem} =
  \sI{\bd\dom}{f_k\tf}{\nu^{\rm int}}  \qmz{for all} k\in\N
\end{equation}
(notice that Proposition~\ref{nm-s5} implies \reff{sf-s8-7} 
merely for a subsequence of $\{\psi_m\}$ that depends on $f_k$).
From \reff{sf-s8-5a} for $g=f_k$ and from \reff{sf-s8-7} we get
\begin{equation*}
   \divv(\tf f_k)(\dom) = \sI{\bd\dom}{f_k\tf}{\nu^{\rm int}} \qmz{for all}
   k\in\N\,. 
\end{equation*}
Consequently
\begin{eqnarray*}
&&
  \hspace*{-15mm}
  \Big| \divv(\tf\f)(\dom) - \sI{\bd\dom}{f\tf}{\nu^{\rm int}} \Big| \\
&\le&
  \big| \divv(\tf\f)(\dom) - \divv(\tf f_k)(\dom) \big| +
  \Big|\: \sI{\bd\dom}{f_k\tf}{\nu^{\rm int}} - 
  \sI{\bd\dom}{f\tf}{\nu^{\rm int}}\Big| \,.
\end{eqnarray*}
Since the right hand side tends to zero as $k\to\infty$, we get
\begin{equation*}
  \divv(\tf\f)(\dom) = \sI{\bd\dom}{f\tf}{\nu^{\rm int}}
\end{equation*}
which verifies the first equality in \reff{sf-s8-1}. From \reff{sf-s8-5a}
we get
\begin{equation*} 
  \lim_{m\to\infty}\I{\dom}{f\tf\cdot D\psi_m}{\lem} =
  \sI{\bd\dom}{f\tf}{\nu^{\rm int}} \,.
\end{equation*}
Thus we can apply Proposition~\ref{sf-s9} to get the second equality in 
\reff{sf-s8-1}.
\end{proof+}

\begin{lemma}\label{sf-s10}
Let $\dom\subset\R^n$ be open and bounded with Lipschitz boundary and let
$g\in\cB\cV(\dom)\cap C^\infty(\dom)$. Moreover let $\nu^{\rm int}$ be the
interior normal measure of $\dom$ with approximating sequence 
$\chi^{\rm int}_m$ 
according to \reff{nm-int-2}. Then 
\begin{equation} \label{sf-s10-1}
  \lim_{m\to\infty}\I{\dom}{g\tf\cdot D\chi^{\rm int}_m}{\lem} =
  -\, \sI{\bd\dom}{g\tf}{\nu^{\rm int}}  
\end{equation}
for all $\tf\in \woinf{\dom,\R^n}$.
\end{lemma}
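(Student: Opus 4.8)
The plan is to evaluate the left-hand side of \reff{sf-s10-1} directly by the coarea formula and a passage to the limit over inner parallel surfaces, and only then to identify the result with the boundary integral. Fix $g\in\cB\cV(\dom)\cap C^\infty(\dom)$ and $\tf\in\woinf{\dom,\R^n}$. Since $g$ is smooth with $|Dg|(\dom)<\infty$, it lies in $\cW^{1,1}(\dom)$, hence $g\tf\in\cD\cM^1(\dom)$ with $\divv(g\tf)=(g\divv\tf+\tf\cdot\nabla g)\lem$ absolutely continuous and with density in $\cL^1(\dom)$. By \reff{nm-s9-5} we have $D\chi^{\rm int}_m=-\tfrac{1}{\delta_m}\nu^\dom$ $\lem$-a.e.\ on the shell $S_m:=\Int\dom\setminus\ol{\dom_{-\delta_m}}$ and $D\chi^{\rm int}_m=0$ elsewhere; since $\dom$ has Lipschitz boundary, \reff{nm-int-1} holds for any $\delta_m\downarrow 0$ (cf.\ Remark~\ref{nm-s6}(2)), so for definiteness $\delta_m=\tfrac1m$, and then $|D\chi^{\rm int}_m|\le m$. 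The coarea formula for $\distf{\bd\dom}$ (unit gradient $\lem$-a.e., with $\{\distf{\bd\dom}=\tau\}\cap\dom=\bd\dom_{-\tau}$ up to an $\ham^{n-1}$-null set for small $\tau$ as $\dom$ is Lipschitz) gives
\begin{equation*}
  \I{\dom}{g\tf\cdot D\chi^{\rm int}_m}{\lem} =
  -\,\frac{1}{\delta_m}\int_0^{\delta_m}\I{\bd\dom_{-\tau}}{g\tf\cdot\nu^\dom}{\ham^{n-1}}\,d\tau \,.
\end{equation*}

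Next I would pass to the limit inside this average. For a.e.\ $\tau>0$ the set $\dom_{-\tau}$ has finite perimeter, $\ol{\dom_{-\tau}}\csubset\dom$ (so $g$ is continuous and bounded near $\rbd\dom_{-\tau}$), and the measure-theoretic normal of $\dom_{-\tau}$ coincides $\ham^{n-1}$-a.e.\ on $\rbd\dom_{-\tau}$ with the distance-gradient normal $\nu^\dom$ of \reff{nm-normal}, by the arguments following \reff{nm-normal1}. Hence, exactly as in the proof of Proposition~\ref{dt-s1a} (Degiovanni's surface identity, applied to $g\tf\in\cD\cM^1(\dom)$ on $\dom_{-\tau}$),
\begin{equation*}
  \I{\bd\dom_{-\tau}}{g\tf\cdot\nu^\dom}{\ham^{n-1}} = \divv(g\tf)(\dom_{-\tau})
  = \I{\dom_{-\tau}}{(g\divv\tf+\tf\cdot\nabla g)}{\lem} \qmq{for a.e. $\tau>0$}.
\end{equation*}
Because the integrand lies in $\cL^1(\dom)$, the right-hand side is continuous in $\tau$ and tends to $\divv(g\tf)(\dom)$ as $\tau\downarrow 0$; averaging over $(0,\delta_m)$ and letting $m\to\infty$ yields $\I{\dom}{g\tf\cdot D\chi^{\rm int}_m}{\lem}\to -\divv(g\tf)(\dom)$.

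It remains to show $\divv(g\tf)(\dom)=\sI{\bd\dom}{g\tf}{\nu^{\rm int}}$, which I would obtain by truncation so as to invoke the bounded case of Theorem~\ref{bd-s5}. Put $g^{(k)}:=(-k)\vee(g\wedge k)\in\cW^{1,1}(\dom)\cap\cL^\infty(\dom)$, which is continuous on $\dom$ with $\nabla g^{(k)}=\chi_{\{|g|<k\}}\nabla g$. The identity of the previous paragraph applies verbatim to the bounded, continuous field $g^{(k)}\tf$, giving $\I{\dom}{g^{(k)}\tf\cdot D\chi^{\rm int}_m}{\lem}\to -\divv(g^{(k)}\tf)(\dom)$; on the other hand, by \reff{bd-s5-1} a subsequence of the same expression converges to $-\sI{\bd\dom}{g^{(k)}\tf}{\nu^{\rm int}}$, so $\divv(g^{(k)}\tf)(\dom)=\sI{\bd\dom}{g^{(k)}\tf}{\nu^{\rm int}}$ for every $k$. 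Letting $k\to\infty$: the left side tends to $\divv(g\tf)(\dom)$ since $g^{(k)}\to g$ and $\nabla g^{(k)}\to\nabla g$ in $\cL^1(\dom)$ by dominated convergence. For the right side, $g\tf$ is $\nu^{\rm int}$-integrable by Proposition~\ref{sf-s6}, condition \reff{sf-s6-1} being satisfied because $\tfrac1\delta\I{(\bd\dom)_\delta\cap\dom}{|g|}{\lem}=\tfrac1\delta\int_0^\delta\I{\bd\dom_{-\tau}}{|g|}{\ham^{n-1}}\,d\tau$ stays bounded as $\delta\downarrow 0$ (the inner integrals converging in $\cL^1(\bd\dom,\ham^{n-1})$ to the BV-trace of $g$); and $g^{(k)}\tf\to g\tf$ in $\nu^{\rm int}$-measure, which one gets as in the proof of Proposition~\ref{nm-s5} from $|\nu^{\rm int}|^*\{|g|\ge k\}\to 0$, so dominated convergence in the finitely additive integral gives $\sI{\bd\dom}{g^{(k)}\tf}{\nu^{\rm int}}\to\sI{\bd\dom}{g\tf}{\nu^{\rm int}}$. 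Combining the two limits proves the lemma.

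The main obstacle is this last step. Since $|\nu^{\rm int}|$ is only weakly absolutely continuous with respect to $\lem$, the $\cL^1(\dom)$-convergence $g^{(k)}\to g$ does not by itself yield convergence in $\nu^{\rm int}$-measure; one must reuse the quantitative superlevel-set estimate from the proof of Proposition~\ref{nm-s5} (bounding $|\nu^{\rm int}|$ of $\{|g|\ge k\}$ via the gradient bound $|D\chi^{\rm int}_m|\le m$ and \reff{sf-s6-1}), i.e.\ the fact that $g$ is uniformly $\nu^{\rm int}$-integrable near $\bd\dom$. A subsidiary point is checking that the Degiovanni surface identity on a.e.\ inner parallel surface $\rbd\dom_{-\tau}$ is available for the merely bounded (not Lipschitz) truncations $g^{(k)}\tf$; this is harmless because $g^{(k)}$ is continuous on $\dom$ and $\ol{\dom_{-\tau}}\csubset\dom$.
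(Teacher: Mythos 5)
Your proof is correct, but it follows a genuinely different route from the paper's. The paper proves \reff{sf-s10-1} by hand: it localizes to Lipschitz cylinders, introduces the vertical translates $g^t$, $\tilde g^t$, and establishes the four limits \reff{sf-s10-10}--\reff{sf-s10-13} (integration by parts on $\dom\cap C$ for each fixed $t$, identification with the $\nu^{\rm int}$-integral of the bounded slice $g^t$, uniformity in $m$ as $t\to0$, and a Cauchy argument in $t$ that simultaneously yields $\nu^{\rm int}$-integrability of $g\tf$). You instead split the statement into (i) identifying $\lim_m\I{\dom}{g\tf\cdot D\chi_m^{\rm int}}{\lem}$ with $-\divv(g\tf)(\dom)$, which is exactly the first equation of Proposition~\ref{dt-s1a} applied to the field $g\tf\in\cD\cM^1(\dom)$ (your coarea/parallel-surface derivation is essentially that proposition's proof), and (ii) identifying $\divv(g\tf)(\dom)$ with $\sI{\bd\dom}{g\tf}{\nu^{\rm int}}$ by truncating to $g^{(k)}$, invoking \reff{bd-s5-1} for the bounded field together with the full-sequence limit from (i) to get the identity for each $k$, and passing to the limit via the superlevel-set estimate of Proposition~\ref{nm-s5}/\ref{sf-s6} and dominated convergence in the finitely additive integral. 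The logic is sound and non-circular (Proposition~\ref{dt-s1a}, Theorem~\ref{bd-s5} and Proposition~\ref{sf-s6} are all independent of this lemma), and you correctly flag the two points that still require the Lipschitz cylinder structure: the verification of \reff{sf-s6-1} (your appeal to $\cL^1(\hm)$-convergence of the boundary slices should really be phrased as the bound $\frac1\delta\I{(\bd\dom)_\delta\cap\dom}{|g|}{\lem}\le \tilde c\sup_{s}\I{\bd\dom\cap C}{|g^s|}{\hm}$ obtained from \reff{sf-s10-4}-type containments and the estimate \reff{sf-s10-5}, i.e.\ part (a) of the paper's proof) and the fact that weak absolute continuity alone does not convert $\cL^1$-convergence into convergence in $\nu^{\rm int}$-measure. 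What your route buys is economy — it reuses already-proved machinery and avoids redoing the translation argument; what the paper's route buys is the explicit identification of the limit with the classical surface integral $\I{\bd\dom\cap C}{g^0\tf\cdot\nu^\dom}{\hm}$ of the trace, which is what connects Theorem~\ref{sf-s8} to \reff{sf-e2}.
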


\begin{proof}     
For $x\in\R^n$ we use the notation
\begin{equation*}
  x=(x_1,\dots,x_n)=(x',x_n)\in\R^n \qmq{with} 
  x'=(x_1,\dots,x_{n-1})\in\R^{n-1}\,. 
\end{equation*}
Since $\dom$ has Lipschitz boundary, for each $x\in\bd\dom$ there is a cylinder
\begin{equation*}
  C(x,r,h) := \{(y',y_n)\:|\: |y'-x'|<r\,,\; |y_n-x_n|< 2h \} 
\end{equation*}
and a Lipschitz continuous function $\gamma$ on $B_r(x')\subset\R^{n-1}$ such
that after a suitable rotation of the coordinate system
$|\gamma(y')-x_n|<h$ on $B_r(x')$ and
\begin{equation*}
  \dom\cap C(x,r,h) = 
  \{ y\in\R^n\:|\: |y'-x'|<r\,,\; \gamma(y') < y_n < x_n+2h \} \,.
\end{equation*}
Since $\bd\dom$ can be covered by finitely many such cylinders, it is
sufficient to show \reff{sf-s10-1} only for the case where the integrals 
are restricted to a cylinder $C:=C(\bar x,r,h)$ and to work in
the related coordinate system. The general case then follows 
by a straightforward argument with a partition of unity subordinate to finitely
many such cylinders. 

Let us fix some $\tf\in \woinf{\dom,\R^n}$ and a
cylinder $C=C(\bar x,r,h)$ with $\bar x\in\bd\dom$ and let us define 
\begin{equation*}
  C^{t,s}:= \{ x\in C\:|\: \gamma(x')+t<x_n<\gamma(x')+s \}
  \qmq{for} 0\le t < s < h \,,
\end{equation*}
\begin{equation*}
  g^t(x) := g(x',\gamma(x')+t)\,, \quad 
  \tilde g^t(x) := g(x',x_n+t) \qmq{for} t>0\,.
\end{equation*}
$\tilde g^t$ is a shift of $g$ with 
$\tilde g^t\in C^\infty(\ol{\dom\cap C})$, while $g^t$ is constant in the last
coordinate and not necessarily smooth. 
Moreover we briefly write $\chi_m=\chi_m^{\rm int}$ and 
let $\delta_m>0$ be related to it according to
\reff{nm-int-2}. Since $\bd\dom$ is Lipschitz, there is some $\tilde c>0$ such
that 
\begin{equation} \label{sf-s10-4}
  (\supp D\chi_m)\cap C = \dom_{\delta_m}\cap C \,\subset\,
  C^{0,\beta_m} \qmq{for}  \beta_m := \tilde c\delta_m \,.
\end{equation}

\medskip

(a) For $0<s<t<h$ we now have 
\begin{eqnarray*}
  |g^t(x)-g^s(x)| 
&\le& 
  \int_s^t\Big|\frac{\partial g}{\partial x_n}(x',\gamma(x')+\tau)\Big|\,d\tau\\
&\le&
  \int_s^t |Dg(x',\gamma(x')+\tau)|\,d\tau  \,.
\end{eqnarray*}
By the coarea formula, 
\begin{equation} \label{sf-s10-5}
  \I{\bd\dom\cap C}{|g^t-g^s|}{\hm} = \I{C^{s,t}}{|Dg|}{\lem}
  \le  \I{C^{0,t}}{|Dg|}{\lem}\,.
\end{equation}
The right hand side tends to zero as $t\to 0$ and, thus, there is some $g^0$
with 
\begin{equation} \label{sf-s10-6}
  \lim_{t\to 0} g^t = g^0 \qmq{in} \cL^1(\bd\dom\cap C,\hm) \,.
\end{equation}
We can extend $g^0$ on $C$ such that $g^0(x)=g^0(x',\gamma(x'))$. 
By \reff{sf-s10-5} and the integrability of $|Dg|$,  
for any $\eps>0$ there is some $\delta>0$ such that 
\begin{equation}  \label{sf-s10-7}
  \I{\bd\dom\cap C}{|g^t-g^s|}{\hm}<\eps \qmq{whenever} |t-s|<\delta
\end{equation}
(cf. \cite[p.~1016]{zeidler_IIB}). Below we show for $t>0$ that
\begin{equation} \label{sf-s10-10}
  \lim_{m\to\infty} \I{\dom\cap C}{\tilde g^t\tf\cdot D\chi_m}{\lem} =
  - \I{\bd\dom\cap C}{g^t\tf\cdot\nu^\dom}{\hm}  \,,
\end{equation}
\begin{equation} \label{sf-s10-11}
  \sI{\bd\dom\cap C}{g^t\tf}{\nu^{\rm int}} =
  \I{\bd\dom\cap C}{g^t\tf\cdot\nu^\dom}{\hm} \,,
\end{equation}
\begin{equation} \label{sf-s10-12}
  \lim_{t\to 0} \I{\dom\cap C}{\tilde g^t\tf\cdot D\chi_m}{\lem} =
  \I{\dom\cap C}{g\tf\cdot D\chi_m}{\lem} \qmz{uniformly for} m\in\N\,,
\end{equation}
\begin{equation} \label{sf-s10-13}
  \lim_{t\to 0}\, \sI{\bd\dom\cap C}{g^t\tf}{\nu^{\rm int}} =
  \sI{\bd\dom\cap C}{g\tf}{\nu^{\rm int}} \,.
\end{equation}
Consequently, for $\eps>0$ there is some $t_0>0$ and some $m_0\in\N$ such that
\begin{eqnarray*}
&&
  \hspace{-20mm}
  \Big| \I{\dom}{g\tf\cdot D\chi_m}{\lem} + 
  \sI{\bd\dom}{g\tf}{\nu^{\rm int}} \Big| \\ 
&\le&
  \Big| \I{\dom}{g\tf\cdot D\chi_m}{\lem} -
  \I{\dom\cap C}{\tilde g^{t_0}\tf\cdot D\chi_m}{\lem} \Big| \\
&&
  + \: \Big| \I{\dom\cap C}{\tilde g^{t_0}\tf\cdot D\chi_m}{\lem} +
  \I{\bd\dom\cap C}{g^{t_0}\tf\cdot\nu^\dom}{\hm} \Big| \\
&&
  +\:  \Big|\, \sI{\bd\dom\cap C}{g\tf}{\nu^{\rm int}} -
  \sI{\bd\dom\cap C}{g^{t_0}\tf}{\nu^{\rm int}} \Big| \\
&\le&
  3\eps \qmz{for all} m>m_0\,.
\end{eqnarray*}
But this implies the assertion \reff{sf-s10-1} and it remains to show
\reff{sf-s10-10}-\reff{sf-s10-13}.

(b) Let us show \reff{sf-s10-10}. We have that 
$\tilde g^t,\chi_m\in\woinf{\dom}$
and that $\dom\cap C$ has Lipschitz boundary. Thus, 
integration by parts gives for $t>0$
\begin{eqnarray*}
&&
  \hspace{-30mm}
  - \I{\dom\cap C}{\tilde g^t\tf\cdot D\chi_m}{\lem} \\
&=& 
  \I{\dom\cap C}{\tilde g^t\tf\cdot D(1-\chi_m)}{\lem} \\
&=&
  - \I{\dom\cap C}{(1-\chi_m)\divv(\tilde g^t\tf)}{\lem} \\
&& 
  + \I{\bd(\dom\cap C)}{(1-\chi_m)\,\tilde g^t\tf\cdot\nu^{\dom\cap C}}{\hm} \\
&\overset{m\to\infty}{\longrightarrow}&
  \I{\bd\dom\cap C}{\tilde g^t\tf\cdot\nu^\dom}{\hm}
\:=\:
  \I{\bd\dom\cap C}{g^t\tf\cdot\nu^\dom}{\hm}  \,.
\end{eqnarray*}
But this is \reff{sf-s10-10}.

(c) For \reff{sf-s10-11} we observe that $g^t\in\cL^\infty(\dom)$. Then, 
by Theorem~\ref{bd-s5}, there is a subsequence $\{\chi_{m'}\}$ with
\begin{equation*}
  \lim_{m'\to\infty}\I{\dom\cap C}{g^t\tf\cdot D\chi_{m'}}{\lem} =
  -\: \sI{\bd\dom\cap C}{g^t\tf}{\nu^{\rm int}} \,.
\end{equation*}
With \reff{sf-s10-10} we get \reff{sf-s10-11}.

(d) Let us verify \reff{sf-s10-12}.
For $\eps>0$ we choose $\delta>0$ as in \reff{sf-s10-7} and obtain 
for all $m\in\N$
\begin{eqnarray*}
&&
  \hspace{-25mm}
  \I{\dom\cap C}{|\tilde g^t-g|\,\tf\cdot D\chi_m}{\lem} \\
&\le&
  \frac{\|\tf\|_\infty}{\delta_m} \I{C^{0,\beta_m}}{|g^t-g|}{\lem} \\
&=&
  \frac{\|\tf\|_\infty}{\delta_m}
  \int_0^{\beta_m} \I{\bd\dom\cap C}{|g^{s+t}-g^s|}{\hm}\, ds  \\
&\le&
  \frac{\|\tf\|_\infty\beta_m\eps}{\delta_m} 
\:=\: \|\tf\|_\infty\,\tilde c\, \eps 
  \quad\qmz{for all} 0<t<\delta,\;m\in\N\,. 
\end{eqnarray*}
This gives \reff{sf-s10-12}.

(e) As preparation for the proof of \reff{sf-s10-13} we first show that
$g^t\overset{\nu^{\rm int}}{\longrightarrow}g$. Let us fix some $\eps>0$
and notice that $g^t\overset{\hm}{\longrightarrow}g^0$ on $\bd\dom$
by \reff{sf-s10-6}. Therefore
\begin{equation} \label{sf-s10-15}
  \hm(B^t) \overset{t\to 0}{\longrightarrow} 0 \qmq{where}
  B^t:=\big\{x\in\bd\dom\cap C\:\big|\: |g^t-g^0|>\eps \big\} \,.
\end{equation}
Obviously,
\begin{eqnarray*}
  B^{t,s}
&:=&
  \big\{x\in\bd\dom\cap C\:\big|\: |g^t-g^s|>2\eps \big\} \\
&\subset&
  \big\{x\in\bd\dom\cap C\:\big|\: |g^t-g^0|+|g^s-g^0| > 2\eps \big\} 
\:\subset\:  B^t \cup B^s \,.
\end{eqnarray*}
For $\delta>0$ and 
\begin{equation*}
  \tilde B^t := \big\{ x\in\dom\cap C \:\big|\: |g^t-g|>2\eps \big\} \,,
\end{equation*}
we have that
\begin{equation*}
  \lem(\tilde B^t\cap C^{0,\delta}) = \int_0^\delta \hm(B^{t,s})\, ds
  \le \int_0^\delta  \hm(B^t) + \hm(B^{s})\, ds
\end{equation*}
By Theorem~\ref{bd-s5},
\begin{eqnarray*}
  |\nu^{\rm int}|(\tilde B^t) 
&\le& 
  \limsup_{m\to\infty}\|D\chi_m\|_{\cL^1(\tilde B^t)} =
  \limsup_{m\to\infty} \I{\tilde B^t}{|D\chi_m|}{\lem} \\
&\le&
  \limsup_{m\to\infty}\tfrac{1}{\delta_m}
  \I{\tilde B^t\cap C^{0,\beta_m}}{}{\lem} \\
&\le&
  \limsup_{m\to\infty}\tfrac{1}{\delta_m}
  \int_0^{\beta_m} \hm(B^t) + \hm(B^{s})\, ds  \\
&\le&
  \limsup_{m\to\infty}\tfrac{2\beta_m}{\delta_m}
  \sup_{s\in(0,t)}  \hm(B^s)  \qmz{(use $\beta_m<t$ for $m$ large)}   \\
&=&
2\tilde c \sup_{s\in(0,t)}  \hm(B^s) 
\end{eqnarray*}
By \reff{sf-s10-15} the right hand side tends to zero as $t\to 0$.
Since $\eps>0$ was arbitrary, we obtain 
$g^t\overset{\nu^{\rm int}}{\longrightarrow}g$.
For $\tf g^t$ we have
\begin{equation*}
  \tilde B^t_\tf :=
  \big\{ x\in\dom\cap C \:\big|\: |\tf g^t-\tf g|>\eps \big\} \subset
  \big\{ x\in\dom\cap C \:\big|\: \|\tf\|_\infty|g^t-g|>\eps \big\} \,.
\end{equation*}
Hence, $|\nu^{\rm int}|(\tilde B^t)\to 0$ for all $\eps>0$ implies that
$|\nu^{\rm int}|(\tilde B^t_\tf)\to 0$ for all $\eps>0$. Thus we also have
$\tf g^t\overset{\nu^{\rm int}}{\longrightarrow}\tf g$.
 
(f) We now proof \reff{sf-s10-13}. Let us fix some $\eps>0$. For fixed
$s,t$ we consider the measure $\nu:=\|\tf\|_\infty|g^t-g^s|\nu^{\rm int}$. By 
\reff{pm-tva} there is some $\tilde\tf\in\cL^\infty(\dom,\R^n)$ 
with $\|\tilde\tf\|_\infty\le 1$ and by Theorem~\ref{bd-s5} there is some
subsequence $\chi_{m'}$ such that
\begin{eqnarray*}
&&
  \hspace{-25mm}
  \I{\dom\cap C}{\|\tf\|_\infty|g^t-g^s|}{|\nu^{\rm int}|} - \eps \\
&\le& 
  \|\tf\|_\infty \I{\dom\cap C}{|g^t-g^s|\tilde\tf}{\nu^{\rm int}} \\
&=&
  \lim_{m'\to\infty} \|\tf\|_\infty
  \I{\dom\cap C}{|g^t-g^s|\tilde\tf\cdot D\chi_{m'}}{\lem} \\
&\le&
  \limsup_{m\to\infty} \:\frac{ \|\tf\|_\infty}{\delta_{m}} \,
  \I{\dom_{\delta_{m}}\cap C}{|g^t-g^s|}{\lem} \\ 
&\le&
  \limsup_{m\to\infty} \:\frac{ \|\tf\|_\infty}{\delta_{m}}\,
  \I{C^{0,\beta_{m}}}{|g^t-g^s|}{\lem} \\
&=&
  \limsup_{m\to\infty} \:\frac{\beta_m \|\tf\|_\infty}{\delta_{m}}\,
  \I{\bd\dom\cap C}{|g^t-g^s|}{\hm} \\
&=& \tilde c \|\tf\|_\infty \I{\bd\dom\cap C}{|g^t-g^s|}{\hm} \,.
\end{eqnarray*}
Since $\eps>0$ is arbitrary, the estimate is also true without $\eps$.
By \reff{sf-s10-6}, the right hand side tends to zero if $t,s\to 0$. 
Hence $\tf g$ is $\nu^{\rm int}$-integrable and we have \reff{sf-s10-13}
(cf. \cite[p.~114]{rao}).
\end{proof}

\begin{lemma} \label{sf-s11}
Let $\dom\subset\R^n$ be an open bounded set with Lipschitz boundary, let
$f\in\cB\cV(\dom)$ and let $f_k\in\cB\cV(\dom)\cap C^\infty(\dom)$ be an
approximating sequence satisfying \reff{sf-s8-5b}.
Moreover let $\nu^{\rm int}$ be the interior
normal measure from Example~\ref{nm-int}. Then
$f$ is $\nu^{\rm int}$-integrable and
\begin{equation} \label{sf-s11-1}
  \lim_{k\to\infty}\sI{\bd\dom}{f_k\tf}{\nu^{\rm int}} =
  \sI{\bd\dom}{f\tf}{\nu^{\rm int}}
\end{equation}
for all $\tf\in \woinf{\dom,\R^n}$.
\end{lemma}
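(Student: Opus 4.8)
The plan is to reduce the whole statement to the classical boundary trace of $f$ on $\bd\dom$, to observe that on the smooth functions $f_k$ the functional $\sI{\bd\dom}{\cdot}{\nu^{\rm int}}$ is an ordinary surface integral against that trace, and then to pass to the limit using the strict convergence \reff{sf-s8-5b}. I would first dispose of $\nu^{\rm int}$-integrability of $f$ by invoking Proposition~\ref{sf-s6} with the aura $\au=(\bd\dom)_{\delta_{m_0}}\cap\Int\dom$ furnished by \reff{bd-s5-0}. Its hypothesis $|D\chi^{\rm int}_m|\le\gamma m$ holds since $|D\chi^{\rm int}_m|=\tfrac1{\delta_m}|\nu^\dom|\le\tfrac1{\delta_m}\le m$ by \reff{nm-int-2} and \reff{nm-s9-5}, and conditions \reff{sf-s6-1}, \reff{sf-s6-2} amount to the standard BV trace estimate on a Lipschitz domain: localizing in a boundary cylinder $C$, flattening $\bd\dom$ as the graph of a Lipschitz $\gamma$, and using the coarea formula exactly as in the proof of Lemma~\ref{sf-s10} one gets $\frac1\delta\I{(\bd\dom)_\delta\cap\dom}{|g|}{\lem}\le c\|g\|_{\cB\cV}$ for $g\in\cB\cV(\dom)$ and small $\delta$, and, writing $Tg\in\cL^1(\bd\dom,\hm)$ for the $\cL^1$-limit of the flattened slices $g^t(y'):=g(y',\gamma(y')+t)$ as $t\downarrow0$ (i.e. the classical trace), $\limsup_{\delta\downarrow0}\frac1\delta\I{(\bd\dom)_\delta\cap\dom\cap\{|g|\ge k\}}{|g|}{\lem}\le c\I{\bd\dom\cap\{|Tg|\ge k/2\}}{|Tg|}{\hm}\to0$ as $k\to\infty$. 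Since $\|f_k\|_{\cB\cV}\to\|f\|_{\cB\cV}$ by \reff{sf-s8-5b}, these bounds are uniform in $k$, so $f$, all $f_k$, and $f\tf$ are $\nu^{\rm int}$-integrable.

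Next I would establish, for every $g\in\cB\cV(\dom)$, the identity
\[ \sI{\bd\dom}{g\tf}{\nu^{\rm int}} = \I{\bd\dom}{Tg\,\tf\cdot\nu^\dom}{\hm}\,. \]
For $g$ smooth — in particular $g=f_k$ — this is already inside the proof of Lemma~\ref{sf-s10}: combining \reff{sf-s10-11} and \reff{sf-s10-13} yields $\sI{\bd\dom\cap C}{g\tf}{\nu^{\rm int}}=\lim_{t\downarrow0}\I{\bd\dom\cap C}{g^t\tf\cdot\nu^\dom}{\hm}=\I{\bd\dom\cap C}{Tg\,\tf\cdot\nu^\dom}{\hm}$, since $g^t\to Tg$ in $\cL^1(\bd\dom\cap C,\hm)$ and $\tf$ is bounded; a finite boundary covering and a subordinate partition of unity give the global identity. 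For general $g\in\cB\cV(\dom)$ I would read off the same identity from Proposition~\ref{sf-s6}: the limit in \reff{sf-s6-3} equals $-\I{\bd\dom}{Tg\,\tf\cdot\nu^\dom}{\hm}$ because $\I{\dom}{g\tf\cdot D\chi^{\rm int}_m}{\lem}=-\tfrac1{\delta_m}\I{\dom\setminus\dom_{-\delta_m}}{g\tf\cdot\nu^\dom}{\lem}$ by \reff{nm-s9-5}, and after the same flattening and coarea this expression is an average over a thin layer of $\I{\bd\dom}{g^\tau\,\tf\cdot\nu^\dom}{\hm}\to\I{\bd\dom}{Tg\,\tf\cdot\nu^\dom}{\hm}$ (using $g^\tau\to Tg$ in $\cL^1$, boundedness of $\|g^\tau\|_{\cL^1}$, and that $\nu^\dom$ on $\bd\dom_{-\tau}$ converges $\hm$-a.e. to $\nu^\dom$ on $\bd\dom$, cf. the arguments after \reff{nm-normal1}).

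The crux, and the main obstacle, is showing $Tf_k\to Tf$ in $\cL^1(\bd\dom,\hm)$ from the strict convergence \reff{sf-s8-5b}; a bare $\cL^1(\dom)$-estimate on $f_k-f$ is useless because the measures $|Df_k|$ could pile up on $\bd\dom$. One upgrades $Df_k\wto Df$, lower semicontinuity of the variation, and $|Df_k|(\dom)\to|Df|(\dom)$ to $|Df_k|\overset{*}{\wto}|Df|$ as Radon measures on $\dom$; in particular $|Df_k|(B)\to|Df|(B)$ whenever $|Df|(\partial B)=0$. Working in a fixed cylinder $C$, the slicing estimate $\I{\bd\dom\cap C}{|g^t-g^s|}{\hm}\le|Dg|(C^{s,t})$ (valid for $g\in\cB\cV(\dom)$ and a.e. $s,t$; for smooth $g$ it is \reff{sf-s10-5}) gives
\[ \I{\bd\dom\cap C}{|Tf_k-Tf|}{\hm} \le |Df_k|(C^{0,t}) + \I{\bd\dom\cap C}{|f_k^t-f^t|}{\hm} + |Df|(C^{0,t})\,. \]
For fixed $s>0$, $\int_0^s\I{\bd\dom\cap C}{|f_k^t-f^t|}{\hm}\,dt=\I{C^{0,s}}{|f_k-f|}{\lem}\le\|f_k-f\|_{\cL^1(\dom)}\to0$, so along a subsequence $\{f_{k_j}\}$ the middle term tends to $0$ for a.e. $t\in(0,s)$. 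Given $\eps>0$ pick such a $t$ with moreover $|Df|(\partial C^{0,t})=0$ and $|Df|(C^{0,t})<\eps$ (possible since $C^{0,t}\downarrow\emptyset$ in $\dom$ and $|Df|$ is finite); then $\limsup_j|Df_{k_j}|(C^{0,t})=|Df|(C^{0,t})<\eps$, so $\limsup_j\I{\bd\dom\cap C}{|Tf_{k_j}-Tf|}{\hm}\le2\eps$. As $\eps$ was arbitrary and every subsequence of $\{f_k\}$ admits such a further subsequence with the same limit $0$, $Tf_k\to Tf$ in $\cL^1(\bd\dom\cap C,\hm)$, hence globally.

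Finally, combining the last two steps, $\sI{\bd\dom}{f_k\tf}{\nu^{\rm int}}=\I{\bd\dom}{Tf_k\,\tf\cdot\nu^\dom}{\hm}\to\I{\bd\dom}{Tf\,\tf\cdot\nu^\dom}{\hm}=\sI{\bd\dom}{f\tf}{\nu^{\rm int}}$ for every $\tf\in\woinf{\dom,\R^n}$, which together with the $\nu^{\rm int}$-integrability of $f$ from the first step is the assertion. The only non-routine ingredients are the trace-convergence argument (the passage from strict BV convergence to $\cL^1$-convergence of traces, where the weak* convergence $|Df_k|\overset{*}{\wto}|Df|$ is essential) and, to a lesser extent, extending the surface-integral identity of Lemma~\ref{sf-s10} from smooth to general BV functions, which I would handle via Proposition~\ref{sf-s6} as above.
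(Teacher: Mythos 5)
Your route is structurally different from the paper's. The paper never identifies $\sI{\bd\dom}{f\tf}{\nu^{\rm int}}$ with a classical surface integral for non-smooth $f$: it works entirely inside the finitely additive integration theory, showing in its parts (b)--(c) that $\tf f_k\convim{\nu^{\rm int}}\tf f$ and in part (d) that $\{\tf f_k\}$ is Cauchy in $\cL^1(|\nu^{\rm int}|)$, and then invoking the completeness-type criterion of \cite[p.~114]{rao} to obtain both the integrability of $\tf f$ and \reff{sf-s11-1} in one stroke. You instead propose to prove the identity $\sI{\bd\dom}{g\tf}{\nu^{\rm int}}=\I{\bd\dom}{Tg\,\tf\cdot\nu^\dom}{\hm}$ for \emph{every} $g\in\cB\cV(\dom)$ and then reduce the lemma to $\cL^1(\hm)$-convergence of the classical traces $Tf_k\to Tf$ under strict convergence. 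Your trace-convergence argument (upgrading \reff{sf-s8-5b} to $|Df_k|\overset{*}{\wto}|Df|$, slicing, and choosing good layers $C^{0,t}$) is essentially the paper's part (b) in different clothing and is sound, modulo the point that $\partial C^{0,t}$ must be taken relative to $\dom$ when you impose $|Df|(\partial C^{0,t})=0$, since $C^{0,t}$ touches $\bd\dom$ and plain weak$^*$ convergence does not control mass near the boundary without the total-mass convergence.

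The genuine gap is in your second step, which carries the whole weight of the argument. To get the identity for general BV $g$ you write $\I{\dom}{g\tf\cdot D\chi^{\rm int}_m}{\lem}=-\tfrac1{\delta_m}\I{\dom\setminus\dom_{-\delta_m}}{g\tf\cdot\nu^\dom}{\lem}$ and claim that "after the same flattening and coarea" this is an average of $\I{\bd\dom}{g^\tau\tf\cdot\nu^\dom}{\hm}$. But the layer $\dom\setminus\dom_{-\delta_m}$ is foliated by the level sets $\bd\dom_{-\tau}$ of the distance function, whereas $g^\tau$ and the trace $Tg$ live on the vertical graph translates $\{x_n=\gamma(x')+\tau\}$; for a Lipschitz boundary these foliations do not coincide, and bridging them is exactly what the paper's Lemma~\ref{sf-s10} does via the inclusion \reff{sf-s10-4} and the smooth intermediary $\tilde g^t$ — an argument that uses integration by parts for $\tilde g^t\tf$ on the Lipschitz set $\dom\cap C$ and is therefore not directly available when $g$ is merely BV (one must first check that the shifted function's trace agrees $\hm$-a.e. with $g^t$, which holds only for a.e. $t$, and redo the error estimates of parts (d)--(f) of that proof). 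Moreover, granting the classical trace theorem \reff{sf-e2}, the identity $\sI{\bd\dom}{g\tf}{\nu^{\rm int}}=\I{\bd\dom}{Tg\,\tf\cdot\nu^\dom}{\hm}$ for all $g\in\cB\cV(\dom)$ is equivalent to Theorem~\ref{sf-s8} itself, so you have effectively deferred the main difficulty of the lemma (and of the theorem it serves) to a step that is asserted rather than proven. A smaller issue of the same flavour: Proposition~\ref{sf-s6} requires \reff{sf-s6-2} \emph{uniformly} for $\delta\in(0,\tilde\delta)$, while your verification only controls the $\limsup$ as $\delta\downarrow0$; this is repairable by comparing $\{|f^t|\ge k\}$ with $\{|Tf|\ge k/2\}\cup\{|f^t-Tf|\ge k/2\}$ and shrinking $\tilde\delta$, but it needs to be written out.
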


\smallskip

\begin{proof}     
We use the notation from the proof of Lemma~\ref{sf-s10} and, as there, 
it is sufficient to show \reff{sf-s11-1} for the case where the integrals
are restricted to an open cylinder $C:=C(\bar x,r,h)$ for some 
$\bar x\in\bd\dom$  and to work in the related coordinate system.
Let us also fix some $\tf\in \woinf{\dom,\R^n}$.

\newcommand{\av}{\tau} 

(a) We start with some preliminaries.
For $g\in\cB\cV(\dom)$ and $t,\av>0$ we set
\begin{equation*}
  g^x(t) := g^t(x) = g(x',\gamma(x')+t)\,, \quad 
  g^{(t,\av)}(x):=\frac{1}{\av}\int_t^{t+\av} g^s(x)\, ds \,.
\end{equation*}
Then there is some $\K\subset\bd\dom\cap C$ with 
$\hm\big((\bd\dom\cap C)\setminus\K\big)=0$ such that 
\begin{equation*}
  g^x\in \cB\cV\big((0,h)\big) \qmz{for all} x\in\K
\end{equation*}
(cf. \cite[p.~217, 220]{evans}). These $g^x$ agree $\cL^1$-a.e.
with their right continuous 
representative. Thus we can identify $g$ 
with a representative where 
\begin{equation*}
  \tx{all $g^x$ with $x\in\K$ are continuous from the right}
\end{equation*}
(cf. \cite[p.~136]{ambrosio}). 
With the distributional derivative $Dg^x$, that is a Radon measure on $(0,h)$,
we then have 
\begin{equation*}
  g^x(s) = g^x(t) + Dg^x\big((t,s]\big) \qmz{for all} t<s\,,\; x\in\K
\end{equation*}
(cf. \cite[p.~136, 139]{ambrosio}). Since $|Dg^x|\big((t,s]\big)$ is the
total variation of $g^x$ on $(t,s]$, 
\begin{equation} \label{sf-s11-5}
   |g^x(s) - g^x(t)| \le |Dg^x|\big((t,s]\big) \qmz{for all} x\in\K \,.
\end{equation}
The distributional derivative $D_ng$ with respect to $x_n$
is a Radon measure on $\dom\cap C$, since we have for all $0\le t<s$ 
\begin{eqnarray}
  |D_ng|(C^{t,s}) 
&=&
  \sup \Big\{ 
  \I{C^{t,s}}{g\frac{\partial\psi}{\partial x_n}}{\lem}\:\Big|\: 
  \psi\in C^1_c(C^{t,s})\,,\;\|\psi\|_\infty\le 1 \Big\} \nonumber\\
&\le&
  \sup \Big\{ 
  \I{C^{t,s}}{g\divv\tf}{\lem}\:\Big|\: 
  \tf\in C^1_c(C^{t,s},\R^n)\,,\;\|\tf\|_\infty\le 1 \Big\} \nonumber\\
&=&  \label{sf-s11-7}
  |Dg|(C^{t,s})
\end{eqnarray}
(cf. \cite[p.~194, 195]{ambrosio} and take 
$\tf=(0,\dots,0,\psi)$ to see the inequality). Therefore
\begin{equation}  \label{sf-s11-6}
  |D_ng|(C^{t,s}) = \I{\bd\dom\cap C}{|Dg^x|\big((t,s)\big)}{\hm} < \infty 
  \qmz{for all} 0\le t<s
\end{equation}
(cf. \cite[p.~195]{ambrosio}, \cite[p.~220]{evans}). Since $|Dg|$ is a Radon
measure on the open set $\dom\cap C$,
\begin{equation}  \label{sf-s11-8}
  \lim_{t\to 0} |Dg|(\ol{C^{0,t}}) = 0 \,.
\end{equation}
Using \reff{sf-s11-5}-\reff{sf-s11-7}, we get for $t,\av>0$ small,  
\begin{eqnarray}
  \I{\bd\dom\cap C}{|g^t-g^{(t,\av)}|}{\hm} 
&=&
  \I{\bd\dom\cap C}{\Big|\frac{1}{\av}\int_t^{t+\av}g^t-g^s\, ds\Big|}{\hm} 
  \nonumber\\
&\le&
  \frac{1}{\av} \int_t^{t+\av} \I{\bd\dom\cap C}{|g^t-g^s|}{\hm} \, ds 
  \nonumber\\
&\le&
  \frac{1}{\av} \int_t^{t+\av} 
  \I{\bd\dom\cap C}{|Dg^x|\big( (t,s+\av)\big)}{\hm} \, ds  
  \nonumber\\
&=&
  \frac{1}{\av} \int_t^{t+\av}  |D_ng|(C^{t,s+\av})  \, ds  
  \nonumber\\
&\le& \label{sf-s11-8a}
 |D_ng|(C^{t,t+2\av}) 
\:\le\:
  |Dg|(C^{0,t+2\av}) \,.
\end{eqnarray}

(b) We fix some $\tilde\eps>0$ and show that there is some $t_0>0$ and some
$k_0\in\N$ such that
\begin{equation} \label{sf-s11-9}
  \I{\bd\dom\cap C}{|f_k^t-f^t|}{\hm} \le \tilde\eps
  \qmz{for all} k>k_0\,,\; 0<t<t_0\,.
\end{equation}
By \reff{sf-s11-8} we can choose some $t_0>0$ such that
\begin{equation*}
  |Df|(\ol{C^{0,3t_0}}) < \frac{\tilde\eps}{4} \,.
\end{equation*}
Since
\begin{equation*}
  \limsup_{k\to\infty}|Df_k|(\ol{C^{0,3t_0}}) \le |Df|(\ol{C^{0,3t_0}})
\end{equation*}
(cf. \cite[p.~93]{pfeffer}), there is some $k_0\in\N$ with
\begin{equation*}
  |Df_k|(\ol{C^{0,3t_0}}) \le |Df|(\ol{C^{0,3t_0}}) + \frac{\tilde\eps}{4}
  \qmz{for all} k>k_0\,.
\end{equation*}
Let us fix some $\av<t_0$. Then, by $f_k\to f$ in $\cL^1(\dom)$, we can assume
that $k_0$ is so large that
\begin{equation*}
  \frac{1}{\av} \I{C^{0,2t_0}}{|f_k-f|}{\lem} < \frac{\tilde\eps}{4}
  \qmz{for all} k>k_0\,.
\end{equation*}
Consequently, using \reff{sf-s11-8a}, we obtain for $0<t<t_0$ and $k>k_0$ 
\begin{eqnarray*}
&& \hspace{-20mm}
  \I{\bd\dom\cap C}{|f_k^t-f^t|}{\hm}  \\
&\le&
  \I{\bd\dom\cap C}{|f_k^t-f_k^{(t,\av)}|}{\hm} +
  \I{\bd\dom\cap C}{|f_k^{(t,\av)}-f^{(t,\av)}|}{\hm}  \\
&& 
  +\: \I{\bd\dom\cap C}{|f^{(t,\av)}-f^t|}{\hm}  \\
&\le&
  |Df_k|(C^{0,t+2\av}) +  |Df|(C^{0,t+2\av}) \\
&&
  + \I{\bd\dom\cap C}{\Big|\frac{1}{\av}
  \int_t^{t+\av}f_k^s-f^s\,ds\Big|}{\hm}  \\
&\le&
  |Df_k|(\ol{C^{0,3t_0}}) + |Df|(\ol{C^{0,3t_0}}) +
  \frac{1}{\av}\I{C^{t,t+\av}}{|f_k-f|}{\lem}  \\
&\le&
  2 |Df|(\ol{C^{0,3t_0}}) + \frac{\tilde\eps}{4} +
  \frac{1}{\av} \I{C^{0,2t_0}}{|f_k-f|}{\lem} 
\:\le\:  \tilde\eps 
\end{eqnarray*}
which verifies \reff{sf-s11-9}.

(c) We show that $\tf f_k\overset{\nu}{\to}\tf f$. For $\eps>0$ we define
\begin{equation*}
  B_k:= \big\{ x\in\dom\cap C\:\big|\: |f_k-f|>\eps \big\}\,, 
\end{equation*}
\begin{equation*}
  B_k^t := \big\{ x\in\bd\dom\cap C \:\big|\: |f_k^t-f^t|>\eps \big\} \,.
\end{equation*}
Let us also fix some $\tilde\eps>0$ and let $t_0>0$ and $k_0\in\N$ be related
to $\tilde\eps$ according to \reff{sf-s11-9}.
The  Chebyshev inequality and \reff{sf-s11-9} imply
\begin{equation*}
  \hm(B^t_k) \le \frac{1}{\eps} \I{B^t_k}{|f_k^t-f^t|}{\hm}
  \le \frac{\tilde\eps}{\eps}
  \qmz{for all} k>k_0\,,\; 0<t<t_0 \,.
\end{equation*}
With \reff{bd-s5-0a}, \reff{sf-s10-4}, and $\beta_m\to 0$, we get
\begin{eqnarray*}
  |\nu^{\rm int}|(B_k) 
&\le& 
  \limsup_{m\to\infty}\|D\chi_m\|_{\cL^1(B_k)} =
  \limsup_{m\to\infty} \I{B_k}{|D\chi_m|}{\lem} \\
&\le&
  \limsup_{m\to\infty}\tfrac{1}{\delta_m}
  \I{B_k\cap C^{0,\beta_m}}{}{\lem} \\
&\le&
  \limsup_{m\to\infty}\tfrac{1}{\delta_m}
  \int_0^{\beta_m} \hm(B_k^t)\, dt \\
&\le&
  \limsup_{m\to\infty}\tfrac{\tilde\eps\beta_m}{\eps\delta_m} 
\:=\:
  \frac{\tilde c\tilde\eps}{\eps} \qmz{for all} k>k_0\,.
\end{eqnarray*}
Therefore $|\nu^{\rm int}|(B_k)\to 0$ for all $\eps>0$ and, hence,
$f_k\overset{\nu^{\rm int}}{\longrightarrow}f$.
For $\tf f_k\overset{\nu^{\rm int}}{\longrightarrow}\tf f$
we argue as in part (e) of the proof of Lemma~\ref{sf-s10}.

(d) Let us finally show the assertion \reff{sf-s11-1}.
For that we fix $\eps>0$ and let $t_0>0$, $k_0\in\N$ be related to
$\tilde\eps>0$ as in \reff{sf-s11-9}.
By \reff{pm-tva} there is some 
$\tilde\tf\in\cL^\infty(\dom,\R^n)$ with $\|\tilde\tf\|_\infty\le 1$ 
and by Theorem~\ref{bd-s5} there is a subsequence $\{\chi_{m'}\}$ such
that for $k,l>k_0$  
\begin{eqnarray*}
&&
  \hspace{-25mm}
  \I{\dom\cap C}{\|\tf\|_\infty|f_k-f_l|}{|\nu^{\rm int}|} -\eps  \\
&\le& 
  \I{\dom\cap C}{\|\tf\|_\infty|f_k-f_l|\,\tilde\tf}{\nu^{\rm int}} \\
&=&
  \|\tf\|_\infty \lim_{m'\to\infty} 
  \I{\dom\cap C}{|f_k-f_l|\,\tilde\tf\cdot D\chi_{m'}}{\lem} \\
&\le&
  \|\tf\|_\infty \limsup_{m\to\infty} \frac{1}{\delta_{m}} 
  \I{C^{0,\beta_m}}{|f_k-f_l|}{\lem} \\ 
&=&
  \|\tf\|_\infty \limsup_{m\to\infty} \frac{1}{\delta_{m}} \int_0^{\beta_m}\!\!
  \I{\bd\dom\cap C}{|f_k^t-f_l^t|}{\hm} \, dt \\
&\le&
 \|\tf\|_\infty \limsup_{m\to\infty} \frac{1}{\delta_{m}} \int_0^{\beta_m}\!\!
  \I{\bd\dom\cap C}{\big(|f_k^t-f^t|+|f^t-f_l^t|\big)}{\hm} \, dt \\
&\le&
  \|\tf\|_\infty \limsup_{m\to\infty} \frac{1}{\delta_{m}} \int_0^{\beta_m}
  2\tilde\eps \, ds \quad\qmq{(since $t<t_0$ for $m$ large)}\\
&=& 
  2\tilde c\tilde\eps\|\tf\|_\infty  \,. 
\end{eqnarray*}
This is true without $\eps>0$, since it is arbitrary.
Since $\tilde\eps>0$ is arbitrary, the left hand side tends to zero as
$k,l\to\infty$. Therefore $\tf f$ is $\nu^{\rm int}$-integrable and 
\reff{sf-s11-1} follows (cf. \cite[p.~114]{rao}).
\end{proof}

As application of the introduced theory we finally consider a general boundary
value problem for the $p$-Laplace operator. Let $\dom\subset\R^n$ be an open 
bounded set and let $1<p<\infty$. The trace operator 
$T:\cW^{1,1}(\dom)\to\woinf{\dom}^*$ from Proposition~\ref{tt-s5} (with
$\edom=\dom$) is also a
linear continuous operator on $\cW^{1,p}(\dom)$ by the continuous embedding
$\cW^{1,p}(\edom))\hookrightarrow\cW^{1,1}(\edom)$. For given
$g\in\cL^{p'}(\dom)$ 
and $f_b\in\cW^{1,p}(\dom)$ we call $f\in\cW^{1,p}(\dom)$
weak solution of the boundary value problem 
\begin{equation} \label{sf-e3}
  -\divv{\big( |Df|^{p-2}Df \big)} = g \qmz{on} \dom\,, \quad
  f=f_b \qmz{on} \bd\dom
\end{equation}
if we have that
\begin{equation*}
  \I{\dom}{|Df|^{p-2}DfD\tf-g\tf}{\lem} = 0 
  \qmz{for all} \tf\in C_c^\infty(\dom) \qmq{and}
\end{equation*}
\begin{equation} \label{sf-e5}
  \df{T(f-f_b)}{\tf} =0 \qmz{for all} \tf\in\woinf{\dom,\R^n} \,.
\end{equation}
We show that this problem has always a solution
without any regularity assumption on the boundary $\bd\dom$. 
Before let us discuss the
boundary condition \reff{sf-e5} for $\dom$ having Lipschitz boundary. 
From \reff{sf-e2} we get 
\begin{equation*}
  \df{T(f-f_b)}{\tf} = \divv((f-f_b)\tf)(\dom)
  = \I{\bd\dom}{(\pr{f}-\pr{f_b})\tf\cdot\nu}{\hm} = 0\, 
\end{equation*}
for all $\tf\in\woinf{\dom,\R^n}$. By approximation, the most right 
equality is even valid for all $\tf\in C(\dom,\R^n)$. Hence 
$\reme{(\pr{f}-\pr{f_b})\nu\hm}{\bd\dom}$ has to be the zero
measure. Consequently  
\begin{equation*}
  \pr{f}=\pr{f_b} \qmz{$\hm$-a.e. on $\bd\dom$\,,}
\end{equation*}
which is the usual pointwise boundary condition. Let us still point out that 
the trace $Tf$ is uniquely defined though its representation according to
Theorem~\ref{sf-s1} is not. 

\begin{theorem}\label{plap}
Let $\dom\subset\R^n$ be open and bounded, let $1<p<\infty$, let 
$g\in \cL^{p'}(\dom)$ (where $\frac{1}{p}+\frac{1}{p'}=1$), 
let $f_b\in\cW^{1,p}(\dom)$, and let 
$T$ be the trace operator from Proposition~\ref{tt-s5}.
Then there is a weak solution $f\in\cW^{1,p}(\dom)$ of the boundary value
problem \reff{sf-e3}. 
\end{theorem}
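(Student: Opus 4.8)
The plan is to obtain a weak solution as the minimizer of the natural energy over the affine class $f_b+\cW_0^{1,p}(\dom)$, and to observe that membership in this class already forces the boundary condition $T(f-f_b)=0$; the trace theory of the previous sections enters only through the elementary inclusion $\cW_0^{1,p}(\dom)\subset\ker T$, and everything else is the classical direct method for the $p$-Laplacian.

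\emph{Step 1: $\cW_0^{1,p}(\dom)\subset\ker T$.} Since $\dom$ is bounded, $\cW_0^{1,p}(\dom)\subset\cW^{1,1}(\dom)$, so $Th$ is defined for $h\in\cW_0^{1,p}(\dom)$ via Proposition~\ref{tt-s5} (with $\edom=\dom$), and $\df{Th}{\tf}=\divv(h\tf)(\dom)=\I{\dom}{h\divv\tf}{\lem}+\I{\dom}{\tf\cdot Dh}{\lem}$ for $\tf\in\woinf{\dom,\R^n}$ (cf. Remark~\ref{tt-s6}). First I would pick $h_k\in C_c^\infty(\dom)$ with $h_k\to h$ in $\cW^{1,p}(\dom)$, hence in $\cW^{1,1}(\dom)$. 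For fixed $k$ the vector field $h_k\tf$ is Lipschitz with compact support in $\dom$, so the product rule and integration by parts (the definition of divergence measure, cf. \reff{tt-e3}, applied componentwise to $h_k\tf$) give $\divv(h_k\tf)(\dom)=\I{\dom}{\divv(h_k\tf)}{\lem}=0$, i.e. $\df{Th_k}{\tf}=0$. Letting $k\to\infty$, using $\divv\tf,\tf\in\cL^\infty(\dom)$ and $h_k\to h$, $Dh_k\to Dh$ in $\cL^1(\dom)$, yields $\df{Th}{\tf}=0$ for all $\tf\in\woinf{\dom,\R^n}$.

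\emph{Step 2: existence of a minimizer.} On the reflexive space $\cW_0^{1,p}(\dom)$ ($1<p<\infty$) I would consider
\begin{equation*}
  J(h):=\tfrac1p\I{\dom}{|Df_b+Dh|^p}{\lem}-\I{\dom}{g(f_b+h)}{\lem}\,,
\end{equation*}
which is finite for every $h$ by Hölder's inequality ($g\in\cL^{p'}(\dom)$, $f_b+h\in\cL^p(\dom)$). Since $\dom$ is bounded, Poincaré's inequality makes $\|Dh\|_{\cL^p(\dom)}$ an equivalent norm on $\cW_0^{1,p}(\dom)$ and
\begin{equation*}
  J(h)\ge\tfrac1p\big(\|Dh\|_p-\|Df_b\|_p\big)_+^{\,p}-\|g\|_{p'}\|f_b\|_p-C_\dom\|g\|_{p'}\|Dh\|_p\,,
\end{equation*}
so $J$ is coercive (the $p$-th power dominates the linear term). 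The first term of $J$ is convex and strongly continuous on $\cW_0^{1,p}(\dom)$, hence weakly sequentially lower semicontinuous, while $h\mapsto\I{\dom}{g(f_b+h)}{\lem}$ is affine and weakly continuous; thus the direct method of the calculus of variations gives a minimizer $h^*\in\cW_0^{1,p}(\dom)$, and I set $f:=f_b+h^*\in\cW^{1,p}(\dom)$.

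\emph{Step 3: Euler--Lagrange equation and boundary condition.} For $\tf\in\cW_0^{1,p}(\dom)$ the function $t\mapsto J(h^*+t\tf)$ is differentiable with derivative at $t=0$ equal to $\I{\dom}{|Df|^{p-2}Df\cdot D\tf-g\tf}{\lem}$ (dominated convergence, the $t$-derivative of the integrand being bounded for $|t|\le1$ by $p\,2^{p-1}(|Df|^{p-1}+|D\tf|^{p-1})|D\tf|\in\cL^1(\dom)$ by Hölder). Minimality forces this to vanish, so in particular $\I{\dom}{|Df|^{p-2}Df\cdot D\tf-g\tf}{\lem}=0$ for all $\tf\in C_c^\infty(\dom)$, the first requirement for a weak solution. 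Finally $f-f_b=h^*\in\cW_0^{1,p}(\dom)\subset\ker T$ by Step~1, hence $\df{T(f-f_b)}{\tf}=0$ for all $\tf\in\woinf{\dom,\R^n}$, the second requirement. No step here is genuinely hard; the only subtlety is that, with no regularity of $\bd\dom$ available, one must keep track that Poincaré's inequality, reflexivity and weak lower semicontinuity all require merely boundedness of $\dom$, and that the boundary condition is captured through $\ker T$ rather than a pointwise trace — which is exactly what Step~1 provides.
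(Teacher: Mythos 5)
Your proposal is correct and follows essentially the same route as the paper: minimize the natural $p$-Dirichlet energy over the affine class $f_b+\cW_0^{1,p}(\dom)$ via the direct method (Poincaré for coercivity, convexity for weak lower semicontinuity), derive the Euler--Lagrange equation by Gâteaux differentiation, and capture the boundary condition through the inclusion $\cW_0^{1,p}(\dom)\subset\ker T$. Your Step~1 merely spells out in more detail what the paper dispatches by noting $Tf=0$ for $f\in C_c^\infty(\dom)$ together with the continuity of $T$ on $\cW^{1,p}(\dom)$; the two arguments are the same.
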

\noi
Notice that we obviously have that
\begin{equation*}
  Tf=0 \qmq{for all} f\in C_c^\infty(\dom)\,.
\end{equation*}
Thus, by the continuity of $T$ on $\cW^{1,p}(\dom)$,
\begin{equation} \label{sf-e4}
  \cW^{1,p}_0(\dom) \subset \{f\in\cW^{1,p}(\dom)\mid Tf=0\} \,.
\end{equation}
The drawback of the set on the right hand side is that the Poicar\'e
inequality might not be true for all functions. But it turns 
out to be sufficient for the theorem to study a
variational problem on $f_b+\cW^{1,p}_0(\dom)$. 

\begin{proof+}{ of Theorem~\ref{plap}}  
We consider the minimization problem 
\begin{equation*}
  E(f):=\I{\dom}{|Df|^p-fg}{\lem} \; \to\; \op{Min!}\,, \quad
  f\in\cW^{1,p}(\dom)
\end{equation*}
subject to
\begin{equation*}
  M:=\big\{ f\in\cW^{1,p}(\dom) \:\big|\: f=f_b+f_0\,,\;
  f_0\in\cW^{1,p}_0(\dom) \big \} .
\end{equation*}
Let $f_k\in\cW^{1,p}(\dom)$ be a minimizing sequence $f_k\in\cW^{1,p}(\dom)$.
Then, by the Poincar\'e inequality, there is some $c>0$ such that
\begin{eqnarray*}
  \|f_k\|_{\cL^p} 
&\le&
  \|f_k-f_b\|_{\cL^p} + \|f_b\|_{\cL^p} \\
&\le&
  c\|Df_k-Df_b\|_{\cL^p} + \|f_b\|_{\cL^p} \\
&\le&
  c\|Df_k\|_{\cL^p} + c\|Df_b\|_{\cL^p} + \|f_b\|_{\cL^p}  
\end{eqnarray*}
Consequently, for some $\tilde c>0$,
\begin{eqnarray*}
  E(f_k) 
&\ge&
   \|Df_k\|^p_{\cL^p} - \|g\|_{\cL^{p'}}\|f_k\|_{\cL^p} \\
&\ge&
  \|Df_k\|^p_{\cL^p} - 
  \tilde c \big( \|Df_k\|_{\cL^p} + \|Df_b\|_{\cL^p} + \|f_b\|_{\cL^p} \big) \\
&=&
  \|Df_k\|_{\cL^p}\big( \|Df_k\|^{p-1}_{\cL^p}-\tilde c \big) -
  \tilde c \big(\|Df_b\|_{\cL^p} + \|f_b\|_{\cL^p} \big) 
\end{eqnarray*}
Combining both estimates we get that the $f_k$
must be bounded in $\cW^{1,p}(\dom)$. Thus there is a weakly convergent
subsequence, denoted the same way, with $f_k\wto:f$. Since 
$M$ is a closed affine subspace of
$\cW^{1,p}(\dom)$, it is also weakly closed. Therefore $f\in M$.
As convex and continuous function, $E$ is weakly lower semicontinuous
(cf. \cite[p.~49 or 74]{dacorogna}). 
This implies that $f$ solves the minimization problem. 
Obviously, $f=f_b+f_0$ for some $f_0\in\cW^{1,p}_0(\dom)$. Hence
\begin{equation*}
  \df{T(f-f_b)}{\tf} = \df{Tf_0}{\tf} = 0  
  \qmz{for all} \tf\in\woinf{\dom,\R^n}\,
\end{equation*}
and, thus, $f$ satisfies the boundary condition. 

Now we decompose $E=E_1-E_2$ in the obvious way where $E_1$ is convex and
$E_2$ is linear and continuous. Clearly,
\begin{equation*}
  E_2'(f,\tf) = \I{\dom}{g\tf}{\lem} \qmz{for all} \tf\in\cW^{1,p}_0(\dom)\,.
\end{equation*}
Moreover $E_1$ is G\^ ateaux differentiable on $M$ with
\begin{equation*}
  E_1'(f,\tf) = \I{\dom}{|Df|^{p-2}DfD\tf}{\lem}
  \qmz{for all} \tf\in\cW^{1,p}_0(\dom)
\end{equation*}
(cf. \cite[p.~89]{dacorogna}). Since $f$ minimizes $E$ on $M$,
\begin{equation*}
  E'(f,\tf) =  \I{\dom}{|Df|^{p-2}DfD\tf - g\tf}{\lem} = 0
  \qmz{for all} \tf\in\cW^{1,p}_0(\dom) \,.
\end{equation*}
Consequently, $f$ is a weak solution of 
\reff{sf-e3} and the proof is complete.
\end{proof+}

\bigskip

\bibliographystyle{plain}

\vspace{5mm}

\noi
Friedemann Schuricht (corresponding author)\\
TU Dresden \\
Fakultät Mathematik \\
01062 Dresden \\
Germany\\
email: {\it friedemann.schuricht@tu-dresden.de}

\bigskip

\noi
Moritz Schönherr\\
TU Dresden \\
Fakultät Mathematik \\
01062 Dresden \\
Germany \\
email: {\it moritz.schoenherr@posteo.de}

\end{document}